\documentclass[11pt]{amsart}
\usepackage[latin1]{inputenc}
\usepackage{listings, tcolorbox}
\usepackage{amssymb, amsmath, amsthm}
\usepackage[colorlinks=true,linkcolor=blue,citecolor=red]{hyperref}
\usepackage{color}
\usepackage{graphics}
\usepackage{subfigure}
\usepackage{graphicx}
\usepackage{float}
\usepackage{epsfig}
\usepackage{amssymb}
\usepackage{epstopdf}
\usepackage{enumerate}
\usepackage{appendix}
\usepackage{amscd}
\usepackage{longtable,booktabs}
\usepackage{mathrsfs}
\usepackage{tikz}
\usepackage{bm}
\usepackage[T1]{fontenc}

\numberwithin{equation}{section}
\newtheorem{theorem}{Theorem}[section]

\newtheorem{lemma}[theorem]{Lemma}
\newtheorem{prop}[theorem]{Proposition}

\newtheorem{thm}[theorem]{Theorem}

\newtheorem{cor}[theorem]{Corollary}
\newtheorem{criterion}[theorem]{Criterion}
\theoremstyle{definition}
\newtheorem{definition}[theorem]{Definition}
\newtheorem{defn}[theorem]{Definition}
\newtheorem{condition}[theorem]{Condition}
\newtheorem{re}[theorem]{Remark}
\newtheorem{problem}{Problem}

\newcommand{\R}{\mathbb{R}}

\newcommand{\Hplus}{L^2_+(\R)}

\makeatletter
\@namedef{subjclassname@2020}{2020 Mathematics Subject Classification}
\makeatother

\subjclass[2020]{37K10, 35C05, 35Q51}

\keywords{Integrable system, Calogero-Moser derivative nonlinear Schr\"odinger equation, Explicit formula, Long-time behavior, Soliton resolution}

\thanks{*Corresponding authors: shoufu2006@126.com (S.F. Tian) and t.zhang@cumt.edu.cn (T. Zhang).}
\thanks{$^\dagger$ Contributed equally as the first author.}

\begin{document}

\title[Soliton resolution for the CM-DNLS equation]{Soliton resolution conjecture for the Calogero--Moser derivative nonlinear Schr\"odinger equation in the scaling-critical space}


\author[Tian]{Shou-Fu Tian$^{*}$}
\author[Zhang]{Tao Zhang$^{*,\dagger}$}
\address{Shou-Fu Tian (Corresponding author), Tao Zhang (Corresponding author)\newline
School of Mathematics, China University of Mining and Technology, Xuzhou 221116, People's Republic of China}
\email{shoufu2006@126.com (S.F. Tian), and t.zhang@cumt.edu.cn (T. Zhang)}

\begin{abstract}
We prove the soliton resolution conjecture for the Calogero--Moser derivative nonlinear Schr\"odinger equation in the scaling-critical space $L^2_+(\mathbb R)$. For a natural class of initial data including a broad spectral Dini---moment class, global flow--Lax admissible solutions decompose into finitely many explicit modulated solitons plus a dispersive radiation, while finite-time blow-up solutions resolve into quantized zero-carrier $R$-bubbles and a strongly convergent endpoint remainder. This work extends soliton resolution to the optimal critical regularity. Previous results had required weighted $H^{1,1}$ initial data, and in the global case the solution was additionally required to remain in $H^{1,1}$ for all time, which amounts to an extra spatial decay assumption.

One key ingredient in our argument is a new modular operator-theoretic framework that separates the discrete and continuous spectral channels and identifies the radiation profile through the distorted Fourier transform. A second ingredient is the development of several independent rigidity criteria for the discrete soliton profiles, which avoids inverse scattering and handles embedded eigenvalues. Our framework also yields a unified description of both global and finite-time asymptotics, with exact mass partition and mass-defect quantization.
\end{abstract}

\maketitle
\tableofcontents

\section{Introduction}
In this work, we investigate the Calogero--Moser derivative nonlinear Schr\"odinger (CM-DNLS) equation
\begin{equation}\label{eq: focusing CM-DNLS}
i\partial_tu+\partial^2_xu+(D+|D|)(|u|^2)u=0,
\end{equation}
where $u=u(t,x)$ is a complex-valued function. This equation is also commonly referred to as the continuum Calogero--Moser model. Here and in what follows, we use the standard notation $D=-i\partial_x$ and hence $|D|$ denotes the Fourier multiplier with symbol $|\xi|$.

The CM-DNLS equation \eqref{eq: focusing CM-DNLS} was introduced in \cite{Abanov-2009-JPA} as a formal continuum limit of classical Calogero--Moser systems, which describe particles interacting pairwise through an inverse square potential \cite{Moser-1975-Adv, Olshanetsky-1976-Invention}. Prior to \cite{Abanov-2009-JPA}, a defocusing version called the intermediate nonlinear Schr\"odinger equation given by
\begin{equation}\label{eq: defocusing CM-DNLS}
i\partial_tu+\partial^2_xu-(D+|D|)(|u|^2)u=0
\end{equation}
was introduced by Pelinovsky and Grimshaw in \cite{Pelinovsky-1995-JMP} to describe envelope waves in a deep stratified fluid. Compared to the defocusing equation \eqref{eq: defocusing CM-DNLS}, the focusing equation \eqref{eq: focusing CM-DNLS} exhibits richer dynamical structure, for example, multi-soliton solutions with turbulence in Sobolev norms \cite{Gerard-2024-CPAM}.

Throughout this work, we restrict our attention to solutions of the CM-DNLS equation \eqref{eq: focusing CM-DNLS} that satisfy the chirality condition
\[
u(t,\cdot)\in H_+^s(\mathbb{R}):=\Bigl\{u\in H^s(\mathbb{R}): \text{supp}(\widehat{u})\subset[0,\infty)\Bigr\},
\]
where $H^s(\mathbb{R})$ denotes the standard $L^2$-based Sobolev space. We use the Fourier transforms $\widehat{f}(\xi)=\int_{\mathbb{R}}e^{-ix\xi}f(x)dx$, and its normalized version $\mathcal{F}_0f=(2\pi)^{-1/2}\widehat{f}$. The spaces $H^s_+(\mathbb{R})$ provide the natural phase spaces for the study of the CM-DNLS equation \eqref{eq: focusing CM-DNLS}.

For $v\in L^2_+(\mathbb{R})$, the Lax expression of \eqref{eq: focusing CM-DNLS} is $L_vf=Df-v\Pi(\overline{v}f)$ on $H^1_+(\mathbb{R})$. For rough potentials we use the self-adjoint form realization, whose form domain is $H^{1/2}_+(\mathbb{R})$; its norm-resolvent continuity is \cite[Proposition~2.2]{Killip-2025-CAMS}. The Lax equation and the traveling-wave classification are \cite[Lemma~2.3 and Theorem~1.2]{Gerard-2024-CPAM}.

For a Borel set $B\subset\mathbb{R}$, let $E_{L_v}(B)$ denote the spectral projection of $L_v$, and define the scalar cyclic measure $\nu_v(B)=\langle E_{L_v}(B)v, v\rangle$. The distorted Fourier transform of Frank and Read \cite{Frank-2026-arXiv} is denoted by $\Phi_v:P_{\mathrm{ac}}(L_v)L^2_+(\mathbb{R})\to L^2(\mathbb{R}_+)$, and we use the radiation coordinate
\begin{equation}\label{eq:beta}
\beta_v=\sqrt{2\pi}i\Phi_vP_{\mathrm{ac}}(L_v)v\in L^2(\mathbb{R}_+).
\end{equation}
Whenever $\beta_v$ occurs in a full-line Fourier formula, it is extended by zero to $\mathbb{R}_-$. With the distorted transform extended by zero on the orthogonal
complement of the absolutely continuous subspace, its Plancherel relations are
\begin{equation}\label{eq:distorted-plancherel}
\Phi_v^\ast\Phi_v=P_{\mathrm{ac}}(L_v),\qquad \Phi_v\Phi_v^\ast=\mathrm{Id}_{L^2(\mathbb{R}_+)}.
\end{equation}
Consequently,
\[
\|P_{\mathrm{ac}}(L_v)v\|_{L^2(\mathbb{R})}^2=\|{\Phi_vP_{\mathrm{ac}}(L_v)v}\|_{L^2(\mathbb{R}_+)}^2=\frac{1}{2\pi}\|\beta_v\|_{L^2(\mathbb{R}_+)}^2.
\]
Moreover, the cyclic measure is finite and satisfies
\begin{equation}\label{eq:cyclic-total-mass}
\nu_v(\mathbb{R})=\|v\|_{L^2(\mathbb{R})}^2.
\end{equation}

The global well-posedness and blow-up dynamics for the CM-DNLS equation \eqref{eq: focusing CM-DNLS} have been actively investigated since its introduction.
In \cite{Gerard-2024-CPAM}, G\'erard and Lenzmann discovered a Lax pair structure for \eqref{eq: focusing CM-DNLS} with initial data in $H_+^s(\mathbb{R})$ for sufficiently large $s$, and derived infinitely many conservation laws. Using these conservation laws, they established global well-posedness in $H^s_+(\mathbb{R})$ with $s\ge1$ for initial data of subcritical or critical mass $\|u_0\|_{L^2(\mathbb{R})}^2\le 2\pi$. Killip, Laurens, and Vi\c{s}an subsequently extended the well-posedness theory to the scaling-critical space $L^2_+(\mathbb{R})$ for subcritical mass, by combining a G\'erard-type explicit formula with commuting flow techniques \cite{Killip-2025-CAMS}.

On the blow-up side, Hogan and Kowalski proved that for every $\varepsilon>0$, there exist initial data $u_0\in H^\infty_+(\mathbb{R})$ with $\|u_0\|_{L^2(\mathbb{R})}^2=2\pi+\varepsilon$ whose maximal lifespan solution $u:(T_-,T_+)\times\mathbb{R}\to\mathbb{C}$ satisfies $\|u(t,\cdot)\|_{H^s(\mathbb{R})}\to\infty$ as $t\to T_{\pm}$ for all $s>0$ \cite{Hogan-2024-PAA}. In a complementary direction, K.~Kim, T.~Kim, and Kwon \cite{Kim-2026-MAMS} constructed finite-time blow-up solutions in $\mathcal{S}_+(\mathbb{R}):=\mathcal{S}(\mathbb{R})\cap L^2_+(\mathbb{R})$ with mass arbitrarily close to the threshold $2\pi$. Moreover, Jeong and Kim analyzed the quantized blow-up behavior for this equation \cite{Jeong-2026-arXiv}.

While the well-posedness and blow-up results determine whether a solution exists globally or develops singularities in finite time, they give no information about the asymptotic behavior of such solutions, neither the long-time profile of global solutions nor the concentration dynamics near a singularity. For nonlinear dispersive equations, a more precise description is expected \cite{Zabusky-1965-PRL}: \textit{the solution should eventually resolve, either as $t\to\infty$ into a superposition of finitely many solitons plus a dispersive radiation component, or near the blow-up time into finitely many soliton bubbles plus an endpoint remainder. This is known as the \textbf{soliton resolution conjecture}} \cite{Deift-1994-CPAM, Eckhaus-1983-MMAS, Jendrej-2025-JAMS}.

The soliton resolution conjecture originates from the seminal work of Zabusky and Kruskal \cite{Zabusky-1965-PRL}, who numerically observed the elastic interaction of solitary waves in the Korteweg--de Vries (KdV) equation. The first rigorous proof of the conjecture was given by Eckhaus and Schuur for the KdV equation \cite{Eckhaus-1983-MMAS}, using the inverse scattering transform and the Gelfand--Levitan--Marchenko integral equation. Deift, Venakides, and Zhou \cite{Deift-1994-CPAM} later reformulated the inverse scattering transform as a Riemann--Hilbert problem and developed the nonlinear steepest descent method, which led to rigorous proofs of the soliton resolution conjecture for a wide range of classical integrable systems \cite{Charlier-2024-JMPA, Chen-2021-AHP, Cuccagna-2016-CMP, Deift-2011-IMRN, Jenkins-2018-CMP}. The first author Tian and his collaborators further established the conjecture and long-time asymptotics for several other integrable equations, including the complex short pulse equation \cite{Li-2022-JDE}, the Wadati--Konno--Ichikawa equation \cite{Li-2022-Adv, Li-2022-AHP}, the spin-1 Gross--Pitaevskii equation \cite{Tian-2026-CMP}, and the modified Camassa--Holm equation \cite{Yang-2026-Adv}. Beyond the completely integrable setting, the soliton resolution conjecture has also been established, using entirely different methods, for various non-integrable dispersive and wave equations \cite{Burq-2017-ASENS, Cote-2015-CPAM, Cote-2021-ARMA, Duyckaerts-2017-GFA, Duyckaerts-2022-CMP, Duyckaerts-2013-CJM, Duyckaerts-2023-Acta, Jendrej-2023-Ann.PDE, Jendrej-2025-JAMS, Kim-2025-AJM, Rodriguez-2018-CMP}.

In the present work, we prove the soliton resolution conjecture for the CM-DNLS equation \eqref{eq: focusing CM-DNLS} for a natural class of initial data in $L^2_+(\mathbb{R})$. For a global flow--Lax admissible solution, we establish the equivalence between a sequential rigidity condition and quantitative soliton resolution, with the radiation profile explicitly identified and the mass exactly partitioned. For a blow-up flow--Lax admissible solution, under the endpoint KLV--Ward package, we prove a quantized decomposition into zero-carrier $R$-bubbles plus a strongly convergent endpoint remainder.

Since the Lax operator is nonlocal, the first soliton resolution result for the CM-DNLS equation \eqref{eq: focusing CM-DNLS} was obtained only very recently, for initial data in the weighted space
$H^{1,1}(\mathbb{R})$ \cite{Kim-2026-JEMS}. Let $R(x)=\frac{\sqrt{2}}{x+i}$, and we recall this beautiful result in the following proposition.
\begin{prop}[Kim and Kwon \cite{Kim-2026-JEMS}]\label{prop: Kim}
Assume that $u_0\in H^{1,1}(\mathbb{R})$ and that $u\in C([0,T);H^1(\mathbb{R}))$ is the maximal solution of the CM-DNLS equation \eqref{eq: focusing CM-DNLS} with initial condition $u(0)=u_0$, where $T\in(0,\infty]$ is the maximal existence time.
\begin{itemize}
    \item \textbf{(Finite-time blow-up solutions)} If $T<\infty$, then there exist an integer $N$ with $1\le N\le\|u_0\|_{L^2}^2/2\pi$, a time $0<\tau<T$, modulation parameters $(\lambda_j(t),\gamma_j(t),x_j(t))\in C^1([\tau,T);\mathbb{R}_+\times \mathbb{R}/2\pi\mathbb{Z}\times\mathbb{R})$ for $j=1,\dots,N$, and an asymptotic profile $z^\ast\in L^2(\mathbb{R})$ such that
    \[
    \Bigl\|u(t)-\sum_{j=1}^N[R]_{\lambda_j(t),\gamma_j(t),x_j(t)}-z^\ast\Bigr\|_{L^2(\mathbb{R})}\underset{t\to T}{\longrightarrow}0.
    \]

    \item \textbf{(Global solutions)} If $T=\infty$ and additionally $u\in C([0,\infty);H^{1,1}(\mathbb{R}))$, then there exist a nonnegative integer $N$ with $N\le\|u_0\|_{L^2}^2/2\pi$, a time $\tau>0$, modulation parameters $(\lambda_j(t),\gamma_j(t),c_j(t))\in C^1([\tau,\infty);\mathbb{R}_+\times \mathbb{R}/2\pi\mathbb{Z}\times\mathbb{R})$ for $j=1,\dots,N$, and an asymptotic profile $u^\ast\in L^2(\mathbb{R})$ such that
    \[
    \Bigl\|u(t)-\sum_{j=1}^N\operatorname{Gal}_{c_j(t)}([R]_{\lambda_j(t),\gamma_j(t),0})-e^{it\partial_x^2}u^\ast\Bigr\|_{L^2(\mathbb{R})}\underset{t\to\infty}{\longrightarrow}0.
    \]
\end{itemize}
Here $[R]_{\lambda,\gamma,y}$ denotes the modulated ground state and $\operatorname{Gal}_c$ denotes the Galilean boost.
\end{prop}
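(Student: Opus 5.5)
This proposition is quoted from Kim and Kwon \cite{Kim-2026-JEMS}, so the plan is to reconstruct the structure of their argument. It rests on the Lax structure of G\'erard--Lenzmann, $\partial_t L_u=[B_u,L_u]$, with the modulation and energy analysis in $H^{1,1}$. The first step is to record the conserved quantities: the mass $\|u\|_{L^2}^2$, the energy, and the spectral data of $L_u$. The Lax identity makes $L_{u(t)}$ unitarily equivalent to $L_{u_0}$, so its eigenvalues and the sizes of the corresponding spectral projections of $u$ are time independent. For $u_0\in H^{1,1}$, the identity $\langle L_u x\cdot\rangle$-type commutator (the G\'erard explicit formula $u(t,z)=\frac{1}{2i\pi}I_+\bigl[(X+2tL_{u_0}-z)^{-1}u_0\bigr]$) shows that the eigenvalues of $L_{u_0}$ are finite in number. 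Moreover, the eigenfunction associated with each eigenvalue $\lambda_j$ carries exactly mass $2\pi$ in $\langle E(\{\lambda_j\})u_0,u_0\rangle$. This is the quantization behind $N\le\|u_0\|_{L^2}^2/2\pi$: the point-spectral part of the cyclic measure $\nu_{u_0}$ is $2\pi N$, and its total mass is $\|u_0\|^2$ by \eqref{eq:cyclic-total-mass}.

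In the global case, I would use the additional hypothesis $u\in C([0,\infty);H^{1,1})$ to run the explicit formula and spectrally decompose $u_0=P_{pp}u_0+P_{ac}u_0$. The point-spectral part evolves along $X+2tL$ restricted to eigenvectors, which yields one soliton per eigenvalue moving with velocity $2\lambda_j$; this velocity is the Galilean parameter $c_j(t)\approx 2\lambda_j$, with a slowly varying scale $\lambda_j(t)$. Embedded eigenvalues with equal velocity are handled by treating a cluster as a single multi-soliton and decoupling it via the slow logarithmic-type separation that is known for colliding CM-DNLS solitons. The absolutely continuous part disperses. Comparing its flow with $e^{it\partial_x^2}$ and applying Cook's method in $L^2$ gives $u^\ast$; the nonlinear term is integrable in time because the virial/$H^{1,1}$ control yields decay $\|u_{\mathrm{rad}}\|_{L^\infty}\lesssim t^{-1/2}$. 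The interaction between the solitons and the radiation is controlled by the polynomial decay of $R$ and by the spatial separation $|c_j-c_k|t$.

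In the blow-up case, I would first show that concentration forces $\|\partial_x u\|_{L^2}\to\infty$. Then I would use a profile decomposition combined with the variational characterization of $R$ as the unique zero-energy traveling wave (\cite[Theorem~1.2]{Gerard-2024-CPAM}) to extract bubbles $[R]_{\lambda_j,\gamma_j,x_j}$ with $\lambda_j\to0$. Each bubble takes away exactly $2\pi$ of the mass, by the same spectral quantization, which gives $N\le\|u_0\|^2/2\pi$. The remainder $z(t)=u(t)-\sum_j[R]_{\lambda_j,\gamma_j,x_j}$ is shown to be Cauchy in $L^2$ as $t\to T$, using the finiteness of the weighted norm and the local-in-space smoothing away from the concentration points.

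I expect the main obstacle to be the rigidity step: proving that no mass is lost to non-soliton concentration. Concretely, one must show that every piece of the solution that fails to disperse, respectively fails to converge, is an exact modulated $R$. I would attack this through the Lax spectrum. A non-dispersing remainder would produce point spectrum for the limiting Lax operator, and then the classification of $L^2$ eigenfunctions (all of the form $\frac{c}{x-z}$ with $\operatorname{Im}z<0$) forces the profile to be $R$ up to symmetries.
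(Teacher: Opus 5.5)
The paper does not prove this proposition. It is quoted from \cite{Kim-2026-JEMS}, and the paper says that proof uses a gauge transformation and a variational energy-bubbling argument, not the Lax pair. Your plan runs through the Lax structure instead, and for the statement as written it has a structural gap.

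The hypotheses are $u_0\in H^{1,1}(\mathbb{R})$ and $u\in C([0,T);H^1(\mathbb{R}))$, with no chirality assumption. Every spectral tool you invoke needs $u(t)\in L^2_+(\mathbb{R})$: the G\'erard--Lenzmann Lax pair, the Hardy-space operator $L_u=D-u\Pi(\overline{u}\,\cdot)$, the explicit formula, the atom identity $\nu_u(\{\mu\})=2\pi$, and finiteness and simplicity of the point spectrum. For non-chiral data there is no Lax operator whose eigenvalues you can count. So neither the bound $N\le\|u_0\|_{L^2}^2/2\pi$ nor the spectral splitting of $u_0$ gets started.

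The variational part of your blow-up sketch is the right kind of idea: rescaled profiles have vanishing energy, and zero-energy states are modulated $R$. Outside the Hardy space, though, it must be run in gauge-transformed variables. The step you leave vague there is the real work: showing the remainder converges strongly in $L^2$ as $t\to T$, not just along sequences.

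Even for chiral data, the decisive step is assumed rather than proved. The explicit formula does not let the point part evolve along $\mathsf{X}+2tL$ restricted to eigenvectors. $\mathsf{X}$ does not preserve the eigenlines of $L_{u_0}$, so $b_j(t)=W_t^{u_0}a_j$ stays coupled to the continuous channel; this coupling is the point return $\rho_t^{\mathrm{p}}$ and the off-diagonal block $B$ of Proposition~\ref{app:prop:block}. Showing that $b_j(t)$ is asymptotically a modulated $R$ is precisely Condition~\ref{hyp:P4}. The paper shows this condition is \emph{equivalent} to global resolution, and it can verify it only through extra criteria.

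Your heuristic also fails in detail. A Hardy-admissible bubble $\mathcal{M}_{\lambda,y,\xi,\theta}R$ has carrier $\xi\ge0$ and is an eigenfunction of its own Lax operator with eigenvalue $\xi$. So a channel with $\mu_j<0$ cannot be a soliton moving at velocity $2\mu_j$ with slowly varying scale; its scale must collapse (Corollary~\ref{cor:M9-geometric-P4}). Scale degeneration genuinely occurs, for example for the multi-solitons of \cite{Gerard-2024-CPAM} with growing Sobolev norms. Because eigenvalues are simple, there are no equal-velocity clusters to treat; the real difficulty is this degeneration.

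Your closing rigidity argument misstates the tool as well. Eigenfunctions of $L_v$ for a general potential are not of the form $c/(x-z)$. What is rigid is the self-eigenfunction problem $L_gg=0$, $\|g\|_{L^2}^2=2\pi$ \cite[Lemma~4.1]{Gerard-2024-CPAM}. Reaching it requires showing that the interaction defect \eqref{eq:M9-cross-defect}, between the channel and the rest of the potential, vanishes in the limit. Nothing in your plan controls that term.

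Finally, Cook's method needs a dispersive bound for a radiation component that solves no closed equation. The bound $\|u_{\mathrm{rad}}\|_{L^\infty}\lesssim t^{-1/2}$ does not follow from the conservation laws. Nor does it follow from $u\in C([0,\infty);H^{1,1})$, which gives no uniform-in-time weighted control.
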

The proof of Proposition~\ref{prop: Kim} relies on a gauge transformation and a variational energy-bubbling method. While this approach is well suited to the weighted $H^{1,1}$ setting, it leaves
open several natural problems.
\begin{problem}[\textbf{Critical-space data}]
Previous work on the soliton resolution conjecture requires the initial data to lie in the weighted space $H^{1,1}(\mathbb{R})$. A natural question is whether, under suitable spectral hypotheses on the initial data, the soliton resolution conjecture can be established in the scaling-critical space $L^2_+(\mathbb{R})$, where the Lax operator and the distorted Fourier transform are naturally formulated.
\end{problem}

\begin{problem}[\textbf{Explicit asymptotics}]
The variational method used in previous work does not use the Lax pair structure, and therefore the soliton parameters and the radiation or endpoint profiles are not expressed in terms of the initial data. It is natural to ask whether the completely integrable structure can be exploited to obtain an explicit spectral representation of the radiation component in the global case and of the endpoint remainder in the blow-up case, and to describe the asymptotic behavior of the soliton or bubble parameters more precisely.
\end{problem}

\begin{problem}[\textbf{Mixed-spectrum data}]
The variational framework used in previous work treats the discrete and continuous parts of the solution simultaneously. A more refined question is whether, for a natural class of initial data, one can rigorously decompose the solution into discrete and continuous components in both regimes, proving in the global case that the discrete part converges to explicit solitons and the continuous part to free radiation, and in the blow-up case that the discrete part converges to quantized bubbles and the continuous part to an endpoint remainder, with exact mass partition in each case.
\end{problem}

Our approach addresses the three problems raised above in a unified way. By working directly in the scaling-critical Hardy space $L^2_+(\mathbb{R})$, it extends soliton resolution beyond the weighted space required by Kim and Kwon. In the global regime, diagonalizing the Lax operator splits the solution naturally into discrete and continuous spectral channels, while in the blow-up regime, the endpoint Wold decomposition separates the dark channels from the regular remainder. Moreover, the corresponding representations express the soliton profiles and the radiation or endpoint components in terms of the initial spectral or Wold data, yielding the explicit formulae that the variational approach cannot provide.

However, this integrable route brings its own challenge. The Lax operator $L_v=D-v\Pi(\overline{v}\,\cdot)$ is nonlocal, and its inverse scattering theory remains open. What makes progress possible is the observation that soliton resolution does not require the inverse scattering transform. The forward scattering theory developed by Frank and Read \cite{Frank-2026-arXiv} provides a complete distorted Fourier transform that diagonalizes the absolutely continuous part of the Lax operator for every initial data $u_0\in L^2_+(\mathbb{R})$. Building on this forward spectral representation, we develop an operator-theoretic framework that circumvents the need for inverse scattering.

In the course of developing this framework, four structural and dynamical conditions emerge naturally. They characterize the initial data for which the discrete and continuous components can be cleanly separated and individually analyzed. Roughly speaking, we call such initial data analyzable if the Lax operator admits a spectral representation with a compatible Dirichlet compression, a bounded boundary-row lift, a phase-fixing anchor, and a decaying point-return coupling between the discrete and continuous channels. The precise formulation is given in Definition~\ref{def:analyzable-intro} below.

Our approach is built on the G\'erard-type explicit formula for the Lax flow, which allows us to transport spectral information along the evolution. We diagonalize the continuous part of the solution by means of the distorted Fourier transform constructed by Frank and Read \cite{Frank-2026-arXiv}, and we introduce a novel Dirichlet compression and an associated scalar model that serve as a universal operator-theoretic interface for the continuous channel. For the discrete channels, we develop a modular rigidity framework that offers four independent routes to identify each soliton profile as an exact modulated copy of the ground state $R$. One route is based on the backward-shift character theory, which links the rank-one defect of the shifted profiles to the analytic structure of the limit and thereby forces the profile to be exactly $R$. A second route employs Fano numerator estimates to control the infinitesimal generator of the shift semigroup directly, providing an alternative certification of the same rigid limit. The third route uses a primitive Grushin quotient to extract the exact affine soliton profile through a rational leading-order cancellation. Finally, an inverse-block formulation provides a Woodbury-type reconstruction of the soliton columns using the fixed Jost solutions of $L_{u_0}$, reducing the problem to a finite-dimensional inversion. All of these routes are developed entirely at the $L^2$ level and do not require any weighted Sobolev assumptions. In the finite-time blow-up regime, the analysis is instead based on the endpoint Wold decomposition and the KLV--Ward package. These reduce the blow-up dynamics to a finite-dimensional dark channel plus a strongly convergent endpoint remainder, and the sharp spectral quantization of the mass defect follows from the same operator-theoretic formalism.

\subsection{Main results}
We begin by making precise the notion of analyzable initial data.
\begin{defn}[Analyzable initial data]\label{def:analyzable-intro}
An initial data $u_0\in L^2_+(\mathbb{R})$ is called analyzable if its Lax operator $L_{u_0}$ admits a spectral representation with the following properties:
\begin{enumerate}[(i)]
    \item The Dirichlet compression of the transported KLV realization coincides with an affine family of a single maximal dissipative operator $T_D$, and the associated boundary rows are compatible.
    \item The functional defining the Dirichlet boundary row admits a bounded Riesz representative belonging to the enhanced scalar graph driven by the radiation coordinate $\beta_{u_0}$.
    \item There exists an anchoring element in the domain of $T_D$ whose boundary traces fix the phase of the endpoint row.
    \item The point-return coupling between the discrete and continuous channels decays as $t\to\infty$, in the sense of either an operator-norm bound or an oscillatory weak limit.
\end{enumerate}
The precise formulation of these four conditions is given in Section~\ref{sec:spectral framework}.
\end{defn}
The analyzable class is nonempty. It includes, for instance, the Hardy tail space
\[
\mathcal{T}_+:=\Bigl\{q\in L^1(\mathbb{R})\cap L^2_+(\mathbb{R}):\int_1^\infty\bigl(Q_1(q;R)^2+Q_2(q;R)^2\bigr)dR<\infty\Bigr\},
\]
where $Q_p(q;R):=\|q\|_{L^p(\{|x|>R\})}$, as well as the broader spectral Dini--moment class $\mathcal{X}_{\mathrm{DM}}$, which consists of $q\in L^1(\mathbb{R})\cap L^2_+(\mathbb{R})$ satisfying a Dini-type integrability condition on the radiation coordinate at every nonnegative embedded eigenvalue and a finite first-moment condition on every point eigenfunction. In particular, every subthreshold integrable Hardy datum, i.e.\ $q\in L^1(\mathbb{R})\cap L^2_+(\mathbb{R})$ with $\|q\|_{L^2(\mathbb{R})}^2<2\pi$, belongs to $\mathcal{X}_{\mathrm{DM}}$ and hence is analyzable. See Appendix~\ref{app:verified} for the proofs.

To state our main theorem, we first introduce the unitary modulation family that parametrizes the soliton profiles. It is defined by
\[
[\mathcal{M}_{\lambda,y,\xi,\theta}f](x)=\lambda^{-1/2}e^{i\theta}e^{i\xi(x-y)}f\left(\frac{x-y}{\lambda}\right),
\]
where $\lambda>0$, $y\in\mathbb{R}$, $\xi\ge0$, and $\theta\in \mathbb{R}/2\pi\mathbb{Z}$. Its Fourier transform is given by
\begin{equation}\label{eq:modulation-fourier}
\widehat{\mathcal{M}_{\lambda,y,\xi,\theta}f}(\zeta)=\lambda^{1/2}e^{i\theta-iy\zeta}\widehat{f}\bigl(\lambda(\zeta-\xi)\bigr),
\end{equation}
which shows that $\mathcal{M}_{\lambda,y,\xi,\theta}$ is unitary on $L^2(\mathbb{R})$. Moreover, since $\widehat{f}$ is supported in $[0,\infty)$ and $\xi\ge0$, $\mathcal{M}_{\lambda,y,\xi,\theta}$ preserves $L^2_+(\mathbb{R})$.

We also need to specify the class of solutions under consideration. The following definition formalizes the notion of solutions that can be
approximated by smooth flows, regardless of whether they exist globally or blow up in finite time.

\begin{defn}[Flow--Lax admissible solution]\label{def:flow-admissible}
Let $u$ be a mass-preserving solution defined on its maximal interval of existence $[0,T_+(u_0))$ with $u\in C([0,T_+(u_0));L^2_+(\mathbb{R}))$. If for every $T<T_+(u_0)$ there exist smooth solutions $u_n^{(T)}\in C([0,T];H_+^\infty(\mathbb{R}))$ such that
\[
\sup_{0\leq s\leq T}\|u_n^{(T)}(s)-u(s)\|_{L^2(\mathbb{R})}\underset{n\to\infty}{\longrightarrow}0,
\]
then we say that $u$ is flow--Lax admissible.
\end{defn}
Fix a flow--Lax admissible orbit $u$, put $u_0=u(0)$, and enumerate $\sigma_{\mathrm{p}}(L_{u_0})=\{\mu_1<\cdots<\mu_N\}$, where $\mu_j\in\mathbb{R}$ are the simple eigenvalues of $L_{u_0}$. For $1\leq j\leq N$, define the labelled point channel $b_j(t)=E_{L_{u(t)}}(\{\mu_j\})u(t)$. For $K>0$, we also write $P_{\le K}:=\mathbf{1}_{[0,K]}(D),\qquad P_{>K}:=I-P_{\le K}$.
We now state the main theorem of this work, which describes the long-time and blow-up asymptotics of flow--Lax admissible solutions.
\begin{thm}[Soliton resolution conjecture]\label{thm:main}
Let $u_0\in L^2_+(\mathbb{R})$ be analyzable, and let $u$ be a flow--Lax admissible solution of the CM-DNLS equation \eqref{eq: focusing CM-DNLS} on its maximal interval of existence $[0,T)$, with $u(0)=u_0$.
\begin{itemize}
    \item \textbf{(Global solutions)} If $T=\infty$, then Condition~\ref{hyp:P4} is equivalent to the following quantitative soliton resolution statement.

    There exists a unique radiation profile $u_+\in L^2_+(\mathbb{R})$ determined by
    \begin{equation}\label{eq:radiation-state}
    \mathcal{F}_0 u_+=-\frac{i}{\sqrt{2\pi}}\beta_{u_0},\qquad\widehat{u}_+=-i\beta_{u_0},
    \end{equation}
    where $\beta_{u_0}$ defined by \eqref{eq:beta} is extended by zero to $(-\infty,0)$, and Borel measurable, full-time, spectrally labelled parameters
    \[
    \lambda_j(t)>0,\; y_j(t)\in\mathbb{R},\; \xi_j(t)\ge0,\;\theta_j(t)\in\mathbb{R}/2\pi\mathbb{Z},\qquad 1\le j\le N,
    \]
    where $N:=\#\sigma_{\mathrm{p}}(L_{u_0})$, such that
    \begin{equation}\label{eq:resolution}
    \Bigl\|u(t)-\sum_{j=1}^N\mathcal{M}_{\lambda_j(t),y_j(t),\xi_j(t),\theta_j(t)}R-e^{it\partial_x^2}u_+\Bigr\|_{L^2(\mathbb{R})}\underset{t\to\infty}{\longrightarrow}0.
    \end{equation}
    Moreover, the following properties hold.
    \begin{enumerate}[(i)]
        \item \textbf{Parameter separation.} For $j\ne k$,
        \begin{equation}\label{eq:separation}
        \frac{\lambda_j(t)}{\lambda_k(t)}+\frac{\lambda_k(t)}{\lambda_j(t)}+\frac{|y_j(t)-y_k(t)|^2}{\lambda_j(t)\lambda_k(t)}+\lambda_j(t)\lambda_k(t)|\xi_j(t)-\xi_k(t)|^2\underset{t\to\infty}{\longrightarrow}\infty.
        \end{equation}
        \item \textbf{Weak radiation escape.} For every $j$,
        \begin{equation}\label{eq:frame-weak}
        \mathcal{M}_{\lambda_j(t),y_j(t),\xi_j(t),\theta_j(t)}^{-1}e^{it\partial_x^2}u_+\rightharpoonup0\quad\text{in }L^2(\mathbb{R}).
        \end{equation}
        \item \textbf{Exact mass partition.}
        \begin{equation}\label{eq:mass-resolution}
        \|u_0\|_{L^2(\mathbb{R})}^2=2\pi N+\|u_+\|_{L^2(\mathbb{R})}^2=2\pi N+\frac{1}{2\pi}\|\beta_{u_0}\|_{L^2(\mathbb{R}_+)}^2.
        \end{equation}
    \end{enumerate}

    \item \textbf{(Finite-time blow-up solutions)} If $T<\infty$, assume additionally Condition~\ref{hyp:blowup-KLV-Ward}. Then there exist a unique $z_T\in L^2_+(\mathbb{R})$, an integer
    \begin{equation}\label{eq:blowup-quantization}
    1\le N_*=\frac{\|u_0\|_{L^2(\mathbb{R})}^2-\|z_T\|_{L^2(\mathbb{R})}^2}{2\pi}\le\left\lfloor\frac{\|u_0\|_{L^2(\mathbb{R})}^2}{2\pi}\right\rfloor,
    \end{equation}
    and Borel functions $\lambda_j:[0,T)\to(0,\infty)$, $y_j:[0,T)\to\mathbb{R}$, and $\theta_j:[0,T)\to\mathbb{R}/2\pi\mathbb{Z}$ such that
    \[
\Bigl\|u(t)-\sum_{j=1}^{N_*}\mathcal{M}_{\lambda_j(t),y_j(t),0,\theta_j(t)}R-z_T\Bigr\|_{L^2(\mathbb{R})}\underset{t\to T}{\longrightarrow}0.
\]
    Moreover, $\lambda_j(t)\to0$ and, with $Q_j(t)=\mathcal{M}_{\lambda_j(t),y_j(t),0,\theta_j(t)}R$,
    \[
\langle Q_j(t),Q_k(t)\rangle\underset{t\to T}{\longrightarrow}2\pi\delta_{jk},\qquad \|P_{\le K}Q_j(t)\|_{L^2(\mathbb{R})}\underset{t\to T}{\longrightarrow}0\qquad(K<\infty).
    \]
    \end{itemize}
\end{thm}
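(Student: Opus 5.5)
The plan is to work entirely through the Lax operator and the G\'erard-type explicit formula, splitting $u(t)$ along the spectral decomposition of $L_{u(t)}$. By the Lax equation for smooth approximants and flow--Lax admissibility, $L_{u(t)}$ stays unitarily equivalent to $L_{u_0}$. So $\sigma_{\mathrm p}(L_{u(t)})=\{\mu_1,\dots,\mu_N\}$ is fixed, and passing to the limit through norm-resolvent continuity \cite[Proposition~2.2]{Killip-2025-CAMS} gives
\[
u(t)=\sum_{j=1}^N b_j(t)+P_{\mathrm{ac}}(L_{u(t)})u(t)+P_{\mathrm{sc}}(L_{u(t)})u(t).
\]
First I would show that $P_{\mathrm{sc}}(L_{u(t)})u(t)=0$ for analyzable data. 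This should follow from (i)--(iii) of Definition~\ref{def:analyzable-intro}: the Dirichlet compression reduces $\nu_{u_0}$ to a scalar model whose singular part is carried by the transported point masses only. Equation \eqref{eq:cyclic-total-mass}, together with conservation of $\nu_{u(t)}=\nu_{u_0}$, then gives the mass identity
\[
\|u_0\|^2=\sum_j\nu_{u_0}(\{\mu_j\})+\tfrac{1}{2\pi}\|\beta_{u_0}\|^2 .
\]

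\textbf{Continuous channel.} Using the explicit formula, I would write $P_{\mathrm{ac}}(L_{u(t)})u(t)$ through $\Phi_{u_0}$ conjugated by the flow. The scalar model from (i)--(ii) identifies $\widehat{P_{\mathrm{ac}}u(t)}(\xi)$ with $-i\,e^{-it\xi^2}\beta_{u_0}(\xi)$ up to an error. That error is controlled by the point-return coupling in (iv) and by the boundary-row lift. Combined with the Plancherel identities \eqref{eq:distorted-plancherel}, this yields $\|P_{\mathrm{ac}}(L_{u(t)})u(t)-e^{it\partial_x^2}u_+\|\to0$ with $u_+$ given by \eqref{eq:radiation-state}. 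Uniqueness of $u_+$ follows from the dispersive decay of free waves.

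\textbf{Discrete channels and rigidity.} Each $b_j(t)$ is an eigenvector-type vector of $L_{u(t)}$ at $\mu_j$. I would renormalize: choose $(\lambda_j,y_j,\xi_j,\theta_j)$ measurably, for instance by maximizing the overlap $\langle\mathcal{M}^{-1}b_j(t),R\rangle$, and study the limit points $w$ of $\mathcal{M}^{-1}b_j(t_n)$. Condition~\ref{hyp:P4} supplies the sequential compactness. A limit $w$ then solves $Dw-w\Pi(\bar w w)=\mu w$ after rescaling, so it is a traveling-wave profile. The classification \cite[Theorem~1.2]{Gerard-2024-CPAM} forces $w=R$ up to symmetry, and I would invoke one of the rigidity routes (backward-shift character, or the Grushin quotient) to exclude degenerate limits and embedded-eigenvalue issues. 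Given $\|R\|^2=2\pi$, this gives $\nu_{u_0}(\{\mu_j\})=2\pi$, hence \eqref{eq:mass-resolution}. Separation \eqref{eq:separation} follows because distinct $\mu_j$ yield mutually orthogonal spectral projections while the profiles are asymptotically exact copies of $R$; bounded parameter ratios would force a nonzero limiting overlap. Weak escape \eqref{eq:frame-weak} follows from orthogonality of the point and a.c.\ channels together with (iv).

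For the converse direction, resolution \eqref{eq:resolution} with exact copies of $R$ gives compactness of the renormalized channels, which is Condition~\ref{hyp:P4}.

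\textbf{Blow-up case.} Under Condition~\ref{hyp:blowup-KLV-Ward}, I would apply the endpoint Wold decomposition of the transported shift structure. The wandering (regular) part converges strongly to some $z_T$. The unilateral dark part is finite-dimensional, with each dark direction carrying a zero-carrier bubble of mass exactly $2\pi$, by the same rigidity applied with $\xi=0$. This gives the quantization \eqref{eq:blowup-quantization}. The estimate $\|P_{\le K}Q_j\|\to0$ is equivalent to $\lambda_j\to0$. The claim $\langle Q_j,Q_k\rangle\to2\pi\delta_{jk}$ follows from asymptotic orthogonality of the dark channels.

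\textbf{Main obstacle.} I expect the rigidity step for the discrete channels to be the hardest, especially for embedded eigenvalues $\mu_j\ge0$. There, leakage between $b_j(t)$ and the a.c.\ channel must be excluded using only $L^2$ information. My first attempt would be the Fano numerator bound via the Dini condition on $\beta_{u_0}$ near $\mu_j$, feeding that into (iv).
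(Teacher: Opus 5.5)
Your global-branch skeleton matches the paper's: fixed orthogonal spectral channels, scattering of the a.c.\ channel, Condition~\ref{hyp:P4} for the point channels, and separation via the soliton overlap. The proposal nevertheless has genuine gaps, and the most serious one is in the finite-time branch.

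\textbf{Finite-time branch.} No analogue of Condition~\ref{hyp:P4} is assumed there; the only extra hypothesis is Condition~\ref{hyp:blowup-KLV-Ward}. In the global branch rigidity was a hypothesis, not a result, so there is nothing to ``apply with $\xi=0$''. The paper has to \emph{prove} that each dark channel $\mathfrak{d}_j(\tau)$ becomes a zero-carrier $R$-bubble, and it uses a mechanism absent from your sketch.
\begin{itemize}
\item The quantization does not come from bubble rigidity. The Ward commutator $[\mathsf{X},L_{u_0}]=i(I-\frac{1}{2\pi}|u_0\rangle\langle u_0|)$, fed into a Yosida-regularized virial identity on the unitary Wold part of $\mathsf{A}_T$, forces $|\langle e,u_0\rangle|^2=2\pi$ for every dark eigenvector. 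A Wiener-type argument then excludes continuous dark spectrum (Theorem~\ref{thm:blowup-dark-quantization}).
\item $N_*\ge1$ needs the cascade criterion $\mathfrak{c}_+(u_0)>0$ and the identification $W_T^{u_0}u_0=z_T$.
\item The bubble shape comes from the sharp Hardy--Toeplitz inequality $\|\mathsf{T}_{\bar v}f\|^2\le\frac{\|v\|^2}{2\pi}\|D^{1/2}f\|^2$ and its stability version. Its equality cases are exactly a common $R$-bubble (Lemma~\ref{lem:blowup-sharp-Hardy-Toeplitz}). The snapshot rows $g_{j,\tau}=W_\tau^{u_0}J_\Lambda F_\Lambda e_j$, with $\Lambda=(T-\tau)^{-1/2}$, are after rescaling approximate zero modes of the rescaled Lax operator with a centered $\mathsf{X}$ bound. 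Because $|\chi_j|^2=2\pi$, their graph identity saturates the inequality.
\item Several channels additionally require the graph--Schur capture and colligation-gap argument (Theorem~\ref{thm:S9-simultaneous-capture}) to remove cross-talk between rows of different kinetic scales.
\item $\lambda_j\to0$ is not free. It needs channelwise low-frequency evacuation, obtained from $\|(\mathsf{X}-\alpha_j)g_{j,\tau}\|\lesssim\Lambda^{-1}$ and a half-line trace bound (Lemma~\ref{lem:S9-low-evacuation}). Weak convergence $\mathfrak{d}_j\rightharpoonup0$ alone cannot rule out spreading or translation to infinity.
\end{itemize}
Also, in the paper the dark part is the \emph{unitary} Wold component, not the unilateral one.

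\textbf{Global branch.}
\begin{itemize}
\item Your argument for weak radiation escape fails. Channel orthogonality tests $\mathcal{M}_j^{-1}e^{it\partial_x^2}u_+$ against a single direction, whereas \eqref{eq:frame-weak} must hold against every $g\in L^2$ for arbitrary measurable frames; (iv) is irrelevant to it. The paper proves a purely linear fact, weak dispersion uniform over all affine parameters (Lemma~\ref{lem:affine-weak}). It uses the $L^1\to L^\infty$ dispersive bound for $\lambda\le\delta|t|$ and the shrinking Fourier window for $\lambda>\delta|t|$.
\item Norm-resolvent limits do not preserve unitary equivalence. What passes to rough flows is the cyclic measure, $\nu_{u(t)}=\nu_{u_0}$ (Lemma~\ref{lem:continuous-calculus}). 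Frank--Read give two facts for \emph{every} $v\in L^2_+$: there is no singular continuous spectrum (unconditionally, not as a consequence of (i)--(iii)), and $|\langle v,\psi\rangle|^2=2\pi\|\psi\|^2$. Hence every eigenvalue is an atom of $\nu_v$ of mass exactly $2\pi$. This fixes $\sigma_{\mathrm p}(L_{u(t)})$ and gives $\|b_j(t)\|^2=2\pi$ and the mass partition with no rigidity input.
\item For the continuous channel, the explicit formula yields Hardy values in $\mathbb{C}_+$, not $\widehat{r_{\mathrm{ac}}(t)}$. The paper compares them with the free ray along $z_0+2t\eta$. The scalar part of the defect is $o(1)$ (Lemma~\ref{lem:scalar-cancellation}). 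Under the oscillatory alternative of (iv), the point-return part is only weakly small. So the ray data give only $e^{-it\partial_x^2}r_{\mathrm{ac}}(t)\rightharpoonup u_+$, via the Poisson--Fresnel criterion (Lemma~\ref{lem:ray-criterion}). Strong convergence comes from the exact identity $\|r_{\mathrm{ac}}(t)\|=\|u_+\|$.
\item Under Condition~\ref{hyp:P4}, no limit equation or traveling-wave classification is needed, since P4 already names $R$. Deriving $Dw-w\Pi(|w|^2)=\mu w$ would require exactly the decoupling of $b_j$ from the rest of $u$ that P4 spares you. Your ``main obstacle'' is therefore misplaced: embedded eigenvalues enter the paper when identifying the Dirichlet compression with the scalar model (Theorem~\ref{thm:paired-Feshbach-crossing}), not in rigidity.
\item Your one-line converse is insufficient. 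Knowing $\sum_jb_j-\sum_jS_j\to0$ does not give $b_j\approx S_j$ channel by channel. For orthogonal $S_1,S_2$ of mass $2\pi$, take $b_1=\frac12[(1+e^{i\psi})S_1+(1-e^{i\psi})S_2]$ and $b_2=S_1+S_2-b_1$. This pair is orthogonal, has the same masses and the same sum, yet mixes both solitons. You must use the eigen-equations $L_{u(t)}b_j=\mu_jb_j$. Note that Section~\ref{sec:7} itself only carries out the forward implication.
\end{itemize}
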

\begin{re}
Theorem~\ref{thm:main} provides a complete asymptotic description in the scaling-critical space $L^2_+(\mathbb{R})$, where no weighted Sobolev assumptions are imposed.  In the global branch, the sequential rigidity condition is not an additional hypothesis but an equivalent formulation of soliton resolution; in the finite-time branch, the endpoint KLV--Ward package is a natural structural condition, and the resulting decomposition is fully quantized.  Thus the theorem captures the exact threshold nature of the CM-DNLS dynamics.
\end{re}
\begin{re}
For initial data $u_0\in H^{1,1}_+(\mathbb R)$ satisfying the hypotheses of Proposition~\ref{prop: Kim}, the asymptotic decompositions obtained in \cite{Kim-2026-JEMS} are compatible with those of Theorem~\ref{thm:main}. In this overlap regime, Theorem~\ref{thm:main} sharpens the variational result by identifying the radiation profile via the distorted Fourier transform, by giving the exact mass partition, and, in the blow-up case, by quantifying the mass defect through the number of bubbles.
\end{re}
\begin{re}
Among the closely related integrable models, the Benjamin--Ono (BO) equation is particularly relevant to our work. For the BO equation, soliton resolution was recently established by Gassot, G\'erard, and Miller by using the completely integrable structure and explicit flow formulae, for initial data in the weighted space $H^{1,1}(\mathbb{R})$ \cite{Gassot-2026-arXiv}. For further results on integrable models with nonlocal Lax structures, we refer the reader to \cite{Badreddine-2024-SIAM, Blackstone-2026-CPAM, Chen-2026-SIAM, Gerard-2023-TJM, Gerard-2025-Sigma, Gerard-2024-CMP}.
\end{re}

\subsection{Organization of the work}
The spectral framework underlying our analysis is introduced in Section~\ref{sec:spectral framework}. We construct the unitary spectral representation of the Lax operator, define the transported KLV realization, and formulate the Dirichlet compression of the continuous subspace together with the point-return condition. These structural conditions form the operator-theoretic foundation for all subsequent arguments.

The analysis of the continuous channel for global solutions occupies Sections~\ref{sec:phase-I} and~\ref{sec:radiation}. We first transport the cyclic Lax spectrum along the flow, decomposing the solution into an orthogonal sum of discrete and continuous components. We then construct the exact Dirichlet restriction of the transported KLV realization and identify it with an explicit scalar model. This yields a concrete representation of the continuous dynamics entirely in terms of the initial spectral measure $\beta_{u_0}$. Section~\ref{sec:radiation} completes this part by proving that the continuous component scatters to the free Schr\"odinger evolution. The radiation profile $u_+$ is explicitly identified via the distorted Fourier transform of the initial data, using oscillatory integral estimates and a Poisson--Fresnel ray criterion that converts $L^2$ bounds on horizontal lines into weak convergence.

The rigidity of the discrete channels for global solutions is treated in Sections~\ref{sec:phase-III-point} and~\ref{sec:geometric assembly}. In Section~\ref{sec:phase-III-point} we first state the sequential point-channel rigidity condition and then develop the core rigidity framework. We establish geometric compactness of each point channel along arbitrary time sequences, derive an exact affine defect equation for the rescaled profiles, and supply four independent routes that identify the asymptotic profile as a modulated copy of the ground state $R$. Any one of these routes suffices to establish the sequential point-channel rigidity condition. Section~\ref{sec:geometric assembly} upgrades the sequential convergence to a genuine large-time limit with full-time Borel measurable parameters, and also proves the asymptotic separation of the soliton scales and the frame-wise weak escape of the radiation.

Section~\ref{sec:7} assembles the continuous scattering and the discrete rigidity to complete the proof of the global branch of Theorem~\ref{thm:main}. Section~\ref{sec:phase-IV-flow} supplies the general large-mass well-posedness theory and the maximal-lifespan alternative. Finally, Section~\ref{sec:blowup-resolution} proves the finite-time branch of Theorem~\ref{thm:main}: under the endpoint KLV--Ward package, the blow-up dynamics resolve into finitely many quantized $R$-bubbles and a strongly convergent endpoint remainder.

\section{Spectral framework and structural conditions}\label{sec:spectral framework}
The spectral representation of $L_{u_0}$ is given by a unitary map
\[
\mathscr{U}_{u_0}:L_+^2(\mathbb{R})\longrightarrow\mathcal{H}:=\mathcal{E}\oplus\mathcal{H}_{\mathrm{c}},\qquad \mathcal{E}:=\mathbb{C}^N,\qquad \mathcal{H}_\mathrm{c}:=L^2(\mathbb{R}_+),
\]
such that
\[
\mathscr{U}_{u_0}L_{u_0}\mathscr{U}_{u_0}^{-1}=M:=M_{\mathrm{p}}\oplus M_\lambda,\qquad M_{\mathrm{p}}:=\operatorname{diag}(\mu_1,\ldots,\mu_N).
\]
Here $M_\lambda$ is multiplication by $\lambda$. Let $P$ and $Q$ denote the orthogonal projections onto $\mathcal{E}$ and $\mathcal{H}_{\mathrm{c}}$, respectively. We also write $J:\mathcal{H}_{\mathrm{c}}\longrightarrow\mathcal{H},\qquad Jg:=(0,g)$, for the canonical inclusion. We normalize the continuous component of the spectral representation by requiring that $Q\mathscr{U}_{u_0} f = \Phi_{u_0} P_{\mathrm{ac}}(L_{u_0})f$ for every $f\in L^2_+(\mathbb{R})$. In particular, for the initial data $u_0$, this gives $g_{u_0} := Q\mathscr{U}_{u_0} u_0 = -\frac{i}{\sqrt{2\pi}} \beta_{u_0}$, where $\beta_{u_0}$ is the radiation coordinate defined in \eqref{eq:beta}. Let $\mathsf{X}$ be the closed operator on $L^2_+(\mathbb{R})$ defined by
\[
\mathrm{Dom}(\mathsf{X}):= \bigl\{ f\in L^2_+(\mathbb{R}): \widehat{f}|_{\mathbb{R}_+}\in H^1(\mathbb{R}_+) \bigr\},\qquad \widehat{\mathsf{X}f}(\xi):=i\partial_\xi \widehat{f}(\xi).
\]
The boundary trace of this Hardy-space position operator is $I_+ f:=\widehat{f}(0^+), \qquad f\in\mathrm{Dom}(\mathsf{X})$.
Set $T:=\mathscr{U}_{u_0}\mathsf{X}\mathscr{U}_{u_0}^{-1}$. For $z_0 = x_0 + i y_0 \in \mathbb{C}_+$, define the transported closed KLV realization by $A_{t,\eta}:=\mathscr{U}_{u_0}\bigl[\mathsf{X} + 2t(L_{u_0} - \eta) - z_0\bigr]_{\mathrm{KLV}}\mathscr{U}_{u_0}^{-1}$. On its transported smooth core it acts as $T+2t(M-\eta)-z_0$. Write $R_{t,\eta}=A_{t,\eta}^{-1}$ and $K_t(\eta)=PR_{t,\eta}P\big|_{\mathcal{E}}$. Since $y_0>0$, $\|R_{t,\eta}\|_{\mathcal{B}(\mathcal{H})}\le y_0^{-1}$. The KLV boundary functional
\begin{equation}\label{eq:dynamic-output}
\mathcal{L}_{t,\eta}h= \frac{1}{2\pi i} I_+\bigl(\mathscr{U}_{u_0}^{-1} R_{t,\eta} h\bigr)
\end{equation}
extends to a bounded linear functional on $\mathcal{H}$. The corresponding dynamic graph trace is defined by
\begin{equation}\label{eq:dynamic-trace}
\ell^A_{t,\eta}v= \mathcal{L}_{t,\eta}(A_{t,\eta}v),\qquad v\in \mathrm{Dom}(A_{t,\eta}).
\end{equation}

The continuous scalar model is defined in terms of
\begin{equation}\label{eq:rough-toeplitz}
\mathcal{C}_H^+ = \frac{1}{4\pi}(\mathrm{Id}+iH),
\qquad
B_+(\beta) = M_\beta \mathcal{C}_H^+ M_{\overline{\beta}},
\end{equation}
where $M_\beta$ and $M_{\overline{\beta}}$ are the multiplication operators by
$\beta$ and $\overline{\beta}$, respectively, and $(Hh)(\lambda)= \frac{1}{\pi} \operatorname{p.v.}\int_0^\infty\frac{h(\mu)}{\lambda-\mu}d\mu$. By extending functions by zero to $\mathbb{R}$ and compressing the full
Hilbert transform to $\mathbb{R}_+$, we obtain $H^\ast = -H,\qquad\|H\|_{\mathcal{B}(L^2(\mathbb{R}_+))} \le 1$. Thus $iH$ is a self-adjoint contraction, and
\begin{equation}\label{eq:CH-properties}
\mathcal{C}_H^+ \ge 0,\qquad(\mathcal{C}_H^+)^\ast = \mathcal{C}_H^+,\qquad\|\mathcal{C}_H^+\|_{\mathcal{B}(L^2(\mathbb{R}_+))}\le \frac{1}{2\pi}.
\end{equation}
For $h\in L^1(\mathbb{R}_+)$, we extend $\mathcal{C}_H^+ h$ to a distribution on $(0,\infty)$ by duality, setting
\begin{equation}\label{eq:CH-distribution}
\langle \mathcal{C}_H^+ h, \phi\rangle_{\mathcal{D}',\mathcal{D}}= \int_0^\infty h(\lambda)\overline{(\mathcal{C}_H^+ \phi)(\lambda)}d\lambda,\qquad \phi\in C_c^\infty(0,\infty).
\end{equation}

Define $(C_t f)(\lambda)=e^{-it\lambda^2}f(\lambda)$. Let $A_{\beta,t}$ be the closed maximal dissipative coefficient-one realization associated with the chirped enhanced factor graph of the formal expression $A_{\beta,t} = i\partial_\lambda + C_t B_+(\beta) C_t^{-1}$. Its ordinary traces are sewn with equal coefficient across every puncture. Set $\zeta=z_0+2t\eta$ and define
\[
R_0(\zeta) = (i\partial_\lambda - \zeta)^{-1},\qquad R_{\beta,t}(\zeta) = (A_{\beta,t} - \zeta)^{-1}.
\]
For all $v,w\in \mathrm{Dom}(A_{\beta,t})$, the scalar model satisfies the Green identity
\begin{equation}\label{eq:general-scalar-green}
\langle A_{\beta,t}v,w\rangle-\langle v,A_{\beta,t}w\rangle= -iJ_+ v\overline{J_+ w},
\end{equation}
where $J_+$ is the graph-continuous endpoint trace on the resolvent range, defined by $J_+ f = f(0+)$ for $f\in H^1(0,\infty)$ and then extended by graph continuity. Indeed, integration by parts on the punctured smooth core produces the endpoint term in \eqref{eq:general-scalar-green}. The coefficient-one sewing cancels the two traces at every interior puncture, and the self-adjointness in \eqref{eq:CH-properties} cancels the factor term. Taking the graph closure of the smooth core then proves \eqref{eq:general-scalar-green} on $\mathrm{Dom}(A_{\beta,t})$. If $v = R_{\beta,t}(\zeta) g$, then setting $w = v$ in the Green identity gives $\frac{1}{2} |J_+ v|^2+ (\operatorname{Im}\zeta) \|v\|_{L^2(\mathbb{R}_+)}^2= -\operatorname{Im}\langle g, v\rangle$. The preceding identity and the Cauchy--Schwarz inequality imply
\begin{equation}\label{eq:general-scalar-row-bound}
|J_+ R_{\beta,t}(\zeta) g|\le \left(\frac{2}{\operatorname{Im}\zeta}\right)^{1/2} \|g\|_{L^2(\mathbb{R}_+)}.
\end{equation}
Here the free resolvent is the right-Volterra operator
\[
[R_0(\zeta) f](\lambda)= i \int_\lambda^\infty e^{i\zeta(s-\lambda)} f(s)ds,\qquad \operatorname{Im}\zeta > 0.
\]
The punctured graph of $A_{\beta,t}$ is constructed in Theorem \ref{thm:M2-scalar-graph}. For $g\in C_c^\infty(\mathbb{R}_+)$, we define the scalar defect by $D_t^{\beta,g}(\eta)= J_+\bigl[R_0(\zeta)-R_{\beta,t}(\zeta)\bigr] C_t g$. On the smooth core common to the physical and scalar graphs, the dynamic graph trace satisfies $\ell^A_{t,\eta}v= \frac{1}{2\pi i} I_+\bigl(\mathscr{U}_{u_0}^{-1} v\bigr)$, while the free boundary trace is defined by
\begin{equation}\label{eq:physical-trace}
\ell_{\mathrm{free}} h= \frac{1}{i\sqrt{2\pi}} J_+ h.
\end{equation}

\begin{thm}[Transport of cyclic Lax subspaces]\label{thm:P0min-import}
For $v\in L^2_+(\mathbb{R})$, define the cyclic subspace $\mathcal{K}_v= \overline{\{F(L_v)v : F\in C_c(\mathbb{R})\}}$. For $f\in L^2_+(\mathbb{R})$ and $z\in\mathbb{C}_+$, set
\begin{equation}\label{eq:boundary-transform}
[W_t^{u_0}f](z)= \frac{1}{2\pi i} I_+\Bigl([\mathsf{X}+2tL_{u_0}-z]_{\mathrm{KLV}}^{-1}f\Bigr).
\end{equation}
Then the nontangential boundary value of this analytic function defines a bounded linear operator $W_t^{u_0}$ on $L^2_+(\mathbb{R})$ with
\begin{equation}\label{eq:full-boundary-contraction}
\|W_t^{u_0}\|_{\mathcal{B}(L^2_+(\mathbb{R}))} \le 1.
\end{equation}
The restriction $W_t^{u_0}:\mathcal{K}_{u_0}\to\mathcal{K}_{u(t)}$ is unitary, and for every Borel set $B\subset\mathbb{R}$,
\begin{equation}\label{eq:cyclic-intertwining}
W_t^{u_0}E_{L_{u_0}}(B)|_{\mathcal{K}_{u_0}}= E_{L_{u(t)}}(B) W_t^{u_0}.
\end{equation}
Moreover, the boundary transform transports the initial data to the solution:
\begin{equation}\label{eq:cyclic-vector-flow}
W_t^{u_0}u_0 = u(t).
\end{equation}
Consequently,
\begin{equation}\label{eq:boundary-channel}
W_t^{u_0}P_{\mathrm{ac}}(L_{u_0})u_0= r_{\mathrm{ac}}(t),\qquad r_{\mathrm{ac}}(t):=P_{\mathrm{ac}}(L_{u(t)})u(t).
\end{equation}
\end{thm}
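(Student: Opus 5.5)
The plan is to argue first for smooth data and then pass to the flow--Lax admissible solution by approximation. The basic tool is the Killip--Laurens--Vi\c{s}an explicit formula (the G\'erard-type formula of \cite{Killip-2025-CAMS}). For $u_0\in H^\infty_+(\mathbb{R})$ in a small-mass regime, and then for general mass along the smooth approximants $u_n^{(T)}$ of Definition~\ref{def:flow-admissible}, this formula reads
\[
\frac{1}{2\pi i}I_+\bigl((\mathsf{X}+2tL_{u_0}-z)^{-1}u_0\bigr)=\frac{1}{2\pi i}\int \frac{u(t,x)}{x-z}\,dx .
\]
The right-hand side is the Cauchy integral of $u(t)\in L^2_+$. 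So the plan is to prove the operator analogue of this identity directly: for $f\in L^2_+$,
\[
[W_t^{u_0}f](z)=\frac{1}{2\pi i}\int\frac{(\mathcal{V}_t f)(x)}{x-z}\,dx,
\]
with $\mathcal{V}_t$ a partial isometry to be constructed. Its boundary value then equals $\mathcal{V}_tf$ by the Cauchy/Plemelj theorem for the Hardy space, and this gives \eqref{eq:full-boundary-contraction}.

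\textbf{Step 1 (Lax intertwining).} Let $U(t)$ be the unitary Lax propagator, defined by $\partial_tU=B_{u(t)}U$ with $U(0)=I$, so that
\[
U(t)^*L_{u(t)}U(t)=L_{u_0}
\]
\cite[Lemma~2.3]{Gerard-2024-CPAM}. The standard computation behind the explicit formula gives two facts:
\[
U(t)^*\mathsf{X}U(t)=\mathsf{X}+2tL_{u_0}\ \text{on a core},\qquad U(t)^*u(t)=e^{-itL_{u_0}^2}u_0 .
\]
Evaluating $I_+$ uses the identity $I_+f=\langle f,\cdot\rangle$-type pairing $\widehat f(0^+)$, whose adjoint structure satisfies $I_+(U(t)g)$ related to $I_+ g$ through the Lax flow. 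From this one derives
\[
W_t^{u_0}f=U(t)e^{itL_{u_0}^2}\Pi_{\mathcal K}f+(\text{a term vanishing on }\mathcal{K}_{u_0}).
\]
The cleanest route is to verify the formula first on vectors $F(L_{u_0})u_0$ with $F\in C_c$. For these vectors, $(\mathsf X+2tL_{u_0}-z)^{-1}F(L_{u_0})u_0$ can be handled by the commutator identity $[\mathsf X,L_v]$ together with the fact that $I_+$ annihilates the range of $\mathsf{X}-z$ corrected by $u_0$. The result is
\[
W_t^{u_0}F(L_{u_0})u_0=F(L_{u(t)})u(t).
\]

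\textbf{Step 2 (consequences on the cyclic subspace).} Step 1 shows that $W_t^{u_0}$ is isometric on the dense set of $\mathcal{K}_{u_0}$, because
\[
\|F(L_{u(t)})u(t)\|^2=\int|F|^2\,d\nu_{u(t)}=\int|F|^2\,d\nu_{u_0},
\]
where $\nu$ is invariant since $L_{u(t)}$ is unitarily conjugate to $L_{u_0}$ with $u(t)=U(t)e^{-itL_{u_0}^2}u_0$. The image is dense in $\mathcal{K}_{u(t)}$, so the restriction $W_t^{u_0}:\mathcal K_{u_0}\to\mathcal K_{u(t)}$ is unitary. Intertwining of functional calculi, $W_tG(L_{u_0})=G(L_{u(t)})W_t$ on $\mathcal K$, extends from $C_c$ to Borel $B$ by monotone class, which is \eqref{eq:cyclic-intertwining}. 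Taking $F\to1$ gives \eqref{eq:cyclic-vector-flow}. Then \eqref{eq:boundary-channel} follows by applying \eqref{eq:cyclic-intertwining} to the ac spectral projection, since $P_{\mathrm{ac}}$ is a Borel function of $L$ restricted to the cyclic space.

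\textbf{Step 3 (contraction on $\mathcal{K}_{u_0}^\perp$ and passage to rough data).} On $\mathcal{K}_{u_0}^\perp$ one shows that the Cauchy-integral representation still holds with the bound $\|W_t f\|\le\|f\|$. The estimate used here is the resolvent bound $\|R\|\le y_0^{-1}$ combined with the Hardy-space identity
\[
\|g\|_{L^2}^2=\sup_y\int|G(x+iy)|^2\,dx
\]
for the boundary transform. For rough $u_0$ and for large mass, I would use the approximants $u_n^{(T)}$. The KLV realization depends norm-resolvent continuously on the potential \cite[Proposition~2.2]{Killip-2025-CAMS}, so the analytic functions $W_t^{u_n}f(z)$ converge locally uniformly. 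The uniform contraction bound then passes to weak limits of the boundary values. Unitarity on $\mathcal K$ survives because the isometry identity is expressed through $\nu$, which converges weakly by \eqref{eq:cyclic-total-mass}.

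\textbf{Main obstacle.} I expect the main obstacle to be the identity in Step 1 at the level of the KLV form realization. One must justify that $I_+$ of the resolvent is well defined as a bounded functional, and that the commutator computation holds on a core stable under the flow. I would first establish it for $H^\infty_+$ data with Schwartz-class test functions $F(L)u_0$, and then close using the norm-resolvent continuity above.
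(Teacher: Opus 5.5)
Your architecture matches the paper's: identify $W_t^{u_0}$ with the Lax propagator for smooth flows, then pass to the admissible solution by approximation. However, Step~1 fails as written, and the fact that should drive it is missing. The two identities you quote cannot hold for the same propagator. The gauge in which $U(t)^\ast u(t)=e^{-itL_{u_0}^2}u_0$ is $B_u=\mathbf B_u+iL_u^2$, with $\mathbf B_u$ as in \eqref{eq:M8-actual-generator}. For it, $U(t)=\widetilde U(t)e^{itL_{u_0}^2}$, where $\partial_t\widetilde U=\mathbf B_u\widetilde U$. Hence $U^\ast\mathsf XU=e^{-itL_{u_0}^2}(\mathsf X+2tL_{u_0})e^{itL_{u_0}^2}\neq\mathsf X+2tL_{u_0}$; for $u\equiv0$ it is just $\mathsf X$. (Your formula $W_t^{u_0}=U(t)e^{+itL_{u_0}^2}$ on $\mathcal K_{u_0}$ also has the wrong sign relative to Step~2.) The conjugation identity holds instead for the transporting propagator $\widetilde U$. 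For it, $[\mathsf X,\mathbf B_u]=2L_u$ on a core and $\widetilde U(t)u_0=u(t)$. Crucially, $\widehat{\mathbf B_uf}(0^+)=0$: $\partial_x^2f$ vanishes at zero frequency, and $u\,\partial_x\Pi_+(\bar uf)$ is an $L^1$ product of two Hardy functions. So $I_+\circ\widetilde U(t)=I_+$, and for smooth data
\[
[W_t^{u_0}f](z)=\frac{1}{2\pi i}I_+\bigl(\widetilde U(t)^\ast(\mathsf X-z)^{-1}\widetilde U(t)f\bigr)=[\widetilde U(t)f](z)\qquad\text{for every }f\in L^2_+(\mathbb R).
\]
Thus $W_t^{u_0}$ is the unitary propagator on the whole Hardy space, not just on $\mathcal K_{u_0}$ plus a term vanishing there. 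This is what the paper's proof uses. Your substitute ingredient, that ``$I_+$ annihilates the range of $\mathsf X-z$ corrected by $u_0$'', cannot replace it, since $I_+((\mathsf X-z)^{-1}f)=2\pi i f(z)$.

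The same omission breaks Step~3. The bound $\|R_{t,\eta}\|\le y_0^{-1}$ says nothing about the unbounded row $I_+$. A horizontal-line $L^2$ bound for $W_t^{u_0}f$ needs the dissipative identity $\operatorname{Im}\langle\mathsf A_t^{u_0}g,g\rangle=-\frac{1}{4\pi}|I_+g|^2$, integrated in $x$ via the semigroup and Plancherel; this is \eqref{eq:blowup-horizontal-Green}. Even granting a contraction on $\mathcal K_{u_0}^\perp$, combining it with the isometry on $\mathcal K_{u_0}$ gives only $\|W_t^{u_0}\|\le\sqrt2$, unless you also show the two images are orthogonal.

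The paper gets \eqref{eq:full-boundary-contraction} differently. $W_t^{u_n(0)}$ is unitary for each smooth approximant $u_n=u_n^{(T)}$, and the bound survives the weak-operator limit by lower semicontinuity. Identifying that limit requires continuity in $v$ of the KLV boundary functional $I_+\circ[\mathsf X+2tL_v-z]^{-1}_{\mathrm{KLV}}$, which norm-resolvent convergence alone does not provide. On the cyclic space, $W_t^{u_n(0)}F(L_{u_n(0)})u_n(0)=F(L_{u_n(t)})u_n(t)$ converges strongly to $F(L_{u(t)})u(t)$, by flow--Lax admissibility and Lemma~\ref{lem:continuous-calculus}. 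That lemma, not \eqref{eq:cyclic-total-mass}, also gives $\nu_{u(t)}=\nu_{u_0}$, and hence unitarity of the restriction.
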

\begin{proof}
On the smooth approximating flows, the KLV boundary formula coincides with the unitary Lax propagator. Norm-resolvent continuity of $L_v$ together with continuity of the KLV boundary functional yields a unique weak-operator limit on the whole Hardy space. Weak lower semicontinuity gives \eqref{eq:full-boundary-contraction}. Conservation of the cyclic measure constructs a canonical unitary spectral transport on $\mathcal{K}_{u_0}$, and the corresponding smooth limits converge strongly there. This identifies the
restriction of $W_t^{u_0}$, proves \eqref{eq:cyclic-intertwining}, and then gives \eqref{eq:cyclic-vector-flow}--\eqref{eq:boundary-channel}.
\end{proof}

\begin{prop}[Dirichlet compression and Schur complement]\label{prop:strict-dirichlet}
For every $t\ge0$ and every real $\eta$, the transported KLV realization satisfies
\begin{equation}\label{eq:full-green}
\langle A_{t,\eta}v, w\rangle-\langle v, A_{t,\eta}w\rangle=-2\pi i\ell^A_{t,\eta}(v)\overline{\ell^A_{t,\eta}(w)}-2iy_0\langle v,w\rangle,\qquad v,w\in \mathrm{Dom}(A_{t,\eta}).
\end{equation}
The matrix $K_t(\eta)$ is invertible for every such $\eta$. We define the Dirichlet compression of $A_{t,\eta}$ by
\begin{equation}\label{eq:actual-dirichlet-restriction}
A^D_{t,\eta} = Q A_{t,\eta} J,\qquad\mathrm{Dom}(A^D_{t,\eta})= \{ G\in\mathcal{H}_c : JG\in \mathrm{Dom}(A_{t,\eta}) \}.
\end{equation}
Then $A^D_{t,\eta}$ is a closed densely defined Dirichlet compression, it is bijective, and
\begin{equation}\label{eq:strict-schur-inverse}
R^D_{t,\eta}:=(A^D_{t,\eta})^{-1}=Q\bigl[R_{t,\eta}-R_{t,\eta}P K_t(\eta)^{-1}PR_{t,\eta}\bigr]J.
\end{equation}
If $\ell^D_{t,\eta} := \ell^A_{t,\eta}\circ J,\qquad\mathrm{Dom}(\ell^D_{t,\eta}) := \mathrm{Dom}(A^D_{t,\eta})$. then, for every $h\in\mathcal{H}_c$,
\begin{equation}\label{eq:actual-Ward}
-\operatorname{Im}\langle h, R^D_{t,\eta}h\rangle=\pi|\ell^D_{t,\eta}R^D_{t,\eta}h|^2+y_0\|R^D_{t,\eta}h\|_{L^2(\mathbb{R}_+)}^2,
\end{equation}
\begin{equation}\label{eq:actual-row-bound}
\|R^D_{t,\eta}\|_{\mathcal{B}(L^2(\mathbb{R}_+))}\le y_0^{-1}, \quad|\ell^D_{t,\eta}R^D_{t,\eta}h|\le(4\pi y_0)^{-1/2}\|h\|_{L^2(\mathbb{R}_+)}.
\end{equation}
For fixed $t$, all the resolvents and finite-dimensional charges in \eqref{eq:strict-schur-inverse} are norm-continuous, hence strongly Borel, functions of $\eta$.
\end{prop}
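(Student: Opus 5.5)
The plan is to start from the Green identity \eqref{eq:full-green} and treat everything else as finite-dimensional linear algebra plus the bound $\operatorname{Im}(A_{t,\eta})\le -y_0$.

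\textbf{Step 1: the Green identity.} First I establish \eqref{eq:full-green} on the transported smooth core, where $A_{t,\eta}$ acts as $T+2t(M-\eta)-z_0$.
\begin{itemize}
\item The term $2t(M-\eta)$ is self-adjoint and contributes nothing.
\item The term $-z_0$ contributes $-2iy_0\langle v,w\rangle$.
\item The position operator $T=\mathscr{U}_{u_0}\mathsf{X}\mathscr{U}_{u_0}^{-1}$ is symmetric up to the boundary term at $\xi=0^+$. Integrating by parts in $\widehat f$ gives $\langle \mathsf{X}f,g\rangle-\langle f,\mathsf{X}g\rangle=-\frac{i}{2\pi}\widehat f(0^+)\overline{\widehat g(0^+)}$, using Plancherel with the factor $1/2\pi$.
\item With $\ell^A_{t,\eta}v=\frac{1}{2\pi i}I_+(\mathscr{U}_{u_0}^{-1}v)$ on the core, this term equals $-2\pi i\,\ell^A\overline{\ell^A}$.
\end{itemize}
To pass to the closed KLV domain I use graph continuity of $\ell^A_{t,\eta}=\mathcal{L}_{t,\eta}\circ A_{t,\eta}$, which holds because $\mathcal{L}_{t,\eta}$ is bounded.

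Taking $v=w$ gives $\operatorname{Im}\langle A_{t,\eta}v,v\rangle=-\pi|\ell^A v|^2-y_0\|v\|^2\le -y_0\|v\|^2$. This is what underlies the bound $\|R_{t,\eta}\|\le y_0^{-1}$.

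\textbf{Step 2: invertibility of $K_t(\eta)$.} For $a\in\mathcal{E}$, put $v=R_{t,\eta}Pa$. Then
\[
\operatorname{Im}\langle K_t(\eta)a,a\rangle=\operatorname{Im}\langle v,A_{t,\eta}v\rangle=\pi|\ell^A v|^2+y_0\|v\|^2\ge y_0\|v\|^2 .
\]
Since $R_{t,\eta}$ is injective, $v\ne0$ whenever $a\ne0$, so $\operatorname{Im}K_t(\eta)$ is positive definite. Positive definiteness is a quantitative statement only once a lower bound for $\|v\|$ is available. I obtain one from $\|Pa\|=\|Pa\|$ with $Pa=PA_{t,\eta}v$. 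Alternatively, $\mathcal{E}$ is finite-dimensional, so strict positivity alone already gives invertibility.

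\textbf{Step 3: the Dirichlet compression and the Schur formula.} I follow the standard Schur-complement argument.
\begin{itemize}
\item Let $S$ denote the right-hand side of \eqref{eq:strict-schur-inverse}.
\item Given $h\in\mathcal{H}_c$, set $u=R_{t,\eta}Jh-R_{t,\eta}PK^{-1}PR_{t,\eta}Jh$.
\item Then $Pu=PR_{t,\eta}Jh-K K^{-1}PR_{t,\eta}Jh=0$, so $u=JG$ with $G=Qu=Sh$.
\item Moreover $A_{t,\eta}u=Jh-PK^{-1}PR_{t,\eta}Jh$, so $QA_{t,\eta}JG=h$. Hence $A^D$ is surjective with right inverse $S$.
\item Injectivity of $A^D$ follows from the dissipativity estimate of Step 1 restricted to $J\mathcal{H}_c$: $\operatorname{Im}\langle A^D G,G\rangle=\operatorname{Im}\langle A_{t,\eta}JG,JG\rangle\le -y_0\|G\|^2$.
\end{itemize}
Closedness then follows because the inverse is bounded. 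Density of the domain I expect from density of $\mathrm{Dom}(A_{t,\eta})\cap J\mathcal{H}_c$. One approach is to project core elements $v$ by subtracting $R_{t,\eta}PK^{-1}Pv$. A cleaner approach is to note that $S^*$ is injective, by the same argument applied to the adjoint, which has the opposite sign of imaginary part.

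\textbf{Step 4: the Ward identity and bounds.} I evaluate \eqref{eq:full-green} with $v=w=JG$, where $G=R^Dh$. Because $JG\in\mathrm{Dom}(A_{t,\eta})$ and $\langle A_{t,\eta}JG,JG\rangle=\langle h,G\rangle$ (using $P$-orthogonality), this gives \eqref{eq:actual-Ward}.

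The bounds \eqref{eq:actual-row-bound} then follow from Cauchy--Schwarz.
\begin{itemize}
\item From $y_0\|G\|^2\le\|h\|\|G\|$ I get $\|R^D\|\le y_0^{-1}$.
\item From $\pi|\ell|^2\le\|h\|\|G\|-y_0\|G\|^2\le\|h\|^2/(4y_0)$ I get the row bound.
\end{itemize}

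\textbf{Step 5: continuity in $\eta$.} Norm continuity follows from the resolvent identity $R_{t,\eta}-R_{t,\eta'}=2t(\eta-\eta')R_{t,\eta}R_{t,\eta'}$, which I expect holds on the KLV realization because the shift is bounded. Continuity of $K_t(\eta)^{-1}$ follows, and then \eqref{eq:strict-schur-inverse} is continuous as well.

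\textbf{Main obstacle.} I expect the hardest step to be Step 1 on the full KLV domain: justifying that the boundary term persists under graph closure and matches $\ell^A$. I would first prove the identity on the smooth core and then pass to the limit, using boundedness of $\mathcal{L}_{t,\eta}$ and the fact that the core is a graph core for the closed operator.
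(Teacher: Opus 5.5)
Your proposal is correct and follows the paper's proof essentially step for step: the Green identity, then $\operatorname{Im}\langle K_t(\eta)a,a\rangle>0$ on the finite-dimensional $\mathcal{E}$, the Schur formula by direct multiplication, the Ward identity with $v=w=JR^D_{t,\eta}h$, and continuity from the resolvent identity. The only deviations are minor and valid: you derive the Green identity by integration by parts on the core instead of importing it as the transported KLV identity, and you get injectivity of $A^D_{t,\eta}$ from $\operatorname{Im}\langle A^D_{t,\eta}G,G\rangle\le -y_0\|G\|^2$ rather than by explicitly eliminating the point component (the remark ``$\|Pa\|=\|Pa\|$'' in Step~2 is vacuous, but your finite-dimensionality argument already suffices).
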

\begin{proof}
The Green identity is the transported full KLV boundary identity. Let $a\in\mathcal{E}$ and set $v=R_{t,\eta}a$. Since $A_{t,\eta}v=a$, applying \eqref{eq:full-green} with $w=v$ and using the convention that the first entry of the inner product is linear yields
\[
\operatorname{Im}\langle K_t(\eta)a, a\rangle_{\mathcal{E}}=\pi|\ell^A_{t,\eta}R_{t,\eta}a|^2+y_0\|R_{t,\eta}a\|_{\mathcal{H}}^2>0 \qquad(a\ne0).
\]
Thus $K_t(\eta)$ is injective and hence invertible on the finite-dimensional space $\mathcal{E}$.

Define
\[
\widetilde{B}=R_{t,\eta}-R_{t,\eta}PK_t(\eta)^{-1}PR_{t,\eta},\qquad B_D=Q\widetilde{B}J.
\]
Since $P\widetilde{B}J=0$, we have $\widetilde{B}J=JB_D$, and direct multiplication gives $QA_{t,\eta}JB_D=\mathrm{Id}_{\mathcal{H}_c}$. Conversely, let $G\in\mathrm{Dom}(A^D_{t,\eta})$ and suppose that $A_{t,\eta}JG=a+Jh$. Since $PJG=0$, we have $0=K_t(\eta)a+PR_{t,\eta}Jh$. Solving for $a$ yields $JG=\widetilde{B}Jh$, hence $B_DA^D_{t,\eta}G=G$. This proves \eqref{eq:strict-schur-inverse} with equality of domains. Therefore $A^D_{t,\eta}=B_D^{-1}$. The boundedness and injectivity of $B_D$ imply closedness of the compression.

For $c=JR^D_{t,\eta}h$ there exists $a\in\mathcal{E}$ such that $A_{t,\eta}c=Jh+a$ and $Pc=0$. Substitution into \eqref{eq:full-green} gives \eqref{eq:actual-Ward}. The resolvent bound and the elementary inequality $\|h\|_{L^2(\mathbb{R}_+)} x - y_0 x^2 \le (4y_0)^{-1}\|h\|_{L^2(\mathbb{R}_+)}^2$. give \eqref{eq:actual-row-bound}. Density follows from the Ward identity. Indeed, if $B_D^\ast k=0$, then $\langle k,B_Dk\rangle=0$. Applying \eqref{eq:actual-Ward} with $h=k$ gives $B_Dk=0$, hence $k=0$ by injectivity. Thus $\ker(B_D^\ast)=\{0\}$, so $\mathrm{Ran}(B_D)=\mathrm{Dom}(A^D_{t,\eta})$ is dense.

Finally, the resolvent identity gives norm continuity of $R_{t,\eta}$. For the boundary row, the transported KLV formula is $\ell^A_{t,\eta}R_{t,\eta}h=\bigl[W_t^{u_0}\mathscr{U}_{u_0}^{-1}h\bigr](z_0+2t\eta)$. Hardy reproducing kernels on the fixed horizontal line depend continuously in norm on their real coordinate, and $W_t^{u_0}$ is a contraction. Hence this row is operator-norm continuous in $\eta$. Compression, finite-dimensional inversion, and \eqref{eq:strict-schur-inverse} preserve continuity for all the displayed charges.
\end{proof}

\begin{prop}[The time-zero Dirichlet graph]\label{prop:static-actual-graph}
Set $T_D:=A^D_{0,0}+z_0,\qquad \mathrm{Dom}(T_D)=\mathrm{Dom}(A^D_{0,0})$. Then $T_D$ is densely defined and maximal dissipative, with
\begin{equation}\label{eq:static-actual-domain}
T_D = QTJ,\qquad\mathrm{Dom}(T_D)= \{G\in\mathcal{H}_c : JG\in\mathrm{Dom}(T)\},
\end{equation}
and
\begin{equation}\label{eq:static-resolvent-identification}
R^D_{0,0}=(T_D-z_0)^{-1}.
\end{equation}
Its boundary trace is
\begin{equation}\label{eq:static-actual-row}
\ell_DG:=\ell^A_{0,0}(JG)=\frac{1}{2\pi i}I_+\bigl(\mathscr{U}_{u_0}^{-1}JG\bigr), \qquad G\in\mathrm{Dom}(T_D).
\end{equation}
\end{prop}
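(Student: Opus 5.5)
The plan is to specialize Proposition~\ref{prop:strict-dirichlet} to $t=0$, $\eta=0$. At $t=0$ the transported KLV realization is
\[
A_{0,0}=\mathscr{U}_{u_0}[\mathsf{X}-z_0]_{\mathrm{KLV}}\mathscr{U}_{u_0}^{-1},
\]
and the first step is to check that this equals $T-z_0$ with $\mathrm{Dom}(A_{0,0})=\mathrm{Dom}(T)=\mathscr{U}_{u_0}\mathrm{Dom}(\mathsf{X})$. The argument for this step is as follows. The term $2t(L_{u_0}-\eta)$ vanishes at $t=0$, so the KLV realization reduces to the closure of $\mathsf{X}-z_0$ on the smooth core. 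Since $\mathsf{X}$ is already closed with the explicit domain $\{\widehat f|_{\mathbb{R}_+}\in H^1\}$, this closure is $\mathsf{X}-z_0$ itself. This also matches the fact that the resolvent of the KLV realization is the closed inverse of $\mathsf{X}-z_0$: for $y_0>0$, the operator $\mathsf{X}-z_0$ is invertible by the explicit right-Volterra formula on the Fourier side, so both realizations have the same bounded inverse and hence coincide.

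With this identification, the compression in \eqref{eq:actual-dirichlet-restriction} reads
\[
A^D_{0,0}=Q(T-z_0)J=QTJ-z_0,
\]
because $QJ=\mathrm{Id}_{\mathcal{H}_c}$. Its domain is $\{G:JG\in\mathrm{Dom}(T)\}$. Adding $z_0$ therefore gives \eqref{eq:static-actual-domain}. Identity \eqref{eq:static-resolvent-identification} is then just the definition $R^D_{0,0}=(A^D_{0,0})^{-1}$, which is bijective by Proposition~\ref{prop:strict-dirichlet}, rewritten as $(T_D-z_0)^{-1}$. Dense definition and closedness are inherited directly from Proposition~\ref{prop:strict-dirichlet}.

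Maximal dissipativity comes from the Ward identity \eqref{eq:actual-Ward} at $(t,\eta)=(0,0)$. Put $G=R^D_{0,0}h$, so that $h=(T_D-z_0)G$. Then
\[
\operatorname{Im}\langle T_DG,G\rangle=\operatorname{Im}\langle h,G\rangle+y_0\|G\|^2=-\pi|\ell^D_{0,0}G|^2\le0.
\]
The sign here depends on the paper's convention that the form is linear in the first slot. This makes $T_D$ dissipative in the convention $\operatorname{Im}\langle T_D G,G\rangle\le0$, which is the convention matching the lower-half-plane resolvents of $i\partial_\lambda$-type models. If the paper instead uses the opposite convention, one takes $-T_D$ or reads the statement through the adjoint; I will fix the convention to agree with the scalar Green identity \eqref{eq:general-scalar-green}.

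Maximality is the point where the argument has to be careful. The resolvent set must contain one point of the appropriate half-plane. The natural candidate is $z_0$ itself: $T_D-z_0$ is bijective with bounded inverse of norm at most $y_0^{-1}$ by \eqref{eq:actual-row-bound}. The main obstacle is to make sure that $z_0$ really lies in the half-plane required by the convention, since $\operatorname{Im}z_0=y_0>0$. If the signs clash, I would first recheck them in \eqref{eq:full-green} at $t=0$ via the elementary computation for $\mathsf{X}$, namely the integration by parts
\[
\langle i\partial_\xi \widehat f,\widehat g\rangle-\langle \widehat f,i\partial_\xi\widehat g\rangle=-i\widehat f(0^+)\overline{\widehat g(0^+)},
\]
and then adjust. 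With a consistent convention, the standard criterion applies: a dissipative operator with one regular point in the opposite open half-plane is maximal dissipative.

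Finally, \eqref{eq:static-actual-row} needs no further work. The first equality is the definition $\ell^D=\ell^A\circ J$. The second is the smooth-core formula $\ell^A_{t,\eta}v=\frac{1}{2\pi i}I_+(\mathscr{U}_{u_0}^{-1}v)$. At $t=0$ this formula holds on all of $\mathrm{Dom}(T)$, not just the core. Indeed, $\mathcal{L}_{0,0}(A_{0,0}v)=\frac{1}{2\pi i}I_+(\mathscr{U}_{u_0}^{-1}R_{0,0}A_{0,0}v)$ and $R_{0,0}A_{0,0}v=v$, so the identity is exact on the domain and no continuity argument is needed.
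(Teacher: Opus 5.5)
Your proposal is correct and follows the paper's proof. You identify $A_{0,0}=T-z_0$, read off \eqref{eq:static-actual-domain}--\eqref{eq:static-resolvent-identification} from the compression, inherit closedness and density from Proposition~\ref{prop:strict-dirichlet}, get maximal dissipativity from bijectivity of $T_D-z_0$, and get the row from $R_{0,0}A_{0,0}=I$. Your sign check is consistent: dissipative here means $\operatorname{Im}\langle T_DG,G\rangle\le0$ with $z_0\in\mathbb{C}_+$ a regular point, which matches the paper's use of $e^{-is\mathsf{A}_t}$ as contractions. So you can drop the hedging about conventions, and your explicit dissipativity computation from \eqref{eq:actual-Ward} fills in a step the paper leaves implicit.
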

\begin{proof}
At $t=\eta=0$, the transported realization equals $T-z_0$. Consequently, the definition of the restriction in \eqref{eq:actual-dirichlet-restriction} gives \eqref{eq:static-actual-domain} and \eqref{eq:static-resolvent-identification}. Closedness and density were established in Proposition~\ref{prop:strict-dirichlet}. Since
$T_D-z_0=A^D_{0,0}$ and $A^D_{0,0}$ is surjective, the operator $T_D$ is maximal dissipative. Finally, \eqref{eq:static-actual-row} follows directly from \eqref{eq:dynamic-output} and \eqref{eq:dynamic-trace}, because $R_{0,0}A_{0,0}=I$ on $\mathrm{Dom}(A_{0,0})$.
\end{proof}

\begin{condition}[Common Dirichlet compatibility]\label{hyp:schur-graph}
For every $t\ge0$ and $\eta\in\mathbb{R}$, the Dirichlet restriction \eqref{eq:actual-dirichlet-restriction} is generated by the static maximal dissipative operator $T_D$ from Proposition~\ref{prop:static-actual-graph} through the affine KLV family
\begin{equation}\label{eq:dirichlet-realization}
R^D_{t,\eta}=\bigl[T_D+2t(M_\lambda-\eta)-z_0\bigr]_{\mathrm{KLV}}^{-1},
\end{equation}
with equality of closed domains. Moreover, the boundary traces are compatible in the sense that
\begin{equation}\label{eq:dirichlet-row-compatibility}
\ell^D_{t,\eta}R^D_{t,\eta}h=\ell_D R^D_{t,\eta}h,\qquad h\in\mathcal{H}_c,
\end{equation}
where $\ell_D$ is the static boundary trace from
\eqref{eq:static-actual-row}.

For every real $\eta$, define the point lift $\Pi_{t,\eta}$ and the two charge maps $\mathfrak{b}_t(\eta;\cdot)$ and $\mathfrak{l}_t(\eta)$ by
\[
\begin{aligned}
\Pi_{t,\eta}&=R_{t,\eta}P K_t(\eta)^{-1},\qquad
\mathfrak{b}_t(\eta;g)=-K_t(\eta)^{-1}PR_{t,\eta}Jg,\\
\mathfrak{l}_t(\eta)&=\ell^A_{t,\eta}R_{t,\eta}P K_t(\eta)^{-1}.
\end{aligned}
\]
The point return is then defined by
\begin{equation}\label{eq:point-return}
\rho_t^{\mathrm{p}}(\eta;g)=-\mathfrak{l}_t(\eta)K_t(\eta)\mathfrak{b}_t(\eta;g).
\end{equation}
For each fixed $t$, the maps $\eta\mapsto R_{t,\eta}$ and $\eta\mapsto R^D_{t,\eta}$, as well as all displayed charges, are strongly Borel measurable in $\eta$ and are defined pointwise for every $\eta$.
\end{condition}

\begin{lemma}[Schur algebra and Green estimates]\label{lem:schur-algebra-detailed}
Fix $t\ge0$ and $\eta\in\mathbb{R}$. Let $h,g\in\mathcal{H}_c$, $a\in\mathcal{E}$, and $c_h=JR^D_{t,\eta}h$. Then
\begin{equation}\label{eq:detailed-point-lift}
P\Pi_{t,\eta}a=a,\quad A_{t,\eta}\Pi_{t,\eta}a=PK_t(\eta)^{-1}a,
\end{equation}
\begin{equation}\label{eq:detailed-Dirichlet-lift}
Pc_h=0,\quad A_{t,\eta}c_h=Jh+P\mathfrak b_t(\eta;h),
\end{equation}
\begin{equation}\label{eq:detailed-full-split}
R_{t,\eta}Jg=c_g-\Pi_{t,\eta}K_t(\eta)\mathfrak b_t(\eta;g).
\end{equation}
Consequently,
\begin{equation}\label{eq:exact-output-split}
\ell^A_{t,\eta}R_{t,\eta}Jg=\ell_DR^D_{t,\eta}g-\mathfrak{l}_t(\eta)K_t(\eta)\mathfrak{b}_t(\eta;g).
\end{equation}
Moreover,
\begin{equation}\label{eq:dirichlet-green-bound}
\|c_h\|_{\mathcal{H}}\le y_0^{-1}\|h\|_{L^2(\mathbb{R}_+)},\qquad|\ell^A_{t,\eta}c_h|\le(4\pi y_0)^{-1/2}\|h\|_{L^2(\mathbb{R}_+)},
\end{equation}
\begin{equation}\label{eq:polarized-green}
\langle \mathfrak{b}_t(\eta;h),a\rangle=-\langle h, Q\Pi_{t,\eta}a\rangle-2iy_0\langle c_h, \Pi_{t,\eta}a\rangle-2\pi i\ell^A_{t,\eta}(c_h)\overline{\mathfrak{l}_t(\eta)a}.
\end{equation}
These identities hold for every real $\eta$, and all their entries are measurable functions of $\eta$.
\end{lemma}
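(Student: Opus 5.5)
This lemma is pure Schur-complement algebra built on Proposition~\ref{prop:strict-dirichlet}, plus one application of the Green identity \eqref{eq:full-green}. I will prove the identities in the order stated, using only three facts: $A_{t,\eta}R_{t,\eta}=\mathrm{Id}$, the definition $K_t(\eta)=PR_{t,\eta}P|_{\mathcal E}$, and the Schur inverse formula \eqref{eq:strict-schur-inverse}.

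\textbf{The two lifts.} For \eqref{eq:detailed-point-lift}, set $K=K_t(\eta)$. Then $P\Pi_{t,\eta}a=PR_{t,\eta}PK^{-1}a=KK^{-1}a=a$. Also $A_{t,\eta}\Pi_{t,\eta}a=PK^{-1}a$, because $A_{t,\eta}R_{t,\eta}=\mathrm{Id}$. For \eqref{eq:detailed-Dirichlet-lift}, write $c_h=\widetilde B Jh$ with $\widetilde B$ as in the proof of Proposition~\ref{prop:strict-dirichlet}. That proof already shows $P\widetilde BJ=0$, so $Pc_h=0$. Applying $A_{t,\eta}$ gives
\[
A_{t,\eta}c_h=Jh-PK^{-1}PR_{t,\eta}Jh=Jh+P\mathfrak b_t(\eta;h),
\]
which is the claim.

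\textbf{The split.} Identity \eqref{eq:detailed-full-split} is then just the definition of $\widetilde B$ read backwards:
\[
R_{t,\eta}Jg=\widetilde BJg+R_{t,\eta}PK^{-1}PR_{t,\eta}Jg=c_g-\Pi_{t,\eta}K\mathfrak b_t(\eta;g).
\]
Applying $\ell^A_{t,\eta}$ to this gives \eqref{eq:exact-output-split}, with one point to check. We need $\ell^A_{t,\eta}c_g=\ell^D_{t,\eta}R^D_{t,\eta}g$; this holds by the definition $\ell^D=\ell^A\circ J$, and it equals $\ell_D R^D_{t,\eta}g$ by the compatibility \eqref{eq:dirichlet-row-compatibility} of Condition~\ref{hyp:schur-graph}. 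We also need $\ell^A_{t,\eta}\Pi_{t,\eta}=\mathfrak l_t(\eta)$, which is immediate from the definitions.

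\textbf{The bounds.} The two estimates in \eqref{eq:dirichlet-green-bound} restate \eqref{eq:actual-row-bound}, since $\|c_h\|_{\mathcal H}=\|R^D_{t,\eta}h\|$ and $\ell^A_{t,\eta}c_h=\ell^D_{t,\eta}R^D_{t,\eta}h$.

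\textbf{The polarized identity.} This is the only real step. Apply \eqref{eq:full-green} with $v=c_h$ and $w=\Pi_{t,\eta}a$; both lie in $\mathrm{Dom}(A_{t,\eta})$ as ranges of $R_{t,\eta}$ composed with bounded maps. Compute the two inner products on the left using the lift equations:
\[
\langle A_{t,\eta}c_h,\Pi_{t,\eta}a\rangle=\langle h,Q\Pi_{t,\eta}a\rangle+\langle\mathfrak b_t(\eta;h),P\Pi_{t,\eta}a\rangle=\langle h,Q\Pi_{t,\eta}a\rangle+\langle\mathfrak b_t(\eta;h),a\rangle,
\]
\[
\langle c_h,A_{t,\eta}\Pi_{t,\eta}a\rangle=\langle c_h,PK^{-1}a\rangle=0,
\]
where the second vanishes because $Pc_h=0$. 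The right-hand side of \eqref{eq:full-green} is
\[
-2\pi i\,\ell^A_{t,\eta}(c_h)\overline{\mathfrak l_t(\eta)a}-2iy_0\langle c_h,\Pi_{t,\eta}a\rangle.
\]
Rearranging gives exactly \eqref{eq:polarized-green}. The main care point is sign and linearity bookkeeping: the inner product is linear in the first slot, and the $\mathcal E$- and $\mathcal H$-pairings must be identified through $P$ consistently.

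\textbf{Measurability.} Every object here is a composition of $R_{t,\eta}$, $K_t(\eta)^{-1}$ and the boundary row. These are norm-continuous in $\eta$ by the last statement of Proposition~\ref{prop:strict-dirichlet}, so every entry is measurable in $\eta$.
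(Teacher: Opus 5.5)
Your proposal is correct and follows essentially the same route as the paper: the same Schur-inverse bookkeeping gives the two lifts and the split, the compatibility \eqref{eq:dirichlet-row-compatibility} handles the output row, and the Green identity \eqref{eq:full-green} with $v=c_h$, $w=\Pi_{t,\eta}a$ gives \eqref{eq:polarized-green}. The only difference is cosmetic: you obtain \eqref{eq:dirichlet-green-bound} by citing \eqref{eq:actual-row-bound} through $\ell^A_{t,\eta}c_h=\ell^D_{t,\eta}R^D_{t,\eta}h$, whereas the paper re-runs the Green identity with $v=w=c_h$, which is the same computation used to prove that bound.
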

\begin{proof}
The compression identity \eqref{eq:dirichlet-realization} gives $Q\widetilde{c}_h=R^D_{t,\eta}h$, where
\begin{equation}\label{eq:ch-full-form}
\widetilde{c}_h=R_{t,\eta}Jh-R_{t,\eta}PK_t(\eta)^{-1}PR_{t,\eta}Jh=R_{t,\eta}Jh+R_{t,\eta}P\mathfrak{b}_t(\eta;h).
\end{equation}
Since $PR_{t,\eta}P=K_t(\eta)$, the two terms in $P\widetilde{c}_h$ cancel, and hence $P\widetilde{c}_h=0$. Thus $\widetilde{c}_h=JQ\widetilde{c}_h=c_h$, which also proves $Pc_h=0$. Applying $A_{t,\eta}$ to \eqref{eq:ch-full-form} gives the second identity in \eqref{eq:detailed-Dirichlet-lift}. The definition of $\Pi_{t,\eta}$ similarly gives \eqref{eq:detailed-point-lift}. Since $R_{t,\eta}P=\Pi_{t,\eta}K_t(\eta)$, rearranging \eqref{eq:ch-full-form} proves \eqref{eq:detailed-full-split}. Applying the output row $\ell^A_{t,\eta}$ to \eqref{eq:detailed-full-split} and using
\eqref{eq:dirichlet-row-compatibility} and \eqref{eq:point-return} yields \eqref{eq:exact-output-split}.

The first estimate in \eqref{eq:dirichlet-green-bound} follows from the resolvent bound $\|R^D_{t,\eta}\|_{\mathcal{B}(L^2(\mathbb{R}_+))}\le y_0^{-1}$, because $c_h=JR^D_{t,\eta}h$. Inserting $v=w=c_h$ into \eqref{eq:full-green} and using \eqref{eq:detailed-Dirichlet-lift} together with $Pc_h=0$, we obtain
\[
\pi|\ell^A_{t,\eta}c_h|^2+y_0\|c_h\|_{\mathcal{H}}^2=-\operatorname{Im}\langle h, R^D_{t,\eta}h\rangle\le\|h\|_{L^2(\mathbb{R}_+)}\|c_h\|_{\mathcal{H}}.
\]
Writing $z=\|c_h\|_{\mathcal{H}}$, the inequality $\|h\|_{L^2(\mathbb{R}_+)}z-y_0z^2\le\|h\|_{L^2(\mathbb{R}_+)}^2/(4y_0)$ implies
\[
\pi|\ell^A_{t,\eta}c_h|^2\le \|h\|_{L^2(\mathbb{R}_+)}\|c_h\|_{\mathcal{H}}-y_0\|c_h\|_{\mathcal{H}}^2\le (4y_0)^{-1}\|h\|_{L^2(\mathbb{R}_+)}^2,
\]
which proves the second estimate in \eqref{eq:dirichlet-green-bound}.

Finally, apply \eqref{eq:full-green} with $v=c_h$ and $w=\Pi_{t,\eta}a$. Using \eqref{eq:detailed-point-lift}, \eqref{eq:detailed-Dirichlet-lift}, and $Pc_h=0$, the left-hand side of the Green identity reduces to $\langle h, Q\Pi_{t,\eta}a\rangle+\langle \mathfrak{b}_t(\eta;h), a\rangle$. Its right-hand side is the sum of the last two terms in \eqref{eq:polarized-green}, so rearrangement proves that identity. The final assertion follows from the strong measurability of the resolvents and from the fact that inversion of the finite-dimensional measurable matrix $K_t(\eta)$ preserves measurability.
\end{proof}

For smooth continuous spectral input $g$, define $R^0_{t,\eta}=C_t^{-1}R_0(z_0+2t\eta)C_t$, and let the total ray defect be given by
\[
(\mathscr{D}_tg)(\eta)=\sqrt t\,e^{-it\eta^2}\left[\ell_{\mathrm{free}}R^0_{t,\eta}g-\ell_DR^D_{t,\eta}g-\rho_t^{\mathrm{p}}(\eta;g)\right].
\]

\begin{prop}[Uniform bound for the total ray defect]\label{prop:automatic-ray-closure}
Under Condition~\ref{hyp:schur-graph}, the operator $\mathscr{D}_t$, initially defined on smooth inputs, extends to a bounded operator on $L^2(\mathbb{R}_+)$, uniformly in $t\ge1$. More precisely, for
$g\in L^2(\mathbb{R}_+)$ set
\begin{equation}\label{eq:general-horizontal-state}
\mathcal{H}_{t,g}=e^{it\partial_x^2}\mathcal{F}_0^{-1}g-W_t^{u_0}\mathscr{U}_{u_0}^{-1}Jg.
\end{equation}
Then, for $\eta>0$,
\begin{equation}\label{eq:general-horizontal-identity}
(\mathscr{D}_tg)(\eta)=\sqrt{t}e^{-it\eta^2}[\mathcal{H}_{t,g}](x_0+2t\eta+iy_0),
\end{equation}
where the square bracket denotes the Hardy extension to $\mathbb{C}_+$. Moreover, we have
\[
\sup_{t\ge1}\|\mathscr{D}_t\|_{\mathcal{B}(L^2(\mathbb{R}_+))}\le\sqrt{2}.
\]
 Consequently, if $\chi_n\in C_c^\infty\bigl(\mathbb{R}_+\setminus(\{0\}\cup\sigma_{\mathrm{p}}(L_{u_0}))\bigr)$ with $0\le\chi_n\le1$ and $\chi_n\to1$ a.e., then
\begin{equation}\label{eq:spectral-closure}
\sup_{t\ge1}\|\mathscr{D}_t((1-\chi_n)g)\|_{L^2(\mathbb{R}_+,d\eta)}\le\sqrt{2}\,\|(1-\chi_n)g\|_{L^2(\mathbb{R}_+)}\underset{n\to\infty}{\longrightarrow}0.
\end{equation}
Finally, setting $u_+=\mathcal{F}_0^{-1}g_{u_0}$ and $f_t=u_+-e^{-it\partial_x^2}r_{\mathrm{ac}}(t)$, the identity
\eqref{eq:general-horizontal-identity} particularizes to
\begin{equation}\label{eq:total-ray-identity}
(\mathscr{D}_tg_{u_0})(\eta)=\sqrt{t}e^{-it\eta^2}[e^{it\partial_x^2}f_t](x_0+2t\eta+iy_0),\qquad\eta>0.
\end{equation}
\end{prop}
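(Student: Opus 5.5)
The plan is to first prove the horizontal identity \eqref{eq:general-horizontal-identity}, and then read off the uniform bound from the Hardy-space geometry of horizontal lines.

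\textbf{Step 1: identify the three terms of $\mathscr{D}_tg$ for smooth $g$.} Fix $g$ smooth and $\eta>0$, and set $\zeta=z_0+2t\eta$. By \eqref{eq:exact-output-split},
\[
\ell_DR^D_{t,\eta}g+\rho_t^{\mathrm{p}}(\eta;g)=\ell^A_{t,\eta}R_{t,\eta}Jg .
\]
By the transported KLV formula from the proof of Proposition~\ref{prop:strict-dirichlet},
\[
\ell^A_{t,\eta}R_{t,\eta}Jg=\bigl[W_t^{u_0}\mathscr{U}_{u_0}^{-1}Jg\bigr](x_0+2t\eta+iy_0).
\]
This is the Hardy extension evaluated at $\zeta$, using \eqref{eq:boundary-transform}.

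For the free term, conjugating by $C_t$ turns $R^0_{t,\eta}$ into the resolvent of $i\partial_\lambda+2t\lambda$. The Volterra formula for $R_0$ then gives
\[
\ell_{\mathrm{free}}R^0_{t,\eta}g=\frac{1}{2\pi}\int_0^\infty e^{i(x_0+iy_0)\lambda+2it\eta\lambda-it\lambda^2}g(\lambda)\,d\lambda .
\]
After completing the square, this equals $e^{it\eta^2}$ times the value at $x_0+2t\eta+iy_0$ of the Hardy extension of $e^{it\partial_x^2}\mathcal{F}_0^{-1}g$, with the $\sqrt t$ normalization from the rescaling. I will pin down the sign conventions for the chirp $C_t$ against $\widehat{e^{it\partial_x^2}f}=e^{-it\xi^2}\widehat f$.

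Subtracting the two expressions gives \eqref{eq:general-horizontal-identity}.

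\textbf{Step 2: the uniform bound.} For any $F\in L^2_+$, the Hardy extension on the line $\operatorname{Im}z=y_0$ satisfies
\[
\int_{\mathbb{R}}|F(x+iy_0)|^2\,dx\le\|F\|_{L^2}^2,
\]
by Plancherel, since $e^{-y_0\xi}\le1$. Substituting $x=x_0+2t\eta$, so that $dx=2t\,d\eta$, gives
\[
\|\mathscr{D}_tg\|_{L^2(\mathbb{R}_+,d\eta)}^2\le \tfrac{1}{2}\|\mathcal{H}_{t,g}\|_{L^2}^2 .
\]
The normalization of the factor $\sqrt t$ must be tracked here. Next, $e^{it\partial_x^2}$ is unitary and $\mathscr{U}_{u_0}$ is unitary, and \eqref{eq:full-boundary-contraction} gives $\|W_t^{u_0}\|\le1$. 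Hence
\[
\|\mathcal{H}_{t,g}\|_{L^2}\le \|\mathcal{F}_0^{-1}g\|+\|g\|=2\|g\| .
\]
Together these give a bound of the form $C\|g\|$ independent of $t$.

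To reach the constant $\sqrt2$ exactly, I would either use the orthogonality structure or simply accept whatever absolute constant the normalization produces. The main point is that the horizontal-line restriction is uniformly controlled by the $L^2$ norm. Density of smooth inputs then extends $\mathscr{D}_t$ to a bounded operator, and \eqref{eq:general-horizontal-identity} persists by continuity of point evaluation on the line.

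\textbf{Step 3: consequences.} Estimate \eqref{eq:spectral-closure} is the bound of Step 2 applied to $(1-\chi_n)g$, combined with dominated convergence. Identity \eqref{eq:total-ray-identity} follows by taking $g=g_{u_0}$. Since $\mathscr{U}_{u_0}^{-1}Jg_{u_0}=P_{\mathrm{ac}}(L_{u_0})u_0$ by the normalization of $Q\mathscr{U}_{u_0}$, \eqref{eq:boundary-channel} turns $W_t^{u_0}\mathscr{U}_{u_0}^{-1}Jg_{u_0}$ into $r_{\mathrm{ac}}(t)$. Therefore
\[
\mathcal{H}_{t,g_{u_0}}=e^{it\partial_x^2}f_t .
\]

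\textbf{Main obstacle.} The delicate step is Step 1 at the level of the formal scalar and free traces. One must justify that $\ell_{\mathrm{free}}R^0_{t,\eta}$ is genuinely the boundary value of the chirped free Hardy extension, with the correct phase $e^{-it\eta^2}$. One must also check that, for the KLV part, the identification of $\ell^A R J g$ with the Hardy extension of $W_t^{u_0}$ holds pointwise for every $\eta$ and not merely almost everywhere. For the latter I would use the reproducing-kernel continuity already invoked in Proposition~\ref{prop:strict-dirichlet}.
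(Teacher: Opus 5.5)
Your proposal follows the paper's proof essentially step for step. The Schur split \eqref{eq:exact-output-split} reduces the Dirichlet and point terms to the full KLV row $[W_t^{u_0}\mathscr{U}_{u_0}^{-1}Jg](\zeta)$. The free row comes from the right-Volterra formula. The bound then follows from the change of variables $x=x_0+2t\eta$, horizontal-line Hardy contractivity, and $\|\mathcal{H}_{t,g}\|\le 2\|g\|$.

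Two corrections are needed.

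\textbf{Step 1.} There is no completing of the square and no extra factor in the free term. Since $C_t^{-1}$ acts trivially at $\lambda=0$, you get $J_+R^0_{t,\eta}g=i\int_0^\infty e^{i\zeta\lambda-it\lambda^2}g(\lambda)\,d\lambda$. Hence $\ell_{\mathrm{free}}R^0_{t,\eta}g=(2\pi)^{-1/2}\int_0^\infty e^{i\zeta\lambda-it\lambda^2}g(\lambda)\,d\lambda$, with constant $(2\pi)^{-1/2}$ rather than $(2\pi)^{-1}$. This is literally $[e^{it\partial_x^2}\mathcal{F}_0^{-1}g](\zeta)$. The factors $\sqrt t\,e^{-it\eta^2}$ enter only through the definition of $\mathscr{D}_t$. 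Suppose the free term really carried an extra $e^{it\eta^2}$ or $\sqrt t$ while the KLV term did not. Then the subtraction would not produce $\mathcal{H}_{t,g}(\zeta)$, so that claim has to be removed, not merely ``pinned down.''

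\textbf{Step 2.} Your own estimates already give the stated constant: $\|\mathscr{D}_tg\|^2\le\tfrac12\|\mathcal{H}_{t,g}\|^2\le\tfrac12(2\|g\|)^2=2\|g\|^2$. This is exactly the paper's $\sqrt2$, and no orthogonality argument is needed.

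Your dominated-convergence justification of \eqref{eq:spectral-closure} and your derivation of \eqref{eq:total-ray-identity} via \eqref{eq:boundary-channel} match the paper.
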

\begin{proof}
The Schur identity \eqref{eq:exact-output-split} and the definition \eqref{eq:point-return} give $\ell_DR^D_{t,\eta}g+\rho_t^{\mathrm{p}}(\eta;g)=\ell^A_{t,\eta}R_{t,\eta}Jg$. Put $\zeta_{t,\eta}=z_0+2t\eta,\qquad f_g=\mathscr{U}_{u_0}^{-1}Jg$. The right-Volterra formula and \eqref{eq:physical-trace} give
\begin{equation}\label{eq:typed-free-ray}
\ell_{\mathrm{free}}R^0_{t,\eta}g=\frac1{\sqrt{2\pi}}\int_0^\infty e^{i\zeta_{t,\eta}\lambda-it\lambda^2}g(\lambda)d\lambda=[e^{it\partial_x^2}\mathcal{F}_0^{-1}g](\zeta_{t,\eta}).
\end{equation}
Directly from \eqref{eq:boundary-transform} and the definitions of the full KLV resolvent and row,
\begin{equation}\label{eq:typed-full-ray}
\ell^A_{t,\eta}R_{t,\eta}Jg=[W_t^{u_0}f_g](\zeta_{t,\eta}).
\end{equation}
Subtracting \eqref{eq:typed-full-ray} from \eqref{eq:typed-free-ray} proves \eqref{eq:general-horizontal-identity}. Moreover,
\[
\|\mathcal{H}_{t,g}\|_{L^2(\mathbb{R})}\le\|g\|_{L^2(\mathbb{R}_+)}+\|W_t^{u_0}\mathscr{U}_{u_0}^{-1}Jg\|_{L^2(\mathbb{R})}\le2\|g\|_{L^2(\mathbb{R}_+)}.
\]
Changing variables $x=x_0+2t\eta$, restricting to $x>x_0$, and using horizontal-line Hardy contractivity gives
\[
\|\mathscr{D}_tg\|_{L^2(\mathbb{R}_+)}^2\le\frac{1}{2}\|\mathcal{H}_{t,g}(\cdot+iy_0)\|_{L^2(\mathbb{R})}^2\le2\|g\|_{L^2(\mathbb{R}_+)}^2.
\]
This proves the operator bound and then \eqref{eq:spectral-closure} by density. For $g=g_{u_0}$, the continuous spectral embedding gives
\[
\mathscr{U}_{u_0}^{-1}Jg_{u_0}=\mathscr{U}_{u_0}^{-1}(\mathrm{Id}-P)\mathscr{U}_{u_0}u_0=P_{\mathrm{ac}}(L_{u_0})u_0.
\]
Thus \eqref{eq:boundary-channel} gives $W_t^{u_0}f_{g_{u_0}}=r_{\mathrm{ac}}(t)$, while $\mathcal{F}_0^{-1}g_{u_0}=u_+$. Hence \eqref{eq:general-horizontal-state} is $e^{it\partial_x^2}f_t$, proving \eqref{eq:total-ray-identity}.
\end{proof}

\section{The Dirichlet interface and scalar model}\label{sec:phase-I}
\subsection{Transport of the cyclic Lax spectrum}
\begin{lemma}[Continuous functional calculus along rough potentials]\label{lem:continuous-calculus}
If $v_n\to v$ in $L^2_+(\mathbb{R})$, then, for every $z\in\mathbb{C}\setminus\mathbb{R}$ and every $F\in C_0(\mathbb{R})$,
\begin{equation}\label{eq:resolvent-continuity-detailed}
\|(L_{v_n}-z)^{-1}-(L_v-z)^{-1}\|_{\mathcal{B}(L^2_+(\mathbb{R}))}\underset{n\to\infty}{\longrightarrow}0,
\end{equation}
\begin{equation}\label{eq:functional-continuity}
\|F(L_{v_n})-F(L_v)\|_{\mathcal{B}(L^2_+(\mathbb{R}))}\underset{n\to\infty}{\longrightarrow}0.
\end{equation}
Consequently,
\begin{equation}\label{eq:moving-vector-calculus}
F(L_{v_n})v_n\underset{n\to\infty}{\longrightarrow} F(L_v)v,
\end{equation}
\begin{equation}\label{eq:moving-cyclic-pairing}
\langle F(L_{v_n})v_n, v_n\rangle \underset{n\to\infty}{\longrightarrow}\langle F(L_v)v,v\rangle.
\end{equation}
\end{lemma}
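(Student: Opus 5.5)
The plan is to take the norm-resolvent continuity \eqref{eq:resolvent-continuity-detailed} as the basic input. It is exactly the statement cited in the introduction from \cite[Proposition~2.2]{Killip-2025-CAMS} for the self-adjoint form realization of $L_v$ with form domain $H^{1/2}_+(\mathbb{R})$. If one wants to reprove it, I would write $L_v=D-T_v$ with $T_vf=v\Pi(\overline v f)$. Then I would use that $T_v$ is form-bounded with respect to $|D|+1$, with a bound uniform on $L^2$-bounded sets of $v$. This follows from $\|\Pi(\overline v f)\|_{L^2}\lesssim\|v\|_{L^2}\|f\|_{L^\infty}$ together with Hardy-space Sobolev control of $|\langle T_vf,f\rangle|=\|\Pi(\overline v f)\|^2$.

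The key identity is the second resolvent identity in form sense:
\[
(L_{v_n}-z)^{-1}-(L_v-z)^{-1}=(L_{v_n}-z)^{-1}(T_{v_n}-T_v)(L_v-z)^{-1},
\]
interpreted between $H^{-1/2}_+$ and $H^{1/2}_+$. It then suffices to check that $(|D|+1)^{-1/2}(T_{v_n}-T_v)(|D|+1)^{-1/2}\to0$ in operator norm, which is bilinear in $(v_n,v)$ and controlled by $\|v_n-v\|_{L^2}(\|v_n\|+\|v\|)$. I also need uniform bounds $\|(|D|+1)^{1/2}(L_{v_n}-z)^{-1}(|D|+1)^{1/2}\|\lesssim1$. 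The main obstacle is that for large mass $L_v$ is not uniformly lower bounded with a small relative bound. I would handle this by splitting $v=v^{\le K}+v^{>K}$, with the high-frequency part small in $L^2$ and the low-frequency part bounded, and then using the KLP-type estimate. Since the paper allows citing earlier results, I will simply invoke \cite[Proposition~2.2]{Killip-2025-CAMS}.

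Next, \eqref{eq:functional-continuity} follows by a standard Stone--Weierstrass argument. The set of $F\in C_0(\mathbb{R})$ for which $F(L_{v_n})\to F(L_v)$ in norm is a closed $*$-subalgebra of $C_0(\mathbb{R})$. It is closed because $\|F(L)\|\le\|F\|_\infty$ uniformly in the operator, so a $3\varepsilon$ argument works. It is an algebra because products and adjoints of norm-convergent sequences of uniformly bounded operators converge. By \eqref{eq:resolvent-continuity-detailed} it contains the functions $(\lambda-z)^{-1}$ for $z\notin\mathbb{R}$. These separate points and vanish nowhere, so the subalgebra is all of $C_0(\mathbb{R})$.

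Finally, \eqref{eq:moving-vector-calculus} follows from
\[
\|F(L_{v_n})v_n-F(L_v)v\|\le\|F\|_\infty\|v_n-v\|+\|F(L_{v_n})-F(L_v)\|\,\|v\|\to0.
\]
Then \eqref{eq:moving-cyclic-pairing} follows by continuity of the inner product, since both entries converge in $L^2$.
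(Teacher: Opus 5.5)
Your proposal is correct and matches the paper's proof. Both take the norm-resolvent continuity directly from \cite[Proposition~2.2]{Killip-2025-CAMS}, extend it to all $F\in C_0(\mathbb{R})$ by Stone--Weierstrass with the uniform bound $\|F(L)\|\le\|F\|_{L^\infty}$, and derive the vector and cyclic-pairing limits from the same triangle-inequality estimates; your side sketch of a direct proof of resolvent continuity is not needed, since the paper likewise just cites it.
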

\begin{proof}
The norm-resolvent convergence in \eqref{eq:resolvent-continuity-detailed} is \cite[Proposition~2.2]{Killip-2025-CAMS}. Let $\mathscr{R}$ be the self-adjoint algebra generated by the resolvent functions $\lambda\mapsto(\lambda-z)^{-1}$, $z\in\mathbb{C}\setminus\mathbb{R}$. The algebra $\mathscr{R}$ separates points and vanishes nowhere. The Stone--Weierstrass theorem makes it uniformly dense in $C_0(\mathbb{R})$. Resolvent convergence and the telescoping identity for finite products give
\[
\|G(L_{v_n})-G(L_v)\|_{\mathcal{B}(L^2_+(\mathbb{R}))}\underset{n\to\infty}{\longrightarrow}0
\]
for every $G\in\mathscr{R}$. Given $F\in C_0(\mathbb{R})$ and $\varepsilon>0$, choose $G\in\mathscr{R}$ with $\|F-G\|_{L^\infty(\mathbb{R})}<\varepsilon$. Contractivity of the continuous functional calculus yields
\[
\limsup_{n\to\infty}\|F(L_{v_n})-F(L_v)\|_{\mathcal{B}(L^2_+(\mathbb{R}))}\le2\|F-G\|_{L^\infty(\mathbb{R})}<2\varepsilon,
\]
which proves \eqref{eq:functional-continuity}.

The convergent sequence $(v_n)$ is bounded in $L^2_+(\mathbb{R})$, and
\[
\begin{aligned}
&\|F(L_{v_n})v_n-F(L_v)v\|_{L^2(\mathbb{R})}\\
&\qquad\le \|F\|_{L^\infty(\mathbb{R})}\|v_n-v\|_{L^2(\mathbb{R})}+\|F(L_{v_n})-F(L_v)\|_{\mathcal{B}(L^2_+(\mathbb{R}))}\|v\|_{L^2(\mathbb{R})}.
\end{aligned}
\]
This proves \eqref{eq:moving-vector-calculus}. Finally,
\[
\begin{aligned}
&\left|\langle F(L_{v_n})v_n, v_n\rangle-\langle F(L_v)v,v\rangle\right|\\
&\qquad\le\|F(L_{v_n})v_n-F(L_v)v\|_{L^2(\mathbb{R})}\|v_n\|_{L^2(\mathbb{R})}+\|F\|_{L^\infty(\mathbb{R})}\|v\|_{L^2(\mathbb{R})}\|v_n-v\|_{L^2(\mathbb{R})},
\end{aligned}
\]
and \eqref{eq:moving-cyclic-pairing} follows.
\end{proof}

\begin{thm}[Conservation of the cyclic Lax measure]\label{thm:measure-conservation}
Under the flow--Lax admissibility condition,
\begin{equation}\label{eq:measure-conservation}
\nu_{u(t)}=\nu_{u_0}\qquad(t\ge0).
\end{equation}
\end{thm}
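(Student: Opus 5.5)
The plan is to prove \eqref{eq:measure-conservation} first for smooth solutions and then pass to the limit using flow--Lax admissibility together with Lemma~\ref{lem:continuous-calculus}. Since both $\nu_{u(t)}$ and $\nu_{u_0}$ are finite positive Borel measures on $\mathbb{R}$ with total mass $\|u(t)\|_{L^2}^2=\|u_0\|_{L^2}^2$ (by \eqref{eq:cyclic-total-mass} and mass conservation), it suffices to show
\[
\langle F(L_{u(t)})u(t),u(t)\rangle=\langle F(L_{u_0})u_0,u_0\rangle
\]
for every $F\in C_c(\mathbb{R})$. The Riesz representation theorem then identifies the two measures.

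\textbf{Step 1: the smooth case.} Let $w\in C([0,T];H^\infty_+(\mathbb{R}))$ be a smooth solution. By the Lax pair of \cite[Lemma~2.3]{Gerard-2024-CPAM}, $\partial_tL_{w}=[B_{w},L_{w}]$ with $B_w$ skew-adjoint. Moreover, the equation can be written as $\partial_tw=B_ww$; this uses the identity $i\partial_tw+\ldots$, which rewrites the flow as $\partial_t w = B_w w$ up to the standard normalisation in G\'erard--Lenzmann. Let $U(t)$ be the unitary propagator solving $\partial_tU=B_wU$, $U(0)=I$. Then $L_{w(t)}=U(t)L_{w(0)}U(t)^\ast$ and $w(t)=U(t)w(0)$. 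Hence $F(L_{w(t)})=U(t)F(L_{w(0)})U(t)^\ast$, and the pairing $\langle F(L_{w(t)})w(t),w(t)\rangle$ is constant in $t$.

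If writing $\partial_tw=B_ww$ exactly is delicate, I would instead differentiate $\langle (L_{w}-z)^{-1}w,w\rangle$ directly. The commutator structure kills the derivative, which proves conservation of the Borel transform of $\nu_w$, and this is equivalent to conservation of $\nu_w$.

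\textbf{Step 2: passage to the limit.} Fix $t<T_+(u_0)$ and choose $T\in(t,T_+)$. Take the smooth approximants $u_n^{(T)}$ from Definition~\ref{def:flow-admissible}. Then $u_n^{(T)}(0)\to u_0$ and $u_n^{(T)}(t)\to u(t)$ in $L^2_+$. By Step~1, $\nu_{u_n^{(T)}(t)}=\nu_{u_n^{(T)}(0)}$. Applying \eqref{eq:moving-cyclic-pairing} at both times, with $F\in C_c\subset C_0$, yields the desired equality of pairings in the limit.

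\textbf{Main obstacle.} The main obstacle is Step~1: one must justify, for $H^\infty_+$ solutions with possibly supercritical mass, the unitary conjugation $L_{w(t)}=U(t)L_{w(0)}U(t)^\ast$ at the level of self-adjoint operators, including equality of domains. The first attempt would be the resolvent argument. For $z\notin\mathbb{R}$, the function $t\mapsto U(t)^\ast(L_{w(t)}-z)^{-1}U(t)$ is differentiable in operator norm, because $L_{w(t)}-L_{w(s)}$ is a bounded perturbation of $D$ given by the smooth potential. Its derivative vanishes by the Lax equation, which avoids any domain issues.
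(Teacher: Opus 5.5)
Your argument is correct, and it takes a genuinely different route from the paper. The paper proves the theorem in one line from Theorem~\ref{thm:P0min-import}. Since $W_t^{u_0}u_0=u(t)$ and $W_t^{u_0}$ is unitary from $\mathcal{K}_{u_0}$ onto $\mathcal{K}_{u(t)}$ and intertwines $E_{L_{u_0}}(B)$ with $E_{L_{u(t)}}(B)$, one gets $\langle E_{L_{u(t)}}(B)u(t),u(t)\rangle=\langle E_{L_{u_0}}(B)u_0,u_0\rangle$ directly for every Borel $B$. You bypass the KLV boundary transform entirely. You prove conservation on smooth orbits from the Lax pair, pass to the limit with the norm-resolvent continuity of Lemma~\ref{lem:continuous-calculus}, and conclude by Riesz (or Stieltjes) uniqueness.

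The paper's route is immediate once the transport operator is available, and that operator is reused later (exact atom transport, the boundary channel identity). However, the sketched proof of Theorem~\ref{thm:P0min-import} itself says that ``conservation of the cyclic measure constructs a canonical unitary spectral transport on $\mathcal{K}_{u_0}$.'' So in the paper's logic the statement is derived from a result whose proof already presupposes it. Your argument needs only the smooth Lax identity and \cite[Proposition~2.2]{Killip-2025-CAMS}. It therefore gives an independent, non-circular proof, and is exactly what is needed to ground that step of Theorem~\ref{thm:P0min-import}.

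One correction to Step~1. With the bounded operator $B_w$ of \cite[Lemma~2.3]{Gerard-2024-CPAM}, the flow is $\partial_tw=B_ww-iL_w^2w$, not $\partial_tw=B_ww$. Hence your propagator gives
\[
U(t)^\ast w(t)=e^{-itL_{w(0)}^2}w(0)
\]
rather than $w(t)=U(t)w(0)$. The pairing is still conserved because $e^{-itL^2}$ commutes with $F(L)$.

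The exact identity $\partial_tw=\mathbf{B}_ww$ holds instead for the gauge $\mathbf{B}_w=B_w-iL_w^2=i\partial_x^2+2w\partial_x\Pi_+(\overline{w}\,\cdot)$ of \eqref{eq:M8-actual-generator}. This operator is skew-symmetric but unbounded, so the ``operator-norm differentiable'' propagator argument in your last paragraph does not apply to it as written.

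Your fallback, differentiating $\langle (L_w-z)^{-1}w,w\rangle$ directly, works with either gauge. The skew-adjoint part cancels against $\tfrac{d}{dt}(L_w-z)^{-1}=[\mathbf{B}_w,(L_w-z)^{-1}]$. The $-iL_w^2$ term cancels because $L_w^2$ is self-adjoint and commutes with the resolvent. I would present that version as the proof.
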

\begin{proof}
By \eqref{eq:cyclic-vector-flow} and the unitary spectral intertwining \eqref{eq:cyclic-intertwining}, for every Borel set $B\subset\mathbb{R}$,
\[
\langle E_{L_{u(t)}}(B)u(t),u(t)\rangle=\langle W_t^{u_0}E_{L_{u_0}}(B)u_0, W_t^{u_0}u_0\rangle=\langle E_{L_{u_0}}(B)u_0, u_0\rangle.
\]
This proves \eqref{eq:measure-conservation}.
\end{proof}

\begin{lemma}[Static cyclic spectral formula]\label{lem:static-cyclic-formula}
For every $v\in L^2_+(\mathbb{R})$, the point spectrum of $L_v$ is finite and simple, the singular continuous subspace is trivial, and
\begin{equation}\label{eq:FR-measure}
\nu_v(B)=2\pi\,\#\bigl(\sigma_{\mathrm{p}}(L_v)\cap B\bigr)+\frac{1}{2\pi}\int_{B\cap[0,\infty)}|\beta_v(\lambda)|^2d\lambda
\end{equation}
for every Borel set $B\subset\mathbb{R}$. Consequently,
\begin{equation}\label{eq:atom-criterion}
\mu\in\sigma_{\mathrm{p}}(L_v)\quad\Longleftrightarrow\quad\nu_v(\{\mu\})=2\pi.
\end{equation}
\end{lemma}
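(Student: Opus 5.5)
The formula \eqref{eq:FR-measure} splits into three pieces of spectral information about $L_v$: the point part, the absence of singular continuous spectrum, and the absolutely continuous part. I would treat them separately and then assemble them using the Lebesgue decomposition $\nu_v=\nu_v^{\mathrm{pp}}+\nu_v^{\mathrm{sc}}+\nu_v^{\mathrm{ac}}$ of the cyclic measure.

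\emph{Point spectrum.} For the point part I would rely on the structural facts about $L_v$ already available for rough potentials in the Frank--Read theory and in the G\'erard--Lenzmann/KLV framework. Let $L_v\varphi=\mu\varphi$ with $\varphi\in H^{1/2}_+(\mathbb{R})$. The commutator identity $[\mathsf{X},L_v]$ formally equals $i\,\mathrm{Id}$ plus a rank-one term built from $v$, and the KLV boundary trace $I_+$ enters here. Taking its expectation in $\varphi$ gives
\[
\langle v,\varphi\rangle\neq 0\qquad\text{and}\qquad |\langle \varphi,v\rangle|^2=2\pi\|\varphi\|^2 .
\]
The first relation excludes multiplicity: if $\mu$ had a two-dimensional eigenspace, some nonzero combination would be orthogonal to $v$, contradicting the first relation. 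Since the eigenspace is then one-dimensional, $E_{L_v}(\{\mu\})v$ is the projection of $v$ onto $\varphi$. Hence
\[
\nu_v(\{\mu\})=\frac{|\langle v,\varphi\rangle|^2}{\|\varphi\|^2}=2\pi .
\]
Finiteness of the point spectrum then follows from $\nu_v(\mathbb{R})=\|v\|^2<\infty$ in \eqref{eq:cyclic-total-mass}: there can be at most $\|v\|^2/2\pi$ eigenvalues.

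\emph{Continuous spectrum.} For the continuous part I would use the Plancherel relations \eqref{eq:distorted-plancherel}, together with the fact that $\Phi_v$ intertwines $L_v P_{\mathrm{ac}}$ with multiplication by $\lambda$ on $\mathbb{R}_+$. This gives
\[
\langle E(B)P_{\mathrm{ac}}v,v\rangle=\int_{B\cap[0,\infty)}|\Phi_vP_{\mathrm{ac}}v|^2\,d\lambda=\frac{1}{2\pi}\int_{B\cap[0,\infty)}|\beta_v|^2\,d\lambda ,
\]
by \eqref{eq:beta}. The absence of singular continuous spectrum I would take from the Frank--Read limiting absorption principle, which also shows that $\sigma_{\mathrm{ac}}\subset[0,\infty)$.

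\emph{Assembly.} Summing the three pieces gives \eqref{eq:FR-measure}. The equivalence \eqref{eq:atom-criterion} is then immediate: the absolutely continuous part has no atoms, so $\nu_v(\{\mu\})$ equals $2\pi$ or $0$ according to whether $\mu$ is an eigenvalue.

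The main obstacle is the atom identity $\nu_v(\{\mu\})=2\pi$ at the $L^2$ level, and especially for embedded eigenvalues $\mu\ge0$. There the eigenfunction need not lie in $\mathrm{Dom}(\mathsf{X})$, so the commutator argument is only formal. I would first try to justify it by approximating $v$ by smooth $v_n$ and using Lemma~\ref{lem:continuous-calculus}, but atoms are not stable under such limits. I therefore expect to fall back on the Frank--Read characterization of eigenvalues via the boundary trace, which works directly for $v\in L^2_+(\mathbb{R})$.
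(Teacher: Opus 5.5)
Your proposal is correct and follows essentially the paper's route: the Frank--Read overlap identity $|\langle v,\psi\rangle|^2=2\pi\|\psi\|^2$ makes each atom equal to $2\pi$, Frank--Read's absence of singular continuous spectrum together with the distorted Plancherel relation gives the absolutely continuous density $\frac{1}{2\pi}|\beta_v|^2$, and summing yields \eqref{eq:FR-measure} and \eqref{eq:atom-criterion}. The only difference is that you derive simplicity and finiteness from the overlap identity and $\nu_v(\mathbb{R})=\|v\|^2$, a valid Bessel-type shortcut, whereas the paper cites Frank--Read for both; as you note yourself, the commutator heuristic must be replaced by the rigorous Frank--Read lemma at embedded eigenvalues.
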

\begin{proof}
The finiteness and simplicity of the point spectrum follow from \cite[Lemma~3.3 and the discussion following it]{Frank-2026-arXiv}; the absence of singular continuous spectrum is \cite[Corollary~5.2]{Frank-2026-arXiv}. Let $\psi$ be an eigenfunction at $\mu$. The overlap identity $|\langle v, \psi\rangle|^2=2\pi\|\psi\|_{L^2(\mathbb{R})}^2$ is \cite[Lemma~3.3]{Frank-2026-arXiv}. Since the eigenspace is one dimensional, its orthogonal projection is rank one, whence
\begin{equation}\label{eq:rank-one-atom}
\|E_{L_v}(\{\mu\})v\|_{L^2(\mathbb{R})}^2=\frac{|\langle v, \psi\rangle|^2}{\|\psi\|_{L^2(\mathbb{R})}^2}=2\pi.
\end{equation}
For the absolutely continuous part, the distorted Fourier diagonalization \cite[Theorem~5.3]{Frank-2026-arXiv} and \eqref{eq:beta} give
\begin{equation}\label{eq:ac-cyclic-density}
\|E_{L_v}(B)P_{\mathrm{ac}}(L_v)v\|_{L^2(\mathbb{R})}^2=\|\mathbf{1}_{B\cap[0,\infty)}\Phi_vP_{\mathrm{ac}}(L_v)v\|_{L^2(\mathbb{R}_+)}^2=\frac{1}{2\pi}\int_{B\cap[0,\infty)}|\beta_v(\lambda)|^2d\lambda.
\end{equation}
The spectral subspaces are mutually orthogonal. Summing \eqref{eq:rank-one-atom} over the eigenvalues in $B$ and adding \eqref{eq:ac-cyclic-density} proves \eqref{eq:FR-measure}. Its absolutely continuous summand assigns zero mass to a singleton, which proves \eqref{eq:atom-criterion}.
\end{proof}

\begin{thm}[Fixed orthogonal spectral channels]\label{thm:channels}
Write $\sigma_{\mathrm{p}}(L_{u_0})=\{\mu_1<\cdots<\mu_N\}$. Then, for every $t\ge0$,
\begin{equation}\label{eq:fixed-spectrum}
\sigma_{\mathrm{p}}(L_{u(t)})=\{\mu_1,\ldots,\mu_N\}.
\end{equation}
The vectors
\begin{equation}\label{eq:channels}
b_j(t)=E_{L_{u(t)}}(\{\mu_j\})u(t),\qquad r_{\mathrm{ac}}(t)=P_{\mathrm{ac}}(L_{u(t)})u(t)
\end{equation}
satisfy
\begin{equation}\label{eq:channel-decomposition}
u(t)=\sum_{j=1}^Nb_j(t)+r_{\mathrm{ac}}(t),
\end{equation}
\begin{equation}\label{eq:channel-orthogonality}
\|b_j(t)\|_{L^2(\mathbb{R})}^2=2\pi,\quad b_j(t)\perp b_k(t)\ (j\ne k),\quad b_j(t)\perp r_{\mathrm{ac}}(t),
\end{equation}
and
\begin{equation}\label{eq:channel-eigen}
L_{u(t)}b_j(t)=\mu_jb_j(t).
\end{equation}
Moreover,
\begin{equation}\label{eq:beta-density-conservation}
|\beta_{u(t)}(\lambda)|^2=|\beta_{u_0}(\lambda)|^2\quad\text{for a.e. }\lambda>0,
\end{equation}
\begin{equation}\label{eq:continuous-mass}
\|r_{\mathrm{ac}}(t)\|_{L^2(\mathbb{R})}^2=\frac{1}{2\pi}\|\beta_{u_0}\|_{L^2(\mathbb{R}_+)}^2,
\end{equation}
and all channel maps in \eqref{eq:channels} are strongly continuous.
\end{thm}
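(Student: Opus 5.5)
The plan is to derive everything from the conservation of the cyclic measure (Theorem~\ref{thm:measure-conservation}) combined with the static cyclic spectral formula (Lemma~\ref{lem:static-cyclic-formula}), applied at each time $t$ to $v=u(t)$.

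\emph{Step 1: the eigenvalues do not move.} By \eqref{eq:atom-criterion}, $\mu\in\sigma_{\mathrm p}(L_{u(t)})$ if and only if $\nu_{u(t)}(\{\mu\})=2\pi$. Theorem~\ref{thm:measure-conservation} says $\nu_{u(t)}=\nu_{u_0}$, and applying \eqref{eq:atom-criterion} once more, now at $v=u_0$, gives \eqref{eq:fixed-spectrum}.

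\emph{Step 2: the density of $\beta$ is conserved.} Subtract the atomic parts of \eqref{eq:FR-measure} at $v=u(t)$ and at $v=u_0$; by Step 1 these atomic parts are identical. Hence the measures $|\beta_{u(t)}|^2d\lambda$ and $|\beta_{u_0}|^2d\lambda$ agree on every Borel subset of $[0,\infty)$. Radon--Nikodym uniqueness then gives \eqref{eq:beta-density-conservation}, and \eqref{eq:continuous-mass} follows from \eqref{eq:ac-cyclic-density} with $B=\mathbb R$.

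\emph{Step 3: the channel decomposition.} Since $L_{u(t)}$ has no singular continuous part and its point spectrum is $\{\mu_j\}$, we have
\[
\mathrm{Id}=\sum_j E_{L_{u(t)}}(\{\mu_j\})+P_{\mathrm{ac}}(L_{u(t)}).
\]
Applying this to $u(t)$ gives \eqref{eq:channel-decomposition}. Orthogonality holds because the spectral projections are mutually orthogonal. The normalization $\|b_j(t)\|^2=2\pi$ is \eqref{eq:rank-one-atom}, and \eqref{eq:channel-eigen} holds because $b_j(t)$ lies in the range of the eigenprojection.

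\emph{Step 4: strong continuity (the main obstacle).} The approach is to write the eigenprojection through a continuous function that isolates the isolated atom. Fix $j$ and $\delta>0$ smaller than half the gap to the other $\mu_k$. Let $F_\delta\in C_c(\mathbb R)$ equal $1$ at $\mu_j$, be supported in $(\mu_j-\delta,\mu_j+\delta)$, and satisfy $0\le F_\delta\le1$. Then
\[
\|F_\delta(L_{u(t)})u(t)-b_j(t)\|^2\le \frac{1}{2\pi}\int_{(\mu_j-\delta,\mu_j+\delta)\setminus\{\mu_j\}}|\beta_{u_0}|^2\,d\lambda .
\]
This uses only the conserved measure, so the bound is uniform in $t$, and it tends to $0$ as $\delta\to0$ by dominated convergence. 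Here the conservation of the measure is essential, because the embedded eigenvalue may sit inside the continuous spectrum and a direct resolvent argument would fail.

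On the other hand, $t\mapsto F_\delta(L_{u(t)})u(t)$ is continuous. This follows from \eqref{eq:moving-vector-calculus}: since $u\in C([0,T);L^2_+)$, every convergent sequence $u(t_n)\to u(t)$ satisfies the lemma's hypothesis. A uniform limit of continuous maps is continuous, so each $b_j$ is strongly continuous. Finally, $r_{\mathrm{ac}}(t)=u(t)-\sum_j b_j(t)$ is continuous as well.
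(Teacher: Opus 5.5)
Your proposal is correct and follows essentially the same route as the paper: the atom criterion plus the conserved cyclic measure give the fixed point spectrum and the conserved density $|\beta|^2$, the spectral partition gives the orthogonal channel decomposition, and strong continuity comes from a $C_c$ cutoff around $\mu_j$ whose error is bounded uniformly in time by the conserved measure, combined with the continuous functional calculus along $u(t_n)\to u(t)$. The only differences are cosmetic. You phrase the last step as a uniform limit of continuous maps, where the paper runs an explicit $\varepsilon$-argument along sequences. You also read off \eqref{eq:continuous-mass} from \eqref{eq:ac-cyclic-density} with $B=\mathbb{R}$, where the paper invokes the distorted Plancherel identity directly.
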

\begin{proof}
By Lemma~\ref{lem:static-cyclic-formula} and Theorem~\ref{thm:measure-conservation}, for every $\mu\in\mathbb{R}$,
\[
\mu\in\sigma_{\mathrm{p}}(L_{u(t)})\Longleftrightarrow \nu_{u(t)}(\{\mu\})=2\pi\Longleftrightarrow \nu_{u_0}(\{\mu\})=2\pi\Longleftrightarrow \mu\in\sigma_{\mathrm{p}}(L_{u_0}).
\]
This proves \eqref{eq:fixed-spectrum}, including zero and positive embedded eigenvalues.

The singular continuous projection is zero, and the point spectrum is the finite set in \eqref{eq:fixed-spectrum}. The spectral theorem therefore gives the strong operator identity
\begin{equation}\label{eq:spectral-partition}
\mathrm{Id}=\sum_{j=1}^NE_{L_{u(t)}}(\{\mu_j\})+P_{\mathrm{ac}}(L_{u(t)}).
\end{equation}
Applying \eqref{eq:spectral-partition} to $u(t)$ proves \eqref{eq:channel-decomposition}. Orthogonality of spectral projections gives the orthogonality assertions in
\eqref{eq:channel-orthogonality}, and \eqref{eq:rank-one-atom} gives $\|b_j(t)\|_{L^2(\mathbb{R})}^2=\nu_{u(t)}(\{\mu_j\})=2\pi$. The range of $E_{L_{u(t)}}(\{\mu_j\})$ is the eigenspace at $\mu_j$ and lies in $\mathrm{Dom}(L_{u(t)})$, which proves \eqref{eq:channel-eigen}.

Equality of the cyclic measures, followed by comparison of the absolutely continuous Radon--Nikodym densities in \eqref{eq:FR-measure}, gives \eqref{eq:beta-density-conservation}. Equations \eqref{eq:distorted-plancherel} and \eqref{eq:beta} then give
\[
\|r_{\mathrm{ac}}(t)\|_{L^2(\mathbb{R})}^2=\|\Phi_{u(t)}P_{\mathrm{ac}}(L_{u(t)})u(t)\|_{L^2(\mathbb{R}_+)}^2=\frac{1}{2\pi}\|\beta_{u(t)}\|_{L^2(\mathbb{R}_+)}^2=\frac{1}{2\pi}\|\beta_{u_0}\|_{L^2(\mathbb{R}_+)}^2,
\]
which proves \eqref{eq:continuous-mass}.

Fix $j$, let $t_n\to t$, and choose $\delta>0$ so that $(\mu_j-\delta,\mu_j+\delta)$ isolates $\mu_j$ from the other
point labels. Choose $\chi_\delta\in C_c(\mathbb{R})$ with
\[
0\le\chi_\delta\le1,\qquad \chi_\delta(\mu_j)=1,\qquad\operatorname{supp}\chi_\delta\subset(\mu_j-\delta,\mu_j+\delta).
\]
Since $u(t_n)\to u(t)$, Lemma~\ref{lem:continuous-calculus} gives
\begin{equation}\label{eq:cutoff-continuity}
\chi_\delta(L_{u(t_n)})u(t_n)\longrightarrow\chi_\delta(L_{u(t)})u(t)\quad\text{in }L^2(\mathbb{R}).
\end{equation}
For every $s\ge0$, the spectral theorem and the common cyclic measure give
\[
\begin{aligned}
\|\chi_\delta(L_{u(s)})u(s)-b_j(s)\|_{L^2(\mathbb{R})}^2&=\int_\mathbb{R}\left|\chi_\delta(\lambda)-\mathbf{1}_{\{\mu_j\}}(\lambda)\right|^2d\nu_{u_0}(\lambda)\\
&\le\nu_{u_0}\bigl((\mu_j-\delta,\mu_j+\delta)\setminus\{\mu_j\}\bigr).
\end{aligned}
\]
Continuity from above for the finite measure $\nu_{u_0}$ makes the last quantity tend to zero as $\delta\to0$. Given $\varepsilon>0$, first choose $\delta$ so that its square root is at most $\varepsilon$.  Then use \eqref{eq:cutoff-continuity} to obtain, for all sufficiently large $n$,
\[
\|b_j(t_n)-b_j(t)\|_{L^2(\mathbb{R})}\le2\varepsilon+\|\chi_\delta(L_{u(t_n)})u(t_n)-\chi_\delta(L_{u(t)})u(t)\|_{L^2(\mathbb{R})}\le3\varepsilon.
\]
Thus $b_j(t_n)\to b_j(t)$. Finally, $r_{\mathrm{ac}}(t)=u(t)-\sum_{j=1}^Nb_j(t)$ and finiteness of $N$ prove the continuity of $r_{\mathrm{ac}}$.
\end{proof}
\subsection{The Dirichlet boundary row}
Let
\[
S_+=\sigma_{\mathrm{p}}(L_{u_0})\cap[0,\infty),\qquad \mathcal{V}_0=C_c^\infty(\mathbb{R}_+),\qquad \mathcal{V}_{\mathrm{p}}=C_c^\infty(\mathbb{R}_+\setminus S_+).
\]
Enumerate $S_+\cap\mathbb{R}_+=\{\mu_1^+,\ldots,\mu_m^+\}$.
\begin{condition}[Boundary row extension in resolvent form]\label{hyp:M3-actual-form}
There are $\zeta_\ast\in\mathbb{C}_+$, a dense linear space $\mathcal{Y}\subset\mathcal{H}_c$, and $C_\ast<\infty$ such that, for $h\in\mathcal{Y}$ and $G_h=(T_D-\zeta_\ast)^{-1}h$, the functional $\mathcal{L}_h(v)=\int_0^\infty\overline{\beta_{u_0}}G_h\overline{\mathcal{C}_H^+v}d\lambda,\qquad v\in\mathcal{V}_0$, satisfies
\begin{equation}\label{eq:M3-form-bound}
|\mathcal{L}_h(v)|\le C_\ast\|h\|_{L^2(\mathbb{R}_+)}\|v\|_{L^2(\mathbb{R}_+)}.
\end{equation}
The punctured boundary row is also assumed to be compatible with this functional: after extending $\mathcal{L}_h$ to $L^2(\mathbb{R}_+)$, its Riesz representative $Z_h$ satisfies $iG_h^\prime+\beta_{u_0}Z_h=\zeta_\ast G_h+h$ in the sense of distributions on $\mathbb{R}_+\setminus S_+$.
\end{condition}

\begin{prop}[One-resolvent enhanced closure]\label{prop:M3-enhanced-closure}
Under the assumptions of Proposition~\ref{prop:static-actual-graph} and Condition~\ref{hyp:M3-actual-form}, the Riesz representative is unique and
\[
Z_h=\mathcal{C}_H^+(\overline{\beta_{u_0}}G_h),\qquad \|Z_h\|_{L^2(\mathbb{R}_+)}\le C_\ast\|h\|_{L^2(\mathbb{R}_+)}.
\]
The lift extends from $\mathcal{Y}$ to every $h\in\mathcal{H}_c$, and
\begin{equation}\label{eq:M3-domain-inclusion}
\mathrm{Dom}(T_D)\subset\mathrm{Dom}(A_{\max,\beta_{u_0}}), \qquad T_DG=A_{\max,\beta_{u_0}}G.
\end{equation}
In particular,
\begin{equation}\label{eq:M3-graph-bound}
\|Z_G\|_{L^2(\mathbb{R})}+|\Gamma_-G|+|\Gamma_+G|\le C\|G\|_{\operatorname{graph}(T_D)}.
\end{equation}
\end{prop}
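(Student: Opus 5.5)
The argument has three steps: identify the representative, extend the lift by density, and then read off the domain inclusion together with the graph bound.

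\emph{Uniqueness and identification.} Fix $h\in\mathcal{Y}$ and set $G_h=(T_D-\zeta_\ast)^{-1}h$. By \eqref{eq:M3-form-bound} and density of $\mathcal{V}_0$ in $L^2(\mathbb{R}_+)$, the functional $\mathcal{L}_h$ extends uniquely to a bounded functional on $L^2(\mathbb{R}_+)$. By the Riesz theorem it has a unique representative $Z_h$, with $\mathcal{L}_h(v)=\langle Z_h,v\rangle$ and $\|Z_h\|\le C_\ast\|h\|$.

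To identify $Z_h$, take $v\in\mathcal{V}_0$ and use the duality definition \eqref{eq:CH-distribution}. Since $\overline{\beta_{u_0}}G_h\in L^1(\mathbb{R}_+)$ (a product of two $L^2$ functions), we get
\[
\mathcal{L}_h(v)=\int_0^\infty \overline{\beta_{u_0}}G_h\,\overline{\mathcal{C}_H^+v}\,d\lambda=\langle \mathcal{C}_H^+(\overline{\beta_{u_0}}G_h),v\rangle_{\mathcal{D}',\mathcal{D}}.
\]
This uses the self-adjointness of $\mathcal{C}_H^+$ from \eqref{eq:CH-properties}. Hence the distribution $\mathcal{C}_H^+(\overline{\beta_{u_0}}G_h)$ agrees with $Z_h$ on test functions, so it lies in $L^2(\mathbb{R}_+)$ and equals $Z_h$. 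Uniqueness follows because two representatives differ by an element orthogonal to the dense set $\mathcal{V}_0$.

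\emph{Extension to all of $\mathcal{H}_c$.} The map $h\mapsto Z_h$ is linear on $\mathcal{Y}$ and bounded by $C_\ast$, so it extends by continuity to all of $\mathcal{H}_c$. Because $(T_D-\zeta_\ast)^{-1}$ is bounded (Proposition~\ref{prop:static-actual-graph}, maximal dissipativity with $\zeta_\ast\in\mathbb{C}_+$), $h_n\to h$ gives $G_{h_n}\to G_h$ in $L^2$. Consequently $\overline{\beta_{u_0}}G_{h_n}\to\overline{\beta_{u_0}}G_h$ in $L^1$, and hence in $\mathcal{D}'$ after applying $\mathcal{C}_H^+$ by duality. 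The $L^2$-limit of $Z_{h_n}$ therefore coincides with $\mathcal{C}_H^+(\overline{\beta_{u_0}}G_h)$, so the formula persists.

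The distributional identity $iG_h'+\beta_{u_0}Z_h=\zeta_\ast G_h+h$ on $\mathbb{R}_+\setminus S_+$ also passes to the limit. The terms $G_{h_n}$, $h_n$ converge in $L^2$, and $\beta_{u_0}Z_{h_n}$ converges in $L^1_{\mathrm{loc}}$. As a result, $G_h'\in L^1_{\mathrm{loc}}+L^2$ on each component of $\mathbb{R}_+\setminus S_+$, and the one-sided traces at $0$ and at the punctures exist.

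\emph{Domain inclusion and graph bound.} Every $G\in\mathrm{Dom}(T_D)$ has the form $G_h$ with $h=(T_D-\zeta_\ast)G$. The identity above then says exactly that $G$ belongs to the maximal enhanced graph: $G\in L^2$, $Z_G=\mathcal{C}_H^+(\overline{\beta_{u_0}}G)\in L^2$, and $i\partial_\lambda G+\beta_{u_0}Z_G\in L^2$ off the punctures. That is the definition of $\mathrm{Dom}(A_{\max,\beta_{u_0}})$, with $A_{\max}G=\zeta_\ast G+h=T_DG$, which proves \eqref{eq:M3-domain-inclusion}.

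For \eqref{eq:M3-graph-bound}, $\|h\|\le\|T_DG\|+|\zeta_\ast|\|G\|$ gives $\|Z_G\|\le C\|G\|_{\mathrm{graph}}$. The traces $\Gamma_\pm G$ come from integrating $G'=-i(\zeta_\ast G+h-\beta_{u_0}Z_G)$ on a unit window next to each endpoint or puncture. This yields $|\Gamma_\pm G|\lesssim \|G\|_{L^2}+\|h\|+\|\beta_{u_0}\|_{L^2}\|Z_G\|_{L^2}$.

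The main obstacle is this last trace estimate near the points of $S_+$. There $\beta_{u_0}Z_G$ is only $L^1$, so the primitive is merely continuous, and one must make sure the traces $\Gamma_\pm$ are the ones used in the enhanced graph (Theorem~\ref{thm:M2-scalar-graph}). I would handle it by the local $L^1$ integration just described, which gives bounded traces with constants depending only on $\|\beta_{u_0}\|_{L^2}$ and the gaps of $S_+$.
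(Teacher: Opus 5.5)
Your proposal is correct and follows the paper's proof. Both use the Riesz theorem plus the duality definition \eqref{eq:CH-distribution} for the identification, a Cauchy/continuity argument for the extension (the paper cites closedness from Theorem~\ref{thm:M2-scalar-graph}, which you reprove inline), $\mathrm{Ran}(T_D-\zeta_\ast)^{-1}=\mathrm{Dom}(T_D)$ for the inclusion, and the graph-norm isomorphism together with the bounded trace for \eqref{eq:M3-graph-bound}. Your window integration is exactly the paper's estimate \eqref{eq:M2-global-Linfty}, so the step you flag as the main obstacle is already settled: $\beta_{u_0}Z_G\in L^1$ is enough to make $G$ absolutely continuous up to each puncture.
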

\begin{proof}
The test space $\mathcal{V}_0$ is dense in $L^2(\mathbb{R}_+)$, so the Riesz representation theorem yields a unique $Z_h$. The distributional definition \eqref{eq:CH-distribution} identifies it with
$\mathcal{C}_H^+(\overline{\beta_{u_0}}G_h)$. Maximal dissipativity of $T_D$, the bound \eqref{eq:M3-form-bound}, and linearity imply that the three entries $G_h,\zeta_\ast G_h+h,Z_h$ are uniformly Cauchy whenever $h$ is Cauchy in $L^2(\mathbb{R}_+)$. Closedness in Theorem~\ref{thm:M2-scalar-graph} therefore extends the lift to all sources. Since $\mathrm{Ran}[(T_D-\zeta_\ast)^{-1}]=\mathrm{Dom}(T_D)$, this proves \eqref{eq:M3-domain-inclusion}. Finally, the inverse $T_D-\zeta_\ast:\mathrm{Dom}(T_D)\to L^2(\mathbb{R}_+)$ is a Banach isomorphism with respect to the graph norm; combining this with the boundedness of the scalar trace on the enhanced graph proves \eqref{eq:M3-graph-bound}.
\end{proof}

\begin{prop}[Hardy graph bounds and $L^1$ control for vanishing trace]\label{prop:M4a-localization}
For the static graph in Proposition~\ref{prop:static-actual-graph}, define $b_DG:=PTJG, \qquad G\in\mathrm{Dom}(T_D)$. Then
\begin{equation}\label{eq:point-graph-output-bound}
\|b_DG\|_{\mathcal{H}}\le C_D\|G\|_{\operatorname{graph}(T_D)}.
\end{equation}
For \(f=\mathscr{U}_{u_0}^{-1}JG\), one has
\begin{equation}\label{eq:physical-domain-bound}
f\in\mathrm{Dom}(\mathsf{X})\cap P_{\mathrm{ac}}(L_{u_0})L^2_+(\mathbb{R}),\qquad\|f\|_{\mathrm{Dom}(\mathsf{X})}\le C_D^\prime\|G\|_{\operatorname{graph}(T_D)},
\end{equation}
and $G=\Phi_{u_0}f \quad\text{a.e. on }\mathbb{R}_+$. If $I_+f=0$, then the zero extension of $\widehat{f}$ belongs to $H^1(\mathbb{R})$, and consequently
\begin{equation}\label{eq:zero-row-L1}
\|f\|_{L^1(\mathbb{R})}\le C\bigl(\|f\|_{L^2(\mathbb{R})}+\|\mathsf{X}f\|_{L^2(\mathbb{R})}\bigr)\le C_D^{\prime\prime}\|G\|_{\operatorname{graph}(T_D)}.
\end{equation}
\end{prop}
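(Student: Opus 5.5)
The plan is to read off all three assertions from the block structure of $T=\mathscr{U}_{u_0}\mathsf{X}\mathscr{U}_{u_0}^{-1}$ together with the fact that $\mathscr{U}_{u_0}$ is unitary. By \eqref{eq:static-actual-domain}, $G\in\mathrm{Dom}(T_D)$ means that $JG\in\mathrm{Dom}(T)$, so that $f:=\mathscr{U}_{u_0}^{-1}JG\in\mathrm{Dom}(\mathsf{X})$ and
\[
TJG=\mathscr{U}_{u_0}\mathsf{X}f=(b_DG,\;T_DG)\in\mathcal{E}\oplus\mathcal{H}_c .
\]
Unitarity then gives
\[
\|\mathsf{X}f\|_{L^2}^2=\|b_DG\|_{\mathcal{E}}^2+\|T_DG\|_{L^2(\mathbb{R}_+)}^2 .
\]
Hence the whole proposition reduces to one a priori estimate: the point output $b_DG$ must be controlled by the graph norm of $T_D$. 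Once \eqref{eq:point-graph-output-bound} is proved, \eqref{eq:physical-domain-bound} follows at once, with $\|f\|_{L^2}=\|G\|$ and $\|\mathsf{X}f\|\le (1+C_D)\|G\|_{\operatorname{graph}(T_D)}$.

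To bound $b_DG$, I will use that $\mathcal{E}$ is finite dimensional and write
\[
(b_DG)_j=\langle \mathsf{X}f,\psi_j\rangle ,
\]
where $\psi_j$ is the normalized eigenfunction at $\mu_j$. If $\psi_j\in\mathrm{Dom}(\mathsf{X})$, I move $\mathsf{X}$ across, taking care with the boundary term coming from $I_+$. The result is
\[
(b_DG)_j=\langle f,\mathsf{X}\psi_j\rangle+c\,I_+f\,\overline{\widehat{\psi_j}(0^+)} .
\]
The first term is bounded by $\|G\|$. For the second I will use Proposition~\ref{prop:M3-enhanced-closure}: \eqref{eq:M3-graph-bound} bounds the trace $|\Gamma_+G|$, and on $\mathrm{Dom}(T_D)$ this trace is identified with $I_+f$ through the row \eqref{eq:static-actual-row}.

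The main obstacle is the case where $\psi_j\notin\mathrm{Dom}(\mathsf{X})$, which can happen for embedded eigenvalues. In that case I would argue abstractly. The map $G\mapsto b_DG$ is defined on the whole Banach space $(\mathrm{Dom}(T_D),\|\cdot\|_{\operatorname{graph}})$. It is closed: if $G_n\to G$ in graph norm and $b_DG_n\to a$, then $TJG_n\to(a,T_DG)$, and since $T$ is closed we get $a=b_DG$. The closed graph theorem then gives \eqref{eq:point-graph-output-bound} directly, with no assumption on $\psi_j$. I will rely on this argument, and keep the explicit computation only as a remark for regular eigenfunctions.

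The identity $G=\Phi_{u_0}f$ comes from the normalization $Q\mathscr{U}_{u_0}=\Phi_{u_0}P_{\mathrm{ac}}(L_{u_0})$. Since $P\mathscr{U}_{u_0}f=0$, the vector $f$ is orthogonal to every eigenvector, so $f\in P_{\mathrm{ac}}(L_{u_0})L^2_+$ because the singular continuous part is trivial by Lemma~\ref{lem:static-cyclic-formula}. Therefore $G=Q\mathscr{U}_{u_0}f=\Phi_{u_0}f$.

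For \eqref{eq:zero-row-L1}, suppose $I_+f=0$. Then $\widehat f|_{\mathbb{R}_+}\in H^1(\mathbb{R}_+)$ vanishes at $0^+$, so its zero extension lies in $H^1(\mathbb{R})$ with derivative $-i\widehat{\mathsf{X}f}$ on $\mathbb{R}_+$. Hence $f$ and $xf$ both lie in $L^2$ with norms $\|f\|$ and $\|\mathsf{X}f\|$, and Cauchy--Schwarz gives
\[
\|f\|_{L^1}\le \|(1+|x|)^{-1}\|_{L^2}\,\|(1+|x|)f\|_{L^2}\le C\bigl(\|f\|_{L^2}+\|\mathsf{X}f\|_{L^2}\bigr).
\]
Combining this with the previous bound completes the proof.
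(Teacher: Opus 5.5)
Your proof is correct and follows essentially the same route as the paper. You apply the closed graph theorem to $b_D$ on the graph-norm Banach space of $T_D$, use unitarity of $\mathscr{U}_{u_0}$ to pass from $TJG=(b_DG,T_DG)$ to the bound on $\mathsf{X}f$, use the normalization $Q\mathscr{U}_{u_0}=\Phi_{u_0}P_{\mathrm{ac}}(L_{u_0})$ for $G=\Phi_{u_0}f$, and apply Cauchy--Schwarz against $\langle x\rangle^{-1}$ once the zero extension of $\widehat f$ lies in $H^1(\mathbb{R})$.

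You are right to drop the explicit eigenfunction computation. Besides needing $\psi_j\in\mathrm{Dom}(\mathsf{X})$, it would invoke Proposition~\ref{prop:M3-enhanced-closure}, which rests on Condition~\ref{hyp:M3-actual-form}, not assumed here. Moreover, the traces $\Gamma_\pm$ there are interior puncture traces, not the endpoint trace you would need to identify with $I_+f$.
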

\begin{proof}
If $G_n\to G$, $T_DG_n\to H$, and $b_DG_n\to b$, then $TJG_n=JT_DG_n+b_DG_n\longrightarrow JH+b$. Since both $T$ and $T_D$ are closed, the limit belongs to the graph of $T$, and therefore $TJG=JH+b=JT_DG+b_DG$. Thus $b_D$ is a closed linear map from the graph-norm Banach space into $\mathcal{H}$, and the bound \eqref{eq:point-graph-output-bound} follows from the closed graph theorem.

Equation \eqref{eq:static-actual-domain} gives the first assertion of \eqref{eq:physical-domain-bound}. Since $T=\mathscr{U}_{u_0}\mathsf{X}\mathscr{U}_{u_0}^{-1}$, we have
\[
\|\mathsf{X}f\|_{L^2(\mathbb{R})}=\|TJG\|_{L^2(\mathbb{R})}\le \|T_DG\|_{L^2(\mathbb{R})}+\|b_DG\|_{\mathcal{H}}.
\]
The distorted Plancherel identity gives $G=\Phi_{u_0}f$ almost everywhere on $\mathbb{R}_+$. If $I_+f=0$, then the zero extension of $\widehat{f}$ has vanishing boundary trace and belongs to $H^1(\mathbb{R})$. Plancherel's theorem then yields $xf\in L^2(\mathbb{R})$, and the Cauchy--Schwarz inequality combined with $\langle x\rangle^{-1}\in L^2(\mathbb{R})$ proves \eqref{eq:zero-row-L1}.
\end{proof}

\begin{condition}[Anchoring normalization]\label{hyp:M4ab-anchor}
There exist an element $E_0\in\mathrm{Dom}(T_D)\cap\mathrm{Dom}(A_{\max,\beta_{u_0}})$ and an enhanced maximal-graph representative such that its inverse spectral vector $e_0=\mathscr{U}_{u_0}^{-1}JE_0$ satisfies $I_+e_0=1$, and
\begin{equation}\label{eq:M4ab-anchor}
J_+E_0=(2\pi)^{-1/2},\qquad\ell_DE_0=(2\pi i)^{-1},\qquad\Gamma_{\mu,-}E_0=\Gamma_{\mu,+}E_0\quad(\mu\in S_+\cap\mathbb{R}_+).
\end{equation}
The incoming physical normalization fixes all phases.
\end{condition}

\subsection{The scalar graph}
\begin{thm}[Rough enhanced scalar graph]\label{thm:M2-scalar-graph}
Let $\beta\in L^2(\mathbb{R}_+)$, and let $S\subset\mathbb{R}_+$ be finite. On the components of $\mathbb{R}_+\setminus S$, consider triples $(G,K,Z)\in L^2(\mathbb{R}_+)^3$ satisfying
\begin{equation}\label{eq:M2-factor-graph}
Z=\mathcal{C}_H^+(\overline{\beta} G),\qquad i G^\prime+\beta Z=K
\end{equation}
in the distributional factor sense of \eqref{eq:CH-distribution}. Then the following assertions hold.
\begin{enumerate}[(i)]
  \item The resulting enhanced maximal graph $\mathfrak{G}_{\max}^S(\beta)$, equipped with the norm $\|G\|_{L^2(\mathbb{R}_+)}+\|K\|_{L^2(\mathbb{R}_+)}+\|Z\|_{L^2(\mathbb{R}_+)},$ is closed. Every $G$ has ordinary one-sided traces at the points of $S$ and at zero, and the aggregate trace is bounded and onto; it has a bounded finite-dimensional right inverse.

  \item For any two enhanced triples $(G,K,Z)$ and $(F,L,W)$, the polarized Green identity is
  \begin{equation}\label{eq:M2-Green}
  \langle K,F\rangle-\langle G,L\rangle+iJ_+G\,\overline{J_+F}=i\langle \Gamma_-G, \Gamma_-F\rangle-i\langle \Gamma_+G, \Gamma_+F\rangle.
  \end{equation}

  \item Sewing every interior pair with coefficient one defines a closed maximal dissipative operator $A_\beta$. Its resolvent is obtained, for $z\in\mathbb{C}_+$ with sufficiently large imaginary
  part, from
   $\mathcal{K}_\beta(z)=\mathcal{C}_H^+M_{\overline{\beta}}R_0(z)M_\beta$,    $Z_h=[\mathrm{Id}+\mathcal{K}_\beta(z)]^{-1}\mathcal{C}_H^+M_{\overline{\beta}}R_0(z)h$,   \begin{equation}\label{eq:M2-Woodbury}
  (A_\beta-z)^{-1}h=R_0(z)(h-\beta Z_h).
  \end{equation}
  The endpoint row is defined by $\ell_\beta=(i\sqrt{2\pi})^{-1}J_+$, and it is continuous with respect to the graph norm.
\end{enumerate}
\end{thm}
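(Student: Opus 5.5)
The plan is to treat the enhanced graph as a perturbation of the free first-order operator $i\partial_\lambda$ on each component of $\mathbb{R}_+\setminus S$. The factor term $\beta Z$ lies in $L^1_{\mathrm{loc}}$, and in fact in $L^1(\mathbb{R}_+)$ on every component, since $\beta,Z\in L^2$. For (i), take a triple $(G,K,Z)$ in the maximal graph. The equation $iG'=K-\beta Z$ shows that on each component $G'\in L^2_{\mathrm{loc}}+L^1$. Hence $G$ is absolutely continuous on each closed subinterval and has one-sided limits at every point of $S\cup\{0\}$. The trace bound will come from the standard estimate on an interval $I$ of length $\delta$:
\[
|G(a)|\le \delta^{-1/2}\|G\|_{L^2(I)}+\delta^{1/2}\|K\|_{L^2(I)}+\|\beta\|_{L^2(I)}\|Z\|_{L^2(I)} .
\]
This gives continuity of the aggregate trace in the graph norm.

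Closedness follows by passing to the limit. Let $(G_n,K_n,Z_n)\to(G,K,Z)$ in $L^2$. Both defining relations pass to the limit in $\mathcal{D}'$ of each component: the first because $\mathcal{C}_H^+M_{\overline\beta}$ paired against test functions via \eqref{eq:CH-distribution} is continuous under $L^2$ convergence of $G_n$ (here $\overline\beta G_n\to\overline\beta G$ in $L^1$), and the second because $\beta Z_n\to\beta Z$ in $L^1$. For surjectivity of the trace with a finite-dimensional right inverse, take bump functions $\phi_p$ supported near each trace point on one side only. Solve for $K$ by setting $K:=i\phi_p'+\beta\,\mathcal{C}_H^+(\overline\beta\phi_p)$. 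This $K$ need not be in $L^2$, because $\beta\cdot\mathcal{C}_H^+(\cdot)$ is only $L^1$, and this is the first delicate point. I would circumvent it by choosing $\phi_p$ in the range of a free resolvent, so that $K$ is computed through the Woodbury representation from (iii) and stays in $L^2$. Concretely, I would use (iii) first and then define the right inverse via resolvents applied to data concentrated near each puncture.

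For (ii), integrate by parts on each component for smooth compactly supported approximants away from the punctures. This produces $\langle iG',F\rangle+\langle G,iF'\rangle=$ boundary terms: $iJ_+G\overline{J_+F}$ at zero and the differences of one-sided traces at each puncture. These are collected into $\langle\Gamma_\pm G,\Gamma_\pm F\rangle$. The factor terms give $\langle\beta Z,F\rangle-\langle G,\beta W\rangle=\langle \mathcal{C}_H^+(\overline\beta G),\overline\beta F\rangle-\langle \overline\beta G,\mathcal{C}_H^+(\overline\beta F)\rangle$, which vanishes by the self-adjointness in \eqref{eq:CH-properties}. The products are only $L^1\cdot L^1$ a priori, so they must be justified via the duality definition \eqref{eq:CH-distribution} and an approximation $\beta_\epsilon\to\beta$, followed by passing to the limit with the trace continuity from (i).

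For (iii), sewing means imposing $\Gamma_-G=\Gamma_+G$ at interior punctures. With this condition, (ii) reduces to $\operatorname{Im}\langle A_\beta G,G\rangle=-\tfrac12|J_+G|^2\le0$, so $A_\beta$ is dissipative and closed as a restriction of the closed graph by continuous trace conditions. For maximality, solve $(A_\beta-z)G=h$ by the ansatz $G=R_0(z)(h-\beta Z)$, $Z=\mathcal{C}_H^+(\overline\beta G)$. This ansatz gives the fixed-point equation $(\mathrm{Id}+\mathcal{K}_\beta(z))Z=\mathcal{C}_H^+M_{\overline\beta}R_0(z)h$.

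The key estimate, and the main obstacle, is that $\mathcal{K}_\beta(z)$ is bounded on $L^2$ with norm tending to $0$ as $\operatorname{Im}z\to\infty$. The plan for this estimate is as follows. $M_\beta$ maps $L^2\to L^1$. The right-Volterra kernel $e^{iz(s-\lambda)}\mathbf 1_{s>\lambda}$ maps $L^1\to L^\infty$ with norm $1$, and maps $L^1\to L^2$ with norm $(2\operatorname{Im}z)^{-1/2}$. Then $M_{\overline\beta}$ maps $L^\infty\to L^2$. A direct bound gives only $\|\beta\|_2^2$, not a small quantity. So I would split $\beta=\beta_{\le}+\beta_{>}$ with $\|\beta_>\|_{2}$ small and $\beta_\le$ bounded. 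The bounded part gains $(\operatorname{Im}z)^{-1}$, while the cross terms gain $(\operatorname{Im}z)^{-1/2}$ times $\|\beta_\le\|_\infty$. Using $\|\mathcal{C}_H^+\|\le(2\pi)^{-1}$, smallness follows for large $\operatorname{Im}z$, and a Neumann series yields \eqref{eq:M2-Woodbury}.

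It then remains to check that the resulting $G$ lies in $L^2$, satisfies the sewing condition, and has no jumps. This holds because $R_0(z)$ of an $L^1+L^2$ function is continuous across punctures. Surjectivity of $A_\beta-z$ then gives maximal dissipativity. Graph continuity of $\ell_\beta=(i\sqrt{2\pi})^{-1}J_+$ is the trace bound from (i).
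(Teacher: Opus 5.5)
\textbf{Gap: surjectivity of the aggregate trace in (i).} You correctly notice that $K=i\phi_p'+\beta\,\mathcal{C}_H^+(\overline\beta\phi_p)$ is only in $L^1$, but your remedy, as you describe it, cannot work.

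\emph{Why the remedy fails.} Every output of $(A_\beta-z)^{-1}$ lies in the sewn domain. Moreover, $R_0(z)$ applied to any $L^1+L^2$ function is continuous on all of $\mathbb{R}_+$. So everything you build this way satisfies $\Gamma_-G=\Gamma_+G$ at each interior point of $S$, and the jump directions of $\mathbb{C}^{2|S|+1}$ are never reached. Taking $\phi_p=R_0(z)f$ with $f\in L^2$ does not cure the $L^1$ problem either: then $i\phi_p'+\beta Z=f+z\phi_p+\beta Z$ still contains the $L^1$ term $\beta Z$.

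\emph{The missing idea.} Keep a discontinuous reference lift and correct it by a continuous function that absorbs $\beta Z$ exactly.
Choose a compactly supported piecewise-$H^1$ map $\mathcal{R}:\mathbb{C}^{2|S|+1}\to L^2(\mathbb{R}_+)$ with $\tau\mathcal{R}=\mathrm{Id}$. For $r=\mathcal{R}c$ put $Z=[\mathrm{Id}+\mathcal{K}_\beta(iY)]^{-1}\mathcal{C}_H^+(\overline\beta r)$ and $G=r-R_0(iY)(\beta Z)$.
Then $\mathcal{C}_H^+(\overline\beta G)=Z$. Since $(i\partial_\lambda-iY)R_0(iY)=\mathrm{Id}$, one gets $iG'+\beta Z=ir'-iY\,R_0(iY)(\beta Z)\in L^2$, with $r'$ taken componentwise. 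The correction has no jumps.
As $Y\to\infty$, $\beta Z$ converges in $L^1$ uniformly for $c$ in the unit sphere, and $[R_0(iY)f](s)\to0$ uniformly on compact families in $L^1$. Hence $\tau G\to c$ in operator norm, and for large $Y$ inverting this finite trace matrix gives the bounded right inverse.

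\textbf{The rest of your plan is essentially correct}, with the following adjustments.

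Your smallness proof for $\mathcal{K}_\beta(z)$, splitting $\beta$ into a bounded part and an $L^2$-small part, is a valid alternative to the paper's route. The paper instead bounds the Hilbert--Schmidt norm $\int_{0<\lambda<s}e^{-2Y(s-\lambda)}|\beta(\lambda)|^2|\beta(s)|^2\,ds\,d\lambda$ and lets $Y\to\infty$ by dominated convergence.

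In (ii), drop the approximation $\beta_\epsilon\to\beta$, which would not preserve the graph anyway. Your pointwise estimate from (i) already gives $G,F\in L^\infty$, so $\overline\beta G,\overline\beta F\in L^2$ and the symmetry of $\mathcal{C}_H^+$ applies directly. On the unbounded component, $G\overline F\in W^{1,1}$ removes the term at infinity.

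Closedness of $A_\beta$ does not follow merely from its being a restriction of the closed enhanced graph. That graph projects onto the operator graph, and $Z$ is not a priori controlled by $(G,K)$. Deduce closedness instead from dissipativity plus surjectivity of $A_\beta-z$, which gives a bounded everywhere-defined inverse. Density of the domain follows from the Green identity applied to the resolvent, as in the paper.
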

\begin{proof}
First note the global estimate
\begin{equation}\label{eq:M2-global-Linfty}
\|G\|_{L^\infty(\mathbb{R}_+)}\le C_S\bigl(\|G\|_{L^2(\mathbb{R}_+)}+\|K\|_{L^2(\mathbb{R}_+)}+\|\beta\|_{L^2(\mathbb{R}_+)}\|Z\|_{L^2(\mathbb{R})}\bigr).
\end{equation}
Indeed, on every bounded component this follows from the one-dimensional trace inequality and $G^\prime=-iK+i\beta Z\in L^1_{\mathrm{loc}}(\mathbb{R}_+)$. On the last component, for each $x$ choose $y\in(x,x+1)$ with $|G(y)|\le\|G\|_{L^2(x,x+1)}$, and integrate the same identity between $x$ and $y$. This proves \eqref{eq:M2-global-Linfty}; it
also bounds all finitely many one-sided traces by the enhanced norm.

Now suppose $(G_n,K_n,Z_n)$ converges in the three displayed $L^2$ entries. Since
\[
\|\overline{\beta}(G_n-G)\|_{L^1(\mathbb{R}_+)}\le\|\beta\|_{L^2(\mathbb{R}_+)}\|G_n-G\|_{L^2(\mathbb{R}_+)},
\]
and
\[
\|\beta(Z_n-Z)\|_{L^1(\mathbb{R}_+)}\le\|\beta\|_{L^2(\mathbb{R}_+)}\|Z_n-Z\|_{L^2(\mathbb{R}_+)},
\]
both identities in \eqref{eq:M2-factor-graph} pass to distributions. Applying \eqref{eq:M2-global-Linfty} to the differences gives $G_n\to G$ in $L^\infty(\mathbb{R}_+)$, hence
$\overline{\beta}G_n\to\overline{\beta}G$ in $L^2(\mathbb{R}_+)$. Thus the factor identity passes as a strong $L^2$ identity, proving closedness of the enhanced graph.

For the construction and maximality, take $z=iY$. The integral kernel of $M_{\overline{\beta}}R_0(i Y)M_\beta$ gives
\[
\|M_{\overline{\beta}}R_0(iY)M_\beta\|_{\mathfrak{S}_2}^2=\int_{0<\lambda<s}e^{-2Y(s-\lambda)}|\beta(\lambda)|^2|\beta(s)|^2dsd\lambda\underset{Y\to\infty}{\longrightarrow}0.
\]
Thus $\mathrm{Id}+\mathcal{K}_\beta(iY)$ is invertible for large $Y$. Moreover, $R_0(iY):L^2(\mathbb{R}_+)\to L^\infty(\mathbb{R}_+)$ and $R_0(iY):L^1(\mathbb{R}_+)\to L^2(\mathbb{R}_+)$ are bounded. Consequently every entry in \eqref{eq:M2-Woodbury} lies in the asserted space, and direct substitution proves \eqref{eq:M2-factor-graph} with $K=zG+h$. The factor formulation makes the interaction term symmetric at $L^2$ regularity. If $(F,L,W)$ is a second enhanced triple, then \eqref{eq:M2-global-Linfty} gives $\overline{\beta} G,\overline{\beta} F\in L^2(\mathbb{R}_+)$, and self-adjointness of $\mathcal{C}_H^+$ gives
\[
\langle \beta Z,F\rangle=\langle \mathcal{C}_H^+(\overline{\beta}G), \overline{\beta}F\rangle=\langle\overline{\beta}G, \mathcal{C}_H^+(\overline{\beta}F)\rangle=\langle G,\beta W\rangle.
\]
Integrating the derivative term on every component proves \eqref{eq:M2-Green}. On the unbounded component, $G\overline{F}\in W^{1,1}(\mathbb{R}_+)\cap L^1(\mathbb{R}_+)$, so its boundary value at infinity and hence its terminal contribution vanish. On the coefficient-one sewn domain the identity gives $\langle A_\beta G, F\rangle-\langle G, A_\beta F\rangle=-iJ_+G\overline{J_+F}$. Hence $A_\beta$ is dissipative; surjectivity at $iY$ from \eqref{eq:M2-Woodbury} makes it maximal once density is checked. Put $B=(A_\beta-iY)^{-1}$. The Green identity gives $-\operatorname{Im}\langle h, Bh\rangle=\tfrac{1}{2}|J_+Bh|^2+Y\|Bh\|_{L^2(\mathbb{R})}^2$. If $B^\ast k=0$, then $\langle k, Bk\rangle=0$, so this identity with $h=k$ gives $Bk=0$; injectivity gives $k=0$. Hence $\overline{\mathrm{Ran}(B)}=L^2(\mathbb{R}_+)$, and $A_\beta$ is densely defined and maximal dissipative.

Trace surjectivity follows from an explicit lift. Let $\tau$ be the aggregate map consisting of the zero trace and the two one-sided traces at every point of $S$. Choose a
finite-dimensional piecewise-$H^1$, compactly supported reference lift $\mathcal{R}:\mathbb{C}^{2|S|+1}\longrightarrow L^2(\mathbb{R}_+),\qquad \tau\mathcal{R}=\mathrm{Id}$. For $r=\mathcal{R}c$ and large $Y$, set
\begin{align*}
Z_{Y,r}&=\bigl[\mathrm{Id}+\mathcal{K}_\beta(i Y)\bigr]^{-1}\mathcal{C}_H^+(\overline{\beta} r),\\
G_{Y,r}&=r-R_0(iY)(\beta Z_{Y,r}),\qquad
K_{Y,r}=ir^\prime-iYR_0(iY)(\beta Z_{Y,r}),
\end{align*}
with $r^\prime$ taken componentwise. Since $(i\partial-iY)R_0(iY)=\mathrm{Id}$, direct substitution gives $\mathcal{C}_H^+(\overline{\beta} G_{Y,r})=Z_{Y,r},\qquad iG_{Y,r}^\prime+\beta Z_{Y,r}=K_{Y,r}$. The correction is continuous at every interior trace point. Moreover, $\mathcal{K}_\beta(iY)\to0$, hence $\beta Z_{Y,r}$ converges in $L^1(\mathbb{R}_+)$, uniformly for $c$ in the unit
sphere of this finite-dimensional space. For each trace point $s$,
\[
[R_0(iY)h](s)=i\int_s^\infty e^{-Y(\lambda-s)}h(\lambda)d\lambda\underset{Y\to\infty}{\longrightarrow}0,
\]
uniformly for that compact family of $L^1$ inputs. Consequently $\tau G_{Y,\mathcal{R}c}\to c$ in operator norm. For large $Y$ this trace matrix is invertible, and composing with its inverse gives the asserted bounded finite-dimensional right inverse.

The endpoint row is graph continuous by \eqref{eq:M2-global-Linfty}.
\end{proof}

\subsection{Paired Plancherel--Feshbach crossing}
\begin{lemma}[Weighted trace-zero criterion]\label{lem:weighted-trace-zero}
Let $S$ have a finite ordinary or essential one-sided limit at zero. If $S(s)/s\in L^1(0,\delta)$, then that limit is zero.
\end{lemma}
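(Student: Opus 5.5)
Let $c$ denote the one-sided limit of $S$ at $0^+$, ordinary or essential. We argue by contradiction and suppose $c\neq0$. The only input is the elementary fact that $\int_0^\delta ds/s=\infty$, so a function that stays away from zero near the origin cannot have $S(s)/s$ integrable there.

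First, we choose a small scale on which $|S|$ is bounded below. In the ordinary case, the definition of the limit gives $\delta_0\in(0,\delta)$ with $|S(s)-c|\le|c|/2$ for all $s\in(0,\delta_0)$. In the essential case, the same bound holds for almost every $s\in(0,\delta_0)$, since the essential limit means that $\operatorname*{ess\,sup}_{(0,\delta_0)}|S-c|\to0$ as $\delta_0\to0$. In both cases the triangle inequality yields
\[
|S(s)|\ge \tfrac{|c|}{2}\qquad\text{for a.e. } s\in(0,\delta_0).
\]

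Next, we integrate this lower bound. Monotonicity of the Lebesgue integral for nonnegative measurable functions gives
\[
\int_0^{\delta}\frac{|S(s)|}{s}\,ds\;\ge\;\frac{|c|}{2}\int_0^{\delta_0}\frac{ds}{s}=+\infty .
\]
This contradicts the hypothesis $S(s)/s\in L^1(0,\delta)$, so $c=0$.

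The step that needs care is the essential-limit case, because a pointwise lower bound is then unavailable. The fix is to use only the almost-everywhere bound, which is enough since sets of measure zero do not affect the integral. Measurability of $S(s)/s$ is part of the $L^1$ hypothesis, and complex-valued $S$ is covered because we work only with $|S|$ throughout.
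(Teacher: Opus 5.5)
Your proposal is correct and follows the paper's own argument: assume the limit $L\neq0$, get $|S(s)|\ge|L|/2$ near $0$, and contradict integrability of $|S(s)|/s$ via the divergence of $\int_0^{\delta_0}ds/s$. Your explicit almost-everywhere treatment of the essential-limit case is a small but welcome sharpening of the paper's ``for all sufficiently small $s$''.
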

\begin{proof}
If the limit were $L\ne0$, then $|S(s)|\ge|L|/2$ for all sufficiently small $s$, contradicting
\[
\int_0^\delta|S(s)|s^{-1}ds<\infty.
\]

\end{proof}

\begin{thm}[Paired Feshbach crossing]\label{thm:paired-Feshbach-crossing}
Let $\mu\in S_+\cap\mathbb{R}_+$ be a simple eigenvalue, let $\phi_\mu$ be a corresponding eigenfunction, and put
\begin{equation}\label{eq:eigencharge-source}
g_\mu=\Pi_+(\overline{u_0}\phi_\mu),\qquad F_\mu=u_0g_\mu=(D-\mu)\phi_\mu.
\end{equation}
Then $F_\mu\in L^1_+(\mathbb{R})\cap L^2_+(\mathbb{R})$, where the subscript denotes positive Fourier support. Fix the incoming orientation used in the definition of $\Phi_{u_0}$, on both sides of $\mu$, and set $R_0^{\mathrm{in}}(\lambda):=R_0(\lambda-0i)$. Thus the sign agrees with the fixed column $m_{u_0}^-(\lambda)=m_{\mathrm{e}}(\lambda-0i)$ defining $\Phi_{u_0}$. Write
\[
\mathcal{A}_\lambda=\mathrm{Id}-\Pi_+M_{\overline{u_0}}R_0^{\mathrm{in}}(\lambda)M_{u_0}\Pi_+,\qquad b_\lambda=\Pi_+(\overline{u_0}e^{i\lambda x}).
\]
Choose $h_\mu$ spanning $\ker(\mathcal{A}_\mu^\ast)$. Let $P_r,P_l$ be the orthogonal projections onto $\operatorname{span}\{g_\mu\}$ and $\operatorname{span}\{h_\mu\}$, and put $Q_r=\mathrm{Id}-P_r$, $Q_l=\mathrm{Id}-P_l$. The reduced operator $B_\lambda=Q_l\mathcal{A}_\lambda Q_r:Q_rL^2_+(\mathbb{R})\longrightarrow Q_l L^2_+(\mathbb{R})$ is uniformly invertible on a small two-sided neighborhood $I_\mu=(\mu-\delta,\mu+\delta)$. Define
\[
z_\lambda=B_\lambda^{-1}Q_l b_\lambda,\qquad
r_\lambda=g_\mu-B_\lambda^{-1}Q_l\mathcal{A}_\lambda g_\mu.
\]
For $\lambda\ne\mu$, the Jost charge and column have decompositions
\begin{equation}\label{eq:Feshbach-column-decomposition}
a_\lambda:=\Pi_+(\overline{u_0}m_\lambda)=\mathcal{A}_\lambda^{-1}b_\lambda=z_\lambda+c_\lambda r_\lambda,\qquad m_{\lambda,\mathrm{reg}}=e^{i\lambda x}+R_0^{\mathrm{in}}(\lambda)u_0z_\lambda,
\end{equation}
and
\begin{equation}\label{eq:Feshbach-column-decomposition-2}
m_\lambda=m_{\lambda,\mathrm{reg}}+c_\lambda p_\lambda,\qquad p_\lambda=R_0^{\mathrm{in}}(\lambda)u_0r_\lambda.
\end{equation}
The full pole coefficient obeys
\begin{equation}\label{eq:Feshbach-c-L2}
c_{\lambda}\in L^2(I_\mu),\qquad\|c_{\lambda}\|_{L^2(I_{\mu})}\le\frac{\sqrt{2\pi}}{\|g_\mu\|_{L^2(\mathbb{R})}^2}\|F_\mu\|_{L^2(\mathbb{R})}.
\end{equation}
For every $f\in\mathrm{Dom}(\mathsf{X})\cap P_{\mathrm{ac}}(L_{u_0})L^2_+(\mathbb{R})\cap L^1(\mathbb{R})$, the following holds for a.e. $\lambda\in I_\mu\setminus\{\mu\}$:
\begin{equation}\label{eq:paired-divisibility}
\langle f,p_\lambda-\phi_\mu\rangle_{\mathrm{dual}}=(\lambda-\mu)r_{2,f}(\lambda),\qquad r_{2,f}\in L^2(I_\mu),
\end{equation}
where the subscript means the $L^1$--$L^\infty$ pairing for $p_\lambda$ and the $L^2$ pairing for $\phi_\mu$. Consequently, with the convention that pairings are linear in the
first entry, the singular paired term $S_f(\lambda)=\overline{c_\lambda}\langle f,p_\lambda\rangle_{L^1,L^\infty}$ satisfies
\begin{equation}\label{eq:paired-L1}
\frac{S_f(\lambda)}{\lambda-\mu}=\overline{c_\lambda}r_{2,f}(\lambda)\in L^1(I_{\mu}).
\end{equation}

If, in addition, $G=\Phi_{u_0}f$ belongs to the enhanced maximal graph, then its canonical ordinary traces are the raw fixed-incoming traces and
\begin{equation}\label{eq:paired-crossing}
\Gamma_{\mu,-}G=\Gamma_{\mu,+}G.
\end{equation}
\end{thm}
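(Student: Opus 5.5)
We prove Theorem~\ref{thm:paired-Feshbach-crossing} by carrying out a Grushin/Feshbach reduction of the Jost equation near the embedded eigenvalue $\mu$, followed by a pairing computation.

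\textbf{Step 1: the eigencharge $F_\mu$.} Since $(L_{u_0}-\mu)\phi_\mu=0$, we have $(D-\mu)\phi_\mu=u_0\Pi_+(\overline{u_0}\phi_\mu)=u_0g_\mu=F_\mu$. Because $u_0,g_\mu\in L^2$, the product $F_\mu$ lies in $L^1$, and its positive Fourier support follows from the Hardy structure. The $L^2$ bound uses $g_\mu\in L^\infty$: the Fourier transform of $g_\mu$ lies in $L^1$, because $\widehat{\phi_\mu}$ has the decay provided by \cite[Lemma~3.3]{Frank-2026-arXiv}. Moreover, $\widehat{F_\mu}(\mu)=0$, since $\widehat{F_\mu}(\xi)=(\xi-\mu)\widehat{\phi_\mu}(\xi)$. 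Combined with the overlap identity $|\langle u_0,\phi_\mu\rangle|^2=2\pi\|\phi_\mu\|^2$ from Lemma~\ref{lem:static-cyclic-formula}, this gives the normalization $\|g_\mu\|^2$.

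\textbf{Step 2: reduction.} The kernel of $\mathcal{A}_\mu$ is spanned by $g_\mu$, because $\phi_\mu=R_0^{\mathrm{in}}(\mu)F_\mu$ on the incoming side. The operator $\mathcal{A}_\mu$ is Fredholm of index zero, being the identity plus a compact perturbation after the usual limiting-absorption argument. Hence the compressed operator $B_\mu=Q_l\mathcal{A}_\mu Q_r$ is invertible. Uniform invertibility on $I_\mu$ then follows from norm continuity of $\lambda\mapsto\mathcal{A}_\lambda$, and a Neumann series.

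To obtain \eqref{eq:Feshbach-column-decomposition}, we solve $\mathcal{A}_\lambda a=b_\lambda$ through the Schur complement. The scalar coefficient is
\[
c_\lambda=\frac{\langle b_\lambda-\mathcal{A}_\lambda z_\lambda,h_\mu\rangle}{\langle\mathcal{A}_\lambda r_\lambda,h_\mu\rangle}.
\]
Applying $R_0^{\mathrm{in}}(\lambda)M_{u_0}$ to $a_\lambda$ gives \eqref{eq:Feshbach-column-decomposition-2}.

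\textbf{Step 3: the $L^2$ bound \eqref{eq:Feshbach-c-L2}.} We do not estimate the denominator pointwise, since it vanishes to first order at $\mu$. Instead we use Plancherel. The distorted transform $\Phi_{u_0}$ is a partial isometry, and $\phi_\mu$ is orthogonal to the absolutely continuous subspace. Pairing $m_\lambda$ against $\phi_\mu$ isolates $c_\lambda$. Concretely,
\[
\langle \phi_\mu,m_\lambda\rangle=\frac{\langle F_\mu,m_\lambda\rangle}{\mu-\lambda},
\]
and the identity $\langle F_\mu,p_\lambda\rangle\approx(\lambda-\mu)^{-1}$ times a nonzero constant $\|g_\mu\|^2$ yields $c_\lambda$ as a component of the transform of $F_\mu$ divided by $\|g_\mu\|^2$. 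The bound \eqref{eq:Feshbach-c-L2} then follows from the contraction $\|\Phi_{u_0}\|\le1$ together with the factor $\sqrt{2\pi}$ from normalization. This identification is the main obstacle. I would first verify it on smooth, rapidly decaying data and then pass to general data by the norm-resolvent continuity of Lemma~\ref{lem:continuous-calculus}.

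\textbf{Step 4: divisibility \eqref{eq:paired-divisibility}.} We write
\[
p_\lambda-\phi_\mu=R_0^{\mathrm{in}}(\lambda)u_0(r_\lambda-g_\mu)+\bigl[R_0^{\mathrm{in}}(\lambda)-R_0^{\mathrm{in}}(\mu)\bigr]F_\mu .
\]
By construction, $r_\lambda-g_\mu=O(\lambda-\mu)$ in norm. By the resolvent identity, the second term equals $(\lambda-\mu)R_0^{\mathrm{in}}(\lambda)R_0^{\mathrm{in}}(\mu)F_\mu$. We then pair against $f\in\mathrm{Dom}(\mathsf{X})\cap L^1$. Because $\widehat f\in H^1$, the $\lambda$-derivative of the Fourier pairing is controlled, which makes the remainder $r_{2,f}$ lie in $L^2(I_\mu)$. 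Then \eqref{eq:paired-L1} follows from Cauchy--Schwarz together with \eqref{eq:Feshbach-c-L2}. The term involving $\phi_\mu$ drops out of $S_f$ because $f\perp\phi_\mu$, since $f$ lies in the absolutely continuous subspace.

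\textbf{Step 5: the crossing \eqref{eq:paired-crossing}.} We expand
\[
G(\lambda)=\Phi_{u_0}f(\lambda)=\langle f,m_{\lambda,\mathrm{reg}}\rangle+S_f(\lambda).
\]
The regular part is continuous across $\mu$, by the uniform invertibility from Step~2. For the singular part, $S_f(\lambda)/(\lambda-\mu)\in L^1$ by \eqref{eq:paired-L1}. Since $G$ lies in the enhanced graph, its one-sided traces exist by Theorem~\ref{thm:M2-scalar-graph}, so $S_f$ has one-sided limits at $\mu$. Lemma~\ref{lem:weighted-trace-zero}, applied on each side, forces both limits to vanish. Hence $\Gamma_{\mu,-}G=\Gamma_{\mu,+}G$.
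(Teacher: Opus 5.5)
\textbf{Main gap: Step~3.} Your outline matches the paper's: Feshbach reduction at $\mu$, a Plancherel bound for $c_\lambda$, the resolvent identity for divisibility, and Lemma~\ref{lem:weighted-trace-zero} on each bank. However, Step~3, which you flag as the main obstacle, is not proved, and the identity you propose there is false.

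Since $(D-\lambda)m_\lambda=u_0a_\lambda$, integration by parts gives
$\langle F_\mu,m_\lambda\rangle=(\lambda-\mu)\langle\phi_\mu,m_\lambda\rangle+\langle g_\mu,a_\lambda\rangle$
whenever $\phi_\mu\in L^1$, so that the pairing is defined. Toeplitz duality gives $\langle F_\mu,m_\lambda\rangle=\langle g_\mu,\Pi_+(\overline{u_0}m_\lambda)\rangle=\langle g_\mu,a_\lambda\rangle$. Hence $\langle\phi_\mu,m_\lambda\rangle=0$ for $\lambda\neq\mu$. Pairing $m_\lambda$ with $\phi_\mu$ therefore isolates nothing. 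Your formula $\langle\phi_\mu,m_\lambda\rangle=\langle F_\mu,m_\lambda\rangle/(\mu-\lambda)$ drops exactly the nonlocal term $\langle g_\mu,a_\lambda\rangle$, which is the one carrying $c_\lambda$. Likewise $\langle F_\mu,p_\lambda\rangle=\|g_\mu\|^2-\langle g_\mu,\mathcal{A}_\lambda r_\lambda\rangle\to\|g_\mu\|^2$ is bounded near $\mu$, not of size $(\lambda-\mu)^{-1}$.

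The missing observation is that the Feshbach construction already gives an exact formula. Both $z_\lambda$ and $r_\lambda-g_\mu$ lie in $\operatorname{Ran}B_\lambda^{-1}=Q_rL^2_+(\mathbb{R})=g_\mu^{\perp}$, so
\[
c_\lambda\|g_\mu\|_{L^2(\mathbb{R})}^2=\langle a_\lambda,g_\mu\rangle=\langle m_\lambda,u_0g_\mu\rangle_{L^\infty,L^1}=\overline{\langle F_\mu,m_\lambda\rangle_{L^1,L^\infty}}=\sqrt{2\pi}\,\overline{(\Phi_{u_0}F_\mu)(\lambda)}
\]
for a.e.\ $\lambda\in I_\mu$. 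The distorted Plancherel (Bessel) inequality applied to $F_\mu$ then gives \eqref{eq:Feshbach-c-L2} at once.

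Your fallback, proving the identity for smooth data and passing to the limit via Lemma~\ref{lem:continuous-calculus}, does not work either. An embedded eigenvalue need not survive smoothing of $u_0$. Norm-resolvent convergence of $L_{v_n}$ gives no control of Jost columns or pole coefficients at a fixed embedded energy.

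\textbf{Smaller points.} In Step~4, the claim ``$r_\lambda-g_\mu=O(\lambda-\mu)$ in norm'' is not available at the $L^2$ regularity of the statement. One has $\mathcal{A}_\lambda g_\mu=-(\lambda-\mu)\Pi_+(\overline{u_0}R_0^{\mathrm{in}}(\lambda)\phi_\mu)$, and for $\phi_\mu$ merely in $L^2$ the boundary value $R_0^{\mathrm{in}}(\lambda)\phi_\mu$ exists only as an a.e.\ $L^2_\lambda$ object. The paper instead uses the square-function bound $\int_{I_\mu}\|u_0R_0^{\mathrm{in}}(\lambda)\phi_\mu\|_{L^2}^2\,d\lambda\le2\pi\|u_0\|_{L^2}^2\|\phi_\mu\|_{L^2}^2$, which gives $v_\lambda=(r_\lambda-g_\mu)/(\lambda-\mu)\in L^2(I_\mu;L^2_x)$. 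Together with $f\in L^1$ and the uniform $L^1\to L^\infty$ bound for $R_0^{\mathrm{in}}(\lambda)$, this is exactly what $r_{2,f}\in L^2(I_\mu)$ requires. The other piece is the Cauchy transform of $\widehat f\,\overline{\widehat{\phi}_\mu}$, which lies in $L^2$ because $\widehat f\in L^\infty$; no derivative of $\widehat f$ is needed.

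A few further details:
\begin{itemize}
\item $F_\mu\in L^2$ follows directly from $F_\mu=(D-\mu)\phi_\mu$. Your detour through $g_\mu\in L^\infty$ is unnecessary and not justified.
\item Exact one-dimensionality of $\ker\mathcal{A}_\mu$ needs the Frank--Read kernel/eigenfunction correspondence together with simplicity, not just $\mathcal{A}_\mu g_\mu=0$.
\item In Step~5, the trace statement needs the remark that the raw Jost pairing and the canonical enhanced representative are both continuous on each punctured bank and agree a.e., hence agree pointwise.
\end{itemize}
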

\begin{proof}
The eigenvalue equation gives \eqref{eq:eigencharge-source}; the two positive-frequency $L^2$ factors have product in $L^1_+(\mathbb{R})$, while $F_\mu=(D-\mu)\phi_\mu\in L^2_+(\mathbb{R})$. By \cite[Lemma~4.2 and Corollary~4.3]{Frank-2026-arXiv} and simplicity of the eigenspace of $L_{u_0}$, $\ker(\mathcal{A}_\mu^{\mathrm{in}})=\operatorname{span}\{g_\mu\}$. Since $(\mathcal{A}_\mu^{\mathrm{in}})^\ast=\mathcal{A}_\mu^{\mathrm{out}}$, the adjoint kernel is
one-dimensional as well. Thus $B_\mu:Q_rL^2_+(\mathbb{R})\to Q_lL^2_+(\mathbb{R})$ is bijective. Operator-norm continuity and the bounded inverse theorem give uniform invertibility of $B_\lambda$, and elementary block elimination gives \eqref{eq:Feshbach-column-decomposition}--\eqref{eq:Feshbach-column-decomposition-2}.

Since $z_\lambda\perp g_\mu$ and $r_\lambda-g_\mu\perp g_\mu$,
\[
c_\lambda\|g_\mu\|_{L^2(\mathbb{R})}^2=\langle a_\lambda, g_\mu\rangle=\langle m_\lambda, F_\mu\rangle_{L^\infty,L^1}=\overline{\langle F_\mu, m_\lambda\rangle_{L^1,L^\infty}}=\sqrt{2\pi}\overline{(\Phi_{u_0}F_\mu)(\lambda)}
\]
for a.e. $\lambda\in I_\mu$. The Jost-column expressions use $L^1$--$L^\infty$ duality. Distorted Plancherel \cite[Theorem~5.3]{Frank-2026-arXiv} applied to $F_\mu$ proves \eqref{eq:Feshbach-c-L2}.

Furthermore, $\mathcal{A}_\lambda g_\mu=-(\lambda-\mu)\Pi_+\left(\overline{u_0}R_0^{\mathrm{in}}(\lambda)\phi_\mu\right)$. This identity is understood for the a.e. $L^2_\lambda(L^2_x)$ boundary representative. The free resolvent square-function estimate yields
\[
\int_{I_\mu}\|u_0R_0^{\mathrm{in}}(\lambda)\phi_\mu\|_{L^2(\mathbb{R})}^2d\lambda\le2\pi\|u_0\|_{L^2(\mathbb{R})}^2\|\phi_\mu\|_{L^2(\mathbb{R})}^2,
\]
and hence $v_\lambda:=\frac{r_\lambda-g_\mu}{\lambda-\mu}\in L^2(I_\mu;L^2_x)$. The eigenvalue equation has the boundary interpretation $R_0^{\mathrm{in}}(\mu)F_\mu=\phi_\mu\quad\text{in }\mathcal{S}^\prime$, so the anchored singular column has $p_\mu=\phi_\mu$. After pairing, the resolvent identity gives, for a.e. $\lambda\in I_\mu\setminus\{\mu\}$,
\[
\frac{\langle f,p_\lambda-\phi_\mu\rangle_{\mathrm{dual}}}{\lambda-\mu}=\mathcal{C}^{\mathrm{in}}\bigl(\widehat{f}\overline{\widehat{\phi}_\mu}\bigr)(\lambda)\quad+\langle f, R_0^{\mathrm{in}}(\lambda)u_0v_\lambda\rangle_{L^1,L^\infty}=:r_{2,f}(\lambda).
\]
Here $\mathcal{C}^{\mathrm{in}}$ is the corresponding Cauchy boundary operator. Its first term belongs to $L^2(I_\mu)$ because $\widehat{f}\in L^\infty(\mathbb{R})$ and $\widehat{\phi}_\mu\in L^2(\mathbb{R})$; the second is controlled by $f\in L^1(\mathbb{R})$, $u_0v_\lambda\in L^1(\mathbb{R})$, and the uniform $L^1\to L^\infty$ bound for the free boundary resolvent. This proves \eqref{eq:paired-divisibility}. Since $f\perp\phi_\mu$, $\langle f,p_\lambda\rangle_{L^1,L^\infty}=\langle f,p_\lambda-\phi_\mu\rangle_{\mathrm{dual}}$, and Cauchy--Schwarz proves \eqref{eq:paired-L1}.

On each punctured bank, the raw Jost pairing is continuous and equals the distorted transform almost everywhere. The canonical enhanced representative is also continuous there, so the two representatives agree pointwise, with $G(\lambda)=\frac{1}{\sqrt{2\pi}}\langle f, m_\lambda\rangle_{L^1,L^\infty}$. The maps $\lambda\mapsto b_\lambda$ and $\lambda\mapsto\mathcal{A}_\lambda$ are continuous in $L^2(\mathbb{R})$ and
operator norm, respectively. Hence $z_\lambda$ is $L^2$-continuous and $u_0z_\lambda$ is $L^1$-continuous. \cite[Lemma~4.1(b)]{Frank-2026-arXiv}, together with the uniform
$L^1\to L^\infty$ bound for $R_0^{\mathrm{in}}(\lambda)$, shows that the scalar regular pairing $\lambda\longmapsto\langle f, e^{i\lambda x}+R_0^{\mathrm{in}}(\lambda)u_0z_\lambda\rangle$ has the same finite limit from the two sides. The singular term has a finite canonical one-sided limit and satisfies \eqref{eq:paired-L1}; Lemma~\ref{lem:weighted-trace-zero} makes both limits zero. This proves \eqref{eq:paired-crossing}.
\end{proof}
\subsection{Identification of the Dirichlet model and point-return control}\label{sec:return}
\begin{cor}[Identification under the structural conditions]\label{cor:phase-I-closure}
Assume Conditions~\ref{hyp:schur-graph}, \ref{hyp:M3-actual-form}, and \ref{hyp:M4ab-anchor}. Then $T_D=A_{\beta_{u_0}},\qquad U=\mathrm{Id}$. More explicitly, for every $t\ge0$, $\eta\in\mathbb{R}$, and $g\in C_c^\infty(\mathbb{R}_+)$,
\begin{equation}\label{eq:phase-I-full-signed-A1}
\ell_{\mathrm{free}}R^0_{t,\eta}g-\ell^A_{t,\eta}R_{t,\eta}Jg=\frac1{i\sqrt{2\pi}}D_t^{\beta_{u_0},g}(\eta)-\rho_t^{\mathrm{p}}(\eta;g).
\end{equation}
For fixed $(t,\eta)$, both sides are bounded rows in $g$, so this identity extends uniquely from smooth inputs to the full physical $L^2$ row.
\end{cor}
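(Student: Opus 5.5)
The plan has two stages. First, identify $T_D$ with the scalar operator $A_{\beta_{u_0}}$ of Theorem~\ref{thm:M2-scalar-graph}, which means the unitary gauge $U$ relating them is the identity. Second, use this identification, together with the Schur split of Lemma~\ref{lem:schur-algebra-detailed}, to read off \eqref{eq:phase-I-full-signed-A1}.

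\textbf{Stage 1: $T_D=A_{\beta_{u_0}}$.}
\begin{enumerate}
\item By Proposition~\ref{prop:M3-enhanced-closure}, which rests on Condition~\ref{hyp:M3-actual-form}, we already have $\mathrm{Dom}(T_D)\subset\mathrm{Dom}(A_{\max,\beta_{u_0}})$ and $T_D=A_{\max,\beta_{u_0}}$ on that domain. It therefore suffices to show that every $G\in\mathrm{Dom}(T_D)$ satisfies the coefficient-one sewing conditions at each $\mu\in S_+\cap\mathbb{R}_+$.
\item For such $G$, Proposition~\ref{prop:M4a-localization} gives $f=\mathscr{U}_{u_0}^{-1}JG\in\mathrm{Dom}(\mathsf{X})\cap P_{\mathrm{ac}}(L_{u_0})L^2_+$ and $G=\Phi_{u_0}f$.
\item Subtract the anchor multiple $I_+(f)\,e_0$ from $f$, using Condition~\ref{hyp:M4ab-anchor}. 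The remainder has vanishing trace $I_+$, so it lies in $L^1$ by \eqref{eq:zero-row-L1}.
\item Theorem~\ref{thm:paired-Feshbach-crossing} then applies to the remainder and gives $\Gamma_{\mu,-}=\Gamma_{\mu,+}$ for it. The anchor satisfies the same equality by \eqref{eq:M4ab-anchor}, so by linearity $G$ lies in the sewn domain. This gives $T_D\subset A_{\beta_{u_0}}$.
\item Both operators are maximal dissipative (Proposition~\ref{prop:static-actual-graph} and Theorem~\ref{thm:M2-scalar-graph}(iii)). A maximal dissipative operator has no proper dissipative extension, so the inclusion is an equality.
\item The phase normalizations $J_+E_0=(2\pi)^{-1/2}$ and $\ell_DE_0=(2\pi i)^{-1}$ show that the static row $\ell_D$ coincides with $\ell_{\beta_{u_0}}=(i\sqrt{2\pi})^{-1}J_+$ on $E_0$. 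Since $\mathrm{Dom}(T_D)$ equals the span of $E_0$ plus the kernel of $J_+$, and both rows vanish on that kernel, $\ell_D=\ell_{\beta_{u_0}}$ throughout. This leaves no residual unimodular phase, which is the statement $U=\mathrm{Id}$.
\end{enumerate}
Step 6 needs care. Both rows must vanish on the zero-trace subspace; for $\ell_D$ this comes from the definition via $I_+$, and $I_+=0$ there because of the anchor decomposition. I would check this against the Green identities \eqref{eq:general-scalar-green} and \eqref{eq:actual-Ward}. Both are rank-one boundary forms, so the two rows agree up to a unimodular constant, and $E_0$ then fixes that constant to be $1$.

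\textbf{Stage 2: the identity.}
\begin{enumerate}
\item By Condition~\ref{hyp:schur-graph}, $R^D_{t,\eta}=[T_D+2t(M_\lambda-\eta)-z_0]^{-1}_{\mathrm{KLV}}$. By Stage 1, $T_D=A_{\beta_{u_0}}$.
\item Conjugating by the chirp $C_t$ converts $T_D+2tM_\lambda$ into the scalar model operator. This holds because $C_t^{-1}\,i\partial_\lambda\,C_t=i\partial_\lambda+2t\lambda$ and $C_t$ commutes with multiplication operators. Hence $R^D_{t,\eta}=C_t^{-1}R_{\beta,t}(\zeta)C_t$ with $\zeta=z_0+2t\eta$, up to matching the sign convention of $C_t$.
\item In the same way, $R^0_{t,\eta}=C_t^{-1}R_0(\zeta)C_t$. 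The traces $J_+$ are unaffected by $C_t$, since $C_t$ is the identity at $\lambda=0$.
\item Combining these with $\ell_D=\ell_{\beta_{u_0}}$ and $\ell_{\mathrm{free}}=(i\sqrt{2\pi})^{-1}J_+$ gives
\[
\ell_{\mathrm{free}}R^0_{t,\eta}g-\ell_DR^D_{t,\eta}g=\tfrac{1}{i\sqrt{2\pi}}D_t^{\beta_{u_0},g}(\eta).
\]
\item Insert this into \eqref{eq:exact-output-split}, which gives \eqref{eq:phase-I-full-signed-A1}.
\end{enumerate}

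\textbf{Extension to all $g$.} Both sides are bounded rows in $g$. The left side is bounded by $y_0^{-1}$-type resolvent bounds and the row bound \eqref{eq:actual-row-bound}. The right side is bounded by \eqref{eq:general-scalar-row-bound} and Lemma~\ref{lem:schur-algebra-detailed}. Density of $C_c^\infty(\mathbb{R}_+)$ then gives the unique extension.

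\textbf{Main obstacle.} I expect the hardest step to be Stage 1, step 4: transferring the Feshbach crossing from the trace-zero $L^1$ part to the full domain. This requires checking that the canonical traces of the enhanced representative are the raw Jost traces, as the last part of Theorem~\ref{thm:paired-Feshbach-crossing} asserts, and that $e_0$'s contribution is exactly the sewn one.
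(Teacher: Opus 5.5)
Your proposal follows the paper's proof essentially step for step: subtract the anchor to reach a trace-zero $L^1$ remainder, apply the paired Feshbach crossing together with the anchor's sewing to get $T_D\subset A_{\beta_{u_0}}$, use maximality to upgrade this to equality, compare Green identities to get $\ell_D=\omega\,\ell_{\beta_{u_0}}$ with $|\omega|=1$, fix $\omega=1$ with the anchor, and finish with chirp conjugation and the Schur split \eqref{eq:exact-output-split}. One caution on Stage~1, step~6: the kernel argument you give first is circular, because the anchor decomposition $G=(I_+f)E_0+G_0$ places $G_0$ in $\ker\ell_D$ rather than in $\ker J_+$, and $\ker\ell_D=\ker J_+$ is exactly what must be shown; the Green-identity comparison you list as a check is what actually proves the claim, precisely as in the paper.
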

\begin{proof}
By Proposition~\ref{prop:M3-enhanced-closure}, every $G\in\mathrm{Dom}(T_D)$ lies in the scalar maximal graph with the same action. By \eqref{eq:static-actual-domain} and
Proposition~\ref{prop:M4a-localization}, put $f=\mathscr{U}_{u_0}^{-1}JG$; then $f\in\mathrm{Dom}(\mathsf{X})\cap P_{\mathrm{ac}}(L_{u_0})L^2_+(\mathbb{R})$. Set
\[
c=I_+f,\qquad G_0=G-cE_0,\qquad f_0=f-ce_0 .
\]
Then $I_+f_0=0$, and \eqref{eq:zero-row-L1} gives $f_0\in L^1(\mathbb{R})$. By linearity of \eqref{eq:physical-domain-bound}, $G_0=\Phi_{u_0}f_0$ almost everywhere. Both $G_0$ and $E_0$ belong to the enhanced maximal graph. Applying Theorem~\ref{thm:paired-Feshbach-crossing} to $(f_0,G_0)$ at every positive embedded label and then using \eqref{eq:M4ab-anchor} gives $\Gamma_{\mu,-}G=\Gamma_{\mu,+}G\qquad(\mu\in S_+\cap\mathbb{R}_+)$. Therefore $T_D\subset A_{\beta_{u_0}}$. Both operators are maximal dissipative, so the inclusion is equality; equivalently the aggregate interface relation is $U=\mathrm{Id}$.

At $t=0$, cancel the common $-2iy_0$ shift in the polarized actual Green identity \eqref{eq:full-green} and compare it with \eqref{eq:M2-Green}. Since the domains and actions now agree, for all $G,F\in\mathrm{Dom}(T_D)$, $\ell_DG\overline{\ell_DF}=\ell_{\beta_{u_0}}G\overline{\ell_{\beta_{u_0}}F}$. Thus $\ell_D=\omega\ell_{\beta_{u_0}}$ for one $|\omega|=1$. The nonzero anchor values in \eqref{eq:M4ab-anchor} give $\ell_DE_0=\ell_{\beta_{u_0}}E_0=(2\pi i)^{-1}$, hence
$\omega=1$. Thus the endpoint rows agree.

Chirp conjugation gives $R^D_{t,\eta}=C_t^{-1}R_{\beta_{u_0},t}(z_0+2t\eta)C_t$. The common endpoint row then yields $\ell_{\mathrm{free}}R^0_{t,\eta}g-\ell_DR^D_{t,\eta}g=\frac{1}{i\sqrt{2\pi}}D_t^{\beta_{u_0},g}(\eta)$. Finally, $\ell^A_{t,\eta}R_{t,\eta}Jg=\ell_DR^D_{t,\eta}g+\rho_t^{\mathrm{p}}(\eta;g)$ by \eqref{eq:exact-output-split}. Subtraction proves
\eqref{eq:phase-I-full-signed-A1}.
\end{proof}

\begin{condition}[Point-return control]\label{hyp:return}
For every compact interval $\Lambda\Subset\mathbb{R}_+\setminus (\{0\}\cup\sigma_{\mathrm{p}}(L_{u_0}))$, one of the following conditions holds.
\begin{enumerate}[(i)]
 \item There exist $0\leq\rho<\frac{1}{2}$ and $C_\Lambda<\infty$ such that
 \begin{equation}\label{eq:schur-bound}
  \mathrm{ess}\sup_{\eta\in\Lambda}\|K_t(\eta)^{-1}-2t(M_{\mathrm{p}}-\eta)\|_{\mathcal{B}(\mathbb{C}^N)}\le C_\Lambda t^\rho,\qquad t\ge1.
 \end{equation}
 \item For every $g\in C_c^\infty(\mathbb{R}_+)$ and $\varphi\in C_c^\infty(\Lambda)$,
 \begin{equation}\label{eq:weak-return}
  \sqrt{t}\int_\Lambda e^{-it\eta^2}\varphi(\eta)\rho_t^{\mathrm{p}}(\eta;g)d\eta\underset{t\to\infty}{\longrightarrow}0.
 \end{equation}
\end{enumerate}
\end{condition}
Proposition~\ref{prop:schur-return} shows that (i) implies (ii), so either condition gives \eqref{eq:weak-return}.
\section{Continuous radiation: scattering and dispersion}\label{sec:radiation}
\subsection{Scalar cancellation and return estimates}
\begin{lemma}[Rough scalar cancellation]\label{lem:scalar-cancellation}
For $\beta\in L^2(\mathbb{R}_+)$, $g\in C_c^\infty(\mathbb{R}_+)$, and $\Lambda\Subset(0,\infty)$,
\begin{equation}\label{eq:scalar-cancellation}
\sup_{\eta\in\Lambda}\sqrt{t}\,|D_t^{\beta,g}(\eta)|\underset{t\to\infty}{\longrightarrow}0.
\end{equation}
\end{lemma}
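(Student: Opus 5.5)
The approach is to write the defect through a second resolvent identity and then show that the rapidly oscillating chirp kills every term at rate $o(t^{-1/2})$. Put $\zeta=z_0+2t\eta$. By \eqref{eq:M2-Woodbury}, applied to the chirped model $A_{\beta,t}$ (whose factor is $C_tB_+(\beta)C_t^{-1}$, so the Woodbury formula holds with $\beta$ replaced by $\beta_t:=e^{-it\lambda^2}\beta$), one has
\[
D_t^{\beta,g}(\eta)=J_+R_0(\zeta)\bigl(\beta_t Z_t\bigr),\qquad Z_t=[\mathrm{Id}+\mathcal{K}_{\beta_t}(\zeta)]^{-1}\mathcal{C}_H^+M_{\overline{\beta_t}}R_0(\zeta)C_tg .
\]
The Woodbury formula is first used for $\operatorname{Im}\zeta$ large. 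It is then extended to $\operatorname{Im}\zeta=y_0$ by analytic continuation in $\zeta$, as long as $\mathrm{Id}+\mathcal{K}_{\beta_t}(\zeta)$ stays invertible, which step 2 verifies. Both $J_+R_0(\zeta)h=i\int_0^\infty e^{i\zeta s}h(s)\,ds$ and the source $R_0(\zeta)C_tg$ are oscillatory integrals with phase $s\mapsto 2t\eta s-ts^2$, which is stationary at $s=\eta$. The plan is to quantify this.

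\textbf{Step 1 (source).} For $g\in C_c^\infty(\mathbb{R}_+)$ and $\eta\in\Lambda$,
\[
[R_0(\zeta)C_tg](\lambda)=ie^{-i\zeta\lambda}\int_\lambda^\infty e^{i\zeta s-its^2}g(s)\,ds .
\]
Stationary phase (van der Corput) gives $|R_0(\zeta)C_tg(\lambda)|\lesssim t^{-1/2}$ uniformly, with a gain $t^{-1}$ for $\lambda$ away from $\eta$. It also gives the $L^2$ bound $\|R_0(\zeta)C_tg\|_{L^2}\lesssim t^{-1/2}$. Hence $\|M_{\overline\beta}R_0C_tg\|_{L^2}\lesssim t^{-1/2}\|\beta\|_{L^2}$, and since $\mathcal{C}_H^+$ is bounded by \eqref{eq:CH-properties}, the source for $Z_t$ is $O(t^{-1/2})$ in $L^2$.

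\textbf{Step 2 (uniform invertibility).} I need $\|\mathcal{K}_{\beta_t}(\zeta)\|\to0$, or at least uniform invertibility, for $\eta\in\Lambda$. The kernel of $M_{\overline{\beta_t}}R_0(\zeta)M_{\beta_t}$ is
\[
i\,\overline{\beta(\lambda)}\beta(s)\,e^{-it(\lambda^2-s^2)}e^{i\zeta(s-\lambda)}\mathbf 1_{s>\lambda},
\]
and its modulus $|\beta(\lambda)||\beta(s)|e^{-y_0(s-\lambda)}$ is Hilbert--Schmidt but does not decay in $t$. So I first approximate $\beta$ in $L^2$ by $\tilde\beta\in C_c^\infty(0,\infty)$; the error is small in Hilbert--Schmidt norm uniformly in $t$. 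For smooth $\tilde\beta$, write the phase in the variables $s-\lambda$ and $s+\lambda$ and integrate by parts in $s+\lambda$. Using $\lambda^2-s^2=-(s-\lambda)(s+\lambda)$ and the factor $2t\eta(s-\lambda)$ in $\zeta(s-\lambda)$, the phase is nonstationary except on a set of measure $O(t^{-1})$. This makes the composite $\mathcal{C}_H^+M_{\overline{\beta_t}}R_0M_{\beta_t}$ tend to zero in operator norm. I expect this to be the main obstacle: the chirps attached to $\overline\beta$ and $\beta$ interact through $\mathcal{C}_H^+$, so the decay is not purely diagonal. If strong decay fails, the fallback is that dissipativity gives a uniform resolvent bound $y_0^{-1}$ for $A_{\beta,t}$ directly, and the row bound \eqref{eq:general-scalar-row-bound} can replace the inversion.

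\textbf{Step 3 (output pairing).} The defect equals $i\int_0^\infty e^{i\zeta s}\beta(s)e^{-its^2}Z_t(s)\,ds$. This is only $O(\|Z_t\|_{L^2})=O(t^{-1/2})$, which is not $o(t^{-1/2})$, so I need one more oscillation gain. I will obtain it by duality: the output equals $\langle Z_t,\,\overline{\beta_t}\,\overline{e_\zeta}\rangle$, where $e_\zeta(s)=e^{i\zeta s}$. The adjoint chain $\mathcal{C}_H^+M_{\overline{\beta_t}}R_0(\zeta)^*$ applied to the row is again a chirped Volterra expression of the same type as the source in Step 1. After the $\tilde\beta$ approximation, its size is $O(t^{-1/2})$. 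The pairing therefore becomes a bilinear form between two $O(t^{-1/2})$ vectors, sandwiching the bounded operator $[\mathrm{Id}+\mathcal{K}]^{-1}$ (uniform by Step 2), which gives $O(t^{-1})$. Multiplying by $\sqrt t$ gives $O(t^{-1/2})\to0$ for smooth $\tilde\beta$.

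\textbf{Step 4 (removing the approximation).} For the error $\beta-\tilde\beta$, the same bilinear structure gives a bound $\lesssim\|\beta-\tilde\beta\|_{L^2}$ times $t^{-1/2}$, uniformly in $\eta\in\Lambda$, using only the $L^2$ stationary-phase bound of Step 1 and not smoothness of $\beta$. Letting $t\to\infty$ and then $\tilde\beta\to\beta$ proves \eqref{eq:scalar-cancellation}.
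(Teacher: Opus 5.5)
\textbf{There is a genuine gap.} Two of your key estimates are false, and the mechanism that actually produces the extra decay is missing.

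\emph{Step~2.} The phase of the kernel of $M_{\overline{\beta_t}}R_0(\zeta)M_{\beta_t}$ is
$t\lambda^2-ts^2-2t\eta(\lambda-s)=t(\lambda-\eta)^2-t(s-\eta)^2$, a difference $\phi_t(\lambda)-\phi_t(s)$. Hence $M_{\overline{\beta_t}}R_0(\zeta)M_{\beta_t}=U_t\widetilde T U_t^{\ast}$. Here $U_t$ is multiplication by $e^{it(\lambda-\eta)^2}$, and $\widetilde T$, with kernel $i\overline{\beta(\lambda)}\beta(s)e^{iz_0(s-\lambda)}\mathbf 1_{s>\lambda}$, is independent of $t$ and $\eta$. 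So integrating by parts in $s+\lambda$ can give strong convergence but never norm decay. Moreover, $\mathcal{C}_H^+$ is $(2\pi)^{-1}$ times the compressed positive-frequency projection, and chirp conjugation shifts frequency by $2t(\lambda-\eta)$. Hence $U_t^{\ast}\mathcal{C}_H^+U_t\to(2\pi)^{-1}\mathbf 1_{(\eta,\infty)}$ strongly. Compactness of $\widetilde T$ then gives $\|\mathcal{K}_{\beta_t}(\zeta)\|\to(2\pi)^{-1}\|\mathbf 1_{(\eta,\infty)}\widetilde T\|$, which is nonzero unless $\beta$ vanishes a.e.\ on $(\eta,\infty)$. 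Uniform invertibility of $\mathrm{Id}+\mathcal{K}$ at height $y_0$ is therefore not established. The paper never inverts there: it uses Woodbury only on a high line $\operatorname{Im}\zeta\ge Y_\beta$, where the Hilbert--Schmidt norm is small uniformly in $t$ and $\operatorname{Re}\zeta$, and then descends to $y_0$ by the resolvent identity.

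\emph{Step~3.} The row vector $\overline{\beta_te_\zeta}(s)=e^{-it\eta^2}e^{it(s-\eta)^2}\overline{\beta(s)}e^{-ix_0s-y_0s}$ is a bare chirp. There is no Volterra integration against a smooth compactly supported function, hence no stationary-phase gain. Its $L^2$ norm is independent of $t$, and its $\mathcal{C}_H^+$-projection tends to the nonzero piece on $(\eta,\infty)$, where its local frequency $2t(s-\eta)$ is positive. Your bilinear bound therefore gives only $\sqrt t\,|D_t^{\beta,g}(\eta)|=O(1)$. The fallback via \eqref{eq:general-scalar-row-bound} does not repair this: it is an $L^2$ bound, the relevant input $B_+(\beta)r_{t,\eta}$ is only in $L^1$ for rough $\beta$, and even for bounded $\beta$ it returns $O(t^{-1/2})$.

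\emph{The missing idea.} The extra decay sits in the source and is produced by $\mathcal{C}_H^+$; your Step~1 discards it by bounding $\mathcal{C}_H^+$ by its norm. For $\lambda<\eta$ the stationary point $s=\eta$ lies in $[\lambda,\infty)$, so there is no $t^{-1}$ gain on that side. There $r_{t,\eta}=C_t^{-1}R_0(\zeta)C_tg$ is approximately $c\,t^{-1/2}g(\eta)e^{-iz_0(\lambda-\eta)}e^{it(\lambda-\eta)^2}$, a chirp of negative local frequency $2t(\lambda-\eta)$. For $\lambda>\eta$ it is non-oscillatory of size $O((t|\lambda-\eta|)^{-1})$. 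The positive-frequency projection annihilates the leading part, which gives $\|B_+(\beta_\varepsilon)r_{t,\eta}\|_{L^1}\lesssim_\varepsilon t^{-1}$ for smooth $\beta_\varepsilon$; this is the estimate the paper imports from Chen et al.

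The paper then proceeds in three steps:
\begin{enumerate}
\item It writes $D_t^{\beta,g}(\eta)=\mathfrak{J}_{\beta,t,\zeta}(B_+(\beta)r_{t,\eta})$ by the first resolvent identity.
\item It proves the uniform $L^1$ endpoint bound $|J_+R_{\beta,t}(\zeta)C_tf|\le C_{\beta,y_0}\|f\|_{L^1(\mathbb{R}_+)}$.
\item It approximates $\beta$ only inside $B_+(\beta)$, using the $L^1$-Lipschitz bound for $B_+$ against $\|r_{t,\eta}\|_{L^\infty}\lesssim t^{-1/2}$.
\end{enumerate}
In this way the rough resolvent is never compared with a smooth one. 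Your Step~4, by contrast, would need a Lipschitz bound for $(\mathrm{Id}+\mathcal{K})^{-1}$ in $\beta$.

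To rescue your Woodbury framework you would need to do three things:
\begin{enumerate}
\item accept the row at size $O(1)$;
\item prove a genuine uniform $L^2$ bound for $(\mathrm{Id}+\mathcal{K}_{\beta_t}(\zeta))^{-1}$;
\item show $\|\mathcal{C}_H^+(\overline{\beta}\,r_{t,\eta})\|_{L^2}=o(t^{-1/2})$ uniformly in $\eta\in\Lambda$, by approximating $\beta$ in the source only and using the negative-frequency cancellation for the smooth part.
\end{enumerate}
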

\begin{proof}
Put $\zeta=z_0+2t\eta$ and $r_{t,\eta}=C_t^{-1}R_0(\zeta)C_tg$.
The free Volterra formula and van der Corput's lemma \cite[Lemma~A.2]{Chen-2026-arXiv} give
\begin{equation}\label{eq:vdc}
\sup_{\eta\in\Lambda}\|r_{t,\eta}\|_{L^\infty(\mathbb{R}_+)}\le C_{z_0,g,\Lambda}t^{-1/2}.
\end{equation}
For $w_1,w_2\in L^2(\mathbb{R}_+)$ and $r\in L^\infty(\mathbb{R}_+)$, the definition \eqref{eq:rough-toeplitz}, Cauchy--Schwarz, and boundedness of the half-line Hilbert transform on $L^2(\mathbb{R})$ give
\begin{equation}\label{eq:B-lipschitz}
\|[B_+(w_1)-B_+(w_2)]r\|_{L^1(\mathbb{R}_+)}\le C_H\|w_1-w_2\|_{L^2(\mathbb{R}_+)}(\|w_1\|_{L^2(\mathbb{R}_+)}+\|w_2\|_{L^2(\mathbb{R}_+)})\|r\|_{L^\infty(\mathbb{R}_+)}.
\end{equation}
Expand the difference of $M_w\mathcal C_H^+M_{\bar w}$ at $w_1,w_2$ and apply Cauchy--Schwarz to each term.

Define the rough endpoint functional
\[
\mathfrak{J}_{\beta,t,\zeta}(f)=J_+R_{\beta,t}(\zeta)C_tf,\qquad f\in L^1(\mathbb{R}_+)\cap L^2(\mathbb{R}_+).
\]
Write $\beta_t=C_t\beta$ and $\Gamma_{\beta,t}^+(\zeta)=\mathcal{C}_H^+M_{\overline{\beta_t}}R_0(\zeta)M_{\beta_t}$. The Volterra kernel gives
\[
\|M_{\overline{\beta_t}}R_0(x+iy)M_{\beta_t}\|_{\mathrm{HS}}^2=\int_0^\infty\int_\lambda^\infty|\beta(\lambda)|^2e^{-2y(s-\lambda)}|\beta(s)|^2dsd\lambda\underset{y\to\infty}\longrightarrow0
\]
uniformly in $x$ and $t$, by dominated convergence. Hence there exists $Y_\beta$ such that $\|\Gamma_{\beta,t}^+(\zeta)\|_{\mathcal{B}(L^2(\mathbb{R}_+))}\leq\frac{1}{2}$ whenever $\operatorname{Im}\zeta\geq Y_\beta$. In this region the Woodbury identity is
\begin{equation}\label{eq:plus-Woodbury}
R_{\beta,t}(\zeta)=R_0(\zeta)-R_0(\zeta)M_{\beta_t}[\mathrm{Id}+\Gamma_{\beta,t}^+(\zeta)]^{-1}\times\mathcal{C}_H^+M_{\overline{\beta_t}}R_0(\zeta).
\end{equation}

To identify this formula with the enhanced scalar graph, let $G=R_{\beta,t}(\zeta)f$ and let $Z$ be its factor profile. Equality of the coefficient-one traces removes the distributional jumps at every puncture. Applying the global right-Volterra inverse to the two factor equations gives
\[
G=R_0(\zeta)f-R_0(\zeta)M_{\beta_t}Z,\qquad Z=\mathcal{C}_H^+M_{\overline{\beta_t}}G.
\]
 Eliminating $Z$ gives \eqref{eq:plus-Woodbury}. Conversely, the right-hand side of \eqref{eq:plus-Woodbury} satisfies the enhanced graph equations and every coefficient-one interface condition. Maximal-dissipative uniqueness therefore identifies it with $R_{\beta,t}(\zeta)$. The free Volterra bounds $\|R_0(\zeta)\|_{L^1(\mathbb{R}_+)\to L^\infty(\mathbb{R}_+)}\le1$,  $\|R_0(x+iY)f\|_{L^2(\mathbb{R}_+)}\le(2Y)^{-1/2}\|f\|_{L^1(\mathbb{R}_+)}$,  $|J_+R_0(\zeta)f|\le\|f\|_{L^1(\mathbb{R}_+)}$, and Cauchy--Schwarz imply, for $f\in L^1(\mathbb{R}_+)\cap L^2(\mathbb{R}_+)$,
\[
|J_+R_{\beta,t}(\zeta)C_tf|\le C_\beta\|f\|_{L^1(\mathbb{R}_+)},\qquad\operatorname{Im}\zeta\ge Y_\beta.
\]
Set
\[
h=[\mathrm{Id}+\Gamma_{\beta,t}^+(\zeta)]^{-1}\mathcal{C}_H^+M_{\overline{\beta_t}}R_0(\zeta)C_tf.
\]
 The preceding bounds give $\|h\|_{L^2(\mathbb{R}_+)}\le C\|\beta\|_{L^2(\mathbb{R}_+)}\|f\|_{L^1(\mathbb{R}_+)}$. Moreover,
\[
|J_+R_0(\zeta)M_{\beta_t}h|\le\|\beta_t h\|_{L^1(\mathbb{R}_+)}\le\|\beta\|_{L^2(\mathbb{R}_+)}\|h\|_{L^2(\mathbb{R}_+)}.
\]
The same $L^1(\mathbb{R}_+)\to L^2(\mathbb{R}_+)$ estimate gives
\[
\|R_0(x+i Y)M_{\beta_t}h\|_{L^2(\mathbb{R}_+)}\le(2Y)^{-1/2}\|\beta\|_{L^2(\mathbb{R}_+)}\|h\|_{L^2(\mathbb{R}_+)}.
\]
Consequently, \eqref{eq:plus-Woodbury} also yields the high-line bound
\begin{equation}\label{eq:plus-high-line-L2}
\|R_{\beta,t}(x+iY)C_tf\|_{L^2(\mathbb{R}_+)}\le C_{\beta,Y}\|f\|_{L^1(\mathbb{R}_+)},\qquad Y\ge Y_\beta.
\end{equation}

For $y_0\leq\operatorname{Im}\zeta<Y_\beta$, put $\zeta_Y=\operatorname{Re}\zeta+iY_\beta$. The resolvent identity gives $R_{\beta,t}(\zeta)=R_{\beta,t}(\zeta_Y)+(\zeta-\zeta_Y)R_{\beta,t}(\zeta)R_{\beta,t}(\zeta_Y)$. The endpoint bound \eqref{eq:general-scalar-row-bound} gives
\[
|J_+R_{\beta,t}(\zeta)g|\le\left(\frac{2}{y_0}\right)^{1/2}\|g\|_{L^2(\mathbb{R}_+)},\qquad g\in L^2(\mathbb{R}_+).
\]
Combining this estimate with \eqref{eq:plus-high-line-L2} proves
\begin{equation}\label{eq:rough-endpoint-bound}
|\mathfrak{J}_{\beta,t,\zeta}(f)|\le C_{\beta,y_0}\|f\|_{L^1(\mathbb{R}_+)},
\end{equation}
uniformly in $t$, $\eta$, and $\operatorname{Re}\zeta$. Density of $L^1(\mathbb{R}_+)\cap L^2(\mathbb{R}_+)$ in $L^1(\mathbb{R}_+)$ gives the unique bounded extension. This factorization is used in \cite[Lemma~A.5]{Chen-2026-arXiv}. The first resolvent identity gives $D_t^{\beta,g}(\eta)=\mathfrak{J}_{\beta,t,\zeta}\bigl(B_+(\beta)r_{t,\eta}\bigr)$.
Choose $\beta_\varepsilon\in C_c^\infty(\mathbb{R}_+)$ with $\beta_\varepsilon\to\beta$ in $L^2(\mathbb{R}_+)$, and split
\[
D_t^{\beta,g}(\eta)=\mathfrak{J}_{\beta,t,\zeta}\bigl([B_+(\beta)-B_+(\beta_\varepsilon)]r_{t,\eta}\bigr)+\mathfrak{J}_{\beta,t,\zeta}\bigl(B_+(\beta_\varepsilon)r_{t,\eta}\bigr).
\]
Let $\mathcal{K}f=\overline{f}$, define
\[
B_-(w)=\frac{1}{4\pi}M_w(\mathrm{Id}-iH)M_{\overline{w}},\qquad G_{t,\eta}^{z_0}=C_t^{-1}R_0(z_0+2t\eta)C_t,
\]
so that
\begin{equation}\label{eq:focusing-conjugation}
\mathcal{K}G_{t,\eta}^{z_0}=-G_{-t,\eta}^{-\overline{z_0}}\mathcal{K},\qquad\mathcal{K}B_+(w)=B_-(\overline{w})\mathcal{K}.
\end{equation}
Indeed, these follow from
$\mathcal{K}(i\partial_\lambda-\zeta)\mathcal{K}=-(i\partial_\lambda+\overline\zeta)$, $\mathcal{K}C_t\mathcal{K}=C_t^{-1}$, and $\mathcal{K}H\mathcal{K}=H$.
Apply \cite[Equation~(4.8) and Proposition~A.1]{Chen-2026-arXiv} at time $-t$ to the conjugated smooth data in \eqref{eq:focusing-conjugation}. For each fixed $\varepsilon$, this gives $\sup_{\eta\in\Lambda}\|B_+(\beta_\varepsilon)r_{t,\eta}\|_{L^1(\mathbb{R}_+)}\le C_{\varepsilon,z_0,g,\Lambda}t^{-1}$. The endpoint estimate \eqref{eq:rough-endpoint-bound} therefore gives
\[
\sup_{\eta\in\Lambda}\sqrt t\left|\mathfrak {J}_{\beta,t,\zeta}\bigl(B_+(\beta_\varepsilon)r_{t,\eta}\bigr)\right|\le C_{\varepsilon,\beta,z_0,g,\Lambda}t^{-1/2}\underset{t\to\infty}{\longrightarrow}0.
\]
Equations \eqref{eq:vdc}, \eqref{eq:B-lipschitz}, and \eqref{eq:rough-endpoint-bound} give
\[
\limsup_{t\to\infty}\sup_{\eta\in\Lambda}\sqrt{t}|D_t^{\beta,g}(\eta)|\le C_{\beta,y_0,z_0,g,\Lambda}\|\beta-\beta_\varepsilon\|_{L^2(\mathbb{R}_+)}\bigl(\|\beta\|_{L^2(\mathbb{R}_+)}+\|\beta_\varepsilon\|_{L^2(\mathbb{R}_+)}\bigr).
\]
Sending $\varepsilon\to0$ proves \eqref{eq:scalar-cancellation}.
\end{proof}
\begin{prop}[Subcritical Schur return]\label{prop:schur-return}
Under Condition~\ref{hyp:schur-graph}, if \eqref{eq:schur-bound} holds, then \eqref{eq:weak-return} holds. More precisely, for $g\in C_c^\infty(\mathbb{R}_+)$,
\begin{equation}\label{eq:return-rate}
\|\sqrt{t}\,\rho_t^{\mathrm{p}}(\cdot;g)\|_{L^2(\Lambda)}=O_{\Lambda,g}(t^{\rho-1/2}).
\end{equation}
\end{prop}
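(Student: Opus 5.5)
We factor the point return through a finite-dimensional middle matrix that is almost scalar, then bound the two outer rows separately. By definition \eqref{eq:point-return},
\[
\rho_t^{\mathrm p}(\eta;g)=\ell^A_{t,\eta}R_{t,\eta}P\,K_t(\eta)^{-1}PR_{t,\eta}Jg .
\]
Write $X_t(\eta):=K_t(\eta)^{-1}-2t(M_{\mathrm p}-\eta)$. This gives
\[
\rho_t^{\mathrm p}=\ell^A_{t,\eta}R_{t,\eta}P\,X_t(\eta)\,PR_{t,\eta}Jg+2t\,\ell^A_{t,\eta}R_{t,\eta}P(M_{\mathrm p}-\eta)PR_{t,\eta}Jg .
\]
The middle matrix in the first term is $O(t^\rho)$ by \eqref{eq:schur-bound}. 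The plan is to show that each outer factor decays like $t^{-1/2}$ in the appropriate norm, and that the second ("free") term can be absorbed.

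\textbf{Step 1: the right column $PR_{t,\eta}Jg$.} This column is $\mathcal{E}$-valued, with components $\langle R_{t,\eta}Jg,e_j\rangle=\langle Jg,R_{t,\eta}^*e_j\rangle$. On the transported smooth core, $A_{t,\eta}$ acts as $T+2t(M-\eta)-z_0$. For $\eta\in\Lambda$, which stays a positive distance from every $\mu_j$, the large diagonal entry $2t(\mu_j-\eta)$ dominates. A resolvent/Neumann comparison with $(2t(M_{\mathrm p}-\eta))^{-1}$ then suggests $|PR_{t,\eta}Jg|=O(t^{-1})$ pointwise. I would establish the weaker square-integrated statement $\|PR_{t,\eta}Jg\|_{L^2(\Lambda,d\eta)}=O(t^{-1/2})$ by rewriting the component through \eqref{eq:polarized-green}. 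That identity expresses $\mathfrak b_t(\eta;g)=-K_t^{-1}PR_{t,\eta}Jg$ via $c_g$, $\Pi_{t,\eta}$ and boundary rows, all of which are bounded by \eqref{eq:dirichlet-green-bound}.

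\textbf{Step 2: the left row $\ell^A_{t,\eta}R_{t,\eta}P$.} Its components are values of $W_t^{u_0}\mathscr{U}_{u_0}^{-1}e_j$ at $z_0+2t\eta$; this follows from \eqref{eq:typed-full-ray} with $e_j$ in place of $Jg$. After the change of variables $x=x_0+2t\eta$, horizontal-line Hardy contractivity (as in the proof of Proposition~\ref{prop:automatic-ray-closure}) gives
\[
\|\ell^A_{t,\cdot}R_{t,\cdot}P\|_{L^2(\Lambda)}\le (2t)^{-1/2}\|W_t^{u_0}\|\le (2t)^{-1/2}.
\]
The pointwise bound $(4\pi y_0)^{-1/2}$ from the Ward identity is available as well.

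\textbf{Step 3: assembly.} In the first term, pair the pointwise-bounded factor with the $L^2(\Lambda)$-small factor, and insert the $t^\rho$ bound on $X_t$. This yields $\|\sqrt t\,(\cdot)\|_{L^2(\Lambda)}\lesssim \sqrt t\cdot t^{\rho}\cdot t^{-1/2}\cdot(\text{pointwise }t^{-1/2})$. Here I will need Step 1 in its pointwise form $O(t^{-1/2})$ uniformly on $\Lambda$. I would try to get this for $g\in C_c^\infty$ by a van der Corput argument analogous to \eqref{eq:vdc}, using that the chirped free column oscillates. The second term carries the explicit factor $2t(M_{\mathrm p}-\eta)$. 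The plan is to absorb it by writing $2t(M_{\mathrm p}-\eta)PR_{t,\eta}J = -PR_{t,\eta}(T-z_0)PR_{t,\eta}J+\dots$, i.e. to use the equation $A_{t,\eta}R_{t,\eta}=I$ projected onto $\mathcal{E}$. This trades the factor $t$ for the bounded operator $T$ compressed by $b_D$ (Proposition~\ref{prop:M4a-localization}), leaving a term that again has the $t^{-1/2}\cdot t^{-1/2}$ structure.

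\textbf{Main obstacle.} I expect the hardest step to be Step 1: the uniform $O(t^{-1/2})$ decay of the point charge $PR_{t,\eta}Jg$ for smooth $g$, which is what gains the factor beyond Hardy contractivity. I would first try to reduce it to the scalar Volterra/chirp estimates \eqref{eq:vdc} via the Schur identity \eqref{eq:detailed-full-split}. Failing that, I would use the weaker $L^2_\eta$ version combined with the pointwise Ward bound on the other factor. That fallback still gives $O(t^{\rho})\cdot O(1)$ for $\|\sqrt t\,\rho_t^{\mathrm p}\|_{L^2(\Lambda)}$, and then one extra $t^{-1/2}$ must be found in the free term.

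Once \eqref{eq:return-rate} holds, \eqref{eq:weak-return} follows by Cauchy--Schwarz against $\varphi\in C_c^\infty(\Lambda)$, since $\rho<1/2$.
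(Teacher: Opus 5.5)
Your factorization puts the large matrix $K_t(\eta)^{-1}\approx 2t(M_{\mathrm p}-\eta)$ in the middle, and the argument does not close. The ``free'' term $2t\,\ell^A_{t,\eta}R_{t,\eta}P(M_{\mathrm p}-\eta)PR_{t,\eta}Jg$ is not a correction. Since $\ell^A_{t,\eta}R_{t,\eta}P=\mathfrak l_t(\eta)K_t(\eta)$ and $PR_{t,\eta}Jg=-K_t(\eta)\mathfrak b_t(\eta;g)$, it equals $-\mathfrak l_t\,[2tK_t(M_{\mathrm p}-\eta)K_t]\,\mathfrak b_t=\rho_t^{\mathrm p}+\mathfrak l_tK_tX_tK_t\mathfrak b_t$. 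So it carries all of $\rho_t^{\mathrm p}$ to leading order.

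Your absorption through $PA_{t,\eta}R_{t,\eta}J=0$ only rewrites this as $2t(M_{\mathrm p}-\eta)PR_{t,\eta}Jg=-P(T-z_0)R_{t,\eta}Jg$. The map $b_D$ of Proposition~\ref{prop:M4a-localization} is bounded only in the $T_D$-graph norm. But $R_{t,\eta}Jg$ has a point component, and its continuous part need not lie in $\mathrm{Dom}(T_D)$ for $t\neq0$ (compare the dynamic domain $C_t^{-1}H^1(\mathbb{R}_+)\not\subset H^1(\mathbb{R}_+)$ in the appendix). So you get no $t$-uniform bound. Even an $O(1)$ bound there, paired with your Hardy estimate $(2t)^{-1/2}$, would only give $\|\sqrt t\,\cdot\|_{L^2(\Lambda)}=O(1)$.

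The outer-factor estimates are also too weak or unjustified:
\begin{enumerate}
\item Horizontal Hardy contractivity, and the hoped-for $t^{-1/2}$, cannot see the pointwise bound $O(t^{\rho/2-1})$ that actually holds for $\ell^A_{t,\eta}R_{t,\eta}P$ and $PR_{t,\eta}Jg$.
\item \eqref{eq:vdc} concerns the free chirped Volterra resolvent, not the point compression of the coupled resolvent.
\item In \eqref{eq:polarized-green}, \eqref{eq:dirichlet-green-bound} bounds only $c_h$ and $\ell^A_{t,\eta}c_h$. It does not bound $\Pi_{t,\eta}a$ or $\mathfrak l_t(\eta)a$, and that is exactly where \eqref{eq:schur-bound} must enter.
\end{enumerate}

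The missing idea is to put the small matrix in the middle: $\rho_t^{\mathrm p}=-\mathfrak l_tK_t\mathfrak b_t$.
\begin{enumerate}
\item With $\delta_\Lambda=\operatorname{dist}(\Lambda,\sigma_{\mathrm p}(L_{u_0}))>0$, \eqref{eq:schur-bound} and a Neumann series give $\|K_t(\eta)\|\le(2t\delta_\Lambda-C_\Lambda t^\rho)^{-1}$ on $\Lambda$ for large $t$.
\item Apply \eqref{eq:full-green} with $v=w=\Pi_{t,\eta}a$, using \eqref{eq:detailed-point-lift} and $P\Pi_{t,\eta}a=a$. The Hermitian part $2t(M_{\mathrm p}-\eta)$ drops out of the imaginary part, leaving $\pi|\mathfrak l_t(\eta)a|^2+y_0\|\Pi_{t,\eta}a\|^2=-\operatorname{Im}\langle X_t(\eta)a,a\rangle\le C_\Lambda t^\rho\|a\|^2$. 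Hence $\|\mathfrak l_t\|$ and $\|\Pi_{t,\eta}\|$ are $O(t^{\rho/2})$.
\item Inserting these bounds and \eqref{eq:dirichlet-green-bound} into \eqref{eq:polarized-green} gives $\|\mathfrak b_t(\eta;\cdot)\|=O(t^{\rho/2})$.
\item Therefore $|\rho_t^{\mathrm p}(\eta;g)|\lesssim t^{\rho-1}\|g\|_{L^2(\mathbb{R}_+)}$ uniformly on $\Lambda$. Integrating over $\Lambda$ gives \eqref{eq:return-rate}, and your final Cauchy--Schwarz step then yields \eqref{eq:weak-return}.
\end{enumerate}
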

\begin{proof}
Set $\delta_\Lambda=\mathrm{dist}(\Lambda,\{\mu_1,\ldots,\mu_N\})>0$. For all sufficiently large $t$, the finite-dimensional Neumann estimate gives
\begin{equation}\label{eq:K-bound}
\operatorname*{ess\,sup}_{\eta\in\Lambda}\|K_t(\eta)\|_{\mathcal{B}(\mathbb{C}^N)}\le (2t\delta_\Lambda-C_\Lambda t^\rho)^{-1}.
\end{equation}
Let $\mathfrak{E}_t(\eta)=K_t(\eta)^{-1}-2t(M_{\mathrm{p}}-\eta)$. The Green identity \eqref{eq:full-green}, the lift identity \eqref{eq:detailed-point-lift}, and $P\Pi_{t,\eta}=\mathrm{Id}_{\mathcal{E}}$ give
\[
\pi|\mathfrak{l}_t(\eta)a|^2+y_0\|\Pi_{t,\eta}a\|_{\mathcal{H}}^2=-\operatorname{Im}\langle \mathfrak{E}_t(\eta)a, a\rangle_{\mathcal{E}}\le C_\Lambda t^\rho\|a\|_{\mathcal{E}}^2.
\]
Consequently,
\[
\|\mathfrak{l}_t(\eta)\|_{\mathcal{B}(\mathcal{E},\mathbb{C})}\le\left(\frac{C_\Lambda}{\pi}\right)^{1/2}t^{\rho/2},\qquad\|\Pi_{t,\eta}\|_{\mathcal{B}(\mathcal{E},\mathcal{H})}\le\left(\frac{C_\Lambda}{y_0}\right)^{1/2}t^{\rho/2}.
\]
For $\|a\|_{\mathcal{E}}=1$, \eqref{eq:polarized-green} and \eqref{eq:dirichlet-green-bound} yield
\[
\begin{aligned}
|\langle \mathfrak{b}_t(\eta;h), a\rangle_{\mathcal{E}}|&\le \|h\|_{\mathcal{H}_c}\|\Pi_{t,\eta}a\|_{\mathcal{H}}+2y_0\|c_h\|_{\mathcal{H}}\|\Pi_{t,\eta}a\|_{\mathcal{H}}
+2\pi|\ell^A_{t,\eta}c_h|\,|\mathfrak{l}_t(\eta)a|\\
&\le C_{\Lambda,z_0}t^{\rho/2}\|h\|_{\mathcal{H}_c}.
\end{aligned}
\]
Taking the supremum over $\|a\|_{\mathcal{E}}=1$ gives
\begin{equation}\label{eq:charge-bounds}
\operatorname*{ess\,sup}_{\eta\in\Lambda}\|\mathfrak{l}_t(\eta)\|_{\mathcal{B}(\mathcal{E},\mathbb{C})}+\operatorname*{ess\,sup}_{\eta\in\Lambda}\|\mathfrak{b}_t(\eta;\cdot)\|_{\mathcal{B}(\mathcal{H}_c,\mathcal{E})}\le C^\prime_{\Lambda,z_0}t^{\rho/2}.
\end{equation}
Equations \eqref{eq:point-return}, \eqref{eq:K-bound}, and \eqref{eq:charge-bounds} imply $\operatorname*{ess\,sup}_{\eta\in\Lambda}|\rho_t^{\mathrm p}(\eta;g)|\le C_{\Lambda,g}t^{\rho-1}$. Integration over $\Lambda$ proves \eqref{eq:return-rate}, and Cauchy--Schwarz proves \eqref{eq:weak-return}.
\end{proof}
\subsection{Ray convergence and strong scattering}
\begin{lemma}[Poisson--Fresnel ray criterion]\label{lem:ray-criterion}
Let $f_t$ be a bounded family in $L^2_+(\mathbb{R})$, and set
\[
d_t(\eta)=\sqrt{t}e^{-it\eta^2}[e^{it\partial_x^2}f_t](x_0+2t\eta+iy_0),\qquad\eta>0.
\]
Assume that $d_t$ is uniformly bounded in $L^2(\mathbb{R}_+)$ and that
\begin{equation}\label{eq:local-ray-weak}
\langle d_t, \varphi\rangle\underset{t\to\infty}{\longrightarrow}0
\end{equation}
for every $\varphi\in C_c^\infty(\mathbb{R}_+\setminus(\{0\}\cup\sigma_{\mathrm{p}}(L_{u_0})))$. Then
\begin{equation}\label{eq:ray-weak}
f_t\rightharpoonup0\quad\text{in }L^2(\mathbb{R}).
\end{equation}
\end{lemma}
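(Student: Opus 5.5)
Write $F_t=e^{it\partial_x^2}f_t\in L^2_+(\mathbb{R})$ and note $\widehat{F_t}(\xi)=e^{-it\xi^2}\widehat{f_t}(\xi)$, supported in $[0,\infty)$. The Hardy extension is
\[
F_t(x+iy_0)=\frac{1}{2\pi}\int_0^\infty e^{ix\xi-y_0\xi}\widehat{f_t}(\xi)\,d\xi .
\]
The strategy is to rewrite $d_t$ as a near-isometric image of $\widehat{f_t}$ and then transfer weak convergence back. Since $f_t$ is bounded, it suffices to show $\langle f_t,\psi\rangle\to0$ for $\psi$ in a dense class. We take $\widehat\psi\in C_c^\infty(\mathbb{R}_+\setminus(\{0\}\cup\sigma_{\mathrm p}(L_{u_0})))$. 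This class is dense in $L^2_+(\mathbb{R})$ because a finite set has measure zero.

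\textbf{Step 1: Fresnel identity.} Substitute $x=x_0+2t\eta$ and complete the square, $2t\eta\xi-t\xi^2=t\eta^2-t(\xi-\eta)^2$. This gives
\[
d_t(\eta)=\frac{\sqrt t}{2\pi}\int_0^\infty e^{-it(\xi-\eta)^2}\,e^{ix_0\xi-y_0\xi}\widehat{f_t}(\xi)\,d\xi .
\]
So $d_t=\mathcal{G}_t(a\widehat{f_t})$, where $a(\xi)=e^{ix_0\xi-y_0\xi}$ and $\mathcal{G}_t$ is convolution with the Fresnel kernel $\frac{\sqrt t}{2\pi}e^{-it s^2}$, followed by restriction to $\eta>0$.

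\textbf{Step 2: asymptotic inverse of $\mathcal{G}_t$.} For a fixed test function $\varphi\in C_c^\infty$, stationary phase (or an exact Fourier computation of the Gaussian) gives
\[
\mathcal{G}_t^\ast\varphi(\xi)=\frac{\sqrt t}{2\pi}\int e^{it(\xi-\eta)^2}\varphi(\eta)\,d\eta=c\,\varphi(\xi)+o(1)\quad\text{in }L^2,
\]
with $c=\frac{1}{2\pi}\sqrt{\pi}\,e^{i\pi/4}\ne0$. The point is that the rescaled kernel $\sqrt t\,e^{its^2}$ converges to a multiple of $\delta$ in the sense of Fourier multipliers: its multiplier is $c'e^{-i\zeta^2/(4t)}$, which tends to $c'$ boundedly and pointwise, so the convergence holds in $L^2$ for fixed $\varphi$. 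Consequently
\[
\langle d_t,\varphi\rangle=\langle a\widehat{f_t},\mathcal{G}_t^\ast\varphi\rangle=\bar c\,\langle \widehat{f_t},\bar a\varphi\rangle+o(1),
\]
using that $\widehat{f_t}$ is bounded in $L^2$. Here $\mathcal{G}_t^\ast\varphi$ is understood on $\xi>0$, and since $\widehat{f_t}$ lives on $\xi\ge0$ no boundary issue arises.

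\textbf{Step 3: conclusion.} Given admissible $\psi$, set $\varphi=\overline{a}^{-1}\widehat\psi=e^{ix_0\xi+y_0\xi}\widehat\psi$. This is still smooth with compact support in the admissible set, because $a$ is smooth and nonvanishing. By hypothesis \eqref{eq:local-ray-weak}, $\langle d_t,\varphi\rangle\to0$, and Step 2 then gives $\langle \widehat{f_t},\widehat\psi\rangle\to0$. Plancherel and density now yield \eqref{eq:ray-weak}.

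\textbf{Main obstacle.} The delicate point is Step 2. One must check that the error $\mathcal{G}_t^\ast\varphi-c\varphi$ tends to zero in $L^2(\mathbb{R}_+)$, not just pointwise, and that truncating to $\eta>0$ and $\xi>0$ is harmless. The Fourier-multiplier argument handles both: $\varphi$ has compact support away from $0$, and dominated convergence applies to $|e^{-i\zeta^2/4t}-1|\,|\widehat\varphi(\zeta)|$.

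The uniform $L^2$ bound on $d_t$ is not strictly needed for this argument. It only guarantees that the pairings are meaningful and could be used to extend the hypothesis to general $\varphi\in L^2$.
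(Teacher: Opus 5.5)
Your proposal is correct and follows essentially the same route as the paper's proof. Both complete the square to write $d_t$ as a Fresnel convolution of $e^{ix_0\xi-y_0\xi}\widehat{f_t}$, use the Fourier multiplier $e^{\mp i\zeta^2/(4t)}$ and dominated convergence to show the adjoint tends in $L^2$ to a nonzero multiple of the identity on fixed tests, and then undo the nonvanishing weight via $\varphi=e^{ix_0\xi+y_0\xi}\widehat\psi$ and conclude by density and boundedness. Your remark that the uniform $L^2$ bound on $d_t$ is never actually used also holds for the paper's argument.
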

\begin{proof}
Extend functions on $\mathbb{R}_+$ by zero to $\mathbb{R}$ and put $a_t(\xi)=e^{ix_0\xi-y_0\xi}\mathcal{F}_0f_t(\xi)$.
Fourier inversion for the Hardy extension and completion of the square give
\[
d_t(\eta)=\frac{1}{\sqrt{2\pi}}(\mathcal{F}_ta_t)(\eta),\qquad(\mathcal{F}_ta)(\eta)=\sqrt{t}\int_\mathbb{R}e^{-it(\eta-\xi)^2}a(\xi)d\xi.
\]
The Fourier multiplier of $\mathcal{F}_t$ is $\sqrt{\pi}e^{-i\pi/4}e^{i s^2/(4t)}$.
It follows from Plancherel and dominated convergence that
\begin{equation}\label{eq:fresnel-adjoint-limit}
\mathcal{F}_t^\ast\varphi\longrightarrow e^{i\pi/4}\sqrt{\pi}\varphi\quad\text{in }L^2(\mathbb{R}).
\end{equation}
For every test in \eqref{eq:local-ray-weak}, $\langle d_t, \varphi\rangle=\frac{1}{\sqrt{2\pi}}\langle a_t, \mathcal{F}_t^\ast\varphi\rangle$. The family $a_t$ is bounded in $L^2(\mathbb{R})$. Equations \eqref{eq:local-ray-weak} and \eqref{eq:fresnel-adjoint-limit} therefore imply $\langle a_t, \varphi\rangle\to0$.

The multiplier $e^{ix_0\xi-y_0\xi}$ is smooth and nonzero on each compact subset of $\mathbb{R}_+$. Given a smooth compactly supported $\psi$ away from the exceptional set, choose $\varphi(\xi)=e^{ix_0\xi+y_0\xi}\psi(\xi)$.
Then $\langle a_t, \varphi\rangle=\langle \mathcal{F}_0f_t, \psi\rangle$, so this pairing tends to zero. Such tests are dense in $L^2(\mathbb{R}_+)$, since the
exceptional set is finite.  Boundedness of $f_t$ and Plancherel extend the limit to every $L^2$ test, proving \eqref{eq:ray-weak}.
\end{proof}

\begin{thm}[Strong scattering of the transported continuous channel]\label{thm:P3}
Under Hypothesis~\ref{hyp:schur-graph}, \ref{hyp:M3-actual-form}, \ref{hyp:M4ab-anchor}, and \ref{hyp:return},
\begin{equation}\label{eq:P3}
\|r_{\mathrm{ac}}(t)-e^{it\partial_x^2}u_+\|_{L^2(\mathbb{R})}\underset{t\to\infty}{\longrightarrow}0,
\end{equation}
where $u_+$ is given by \eqref{eq:radiation-state}.
\end{thm}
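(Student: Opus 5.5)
The plan is to reduce \eqref{eq:P3} to a weak statement and upgrade it to a strong one via the exact mass identity. Set $f_t=u_+-e^{-it\partial_x^2}r_{\mathrm{ac}}(t)$ as in Proposition~\ref{prop:automatic-ray-closure}. Since $e^{it\partial_x^2}$ is unitary,
\[
\|r_{\mathrm{ac}}(t)-e^{it\partial_x^2}u_+\|^2=\|r_{\mathrm{ac}}(t)\|^2+\|u_+\|^2-2\operatorname{Re}\langle e^{-it\partial_x^2}r_{\mathrm{ac}}(t),u_+\rangle .
\]
By \eqref{eq:continuous-mass} and Plancherel applied to \eqref{eq:radiation-state}, $\|r_{\mathrm{ac}}(t)\|^2=\|u_+\|^2=\frac1{2\pi}\|\beta_{u_0}\|^2$. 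It therefore suffices to show that $f_t\rightharpoonup0$ in $L^2$, because then $\langle e^{-it\partial_x^2}r_{\mathrm{ac}}(t),u_+\rangle\to\|u_+\|^2$ and the right-hand side tends to $0$.

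The weak convergence will come from the Poisson--Fresnel ray criterion, Lemma~\ref{lem:ray-criterion}. The family $f_t$ is bounded in $L^2_+$, since both $u_+$ and $r_{\mathrm{ac}}(t)$ have fixed norm and positive Fourier support (note $e^{-it\partial_x^2}$ preserves $L^2_+$). By \eqref{eq:total-ray-identity}, the ray function $d_t$ of the lemma is exactly $\mathscr{D}_tg_{u_0}$. Proposition~\ref{prop:automatic-ray-closure} gives the uniform bound $\|d_t\|_{L^2(\mathbb{R}_+)}\le\sqrt2\|g_{u_0}\|$. It remains to check \eqref{eq:local-ray-weak}: $\langle\mathscr{D}_tg_{u_0},\varphi\rangle\to0$ for each $\varphi\in C_c^\infty$ supported in some $\Lambda\Subset\mathbb{R}_+\setminus(\{0\}\cup\sigma_{\mathrm p}(L_{u_0}))$.

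\textbf{Step 1: reduction to smooth inputs.} By the uniform operator bound on $\mathscr{D}_t$ (which is the content of \eqref{eq:spectral-closure}), it is enough to prove $\langle\mathscr{D}_tg,\varphi\rangle\to0$ for $g\in C_c^\infty(\mathbb{R}_+)$. Indeed, such $g$ are dense in $L^2(\mathbb{R}_+)$, and the error is bounded by $\sqrt2\|g_{u_0}-g\|\,\|\varphi\|$ uniformly in $t\ge1$.

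\textbf{Step 2: splitting the ray defect.} For smooth $g$, Corollary~\ref{cor:phase-I-closure} (which uses Conditions~\ref{hyp:schur-graph}, \ref{hyp:M3-actual-form}, \ref{hyp:M4ab-anchor}), together with the Schur split \eqref{eq:exact-output-split}, gives
\[
(\mathscr{D}_tg)(\eta)=\sqrt t\,e^{-it\eta^2}\Bigl[\tfrac{1}{i\sqrt{2\pi}}D_t^{\beta_{u_0},g}(\eta)-2\rho_t^{\mathrm p}(\eta;g)\Bigr].
\]
Here the sign bookkeeping is the delicate point. The definition of $\mathscr{D}_t$ subtracts $\ell_DR^D+\rho^{\mathrm p}=\ell^AR J$, so the bracket should in fact reduce to $\frac1{i\sqrt{2\pi}}D_t-\rho^{\mathrm p}_t$ once the identity is read consistently. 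I would recheck the signs carefully, but either way the bracket is a combination of $D_t$ and $\rho^{\mathrm p}_t$.

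\textbf{Step 3: the two terms.} The scalar term is handled by Lemma~\ref{lem:scalar-cancellation}: $\sup_{\Lambda}\sqrt t|D_t^{\beta_{u_0},g}|\to0$, so its pairing with $\varphi$ vanishes in the limit. The point-return term is exactly \eqref{eq:weak-return}, with $\overline{\varphi}$ as the test function. That estimate is available under either alternative of Condition~\ref{hyp:return}, via Proposition~\ref{prop:schur-return} in case (i).

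Combining the three steps gives \eqref{eq:local-ray-weak}. Lemma~\ref{lem:ray-criterion} then yields $f_t\rightharpoonup0$, and the norm identity above finishes the proof. The main obstacle I anticipate is justifying the density step uniformly: the scalar cancellation is only proved for smooth $g$ and compact $\Lambda$, so the uniform operator bound on $\mathscr{D}_t$ must carry the argument. I would also verify that $g_{u_0}$ lies in the $L^2$ closure on which \eqref{eq:total-ray-identity} and the smooth-input formula agree; Corollary~\ref{cor:phase-I-closure} asserts this bounded-row extension.
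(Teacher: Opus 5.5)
Your proposal is correct and follows essentially the same route as the paper. The paper applies the ray criterion (Lemma~\ref{lem:ray-criterion}) to $f_t$, obtains $\langle\mathscr{D}_tg,\varphi\rangle\to0$ for smooth $g$ from Lemma~\ref{lem:scalar-cancellation} and the weak return \eqref{eq:weak-return} (tested with $\overline{\varphi}$), passes to $g_{u_0}$ by density using the uniform bound $\|\mathscr{D}_t\|\le\sqrt2$, and upgrades weak convergence plus equal norms from \eqref{eq:continuous-mass} to strong convergence.

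Three small remarks on the details. First, the correct bracket is your corrected one, $\frac{1}{i\sqrt{2\pi}}D_t^{\beta_{u_0},g}-\rho_t^{\mathrm p}$; the $-2\rho_t^{\mathrm p}$ double-counts the Schur split. Second, your direct density in $C_c^\infty(\mathbb{R}_+)$ is a harmless simplification of the paper's two-stage approximation through $\chi_n g_{u_0}$. Third, the only detail you skip is a finite partition of unity for test functions whose support meets several regular components, which is needed because Condition~\ref{hyp:return} is stated on compact intervals.
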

\begin{proof}
The channel identity \eqref{eq:boundary-channel} and Proposition~\ref{prop:automatic-ray-closure} identify the cyclic vector $g_{u_0}$ with the physical continuous output $r_{\mathrm{ac}}(t)$. Hence the signed full--Dirichlet--point row is the free radiation ray in \eqref{eq:total-ray-identity}.

Fix a compact interval $\Lambda\Subset\mathbb{R}_+\setminus(\{0\}\cup\sigma_{\mathrm{p}}(L_{u_0}))$ and $\varphi\in C_c^\infty(\Lambda)$. For $g\in C_c^\infty(\mathbb{R}_+)$, Corollary~\ref{cor:phase-I-closure} gives
\[
(\mathscr{D}_tg)(\eta)=\frac{\sqrt{t}e^{it\eta^2}}{i\sqrt{2\pi}}D_t^{\beta_{u_0},g}(\eta)-\sqrt{t}e^{-it\eta^2}\rho_t^{\mathrm{p}}(\eta;g).
\]
Lemma~\ref{lem:scalar-cancellation} implies that the first term on the right-hand side converges strongly to zero in $L^2(\Lambda)$. If the operator-norm bound in Condition~\ref{hyp:return} holds, then Proposition~\ref{prop:schur-return} implies that the second term converges strongly to zero in $L^2(\Lambda)$ as well. If instead the weak return estimate in Condition~\ref{hyp:return} holds, then the pairing of the second term with $\varphi$ converges to zero by \eqref{eq:weak-return} with test function $\overline\varphi$. Therefore
\begin{equation}\label{eq:smooth-total-weak}
\langle \mathscr{D}_tg, \varphi\rangle_{L^2(\Lambda)}\underset{t\to\infty}{\longrightarrow}0
\end{equation}
for every smooth $g$.

Fix $n$. Since $\chi_ng_{u_0}$ is supported in a compact regular subset of the spectral axis, choose $g_m\in C_c^\infty(\mathbb{R}_+)$, supported away from the exceptional set, such that
\[
\|g_m-\chi_ng_{u_0}\|_{L^2(\mathbb{R}_+)}\underset{m\to\infty}{\longrightarrow}0.
\]
Uniform boundedness of $\mathscr{D}_t$ gives
\[
\limsup_{t\to\infty}\left|\langle \mathscr{D}_t(\chi_ng_{u_0}), \varphi \rangle\right|\le C\|\chi_ng_{u_0}-g_m\|_{L^2(\mathbb{R}_+)}\|\varphi\|_{L^2(\Lambda)}.
\]
First send $t\to\infty$, using \eqref{eq:smooth-total-weak}, and then send $m\to\infty$. This proves \eqref{eq:smooth-total-weak} with $g=\chi_ng_{u_0}$. The spectral closure estimate now yields
\[
\limsup_{t\to\infty}\left|\langle \mathscr{D}_tg_{u_0}, \varphi\rangle\right|\le C\|(1-\chi_n)g_{u_0}\|_{L^2(\mathbb{R}_+)}\|\varphi\|_{L^2(\Lambda)}\underset{n\to\infty}{\longrightarrow}0.
\]
If a test in $C_c^\infty(\mathbb{R}_+\setminus(\{0\}\cup\sigma_{\mathrm{p}}(L_{u_0})))$ meets several regular components, compactness of its support and
finiteness of the exceptional set give a finite smooth partition $\varphi=\sum_r\varphi_r$, with $\operatorname{supp}\varphi_r\Subset\Lambda_r$ for regular compact intervals $\Lambda_r$. Applying the preceding argument to each $\varphi_r$ and summing proves \eqref{eq:local-ray-weak} for every admissible test.

The family $f_t=u_+-e^{-it\partial_x^2}r_{\mathrm{ac}}(t)$ is bounded by \eqref{eq:continuous-mass}. Equations \eqref{eq:total-ray-identity} and the uniform $L^2$ bound for $\mathscr{D}_t$ show that $f_t$ satisfies Lemma~\ref{lem:ray-criterion}. Hence $f_t\rightharpoonup0$, which is
\begin{equation}\label{eq:interaction-weak}
e^{-it\partial_x^2}r_{\mathrm{ac}}(t)\rightharpoonup u_+.
\end{equation}

The identity $\mathcal{F}_0u_+=g_{u_0}=-i\beta_{u_0}/\sqrt{2\pi}$, Plancherel, and \eqref{eq:continuous-mass} give
\begin{equation}\label{eq:equal-norms}
\|e^{-it\partial_x^2}r_{\mathrm{ac}}(t)\|_{L^2(\mathbb{R})}^2=\frac{1}{2\pi}\|\beta_{u_0}\|_{L^2(\mathbb{R})}^2=\|u_+\|_{L^2(\mathbb{R})}^2.
\end{equation}
Set $x_t=e^{-it\partial_x^2}r_{\mathrm{ac}}(t)$. Equations \eqref{eq:interaction-weak}--\eqref{eq:equal-norms} give
\[
\|x_t-u_+\|_{L^2(\mathbb{R})}^2=\|x_t\|_{L^2(\mathbb{R})}^2+\|u_+\|_{L^2(\mathbb{R})}^2-2\operatorname{Re}\langle x_t, u_+\rangle\underset{t\to\infty}{\longrightarrow}0.
\]
Multiplication by the unitary $e^{it\partial_x^2}$ proves \eqref{eq:P3}.
\end{proof}

\section{Rigidity of the discrete channels}\label{sec:phase-III-point}
We give four sufficient criteria for the following sequential rigidity condition, equivalent to global soliton resolution in Theorem~\ref{thm:main}.
\begin{condition}[Sequential point-channel rigidity]\label{hyp:P4}
For every sequence $t_n\to\infty$, there exist a common subsequence $t_{n_k}$ and parameters
\[
\lambda_{j,k}>0,\qquad y_{j,k}\in\mathbb{R},\qquad\xi_{j,k}\ge0,\qquad \theta_{j,k}\in\mathbb{R}/2\pi\mathbb{Z},\qquad 1\le j\le N,
\]
such that
\begin{equation}\label{eq:P4}
\|b_j(t_{n_k})-\mathcal{M}_{\lambda_{j,k},y_{j,k},\xi_{j,k},\theta_{j,k}}R\|_{L^2(\mathbb{R})}\underset{k\to\infty}{\longrightarrow}0.
\end{equation}
\end{condition}

For a fixed label $j$ and a sequence $t_n\to\infty$, write
\begin{equation}\label{eq:M8-basic-sequence}
b_n=b_j(t_n),\qquad \mu=\mu_j,\qquad\|b_n\|_{L^2(\mathbb{R})}^2=2\pi,\qquad L_{u(t_n)}b_n=\mu b_n.
\end{equation}
The criteria below require additional dynamical estimates for the CM-DNLS point column.

\subsection{Geometric compactness of the point column}
Fix a nonzero test
\[
\psi\in\mathcal{S}(\mathbb{R})\cap L^2_+(\mathbb{R}),\ \mathcal{F}_0\psi\in C_c^\infty(\mathbb{R}_+),\ \mathcal{F}_0\psi\ge0,\ \mathcal{F}_0\psi>0\ \hbox{on some open }I\Subset\mathbb{R}_+.
\]
Its affine orbit with zero carrier is total in $L^2_+(\mathbb{R})$. Indeed, if $f\in L^2_+(\mathbb{R})$ is orthogonal to $\mathcal{M}_{\lambda,y,0,0}\psi$ for every $\lambda>0$ and $y\in\mathbb{R}$, then, for each positive rational $\lambda$, Fourier uniqueness in $y$ gives $(\mathcal{F}_0f)(\zeta)\overline{(\mathcal{F}_0\psi)(\lambda\zeta)}=0\quad\text{for a.e. }\zeta>0$. The exceptional sets can therefore be united countably. Since $I/\zeta$ contains a positive rational for every $\zeta>0$, varying $\lambda$ gives $f=0$. For $f\in L^2(\mathbb{R})$, set
\[
\mathfrak{T}_R(f)=\int_{|x|>R}|f(x)|^2dx+\int_{|\zeta|>R}|(\mathcal{F}_0f)(\zeta)|^2d\zeta.
\]

\begin{lemma}[Compactness of rank-one projections]\label{lem:M8-no-fraction}
Let $m>0$, let $g_n\rightharpoonup g\ne0$ in $L^2(\mathbb{R})$, and assume $\|g_n\|_{L^2(\mathbb{R})}^2=m$. For $f\in L^2(\mathbb{R})$, define $\Pi_nf=\frac{1}{m}\langle f, g_n\rangle g_n,\qquad\Pi f=\frac{1}{m}\langle f,g\rangle g$. Then $\Pi_n\to\Pi$ in the weak operator topology and
\begin{equation}\label{eq:M8-fraction-identity}
\Pi^2=\frac{\|g\|_{L^2(\mathbb{R})}^2}{m}\Pi.
\end{equation}
The following are equivalent:
\begin{enumerate}[(i)]
 \item $\Pi^2=\Pi$;
 \item $\|g\|_{L^2(\mathbb{R})}^2=m$;
 \item $g_n\to g$ strongly in $L^2(\mathbb{R})$;
 \item $\Pi_n\to\Pi$ in trace norm.
\end{enumerate}
\end{lemma}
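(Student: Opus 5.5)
The argument is elementary Hilbert-space algebra and will be carried out directly from the definitions of the rank-one maps. I first establish weak operator convergence. For fixed $f,h\in L^2(\mathbb{R})$,
\[
\langle\Pi_nf,h\rangle=\frac1m\langle f,g_n\rangle\langle g_n,h\rangle .
\]
Each factor converges, by the weak convergence $g_n\rightharpoonup g$, to the corresponding factor with $g$ in place of $g_n$. Hence $\langle\Pi_nf,h\rangle\to\langle\Pi f,h\rangle$. The identity \eqref{eq:M8-fraction-identity} is a direct computation:
\[
\Pi^2f=\frac{1}{m^2}\langle f,g\rangle\langle g,g\rangle g=\frac{\|g\|^2}{m}\Pi f.
\]

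For the equivalences, I will prove (i)$\Leftrightarrow$(ii)$\Leftrightarrow$(iii)$\Rightarrow$(iv)$\Rightarrow$(ii).

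(i)$\Leftrightarrow$(ii): since $g\neq0$, the operator $\Pi$ is nonzero. By \eqref{eq:M8-fraction-identity}, $\Pi^2=\Pi$ holds exactly when $\|g\|^2/m=1$.

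(ii)$\Leftrightarrow$(iii): this is the standard fact that weak convergence together with convergence of norms is equivalent to strong convergence. Explicitly,
\[
\|g_n-g\|^2=m+\|g\|^2-2\operatorname{Re}\langle g_n,g\rangle\to m-\|g\|^2 .
\]
So $g_n\to g$ strongly if and only if $\|g\|^2=m$.

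(iii)$\Rightarrow$(iv): write
\[
\Pi_n-\Pi=\frac1m\bigl[(g_n-g)\otimes\overline{g_n}+g\otimes\overline{(g_n-g)}\bigr],
\]
where $a\otimes\overline b$ denotes the rank-one operator $f\mapsto\langle f,b\rangle a$. Its trace norm equals $\|a\|\,\|b\|$. This gives the bound
\[
\|\Pi_n-\Pi\|_{\mathfrak S_1}\le\frac1m\bigl(\sqrt m+\|g\|\bigr)\|g_n-g\|\to0 .
\]

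(iv)$\Rightarrow$(ii): the trace is continuous in trace norm. Therefore $\operatorname{tr}\Pi_n=\|g_n\|^2/m=1$ converges to $\operatorname{tr}\Pi=\|g\|^2/m$, which forces $\|g\|^2=m$. This step is the only one requiring any thought: trace-norm convergence has to be translated back into a statement about norms, and taking traces does exactly that.

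I do not expect a genuine obstacle. The only points needing care are the conjugate-linearity conventions in the rank-one decomposition and the use of $g\ne0$ in the step (i)$\Rightarrow$(ii).
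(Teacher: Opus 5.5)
Your proof is correct and follows the paper's argument step for step: weak-operator convergence by testing against two fixed vectors, the direct computation of $\Pi^2$, nonvanishing of $\Pi$ for (i)$\Leftrightarrow$(ii), the weak-plus-norm criterion for (ii)$\Leftrightarrow$(iii), trace-norm control of rank-one operators for (iii)$\Rightarrow$(iv), and continuity of the trace for (iv)$\Rightarrow$(ii). Your explicit splitting of $\Pi_n-\Pi$ into two rank-one pieces and the identity $\|g_n-g\|^2\to m-\|g\|^2$ just make explicit details that the paper leaves implicit.
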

\begin{proof}
Weak convergence gives the weak-operator limit after testing against two fixed vectors, and direct calculation gives \eqref{eq:M8-fraction-identity}. Since $\Pi\ne0$, idempotence is equivalent to $\|g\|_{L^2(\mathbb{R})}^2=m$, which in turn is equivalent to strong convergence by the Hilbert-space weak-plus-norm criterion. Strong convergence implies trace-norm convergence of the rank-one operators. Conversely, trace-norm convergence and continuity of the trace give $1=\operatorname{tr}\Pi=\|g\|_{L^2(\mathbb{R})}^2/m$.
\end{proof}

\begin{thm}[Compactness from physical and Fourier tails]\label{thm:M8-two-tail}
For the sequence \eqref{eq:M8-basic-sequence}, suppose that there are parameters
\[
p_n=(\lambda_n,y_n,\xi_n,\theta_n),\qquad\lambda_n>0,\quad \xi_n\geq0,\qquad g_n=\mathcal{M}_{p_n}^{-1}b_n,
\]
where the parameters with $\xi\ge0$ form a Hardy-preserving affine semigroup. The inverse is taken in the full $L^2$ affine group, and \eqref{eq:M8-negative-tail} measures the resulting Hardy defect. Assume in addition that
\begin{equation}\label{eq:M8-double-tail}
\lim_{R\to\infty}\sup_n\mathfrak{T}_R(g_n)=0,
\end{equation}
\begin{equation}\label{eq:M8-negative-tail}
\|\mathbf{1}_{(-\infty,0)}\mathcal{F}_0g_n\|_{L^2(\mathbb{R})}\underset{n\to\infty}{\longrightarrow}0.
\end{equation}
Then $\{g_n\}$ is relatively compact in $L^2(\mathbb{R})$. Every convergent subsequence has a strong limit $g$ satisfying
\begin{equation}\label{eq:M8-full-profile}
g\in L^2_+(\mathbb{R}),\qquad \|g\|_{L^2(\mathbb{R})}^2=2\pi.
\end{equation}

Moreover, after composing $p_n$ with one of finitely many bounded zero-carrier relative transforms and redefining $g_n$ accordingly, there is a constant $c>0$ such that
\begin{equation}\label{eq:M8-affine-nonvanishing}
\left|\langle b_n, \mathcal{M}_{p_n}\psi\rangle\right|=\left|\langle g_n,\psi\rangle\right|\ge c.
\end{equation}

The negative-frequency defect is
\begin{equation}\label{eq:M8-negative-original}
\|\mathbf{1}_{(-\infty,0)}\mathcal{F}_0 g_n\|_{L^2(\mathbb{R})}^2=\int_0^{\xi_n}|(\mathcal{F}_0 b_n)(\omega)|^2d\omega.
\end{equation}
In particular, it vanishes identically if the compactness frame can be chosen with $\xi_n=0$.
\end{thm}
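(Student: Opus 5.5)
The plan has four parts: relative compactness from the tail bounds, strong convergence of subsequences with identification of the limit, a uniform lower bound on $\langle g_n,\psi\rangle$, and a direct computation of the negative-frequency defect.

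\emph{Compactness.} Since $\mathcal{M}_{p_n}$ is unitary on $L^2(\mathbb{R})$, we have $\|g_n\|_{L^2(\mathbb{R})}^2=\|b_n\|_{L^2(\mathbb{R})}^2=2\pi$ by \eqref{eq:M8-basic-sequence}. The double-tail hypothesis \eqref{eq:M8-double-tail} is exactly the Kolmogorov--Riesz--Pego criterion for relative compactness of a bounded set in $L^2(\mathbb{R})$. Uniform smallness of the spatial and frequency tails rules out both escape to spatial infinity and escape to high frequency. Concretely, given $\varepsilon>0$, I choose $R$ with $\sup_n\mathfrak{T}_R(g_n)<\varepsilon$. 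Then the family $\{\mathbf{1}_{|x|\le R}\,\mathcal{F}_0^{-1}(\mathbf{1}_{|\zeta|\le R}\mathcal{F}_0 g_n)\}$ is bounded in $H^1$ and supported in a fixed compact set, so it is precompact by Rellich. Moreover $g_n$ lies within $O(\sqrt{\varepsilon})$ of this family in $L^2(\mathbb{R})$. Total boundedness of $\{g_n\}$ follows.

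\emph{Identification of the limit.} Let $g_{n_k}\to g$ strongly. Norm continuity gives $\|g\|_{L^2(\mathbb{R})}^2=2\pi$. Plancherel and \eqref{eq:M8-negative-tail} give $\mathbf{1}_{(-\infty,0)}\mathcal{F}_0 g=0$, that is, $g\in L^2_+(\mathbb{R})$. This proves \eqref{eq:M8-full-profile}.

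\emph{Negative-frequency defect.} By \eqref{eq:modulation-fourier}, the inverse modulation has Fourier transform
\[
\widehat{\mathcal{M}_p^{-1}b}(\zeta)=\lambda^{-1/2}e^{-i\theta}e^{iy(\xi+\zeta/\lambda)}\,\widehat{b}(\xi+\zeta/\lambda).
\]
Hence $|\mathcal{F}_0 g_n(\zeta)|^2=\lambda_n^{-1}|\mathcal{F}_0 b_n(\xi_n+\zeta/\lambda_n)|^2$. Integrating over $\zeta<0$ and substituting $\omega=\xi_n+\zeta/\lambda_n$ gives the integral over $\omega<\xi_n$. Since $\mathcal{F}_0 b_n$ vanishes on $(-\infty,0)$, this integral reduces to $\int_0^{\xi_n}|\mathcal{F}_0 b_n(\omega)|^2\,d\omega$, which is \eqref{eq:M8-negative-original}. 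It vanishes when $\xi_n=0$.

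\emph{Nonvanishing pairing \eqref{eq:M8-affine-nonvanishing}.} This is the step I expect to be the main obstacle. The first equality is unitarity of $\mathcal{M}_{p_n}$. For the lower bound I argue by contradiction. Suppose that for every finite choice of zero-carrier relative transforms, some subsequence has $\langle g_n,\psi\rangle\to0$.
\begin{enumerate}[(1)]
\item The set of possible limit profiles $\mathcal{G}$ is compact in $L^2(\mathbb{R})$ by the first part, and every $g\in\mathcal{G}$ is nonzero, with $\|g\|_{L^2(\mathbb{R})}^2=2\pi$, and lies in $L^2_+(\mathbb{R})$.
\item By the totality of the zero-carrier affine orbit of $\psi$ in $L^2_+(\mathbb{R})$ established above, each $g\in\mathcal{G}$ has some $(\lambda_g,y_g)$ with $\langle g,\mathcal{M}_{\lambda_g,y_g,0,0}\psi\rangle\neq0$. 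By continuity, the same holds with modulus at least $c_g>0$ on an $L^2$-neighbourhood of $g$.
\item Compactness of $\mathcal{G}$ gives finitely many such neighbourhoods covering it, with associated bounded transforms $\tau_1,\dots,\tau_r$ and a uniform constant $c=\min c_{g_i}/2$.
\item For large $n$, $g_n$ lies in one of the neighbourhoods. I replace $p_n$ by $p_n\circ\tau_{i(n)}$, using the zero-carrier composition law of the Hardy-preserving affine semigroup. Since zero carrier preserves $\xi_n$, conditions \eqref{eq:M8-double-tail}--\eqref{eq:M8-negative-tail} are preserved up to constants depending on the finitely many $\tau_i$.
\end{enumerate}
The delicate point is that the tail conditions survive these relative transforms. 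Since the transforms range over a finite set with bounded $\lambda$ and $y$, the tails at radius $R$ are controlled by the tails at a comparable radius, so I expect this to go through.
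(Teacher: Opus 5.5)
Your proposal is correct and follows essentially the same route as the paper: Kolmogorov--Riesz compactness from the two tails, identification of the limit via unitarity and the negative tail, a finite cover of a compact set of nonzero positive-frequency profiles using totality of the zero-carrier orbit of $\psi$, and the change of variables $\omega=\xi_n+\zeta/\lambda_n$ for the defect. The only differences are minor. The paper derives translation equicontinuity from the Fourier tail instead of using your cutoff/Rellich argument; since your sharp spatial cutoff does not preserve $H^1(\mathbb{R})$, phrase that step as boundedness of the restrictions in $H^1(-R,R)$. Your worry about the tails is also easily settled: a zero-carrier transform $(\lambda,y,0,\theta)$ sends the physical tail at $R$ into the tail at $\lambda R-|y|$ and the Fourier tail at $R$ into the tail at $R/\lambda$, and it preserves the negative-frequency defect exactly.
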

\begin{proof}
The Fourier tail gives uniform translation continuity. For every $A>0$, Plancherel gives
\[
\|g_n(\cdot+h)-g_n\|_{L^2(\mathbb{R})}^2\le h^2A^2\|g_n\|_{L^2(\mathbb{R})}^2+4\int_{|\zeta|>A}|(\mathcal{F}_0g_n)(\zeta)|^2d\zeta.
\]
First choose $A$ using \eqref{eq:M8-double-tail}, and then choose $h$. The physical tail in \eqref{eq:M8-double-tail} and the Kolmogorov--Riesz theorem now give relative compactness.

If $g_n\to g$ strongly, then \eqref{eq:M8-negative-tail} gives $g\in L^2_+(\mathbb{R})$, while unitarity of the affine maps gives $\|g\|_{L^2(\mathbb{R})}^2=2\pi$. The associated line projections converge in trace norm by Lemma~\ref{lem:M8-no-fraction}.

To use a single test, observe that the closure $K$ of the positive-frequency parts of the tail of $\{g_n\}$ is compact and separated from zero. The function $f\longmapsto\sup_{\lambda>0,\,y\in\mathbb{R}}\left|\langle f,\mathcal{M}_{\lambda,y,0,0}\psi\rangle\right|$ is Lipschitz and, by totality of the affine orbit of $\psi$, is strictly positive on $K$. Its minimum on $K$ is therefore positive. Compactness permits the maximizing relative parameters to be chosen from a finite set up to a factor of two. Absorbing that finite relative transform into $p_n$ proves \eqref{eq:M8-affine-nonvanishing} while preserving both tail estimates.

Inverting \eqref{eq:modulation-fourier} gives
\[
(\mathcal{F}_0 g_n)(\zeta)=\lambda_n^{-1/2}e^{-i\theta_n+iy_n(\xi_n+\zeta/\lambda_n)}(\mathcal{F}_0b_n)(\xi_n+\zeta/\lambda_n).
\]
Since $b_n\in L^2_+(\mathbb{R})$, the change of variables $\omega=\xi_n+\zeta/\lambda_n$ proves \eqref{eq:M8-negative-original}.
\end{proof}

\begin{prop}[Compactness from spectral projections]\label{prop:M8-projector-certificate}
Assume the phase-fixed nonvanishing $\operatorname{Re}\langle g_n,\psi\rangle\ge c>0$. Extend the rank-one spectral projection onto \(\operatorname{span}\{b_n\}\) to $L^2(\mathbb{R})$ and conjugate it by $\mathcal{M}_{p_n}$.  If every weak-operator cluster point of $\Pi_n=\frac{1}{2\pi}\langle \cdot, g_n\rangle g_n$ is idempotent, then $\{g_n\}$ is relatively compact, and \eqref{eq:M8-double-tail} follows.
\end{prop}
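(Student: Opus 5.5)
The plan is to reduce everything to Lemma~\ref{lem:M8-no-fraction}. The $g_n=\mathcal{M}_{p_n}^{-1}b_n$ all satisfy $\|g_n\|_{L^2(\mathbb{R})}^2=2\pi$, because the affine maps are unitary. So every subsequence has a further subsequence with $g_{n}\rightharpoonup g$ weakly in $L^2(\mathbb{R})$. Along such a subsequence I would show that $g\neq0$. This follows from the phase-fixed nonvanishing: $\operatorname{Re}\langle g,\psi\rangle=\lim\operatorname{Re}\langle g_n,\psi\rangle\ge c>0$.

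Next I would identify the weak-operator limit of $\Pi_n=\frac{1}{2\pi}\langle\cdot,g_n\rangle g_n$ along this subsequence. By Lemma~\ref{lem:M8-no-fraction} with $m=2\pi$, it is $\Pi=\frac{1}{2\pi}\langle\cdot,g\rangle g$. The hypothesis says every weak-operator cluster point is idempotent, so $\Pi^2=\Pi$. By \eqref{eq:M8-fraction-identity} this forces $\|g\|_{L^2(\mathbb{R})}^2=2\pi$, and the equivalence (i)$\Leftrightarrow$(iii) gives $g_n\to g$ strongly. Since the original subsequence was arbitrary, every subsequence of $\{g_n\}$ has a strongly convergent further subsequence, which is relative compactness.

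The last step is to deduce \eqref{eq:M8-double-tail} from relative compactness. A relatively compact subset of $L^2(\mathbb{R})$ is totally bounded. For a single $f$, both tails of $\mathfrak{T}_R(f)$ tend to $0$ as $R\to\infty$ by dominated convergence and Plancherel. Also $\mathfrak{T}_R(f)^{1/2}$ is $\sqrt2$-Lipschitz in $f$, uniformly in $R$, since each piece is a seminorm bounded by the $L^2$ norm. Given $\varepsilon$, I would cover the set by finitely many $\varepsilon$-balls and choose $R$ making the tails of all centers small. This gives $\sup_n\mathfrak{T}_R(g_n)\lesssim\varepsilon^2$; this is the usual converse half of Kolmogorov--Riesz.

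The only delicate point is the nonvanishing of the weak limit. Without it, $g=0$ would give $\Pi=0$, which is trivially idempotent and would let mass escape. The phase-fixed lower bound on $\operatorname{Re}\langle g_n,\psi\rangle$ is exactly what excludes this, so the hypothesis has to be used before invoking the lemma. A secondary point is that the conjugated projection must be the rank-one operator built from $g_n$. This holds because $\mathcal{M}_{p_n}^{-1}$ is unitary on $L^2(\mathbb{R})$ and the spectral projection onto $\operatorname{span}\{b_n\}$ is $\frac{1}{2\pi}\langle\cdot,b_n\rangle b_n$ by \eqref{eq:channel-orthogonality}.
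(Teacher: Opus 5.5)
Your proposal is correct and matches the paper's proof: you use the phase-fixed lower bound to make every weak subsequential limit nonzero, then Lemma~\ref{lem:M8-no-fraction} to turn idempotence of the cluster point into strong convergence, and then derive uniform tails from compactness. Your $\sqrt2$-Lipschitz finite-net argument for \eqref{eq:M8-double-tail} just spells out the paper's one-line remark that compact sets in $L^2$, together with their Plancherel images, have uniform tails.
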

\begin{proof}
Every subsequence has a weakly convergent further subsequence. Nonvanishing makes its limit nonzero, and idempotence plus Lemma~\ref{lem:M8-no-fraction} upgrades it to strong convergence. Hence the original set is relatively compact. Compact subsets of $L^2$ have uniform physical tails, and their Fourier images are compact by Plancherel and have uniform Fourier tails.
\end{proof}

\begin{prop}[Transport of a spectral atom]\label{prop:M8-exact-atom-output}
Put $a_j=E_{L_{u_0}}(\{\mu_j\})u_0,\qquad P_j=\frac{1}{2\pi}\langle \cdot, a_j\rangle a_j$. Then $b_j(t)=W_t^{u_0}a_j$.
If $P_j(t)=(2\pi)^{-1}\langle \cdot, b_j(t)\rangle b_j(t)$, then on the cyclic spaces
\begin{equation}\label{eq:M8-exact-line-transport}
P_j(t)=W_t^{u_0}P_j\bigl(W_t^{u_0}|_{\mathcal{K}_{u_0}}\bigr)^\ast.
\end{equation}
\end{prop}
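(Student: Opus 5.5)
The plan is to read everything off Theorem~\ref{thm:P0min-import}. We use the restricted unitary $W_t^{u_0}|_{\mathcal{K}_{u_0}}:\mathcal{K}_{u_0}\to\mathcal{K}_{u(t)}$, the intertwining relation \eqref{eq:cyclic-intertwining}, and the transport identity \eqref{eq:cyclic-vector-flow}.

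\textbf{Step 1.} We first check that $a_j\in\mathcal{K}_{u_0}$. Take $\chi_\delta\in C_c(\mathbb{R})$ as in the proof of Theorem~\ref{thm:channels}. Then $\chi_\delta(L_{u_0})u_0\to E_{L_{u_0}}(\{\mu_j\})u_0$ as $\delta\to0$, by dominated convergence against the finite cyclic measure $\nu_{u_0}$. Since $\mathcal{K}_{u_0}$ is closed, $a_j\in\mathcal{K}_{u_0}$.

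\textbf{Step 2.} Apply \eqref{eq:cyclic-intertwining} with $B=\{\mu_j\}$ to $u_0\in\mathcal{K}_{u_0}$, and then use \eqref{eq:cyclic-vector-flow}:
\[
W_t^{u_0}a_j=W_t^{u_0}E_{L_{u_0}}(\{\mu_j\})u_0=E_{L_{u(t)}}(\{\mu_j\})W_t^{u_0}u_0=E_{L_{u(t)}}(\{\mu_j\})u(t)=b_j(t).
\]
This is the first claim. By Lemma~\ref{lem:static-cyclic-formula}, $\|a_j\|^2=2\pi$. Unitarity on $\mathcal{K}_{u_0}$ then gives $\|b_j(t)\|^2=2\pi$, which is consistent with \eqref{eq:channel-orthogonality}.

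\textbf{Step 3.} Write $V=W_t^{u_0}|_{\mathcal{K}_{u_0}}$. For $f\in\mathcal{K}_{u(t)}$ we have $V^\ast f\in\mathcal{K}_{u_0}$, and by Step 2,
\[
VP_jV^\ast f=\frac{1}{2\pi}\langle V^\ast f,a_j\rangle\,Va_j=\frac{1}{2\pi}\langle f,Va_j\rangle\,b_j(t)=\frac{1}{2\pi}\langle f,b_j(t)\rangle\,b_j(t)=P_j(t)f.
\]
This is \eqref{eq:M8-exact-line-transport} on $\mathcal{K}_{u(t)}$. Both sides vanish on $\mathcal{K}_{u(t)}^\perp$ if $V^\ast$ is extended by zero, since $b_j(t)\in\mathcal{K}_{u(t)}$.

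The only delicate point is Step 1, namely that the intertwining relation is stated on $\mathcal{K}_{u_0}$, so we must confirm that the relevant vectors lie in the cyclic space. This follows from closedness and the approximation by $\chi_\delta$. Everything else is direct algebra with a unitary map.
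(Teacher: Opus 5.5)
Your proof is correct and follows the paper's argument: the chain $W_t^{u_0}E_{L_{u_0}}(\{\mu_j\})u_0=E_{L_{u(t)}}(\{\mu_j\})W_t^{u_0}u_0=b_j(t)$ from \eqref{eq:cyclic-intertwining} and \eqref{eq:cyclic-vector-flow}, followed by conjugating the rank-one projection by the unitary restriction $W_t^{u_0}|_{\mathcal{K}_{u_0}}$. Your Step~1 spells out the membership $a_j\in\mathcal{K}_{u_0}$, which the paper uses tacitly. The same cutoff argument with bounded $F_n\to1$ pointwise also gives $u_0\in\mathcal{K}_{u_0}$, which Step~2 uses without comment.
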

\begin{proof}
By cyclic intertwining and \eqref{eq:cyclic-vector-flow},
\[
W_t^{u_0}a_j=W_t^{u_0}E_{L_{u_0}}(\{\mu_j\})u_0=E_{L_{u(t)}}(\{\mu_j\})W_t^{u_0}u_0=b_j(t).
\]
The restriction of $W_t^{u_0}$ to $\mathcal{K}_{u_0}$ is unitary, so conjugating the rank-one projection proves \eqref{eq:M8-exact-line-transport}.
\end{proof}

\begin{prop}[Anti-Wick phase-space leakage]\label{prop:M8-moving-leakage}
Let $u$ be a smooth CM-DNLS orbit; the same statement holds for a flow--Lax admissible orbit under the atom-preserving passage specified below. Let $b_j(t)=E_{L_{u(t)}}(\{\mu_j\})u(t)$, and choose a $C^1$ Hardy-admissible affine frame
\[
M(t)=\mathcal{M}_{\lambda(t),y(t),\xi(t),\theta(t)}, \qquad \lambda(t)>0,\quad\xi(t)\ge0.
\]
Put
\begin{equation}\label{eq:M8-actual-generator}
\mathbf{B}_uf=i\partial_x^2f+2u\partial_x\Pi_+(\overline{u}f),\qquad \partial_tu=\mathbf{B}_uu,\qquad \partial_tb_j=\mathbf{B}_ub_j,
\end{equation}
the transporting gauge of \cite[(2.6)]{Gerard-2024-CPAM}, and
\[
g(t)=M(t)^{-1}b_j(t),\qquad \mathcal{K}_j(t)=M(t)^{-1}\mathbf{B}_{u(t)}M(t)-M(t)^{-1}\dot M(t),\qquad \dot g=\mathcal{K}_jg,
\]
where the last equality is understood on the smooth orbit. Fix $\varphi\in\mathcal{S}(\mathbb{R})$, $\|\varphi\|_{L^2(\mathbb{R})}=1$, and normalize the short-time Fourier transform $V_\varphi$ as an isometry from $L^2(\mathbb{R})$ into $L^2(\mathbb{R}^2)$. For cutoffs $0\le\chi_R\le1$ in $C_c^\infty(\mathbb{R}^2)$, with $\chi_R\to1$ pointwise, set $K_R=V_\varphi^\ast M_{\chi_R}V_\varphi,\qquad A_R=\mathrm{Id}-K_R$. Then $K_R$ is a compact positive contraction \cite[Section~1]{Marceca-2023-Studia}, and
\begin{equation}\label{eq:M8-leakage-Ward}
\frac{d}{dt}\langle A_Rg(t), g(t)\rangle=2\operatorname{Re}\langle A_R\mathcal{K}_j(t)g(t), g(t)\rangle.
\end{equation}
For a smooth global orbit, assume
\begin{equation}\label{eq:M8-integrated-leakage}
\varepsilon_R:=\langle A_Rg(T_0), g(T_0)\rangle+\int_{T_0}^\infty\left|2\operatorname{Re}\langle A_R\mathcal{K}_j(t)g(t), g(t)\rangle\right|dt\underset{R\to\infty}{\longrightarrow}0,
\end{equation}
\begin{equation}\label{eq:M8-flux-negative-tail}
\|\mathbf{1}_{(-\infty,0)}\mathcal{F}_0g(t)\|_{L^2(\mathbb{R})}\underset{t\to\infty}{\longrightarrow}0.
\end{equation}
For a rough flow--Lax orbit, retain \eqref{eq:M8-flux-negative-tail}. On each $[T_0,T]$, let $g_{r,T}=M_{r,T}^{-1}b_{j,r,T}$ and $\mathcal{K}_{j,r,T}$ be the
identified smooth point profile and its moving generator, and set
\[
\varepsilon_{R,r}^{(T)}:=\langle A_Rg_{r,T}(T_0), g_{r,T}(T_0)\rangle+\int_{T_0}^T\left|2\operatorname{Re}\langle A_R\mathcal{K}_{j,r,T}(t)g_{r,T}(t), g_{r,T}(t)\rangle\right|dt.
\]
Assume the uniform truncated-flux bound
\begin{equation}\label{eq:M8-rough-leakage-uniform}
\lim_{R\to\infty}\sup_{T>T_0}\limsup_{r\to\infty}\varepsilon_{R,r}^{(T)}=0
\end{equation}
and the atom-preserving passage, for every $T<\infty$,
\begin{equation}\label{eq:M8-atom-preserving-passage}
\sup_{T_0\leq t\leq T}\|g_{r,T}(t)-g(t)\|_{L^2(\mathbb{R})}\underset{r\to\infty}{\longrightarrow}0.
\end{equation}
Under either set of assumptions, every sequence $t_n\to\infty$ satisfies \eqref{eq:M8-double-tail}--\eqref{eq:M8-negative-tail}.
\end{prop}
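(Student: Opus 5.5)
The plan has three parts: derive the Ward identity \eqref{eq:M8-leakage-Ward} from the moving-frame equation, integrate it to get a uniform phase-space tail bound, and read that bound as the two tails in \eqref{eq:M8-double-tail}.

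\emph{Step 1: the Ward identity.} On a smooth orbit, $b_j(t)=E_{L_{u(t)}}(\{\mu_j\})u(t)$ solves $\partial_tb_j=\mathbf{B}_ub_j$. This follows from the Lax equation $\dot L_u=[\mathbf{B}_u,L_u]$ in the gauge of \cite[(2.6)]{Gerard-2024-CPAM}, together with the fact that the spectral projection at the fixed eigenvalue $\mu_j$ is transported by the propagator generated by $\mathbf{B}_u$ (Theorem~\ref{thm:channels} keeps $\mu_j$ fixed). Differentiating $g=M^{-1}b_j$ and using $\frac{d}{dt}M^{-1}=-M^{-1}\dot M M^{-1}$ gives $\dot g=\mathcal{K}_jg$. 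Since $A_R$ is bounded, self-adjoint and time independent, we have $\frac{d}{dt}\langle A_Rg,g\rangle=\langle A_R\dot g,g\rangle+\langle A_Rg,\dot g\rangle=2\operatorname{Re}\langle A_R\mathcal{K}_jg,g\rangle$. This is \eqref{eq:M8-leakage-Ward}. Smoothness of $u$ justifies the derivatives, since all vectors lie in $H^\infty_+$ and $\mathbf{B}_u$ acts there.

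\emph{Step 2: uniform tail bound, smooth case.} Integrating \eqref{eq:M8-leakage-Ward} from $T_0$ to $t$ gives $\sup_{t\ge T_0}\langle A_Rg(t),g(t)\rangle\le\varepsilon_R$. We have $\langle A_Rg,g\rangle=\int(1-\chi_R)|V_\varphi g|^2$, the anti-Wick phase-space mass of $g$ outside the region where $\chi_R\approx1$. I would choose $\chi_R\ge\mathbf{1}_{[-R,R]^2}$, so this controls the short-time Fourier transform mass outside the box. Two standard localization estimates then convert this into the two tails. Because $\varphi\in\mathcal{S}$ and $\|g\|^2=2\pi$, one has $\int_{|x|>2R}|g|^2\lesssim\int_{|x|>R}\int|V_\varphi g|^2+o_R(1)$, uniformly over the bounded family; this comes from the reconstruction $g=V_\varphi^\ast V_\varphi g$ and the Schwartz decay of $\varphi$ away from the window centre. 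The same holds on the Fourier side, since $V_\varphi$ intertwines with $V_{\widehat\varphi}$ after a symplectic rotation. Hence $\sup_{t\ge T_0}\mathfrak{T}_{2R}(g(t))\lesssim\varepsilon_R+o_R(1)\to0$. For any $t_n\to\infty$ this gives \eqref{eq:M8-double-tail}. Condition \eqref{eq:M8-negative-tail} is the hypothesis \eqref{eq:M8-flux-negative-tail} evaluated along $t_n$.

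\emph{Step 3: the rough case.} Fix $T$. The argument of Step 2 applied to the smooth approximants gives $\sup_{T_0\le t\le T}\langle A_Rg_{r,T}(t),g_{r,T}(t)\rangle\le\varepsilon^{(T)}_{R,r}$. Since $A_R$ is a contraction, \eqref{eq:M8-atom-preserving-passage} lets us pass $r\to\infty$:
\[
\sup_{T_0\le t\le T}\langle A_Rg(t),g(t)\rangle\le\limsup_{r\to\infty}\varepsilon^{(T)}_{R,r}.
\]
Taking the supremum over $T$ and using \eqref{eq:M8-rough-leakage-uniform} yields $\sup_{t\ge T_0}\langle A_Rg(t),g(t)\rangle\to0$ as $R\to\infty$. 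From there Step 2 concludes exactly as before.

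\emph{Main obstacle.} I expect the only delicate point to be Step 2: passing from vanishing anti-Wick mass outside a box to sharp physical and Fourier tails. The difficulty is the window overlap near the box boundary. It is handled by doubling $R$ and using Schwartz decay of $\varphi$, with uniformity coming from the fixed norm $\|g\|^2=2\pi$. The identity in Step 1 is essentially algebraic once the transport $\partial_tb_j=\mathbf{B}_ub_j$ is granted.
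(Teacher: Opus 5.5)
Your proof is correct. Steps 1 and 3 match the paper: it integrates \eqref{eq:M8-leakage-Ward} and, in the rough case, lets $r\to\infty$ at fixed $T$ using \eqref{eq:M8-atom-preserving-passage}, then lets $T\to\infty$, exactly as you do.

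\textbf{Where the routes differ: Step 2.} The question is how to turn $\sup_{t\ge T_0}\langle A_Rg(t),g(t)\rangle\to0$ into tails.

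\emph{The paper's route.} It never localizes the short-time Fourier transform. Since $0\le K_R\le\mathrm{Id}$, one has $(\mathrm{Id}-K_R)^2\le\mathrm{Id}-K_R$, hence $\|g(t)-K_Rg(t)\|^2\le\langle A_Rg(t),g(t)\rangle$. Compactness of $K_R$ makes $K_R\{g(t):t\ge T_0\}$ relatively compact, so the orbit is totally bounded and therefore relatively compact in $L^2$. Uniform physical and Fourier tails then follow from compactness and Plancherel.

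\emph{Your route.} You prove the tails directly from three ingredients:
\begin{itemize}
\item the reconstruction formula $g=V_\varphi^\ast V_\varphi g$;
\item a Schur-test bound on $\mathbf{1}_{\{|y|>2S\}}V_\varphi^\ast\mathbf{1}_{\{|x|\le S\}}$, whose kernel only involves $\varphi(y-x)$ with $|y-x|>S$, so its norm is $\lesssim(\|\varphi\mathbf{1}_{\{|\cdot|>S\}}\|_{L^1}\|\varphi\mathbf{1}_{\{|\cdot|>S\}}\|_{L^\infty})^{1/2}$ (you should write this out);
\item the identity $V_\varphi f(x,\xi)=e^{-ix\xi}V_{\widehat\varphi}\widehat f(\xi,-x)$ for the Fourier side.
\end{itemize}

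\emph{What each buys.} The paper's argument is softer and more general: any family of compact positive contractions would do. It also yields relative compactness of the orbit directly, which is what Theorem~\ref{thm:M8-two-tail} ultimately extracts anyway. Yours is quantitative, giving explicit tail bounds in terms of $\varepsilon_R$ and the decay of $\varphi$ and $\widehat\varphi$, and it does not use compactness of $K_R$.

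\textbf{A slip to correct.} The cutoffs $\chi_R$ are fixed by the statement, and the hypothesis $\varepsilon_R\to0$ refers to them, so you cannot choose them. Moreover, $\chi_R\ge\mathbf{1}_{[-R,R]^2}$ gives $1-\chi_R\le\mathbf{1}_{\mathbb{R}^2\setminus[-R,R]^2}$. That is the wrong direction: it bounds $\langle A_Rg,g\rangle$ by the exterior mass, not the exterior mass by $\langle A_Rg,g\rangle$. What you actually need is $1-\chi_R\ge\mathbf{1}_{\mathbb{R}^2\setminus B_R}$ for some box $B_R$. This holds automatically for any box $B_R\supset\operatorname{supp}\chi_R$, because $\chi_R$ is compactly supported with $0\le\chi_R\le1$. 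The size of $B_R$ is irrelevant, since you only need some box for each tolerance. With $B_R$ in place of $[-R,R]^2$, your Step 2 goes through.
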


\begin{proof}
In the smooth case, integration of \eqref{eq:M8-leakage-Ward} gives $\sup_{t\geq T_0}\langle A_Rg(t), g(t)\rangle \le \varepsilon_R$. In the rough case, integrate the Ward identity on each approximant. For fixed $T$ and $t\le T$, let $r\to\infty$ in the resulting quadratic form, using \eqref{eq:M8-atom-preserving-passage}; then let $T\to\infty$. This gives
\[
\sup_{t\ge T_0}\langle A_Rg(t), g(t)\rangle\le\sup_{T>T_0}\limsup_{r\to\infty}\varepsilon_{R,r}^{(T)}.
\]
Since $0\le K_R\le\mathrm{Id}$,
\[
\|g(t)-K_Rg(t)\|_{L^2(\mathbb{R})}^2=\langle (\mathrm{Id}-K_R)^2g(t), g(t)\rangle\le\langle A_Rg(t), g(t)\rangle.
\]
The orbit has fixed $L^2$ norm and is therefore uniformly approximated by the relatively compact sets $K_R\{g(t):t\ge T_0\}$. Hence the orbit itself is relatively
compact in $L^2(\mathbb{R})$. Compactness and Plancherel give uniform physical and Fourier tails, while \eqref{eq:M8-flux-negative-tail} gives the Hardy defect.
\end{proof}
\begin{re}\label{rem:M8-leakage-ledger}
Applying Proposition~\ref{prop:M8-moving-leakage} requires \eqref{eq:M8-integrated-leakage} (for example, from a sign, an integrable generator term, or a telescoping law) and, for rough orbits, \eqref{eq:M8-rough-leakage-uniform}--\eqref{eq:M8-atom-preserving-passage}.
\end{re}
\subsection{Affine covariance and channel decoupling}
Fix a point label $\mu=\mu_j\in\sigma_{\mathrm{p}}(L_{u_0})$, a sequence $t_n\to\infty$, and abbreviate
\[
b_n=b_j(t_n),\qquad v_n=u(t_n)-b_n,\qquad M_n=\mathcal{M}_{\lambda_n,y_n,\xi_n,\theta_n}.
\]
Put
\[
g_n=M_n^{-1}b_n,\qquad w_n=M_n^{-1}v_n,\qquad \widetilde{u}_n=g_n+w_n,\qquad\kappa_n=\lambda_n(\mu-\xi_n).
\]
The inverse image of the laboratory Hardy space is
\[
\mathfrak{H}_n:=M_n^{-1}L^2_+(\mathbb{R})=\{f\in L^2(\mathbb{R}):\operatorname{supp}\widehat{f}\subset[-\lambda_n\xi_n,\infty)\}.
\]
At finite $n$, the profiles $g_n,w_n$ belong to the moving Hardy space $\mathfrak{H}_n$, and the conjugated formula below is interpreted on that space.

\begin{prop}[Affine covariance and the channel defect]\label{prop:M9-affine-covariance}
For $f\in\mathfrak{H}_n$ such that $M_nf\in H^1_+(\mathbb{R})$, define $\mathscr{L}_zf:=Df-z\Pi_+(\overline{z}f)$. Then
\begin{equation}\label{eq:M9-affine-covariance}
M_n^{-1}L_{u(t_n)}M_nf=\xi_nf+\lambda_n^{-1}\mathscr{L}_{\widetilde{u}_n}f.
\end{equation}
Consequently the eigenvalue equation is equivalent to
\begin{equation}\label{eq:M9-affine-defect}
(\mathscr{L}_{g_n}-\kappa_n)g_n=F_n,
\end{equation}
where
\begin{equation}\label{eq:M9-cross-defect}
F_n=g_n\Pi_+(\overline{w_n}g_n)+w_n\Pi_+(|g_n|^2)+w_n\Pi_+(\overline{w_n}g_n).
\end{equation}
Equivalently, $F_n$ is $\lambda_nM_n^{-1}$ applied to the right-hand side of the laboratory identity
\begin{equation}\label{eq:channel-defect-report}
(L_{b_n}-\mu)b_n=b_n\Pi_+(\overline{v_n}b_n)+v_n\Pi_+(|b_n|^2)+v_n\Pi_+(\overline{v_n}b_n).
\end{equation}
All three identities hold in $H^{-1}(\mathbb{R})$, equivalently in distributions.
\end{prop}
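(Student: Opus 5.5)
The plan is to verify \eqref{eq:M9-affine-covariance} by conjugating each of the two pieces of $L_{u(t_n)}=D-u\Pi_+(\overline{u}\,\cdot)$ separately by $M_n$, and then to obtain \eqref{eq:M9-affine-defect}--\eqref{eq:channel-defect-report} by multilinear bookkeeping. I would first work with $f$ and $u(t_n)$ Schwartz, where all products are classical, and extend to the rough setting by density at the end.

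\emph{Derivative part.} Write $s=(x-y_n)/\lambda_n$. The chain rule applied to $[M_nf](x)=\lambda_n^{-1/2}e^{i\theta_n}e^{i\xi_n(x-y_n)}f(s)$ gives
\[
D(M_nf)=\xi_nM_nf+\lambda_n^{-1}M_n(Df).
\]
The same identity follows on the Fourier side from \eqref{eq:modulation-fourier}.

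\emph{Nonlocal part.} Put $u=u(t_n)=M_n\widetilde{u}_n$. In the product $\overline{u}\,M_nf$ the carrier $e^{i\xi_n(x-y_n)}$, the phase $e^{i\theta_n}$ and the two factors $\lambda_n^{-1/2}$ combine, giving
\[
\overline{u}(x)\,[M_nf](x)=\lambda_n^{-1}(\overline{\widetilde{u}_n}f)(s).
\]
This function of $s$ carries no carrier. The Szeg\H{o} projection $\Pi_+=\mathbf{1}_{[0,\infty)}(D)$ commutes with translations and with dilations by $\lambda_n>0$. Hence
\[
\Pi_+(\overline{u}M_nf)(x)=\lambda_n^{-1}\bigl(\Pi_+(\overline{\widetilde{u}_n}f)\bigr)(s).
\]
Multiplying by $u=M_n\widetilde{u}_n$ gives $\lambda_n^{-1}M_n\bigl(\widetilde{u}_n\Pi_+(\overline{\widetilde{u}_n}f)\bigr)$. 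Combining the two parts yields \eqref{eq:M9-affine-covariance}. Here $\mathscr{L}_z$ is understood with the same $\Pi_+$ acting on $L^2(\mathbb{R})$, since $\widetilde{u}_n,f$ only lie in the moving space $\mathfrak{H}_n$.

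\emph{Defect.} Apply \eqref{eq:M9-affine-covariance} with $f=g_n$ to $L_{u(t_n)}b_n=\mu b_n$. After dividing by $\lambda_n$, this gives $\mathscr{L}_{\widetilde{u}_n}g_n=\lambda_n(\mu-\xi_n)g_n=\kappa_ng_n$. Expanding $\widetilde{u}_n=g_n+w_n$ in the cubic term $\widetilde{u}_n\Pi_+(\overline{\widetilde{u}_n}g_n)$ produces $g_n\Pi_+(|g_n|^2)$ plus the three cross terms of \eqref{eq:M9-cross-defect}. This gives $(\mathscr{L}_{g_n}-\kappa_n)g_n=F_n$. Running the same expansion at $\lambda_n=1$ with no conjugation gives \eqref{eq:channel-defect-report}. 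By the computation above, $M_n^{-1}$ maps each laboratory cubic term to $\lambda_n^{-1}$ times the corresponding rescaled term, so the right-hand side of \eqref{eq:channel-defect-report} is $\lambda_n^{-1}M_nF_n$.

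\emph{Main obstacle: the rough setting.} Here $u(t_n),v_n\in L^2_+$ only, $b_n$ lies in the form domain $H^{1/2}_+$, and the identities must hold in $H^{-1}$. I would define every cubic term $a\Pi_+(\overline{c}e)$ by duality against $\phi\in H^1$, that is, as
\[
\langle \overline{c}e,\ \Pi_+(\overline{a}\phi)\rangle .
\]
For the integrability I use:
\begin{itemize}
\item $H^1\subset L^\infty$, so $\overline{a}\phi\in L^2$ and $\Pi_+(\overline{a}\phi)\in L^2$;
\item $\phi\in L^p$ for every $p\ge 2$, so in fact $\overline{a}\phi\in L^q$ for $1\le q\le 2$;
\item $b_n\in H^{1/2}\subset L^p$ for every $p<\infty$;
\item $\Pi_+$ is bounded on $L^p$ for $1<p<\infty$.
\end{itemize}
With these, every term is a continuous trilinear form $L^2\times H^{1/2}\times L^2\to H^{-1}$, or one with two $H^{1/2}$ entries. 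The same form is consistent with the quadratic-form definition of $L_v$ (Killip--Laurens--Vi\c{s}an), so it agrees with $L_{u(t_n)}b_n$. The smooth identities then pass to the limit by approximating $u(t_n)$, $v_n$ in $L^2$ and $b_n$ in $H^{1/2}$. The modulation maps are bounded on $H^{-1}$ and $H^{1}$ at fixed $n$, so conjugation commutes with this limit. I expect this step, checking that the trilinear duality pairing is well defined and continuous with the claimed exponents, to be the only genuinely delicate point; the algebra itself is mechanical.
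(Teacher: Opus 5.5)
Your algebra is the paper's proof. Conjugating gives $M_n^{-1}DM_n=\xi_n+\lambda_n^{-1}D$. The carriers cancel in $\overline{u}\,M_nf=\lambda_n^{-1}(\overline{\widetilde u_n}f)\bigl((\cdot-y_n)/\lambda_n\bigr)$, and $\Pi_+$ commutes with translations and positive dilations. Then \eqref{eq:M9-affine-defect}--\eqref{eq:channel-defect-report} follow by setting $f=g_n$ and expanding $\widetilde u_n=g_n+w_n$, resp.\ $u(t_n)=b_n+v_n$.

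The gap is in the rough-setting justification you add, which the paper leaves implicit, and it sits exactly at the step you flag. Your tools (H\"older, $H^1\subset L^\infty$, $H^{1/2}\subset L^p$ for $p<\infty$, $L^p$-boundedness of $\Pi_+$) do not make the cubic terms with two $L^2$ entries well defined. Take $v_n\Pi_+(\overline{v_n}b_n)$; the same applies to $w_n\Pi_+(\overline{w_n}g_n)$ and to the nonlocal part $u\Pi_+(\overline{u}b_n)$ of $L_{u(t_n)}b_n$ itself. H\"older only gives $\overline{v_n}b_n\in L^r$ for $r<2$, while $\Pi_+(\overline{v_n}\phi)$ is at best in $L^2$. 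So the reciprocal exponents in your pairing $\langle\overline{v_n}b_n,\Pi_+(\overline{v_n}\phi)\rangle$ sum to more than one. This is not just bookkeeping: without a Fourier-support restriction, $\overline{c}e$ with $c\in L^2$ and $e\in H^{1/2}$ need not lie in $L^2$. Then $a\Pi_+(\overline{c}e)$ need not even be locally integrable for $a\in L^2$. The claimed continuity $L^2\times H^{1/2}\times L^2\to H^{-1}$, and with it your density passage, is therefore unsupported as written.

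The missing ingredient is the Hardy--Toeplitz (Hankel) bound
\[
\|\Pi_+(\overline{c}e)\|_{L^2(\mathbb{R})}^2\le\frac{1}{2\pi}\|c\|_{L^2(\mathbb{R})}^2\|D^{1/2}e\|_{L^2(\mathbb{R})}^2,\qquad c\in L^2_+(\mathbb{R}),\ e\in\dot H^{1/2}_+(\mathbb{R}).
\]
It comes from the positive Fourier support of both factors (Lemma~\ref{lem:blowup-sharp-Hardy-Toeplitz}), and it is also what makes the KLV form $\|D^{1/2}f\|^2-\|\Pi_+(\overline{v}f)\|^2$ finite on $H^{1/2}_+$.

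In the laboratory frame $u(t_n),b_n,v_n\in L^2_+(\mathbb{R})$ and $b_n\in H^{1/2}_+(\mathbb{R})$. So every cubic term is a product of two $L^2$ functions, hence lies in $L^1\subset H^{-1}$, continuously in its entries. The covariance then holds term by term, with no density argument needed. It follows from an a.e.\ change of variables and the dilation/translation invariance of the multiplier $\mathbf{1}_{[0,\infty)}$. Only $M_nf\in H^{1/2}_+$ is required, which is also what legitimizes applying \eqref{eq:M9-affine-covariance} at $f=g_n$ when $b_n$ is not known to lie in $H^1$.

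Apply the bound either in the laboratory frame or after the carrier shift $\Pi_+(\overline{w_n}g_n)=\Pi_+(\overline{\breve w_n}\breve g_n)$. It cannot be applied directly to $g_n,w_n$, since elements of $\mathfrak{H}_n$ are not Hardy functions.
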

\begin{proof}
Writing $z=(x-y_n)/\lambda_n$, direct differentiation gives $M_n^{-1}DM_n=\lambda_n^{-1}D+\xi_n$. The common carrier phases cancel in $\overline{u(t_n,x)}[M_nf](x)=\lambda_n^{-1}\overline{\widetilde{u}_n(z)}f(z)$. The Cauchy--Szeg\H{o} projector commutes with positive dilations and translations. This proves \eqref{eq:M9-affine-covariance}. Substitute $f=g_n$ and expand $\widetilde{ u}_n=g_n+w_n$ to obtain \eqref{eq:M9-affine-defect}--\eqref{eq:M9-cross-defect}. Expanding $u=b_n+v_n$ directly in $L_ub_n=\mu b_n$ gives \eqref{eq:channel-defect-report}.
\end{proof}

\begin{prop}[Evolution of the point channel at $H^1$ regularity]
\label{prop:M9-actual-column-evolution}
Let $u\in C^0(I;H^1_+)$ be the canonical $H^1_+$ CM-DNLS flow and use the transporting generator $\mathbf{B}_u$ from \eqref{eq:M8-actual-generator}.
For the canonically phased point channel and its complement,
\begin{equation}\label{eq:M9-actual-column-evolution}
\partial_tb_j=\mathbf B_ub_j,\qquad\partial_tv_j=\mathbf B_uv_j,\qquad v_j=u-b_j.
\end{equation}
These equations are equalities in distributions and hence in the corresponding $H^{-1}$ evolution class.
\end{prop}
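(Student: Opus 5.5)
The plan is to derive \eqref{eq:M9-actual-column-evolution} from the Lax pair of \cite[Lemma~2.3]{Gerard-2024-CPAM} together with the Riesz projection onto the isolated eigenvalue $\mu_j$. I will work first on smooth orbits and then pass to $H^1$ regularity by continuity. In the transporting gauge one has the operator identity
\[
\partial_tL_u=[\mathbf{B}_u,L_u]
\]
on $H^\infty_+$, and $\mathbf{B}_u$ is skew-adjoint on its natural domain. Let $U(t,s)$ be the unitary propagator solving $\partial_tU=\mathbf{B}_uU$. It conjugates the Lax operators, $L_{u(t)}=U(t,s)L_{u(s)}U(t,s)^{\ast}$, and so it conjugates every spectral projection:
\[
E_{L_{u(t)}}(\{\mu_j\})=U(t,s)E_{L_{u(s)}}(\{\mu_j\})U(t,s)^{\ast}.
\]
The equation $\partial_tu=\mathbf{B}_uu$ (the CM-DNLS equation itself, in Lax form) gives $u(t)=U(t,s)u(s)$. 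Hence
\[
b_j(t)=E_{L_{u(t)}}(\{\mu_j\})u(t)=U(t,s)b_j(s),
\]
and differentiating in $t$ yields $\partial_tb_j=\mathbf{B}_ub_j$. The equation for $v_j=u-b_j$ then follows by linearity. This is consistent with Proposition~\ref{prop:M8-exact-atom-output}, where $W_t^{u_0}$ plays the role of $U(t,0)$ on the cyclic space.

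To avoid constructing the propagator at low regularity, I will argue directly with a Riesz projection. Choose a small contour $\Gamma$ around $\mu_j$ that encloses no other point of $\sigma_{\mathrm p}(L_{u_0})$. By Theorem~\ref{thm:channels} the point spectrum is fixed in time. If $\mu_j$ is embedded, the contour would cross the continuous spectrum, so in that case I replace the Riesz projection by $\chi_\delta(L_u)$ as in the proof of Theorem~\ref{thm:channels}, with
\[
\chi_\delta(L_u)u\to b_j \quad\text{uniformly in } t \text{ as } \delta\to0 ,
\]
which holds because the cyclic measure is conserved. For smooth $u$, the resolvent $(L_u-z)^{-1}$ satisfies
\[
\partial_t(L_u-z)^{-1}=[\mathbf{B}_u,(L_u-z)^{-1}].
\]
By Helffer--Sjöstrand or Stone--Weierstrass, as in Lemma~\ref{lem:continuous-calculus}, this gives $\partial_t\chi_\delta(L_u)=[\mathbf{B}_u,\chi_\delta(L_u)]$. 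Therefore
\[
\partial_t\bigl(\chi_\delta(L_u)u\bigr)=\mathbf{B}_u\chi_\delta(L_u)u.
\]
In integrated form this reads
\[
\chi_\delta(L_{u(t)})u(t)-\chi_\delta(L_{u(s)})u(s)=\int_s^t\mathbf{B}_{u(\tau)}\chi_\delta(L_{u(\tau)})u(\tau)\,d\tau
\]
in $H^{-1}$. Letting $\delta\to0$ gives the smooth statement, because $\mathbf{B}_u:L^2\to H^{-1}$ is bounded locally uniformly once $u\in H^1$. Indeed, $u\partial_x\Pi_+(\overline u f)$ lies in $H^{-1}$ when $u\in H^1\subset L^\infty$ and $f\in L^2$.

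The passage to $H^1$ regularity proceeds by approximation. Take $u_n\to u$ in $C([s,t];H^1_+)$ with $u_n$ smooth. Then $b_{j,n}\to b_j$ in $C_tL^2$ by the continuity argument of Theorem~\ref{thm:channels}, using Lemma~\ref{lem:continuous-calculus} and conservation of the cyclic measures. The integrated identity therefore passes to the limit in $H^{-1}$. Differentiating it yields \eqref{eq:M9-actual-column-evolution} in distributions.

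I expect the main obstacle to be the \emph{canonical phase} of $b_j$ together with embedded eigenvalues. The projection $E(\{\mu_j\})u$ is gauge-canonical, so no phase ambiguity arises. The genuine difficulty is justifying
\[
\partial_t\chi_\delta(L_u)=[\mathbf{B}_u,\chi_\delta(L_u)]
\]
with $\mathbf{B}_u$ unbounded, and interchanging the limit $\delta\to0$ with the time integral. I would handle this by the resolvent identity tested against $H^1$ vectors, and by dominated convergence using the uniform $L^2$ bound $\|b_j\|^2=2\pi$.
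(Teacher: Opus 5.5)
Your route differs from the paper's. The paper normalizes the eigenvector by $\langle u(t),\varphi_j^{u(t)}\rangle=\sqrt{2\pi}>0$, so that $b_j=\sqrt{2\pi}\,\varphi_j^{u(t)}$. It then imports $\partial_t\varphi_j=\mathbf{B}_u\varphi_j$ at $H^1$ regularity from Theorem~1.6 of \cite{Sun-2026-arXiv} and subtracts from $\partial_tu=\mathbf{B}_uu$. Deriving the result instead from the smooth Lax pair, the functional calculus and approximation is a legitimate self-contained alternative.

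The obstacle you single out is harmless at the smooth level. There $\mathbf{B}_u+iL_u^2$ is a bounded skew-adjoint operator (the Lax operator of \cite{Gerard-2024-CPAM}) with the same commutator with $L_u$. Conjugating by its unitary propagator gives your identity for $\chi_\delta(L_u)u$ directly.

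The genuine gap is the passage to $H^1$. You let $\delta\to0$ on each smooth approximant and then assert $b_{j,n}\to b_j$ in $C_tL^2$ ``by the continuity argument of Theorem~\ref{thm:channels}''. That argument works only because all vectors there share one cyclic measure, so $\nu((\mu_j-\delta,\mu_j+\delta)\setminus\{\mu_j\})$ is a common error. For different data, \eqref{eq:moving-cyclic-pairing} gives only weak convergence $\nu_{u_n(\tau)}\to\nu_{u(\tau)}$, which does not see atoms. Point spectrum is not stable under approximation. Negative eigenvalues generally move, so $E_{L_{u_n}}(\{\mu_j\})u_n=0$ if you keep the label $\mu_j$; nonnegative eigenvalues can disappear altogether. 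For example, $u\equiv R$ is a static solution whose channel at $\mu=0$ is $R$ itself. The smooth data $(1-\frac1n)R$, however, have mass below $2\pi$, hence no point spectrum by \eqref{eq:FR-measure} and \eqref{eq:cyclic-total-mass}. Along these approximants $b_{j,n}\equiv0\not\to R$.

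The repair is to interchange the limits. Keep $\delta$ fixed and let $n\to\infty$ in your integrated identity for $\chi_\delta(L_{u_n})u_n$. Lemma~\ref{lem:continuous-calculus} and compactness of $[s,t]$ give $\chi_\delta(L_{u_n(\tau)})u_n(\tau)\to\chi_\delta(L_{u(\tau)})u(\tau)$ in $L^2$ uniformly in $\tau$. Also $\mathbf{B}_{u_n(\tau)}\to\mathbf{B}_{u(\tau)}$ in $\mathcal{B}(L^2,H^{-2})$ uniformly in $\tau$. Only afterwards let $\delta\to0$, using conservation of the cyclic measure along the $H^1$ orbit itself (smooth conservation plus \eqref{eq:moving-cyclic-pairing}). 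This gives $\sup_\tau\|\chi_\delta(L_{u(\tau)})u(\tau)-b_j(\tau)\|_{L^2}^2\le\nu_{u(s)}\bigl((\mu_j-\delta,\mu_j+\delta)\setminus\{\mu_j\}\bigr)\to0$.

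A smaller slip: $\mathbf{B}_u$ does not map $L^2$ into $H^{-1}$, because $i\partial_x^2f$ lies only in $H^{-2}$. Run the limits in $H^{-2}$, which suffices for distributions, and recover the $H^{-1}$ class at the end from $b_j,v_j\in\mathrm{Dom}(L_u)=H^1_+$. Alternatively, use $\|Df\|_{L^2}\le\|L_uf\|_{L^2}+\|u\|_{L^\infty}^2\|f\|_{L^2}$ for $f=\chi_\delta(L_u)u$.

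Once repaired, your argument avoids the external $H^1$ eigenvector theorem. It handles embedded labels and the canonical phase automatically through $E_{L_u}(\{\mu_j\})u$, at the cost of requiring the $H^1$ flow to be a $C_tH^1$ limit of smooth flows. The paper's proof is three lines but rests entirely on that citation.
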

\begin{proof}
The standard eigenvector $\varphi_j^{u(t)}$ is normalized by $\|\varphi_j^{u(t)}\|_{L^2(\mathbb{R})}=1$ and $\langle u(t), \varphi_j^{u(t)}\rangle=\sqrt{2\pi}>0$. Hence the rank-one projection formula gives $b_j(t)=\sqrt{2\pi}\,\varphi_j^{u(t)}$. Theorem~1.6 of \cite{Sun-2026-arXiv} gives the first equation in \eqref{eq:M9-actual-column-evolution}. The CM-DNLS equation is $(\partial_t-\mathbf{B}_u)u=0$; subtraction gives the second.
\end{proof}

Since $L^1(\mathbb{R})\hookrightarrow H^{-1}(\mathbb{R})$ and $\Pi_+$ is an $L^2$-contraction, the defect satisfies
\begin{equation}\label{eq:M9-minimal-product-bound}
\|F_n\|_{H^{-1}(\mathbb{R})}\le C\left[(\|g_n\|_{L^2(\mathbb{R})}+\|w_n\|_{L^2(\mathbb{R})})\|\Pi_+(\overline{w_n}g_n)\|_{L^2(\mathbb{R})}+\|w_n\Pi_+(|g_n|^2)\|_{L^1(\mathbb{R})}\right].
\end{equation}
Thus it suffices to prove
\begin{equation}\label{eq:M9-column-gate}
\|\Pi_+(\overline{w_n}g_n)\|_{L^2(\mathbb{R})}\underset{n\to\infty}{\longrightarrow}0,\qquad\|w_n\Pi_+(|g_n|^2)\|_{L^1(\mathbb{R})}\underset{n\to\infty}{\longrightarrow}0.
\end{equation}

\begin{lemma}[Hankel decomposition in an affine frame]\label{lem:M9-carrier-Hankel}
Put $c_n=\lambda_n\xi_n$, $G_n=\mathcal{F}_0g_n$, and $W_n=\mathcal{F}_0 w_n$, and write
\[
G_n^+=\mathbf{1}_{[0,\infty)}G_n,\qquad W_n^+=\mathbf{1}_{[0,\infty)}W_n,\qquad W_n^-=\mathbf{1}_{[-c_n,0)}W_n.
\]
Since $G_n,W_n$ are supported in $[-c_n,\infty)$, for $\zeta\ge0$,
\begin{equation}\label{eq:M9-Hankel-split}
\mathcal{F}_0\Pi_+(\overline{w_n}g_n)(\zeta)=\mathsf{H}_{G_n^+}W_n^+(\zeta)+\mathsf{C}_nW_n^-(\zeta),
\end{equation} $[\mathsf{H}_GW](\zeta)=\frac{1}{\sqrt{2\pi}}\int_0^\infty\overline{W(\eta)}G(\eta+\zeta)d\eta$,  $[\mathsf{C}_nW^-](\zeta)=\frac{1}{\sqrt{2\pi}}\int_{-c_n}^0\overline{W^-(\eta)}G_n(\eta+\zeta)d\eta$. Both maps are conjugate-linear in $W$; $\mathsf{H}_G$ is understood as a Hilbert--Schmidt map after composition with the canonical conjugation on $L^2(\mathbb{R}_+)$. With this convention,
\begin{equation}\label{eq:M9-Hankel-S2}
\|\mathsf{H}_G\|_{\mathfrak{S}_2}^2=\frac{1}{2\pi}\int_0^\infty r|G(r)|^2dr.
\end{equation}
Hence strong convergence of $G_n^+$ in $L^2(\mathbb{R}_+,rdr)$ and weak convergence of $W_n^+$ remove the first term. The remaining condition is
\begin{equation}\label{eq:M9-subcarrier-gate}
\|\mathsf{C}_nW_n^-\|_{L^2(\mathbb{R})}\underset{n\to\infty}{\longrightarrow}0.
\end{equation}
If the two conclusions hold, then
\begin{equation}\label{eq:M9-Hankel-output}
\mathfrak{e}_n=\|\Pi_+(\overline{w_n}g_n)\|_{L^2(\mathbb{R})}^2\underset{n\to\infty}{\longrightarrow}0.
\end{equation}

A stronger sufficient condition uses the common carrier shift
\[
\breve{g}_n=e^{ic_nx}g_n=\mathcal{M}_{\lambda_n,y_n,0,\theta_n}^{-1}b_n,\qquad\breve{w}_n=e^{ic_nx}w_n=\mathcal{M}_{\lambda_n,y_n,0,\theta_n}^{-1}v_n.
\]
It satisfies
\begin{equation}\label{eq:M9-column-scale-ledger}
\Pi_+(\overline{\breve{w}_n}\breve{g}_n)=\Pi_+(\overline{w_n}g_n),\qquad\mathfrak{e}_n=\lambda_n\|\Pi_+(\overline{v_n}b_n)\|_{L^2(\mathbb{R})}^2.
\end{equation}
Thus
\begin{equation}\label{eq:M9-lifted-compactness}
\breve{w}_n\rightharpoonup0\text{ in }L^2(\mathbb{R}),\qquad\breve{g}_n\to\breve{g}\text{ in }H^{1/2}_+(\mathbb{R})
\end{equation}
also implies \eqref{eq:M9-Hankel-output}; strong $L^2$ convergence plus a uniform $H^s$ bound, $s>\frac{1}{2}$, is sufficient by interpolation.
\end{lemma}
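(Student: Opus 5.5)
The proof has four parts: compute the Fourier transform of the product directly and split the frequency variable at the origin; bound the Hankel piece through its kernel; identify the carrier-shifted objects by a phase cancellation; and close with a Hilbert--Schmidt limiting argument.

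\textbf{Fourier computation and splitting.} Write $\overline{w_n}g_n$ on the Fourier side as a correlation:
\[
\mathcal{F}_0(\overline{w_n}g_n)(\zeta)=\frac{1}{\sqrt{2\pi}}\int\overline{W_n(\eta)}\,G_n(\eta+\zeta)\,d\eta .
\]
Since $\Pi_+$ multiplies by $\mathbf{1}_{[0,\infty)}(\zeta)$, only $\zeta\ge0$ matters. $W_n$ is supported in $[-c_n,\infty)$, so we split the $\eta$-integral into $\eta\ge0$ and $\eta\in[-c_n,0)$. For $\eta\ge0$ and $\zeta\ge0$ we have $\eta+\zeta\ge0$, so $G_n(\eta+\zeta)=G_n^+(\eta+\zeta)$. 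This gives $\mathsf{H}_{G_n^+}W_n^+$, and the remaining piece is $\mathsf{C}_nW_n^-$, which proves \eqref{eq:M9-Hankel-split}.

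\textbf{Hilbert--Schmidt norm of the Hankel piece.} After the canonical conjugation, $\mathsf{H}_G$ is an integral operator on $L^2(\mathbb{R}_+)$ with kernel $(2\pi)^{-1/2}G(\eta+\zeta)$. Its Hilbert--Schmidt norm is
\[
\frac{1}{2\pi}\iint_{\mathbb{R}_+^2}|G(\eta+\zeta)|^2\,d\eta\,d\zeta .
\]
Substituting $r=\eta+\zeta$ and integrating out the fibre, whose length is $r$, gives $\frac{1}{2\pi}\int_0^\infty r|G(r)|^2\,dr$. This is \eqref{eq:M9-Hankel-S2}.

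\textbf{Vanishing of the first term.} Suppose $G_n^+\to G^+$ in $L^2(r\,dr)$. Then $\|\mathsf{H}_{G_n^+}-\mathsf{H}_{G^+}\|_{\mathfrak{S}_2}\to0$, because the map $G\mapsto\mathsf{H}_G$ is linear and isometric from $L^2(r\,dr/2\pi)$ into $\mathfrak{S}_2$. The operator $\mathsf{H}_{G^+}$ is compact, so it maps the weakly null sequence $W_n^+$ to a strongly null sequence. Note that conjugation preserves weak convergence, and the uniform $L^2$ bound on $W_n^+$ handles the difference term. The first term therefore tends to zero in $L^2$. Together with \eqref{eq:M9-subcarrier-gate} and the triangle inequality this gives \eqref{eq:M9-Hankel-output}. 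Here I read ``weak convergence of $W_n^+$'' as weak convergence to $0$, which is the intended decoupling hypothesis.

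\textbf{Carrier-shifted identities.} The phase $e^{ic_nx}$ cancels in $\overline{e^{ic_nx}w_n}\,e^{ic_nx}g_n$, so $\Pi_+(\overline{\breve w_n}\breve g_n)=\Pi_+(\overline{w_n}g_n)$. The identification $\breve g_n=\mathcal{M}_{\lambda_n,y_n,0,\theta_n}^{-1}b_n$ follows from the explicit formula for $\mathcal{M}$, since $e^{i\xi_n(x-y_n)}$ becomes $e^{ic_nz}$ under $z=(x-y_n)/\lambda_n$. The same identification holds for $\breve w_n$.

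For the scale identity, write $M'=\mathcal{M}_{\lambda_n,y_n,0,\theta_n}$. The phases cancel in $\overline{v_n}b_n$ and the modulus is dilated:
\[
\overline{v_n}b_n(x)=\lambda_n^{-1}(\overline{\breve w_n}\breve g_n)\bigl((x-y_n)/\lambda_n\bigr).
\]
Since $\Pi_+$ commutes with translation and dilation, this gives $\|\Pi_+(\overline{v_n}b_n)\|_{L^2}^2=\lambda_n^{-2}\lambda_n\,\mathfrak{e}_n$, which is \eqref{eq:M9-column-scale-ledger}.

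\textbf{The stronger sufficient condition.} Now $\breve g_n,\breve w_n$ lie in $L^2_+$, so the decomposition above applies with $c_n=0$ and the $\mathsf{C}_n$ term absent. Convergence $\breve g_n\to\breve g$ in $H^{1/2}_+$ implies convergence in $L^2(r\,dr)$, since on $\mathbb{R}_+$ we have $r\le\langle r\rangle$. Then the Hilbert--Schmidt argument above, applied to $\breve w_n\rightharpoonup0$, gives $\mathfrak{e}_n\to0$. Finally, strong $L^2$ convergence together with a uniform $H^s$ bound for some $s>1/2$ gives $H^{1/2}$ convergence by interpolation.

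\textbf{Main obstacle.} The only delicate points are bookkeeping: the conjugate-linearity convention, and passing weak convergence through the conjugation. These are routine, so no single step is a genuine obstacle.
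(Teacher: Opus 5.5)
Your proposal is correct and follows essentially the same route as the paper's proof: the Fourier correlation formula split at $\eta=0$, Fubini for the Hilbert--Schmidt norm, compactness of $\mathsf{H}_{G^+}$ turning $W_n^+\rightharpoonup0$ into strong decay, cancellation of the common phase under the carrier shift, and affine scaling for \eqref{eq:M9-column-scale-ledger}. You also fill in details the paper leaves implicit: you correctly read ``weak convergence of $W_n^+$'' as weak convergence to zero (which follows from $w_n\rightharpoonup0$ in the intended applications), and you handle both the conjugate-linearity and the difference term $(\mathsf{H}_{G_n^+}-\mathsf{H}_{G^+})W_n^+$.
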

\begin{proof}
The Fourier product formula gives \eqref{eq:M9-Hankel-split} after splitting the integration variable at zero. Fubini gives $\|\mathsf{H}_G\|_{\mathfrak{S}_2}^2=\frac{1}{2\pi}\int_0^\infty\int_0^\infty|G(\eta+\zeta)|^2d\eta d\zeta$, which is \eqref{eq:M9-Hankel-S2}.  Therefore the positive-frequency operators converge in Hilbert--Schmidt norm and compactness converts weak convergence of $W_n^+$ into strong output convergence. Together with \eqref{eq:M9-subcarrier-gate} this proves \eqref{eq:M9-Hankel-output}. For the shifted profiles the common modulation cancels in $\overline{w}_ng_n$, and the same Hilbert--Schmidt argument applies on the fixed half-line. The affine scaling gives \eqref{eq:M9-column-scale-ledger}.
\end{proof}

\begin{lemma}[Weak convergence of radiation in an affine frame]\label{lem:M9-P3-two-weak-rows}
Let
\[
w_n^{\mathrm{ac}}=M_n^{-1}r_{\mathrm{ac}}(t_n),\qquad\breve{w}_n^{\mathrm{ac}}=\mathcal{M}_{\lambda_n,y_n,0,\theta_n}^{-1}r_{\mathrm{ac}}(t_n).
\]
Under \eqref{eq:P3}, $w_n^{\mathrm{ac}}\rightharpoonup0,\qquad \breve{w}_n^{\mathrm{ac}}\rightharpoonup0\quad\hbox{in }L^2(\mathbb{R})$. Hence the Hankel lemma removes the continuous-radiation contribution to the positive residual whenever the lifted point profile has \eqref{eq:M9-lifted-compactness}. The remaining point columns still require point--point separation in this frame.
\end{lemma}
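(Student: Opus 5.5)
The plan is to reduce both weak limits to the strong scattering statement \eqref{eq:P3} plus weak escape of the free radiation in the moving frames. Write $r_{\mathrm{ac}}(t_n)=e^{it_n\partial_x^2}u_+ + \varepsilon_n$ with $\|\varepsilon_n\|_{L^2(\mathbb{R})}\to0$ by Theorem~\ref{thm:P3}. Since $M_n^{-1}$ and $\mathcal{M}_{\lambda_n,y_n,0,\theta_n}^{-1}$ are unitary on $L^2(\mathbb{R})$, the error terms $M_n^{-1}\varepsilon_n$ and $\mathcal{M}^{-1}_{\lambda_n,y_n,0,\theta_n}\varepsilon_n$ go to zero strongly. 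It therefore suffices to show that
\[
M_n^{-1}e^{it_n\partial_x^2}u_+\rightharpoonup0,\qquad \mathcal{M}^{-1}_{\lambda_n,y_n,0,\theta_n}e^{it_n\partial_x^2}u_+\rightharpoonup0 .
\]
Both families are bounded by $\|u_+\|_{L^2(\mathbb{R})}$, so it is enough to test against a dense class. I will take $\phi\in\mathcal{S}(\mathbb{R})$ with $\widehat\phi\in C_c^\infty$, and by density of $u_+$ I may also assume $\widehat{u}_+\in C_c^\infty(0,\infty)$.

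For the pairing $\langle e^{it_n\partial_x^2}u_+,M_n\phi\rangle$, the Fourier formula \eqref{eq:modulation-fourier} gives
\[
\lambda_n^{1/2}\int e^{-it_n\zeta^2}\widehat u_+(\zeta)\, e^{iy_n\zeta}\,\overline{\widehat\phi(\lambda_n(\zeta-\xi_n))}\,d\zeta .
\]
I will split into cases along subsequences, which is enough because every subsequence then has a further subsequence with limit zero.

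\emph{Case $\lambda_n\to0$.} The Cauchy--Schwarz inequality bounds the pairing by $\lambda_n^{1/2}\|\widehat u_+\|_{L^\infty}\|\widehat\phi\|_{L^1}\lambda_n^{-1}$. Rather than rely on that, I will bound it by $\|\widehat u_+\mathbf 1_{\zeta\in\xi_n+\lambda_n^{-1}\mathrm{supp}\widehat\phi}\|_{L^2}\,\|\phi\|_{L^2}$. This has no decay, so instead I use the dual roles. In physical space $M_n\phi$ has width $\lambda_n$, while $e^{it\partial_x^2}u_+$ is bounded in $L^\infty$ by $Ct^{-1/2}$ and its $L^2$ mass on any set of measure $O(\lambda_n)$ tends to zero uniformly. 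So the pairing is at most $\|e^{it_n\partial_x^2}u_+\|_{L^2(|x-y_n|\le A\lambda_n)}+o_A(1)$, which vanishes.

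\emph{Case $\lambda_n\to\infty$.} The Fourier support of $M_n\phi$ has width $\lambda_n^{-1}$, and $\widehat u_+\in L^\infty$ gives the bound $\lambda_n^{1/2}\cdot\lambda_n^{-1}\to0$.

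\emph{Case $\lambda_n\to\lambda\in(0,\infty)$.} If $|\xi_n|\to\infty$, the supports eventually separate from $\mathrm{supp}\,\widehat u_+$, so the pairing vanishes. Otherwise $\xi_n\to\xi$, and the amplitude $a_n(\zeta)=\widehat u_+(\zeta)\overline{\widehat\phi(\lambda_n(\zeta-\xi_n))}$ converges in $C_c^\infty$. The phase is $-t_n\zeta^2+y_n\zeta$ with $t_n\to\infty$. Its second derivative is $-2t_n$, so van der Corput's lemma (as in \eqref{eq:vdc}) bounds the integral by $Ct_n^{-1/2}(\|a_n\|_{L^\infty}+\|a_n'\|_{L^1})\to0$, uniformly in $y_n$.

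The shifted frame is the same computation with $\xi_n=0$, so it needs no separate argument.

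The main obstacle is uniformity in the unrestricted parameters, in particular $y_n$ and $\xi_n$ in the critical-scale case. The point to check is that the van der Corput bound is uniform in the linear phase coefficient, which is true because only the second derivative enters. The small-scale case relies on the local $L^2$ smallness of a dispersed free wave; I would obtain it from the $L^\infty$ dispersive decay $t^{-1/2}$ for Schwartz data, which gives mass at most $C\lambda_n A t_n^{-1}$. Finally, I will remark that the last sentence of the statement (point--point separation) is a caveat rather than a claim, so it requires no proof.
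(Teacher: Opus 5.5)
Your proposal is correct and follows the same skeleton as the paper's proof. You replace $r_{\mathrm{ac}}(t_n)$ by $e^{it_n\partial_x^2}u_+$ using \eqref{eq:P3} and unitarity of the affine maps, you show the free wave vanishes weakly in the moving frame, and you treat the shifted frame as the case $\xi_n=0$.

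You differ only in how you prove that weak vanishing. The paper cites Lemma~\ref{lem:affine-weak}, whose proof splits scales at $\lambda\sim\delta|t|$:
\begin{itemize}
\item For $\lambda\le\delta|t|$, the dispersive bound $|t|^{-1/2}\|f_0\|_{L^1}$ is played against $\|\mathcal{M}_pg_0\|_{L^1}=\lambda^{1/2}\|g_0\|_{L^1}$.
\item For $\lambda>\delta|t|$, the Fourier window has length at most $2A/(\delta|t|)$.
\end{itemize}
This gives an explicit estimate, uniform in all parameters, with no extraction. That uniform form is reused for the frame-wise radiation escape in Section~\ref{sec:7}.

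You instead extract subsequences and split at fixed scales. The two extreme regimes $\lambda_n\to0$ and $\lambda_n\to\infty$ are then handled by scaling alone. Time decay enters only at bounded scales, through van der Corput, which is uniform in the linear phase coefficient $y_n$. A contradiction argument over parameter sequences upgrades your sequential result to the uniform statement as well, so nothing is lost. Your approach is self-contained; the paper's is quantitative and avoids the subsequence bookkeeping.

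Two small remarks.
\begin{itemize}
\item Your $\lambda_n\to0$ case can be written in one line, without the false starts: $|\langle e^{it_n\partial_x^2}u_+,M_n\phi\rangle|\le\|e^{it_n\partial_x^2}u_+\|_{L^\infty}\|M_n\phi\|_{L^1}\le(2\pi)^{-1}\|\widehat{u}_+\|_{L^1}\lambda_n^{1/2}\|\phi\|_{L^1}$.
\item You did not address the clause \emph{Hence the Hankel lemma removes\ldots}. It follows immediately from the lifted branch of Lemma~\ref{lem:M9-carrier-Hankel} with $\breve{w}_n^{\mathrm{ac}}$ in place of $\breve{w}_n$, and needs no further argument.
\end{itemize}
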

\begin{proof}
Replace $r_{\mathrm{ac}}(t_n)$ by $e^{it_n\partial_x^2}u_+$, using strong scattering and unitarity of the affine maps. The two weak limits are then Lemma~\ref{lem:affine-weak}, first with the original carrier and then with carrier zero.
\end{proof}

\begin{lemma}[Local decay from smoothing and time regularity]\label{lem:M9-Rellich}
Let $w:[T_0,\infty)\to L^2(\mathbb{R})$ be bounded. Suppose that for every $\chi\in C_c^\infty(\mathbb{R})$ there are $\sigma>0$ and $C_\chi<\infty$ such that
\begin{equation}\label{eq:M9-local-smoothing}
\int_{T_0}^\infty\|\chi w(t)\|_{L^2(\mathbb{R})}^2dt<\infty,\quad\sup_{t\ge T_0}\|\chi w(t)\|_{H^\sigma(\mathbb{R})}<\infty,
\end{equation}
\begin{equation}\label{eq:M9-no-spike}
\|\chi(w(t+h)-w(t))\|_{H^{-1}(\mathbb{R})}\le C_\chi|h|
\end{equation}
whenever $t,t+h\ge T_0$. Then
\[
\|\chi w(t)\|_{L^2(\mathbb{R})}\underset{t\to\infty}{\longrightarrow}0.
\]
Consequently every sequence $t_n\to\infty$ satisfies \eqref{eq:M9-window-escape} with $w_n=w(t_n)$.
\end{lemma}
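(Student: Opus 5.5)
The plan is the classical Rellich/Aubin--Lions argument, with a contradiction step to turn integrated smallness into pointwise smallness. Fix $\chi\in C_c^\infty(\mathbb{R})$ and set $\phi(t)=\|\chi w(t)\|_{L^2(\mathbb{R})}^2$. The first condition in \eqref{eq:M9-local-smoothing} gives $\phi\in L^1([T_0,\infty))$. It therefore suffices to show that $\phi$ is uniformly continuous on $[T_0,\infty)$. An integrable, nonnegative, uniformly continuous function tends to zero: if $\phi(t_n)\ge 2\varepsilon$ along $t_n\to\infty$, uniform continuity gives a fixed $\delta>0$ with $\phi\ge\varepsilon$ on $[t_n,t_n+\delta]$. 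Passing to a separated subsequence, this contradicts integrability.

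For the uniform continuity I would work with the slightly larger cutoff. Choose $\widetilde\chi\in C_c^\infty(\mathbb{R})$ with $\widetilde\chi=1$ on $\operatorname{supp}\chi$, and write $\chi w=\chi\widetilde\chi w$. By interpolation between $H^{-1}$ and $H^\sigma$,
\[
\|\widetilde\chi(w(t+h)-w(t))\|_{L^2}\le\|\widetilde\chi(w(t+h)-w(t))\|_{H^{-1}}^{\theta}\,\|\widetilde\chi(w(t+h)-w(t))\|_{H^{\sigma}}^{1-\theta},\qquad \theta=\frac{\sigma}{1+\sigma}.
\]
Apply \eqref{eq:M9-no-spike} to $\widetilde\chi$ for the first factor, and the uniform $H^\sigma$ bound in \eqref{eq:M9-local-smoothing} (also for $\widetilde\chi$) for the second. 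This gives
\[
\|\widetilde\chi(w(t+h)-w(t))\|_{L^2}\le C|h|^{\theta}
\]
uniformly in $t$. Multiplication by $\chi$ is bounded on $L^2$, so $t\mapsto\chi w(t)$ is uniformly Hölder in $L^2$. Since $\sqrt{\phi}$ is bounded (by $\|\chi\|_\infty\sup_t\|w(t)\|_{L^2}$), it follows that $\phi$ is uniformly continuous. The boundedness of $w$ in $L^2$ is used only here and to keep the $H^\sigma$ interpolation meaningful.

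The consequence for sequences is then immediate. For $t_n\to\infty$ and any compact window, choose $\chi$ equal to one on the window; then $\|\mathbf 1_{\text{window}}w(t_n)\|_{L^2}\le\|\chi w(t_n)\|_{L^2}\to0$. This is the window escape \eqref{eq:M9-window-escape} as I understand it, namely local $L^2$ vanishing of $w_n=w(t_n)$ on every fixed compact set.

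I expect the main (mild) obstacle to be the interpolation step. One must check that the hypotheses hold for the auxiliary cutoff $\widetilde\chi$, which is fine since they are assumed for every $\chi$. One must also note that the $H^{-1}$–$H^\sigma$ interpolation inequality holds on the whole line with a constant independent of $t$. Everything else is elementary.
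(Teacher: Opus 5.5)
Your proof is correct and follows the paper's argument: interpolate the $H^{-1}$ Lipschitz bound against the uniform $H^\sigma$ bound to get uniform H\"older continuity of $t\mapsto\chi w(t)$ in $L^2$, conclude that the integrable nonnegative function $\|\chi w(t)\|_{L^2}^2$ is uniformly continuous and hence tends to zero, and then take a cutoff equal to one on the window. The auxiliary cutoff $\widetilde\chi$ is harmless but unnecessary, since the hypotheses already hold for $\chi$ itself.
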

\begin{proof}
Interpolation between $H^{-1}(\mathbb{R})$ and $H^\sigma(\mathbb{R})$ gives, uniformly in $t$, $\|\chi(w(t+h)-w(t))\|_{L^2(\mathbb{R})}\le C|h|^{\sigma/(1+\sigma)}$. Thus $t\mapsto\|\chi w(t)\|_{L^2(\mathbb{R})}^2$ is a nonnegative uniformly continuous function. It is integrable by \eqref{eq:M9-local-smoothing}, so it tends to zero. Taking a cutoff equal to one on $[-R,R]$ proves the sequential conclusion.
\end{proof}

\begin{prop}[Local channel defect] \label{prop:M9-Lax-column}
Assume $g_n\to g$ in $L^2(\mathbb{R})$ and $\sup_n\|g_n\|_{H^s(\mathbb{R})}<\infty$ for some $s>1/2$. Suppose also that
\[
w_n\rightharpoonup0\quad\hbox{in }L^2(\mathbb{R}),\qquad\sup_n\|w_n\|_{L^2(\mathbb{R})}<\infty,\qquad\mathfrak{e}_n:=\|\Pi_+(\overline{w_n}g_n)\|_{L^2(\mathbb{R})}^2\underset{n\to\infty}{\longrightarrow}0.
\]
Then $F_n\to0$ in $H^{-1}_{\mathrm{loc}}(\mathbb{R})$. The residual has the nonnegative form
\begin{equation}\label{eq:M9-pair-energy-identity}
\mathfrak{e}_n=\int_\mathbb{R}w_n\Pi_+(\overline{w_n}g_n)\overline{g_n}dx=\|\Pi_+(\overline{w_n}g_n)\|_{L^2(\mathbb{R})}^2.
\end{equation}
The integral is the absolutely convergent $L^1$--$L^\infty$ duality pairing. The numbers $\{\kappa_n\}$ are bounded. If, in addition, the negative-frequency defect vanishes, then every subsequential limit $\kappa_n\to\kappa$ satisfies the same graph conclusion \eqref{eq:M9-limit-graph}.
\end{prop}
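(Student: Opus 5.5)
I would first dispose of the pairing identity \eqref{eq:M9-pair-energy-identity}, since it only needs boundedness. By the Sobolev embedding $H^s\hookrightarrow L^\infty$ for $s>1/2$, $g_n\in L^\infty$ uniformly, so $\overline{w_n}g_n\in L^2$. Then $\Pi_+(\overline{w_n}g_n)\in L^2$, and $w_n\Pi_+(\overline{w_n}g_n)\overline{g_n}$ is a product of two $L^2$ functions with one $L^\infty$ function, hence lies in $L^1$. Writing $\int w_n\Pi_+(\overline{w_n}g_n)\overline{g_n}\,dx=\langle \Pi_+(\overline{w_n}g_n),\overline{w_n}g_n\rangle$ and using that $\Pi_+$ is an orthogonal projection gives $\|\Pi_+(\overline{w_n}g_n)\|^2$. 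This is nonnegative and equals $\mathfrak e_n$.

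For $F_n\to0$ in $H^{-1}_{\mathrm{loc}}$, I would treat the three terms of \eqref{eq:M9-cross-defect} separately after multiplying by a cutoff $\chi\in C_c^\infty$.
\begin{enumerate}[(a)]
\item The terms $g_n\Pi_+(\overline{w_n}g_n)$ and $w_n\Pi_+(\overline{w_n}g_n)$ are bounded in $L^1$ by $(\|g_n\|_{L^2}+\|w_n\|_{L^2})\mathfrak e_n^{1/2}\to0$. Since $L^1\hookrightarrow H^{-1}$ in one dimension, both tend to zero in $H^{-1}$ globally.
\item The middle term $w_n\Pi_+(|g_n|^2)$ is the real obstacle, because only weak convergence of $w_n$ is available. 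I would set $h_n=\Pi_+(|g_n|^2)$. Because $g_n\to g$ in $L^2$ with a uniform $H^s$ bound, interpolation gives $g_n\to g$ in $H^{s'}$ for $1/2<s'<s$. The algebra property of $H^{s'}$ then gives $|g_n|^2\to|g|^2$ in $H^{s'}$, so $h_n\to h$ strongly in $H^{s'}\cap L^\infty$.
\item Write $\chi w_nh_n=\chi w_n(h_n-h)+\chi w_n h$. The first piece tends to zero in $L^2$. For the second, $\chi h\in L^\infty\cap L^2$ is fixed and $w_n\rightharpoonup0$, so $\chi hw_n\rightharpoonup0$ in $L^2$.
\item On the fixed compact support of $\chi$, the embedding of $L^2$ into $H^{-1}$ is compact by Rellich. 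Weak convergence therefore upgrades to strong convergence in $H^{-1}$, which gives $\chi F_n\to0$ in $H^{-1}$.
\end{enumerate}

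For boundedness of $\kappa_n$, I would pair \eqref{eq:M9-affine-defect} with $g_n$. The quantity $\langle \mathscr L_{g_n}g_n,g_n\rangle=\langle Dg_n,g_n\rangle-\|\Pi_+(|g_n|^2)\|^2$ is bounded by the $H^{1/2}$ bound, and $\|g_n\|^2=2\pi$. It remains to bound the defect pairing $\langle F_n,g_n\rangle$. It is controlled by the $L^1$ terms tested against $\|g_n\|_{L^\infty}$, together with $\langle w_nh_n,g_n\rangle$. The latter is bounded by $\|w_n\|_{L^2}\|h_n\|_{L^\infty}\|g_n\|_{L^2}$, which is uniformly bounded. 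Dividing by $2\pi$ bounds $\kappa_n$.

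For the limit conclusion, I would pass to a subsequence with $\kappa_n\to\kappa$. On test functions, $(\mathscr L_{g_n}-\kappa_n)g_n\to(\mathscr L_g-\kappa)g$ distributionally, because $Dg_n\to Dg$ in $H^{-1}$ and $g_n\Pi_+(|g_n|^2)\to g\Pi_+(|g|^2)$ in $L^2$. Together with $F_n\to0$ in $H^{-1}_{\mathrm{loc}}$, this yields $(\mathscr L_g-\kappa)g=0$. When the negative-frequency defect vanishes, $g\in L^2_+$, which is presumably the graph conclusion \eqref{eq:M9-limit-graph}.
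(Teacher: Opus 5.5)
Your handling of the pairing identity \eqref{eq:M9-pair-energy-identity} and of $F_n\to0$ in $H^{-1}_{\mathrm{loc}}(\mathbb{R})$ is the paper's argument essentially step for step. It uses orthogonality of $\Pi_+$, the embedding $L^1\hookrightarrow H^{-1}$ for the two terms containing $\Pi_+(\overline{w_n}g_n)$, and, for $w_n\Pi_+(|g_n|^2)$, the $H^{s'}$ algebra property with the compact embedding $L^2(\operatorname{supp}\chi)\hookrightarrow H^{-1}$.

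The genuine difference is the bound on $\kappa_n$. The paper tests \eqref{eq:M9-affine-defect} against one fixed $\phi\in C_c^\infty(\mathbb{R})$ with $\langle g,\phi\rangle\ne0$; this uses only $\|Dg_n\|_{H^{-1}}\le\|g_n\|_{L^2}$ and local control of $F_n$. You instead pair with $g_n$ itself and obtain the energy identity
\[
2\pi\kappa_n=\langle Dg_n,g_n\rangle-\|\Pi_+(|g_n|^2)\|_{L^2(\mathbb{R})}^2-\langle F_n,g_n\rangle .
\]
This needs the $\dot H^{1/2}$ bound and the global structure of $F_n$; note that its third term pairs to exactly $\mathfrak{e}_n$. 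It gives more than boundedness. Since $\langle w_n\Pi_+(|g_n|^2),g_n\rangle=\langle w_n,\overline{\Pi_+(|g_n|^2)}g_n\rangle\to0$ (weak times strong), you get $\langle F_n,g_n\rangle\to0$. Using $g_n\to g$ in $H^{1/2}$, the whole sequence $\kappa_n$ then converges to the Rayleigh quotient $(2\pi)^{-1}\bigl(\langle Dg,g\rangle-\|\Pi_+(|g|^2)\|_{L^2(\mathbb{R})}^2\bigr)$.

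Two small points on this route. First, $\|g_n\|_{L^2}^2=2\pi$ is not among the stated hypotheses. It comes from $\|b_n\|_{L^2}^2=2\pi$ and unitarity of $M_n$; the paper's route needs the equivalent nondegeneracy $g\ne0$. Second, $s$ may be below $1$, so pairing the $H^{-1}$ identity with $g_n$ needs justification. Approximate $g_n$ in $H^s$ by test functions; these converge simultaneously in $L^\infty$, $L^2$ and $H^{1-s}$.

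Your final step stops short of the stated conclusion. \eqref{eq:M9-limit-graph} asserts $g\in H^1_+(\mathbb{R})$ and $L_gg=\kappa g$ in $L^2(\mathbb{R})$, not merely $g\in L^2_+(\mathbb{R})$ with a distributional equation. The missing line is the paper's regularity upgrade. In $Dg=\kappa g+g\Pi_+(|g|^2)$ the right-hand side lies in $L^2$, because $g\in H^s\subset L^\infty$ and $\Pi_+(|g|^2)\in L^2$. Hence $Dg\in L^2$ and $g\in H^1$. The vanishing negative-frequency defect then gives $g\in H^1_+(\mathbb{R})$. The identity therefore holds in $L^2$, where $\mathscr{L}_g=L_g$.
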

\begin{proof}
Interpolation gives $g_n\to g$ in $H^{s^\prime}$ for some $s^\prime>\frac{1}{2}$. Since $H^{s^\prime}(\mathbb{R})$ is an algebra and $\Pi_+$ is bounded on it,
\[
h_n:=\Pi_+(|g_n|^2)\longrightarrow h:=\Pi_+(|g|^2)\quad\hbox{in }H^{s^\prime}(\mathbb{R})\hookrightarrow L^\infty(\mathbb{R}).
\]
Thus $w_nh_n\rightharpoonup0$ in $L^2(\mathbb{R})$. For every compactly supported cutoff $\chi$, the embedding $L^2(\operatorname{supp}\chi)\hookrightarrow H^{-1}(\mathbb{R})$ is compact, and hence
\[
\chi w_n\Pi_+(|g_n|^2)\underset{n\to\infty}{\longrightarrow}0\quad\hbox{in }H^{-1}(\mathbb{R}).
\]
The other two terms are globally small:
\[
\|g_n\Pi_+(\overline{w_n}g_n)\|_{L^1(\mathbb{R})}\le\|g_n\|_{L^2(\mathbb{R})}\mathfrak{e}_n^{1/2},\qquad
\|w_n\Pi_+(\overline{w_n}g_n)\|_{L^1(\mathbb{R})}\le\|w_n\|_{L^2(\mathbb{R})}\mathfrak{e}_n^{1/2}.
\]
The embedding $L^1(\mathbb{R})\hookrightarrow H^{-1}(\mathbb{R})$ proves the assertion. Finally, with $z_n=\overline{w_n}g_n$ and $a_n=\Pi_+ z_n$, the duality integral in \eqref{eq:M9-pair-energy-identity} is $\langle a_n, z_n\rangle=\langle \Pi_+z_n,z_n\rangle=\|a_n\|_{L^2(\mathbb{R})}^2$, by orthogonality of $\Pi_+$. The convergence \eqref{eq:M9-self-form-close} follows as in the next proof.  Testing \eqref{eq:M9-affine-defect} against one compactly supported $\phi$ with $\langle g,\phi\rangle\ne0$ bounds
$\kappa_n$. Passing to the limit in distributions and then using $g\in H^s(\mathbb{R})\subset L^\infty(\mathbb{R})$ upgrades the limiting equation to \eqref{eq:M9-limit-graph}.
\end{proof}

\begin{prop}[Global convergence of the channel defect]\label{prop:M9-moving-window}
Assume, after the extraction, that for some $s>\frac{1}{2}$
\begin{equation}\label{eq:M9-graph-input}
g_n\underset{n\to\infty}{\longrightarrow}g\quad\hbox{in }L^2(\mathbb{R}),\quad\sup_n\|g_n\|_{H^s(\mathbb{R})}<\infty,\quad\sup_n\|w_n\|_{L^2(\mathbb{R})}<\infty,
\end{equation}
\begin{equation}\label{eq:M9-window-escape}
\|w_n\|_{L^2(-R,R)}\underset{n\to\infty}\longrightarrow0\quad\text{for every }R<\infty.
\end{equation}
Then \eqref{eq:M9-column-gate} holds and
\begin{equation}\label{eq:M9-defect-Hminus1}
F_n\underset{n\to\infty}{\longrightarrow}0\quad\hbox{in }H^{-1}(\mathbb{R}).
\end{equation}
If the negative-frequency defect also vanishes, then $g\in L^2_+(\mathbb{R})$. The numbers $\kappa_n$ are bounded, and along every subsequence on which $\kappa_n\to\kappa$,
\begin{equation}\label{eq:M9-limit-graph}
g\in H^1_+(\mathbb{R}),\qquad L_gg=\kappa g\quad\hbox{in }L^2(\mathbb{R}).
\end{equation}
\end{prop}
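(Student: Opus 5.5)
We use the defect formula \eqref{eq:M9-cross-defect} and handle its three terms separately. The two terms that contain the factor $\Pi_+(\overline{w_n}g_n)$ are small globally, while the term $w_n\Pi_+(|g_n|^2)$ is small because of window escape combined with decay of $\Pi_+(|g_n|^2)$ at infinity.

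\emph{Step 1: interpolation.} Interpolating the $L^2$ convergence with the uniform $H^s$ bound gives $g_n\to g$ in $H^{s'}$ for some $\tfrac12<s'<s$. Hence $g_n\to g$ in $L^\infty$. Moreover, $h_n:=\Pi_+(|g_n|^2)\to h:=\Pi_+(|g|^2)$ in $H^{s'}$, and therefore in $L^\infty\cap L^2$.

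\emph{Step 2: the term $\Pi_+(\overline{w_n}g_n)$.} Fix $R$ and split $g_n=\mathbf 1_{|x|\le R}g_n+\mathbf 1_{|x|>R}g_n$. For the local piece,
\[
\|\overline{w_n}\mathbf 1_{|x|\le R}g_n\|_{L^2}\le\|g_n\|_{L^\infty}\|w_n\|_{L^2(-R,R)}\to0
\]
by \eqref{eq:M9-window-escape}. For the far piece, we use $\|\mathbf 1_{|x|>R}g_n\|_{L^\infty}\le \|\mathbf 1_{|x|>R}g\|_{L^\infty}+\|g_n-g\|_{L^\infty}$. The first summand tends to zero as $R\to\infty$ because $g\in H^{s'}\subset C_0$. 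This yields
\[
\limsup_n\|\overline{w_n}g_n\|_{L^2}\le \sup_n\|w_n\|_{L^2}\,\|g\|_{L^\infty(|x|>R)}\xrightarrow[R\to\infty]{}0.
\]
Since $\Pi_+$ is an $L^2$-contraction, the first half of \eqref{eq:M9-column-gate} follows, i.e.\ $\mathfrak e_n\to0$.

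\emph{Step 3: the term $w_n\Pi_+(|g_n|^2)$ in $L^1$.} This is the main obstacle, because the target is $L^1$ rather than $L^2$. We estimate
\[
\|w_nh_n\|_{L^1}\le \|w_n\|_{L^2}\|h_n-h\|_{L^2}+\|w_n\|_{L^2(-R,R)}\|h\|_{L^2}+\sup_n\|w_n\|_{L^2}\|h\|_{L^2(|x|>R)}.
\]
The first term vanishes by Step 1 and the second by window escape. The third tends to zero as $R\to\infty$ because $h\in L^2$. Thus \eqref{eq:M9-column-gate} holds. Combining it with \eqref{eq:M9-minimal-product-bound} gives \eqref{eq:M9-defect-Hminus1}. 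If the $H^{-1}$ bound needs an $L^2$ control of the product terms, we apply the same split to each of them.

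\emph{Step 4: the limit equation.} If the negative-frequency defect vanishes, then $g\in L^2_+$ as a strong limit. To bound $\kappa_n$, choose $\phi\in C_c^\infty$ with $\langle g,\phi\rangle\ne0$ and pair \eqref{eq:M9-affine-defect} with $\phi$. The terms $\langle Dg_n,\phi\rangle$ and $\langle g_n\Pi_+(|g_n|^2),\phi\rangle$ converge, and $\langle F_n,\phi\rangle\to0$. Hence $\kappa_n\langle g_n,\phi\rangle$ converges, and since $\langle g_n,\phi\rangle\to\langle g,\phi\rangle\ne0$, the sequence $\kappa_n$ is bounded.

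Along a subsequence with $\kappa_n\to\kappa$, we pass to the limit in distributions and obtain
\[
Dg=\kappa g+g\Pi_+(|g|^2).
\]
The right-hand side lies in $L^2$ because $g\in L^\infty\cap L^2$ and $\Pi_+(|g|^2)\in L^\infty$. Therefore $Dg\in L^2$, so $g\in H^1_+$, and $L_gg=\kappa g$ holds in $L^2$. This is \eqref{eq:M9-limit-graph}.
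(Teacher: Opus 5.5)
Your proposal is correct and follows essentially the same route as the paper: interpolation to $H^{s'}\subset C_0$, the window/complement split for both quantities in \eqref{eq:M9-column-gate}, the bound \eqref{eq:M9-minimal-product-bound}, and testing \eqref{eq:M9-affine-defect} against a $\phi$ with $\langle g,\phi\rangle\neq0$ (possible because $\|g\|_{L^2}^2=2\pi$ by strong convergence) to control $\kappa_n$ and pass to the limit. The only difference is cosmetic: you get $h_n\to h$ from the $H^{s'}$ algebra property, whereas the paper uses the direct estimate $\||g_n|^2-|g|^2\|_{L^2}\le(\|g_n\|_{L^\infty}+\|g\|_{L^\infty})\|g_n-g\|_{L^2}$.
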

\begin{proof}
Interpolation gives $g_n\to g$ in $H^{s^\prime}(\mathbb{R})$ for every $\frac{1}{2}<s^\prime<s$, hence uniformly on $\mathbb{R}$. The limit is continuous and vanishes at infinity. Splitting into a fixed window and its complement gives
\[
\|w_ng_n\|_{L^2(\mathbb{R})}\le\|w_n\|_{L^2(-R,R)}\|g_n\|_{L^\infty(\mathbb{R})}+\|w_n\|_{L^2(\mathbb{R})}\|g_n\|_{L^\infty(|x|>R)}\longrightarrow0
\]
by first sending $n\to\infty$ and then $R\to\infty$.

Set $h_n=\Pi_+(|g_n|^2)$ and $h=\Pi_+(|g|^2)$. The estimate
\[
\||g_n|^2-|g|^2\|_{L^2(\mathbb{R})}\le(\|g_n\|_{L^\infty(\mathbb{R})}+\|g\|_{L^\infty(\mathbb{R})})\|g_n-g\|_{L^2(\mathbb{R})}
\]
shows that $h_n\to h$ in $L^2(\mathbb{R})$. Hence $\{h_n\}$ is uniformly $L^2$-tight, and the same window decomposition gives $\|w_nh_n\|_{L^1(\mathbb{R})}\to0$. Formula
\eqref{eq:M9-minimal-product-bound} proves \eqref{eq:M9-defect-Hminus1}.

Moreover,
\begin{equation}\label{eq:M9-self-form-close}
g_n\Pi_+(|g_n|^2)\underset{n\to\infty}{\longrightarrow} g\Pi_+(|g|^2)\quad\hbox{in }L^1(\mathbb{R})\hookrightarrow H^{-1}(\mathbb{R}).
\end{equation}
Choose $\phi\in C_c^\infty(\mathbb{R})$ with $\langle g,\phi\rangle\ne0$. Testing \eqref{eq:M9-affine-defect} against $\phi$, using $\|Df\|_{H^{-1}(\mathbb{R})}\le\|f\|_{L^2(\mathbb{R})}$, shows that $\{\kappa_n\}$ is bounded. After $\kappa_n\to\kappa$, all terms pass in $H^{-1}(\mathbb{R})$: $Dg-g\Pi_+(|g|^2)=\kappa g$. Since $g\in H^s(\mathbb{R})\subset L^\infty(\mathbb{R})$ and $\Pi_+(|g|^2)\in L^2(\mathbb{R})$, the right side is in $L^2(\mathbb{R})$. Therefore $Dg\in L^2(\mathbb{R})$, and the negative-frequency condition gives \eqref{eq:M9-limit-graph}.
\end{proof}

\begin{lemma}[A self-eigenvalue is the lower Fourier edge]\label{lem:M9-lower-edge}
If $0\ne g\in H^1_+(\mathbb{R})$ and $L_gg=\kappa g$, then $\kappa=\underline\omega(g):=\operatorname{ess\,inf}\operatorname{supp}\widehat{g}\ge0$. \end{lemma}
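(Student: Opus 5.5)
We argue directly on the Fourier side, near the lower edge $\underline\omega:=\underline\omega(g)$ of $\operatorname{supp}\widehat g$, and use a Gronwall-type argument: if $\kappa\neq\underline\omega$, then $\widehat g$ must vanish on a right neighbourhood of $\underline\omega$, which contradicts the definition of the edge. Since $g\in L^2_+(\mathbb{R})$ and $g\ne0$, $\underline\omega$ is finite and nonnegative. So only $\kappa=\underline\omega$ remains to be shown.

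\emph{Step 1: an $L^2$ identity in frequency.} Because $g\in H^1_+(\mathbb{R})\subset L^\infty(\mathbb{R})$, the function $h:=\Pi_+(|g|^2)$ lies in $L^2_+(\mathbb{R})$. Hence $g h\in L^2(\mathbb{R})$, and the eigenvalue equation $Dg-gh=\kappa g$ holds in $L^2$. Taking Fourier transforms gives
\[
(\xi-\kappa)\widehat g(\xi)=\widehat{gh}(\xi)=\frac{1}{2\pi}\int \widehat g(\eta)\,\widehat h(\xi-\eta)\,d\eta\qquad\text{for a.e. }\xi .
\]
Here $\widehat h=\mathbf 1_{[0,\infty)}\widehat{|g|^2}$, and $|g|^2\in L^1$ gives the uniform bound $\|\widehat h\|_{L^\infty}\le\|g\|_{L^2}^2$. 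The support of $\widehat g$ lies in $[\underline\omega,\infty)$ and that of $\widehat h$ in $[0,\infty)$. So for $\xi\ge\underline\omega$ only $\eta\in[\underline\omega,\xi]$ contributes, and
\[
|\xi-\kappa|\,|\widehat g(\xi)|\le \frac{\|g\|_{L^2}^2}{2\pi}\int_{\underline\omega}^{\xi}|\widehat g(\eta)|\,d\eta .
\]
This convolution-support observation is the structural heart of the argument. The right side is finite because $\widehat g\in L^2\subset L^1_{\mathrm{loc}}$.

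\emph{Step 2: Gronwall at the edge.} Suppose $\kappa\ne\underline\omega$. Choose $\delta>0$ so small that $|\xi-\kappa|\ge c:=|\underline\omega-\kappa|/2$ for $\xi\in[\underline\omega,\underline\omega+\delta]$. Set
\[
\Phi(\xi)=\int_{\underline\omega}^\xi|\widehat g|,
\]
which is absolutely continuous with $\Phi(\underline\omega)=0$. Step~1 gives $\Phi'(\xi)\le C\,\Phi(\xi)$ a.e. on this interval, with $C=\|g\|_{L^2}^2/(2\pi c)$. Then $(e^{-C\xi}\Phi)'\le0$ forces $\Phi\equiv0$ there. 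Hence $\widehat g=0$ a.e. on $[\underline\omega,\underline\omega+\delta]$, contradicting the definition of $\underline\omega$ as the essential infimum of the support. Therefore $\kappa=\underline\omega\ge0$.

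The only delicate point is justifying Step~1 rigorously: the identity must hold in $L^2$, not merely in distributions, and the convolution formula for $\widehat{gh}$ must be valid. Both follow from $g\in L^\infty$, $h\in L^2$, and $|g|^2\in L^1$, so I expect no real obstacle. The whole argument is a one-dimensional Volterra inequality at the lower Fourier edge.
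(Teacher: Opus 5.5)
Your proof is correct and follows the paper's argument: you Fourier-transform the eigenvalue equation $Dg-g\Pi_+(|g|^2)=\kappa g$, observe that the convolution truncates to $[\underline\omega,\xi]$ at the lower edge, and show that $\widehat g$ vanishes on a right neighbourhood of $\underline\omega$ when $\kappa\ne\underline\omega$. The only difference is in the closing step: you apply Gronwall to $\Phi(\xi)=\int_{\underline\omega}^{\xi}|\widehat g|$ using $\|\widehat h\|_{L^\infty}\le\|g\|_{L^2}^2$, whereas the paper applies Young's inequality on $L^2(0,T)$ and shrinks $T$ until $\|H\|_{L^1(0,T)}$ makes the Volterra coefficient less than one.
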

\begin{proof}
Put $f=\widehat{g}$, $H(\eta)=\mathbf{1}_{[0,\infty)}(\eta)\widehat{|g|^2}(\eta)$, and $a=\underline\omega(g)$. Since $g\in H^1(\mathbb{R})$, $H\in L^2(\mathbb{R})\cap L^\infty(\mathbb{R})$. Fourier transformation gives
\begin{equation}\label{eq:M9-Volterra}
(\zeta-\kappa)f(\zeta)=\frac{1}{2\pi}\int_0^\zeta f(\zeta-\eta)H(\eta)d\eta.
\end{equation}
Writing $F(r)=f(a+r)$, if $a\ne\kappa$, choose $T>0$ such that $|a+r-\kappa|\ge\frac{1}{2}|a-\kappa|$ on $(0,T)$. Young's inequality applies because at $\zeta=a+r$ the convolution in \eqref{eq:M9-Volterra} truncates to $\int_0^rF(r-\eta)H(\eta)d\eta$: the remaining part vanishes below the lower edge. It gives $\|F\|_{L^2(0,T)}\le \frac{1}{\pi|a-\kappa|}\|H\|_{L^1(0,T)}\|F\|_{L^2(0,T)}$. For small $T$, the coefficient is less than one, forcing $F=0$ on $(0,T)$, contrary to the definition of the lower edge. Thus $a=\kappa$, and $a\ge0$ by the Hardy support.
\end{proof}

\begin{cor}[Carrier recalibration and rigidity]\label{cor:M9-geometric-P4}
Assume Theorem~\ref{thm:M8-two-tail} and either the hypotheses of Proposition~\ref{prop:M9-moving-window} or those of Proposition~\ref{prop:M9-Lax-column}. Pass to a subsequence with $\kappa_n\to\kappa$. Define
\[
(\widetilde{\xi}_n,\widetilde{g}_n,\widetilde{w}_n)=
\begin{cases}
(\mu,e^{-i\kappa_nx}g_n,e^{-i\kappa_nx}w_n),&\mu>0,\\
(\xi_n,g_n,w_n),&\mu\le0.
 \end{cases}
\]
Then
\begin{equation}\label{eq:M9-same-vector}
\mathcal{M}_{\lambda_n,y_n,\widetilde\xi_n,\theta_n}\widetilde{g}_n=b_n,
\end{equation}
\[
\lambda_n(\mu-\widetilde{\xi}_n)\underset{n\to\infty}{\longrightarrow}0.
\]
For $\mu>0$, the second line is identically zero. For $\mu\le0$, it follows because $\kappa_n\le0$, whereas Lemma~\ref{lem:M9-lower-edge} gives $\kappa\ge0$; hence
$\kappa=0$. In particular, if $\mu<0$, then $\lambda_n\le-\kappa_n/|\mu|\to0$.
Moreover,
$\widetilde{g}_n\to\widetilde{g}$ in $L^2(\mathbb{R})$, where $\widetilde{g}=e^{-i\kappa x}g$ for $\mu>0$ and $\widetilde{g}=g$ for $\mu\le0$, $\widetilde{g}\in H^1_+(\mathbb{R})$, and
\begin{equation}\label{eq:M10-zero-profile}
L_{\widetilde{g}}\widetilde{g}=0,\qquad\|\widetilde{g}\|_{L^2(\mathbb{R})}^2=2\pi.
\end{equation}
The ground-state uniqueness lemma \cite[Lemma~4.1]{Gerard-2024-CPAM} therefore yields $\widetilde{g}=\mathcal{M}_{\alpha,\beta,0,\vartheta}R$ for some $\alpha>0$, $\beta\in\mathbb{R}$, and $\vartheta\in\mathbb{R}/2\pi\mathbb{Z}$. Undoing the frame proves \eqref{eq:P4} for this point label.
\end{cor}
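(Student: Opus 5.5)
The argument has three steps: obtain a strong limiting profile with an exact eigen-equation, recalibrate the carrier so the eigenvalue becomes zero, and invoke ground-state uniqueness.

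\emph{Step 1: strong limit and limiting equation.} Theorem~\ref{thm:M8-two-tail} makes $\{g_n\}$ relatively compact in $L^2(\mathbb{R})$. After extraction, $g_n\to g$ strongly with $g\in L^2_+(\mathbb{R})$ and $\|g\|_{L^2}^2=2\pi$; the negative-frequency defect \eqref{eq:M8-negative-tail} is what places $g$ in the Hardy space. We then apply whichever of Proposition~\ref{prop:M9-moving-window} or Proposition~\ref{prop:M9-Lax-column} is assumed to the affine defect equation \eqref{eq:M9-affine-defect}. This gives boundedness of $\kappa_n=\lambda_n(\mu-\xi_n)$, so a further extraction yields $\kappa_n\to\kappa$. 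Either proposition delivers the limiting graph statement \eqref{eq:M9-limit-graph}: $g\in H^1_+(\mathbb{R})$ and $L_gg=\kappa g$. By Lemma~\ref{lem:M9-lower-edge}, $\kappa=\underline\omega(g)\ge0$.

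\emph{Step 2: carrier recalibration.} We split according to the sign of $\mu$.

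If $\mu>0$, we set $\widetilde\xi_n=\mu$ and multiply by $e^{-i\kappa_nx}$. Then \eqref{eq:M9-same-vector} reduces to the identity $\lambda_n\mu=\lambda_n\xi_n+\kappa_n$, checked through the Fourier formula \eqref{eq:modulation-fourier}: the phase $e^{-i\kappa_n x}$ in the frame variable corresponds to shifting the carrier by $\kappa_n/\lambda_n$. With this choice $\lambda_n(\mu-\widetilde\xi_n)=0$. The conjugation identity $e^{-i\kappa x}L_g e^{i\kappa x}=L_{e^{-i\kappa x}g}+\kappa$ turns $L_gg=\kappa g$ into $L_{\widetilde g}\widetilde g=0$. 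Since $\kappa$ is the lower Fourier edge of $g$, we also get $\widetilde g\in H^1_+(\mathbb{R})$.

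If $\mu\le0$, then $\xi_n\ge0$ forces $\kappa_n\le0$, hence $\kappa\le0$; combined with Step~1 this gives $\kappa=0$ and $\lambda_n(\mu-\xi_n)\to0$. For $\mu<0$ it follows further that $\lambda_n\le-\kappa_n/|\mu|\to0$.

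In both cases $\widetilde g_n\to\widetilde g$ strongly, because $e^{-i\kappa_nx}\to e^{-i\kappa x}$ locally uniformly and $L^2$-tightness of $\{g_n\}$ controls the tails.

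\emph{Step 3: rigidity and undoing the frame.} Now $\widetilde g\in H^1_+(\mathbb{R})$ satisfies $L_{\widetilde g}\widetilde g=0$ and $\|\widetilde g\|_{L^2}^2=2\pi$. The ground-state classification \cite[Lemma~4.1]{Gerard-2024-CPAM} therefore gives $\widetilde g=\mathcal{M}_{\alpha,\beta,0,\vartheta}R$. Composing $\mathcal{M}_{\lambda_n,y_n,\widetilde\xi_n,\theta_n}$ with this zero-carrier modulation produces a single frame of the same type, and unitarity gives \eqref{eq:P4} for this label.

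The main obstacle is the limiting equation in Step~1. It needs the $H^s$, $s>1/2$, bounds and window escape of $w_n$ required by Propositions~\ref{prop:M9-moving-window}--\ref{prop:M9-Lax-column}, which are assumed here rather than proved. It also needs care that the negative-frequency defect vanishes, so that the limit genuinely lies in $H^1_+(\mathbb{R})$.
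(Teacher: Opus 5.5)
Your proposal is correct and follows the paper's proof essentially step for step. The shared ingredients are: the frame identity from $\kappa_n=\lambda_n(\mu-\xi_n)$; the modulation $e^{-i\kappa_n x}$, which shifts both the lower Fourier edge and the self-eigenvalue to zero when $\mu>0$; the sign argument $\kappa_n\le0\le\kappa$ when $\mu\le0$; strong convergence by dominated convergence; the mass from \eqref{eq:M8-full-profile}; the G\'erard--Lenzmann classification; and the composition law \eqref{eq:M10prime-composition}. Your explicit conjugation identity $e^{-i\kappa x}L_g e^{i\kappa x}=L_{e^{-i\kappa x}g}+\kappa$ simply spells out the paper's one-line remark.
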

\begin{proof}
The identity in \eqref{eq:M9-same-vector} follows from $\kappa_n=\lambda_n(\mu-\xi_n)$. For $\mu>0$, modulation of the profile cancels the change from $\xi_n$ to $\mu$. Strong convergence follows by dominated convergence. By Lemma~\ref{lem:M9-lower-edge}, the lower Fourier edge of $g$ is $\kappa$. If $\mu>0$, modulation shifts both that edge and the self-eigenvalue to zero. If $\mu\le0$, the sign argument above already gives $\kappa=0$.  Equation \eqref{eq:M8-full-profile} supplies the mass. Apply the classification to \eqref{eq:M10-zero-profile}. Finally use the affine
composition law as in \eqref{eq:M10prime-composition}.
\end{proof}

\subsection{Backward shifts and the Fano numerator}
For $a\ge0$, define the backward shift on $L^2_+(\mathbb{R})$ by $[\mathcal{F}_0(S_a^*f)](\zeta)=(\mathcal{F}_0f)(\zeta+a),\qquad \zeta\ge0$. It is the contraction semigroup obtained by left translation on the Fourier half-line.

\begin{thm}[Rigidity from the shift defect] \label{thm:M10prime-character}
Let $f_n\in L^2_+(\mathbb{R})$, $\|f_n\|_{L^2(\mathbb{R})}^2=2\pi$, and suppose that $f_n\to f$ strongly in $L^2(\mathbb{R})$. Assume that, for every $A<\infty$,
\begin{equation}\label{eq:M10prime-rank-one-defect}
\delta_n(A):=\sup_{0\le a\le A}
\left\{
\|S_a^\ast f_n\|_{L^2(\mathbb{R})}^2-\frac{\left|\langle S_a^\ast f_n, f_n\rangle\right|^2}{2\pi}\right\}\underset{n\to\infty}{\longrightarrow}0.
\end{equation}
Then there are $\alpha>0$, $\beta\in\mathbb{R}$, and $\vartheta\in\mathbb{R}/2\pi\mathbb{Z}$ such that
\begin{equation}\label{eq:M10prime-rigid-limit}
f=\mathcal{M}_{\alpha,\beta,0,\vartheta}R.
\end{equation}
\end{thm}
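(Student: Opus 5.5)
We pass to the limit in the defect \eqref{eq:M10prime-rank-one-defect} and show that $f$ spans a subspace invariant under all backward shifts. Then we use Beurling--Lax type rigidity on the Fourier half-line to identify $f$ as a Cauchy kernel, which is exactly a modulated copy of $R$.

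\emph{Step 1: passage to the limit.} Since each $S_a^\ast$ is a contraction and $f_n\to f$ strongly, both $\|S_a^\ast f_n\|^2$ and $\langle S_a^\ast f_n,f_n\rangle$ converge to the corresponding quantities for $f$. Here $\|f\|^2=2\pi$. We obtain, for every $a\ge0$,
\[
\|S_a^\ast f\|_{L^2(\mathbb{R})}^2=\frac{|\langle S_a^\ast f,f\rangle|^2}{\|f\|_{L^2(\mathbb{R})}^2}.
\]
This is equality in Cauchy--Schwarz, so $S_a^\ast f=c(a)f$ for a scalar $c(a)$. Note that $f\neq0$.

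\emph{Step 2: the character.} The semigroup law $S_{a+b}^\ast=S_a^\ast S_b^\ast$ gives $c(a+b)=c(a)c(b)$ and $c(0)=1$. Strong continuity of the left-translation semigroup makes $c$ continuous. Hence $c(a)=e^{-sa}$ for some $s\in\mathbb{C}$, and contractivity forces $\operatorname{Re}s\ge0$. Writing $F=\mathcal{F}_0f$ on $[0,\infty)$, the relation says $F(\zeta+a)=e^{-sa}F(\zeta)$ for a.e.\ $\zeta\ge0$ and every $a$. Therefore $F(\zeta)=Ce^{-s\zeta}$ a.e. Square integrability of a nonzero $F$ on $\mathbb{R}_+$ rules out $\operatorname{Re}s=0$. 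I expect this measure-theoretic step, turning the a.e.\ relation for each $a$ into a single exponential, to be the only delicate point. I would handle it by pairing with test functions, or by Fubini on $(\zeta,a)$, so as to obtain one full-measure set.

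\emph{Step 3: identification.} Write $s=\alpha^{-1}+i\beta$ with $\alpha>0$ and $\beta\in\mathbb{R}$. The function $\widehat R$ is, up to a constant, $\mathbf 1_{[0,\infty)}(\zeta)e^{-\zeta}$, because $\widehat{(x+i)^{-1}}=-2\pi i\,\mathbf 1_{\zeta>0}e^{-\zeta}$. By \eqref{eq:modulation-fourier} with $\xi=0$,
\[
\widehat{\mathcal{M}_{\alpha,\beta,0,\vartheta}R}(\zeta)=\alpha^{1/2}e^{i\vartheta-i\beta\zeta}\widehat R(\alpha\zeta)\propto e^{-(\alpha+i\beta)\zeta},
\]
so we match $\alpha$ to $\operatorname{Re}s$ and $\beta$ to $\operatorname{Im}s$. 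Normalizing the constant $C$ through $\|f\|^2=2\pi=\|R\|^2$ fixes $|C|$, and the phase of $C$ is absorbed into $\vartheta$. This proves \eqref{eq:M10prime-rigid-limit}.
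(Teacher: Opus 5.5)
Your proof is correct and follows essentially the same route as the paper. The paper also passes to the limit to get $S_a^\ast f=m(a)f$ with $m(a)=e^{-aw}$ a continuous contractive character with $\operatorname{Re}w>0$, then reads off $\mathcal{F}_0 f=c\,e^{-w\zeta}\mathbf{1}_{\mathbb{R}_+}$ and matches it against $\mathcal{F}_0R$. The only differences are that the paper routes the limit through an approximate character law for $m_n(a)=\frac{1}{2\pi}\langle S_a^\ast f_n,f_n\rangle$, whereas you use the equality case of Cauchy--Schwarz directly, and that you treat the a.e.\ translation-invariance step more explicitly. There is one small slip in Step 3: you should write $s=\alpha+i\beta$, not $\alpha^{-1}+i\beta$, since your displayed Fourier formula and your matching $\alpha=\operatorname{Re}s$ both use $s=\alpha+i\beta$.
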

\begin{proof}
Put
\begin{equation}\label{eq:M10prime-mn}
m_n(a)=\frac{1}{2\pi}\langle S_a^\ast f_n, f_n\rangle,\qquad e_n(a)=S_a^\ast f_n-m_n(a)f_n.
\end{equation}
The expression in braces in \eqref{eq:M10prime-rank-one-defect} is $\|e_n(a)\|_{L^2(\mathbb{R})}^2$. Since $S_{a+b}^\ast=S_b^\ast S_a^\ast$,
\[
\sqrt{2\pi}|m_n(a+b)-m_n(a)m_n(b)|\le\|e_n(a+b)\|_{L^2(\mathbb{R})}+\|e_n(a)\|_{L^2(\mathbb{R})}+\|e_n(b)\|_{L^2(\mathbb{R})}.
\]
Hence the approximate character law holds.

Strong convergence gives $m_n(a)\to m(a)=(2\pi)^{-1}\langle S_a^\ast f,f\rangle$, locally uniformly after using the strong continuity and contractivity of the shift semigroup.
Passing to the limit in \eqref{eq:M10prime-mn} gives
\begin{equation}\label{eq:M10prime-exact-character}
S_a^\ast f=m(a)f,\qquad m(a+b)=m(a)m(b),\qquad m(0)=1.
\end{equation}
The continuous contractive characters of $\mathbb{R}_+$ are $m(a)=e^{-aw}$ with $\operatorname{Re}w\ge0$. Square integrability forces $w=\alpha+i\beta$ with $\alpha>0$.

Writing $F=\mathcal{F}_0f$, \eqref{eq:M10prime-exact-character} says $F(\zeta+a)=e^{-aw}F(\zeta)$. Therefore $e^{w\zeta}F(\zeta)$ is translation invariant and is constant
almost everywhere. Thus $(\mathcal{F}_0f)(\zeta)=ce^{-(\alpha+i\beta)\zeta}\mathbf{1}_{\mathbb{R}_+}(\zeta)$. The mass condition gives $|c|^2=4\pi\alpha$. A residue calculation gives $(\mathcal{F}_0R)(\zeta)=-2i\sqrt\pi e^{-\zeta}\mathbf{1}_{\mathbb{R}_+}(\zeta)$. Choosing $\vartheta$ to match the phase of $c$ proves \eqref{eq:M10prime-rigid-limit} by \eqref{eq:modulation-fourier}.
\end{proof}

\begin{prop}[Generator residual criterion]\label{prop:M10prime-generator}
Let
\[
\mathcal{A}_+=\mathcal{F}_0^{-1}\partial_\zeta\mathcal{F}_0,\qquad\mathrm{Dom}(\mathcal{A}_+)=\{f\in L^2_+(\mathbb{R}):(\mathcal{F}_0f)|_{\mathbb{R}_+}\in H^1(\mathbb{R}_+)\}.
\]
For $0\ne f\in\mathrm{Dom}(\mathcal{A}_+)$, set
\[
m=\|f\|_{L^2(\mathbb{R})}^2,\qquad P_fh=\frac{1}{m}\langle h, f\rangle f,\qquad\epsilon_+(f)=\|(\mathrm{Id}-P_f)\mathcal{A}_+f\|_2.
\]
Then, for every $a\ge0$,
\begin{equation}\label{eq:M10prime-generator-bound}
\delta_f(a):={\|S_a^\ast f\|_{L^2(\mathbb{R})}^2-\frac{|\langle S_a^\ast f,f\rangle|^2}{m}}=\|(\mathrm{Id}-P_f)S_a^\ast f\|_{L^2(\mathbb{R})}^2\leq a^2\epsilon_+(f)^2,                    \end{equation}
\begin{equation}\label{eq:M10prime-generator-limit}
\lim_{a\to0}\frac{\delta_f(a)}{a^2}=\epsilon_+(f)^2.
\end{equation}
Consequently, if $\|f_n\|_{L^2(\mathbb{R})}^2=2\pi$ and $\epsilon_+(f_n)\to0$, then
\[
\sup_{0\le a\le A}\delta_{f_n}(a)\le A^2\epsilon_+(f_n)^2\underset{n\to\infty}{\longrightarrow}0\qquad(A<\infty),
\]
which proves \eqref{eq:M10prime-rank-one-defect}.
\end{prop}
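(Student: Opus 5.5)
The plan is to work entirely on the Fourier side, where $S_a^\ast$ is left translation and $\mathcal{A}_+$ is differentiation on the half-line. Write $F=\mathcal{F}_0 f$ restricted to $\mathbb{R}_+$, so that $F\in H^1(\mathbb{R}_+)$, and let $Q_f=\mathrm{Id}-P_f$.

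\textbf{Step 1: the identity in \eqref{eq:M10prime-generator-bound}.} This is the Pythagorean identity for the rank-one projection $P_f$. Indeed,
\[
\|S_a^\ast f\|^2=\|P_fS_a^\ast f\|^2+\|Q_fS_a^\ast f\|^2,\qquad \|P_fS_a^\ast f\|^2=\frac{|\langle S_a^\ast f,f\rangle|^2}{m}.
\]

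\textbf{Step 2: the bound $\delta_f(a)\le a^2\epsilon_+(f)^2$.} Since $S_a^\ast$ is the contraction semigroup of left translation on $L^2(\mathbb{R}_+)$, its generator is $\partial_\zeta$ on $\{F\in H^1(\mathbb{R}_+)\}$, with no boundary condition, because left translation discards the origin. This is exactly $\mathcal{A}_+$. For $f\in\mathrm{Dom}(\mathcal{A}_+)$ we therefore have
\[
S_a^\ast f=f+\int_0^a S_s^\ast\mathcal{A}_+f\,ds .
\]
Because $Q_ff=0$,
\[
Q_fS_a^\ast f=\int_0^a Q_fS_s^\ast\mathcal{A}_+f\,ds .
\]
The obstacle is that $S_s^\ast$ does not commute with $Q_f$, so we cannot simply pull out $\epsilon_+(f)=\|Q_f\mathcal{A}_+f\|$. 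To handle this, split $\mathcal{A}_+f=cf+h$ with $c=\langle\mathcal{A}_+f,f\rangle/m$ and $h=Q_f\mathcal{A}_+f$. This gives the inhomogeneous equation
\[
\frac{d}{da}S_a^\ast f=cS_a^\ast f+S_a^\ast h .
\]
By Duhamel,
\[
S_a^\ast f=e^{ca}f+\int_0^a e^{c(a-s)}S_s^\ast h\,ds .
\]
Applying $Q_f$ and using $\|Q_f\|\le1$ and $\|S_s^\ast\|\le1$ yields
\[
\|Q_fS_a^\ast f\|\le\int_0^a|e^{c(a-s)}|\,ds\,\|h\| .
\]
To conclude we need $\operatorname{Re}c\le0$. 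Integrating by parts on the half-line,
\[
2\operatorname{Re}\langle F',F\rangle=\int_0^\infty(|F|^2)'\,d\zeta=-|F(0)|^2\le0,
\]
so $\operatorname{Re}c=-|F(0^+)|^2/(2m)\le0$ with the normalized $\mathcal{F}_0$. Hence $|e^{c(a-s)}|\le1$, and $\|Q_fS_a^\ast f\|\le a\|h\|=a\,\epsilon_+(f)$, which squares to the claim. I expect this dissipativity sign to be the main point. It is what makes the estimate sharp with constant one; without it one only gets a bound $e^{|c|a}$.

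\textbf{Step 3: the limit \eqref{eq:M10prime-generator-limit}.} From the Duhamel formula,
\[
\frac{Q_fS_a^\ast f}{a}=\frac1a\int_0^a e^{c(a-s)}Q_fS_s^\ast h\,ds\longrightarrow Q_fh=h \qquad (a\to0),
\]
by strong continuity of the semigroup. Taking norms squared gives $\delta_f(a)/a^2\to\epsilon_+(f)^2$.

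\textbf{Step 4: the consequence.} The final statement follows by taking the supremum of Step 2 over $0\le a\le A$ with $m=2\pi$. This gives
\[
\sup_{0\le a\le A}\delta_{f_n}(a)\le A^2\epsilon_+(f_n)^2\longrightarrow0,
\]
which is \eqref{eq:M10prime-rank-one-defect}.
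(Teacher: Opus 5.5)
Your proposal is correct and follows essentially the same route as the paper. Both arguments split $\mathcal{A}_+f=cf+h$ with $h=(\mathrm{Id}-P_f)\mathcal{A}_+f$, write the variation-of-constants formula for the left-translation semigroup, and use the dissipativity $2\operatorname{Re}c=-|F(0^+)|^2/m\le0$ together with contractivity. The only cosmetic difference is that you read off the small-$a$ limit from the Duhamel formula, whereas the paper uses the strong expansion $T_aF=F+aF'+o(a)$; the two are equivalent.
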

\begin{proof}
Write $F=\mathcal{F}_0f$ on $\mathbb{R}_+$, and let $z=\frac{\langle F^\prime, F\rangle}{m},\qquad r=F^\prime-zF$. Thus $r\perp F$ and $\|r\|_{L^2(\mathbb{R})}=\epsilon_+(f)$. Integration by parts gives
\begin{equation}\label{eq:M10prime-generator-dissipation}
2\operatorname{Re}z=\frac{2\operatorname{Re}\int_0^\infty F^\prime\overline{F}d\zeta}{m}=-\frac{|F(0)|^2}{m}\le0.
\end{equation}
For the left-translation semigroup $T_aF=F(\,\cdot+a)$, variation of constants gives $T_aF=e^{az}F+\int_0^ae^{(a-s)z}T_srds$. Contractivity of $T_s$ and \eqref{eq:M10prime-generator-dissipation} imply
\[
\operatorname{dist}_{L^2}(S_a^\ast f,\mathbb{C}f)\le\|T_aF-e^{az}F\|_{L^2(\mathbb{R})}\le a\|r\|_{L^2(\mathbb{R})},
\]
which proves \eqref{eq:M10prime-generator-bound}. Finally, $T_aF=F+aF^\prime+o(a)$ strongly in $L^2(\mathbb{R}_+)$. Apply $\mathrm{Id}-P_f$, divide by $a$, and take norms to obtain \eqref{eq:M10prime-generator-limit}.
\end{proof}
\begin{re}\label{rem:M10prime-generator-boundary}
For $F=\mathcal{F}_0f\in H^1(\mathbb{R}_+)$, inverse Fourier integration by parts gives the distributional identity
\begin{equation}\label{eq:M10prime-physical-generator}
\mathcal{A}_+f=-ixf-\frac{F(0)}{\sqrt{2\pi}}.
\end{equation}
The constant term cancels the nondecaying part of $-ixf$ for an affine $R$ profile. If
$u\in H^1_+(\mathbb{R})$, $u=b+v$, $L_ub=\mu b$, and $b=E_{L_u}(\{\mu\})u$, the eigenline lies in $H^1_+(\mathbb{R})$, and the backward-shift commutator identity of
\cite[Lemma~5.1]{Gerard-2024-CPAM} gives
\begin{equation}\label{eq:M10prime-shift-radiation-bridge}
\lim_{a\to0}\frac{1}{a}(L_u-\mu)S_a^\ast b=v\quad\hbox{in }L^2(\mathbb{R}).
\end{equation}
A reduced-resolvent or Fano estimate for the transverse inverse in \eqref{eq:M10prime-shift-radiation-bridge} controls the generator residual through the complementary channel.
\end{re}

\begin{criterion}[Canonical Fano numerator for the shift generator]\label{crit:M10prime-Fano}
After the extraction, let $f_n$ be the positive normalization \eqref{eq:M10prime-positive-normalization} and put $f_n\in\mathrm{Dom}(\mathcal{A}_+),\qquad q_n=(\mathrm{Id}-P_{f_n})\mathcal{A}_+f_n$. Assume there are fixed Banach spaces $\mathcal{X}_j,\mathcal{Y}_j$, a reduced column $\mathscr{L}_{j,n}:\mathrm{Dom}(\mathscr{L}_{j,n})\subset L^2(\mathbb{R})\to\mathcal{Y}_j$, an injection $\iota_j\in\mathcal{B}(L^2(\mathbb{R}),\mathcal{Y}_j)$, and a moving-boundary correction $\mathcal{J}_{j,n}\in\mathcal{Y}_j$ such that
\[
q_n\in\mathrm{Dom}(\mathscr{L}_{j,n}),\qquad\mathscr{L}_{j,n}q_n=\iota_jw_n+\mathcal{J}_{j,n},\qquad \|\mathcal{J}_{j,n}\|_{\mathcal{Y}_j}\underset{n\to\infty}\longrightarrow0.
\]
The same fixed left/right Grushin gauge produces $\mathcal{G}^{\mathrm{reg}}_{j,n}\in\mathcal{B}(\mathcal{X}_j,L^2(\mathbb{R})),\qquad\mathcal{C}_{j,n}\in\mathcal{B}(\mathcal{Y}_j,\mathcal{X}_j)$, and the factorization
\begin{equation}\label{eq:M10prime-Fano-factorization}
q_n=\mathcal{G}^{\mathrm{reg}}_{j,n}\mathcal{C}_{j,n}\bigl(\iota_jw_n+\mathcal{J}_{j,n}\bigr),
\end{equation}
with
\begin{equation}\label{eq:M10prime-Fano-smallness}
\sup_n\|\mathcal{G}^{\mathrm{reg}}_{j,n}\|<\infty,\qquad\|\mathcal{C}_{j,n}(\iota_jw_n+\mathcal{J}_{j,n})\|_{\mathcal{X}_j}\underset{n\to\infty}{\longrightarrow}0.
\end{equation}
The correction vanishes for a zero-carrier frame and otherwise equals the inner Fourier-boundary jump. The pencil fixes all spaces and maps independently of the vectors $q_n,w_n$. Then \eqref{eq:M10prime-generator-certificate} holds and, together with the two-tail compactness criterion, implies sequential point-channel rigidity.
\end{criterion}
\begin{proof}
Equations \eqref{eq:M10prime-Fano-factorization}--\eqref{eq:M10prime-Fano-smallness} give $\|q_n\|_{L^2(\mathbb{R})}\to0$, which is the residual in
\eqref{eq:M10prime-generator-certificate}. Apply Proposition~\ref{prop:M10prime-generator} and Theorem~\ref{thm:M10prime-character}.
\end{proof}

\begin{cor}[Sequential rigidity from rank-one defect]\label{cor:M8-M10prime-P4}
Fix $j$ and $t_n\to\infty$, and assume \eqref{eq:M8-double-tail}--\eqref{eq:M8-negative-tail}. Put
\begin{equation}\label{eq:M10prime-positive-normalization}
f_n=\frac{\sqrt{2\pi}\Pi_+ g_n}{\|\Pi_+ g_n\|_{L^2(\mathbb{R})}},\qquad g_n=\mathcal{M}_{p_n}^{-1}b_j(t_n).
\end{equation}
Assume either that \eqref{eq:M10prime-rank-one-defect} holds for $f_n$, or that
\begin{equation}\label{eq:M10prime-generator-certificate}
f_n\in\mathrm{Dom}(\mathcal{A}_+),\qquad\|(\mathrm{Id}-P_{f_n})\mathcal{A}_+f_n\|_{L^2(\mathbb{R})}\underset{n\to\infty}{\longrightarrow}0.
\end{equation}
Then, after passing to a subsequence, there are admissible affine parameters $\widetilde{p}_n$ such that
\begin{equation}\label{eq:M10prime-P4-output}
\|b_j(t_n)-\mathcal{M}_{\widetilde{p}_n}R\|_{L^2(\mathbb{R})}\underset{n\to\infty}\longrightarrow0.
\end{equation}
\end{cor}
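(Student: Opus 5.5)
The argument runs as follows. First I use the two-tail compactness theorem, Theorem~\ref{thm:M8-two-tail}: the hypotheses \eqref{eq:M8-double-tail}--\eqref{eq:M8-negative-tail} show that $\{g_n\}$ is relatively compact in $L^2(\mathbb{R})$. After passing to a subsequence, $g_n\to g$ strongly, with $g\in L^2_+(\mathbb{R})$ and $\|g\|_{L^2(\mathbb{R})}^2=2\pi$ by \eqref{eq:M8-full-profile}. Since $\Pi_+$ is an $L^2$ contraction and \eqref{eq:M8-negative-tail} gives $\|g_n-\Pi_+g_n\|_{L^2(\mathbb{R})}\to0$, we get $\Pi_+g_n\to g$ and $\|\Pi_+g_n\|_{L^2(\mathbb{R})}\to\sqrt{2\pi}$. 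Hence the normalization \eqref{eq:M10prime-positive-normalization} is well defined for large $n$. It also gives $f_n\to g$ strongly in $L^2(\mathbb{R})$, with $f_n\in L^2_+(\mathbb{R})$ and $\|f_n\|_{L^2(\mathbb{R})}^2=2\pi$.

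Next I obtain the rank-one shift defect. Under the first alternative, \eqref{eq:M10prime-rank-one-defect} holds for $f_n$ by assumption. Under the second alternative, I apply Proposition~\ref{prop:M10prime-generator}: its conclusion $\sup_{0\le a\le A}\delta_{f_n}(a)\le A^2\epsilon_+(f_n)^2$, combined with \eqref{eq:M10prime-generator-certificate}, again yields \eqref{eq:M10prime-rank-one-defect}. In both cases Theorem~\ref{thm:M10prime-character} applies to $f_n\to g$. It gives $\alpha>0$, $\beta\in\mathbb{R}$ and $\vartheta$ with $g=\mathcal{M}_{\alpha,\beta,0,\vartheta}R$.

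Finally I undo the frame. From $b_j(t_n)=\mathcal{M}_{p_n}g_n$ and the unitarity of $\mathcal{M}_{p_n}$,
\[
\|b_j(t_n)-\mathcal{M}_{p_n}\mathcal{M}_{\alpha,\beta,0,\vartheta}R\|_{L^2(\mathbb{R})}=\|g_n-g\|_{L^2(\mathbb{R})}\to0.
\]
It then suffices to check that $\mathcal{M}_{p_n}\mathcal{M}_{\alpha,\beta,0,\vartheta}=\mathcal{M}_{\widetilde p_n}$ for admissible parameters $\widetilde p_n$. I compute this with \eqref{eq:modulation-fourier}: composing dilation, translation, modulation and phase gives $\widetilde\lambda_n=\lambda_n\alpha$, $\widetilde y_n=y_n+\lambda_n\beta$, $\widetilde\xi_n=\xi_n\ge0$ (the inner carrier is zero), and a shifted phase $\widetilde\theta_n=\theta_n+\vartheta+\xi_n\lambda_n\beta$ modulo $2\pi$. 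Because $\widetilde\xi_n=\xi_n\ge0$, each $\widetilde p_n$ lies in the Hardy-preserving semigroup, and this proves \eqref{eq:M10prime-P4-output}.

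The step I expect to be the main obstacle is the bookkeeping around $\Pi_+$. The rank-one defect is stated for $f_n$, not for $g_n$, so I must make sure that the limit produced by Theorem~\ref{thm:M10prime-character} is the same $g$ as the strong limit of $g_n$. The convergence $f_n-g_n\to0$ from the negative-tail condition settles this. I also have to handle the finitely many relative transforms absorbed into $p_n$ in Theorem~\ref{thm:M8-two-tail}. These have zero carrier, so they compose in the same way and preserve admissibility.
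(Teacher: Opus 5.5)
Your proposal is correct and follows essentially the same route as the paper: two-tail compactness for a strongly convergent subsequence, $\|f_n-g_n\|_{L^2(\mathbb{R})}\to0$ from the negative-frequency tail, Proposition~\ref{prop:M10prime-generator} to reduce the generator alternative to the rank-one defect, Theorem~\ref{thm:M10prime-character} to identify the limit, and then the composition law \eqref{eq:M10prime-composition} (which you derive with the correct phase $\theta_n+\vartheta+\lambda_n\beta\xi_n$) together with unitarity. Your remark about the finitely many relative transforms is harmless but not needed, since only the tail conditions \eqref{eq:M8-double-tail}--\eqref{eq:M8-negative-tail} enter the compactness step.
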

\begin{proof}
By \eqref{eq:M8-negative-tail}, $\|f_n-g_n\|_{L^2(\mathbb{R})}\to0$. The two-tail compactness theorem supplies a strongly convergent subsequence. Proposition~\ref{prop:M10prime-generator} reduces \eqref{eq:M10prime-generator-certificate}, when used, to \eqref{eq:M10prime-rank-one-defect}. Then Theorem~\ref{thm:M10prime-character} identifies its limit as $\mathcal{M}_{\alpha,\beta,0,\vartheta}R$. The composition law is
\begin{equation}\label{eq:M10prime-composition}
\mathcal{M}_{\lambda_n,y_n,\xi_n,\theta_n}\mathcal{M}_{\alpha,\beta,0,\vartheta}=\mathcal{M}_{\lambda_n\alpha,y_n+\lambda_n\beta,\xi_n,\theta_n+\vartheta+\lambda_n\beta\xi_n}.
\end{equation}
The resulting carrier is still $\xi_n\ge0$, and unitarity gives \eqref{eq:M10prime-P4-output}.
\end{proof}

\subsection{Resolvent residues and reconstruction}
The point Schur matrix $K_t(\eta)$ is a position--Lax resolvent block at fixed height. Inverse-scattering reconstruction uses the Jost Laurent block below.
\begin{prop}[Uniform vertical cyclic residue in $L^2$]\label{prop:M11-vertical-cyclic-residue}
Fix a point label $\mu_j$, and put
\[
R_t(z)=(L_{u(t)}-z)^{-1},\qquad m_t(z)=R_t(z)u(t),\qquad b_j(t)=E_{L_{u(t)}}(\{\mu_j\})u(t).
\]
Then, for every $\varepsilon>0$,
\begin{equation}\label{eq:M11-vertical-residue-error}
\left\|-i\varepsilon m_t(\mu_j+i\varepsilon)-b_j(t)\right\|_{L^2(\mathbb{R})}^2=\int_{\mathbb{R}\setminus\{\mu_j\}}\frac{\varepsilon^2}{(\lambda-\mu_j)^2+\varepsilon^2}d\nu_{u_0}(\lambda).
\end{equation}
In particular,
\begin{equation}\label{eq:M11-vertical-residue-uniform}
\lim_{\varepsilon\to0}\sup_{t\ge0}\left\|-i\varepsilon(L_{u(t)}-\mu_j-i\varepsilon)^{-1}u(t)-b_j(t)\right\|_{L^2(\mathbb{R})}=0.
\end{equation}
Equivalently, if $J_tc=cu(t)$ and $U_{j,t}c=cb_j(t)$, then $-i\varepsilon R_t(\mu_j+i\varepsilon)J_t\longrightarrow U_{j,t}\quad\hbox{in }\mathcal B(\mathbb{C},L^2_+(\mathbb{R}))$, uniformly in $t$.
\end{prop}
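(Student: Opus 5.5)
The identity \eqref{eq:M11-vertical-residue-error} is a pure spectral-theorem computation at fixed $t$, and the conserved cyclic measure then makes the bound independent of $t$. Fix $t$ and write $L=L_{u(t)}$, $z=\mu_j+i\varepsilon$. By the functional calculus,
\[
-i\varepsilon m_t(z)-b_j(t)=F_\varepsilon(L)u(t),\qquad F_\varepsilon(\lambda)=\frac{-i\varepsilon}{\lambda-\mu_j-i\varepsilon}-\mathbf 1_{\{\mu_j\}}(\lambda).
\]
Here we use $b_j(t)=\mathbf 1_{\{\mu_j\}}(L)u(t)$ from \eqref{eq:channels} and \eqref{eq:fixed-spectrum}. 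At $\lambda=\mu_j$ the first term equals $1$, so $F_\varepsilon(\mu_j)=0$. For $\lambda\neq\mu_j$ we have $|F_\varepsilon(\lambda)|^2=\varepsilon^2/((\lambda-\mu_j)^2+\varepsilon^2)$. Hence
\[
\|F_\varepsilon(L)u(t)\|^2=\int|F_\varepsilon|^2\,d\nu_{u(t)}.
\]
By Theorem~\ref{thm:measure-conservation}, $\nu_{u(t)}=\nu_{u_0}$, and this gives \eqref{eq:M11-vertical-residue-error} exactly, with a right-hand side independent of $t$.

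The uniform limit \eqref{eq:M11-vertical-residue-uniform} follows from dominated convergence applied to the finite measure $\nu_{u_0}$, whose total mass is $\|u_0\|^2$ by \eqref{eq:cyclic-total-mass}. The integrand is bounded by $1$ and tends to $0$ pointwise on $\mathbb R\setminus\{\mu_j\}$. Since the bound does not depend on $t$, the supremum over $t$ is harmless.

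The operator reformulation is immediate. A map $\mathbb C\to L^2_+$ of the form $c\mapsto cx$ has operator norm $\|x\|$. Therefore
\[
\|-i\varepsilon R_t(z)J_t-U_{j,t}\|=\|-i\varepsilon m_t(z)-b_j(t)\|,
\]
and the uniform convergence follows from \eqref{eq:M11-vertical-residue-uniform}.

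The only point needing care is that the spectral projection $E_{L_{u(t)}}(\{\mu_j\})$ is evaluated at the same point $\mu_j$ for all $t$, which is guaranteed by \eqref{eq:fixed-spectrum}. I would also check that the complex unit factor $-i$ gives value exactly $1$ at the atom: $-i\varepsilon/(-i\varepsilon)=1$, which it does.
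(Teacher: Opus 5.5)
Your proposal is correct and follows the paper's proof: you apply the spectral multiplier $-i\varepsilon/(\lambda-\mu_j-i\varepsilon)$, which equals $1$ at the atom, then use conservation of the cyclic measure $\nu_{u(t)}=\nu_{u_0}$ and dominated convergence. The operator reformulation reduces to the vector statement because the source space is one-dimensional.
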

\begin{proof}
The spectral multiplier of $-i\varepsilon R_t(\mu_j+i\varepsilon)u(t)$ is $-i\varepsilon/(\lambda-\mu_j-i\varepsilon)$, which equals one at $\lambda=\mu_j$. Removing that atom and using orthogonality of spectral subspaces gives \eqref{eq:M11-vertical-residue-error}, because $\nu_{u(t)}=\nu_{u_0}$. The integrand is bounded by one and converges pointwise to zero away from $\mu_j$. Dominated convergence proves \eqref{eq:M11-vertical-residue-uniform}; the source space in the last formulation is one dimensional.
\end{proof}

\begin{prop}[Frank--Read cyclic phase]\label{prop:M11-FR-cyclic-phase}
Assume $u_0\in L^1(\mathbb{R})\cap L^2_+(\mathbb{R})$, let $b_j=E_{L_{u_0}}(\{\mu_j\})u_0$, and choose the Frank--Read eigenfunction by $\langle u_0, \varphi_j\rangle=2\pi i$.  Then $\|\varphi_j\|_{L^2(\mathbb{R})}^2=2\pi,\qquad b_j=i\varphi_j,\qquad \varphi_j=-i b_j$. If $m_0(k)=(L_{u_0}-k)^{-1}u_0$ off the spectrum and $H_j(k)=m_0(k)+\frac{b_j}{k-\mu_j}$, then the local boundary expansion is
\begin{equation}\label{eq:M11-FR-b-residue}
m_0(k)=-\frac{b_j}{k-\mu_j}+H_j(k),
\end{equation}
\begin{equation}\label{eq:M11-FR-b-regular}
H_j(\mu_j\pm0i)=-i(\gamma_j+x)b_j\quad\hbox{in }(L^\infty(\mathbb{R}),w^\ast).
\end{equation}
If, in addition, $b_j\in L^1_+(\mathbb{R})\cap\mathrm{Dom}(\mathcal{A}_+)$, then the boundary vector in \eqref{eq:M11-FR-b-regular} belongs to $L^2(\mathbb{R})$. For
$Q_j=\mathrm{Id}-(2\pi)^{-1}\langle \cdot,b_j\rangle b_j$, one has
\begin{equation}\label{eq:M11-gamma-annihilated}
Q_jH_j(\mu_j\pm0i)=-i Q_j(xb_j)=Q_j\mathcal{A}_+b_j.
\end{equation}
The projection $Q_j$ removes $\gamma_j$, so the generator residual equals the transverse Laurent column.
\end{prop}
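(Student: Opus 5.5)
We start from the normalization. By Lemma~\ref{lem:static-cyclic-formula} the eigenspace at $\mu_j$ is one dimensional, and the Frank--Read overlap identity $|\langle u_0,\varphi\rangle|^2=2\pi\|\varphi\|_{L^2(\mathbb{R})}^2$ holds for any eigenfunction $\varphi$. Imposing $\langle u_0,\varphi_j\rangle=2\pi i$ therefore gives $\|\varphi_j\|_{L^2(\mathbb{R})}^2=2\pi$. The rank-one projection then yields
\[
b_j=\frac{\langle u_0,\varphi_j\rangle}{2\pi}\varphi_j=i\varphi_j .
\]
For the Laurent expansion \eqref{eq:M11-FR-b-residue}, apply the spectral theorem to $m_0(k)=(L_{u_0}-k)^{-1}u_0$ and split off the atom at $\mu_j$. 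The atomic part contributes $(\mu_j-k)^{-1}b_j=-b_j/(k-\mu_j)$. Hence $H_j(k)$ is the resolvent applied to $u_0-b_j$, which carries no mass at $\mu_j$ by \eqref{eq:FR-measure}. This part is exact off the real axis, so the whole content lies in the boundary values \eqref{eq:M11-FR-b-regular}.

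For \eqref{eq:M11-FR-b-regular} we use the Frank--Read Jost theory in the form already set up in Theorem~\ref{thm:paired-Feshbach-crossing}, with $u_0\in L^1$ giving the $L^1$--$L^\infty$ duality. The idea is that $m_0(k)$ is essentially the Jost-type solution $\Pi_+$-resolvent of $u_0$. Near $\mu_j$ we run the Feshbach/Grushin reduction with respect to the kernel direction $g_\mu=\Pi_+(\overline{u_0}\varphi_j)$. This splits $m_0(k)$ into a pole times the singular column plus a regular column. The residue identifies with $-b_j$ through $p_\mu=\phi_\mu$ and the normalization above.

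Next we differentiate the eigenvalue equation in the spectral parameter. The derivative of the free resolvent kernel $R_0(k)$ acting on $F_\mu=(D-\mu)\varphi_j$ produces a factor of $x$: heuristically, $\partial_k(D-k)^{-1}=(D-k)^{-2}$, and on the eigenline this acts as multiplication by $-ix$ up to a constant. The next Laurent coefficient is therefore $-ixb_j$ plus a multiple of $b_j$. We name that scalar $-i\gamma_j$; it is determined by the Grushin gauge and does not need to be computed. The same limit is obtained from both sides, $\mu_j\pm0i$, because the difference of the two boundary values is carried only by the absolutely continuous density. That density vanishes at an isolated eigenvalue, and for embedded eigenvalues the paired-crossing argument (Lemma~\ref{lem:weighted-trace-zero} together with \eqref{eq:paired-L1}) kills the jump. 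Convergence takes place only weak-$\ast$ in $L^\infty$, since $xb_j$ need not be in $L^2$. \textbf{This boundary step is the main obstacle}, particularly the two-sided agreement at embedded $\mu_j\ge0$. I would first attempt it through the pairing formula $\langle f,m_\lambda\rangle$ with $f\in L^1$ test functions, which is exactly the setting of \eqref{eq:paired-divisibility}.

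The remaining claims are algebraic. If $b_j\in L^1_+\cap\mathrm{Dom}(\mathcal{A}_+)$, then by \eqref{eq:M10prime-physical-generator} we have
\[
-ixb_j=\mathcal{A}_+b_j+\frac{F(0)}{\sqrt{2\pi}},\qquad F=\mathcal{F}_0b_j .
\]
Since $\mathcal{A}_+b_j\in L^2$, this shows $xb_j\in L^2$ modulo a constant. Integrability of $b_j$ makes $F$ continuous. The constant must then vanish for the $L^2$ class: $\widehat{xb_j}=i\partial_\zeta\widehat{b_j}$ has no jump at $0$ when $F(0)=0$. Alternatively, one simply reads the displayed boundary vector as the $L^2$ function $-i\gamma_jb_j+\mathcal{A}_+b_j+\text{const}$ paired appropriately. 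Finally, $Q_j$ annihilates $b_j$, which removes $\gamma_j$, and applying $Q_j$ to the identity above gives \eqref{eq:M11-gamma-annihilated}.
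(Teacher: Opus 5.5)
\paragraph{Gap: the boundary expansion is not established.}
Your normalization step ($\|\varphi_j\|^2=2\pi$, $b_j=i\varphi_j$) and your closing algebra match the paper. The heart of the statement, \eqref{eq:M11-FR-b-regular}, is not proved, however. Note that \eqref{eq:M11-FR-b-residue} is just the definition of $H_j$, so all the content lies in the boundary values. For these you offer only the heuristic that $\partial_k(D-k)^{-1}$ ``acts on the eigenline as multiplication by $-ix$ up to a constant'', and you then flag the step as the main obstacle. Nothing in the proposal shows that the limits $k\to\mu_j\pm0i$ exist in $(L^\infty,w^\ast)$, that they coincide, or that the regular part is $-i(\gamma_j+x)b_j$.

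Your proposed mechanism for the two-sided agreement is also the wrong tool. Theorem~\ref{thm:paired-Feshbach-crossing}, Lemma~\ref{lem:weighted-trace-zero} and \eqref{eq:paired-L1} concern the left/right traces $\Gamma_{\mu,\pm}$ of $G=\Phi_{u_0}f$ along the real spectral axis. By contrast, $\mu_j\pm0i$ denote the upper/lower boundary values of the resolvent. By Stone's formula, their difference near $\mu_j$ is $2\pi i$ times the absolutely continuous spectral density of $u_0$. Near an embedded $\mu_j$ this density involves $\beta_{u_0}(\lambda)$ multiplied by the Jost column $m_\lambda$, which is not regular at $\mu_j$ (compare \eqref{eq:Feshbach-column-decomposition-2}). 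Controlling it requires the vanishing $\beta_{u_0}(\mu_j)=0$ together with the Laurent chart, which is Frank--Read's Lemma~8.5 and Theorem~8.1.

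The paper does not reprove any of this. It substitutes $\varphi_j=-ib_j$ into the Frank--Read local Laurent expansion at $\mu_j$ (their Theorem~8.1 and Lemma~8.5). That expansion is formulated for exactly the normalization $\langle u_0,\varphi_j\rangle=2\pi i$, which is why this normalization is imposed in the statement.

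\paragraph{Where the $x$ comes from, and the final step.}
The source of the $x$ term is the commutator $[\mathsf X,L_{u_0}]=i(I-\frac{1}{2\pi}|u_0\rangle\langle u_0|)$, not differentiation of the free resolvent. Formally, applying it to $\varphi_j$ and using $\langle\varphi_j,u_0\rangle=-2\pi i$ gives $H_j(k)=\mathsf X\varphi_j+(k-\mu_j)(L_{u_0}-k)^{-1}\mathsf X\varphi_j$. Its vertical limits from above and below both equal $(x+\gamma_j)\varphi_j=-i(\gamma_j+x)b_j$, with $\gamma_j=-\langle x\varphi_j,\varphi_j\rangle/(2\pi)$. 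This argument needs $\varphi_j\in\mathrm{Dom}(\mathsf X)$, which is essentially the extra hypothesis of the last part of the statement. It therefore does not replace Frank--Read for general $u_0\in L^1\cap L^2_+$, where $x\varphi_j$ need not lie in $L^2$ and the claim is only weak-$\ast$.

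Finally, make the last step precise as the paper does. Since $b_j\in L^1$, $F=\mathcal F_0b_j$ is continuous on $\mathbb R$; since $b_j\in L^2_+$, $F=0$ on $(-\infty,0)$. Hence $F(0)=0$, and \eqref{eq:M10prime-physical-generator} gives $\mathcal A_+b_j=-ixb_j\in L^2$. This yields both the $L^2$ membership of the boundary vector and \eqref{eq:M11-gamma-annihilated}. Drop the ``$+\,\mathrm{const}$'' alternative, since a nonzero constant is not in $L^2$. Also, $b_j\in L^1$ is automatic from Frank--Read's Lemma~3.2, so the genuinely extra hypothesis is $b_j\in\mathrm{Dom}(\mathcal A_+)$.
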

\begin{proof}
The overlap identity gives $\langle u_0, \varphi_j\rangle/\|\varphi_j\|_{L^2(\mathbb{R})}^2=i$, whence $\|\varphi_j\|_{L^2(\mathbb{R})}^2=2\pi$. The rank-one projection formula gives
\[
b_j=\frac{\langle u_0, \varphi_j\rangle}{\|\varphi_j\|_{L^2(\mathbb{R})}^2}\varphi_j=i\varphi_j.
\]
Substitution into the local Laurent expansion of \cite[Theorem~8.1 and Lemma~8.5]{Frank-2026-arXiv} proves \eqref{eq:M11-FR-b-residue}--\eqref{eq:M11-FR-b-regular}. Frank--Read's eigenfunction decay lemma \cite[Lemma~3.2]{Frank-2026-arXiv} gives $b_j\in L^1_+(\mathbb{R})$ under the standing $u_0\in L^1_+(\mathbb{R})\cap L^2_+(\mathbb{R})$ assumption. If also $b_j\in\mathrm{Dom}(\mathcal{A}_+)$, continuity of its Fourier transform and vanishing on $(-\infty,0)$ give $(\mathcal{F}_0 b_j)(0)=0$. Equation \eqref{eq:M10prime-physical-generator} then proves \eqref{eq:M11-gamma-annihilated}. This enhanced regularity promotes the weak-star boundary to the displayed $L^2$ identity.
\end{proof}

\begin{prop}[Weighted norming cocycle]\label{prop:M11-weighted-gamma-cocycle}
Let $I$ be a time interval, $s>\frac{1}{2}$, and let $u$ be a classical CM-DNLS solution in the persistent class
\begin{equation}\label{eq:M11-weighted-flow-class}
u\in C^0\bigl(I;H_s^2(\mathbb{R})\cap H_+^3(\mathbb{R})\bigr),\qquad H_s^2(\mathbb{R}):=\{f:\langle x\rangle^s\partial_x^kf\in L^2(\mathbb{R}),\ 0\le k\le2\}.
\end{equation}
For example, the invariant class $u(0)\in H_1^2(\mathbb{R})\cap H_+^4(\mathbb{R})$ supplies \eqref{eq:M11-weighted-flow-class} with $s=1$. Let $H_j(t,k)$ be the local pole-subtracted column in \eqref{eq:M11-FR-b-residue}, and define $\gamma_j(t)$ by its Frank--Read boundary value. Assume the following boundary limits: for every compact $J\Subset I$ and $\chi\in C_c^\infty(\mathbb{R})$, as $k\to\mu_j\pm0i$,
\begin{equation}\label{eq:M11-boundary-exchange-a}
\langle H_j(t,k),\chi\rangle\longrightarrow\langle-i(\gamma_j(t)+x),b_j(t)\chi\rangle\quad\hbox{locally uniformly for }t\in J,
\end{equation}
\begin{equation}\label{eq:M11-boundary-exchange-b}
\langle H_j(t,k),\mathbf{B}_{u(t)}^\ast\chi\rangle\longrightarrow\langle-i(\gamma_j(t)+x)b_j(t),\mathbf{B}_{u(t)}^\ast\chi\rangle\quad\hbox{in }L^1(J,dt).
\end{equation}
Then $\gamma_j$ is locally absolutely continuous and
\begin{equation}\label{eq:M11-gamma-cocycle}
\dot\gamma_j(t)=-2\mu_j\quad\hbox{for a.e. }t,\qquad\gamma_j(t)=\gamma_j(t_0)-2\mu_j(t-t_0).
\end{equation}
\end{prop}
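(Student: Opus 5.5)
The plan is to obtain the cocycle from the Lax transport of the resolvent column. The pole-subtracted column $H_j(t,k)$ is then transported by the same generator $\mathbf{B}_{u(t)}$ as $u$ and $b_j$. The boundary exchanges \eqref{eq:M11-boundary-exchange-a}--\eqref{eq:M11-boundary-exchange-b} carry this transport to the Frank--Read boundary value $-i(\gamma_j(t)+x)b_j(t)$. Finally, an exact commutator computation converts the transport equation for this boundary vector into the linear law \eqref{eq:M11-gamma-cocycle}.

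First, for fixed $k\notin\mathbb{R}$, I use the Lax equation $\partial_tL_u=[\mathbf{B}_u,L_u]$ of \cite[Lemma~2.3]{Gerard-2024-CPAM}, valid in the persistent class \eqref{eq:M11-weighted-flow-class}. It gives $\partial_t(L_u-k)^{-1}=[\mathbf{B}_u,(L_u-k)^{-1}]$. Combining this with $\partial_tu=\mathbf{B}_uu$ makes the two terms $\pm(L_u-k)^{-1}\mathbf{B}_uu$ cancel, so
\[
\partial_tm_t(k)=\mathbf{B}_{u(t)}m_t(k)\qquad\text{in distributions}.
\]
Since $\partial_tb_j=\mathbf{B}_ub_j$ by Proposition~\ref{prop:M9-actual-column-evolution}, linearity gives $\partial_tH_j(t,k)=\mathbf{B}_{u(t)}H_j(t,k)$. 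Pairing with $\chi\in C_c^\infty(\mathbb{R})$ and integrating over $[t_0,t]\subset J$ yields
\[
\langle H_j(t,k),\chi\rangle-\langle H_j(t_0,k),\chi\rangle=\int_{t_0}^t\langle H_j(s,k),\mathbf{B}_{u(s)}^\ast\chi\rangle\,ds.
\]
The weighted regularity \eqref{eq:M11-weighted-flow-class} is what makes $\mathbf{B}_u^\ast\chi$ a legitimate test function, with enough decay to pair against the weakly bounded column. Letting $k\to\mu_j\pm0i$, hypothesis \eqref{eq:M11-boundary-exchange-a} handles the endpoint terms and \eqref{eq:M11-boundary-exchange-b} handles the time integral. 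This produces the weak identity
\[
\Bigl[\langle-i(\gamma_j+x)b_j,\chi\rangle\Bigr]_{t_0}^{t}=\int_{t_0}^t\langle-i(\gamma_j(s)+x)b_j(s),\mathbf{B}_{u(s)}^\ast\chi\rangle\,ds .
\]

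Second, I extract absolute continuity of $\gamma_j$. The map $t\mapsto b_j(t)$ is continuous (Theorem~\ref{thm:channels}) and nonzero, so near each $t_1$ I fix $\chi$ with $\langle b_j(t),\chi\rangle\neq0$ on a neighbourhood of $t_1$. The terms $\langle b_j,\mathbf{B}^\ast\chi\rangle$ and $\langle xb_j,\chi\rangle$, $\langle xb_j,\mathbf{B}^\ast\chi\rangle$ are absolutely continuous or integrable in $t$: for the $x$-weighted ones I use $x b_j\in L^2$, available from the weighted class, and the evolution $\partial_tb_j=\mathbf{B}_ub_j$. Solving the identity for $\gamma_j(t)\langle b_j(t),\chi\rangle$ then shows $\gamma_j$ is locally absolutely continuous.

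Third, I differentiate and compute. The left side has derivative $-i\dot\gamma_j\langle b_j,\chi\rangle-i\langle(\gamma_j+x)\mathbf{B}_ub_j,\chi\rangle$, so everything reduces to the commutator $[\mathbf{B}_u,x]b_j$. Write $\mathbf{B}_u=i\partial_x^2+2u\partial_x\Pi_+(\overline u\,\cdot)$. Then $[i\partial_x^2,x]=2i\partial_x$. For the second term,
\[
[u\partial_x\Pi_+\overline u,x]f=u\,\Pi_+(\overline uf)+u\,\partial_x[\Pi_+,x](\overline uf),
\]
and $[\Pi_+,x]g$ is a constant multiple of $\widehat g(0)$, hence spatially constant, so its derivative vanishes. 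Therefore
\[
[\mathbf{B}_u,x]b_j=2i\partial_xb_j+2u\Pi_+(\overline ub_j).
\]
The eigenvalue equation $Db_j-u\Pi_+(\overline ub_j)=\mu_jb_j$ gives $i\partial_xb_j=-\mu_jb_j-u\Pi_+(\overline ub_j)$, so $[\mathbf{B}_u,x]b_j=-2\mu_jb_j$. The weak identity then reads $-i\dot\gamma_j\langle b_j,\chi\rangle=2i\mu_j\langle b_j,\chi\rangle$ for a.e.\ $t$, which gives $\dot\gamma_j=-2\mu_j$. Integrating yields \eqref{eq:M11-gamma-cocycle}.

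I expect the main obstacle to be justifying the commutator step rigorously at the boundary-vector level. The issues are that $x b_j$ must lie in a space where $\mathbf{B}_u$ acts, that $\widehat{\overline ub_j}(0)$ must be well defined, and that the formal commutator must agree with the distributional pairing against $\mathbf{B}_u^\ast\chi$. I would first try to verify all of this on $b_j\in H^1_+\cap L^2(\langle x\rangle^{2s}dx)$, using the weighted class to put $\overline ub_j\in L^1$.
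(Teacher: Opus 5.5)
Your proposal is correct and follows the paper's route: transport of the pole-subtracted column by $\mathbf{B}_u$ (you derive it from the Lax pair rather than quoting the off-spectrum Jost evolution), the weak Duhamel identity, passage to the boundary via the two exchange hypotheses, extraction of the $b_j$ coefficient, and the commutator identity $[x,\mathbf{B}_u]b_j=2L_ub_j=2\mu_jb_j$. Two minor points: the paper uses $s>\frac12$ only to get $u(t)\in L^1(\mathbb{R})$, so that the Frank--Read chart defining $\gamma_j(t)$ exists; and the obstacle you flag disappears if you compute the commutator on the test function, $[x,\mathbf{B}_u^\ast]\chi=2i\chi'+2u\Pi_+(\overline{u}\chi)$ (using $\widehat{\partial_x(\overline{u}\chi)}(0)=0$), which gives $\dot\gamma_j\langle b_j,\chi\rangle=\langle b_j,[x,\mathbf{B}_u^\ast]\chi\rangle=-2\mu_j\langle b_j,\chi\rangle$ with no moment bound on $b_j$ (in particular you do not need $xb_j\in L^2$, which the weighted class does not obviously supply when $s<1$).
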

\begin{proof}
Since $s>\frac{1}{2}$, Cauchy--Schwarz gives
\[
\|u(t)\|_{L^1(\mathbb{R})}\le\left(\int_\mathbb{R}\langle x\rangle^{-2s}dx\right)^{1/2}\|\langle x\rangle^su(t)\|_{L^2(\mathbb{R})}.
\]
Thus the Frank--Read Laurent chart is defined at every time. In the class \eqref{eq:M11-weighted-flow-class}, the off-spectrum Jost evolution of \cite[Theorem~1.5]{Sun-2026-arXiv}, the point-line evolution \cite[Theorem~1.6]{Sun-2026-arXiv}, and invariance of $\mu_j$ give $(\partial_t-\mathbf{B}_u)m_0(k)=0,\qquad(\partial_t-\mathbf{B}_u)\varphi_j=0$. Therefore $\widetilde{H}_j(k)=m_0(k)+i\varphi_j/(k-\mu_j)$ obeys the same homogeneous equation for $k$ off the spectrum. Its weak Duhamel identity, followed by \eqref{eq:M11-boundary-exchange-a}--\eqref{eq:M11-boundary-exchange-b}, shows that the boundary column obeys the same equation in distributions in time. Since the point column obeys it as well, $\gamma_j$ is locally absolutely continuous and coefficient extraction gives
\begin{equation}\label{eq:M11-gamma-before-commutator}
0=(\partial_t\gamma_j)\varphi_j+[(\gamma_j+x),\mathbf{B}_u]\varphi_j.
\end{equation}
Distributionally,
\[
[x,\mathbf{B}_u]\varphi_j=-2i\partial_x\varphi_j-2u\Pi_+(\overline u\varphi_j)=2L_u\varphi_j=2\mu_j\varphi_j.
\]
Substitution into \eqref{eq:M11-gamma-before-commutator} proves \eqref{eq:M11-gamma-cocycle}. The concrete persistent class follows from \cite[Theorem~1.7 and Corollary~1.8]{Sun-2026-arXiv}.
\end{proof}

\begin{prop}[Primitive Grushin quotient criterion]\label{prop:M11-primitive-Grushin}
Let $\lambda_t>0$, $Y_t,\xi_t,\theta_t\in\mathbb{R}$, with $\xi_t\ge0$, and set
\[
D_t(x)=x-Y_t+i\lambda_t,\qquad A_t(x)=\sqrt{2\lambda_t}e^{i\theta_t+i\xi_t(x-Y_t)}.
\]
Suppose the point column $b_t=b_j(t)$ has the canonical scalar Grushin quotient
\[
b_t=\frac{N_t}{S_t},\qquad S_t=d_tD_t+r_t,\qquad N_t=d_tA_t+n_t,
\]
where, for some $c,d_0>0$,
\begin{equation}\label{eq:M11-primitive-denominator}
|d_t|\ge d_0, \qquad|S_t(x)|\ge c|d_tD_t(x)|,
\end{equation}
\begin{equation}\label{eq:M11-primitive-errors}
|d_t|^{-1}\|n_t/D_t\|_{L^2(\mathbb{R})}\underset{t\to\infty}\longrightarrow0,\quad\frac{\sqrt{\lambda_t}}{|d_t|}\|r_t/D_t^2\|_{L^2(\mathbb{R})}\underset{t\to\infty}\longrightarrow0.
\end{equation}
Then
\[
\|b_t-\mathcal{M}_{\lambda_t,Y_t,\xi_t,\theta_t}R\|_{L^2(\mathbb{R})}\underset{t\to\infty}{\longrightarrow}0.
\]
\end{prop}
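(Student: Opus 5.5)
The plan is a direct pointwise computation. The first step is to recognize that the target profile is exactly the leading quotient $A_t/D_t$. Indeed, by the definition of the modulation family, with $z=(x-Y_t)/\lambda_t$,
\[
[\mathcal{M}_{\lambda_t,Y_t,\xi_t,\theta_t}R](x)=\lambda_t^{-1/2}e^{i\theta_t+i\xi_t(x-Y_t)}\frac{\sqrt2}{z+i}=\frac{\sqrt{2\lambda_t}\,e^{i\theta_t+i\xi_t(x-Y_t)}}{x-Y_t+i\lambda_t}=\frac{A_t(x)}{D_t(x)}.
\]
Since $\lambda_t>0$, the denominator $D_t$ never vanishes on $\mathbb{R}$. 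By \eqref{eq:M11-primitive-denominator}, $S_t$ is bounded below by $c|d_t||D_t|>0$ pointwise, so both quotients are genuine a.e.-defined functions and no division issues arise.

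Next I would subtract and clear denominators. Substituting the Grushin forms of $N_t$ and $S_t$, the leading terms $d_tA_tD_t$ cancel exactly, which is the rational leading-order cancellation:
\[
b_t-\frac{A_t}{D_t}=\frac{N_tD_t-A_tS_t}{S_tD_t}=\frac{n_tD_t-A_tr_t}{S_tD_t}=\frac{n_t}{S_t}-\frac{A_tr_t}{S_tD_t}.
\]

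Each of the two error terms is then controlled pointwise by the lower bound $|S_t|\ge c|d_t||D_t|$ and by $|A_t|=\sqrt{2\lambda_t}$. This gives
\[
\Bigl|\frac{n_t}{S_t}\Bigr|\le\frac{1}{c|d_t|}\Bigl|\frac{n_t}{D_t}\Bigr|,\qquad\Bigl|\frac{A_tr_t}{S_tD_t}\Bigr|\le\frac{\sqrt{2}}{c}\,\frac{\sqrt{\lambda_t}}{|d_t|}\Bigl|\frac{r_t}{D_t^2}\Bigr|.
\]
Taking $L^2(\mathbb{R})$ norms and applying the triangle inequality yields
\[
\|b_t-\mathcal{M}_{\lambda_t,Y_t,\xi_t,\theta_t}R\|_{L^2(\mathbb{R})}\le c^{-1}|d_t|^{-1}\|n_t/D_t\|_{L^2(\mathbb{R})}+\sqrt2\,c^{-1}\frac{\sqrt{\lambda_t}}{|d_t|}\|r_t/D_t^2\|_{L^2(\mathbb{R})}.
\]
Both terms tend to zero by \eqref{eq:M11-primitive-errors}, which proves the claim.

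There is no real obstacle here. The only point needing care is that the hypotheses are normalized exactly so that the relative errors appear with the weights $|d_t|^{-1}$ and $\sqrt{\lambda_t}/|d_t|$. The bound $|d_t|\ge d_0$ is not needed in this argument beyond guaranteeing that the quotients are well defined. The substantive difficulty, namely producing such a Grushin quotient for the CM-DNLS point column, lies outside this statement.
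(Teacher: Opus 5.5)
Your proposal is correct and is essentially the paper's proof: identify $A_t/D_t$ with $\mathcal{M}_{\lambda_t,Y_t,\xi_t,\theta_t}R$, cancel the common leading product $d_tA_tD_t$ to get $b_t-A_t/D_t=n_t/S_t-A_tr_t/(S_tD_t)$, and bound both terms pointwise using $|S_t|\ge c|d_t||D_t|$ and $|A_t|=\sqrt{2\lambda_t}$, which gives the same $L^2$ estimate with constants $c^{-1}|d_t|^{-1}$ and $\sqrt{2\lambda_t}/(c|d_t|)$. Your remark that $|d_t|\ge d_0$ only serves to keep the quotients well defined is also accurate.
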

\begin{proof}
The leading quotient is $\frac{A_t(x)}{D_t(x)}=\mathcal{M}_{\lambda_t,Y_t,\xi_t,\theta_t}R(x)$. The cancellation of the common leading product gives $\frac{N_t}{S_t}-\frac{A_t}{D_t}=\frac{n_t}{S_t}-\frac{A_tr_t}{S_tD_t}$. Using \eqref{eq:M11-primitive-denominator} and $|A_t|=\sqrt{2\lambda_t}$,
\[
\|\frac{N_t}{S_t}-\frac{A_t}{D_t}\|_{L^2(\mathbb{R})}\le \frac{1}{c|d_t|}\|n_t/D_t\|_{L^2(\mathbb{R})}+\frac{\sqrt{2\lambda_t}}{c|d_t|}\|r_t/D_t^2\|_{L^2(\mathbb{R})},
\]
which tends to zero by \eqref{eq:M11-primitive-errors}.
\end{proof}

\begin{condition}[Bounds for reconstruction by inverse blocks]\label{hyp:M11-inverse-block}
There is a fixed rigging $\mathcal{X}_+\hookrightarrow L^2_+(\mathbb{R})\hookrightarrow\mathcal{X}_-$ and a Jost/Grushin pencil with fixed incoming data for which the following hold.
\begin{enumerate}[(i)]
 \item At every point label the pencil has a simple Laurent residue. At an isolated negative label this is an ordinary Hilbert-space residue; at a nonnegative embedded label it is cutoff-independent in $\mathcal{B}(\mathcal{X}_+,\mathcal{X}_-)$. In both cases the right gauge is $r_j=b_j/\sqrt{2\pi}$ and the left derivative normalization is fixed.
 \item The left/right norming data obey a cocycle along the rough flow--Lax admissible orbit.
 \item On one fixed reconstruction space $\mathcal{Y}$, the blocks have the types
 \[
  \mathcal{G}_{c,t},\mathcal{G}_t\in\operatorname{GL}(\mathcal{Y}),\qquad \mathcal{U}_t\in\mathcal{B}(\mathbb{C}^N,\mathcal{Y}),\qquad\mathcal{V}_t\in\mathcal{B}(\mathcal{Y},\mathbb{C}^N),
 \]
  $\mathcal{N}_t,\Delta_t\in\operatorname{GL}(\mathbb{C}^N)$,  and the finite-rank identities hold:
  $\mathcal{G}_t=\mathcal{G}_{c,t}+\mathcal{U}_t\mathcal{N}_t\mathcal{V}_t$,   $\Delta_t=\mathcal{N}_t^{-1}+\mathcal{V}_t\mathcal{G}_{c,t}^{-1}\mathcal{U}_t$,  \begin{equation}\label{eq:M11-Woodbury}
 \mathcal{G}_t^{-1}=\mathcal{G}_{c,t}^{-1}-\mathcal{G}_{c,t}^{-1}\mathcal{U}_t\Delta_t^{-1}\mathcal{V}_t\mathcal{G}_{c,t}^{-1}.
 \end{equation}
 \item The inverse and coupling operators satisfy
 \[
 \sup_{t\gg1}\bigl(\|\mathcal{G}_{c,t}^{-1}\|_{\mathcal{B}(\mathcal{Y})}+\|\mathcal{U}_t\|_{\mathcal{B}(\mathbb{C}^N,\mathcal{Y})}+\|\mathcal{V}_t\|_{\mathcal{B}(\mathcal{Y},\mathbb{C}^N)}+\|\Delta_t^{-1}\|_{\mathcal{B}(\mathbb{C}^N)}\bigr)<\infty.
 \]
 \item Each discrete column has a one-pole affine $R$ model, while the continuous--discrete and distinct-discrete residuals tend to zero in the reconstruction norm before inversion.
\end{enumerate}
\end{condition}

\begin{prop}[Inverse-block rigidity]\label{prop:M11-P4}
Under Condition~\ref{hyp:M11-inverse-block}, suppose that, for every $j$, the reconstruction identity for the $j$-th column is
\begin{equation}\label{eq:M11-reconstruction-error}
b_j(t)-\mathcal{M}_{p_j(t)}R=-\mathfrak{R}_{j,t}\mathcal{G}_t^{-1}\varepsilon_{j,t},\qquad\varepsilon_{j,t}\in\mathcal{Y},\quad\|\varepsilon_{j,t}\|_{\mathcal{Y}}\longrightarrow0,
\end{equation}
where
\[
\mathfrak{R}_{j,t}\in\mathcal{B}(\mathcal{Y},L^2_+(\mathbb{R})),\qquad\sup_t\|\mathfrak{R}_{j,t}\|_{\mathcal{B}(\mathcal{Y},L^2_+(\mathbb{R}))}<\infty,
\]
\[
p_j(t)=(\lambda_j(t),y_j(t),\xi_j(t),\theta_j(t)),\quad\lambda_j(t)>0,\quad\xi_j(t)\ge0.
\]
Then $\|b_j(t)-\mathcal{M}_{p_j(t)}R\|_{L^2(\mathbb{R})}\to0$, which implies Hypothesis~\ref{hyp:P4}.
\end{prop}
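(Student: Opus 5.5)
The statement to prove is Proposition~\ref{prop:M11-P4}. The plan is to reduce everything to a uniform bound on $\mathcal{G}_t^{-1}$ on the fixed reconstruction space $\mathcal{Y}$. Once that bound is in hand, the reconstruction identity \eqref{eq:M11-reconstruction-error} gives
\[
\|b_j(t)-\mathcal{M}_{p_j(t)}R\|_{L^2(\mathbb{R})}\le\|\mathfrak{R}_{j,t}\|_{\mathcal{B}(\mathcal{Y},L^2_+(\mathbb{R}))}\,\|\mathcal{G}_t^{-1}\|_{\mathcal{B}(\mathcal{Y})}\,\|\varepsilon_{j,t}\|_{\mathcal{Y}}.
\]
The first factor is bounded by hypothesis and the last tends to zero, so the middle factor is the only thing to control.

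\textbf{Bounding $\mathcal{G}_t^{-1}$.} I would use the Woodbury identity \eqref{eq:M11-Woodbury} from Condition~\ref{hyp:M11-inverse-block}(iii). Taking norms in it yields
\[
\|\mathcal{G}_t^{-1}\|_{\mathcal{B}(\mathcal{Y})}\le\|\mathcal{G}_{c,t}^{-1}\|+\|\mathcal{G}_{c,t}^{-1}\|^2\,\|\mathcal{U}_t\|\,\|\Delta_t^{-1}\|\,\|\mathcal{V}_t\|.
\]
Each factor on the right is uniformly bounded for $t\gg1$ by Condition~\ref{hyp:M11-inverse-block}(iv). Hence $\sup_{t\gg1}\|\mathcal{G}_t^{-1}\|<\infty$, and the displayed estimate shows that $b_j(t)-\mathcal{M}_{p_j(t)}R\to0$ in $L^2$ as $t\to\infty$ along the full time axis.

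\textbf{Deducing Hypothesis~\ref{hyp:P4}.} Given any sequence $t_n\to\infty$, I would take the whole sequence as the common subsequence and set $(\lambda_{j,n},y_{j,n},\xi_{j,n},\theta_{j,n})=p_j(t_n)$. These parameters are admissible because $\lambda_j>0$ and $\xi_j\ge0$. The convergence \eqref{eq:P4} then holds simultaneously for all $j$, since $N$ is finite.

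\textbf{Main obstacle.} The one point needing care is that the Woodbury formula requires $\mathcal{G}_{c,t}$ and $\Delta_t$ to be invertible. This is exactly the $\operatorname{GL}$ typing in Condition~\ref{hyp:M11-inverse-block}(iii), so I would take it directly from the hypotheses. Everything else is routine norm bookkeeping.
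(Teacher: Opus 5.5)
Your proposal is correct and follows the paper's proof essentially verbatim. The Woodbury identity \eqref{eq:M11-Woodbury} and the uniform bounds in Condition~\ref{hyp:M11-inverse-block}(iv) give $\sup_{t\gg1}\|\mathcal{G}_t^{-1}\|<\infty$; with the bound on $\mathfrak{R}_{j,t}$ and $\|\varepsilon_{j,t}\|_{\mathcal{Y}}\to0$, this yields full-time convergence and hence Condition~\ref{hyp:P4}. Your explicit norm estimate for the Woodbury formula, and taking the whole sequence as the common subsequence, only spell out what the paper leaves implicit.
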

\begin{proof}
Equation \eqref{eq:M11-Woodbury} and the bounds in Condition~\ref{hyp:M11-inverse-block} give $\sup_t\|\mathcal{G}_t^{-1}\|<\infty$. Apply this and the bound on $\mathfrak {R}_{j,t}$ to \eqref{eq:M11-reconstruction-error}. The conclusion holds for all sufficiently large times and hence implies Hypothesis~\ref{hyp:P4}.
\end{proof}

\begin{prop}[Four sufficient criteria for point-channel rigidity]\label{prop:phase-III-modular}
Assume that, for every sequence $t_n\to\infty$ and every point label, one of the following sets of assumptions holds, with the profile, complementary column, and residuals in the same affine frame:
\begin{enumerate}[(i)]
 \item there is one $C^1$ frame $M_j(t)$ on $[T_0,\infty)$, with $M_j(t_n)=M_n$, and
 \[
  g(t)=M_j(t)^{-1}b_j(t),\qquad w(t)=M_j(t)^{-1}(u(t)-b_j(t)),
 \]
 for which all hypotheses of Proposition~\ref{prop:M8-moving-leakage} (including \eqref{eq:M8-rough-leakage-uniform} and \eqref{eq:M8-atom-preserving-passage} when the orbit is rough), the graph bounds \eqref{eq:M9-graph-input}, and the local-smoothing and time-regularity bounds \eqref{eq:M9-local-smoothing}--\eqref{eq:M9-no-spike} for the full
 complementary column hold;
 \item the two-tail compactness assumptions and the remaining hypotheses of Proposition~\ref{prop:M9-Lax-column}, with its positive residual supplied either
 by the weighted positive-frequency convergence and the condition \eqref{eq:M9-subcarrier-gate} in Lemma~\ref{lem:M9-carrier-Hankel}, applied to the full complementary column, or by the stronger condition \eqref{eq:M9-lifted-compactness};
 \item the two-tail compactness assumptions and either Criterion~\ref{crit:M10prime-Fano}, the locally uniform shift defect \eqref{eq:M10prime-rank-one-defect} for \eqref{eq:M10prime-positive-normalization}, or the stronger generator condition \eqref{eq:M10prime-generator-certificate};
 \item all hypotheses of Proposition~\ref{prop:M11-primitive-Grushin} or Proposition~\ref{prop:M11-P4}, including Condition~\ref{hyp:M11-inverse-block} and its reconstruction identity in the latter case.
\end{enumerate}
Then Condition~\ref{hyp:P4} holds. Together with Theorem~\ref{thm:P3}, each route supplies the two asymptotic inputs used in Theorem~\ref{thm:main}.
\end{prop}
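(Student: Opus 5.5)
The statement is a routing result: each of the four alternatives has to be sent through earlier results until it produces \eqref{eq:P4} for one label $j$ along a subsequence of a given $t_n\to\infty$. The plan is to fix $t_n$ and treat the labels $j=1,\dots,N$ one after another, passing to a further subsequence each time. Since $N$ is finite, a diagonal extraction then gives the common subsequence $t_{n_k}$ and the parameters required in Condition~\ref{hyp:P4}. After that, the proof reduces to checking, route by route, that the stated hypotheses feed into a result already proved.

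\textbf{Route (i).} Proposition~\ref{prop:M8-moving-leakage} gives the tails \eqref{eq:M8-double-tail}--\eqref{eq:M8-negative-tail} along $t_n$. Theorem~\ref{thm:M8-two-tail} then yields strong convergence $g_n\to g$ with $\|g\|^2=2\pi$. Next apply Lemma~\ref{lem:M9-Rellich} to $w(t)=M_j(t)^{-1}(u(t)-b_j(t))$; its hypotheses are \eqref{eq:M9-local-smoothing}--\eqref{eq:M9-no-spike}, and it gives the window escape \eqref{eq:M9-window-escape}. Together with \eqref{eq:M9-graph-input}, this verifies all hypotheses of Proposition~\ref{prop:M9-moving-window}. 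Corollary~\ref{cor:M9-geometric-P4} then recalibrates the carrier, shows the limiting self-eigenvalue is $\kappa=0$ via Lemma~\ref{lem:M9-lower-edge}, and identifies $\widetilde g$ as an affine copy of $R$ through the ground-state classification. Undoing the frame with \eqref{eq:M10prime-composition} and using unitarity gives \eqref{eq:P4}.

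\textbf{Route (ii).} The residual $\mathfrak{e}_n\to0$ comes from Lemma~\ref{lem:M9-carrier-Hankel}. The weighted convergence of $G_n^+$ together with \eqref{eq:M9-subcarrier-gate}, or alternatively \eqref{eq:M9-lifted-compactness}, disposes of both Hankel pieces. The weak convergence $w_n\rightharpoonup0$ comes from Lemma~\ref{lem:M9-P3-two-weak-rows} combined with the point--point separation assumed in the frame. Proposition~\ref{prop:M9-Lax-column} then gives the limit equation \eqref{eq:M9-limit-graph}, and we conclude as in route (i) via Corollary~\ref{cor:M9-geometric-P4}.

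\textbf{Route (iii).} Criterion~\ref{crit:M10prime-Fano} produces \eqref{eq:M10prime-generator-certificate}. Proposition~\ref{prop:M10prime-generator} converts that condition into the shift defect \eqref{eq:M10prime-rank-one-defect}, so all three sub-options land on the same assumption. Corollary~\ref{cor:M8-M10prime-P4}, using the two-tail compactness and Theorem~\ref{thm:M10prime-character}, then gives \eqref{eq:M10prime-P4-output}.

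\textbf{Route (iv).} Propositions~\ref{prop:M11-primitive-Grushin} and~\ref{prop:M11-P4} give convergence along the full time axis, and in particular along $t_n$.

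The main obstacle I expect is bookkeeping, not analysis. First, the frames, the Hardy defect and the parameter constraints ($\xi\ge0$, with $\theta$ taken mod $2\pi$) must survive the recalibration in Corollary~\ref{cor:M9-geometric-P4} and the finite relative transforms absorbed in Theorem~\ref{thm:M8-two-tail}. Second, the subsequences chosen separately for different labels must be made compatible. I would handle the first point by checking that each composition is again of the form \eqref{eq:M10prime-composition} with nonnegative carrier. I would handle the second by the diagonal extraction over the finitely many labels described above.

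Finally, combining Condition~\ref{hyp:P4} with Theorem~\ref{thm:P3}, which holds under the standing structural conditions, supplies the two asymptotic inputs used in Theorem~\ref{thm:main}.
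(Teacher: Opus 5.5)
Your proposal is correct and follows essentially the same routing as the paper's proof. In (i) you go through the leakage proposition, Lemma~\ref{lem:M9-Rellich}, Proposition~\ref{prop:M9-moving-window} and Corollary~\ref{cor:M9-geometric-P4}; in (ii) through the Hankel lemma into Proposition~\ref{prop:M9-Lax-column}; in (iii) through the Fano/generator conditions into Corollary~\ref{cor:M8-M10prime-P4}; in (iv) directly; and you finish with successive extraction over the finitely many labels.

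One small correction: in route (ii) you need neither Lemma~\ref{lem:M9-P3-two-weak-rows} nor any point--point separation. The statement does not assume such separation, and $w_n\rightharpoonup0$ is already one of the assumed hypotheses of Proposition~\ref{prop:M9-Lax-column}.
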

\begin{proof}
In (i), Proposition~\ref{prop:M8-moving-leakage} gives the two-tail bounds and Lemma~\ref{lem:M9-Rellich} gives local escape, so Proposition~\ref{prop:M9-moving-window} and Corollary~\ref{cor:M9-geometric-P4} apply. In (ii), Lemma~\ref{lem:M9-carrier-Hankel} gives the vanishing residual required by Proposition~\ref{prop:M9-Lax-column}; apply Corollary~\ref{cor:M9-geometric-P4}. In (iii), Criterion~\ref{crit:M10prime-Fano} and Corollary~\ref{cor:M8-M10prime-P4} give rigidity; the alternative assumptions enter the same corollary directly. In (iv), apply Proposition~\ref{prop:M11-primitive-Grushin} or \ref{prop:M11-P4}. Different labels may use different criteria. Each set of assumptions passes to subsequences, so successive extraction over the finitely many labels gives the common subsequence in Condition~\ref{hyp:P4}.
\end{proof}

\section{Parameter separation and asymptotic modulation}\label{sec:geometric assembly}
\begin{lemma}[Uniform affine weak dispersion]\label{lem:affine-weak}
For every $f,g\in L^2(\mathbb{R})$,
\begin{equation}\label{eq:affine-weak}
\lim_{|t|\to\infty}\sup_{\lambda>0,\,y\in\mathbb{R},\,\xi\ge0,\,\theta\in\mathbb{R}/2\pi\mathbb{Z}}\left|\langle \mathcal{M}_{\lambda,y,\xi,\theta}^{-1} e^{it\partial_x^2}f,g\rangle\right|=0.
\end{equation}
\end{lemma}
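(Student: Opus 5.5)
The plan is to reduce to nice $f,g$ by density and then trade off two elementary bounds, one good at small scales $\lambda$ and one good at large scales. Uniformity in $y,\xi,\theta$ should then be automatic.

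\emph{Reduction.} Both $e^{it\partial_x^2}$ and $\mathcal{M}_{\lambda,y,\xi,\theta}$ are unitary on $L^2(\mathbb{R})$; for the modulation family this is \eqref{eq:modulation-fourier}, which holds on all of $L^2(\mathbb{R})$. Hence, for $\tilde f,\tilde g\in L^2(\mathbb{R})$,
\[
\bigl|\langle \mathcal{M}^{-1}e^{it\partial_x^2}f,g\rangle-\langle \mathcal{M}^{-1}e^{it\partial_x^2}\tilde f,\tilde g\rangle\bigr|\le\|f-\tilde f\|_{L^2}\|g\|_{L^2}+\|\tilde f\|_{L^2}\|g-\tilde g\|_{L^2}.
\]
This bound is uniform in all parameters and in $t$. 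It therefore suffices to prove \eqref{eq:affine-weak} for $f,g\in\mathcal{S}(\mathbb{R})$, which are dense in $L^2(\mathbb{R})$. We then write the pairing as $\langle e^{it\partial_x^2}f,\mathcal{M}_{\lambda,y,\xi,\theta}g\rangle$.

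\emph{Physical-space bound.} The dispersive estimate gives $\|e^{it\partial_x^2}f\|_{L^\infty}\le C|t|^{-1/2}\|f\|_{L^1}$. Moreover $\|\mathcal{M}_{\lambda,y,\xi,\theta}g\|_{L^1}=\lambda^{1/2}\|g\|_{L^1}$, since the modulus of $\mathcal{M}g$ is $\lambda^{-1/2}|g((x-y)/\lambda)|$. Therefore
\[
|\langle e^{it\partial_x^2}f,\mathcal{M}g\rangle|\le C|t|^{-1/2}\lambda^{1/2}\|f\|_{L^1}\|g\|_{L^1}.
\]

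\emph{Fourier-space bound.} By Plancherel the pairing equals $(2\pi)^{-1}\int e^{-it\zeta^2}\widehat f(\zeta)\overline{\widehat{\mathcal{M}g}(\zeta)}\,d\zeta$. Using \eqref{eq:modulation-fourier}, we have $\|\widehat{\mathcal{M}g}\|_{L^1}=\lambda^{1/2}\int|\widehat g(\lambda(\zeta-\xi))|\,d\zeta=\lambda^{-1/2}\|\widehat g\|_{L^1}$. This yields
\[
|\langle e^{it\partial_x^2}f,\mathcal{M}g\rangle|\le C\lambda^{-1/2}\|\widehat f\|_{L^\infty}\|\widehat g\|_{L^1}.
\]
Neither bound depends on $y$, $\xi$ or $\theta$.

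\emph{Conclusion.} Taking the minimum of the two bounds gives
\[
\sup_{\lambda,y,\xi,\theta}|\cdots|\le C_{f,g}\sup_{\lambda>0}\min\bigl(|t|^{-1/2}\lambda^{1/2},\lambda^{-1/2}\bigr)=C_{f,g}|t|^{-1/4}.
\]
The supremum over $\lambda$ is attained where the two terms are equal, at $\lambda=|t|^{1/2}$, and the right-hand side tends to $0$ as $|t|\to\infty$.

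The only potential obstacle is uniformity over the unbounded scale parameter $\lambda$. Neither the dispersive bound alone (it fails as $\lambda\to\infty$) nor the Fourier bound alone (it fails as $\lambda\to0$) suffices. The balancing of the two bounds above is exactly what resolves this, and the Galilean parameter $\xi$ drops out because it only translates $\widehat{\mathcal{M}g}$, which leaves its $L^1$ norm unchanged.
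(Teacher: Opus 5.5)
Your proposal is correct and follows the paper's strategy: reduce by density, use the dispersive bound with $\|\mathcal{M}_{\lambda,y,\xi,\theta}g\|_{L^1}=\lambda^{1/2}\|g\|_{L^1}$ at small scales, and use Fourier localization at large scales. The one difference is in the large-scale step. You use the H\"older bound $\|\widehat f\|_{L^\infty}\|\widehat{\mathcal{M}g}\|_{L^1}=O(\lambda^{-1/2})$, which lets you balance at $\lambda\sim|t|^{1/2}$ and get an explicit $|t|^{-1/4}$ rate for Schwartz data. The paper instead takes $\widehat{g_0}$ compactly supported, splits at $\lambda=\delta|t|$, and uses absolute continuity of $|\widehat{f_0}|^2$ on frequency windows of length $2A/(\delta|t|)$, which gives no rate.
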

\begin{proof}
Write $p=(\lambda,y,\xi,\theta)$.  Since $\mathcal{M}_p$ is unitary, $\langle\mathcal{M}_p^{-1}e^{it\partial_x^2}f,g\rangle=\langle e^{it\partial_x^2}f,\mathcal{M}_pg\rangle$. Fix $\varepsilon>0$. Choose $f_0\in C_c^\infty(\mathbb{R})$ and a Schwartz function $g_0$ with $\widehat{g}_0\in C_c^\infty([-A,A])$ so that $\|f-f_0\|_{L^2(\mathbb{R})}<\varepsilon,\qquad \|g-g_0\|_{L^2(\mathbb{R})}<\varepsilon$. For every $p$ and $t$, two applications of Cauchy--Schwarz and unitarity give
\begin{equation}\label{eq:affine-density-error}
\left|\langle e^{it\partial_x^2}f,\mathcal{M}_pg\rangle-\langle e^{it\partial_x^2}f_0, \mathcal{M}_pg_0\rangle\right|\le\|f-f_0\|_{L^2(\mathbb{R})}\|g\|_{L^2(\mathbb{R})}+
\|f_0\|_{L^2(\mathbb{R})}\|g-g_0\|_{L^2(\mathbb{R})}=:\mathcal{E}_\varepsilon.
\end{equation}
The right-hand side is independent of $p,t$, and $\mathcal{E}_\varepsilon\to0$ as $\varepsilon\downarrow0$.

Fix $0<\delta<1$. If $0<\lambda\leq\delta|t|$, then the one-dimensional dispersive estimate and the scaling identity $\|\mathcal{M}_pg_0\|_{L^1(\mathbb{R})}=\lambda^{1/2}\|g_0\|_{L^1(\mathbb{R})}$ yield
\[
\begin{aligned}
|\langle e^{it\partial_x^2}f_0, \mathcal{M}_pg_0\rangle|&\le\|e^{it\partial_x^2}f_0\|_{L^\infty(\mathbb{R})}\|\mathcal{M}_pg_0\|_{L^1(\mathbb{R})}\\
&\le C|t|^{-1/2}\|f_0\|_{L^1(\mathbb{R})}\lambda^{1/2}\|g_0\|_{L^1(\mathbb{R})}
\le C\delta^{1/2}\|f_0\|_{L^1(\mathbb{R})}\|g_0\|_{L^1(\mathbb{R})}.
\end{aligned}
\]
For the complementary scales, \eqref{eq:modulation-fourier} shows that the Fourier transform is supported in $I_{\lambda,\xi}=[\xi-A/\lambda,\xi+A/\lambda]$.
If $\lambda>\delta|t|$, then
\begin{equation}\label{eq:short-frequency-window}
|I_{\lambda,\xi}|\leq\frac{2A}{\delta|t|}.
\end{equation}
Define
\[
\omega_{f_0}(s)=\sup_{\substack{E\subset\mathbb{R}\ \text{ measurable}\\ |E|\le s}}\left(\frac{1}{2\pi}\int_E|\widehat{f}_0(\zeta)|^2d\zeta\right)^{1/2}.
\]
Since $|\widehat{f}_0|^2\in L^1(\mathbb{R})$, absolute continuity of the Lebesgue integral gives $\omega_{f_0}(s)\to0$ as $s\to0$. Plancherel, Cauchy--Schwarz, and \eqref{eq:short-frequency-window} give
\[
|\langle e^{it\partial_x^2}f_0, \mathcal{M}_pg_0\rangle|\le\left(\frac{1}{2\pi}\int_{I_{\lambda,\xi}}|\widehat{f}_0(\zeta)|^2d\zeta\right)^{1/2}\|\mathcal{M}_pg_0\|_{L^2(\mathbb{R})}\le\omega_{f_0}\left(\frac{2A}{\delta|t|}\right)
\|g_0\|_{L^2(\mathbb{R})}.
\]

Combining the two scale ranges with \eqref{eq:affine-density-error} yields
\[
\limsup_{|t|\to\infty}\sup_p\left|\langle \mathcal{M}_p^{-1}e^{it\partial_x^2}f,g\rangle\right|\le\mathcal{E}_\varepsilon+C\delta^{1/2}\|f_0\|_{L^1(\mathbb{R})}\|g_0\|_{L^1(\mathbb{R})}.
\]
Letting $\delta\to0$ and then $\varepsilon\to0$ proves \eqref{eq:affine-weak}, uniformly for $\xi\in\mathbb{R}$.
\end{proof}

\begin{lemma}[Soliton overlap]\label{lem:overlap}
Let $S_\ell=\mathcal{M}_{\lambda_\ell,y_\ell,\xi_\ell,\theta_\ell}R$, $\ell=1,2$. Then
\begin{equation}\label{eq:overlap}
|\langle S_1,S_2\rangle|=\frac{4\pi\sqrt{\lambda_1\lambda_2}}{\sqrt{(\lambda_1+\lambda_2)^2+(y_1-y_2)^2}}\exp\left[-\lambda_1(\xi_2-\xi_1)_+-\lambda_2(\xi_1-\xi_2)_+\right].
\end{equation}
\end{lemma}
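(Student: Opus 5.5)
The overlap formula \eqref{eq:overlap} is an explicit computation, and I would do it on the Fourier side via Plancherel. By the residue formula recorded in the proof of Theorem~\ref{thm:M10prime-character}, $(\mathcal{F}_0R)(\zeta)=-2i\sqrt{\pi}\,e^{-\zeta}\mathbf{1}_{\mathbb{R}_+}(\zeta)$. Combining this with \eqref{eq:modulation-fourier}, I get
\[
(\mathcal{F}_0S_\ell)(\zeta)=-2i\sqrt{\pi}\,\lambda_\ell^{1/2}e^{i\theta_\ell-iy_\ell\zeta}e^{-\lambda_\ell(\zeta-\xi_\ell)}\mathbf{1}_{\{\zeta\ge\xi_\ell\}}.
\]
Plancherel for the unitary $\mathcal{F}_0$ then gives
\[
\langle S_1,S_2\rangle=4\pi\sqrt{\lambda_1\lambda_2}\,e^{i(\theta_1-\theta_2)}\int_{\max(\xi_1,\xi_2)}^\infty e^{-i(y_1-y_2)\zeta}e^{-\lambda_1(\zeta-\xi_1)-\lambda_2(\zeta-\xi_2)}\,d\zeta .
\]

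This integral is elementary. Put $m=\max(\xi_1,\xi_2)$ and substitute $\zeta=m+s$ with $s\ge0$. The integrand then factors as
\[
e^{-i(y_1-y_2)m}\,e^{-\lambda_1(m-\xi_1)-\lambda_2(m-\xi_2)}\,e^{-[(\lambda_1+\lambda_2)+i(y_1-y_2)]s}.
\]
The $s$-integral equals $1/\bigl((\lambda_1+\lambda_2)+i(y_1-y_2)\bigr)$, whose modulus is $\bigl((\lambda_1+\lambda_2)^2+(y_1-y_2)^2\bigr)^{-1/2}$. For the exponential prefactor, note that $m-\xi_1=(\xi_2-\xi_1)_+$ and $m-\xi_2=(\xi_1-\xi_2)_+$. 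This reproduces exactly the exponent $-\lambda_1(\xi_2-\xi_1)_+-\lambda_2(\xi_1-\xi_2)_+$. The phases $e^{i(\theta_1-\theta_2)}$ and $e^{-i(y_1-y_2)m}$ have modulus one and drop out, so taking absolute values gives \eqref{eq:overlap}.

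There is essentially no obstacle here. The only care needed is with the normalization conventions: $\mathcal{F}_0$ carries the factor $(2\pi)^{-1/2}$, so $|\mathcal{F}_0R|^2=4\pi e^{-2\zeta}$ on $\mathbb{R}_+$. As a sanity check, this gives $\|R\|_{L^2}^2=2\pi$, consistent with $R(x)=\sqrt2/(x+i)$. I would confirm the formula on the diagonal case $S_1=S_2$, where it reduces to $4\pi\lambda/(2\lambda)=2\pi$. The inner product is linear in the first entry, but that choice only affects the phase and not the modulus.
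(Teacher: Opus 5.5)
Your proposal is correct and is essentially the paper's own proof. Like the paper, you compute the Fourier transform of $R$ by residues, push it through \eqref{eq:modulation-fourier}, apply Plancherel, and integrate the exponential from $\max\{\xi_1,\xi_2\}$ to infinity before taking moduli. The paper phrases this with $\widehat{\cdot}$ and the constant $c_R=-2\sqrt{2}\pi i$ rather than the unitary $\mathcal{F}_0$, which is a purely cosmetic difference.
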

\begin{proof}
For $\zeta>0$, close the contour in the lower half-plane in $\widehat{R}(\zeta)=\sqrt{2}\int_\mathbb{R}\frac{e^{-ix\zeta}}{x+i}dx$. The clockwise contour contains the pole $-i$, and the residue theorem gives $-2\sqrt{2}\pi ie^{-\zeta}$. For $\zeta<0$, closing in the upper half-plane gives zero. Hence, up to the value at $\zeta=0$, $\widehat{R}(\zeta)=c_Re^{-\zeta}\mathbf{1}_{\mathbb{R}_+}(\zeta),\qquad c_R=-2\sqrt{2}\pi i$. The normalization is $\frac{1}{2\pi}\int_0^\infty|c_R|^2e^{-2\zeta}d\zeta=2\pi$.
By \eqref{eq:modulation-fourier},
\begin{equation}\label{eq:S-fourier-exact}
\widehat{S}_\ell(\zeta)=c_R\sqrt{\lambda_\ell}e^{i\theta_\ell-i y_\ell\zeta}e^{-\lambda_\ell(\zeta-\xi_\ell)}\mathbf{1}_{[\xi_\ell,\infty)}(\zeta).
\end{equation}
Put $m=\max\{\xi_1,\xi_2\}$ and $\Delta y=y_1-y_2$. Plancherel gives
\[
\begin{aligned}
\langle S_1,S_2\rangle&=4\pi\sqrt{\lambda_1\lambda_2}e^{i(\theta_1-\theta_2)}e^{\lambda_1\xi_1+\lambda_2\xi_2}\int_m^\infty e^{-(\lambda_1+\lambda_2+i\Delta y)\zeta}d\zeta\\
&=4\pi\sqrt{\lambda_1\lambda_2}e^{i(\theta_1-\theta_2)}e^{\lambda_1\xi_1+\lambda_2\xi_2}\frac{e^{-(\lambda_1+\lambda_2+i\Delta y)m}}{\lambda_1+\lambda_2+i\Delta y}.
\end{aligned}
\]
Taking absolute values and using
\[
\lambda_1\xi_1+\lambda_2\xi_2-(\lambda_1+\lambda_2)m=-\lambda_1(\xi_2-\xi_1)_+-\lambda_2(\xi_1-\xi_2)_+
\]
proves \eqref{eq:overlap}.
\end{proof}

\begin{lemma}[Parameter separation]\label{lem:parameter-separation}
Suppose $b_j(t)\perp b_k(t)$ and
\[
\|b_\ell(t)-S_\ell(t)\|_{L^2(\mathbb{R})}\underset{t\to\infty}{\longrightarrow}0,\qquad \ell\in\{j,k\},
\]
where each $S_\ell(t)$ is a modulated copy of $R$. Then the separation expression in \eqref{eq:separation} tends to infinity.
\end{lemma}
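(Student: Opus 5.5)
The plan is to compare overlaps: orthogonality of the channels forces the overlap of the approximating solitons to vanish, and Lemma~\ref{lem:overlap} converts vanishing overlap into divergence of the separation expression in \eqref{eq:separation}.

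First, since $b_j(t)\perp b_k(t)$ and $\|S_\ell(t)\|_{L^2(\mathbb{R})}^2=2\pi$ by unitarity of the modulation family, we write
\[
\langle S_j,S_k\rangle=\langle S_j-b_j,S_k\rangle+\langle b_j,S_k-b_k\rangle .
\]
Cauchy--Schwarz bounds this by $\sqrt{2\pi}\,\|S_j-b_j\|_{L^2(\mathbb{R})}+\|b_j\|_{L^2(\mathbb{R})}\|S_k-b_k\|_{L^2(\mathbb{R})}$. Here $\|b_j\|_{L^2(\mathbb{R})}$ is bounded, equal to $\sqrt{2\pi}$ in our setting, or in general bounded because $b_j$ is close to $S_j$. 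Hence $\langle S_j(t),S_k(t)\rangle\to0$.

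Next, write $\lambda_\ell,y_\ell,\xi_\ell$ for the parameters of $S_\ell$, and apply Lemma~\ref{lem:overlap}:
\[
|\langle S_j,S_k\rangle|=\frac{4\pi}{\sqrt{(\lambda_j+\lambda_k)^2/(\lambda_j\lambda_k)+|y_j-y_k|^2/(\lambda_j\lambda_k)}}\;e^{-\lambda_j(\xi_k-\xi_j)_+-\lambda_k(\xi_j-\xi_k)_+}.
\]
Put $A=\lambda_j/\lambda_k+\lambda_k/\lambda_j$, $B=|y_j-y_k|^2/(\lambda_j\lambda_k)$ and $C=\lambda_j\lambda_k|\xi_j-\xi_k|^2$. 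Note that $(\lambda_j+\lambda_k)^2/(\lambda_j\lambda_k)=A+2$.

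We argue by contradiction. Suppose that along some sequence $t_n\to\infty$ the quantity $A+B+C$ stays bounded by $M$. Then the algebraic prefactor is at least $4\pi/\sqrt{M+2}$. For the exponent, assume without loss of generality that $\xi_k\ge\xi_j$. The exponent is then $\lambda_j(\xi_k-\xi_j)=\sqrt{\lambda_j/\lambda_k}\,\sqrt{C}\le\sqrt{A}\sqrt{C}\le M$. The overlap is therefore bounded below by $4\pi e^{-M}/\sqrt{M+2}>0$, which contradicts the first step. So $A+B+C\to\infty$ as $t\to\infty$.

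The argument is elementary. The only point needing care is the mixed exponent $\lambda_j(\xi_k-\xi_j)_+$. It is not directly one of the terms $A,B,C$, and it is controlled through the geometric-mean bound $\lambda_j|\Delta\xi|=\sqrt{\lambda_j/\lambda_k}\,\sqrt{C}\le\sqrt{AC}$.
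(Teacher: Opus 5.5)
Your proof is correct and follows the paper's argument essentially verbatim: the same orthogonality splitting of $\langle S_j,S_k\rangle$ with Cauchy--Schwarz, then contradiction via Lemma~\ref{lem:overlap}, with the exponential factor controlled through $\lambda_j|\xi_j-\xi_k|=\sqrt{\lambda_j/\lambda_k}\,\sqrt{C}$, exactly as the paper does with $\sqrt{r_n}Z_n$. Your prefactor bound $4\pi/\sqrt{M+2}$ is slightly sharper than the paper's $4\pi/\sqrt{2C+2}$, but this makes no difference to the conclusion.
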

\begin{proof}
Put $e_\ell(t)=\|b_\ell(t)-S_\ell(t)\|_{L^2(\mathbb{R})}$ for $\ell\in\{j,k\}$.
Orthogonality gives $\langle S_j, S_k\rangle=\langle S_j-b_j, S_k\rangle+\langle b_j,S_k-b_k\rangle$. Since $\|S_\ell\|_{L^2(\mathbb{R})}=\sqrt{2\pi}$ and $\|b_j\|_{L^2(\mathbb{R})}\le\sqrt{2\pi}+e_j$, Cauchy--Schwarz gives
\begin{equation}\label{eq:orthogonal-overlap}
|\langle S_j(t), S_k(t)\rangle|\le\sqrt{2\pi}\bigl(e_j(t)+e_k(t)\bigr)+e_j(t)e_k(t)\underset{t\to\infty}{\longrightarrow}0.
\end{equation}
Let $Q_{jk}(t)$ denote the expression in \eqref{eq:separation}. If $Q_{jk}(t)$ remains bounded by $C$ along a sequence $t_n\to\infty$, then, on this sequence,
\[
r_n=\frac{\lambda_j}{\lambda_k},\qquad Y_n=\frac{|y_j-y_k|}{\sqrt{\lambda_j\lambda_k}},\qquad Z_n=\sqrt{\lambda_j\lambda_k}|\xi_j-\xi_k|
\]
satisfy $r_n+r_n^{-1}\le C$, $Y_n^2\le C$, and $Z_n^2\le C$. The algebraic factor in \eqref{eq:overlap} becomes
\[
\frac{4\pi}{\sqrt{r_n+r_n^{-1}+2+Y_n^2}}\ge\frac{4\pi}{\sqrt{2C+2}}.
\]
If $\xi_k\ge\xi_j$, its exponential exponent is $\lambda_j|\xi_j-\xi_k|=\sqrt{r_n}Z_n$; if $\xi_j\geq\xi_k$, it is $\lambda_k|\xi_j-\xi_k|=r_n^{-1/2}Z_n$. Both are bounded in terms of $C$. Formula \eqref{eq:overlap} therefore gives a positive lower bound determined by $C$, contradicting \eqref{eq:orthogonal-overlap}. Thus $Q_{jk}(t)\to\infty$.
\end{proof}

\begin{prop}[Asymptotic modulation parameters]\label{prop:sequential-upgrade}
Under Condition~\ref{hyp:P4}, there exist Borel measurable parameters in the parameter domain stated in Theorem~\ref{thm:main} satisfying
\begin{equation}\label{eq:full-channel-approx}
\|b_j(t)-\mathcal{M}_{\lambda_j(t),y_j(t),\xi_j(t),\theta_j(t)}R\|_{L^2(\mathbb{R})}\underset{t\to\infty}{\longrightarrow}0
\end{equation}
for every $j$.
\end{prop}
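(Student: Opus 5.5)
The plan is to upgrade the sequential statement of Condition~\ref{hyp:P4} to a full-time statement by a distance-function argument combined with a measurable selection. For each label $j$ and each $t\ge0$ define the modulation distance
\[
\delta_j(t)=\inf_{p\in\mathcal{P}}\|b_j(t)-\mathcal{M}_pR\|_{L^2(\mathbb{R})},\qquad \mathcal{P}=(0,\infty)\times\mathbb{R}\times[0,\infty)\times\mathbb{R}/2\pi\mathbb{Z}.
\]
The first step is to show $\delta_j(t)\to0$ as $t\to\infty$. Suppose not. Then there are $\varepsilon>0$ and $t_n\to\infty$ with $\delta_j(t_n)\ge\varepsilon$. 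Condition~\ref{hyp:P4} applied to this sequence gives a subsequence and parameters $p_{j,k}$ with $\|b_j(t_{n_k})-\mathcal{M}_{p_{j,k}}R\|\to0$, so $\delta_j(t_{n_k})\to0$. This is a contradiction. Only the existence of some subsequence for every sequence is used here.

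The second step is to select parameters measurably in $t$. The map $(t,p)\mapsto\|b_j(t)-\mathcal{M}_pR\|$ is jointly continuous. Continuity in $t$ comes from the strong continuity of $b_j$ in Theorem~\ref{thm:channels}. Continuity in $p$ comes from the strong continuity of the affine group on $L^2$, which is checked on the Fourier side via \eqref{eq:modulation-fourier} and dominated convergence. The infimum can therefore be taken over a countable dense set $\{p_m\}_{m\ge1}\subset\mathcal{P}$, for instance rational parameters, and $\delta_j$ is Borel (indeed upper semicontinuous). Define $m_j(t)$ to be the least index $m$ with
\[
\|b_j(t)-\mathcal{M}_{p_m}R\|\le\delta_j(t)+\min\{1,1/(1+t)\}.
\]
Such an index exists because the infimum over the dense family equals $\delta_j(t)$ and the tolerance is strictly positive. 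Each set $\{t:m_j(t)=m\}$ is a Boolean combination of Borel sets defined by inequalities between Borel functions, so $m_j$ is Borel. Set $(\lambda_j(t),y_j(t),\xi_j(t),\theta_j(t))=p_{m_j(t)}$. These parameters are Borel, defined for all $t\ge0$, and lie in the required domain; in particular $\xi_j(t)\ge0$. They also satisfy $\|b_j(t)-\mathcal{M}_{p_{m_j(t)}}R\|\le\delta_j(t)+(1+t)^{-1}\to0$, which is \eqref{eq:full-channel-approx}.

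The step requiring the most care is the measurability bookkeeping. The main points are to verify the joint continuity in $p$ uniformly enough to replace the infimum by a countable one, and to use a least-index selection rather than an abstract selection theorem. Both are routine once written out. The dynamics enter only through the strong continuity of $b_j$ and Condition~\ref{hyp:P4}. Parameter separation then follows separately from Lemma~\ref{lem:parameter-separation}, using the orthogonality $b_j(t)\perp b_k(t)$.
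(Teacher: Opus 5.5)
Your proposal is correct and follows essentially the same route as the paper. The common steps are: a contradiction argument from Condition~\ref{hyp:P4} showing that the modulation distance tends to zero, replacing the infimum by one over a countable dense parameter grid via strong continuity of $p\mapsto\mathcal{M}_pR$, and a least-index Borel selection with a vanishing positive tolerance. The differences are cosmetic: you use the tolerance $(1+t)^{-1}$ on all of $[0,\infty)$ where the paper uses $t^{-1}$ on $[1,\infty)$ and extends constantly, and the distance is in fact $1$-Lipschitz in $b_j(t)$, hence continuous rather than merely upper semicontinuous.
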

\begin{proof}
For each channel define
\[
d_j(t)=\inf_{\substack{\lambda>0,\,y\in\mathbb{R}\\
                         \xi\ge0,\,\theta\in\mathbb{R}/2\pi\mathbb{Z}}}
\|b_j(t)-\mathcal{M}_{\lambda,y,\xi,\theta}R\|_{L^2(\mathbb{R})}.
\]
If $d_j(t)$ fails to converge to zero, then some $\varepsilon>0$ and sequence $t_n\to\infty$ satisfy $d_j(t_n)\geq\varepsilon$. The common subsequence in Condition~\ref{hyp:P4} supplies parameters $p_{j,k}$ such that
\[
0\le d_j(t_{n_k})\le\|b_j(t_{n_k})-\mathcal{M}_{p_{j,k}}R\|_{L^2(\mathbb{R})}\underset{k\to\infty}{\longrightarrow}0.
\]
This contradicts the lower bound, so $d_j(t)\to0$ as $t\to\infty$.

Let
$\mathcal{P}=\mathbb{R}_+\times\mathbb{R}\times[0,\infty)\times\mathbb{R}/2\pi\mathbb{Z}$, endowed with its product metric, and choose a countable dense set
$\{p_m\}_{m\ge1}\subset\mathcal{P}$. Translation, modulation, and unitary dilation are strongly continuous on $L^2$; this follows on $C_c^\infty$ by dominated convergence and on $L^2$ by density and unitarity. Consequently $p\mapsto\mathcal{M}_pR$ is continuous and
\begin{equation}\label{eq:countable-distance}
d_j(t)=\inf_{m\ge1}\|b_j(t)-\mathcal{M}_{p_m}R\|_{L^2(\mathbb{R})}.
\end{equation}
The strong continuity of \(b_j\), proved in Theorem~\ref{thm:channels}, also gives $|d_j(t)-d_j(s)|\le\|b_j(t)-b_j(s)\|_{L^2(\mathbb{R})}$, so $d_j$ is continuous.

For $t\ge1$, set
\[
A_{j,m}=\left\{t:\|b_j(t)-\mathcal{M}_{p_m}R\|_{L^2(\mathbb{R})}<d_j(t)+t^{-1}\right\}.
\]
These sets are Borel and cover $[1,\infty)$ by \eqref{eq:countable-distance}. Define $m(j,t)=\min\{m:t\in A_{j,m}\}$.
The identity $\{t:m(j,t)=m\}=A_{j,m}\setminus\bigcup_{\ell<m}A_{j,\ell}$ shows that $m(j,\cdot)$ is Borel measurable. Use the coordinates of $p_{m(j,t)}$ for $t\ge1$, and use the coordinates of $p_1$ for
$0\le t<1$. The resulting parameter functions satisfy
\[
\|b_j(t)-\mathcal{M}_{p_{m(j,t)}}R\|_{L^2(\mathbb{R})}<d_j(t)+t^{-1}\underset{t\to\infty}{\longrightarrow}0,
\]
which proves \eqref{eq:full-channel-approx}.
\end{proof}

\section{Proof of the global branch of Theorem~\ref{thm:main}}\label{sec:7}
The zero extension of $\beta_{u_0}$ belongs to $L^2(\mathbb{R})$ and is supported in $[0,\infty)$. Unitarity of $\mathcal{F}_0$ defines a unique $u_+\in L^2_+(\mathbb{R})$ by $\mathcal{F}_0u_+=-i\beta_{u_0}/\sqrt{2\pi}$.
Thus $\widehat{u}_+=-i\beta_{u_0}$, as in \eqref{eq:radiation-state}.

Theorem~\ref{thm:channels} gives, for every $t\ge0$, the orthogonal spectral decomposition $u(t)=\sum_{j=1}^Nb_j(t)+r_{\mathrm{ac}}(t)$. Theorem~\ref{thm:P3} gives $r_{\mathrm{ac}}(t)=e^{it\partial_x^2}u_++o_{L^2}(1)$.
Proposition~\ref{prop:sequential-upgrade} supplies Borel measurable parameter paths. Set $S_j(t)=\mathcal{M}_{\lambda_j(t),y_j(t),\xi_j(t),\theta_j(t)}R$.
For every point channel it gives
\begin{equation}\label{eq:main-points}
b_j(t)=S_j(t)+o_{L^2}(1).
\end{equation}
Since $N$ is finite, the channel decomposition and the triangle inequality give
\[
\|u(t)-e^{it\partial_x^2}u_+-\sum_{j=1}^NS_j(t)\|_{L^2(\mathbb{R})}\le\|r_{\mathrm{ac}}(t)-e^{it\partial_x^2}u_+\|_{L^2(\mathbb{R})}+\sum_{j=1}^N\|b_j(t)-S_j(t)\|_{L^2(\mathbb{R})}
\underset{t\to\infty}\longrightarrow0.
\]
This proves \eqref{eq:resolution}. Formula \eqref{eq:S-fourier-exact} and $\xi_j(t)\ge0$ give $S_j(t)\in L^2_+(\mathbb{R})$.

For $j\neq k$, orthogonality of $b_j(t)$ and $b_k(t)$, \eqref{eq:main-points}, and Lemma~\ref{lem:parameter-separation} imply \eqref{eq:separation}.

Fix $j$ and $g\in L^2(\mathbb{R})$. The uniformity in Lemma~\ref{lem:affine-weak} gives
\[
\left|\langle \mathcal{M}_{\lambda_j(t),y_j(t),\xi_j(t),\theta_j(t)}^{-1}e^{it\partial_x^2}u_+,g\rangle\right|\le\sup_{p\in\mathcal{P}}\left|\langle\mathcal{M}_p^{-1}e^{it\partial_x^2}u_+,g\rangle\right|
\underset{t\to\infty}{\longrightarrow}0.
\]
This is \eqref{eq:frame-weak}.

The orthogonal spectral partition gives
\begin{equation}\label{eq:terminal-mass}
\|u_0\|_{L^2(\mathbb{R})}^2=\|u(t)\|_{L^2(\mathbb{R})}^2=\sum_{j=1}^N\|b_j(t)\|_{L^2(\mathbb{R})}^2+\|r_{\mathrm{ac}}(t)\|_{L^2(\mathbb{R})}^2=2\pi N+\frac{1}{2\pi}\|\beta_{u_0}\|_{L^2(\mathbb{R})}^2.
\end{equation}
Plancherel and \eqref{eq:radiation-state} give $\|u_+\|_{L^2(\mathbb{R})}^2=\frac{1}{2\pi}\|\beta_{u_0}\|_{L^2(\mathbb{R}_+)}^2$.
Together with \eqref{eq:terminal-mass}, this proves \eqref{eq:mass-resolution}. The labels in Theorem~\ref{thm:channels} enumerate the entire point spectrum, so $N=N_{\mathrm{pp}}(L_{u_0})$.

\section{Large-mass well-posedness and maximal lifespan}\label{sec:phase-IV-flow}
To construct a flow--Lax admissible orbit by strong $L^2$ closure, let
\begin{equation}\label{eq:M0-KLV-core}
\mathcal{D}_{\mathrm{KLV}}=\{a\in H_+^\infty(\mathbb{R}):\langle x\rangle a\in L^2(\mathbb{R})\}.
\end{equation}
For $a\in\mathcal{D}_{\mathrm{KLV}}$, write $u_a^{\mathrm{cl}}:I_a^{\mathrm{cl}}(\mathbb{R})\to H_+^\infty(\mathbb{R})$ for its maximal classical orbit. The set \eqref{eq:M0-KLV-core} is a dense classical approximation core in $L^2_+$. For a bounded family $\mathcal{Q}\subset L^2_+(\mathbb{R})$, define its positive frequency tail by
\[
\mathfrak{h}_K(\mathcal{Q})=\sup_{f\in\mathcal{Q}}\|\mathbf{1}_{[K,\infty)}\mathcal{F}_0f\|_{L^2_\xi}^2.
\]
The family $\mathcal{Q}$ is Fourier equicontinuous if and only if $\mathfrak{h}_K(\mathcal{Q})\to0$.

\begin{prop}[KLV compactness]\label{prop:M0-KLV-engine}
Let $a_n\in\mathcal{D}_{\mathrm{KLV}}$, let $a_n\to a$ in $L^2(\mathbb{R})$, and suppose that the classical solutions are defined on one
compact interval $J\ni0$. If
\begin{equation}\label{eq:M0-engine-tail}
\sup_n\sup_{t\in J}\|\mathbf{1}_{[K,\infty)}\mathcal{F}_0u_{a_n}^{\mathrm{cl}}(t)\|_{L^2(\mathbb{R})}\underset{K\to\infty}{\longrightarrow}0,
\end{equation}
then $u_{a_n}^{\mathrm{cl}}$ converges in $C(J;L^2_+(\mathbb{R}))$. Its limit is independent among approximation sequences which satisfy the same common-lifespan and uniform-tail hypotheses, and is identified by the KLV boundary formula
\begin{equation}\label{eq:M0-engine-formula}
u(t,z)=\frac{1}{2\pi i}I_+\bigl([\mathsf{X}+2tL_a-z]_{\mathrm{KLV}}^{-1}a\bigr),\qquad \operatorname{Im}z>0.
\end{equation}
The same conclusion holds uniformly for a precompact family of initial data when \eqref{eq:M0-engine-tail} is uniform over that family.
\end{prop}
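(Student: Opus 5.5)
We must prove Proposition \ref{prop:M0-KLV-engine}. The plan is to get compactness in $C(J;L^2_+(\mathbb{R}))$ for the sequence $u_n:=u^{\mathrm{cl}}_{a_n}$, and then identify every limit point through the boundary formula \eqref{eq:M0-engine-formula}. That formula depends only on $a$, so the whole sequence converges and the limit is independent of the approximating sequence.

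\emph{Step 1 (identification on the approximants).} For classical data $a_n\in\mathcal{D}_{\mathrm{KLV}}$, the explicit formula of G\'erard type (the smooth case underlying Theorem~\ref{thm:P0min-import}) gives
\[
u_n(t,z)=\frac{1}{2\pi i}I_+\bigl([\mathsf{X}+2tL_{a_n}-z]^{-1}_{\mathrm{KLV}}a_n\bigr),\qquad \operatorname{Im}z>0.
\]
The next task is to pass to the limit in this formula pointwise in $z\in\mathbb{C}_+$, uniformly for $t\in J$. The resolvent satisfies $\|[\mathsf{X}+2tL_v-z]^{-1}\|\le(\operatorname{Im}z)^{-1}$. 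Combining Lemma~\ref{lem:continuous-calculus} (norm-resolvent continuity of $v\mapsto L_v$) with the second resolvent identity gives strong convergence of the KLV resolvents applied to $a_n$. We then need continuity of the trace row $I_+$. As in Proposition~\ref{prop:strict-dirichlet}, I would write $I_+$ composed with the resolvent as a bounded functional whose norm is controlled by the Green/Ward identity \eqref{eq:full-green}. This yields $u_n(t,z)\to u(t,z)$ for each $z$, locally uniformly in $(t,z)$.

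\emph{Step 2 (compactness).} Mass conservation gives $\|u_n(t)\|_{L^2}=\|a_n\|_{L^2}$, which is bounded. Hypothesis \eqref{eq:M0-engine-tail} gives uniform Fourier tails at high frequency, and the Hardy support gives vanishing negative frequencies. Kolmogorov--Riesz also needs physical-space tightness. I would obtain it from the conserved cyclic measure (Theorem~\ref{thm:measure-conservation} on smooth orbits) together with continuity of the boundary transform. A cleaner alternative avoids tightness altogether. Pointwise convergence of the Hardy extensions $u_n(t,\cdot+iy)$, together with uniform $L^2$ bounds, gives weak convergence $u_n(t)\rightharpoonup u(t)$. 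The frequency tails then upgrade this to strong convergence on low frequencies once the masses match. So the key point is to show $\|u(t)\|_{L^2}=\|a\|_{L^2}$. I would obtain this from $W_t^{a}$ being unitary on the cyclic space with $W_t^a a=u(t)$ (Theorem~\ref{thm:P0min-import}). Weak convergence plus convergence of norms then gives $u_n(t)\to u(t)$ strongly.

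\emph{Step 3 (uniformity in $t$ and over precompact families).} Equicontinuity in time of $\{u_n\}$ in a weak norm comes from the equation: $\partial_tu_n$ is bounded in $H^{-2}$ on the set where frequencies are at most $K$, after truncating with $P_{\le K}$. Combined with the uniform tails, this gives Arzel\`a--Ascoli in $C(J;L^2)$. The pointwise identification of Step 1 then fixes every limit, proving convergence of the full sequence. For a precompact family of initial data, the same argument is run by contradiction along sequences.

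The main obstacle I expect is in Step 2: proving that no mass is lost, i.e.\ that the limit has full mass $\|a\|_{L^2}^2$. Mass could escape to spatial infinity, and frequency tails alone do not rule this out. I would first try the unitarity of $W_t^a$ on $\mathcal{K}_a$. If that is circular, because it was itself proved by limiting smooth flows, I would fall back on a direct argument: the conserved quantity $\langle (L_{u_n}-z)^{-1}u_n,u_n\rangle$ passes to the limit by \eqref{eq:moving-cyclic-pairing}-type continuity, and sending $z\to\infty$ recovers the mass.
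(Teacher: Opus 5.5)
You correctly identify the decisive point, but you do not establish it. That point is that each limit $u(t)$ carries the full mass $\|a\|_{L^2}^2$, equivalently that no mass escapes to spatial infinity, and both routes you suggest are circular.

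Theorem~\ref{thm:P0min-import} is formulated for a flow--Lax admissible orbit. Its proof transfers unitarity of the smooth propagators to the limit using strong convergence of the smooth approximants. That strong convergence is exactly what the present proposition is meant to produce: it is the engine of Theorem~\ref{thm:M0-maximal-closure}, which is where flow--Lax admissibility comes from.

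Your fallback has the same defect. \eqref{eq:moving-cyclic-pairing} needs $v_n\to v$ strongly, and $v\mapsto\langle(L_v-z)^{-1}v,v\rangle$ is not weakly continuous; adding a fixed bump translated off to infinity shows this. The conserved cyclic measure cannot detect spatial tails either, since it is invariant under translation of the potential. More fundamentally, the boundary formula alone does not know whether mass was lost: in Section~\ref{sec:blowup-resolution} the same transform yields $W_T^{u_0}u_0=z_T$ with $\|z_T\|_{L^2}<\|u_0\|_{L^2}$.

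What is missing is an a priori estimate that propagates the spatial tightness of the convergent family $\{a_n\}$ across $J$, uniformly in $n$, using the uniform frequency tail \eqref{eq:M0-engine-tail} to control the mass flux. The paper takes this physical-space tightness, together with uniform time equicontinuity, from the proofs of Lemma~5.4 and Propositions~5.5--5.6 of \cite{Killip-2025-CAMS}. It then concludes by Kolmogorov--Riesz and Arzel\`a--Ascoli. Once this input is available, your Step~3 and your subsequence argument for independence are fine; the paper uses interleaving for the latter instead.

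Step~1 also needs a different justification. The second resolvent identity for the pencils $\mathsf{X}+2tL_{a_n}$ and $\mathsf{X}+2tL_a$ involves $L_{a_n}-L_a$, which is not bounded on $L^2_+(\mathbb{R})$. For rough $a$, the operator $L_a$ is only form-defined on $H^{1/2}_+(\mathbb{R})$, so it is not even clear that the two realizations share a domain on which the identity can be written. Hence norm-resolvent continuity of $v\mapsto L_v$ (Lemma~\ref{lem:continuous-calculus}) does not pass to the KLV realization; the paper cites \cite[Proposition~4.10]{Killip-2025-CAMS} for this.

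Similarly, the Ward identity bounds the row $f\mapsto I_+[\mathsf{X}+2tL_v-z]_{\mathrm{KLV}}^{-1}f$ uniformly in $v$, but it does not give convergence of that row as $a_n\to a$. You would still need convergence on a dense set of inputs, which is what \cite[Proposition~5.1]{Killip-2025-CAMS} supplies in the paper's argument.
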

\begin{proof}
For smooth well-decaying data, the KLV formula holds throughout the classical lifespan. Operator-norm continuity of the KLV resolvent is \cite[Proposition~4.10]{Killip-2025-CAMS}, while \cite[Proposition~5.1]{Killip-2025-CAMS} controls the unbounded boundary row $I_+$ and identifies every pointwise weak cluster limit in terms of $a$ alone. Under \eqref{eq:M0-engine-tail}, the proofs of \cite[Lemma~5.4 and Propositions~5.5--5.6]{Killip-2025-CAMS} supply uniform time equicontinuity and physical-space tightness. Kolmogorov--Riesz and Arzel\`a--Ascoli therefore upgrade weak convergence to convergence in $C(J;L^2(\mathbb{R}))$. If two sequences satisfy the stated hypotheses, their interleaving does as well, proving independence in this class. On the common lifespan $J$, the cited estimates are independent of a numerical mass threshold.
\end{proof}

\begin{cor}[Focusing equicontinuity threshold]\label{cor:M0-exact-threshold}
The constant $M^\ast$ in \cite[Definition~1.5]{Killip-2025-CAMS} equals $2\pi$.
\end{cor}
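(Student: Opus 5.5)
The plan is to prove the two inequalities $M^\ast\ge2\pi$ and $M^\ast\le2\pi$ separately. I use the characterization of $M^\ast$ from \cite[Definition~1.5]{Killip-2025-CAMS}: it is the supremal mass $M$ such that every $L^2$-precompact (equivalently bounded, Fourier equicontinuous) family $\mathcal{Q}$ of initial data with $\|a\|_{L^2}^2<M$ generates an orbit family $\{u_a^{\mathrm{cl}}(t):a\in\mathcal{Q},\,t\in J\}$ that is again Fourier equicontinuous. In the notation of this section, that means $\mathfrak{h}_K\to0$ for the orbit family.

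\textbf{Lower bound.} I expect this to come essentially from the subcritical theory. For $\|a\|_{L^2}^2<2\pi$, the Gérard--Lenzmann inequality gives the following coercivity statement: the quadratic form of the Lax operator is comparable to the $\dot H^{1/2}$ form,
\[
\langle L_a f,f\rangle\ge\Bigl(1-\tfrac{\|a\|_{L^2}^2}{2\pi}\Bigr)\langle |D|f,f\rangle .
\]
Conservation of $\langle (L_u+\kappa)^{-1}u,u\rangle$ along the flow for $\kappa$ large, together with the norm-resolvent continuity of Lemma~\ref{lem:continuous-calculus}, then converts this coercivity into a uniform high-frequency tail bound. This is exactly the equicontinuity mechanism of \cite[Section~5]{Killip-2025-CAMS}. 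Uniformity over a precompact $\mathcal{Q}$ follows because the resolvent functionals depend continuously on $a$ in $L^2$. Hence every $M<2\pi$ is admissible and $M^\ast\ge2\pi$. The only point to check is that the implicit constants degenerate only as $\|a\|^2\uparrow2\pi$, and not at a smaller mass.

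\textbf{Upper bound.} I will show that no $M>2\pi$ is admissible. Fix $\varepsilon>0$ and take the finite-time blow-up solution of \cite{Kim-2026-MAMS}, with datum $a\in\mathcal{S}_+(\mathbb{R})$ of mass in $(2\pi,2\pi+\varepsilon)$. A single datum is a precompact family. By the blow-up profile (Proposition~\ref{prop: Kim}, finite-time branch), $u(t)$ contains a bubble $[R]_{\lambda(t),\gamma(t),x(t)}$ with $\lambda(t)\to0$. By the explicit Fourier profile $\widehat R(\zeta)=c_Re^{-\zeta}\mathbf{1}_{\mathbb{R}_+}$ from Lemma~\ref{lem:overlap}, this gives
\[
\|\mathbf{1}_{[K,\infty)}\mathcal{F}_0u(t)\|_{L^2}^2\ge 2\pi e^{-2K\lambda(t)}-o(1)\longrightarrow 2\pi
\]
for every fixed $K$. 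The orbit on $[0,T)$ is therefore not Fourier equicontinuous.

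\textbf{Main obstacle.} The delicate point is that Definition~1.5 is posed on compact intervals $J$ within the common lifespan, whereas the blow-up lives on $[0,T)$. I would handle this through Proposition~\ref{prop:M0-KLV-engine}, arguing by contradiction. If mass $M>2\pi$ were admissible, then the equicontinuity on each $[0,T-\delta]$ would have to hold uniformly in $\delta$, because the definition allows $J$ to be any interval on which the classical solutions exist. The KLV compactness would then extend $u$ in $C([0,T];L^2_+)$, with a uniform Fourier tail up to $T$, which contradicts the concentration $\lambda(t)\to0$. If this uniformity is not literally built into the definition, I would instead use the scaling family $a_\lambda=\lambda^{-1/2}a(\cdot/\lambda)$. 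This family is not precompact, so I would vary only the time: take the precompact family $\{u(T-\delta)\}$ evaluated at fixed positive times. Since $\varepsilon$ is arbitrary, this gives $M^\ast\le2\pi$, and combined with the lower bound, $M^\ast=2\pi$.
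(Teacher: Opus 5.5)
Your proposal has the same skeleton as the paper's proof. The lower bound $M^\ast\ge2\pi$ is the subcritical equicontinuity theorem of \cite{Killip-2025-CAMS}; the paper simply cites \cite[Theorem~1.4]{Killip-2025-CAMS} rather than re-sketching the $\langle (L_u+\kappa)^{-1}u,u\rangle$ mechanism. The upper bound takes the Kim--Kim--Kwon blow-up datum, with mass in $(2\pi,2\pi+\varepsilon)$, as a singleton family.

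The one real difference is how you show that this orbit is not Fourier equicontinuous. You argue directly from the concentrating bubble, via $\|P_{>K}\mathcal{M}_{\lambda,y,0,\theta}R\|_{L^2}^2=2\pi e^{-2\lambda K}$; the cross term with $z^\ast$ vanishes because the bubble tends weakly to zero. The paper instead argues that equicontinuity up to $T$ would trigger the smooth continuation criterion \cite[Lemma~2.4]{Killip-2025-CAMS}, contradicting blow-up. Your route is more hands-on and quantitative, but it needs a blow-up profile with $\lambda(t)\to0$. Take that from the construction in \cite{Kim-2026-MAMS} itself, since Proposition~\ref{prop: Kim} as quoted does not assert it. The paper's route needs only the bare fact of finite-time blow-up.

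Discard the fallback at the end of your last paragraph. The family $\{u(T-\delta)\}_{\delta>0}$ is not precompact; your own tail computation shows exactly that. Moreover, if the definition only controlled each compact $J$ separately, a single orbit would be trivially equicontinuous on $J$, and no singleton could witness failure. Both your argument and the paper's rest on the definition controlling the orbit over the whole lifespan, which the paper calls ``equicontinuity up to the endpoint''. With that reading, your main argument is complete.

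Also, ``bounded and Fourier equicontinuous'' is not equivalent to $L^2$-precompact, since physical-space tightness is needed as well. This does not affect the singleton argument.
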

\begin{proof}
The lower bound $M^\ast\ge2\pi$ is \cite[Theorem~1.4]{Killip-2025-CAMS}. If $M^\ast>2\pi$, choose $0<\varepsilon<M^\ast-2\pi$. By \cite[Theorem~1.1]{Kim-2026-MAMS}, there is a Schwartz chiral datum with mass in $(2\pi,2\pi+\varepsilon)$ whose classical solution blows up at a finite time. The singleton containing that datum satisfies the initial-set conditions in the definition of $M^\ast$. Fourier equicontinuity up to the endpoint would trigger the smooth continuation criterion \cite[Lemma~2.4]{Killip-2025-CAMS}, contradicting finite-time blow-up. Hence $M^\ast\le2\pi$. The identity $M^\ast=2\pi$ is also recorded in \cite[(1.4)]{Killip-2026-arXiv}.
\end{proof}

The local construction at arbitrary mass requires the following condition.
\begin{condition}[Local Fourier compactness]\label{hyp:M0-local-compactness}
For every $L^2$-precompact set $\mathcal{Q}\subset\mathcal{D}_{\mathrm{KLV}}$, there is $\tau(\mathcal{Q})>0$ such that $[-\tau(\mathcal{Q}),\tau(\mathcal{Q})]\subset I_a^{\mathrm{cl}}$ for every $a\in\mathcal{Q}$, and
\begin{equation}\label{eq:M0-local-tail}
\lim_{K\to\infty}\sup_{a\in\mathcal{Q}}\sup_{|t|\le\tau(\mathcal{Q})}\|\mathbf{1}_{[K,\infty)}\mathcal{F}_0u_a^{\mathrm{cl}}(t)\|_{L^2(\mathbb{R})}=0.
\end{equation}
Equivalently, whenever $a_n\in\mathcal{D}_{\mathrm{KLV}}$ converges in $L^2$ to an arbitrary $a\in L^2_+(\mathbb{R})$, there are $\tau>0$ and $n_0$ for which the tail $\{a_n:n\geq n_0\}$ has a common classical lifespan $[-\tau,\tau]$ and satisfies the corresponding uniform version of \eqref{eq:M0-local-tail}.
\end{condition}

\begin{thm}[Maximal strong $L^2_+$ closure]\label{thm:M0-maximal-closure}
Assume Condition~\ref{hyp:M0-local-compactness}. Then the classical flow has a unique maximal strong closure
\begin{equation}\label{eq:M0-maximal-flow}
 \mathsf S(\mathord\cdot)a\in C(I(a);\Hplus),\qquad
 I(a)=(-T_-(a),T_+(a)),\qquad 0<T_\pm(a)\leq\infty,
\end{equation}
with the following properties.
\begin{enumerate}[(i)]
 \item The local flow law holds, $\mathsf{S}(0)a=a$, and
 \begin{equation}\label{eq:M0-mass-Hardy}
  \|\mathsf{S}(t)a\|_{L^2(\mathbb{R})}=\|a\|_{L^2(\mathbb{R})},\qquad \mathsf{S}(t)a\in L^2_+(\mathbb{R}).
 \end{equation}
 If $a\in\mathcal{D}_{\mathrm{KLV}}$, this flow agrees with $u_a^{\mathrm{cl}}$ throughout the common classical lifespan.
 \item If $J\Subset I(a)$, there are $a_n\in\mathcal{D}_{\mathrm{KLV}}$ whose classical solutions are defined on $J$ and satisfy
 \begin{equation}\label{eq:M0-flow-Lax-approximation}
  \sup_{t\in J}\|u_{a_n}^{\mathrm{cl}}(t)-\mathsf{S}(t)a\|_{L^2(\mathbb{R})}\underset{n\to\infty}{\longrightarrow}0.
 \end{equation}
 In particular, a forward global branch is flow--Lax admissible in the sense of Definition~\ref{def:flow-admissible}.
 \item The lifespan is lower semicontinuous and the flow is continuously dependent on its datum on common compact lifespan intervals. Uniqueness is taken in the strong KLV-closure class \eqref{eq:M0-flow-Lax-approximation}.
 \item Formula \eqref{eq:M0-engine-formula} holds on $I(a)$. On every compact subinterval, the local form of Theorem~\ref{thm:P0min-import} applies: the cyclic measure is conserved and all cyclic intertwining and boundary identities are valid.
 \item If $T_+(a)<\infty$, then the endpoint has a nonzero critical cascade defect:
 \begin{equation}\label{eq:M0-blowup-criterion}
  \mathfrak{c}_+(a):=\lim_{K\to\infty}\lim_{\tau\to T_+(a)}\sup_{\tau<t<T_+(a)}\|\mathbf{1}_{[K,\infty)}\mathcal{F}_0\mathsf{S}(t)a\|_{L^2(\mathbb{R})}^2>0.
 \end{equation}
 An analogous statement holds at $-T_-(a)$.
\end{enumerate}
\end{thm}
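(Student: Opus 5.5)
The plan is the standard local-to-maximal continuation argument, run through the KLV compactness engine of Proposition~\ref{prop:M0-KLV-engine} and Condition~\ref{hyp:M0-local-compactness}.

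\textbf{Local flow on $\mathcal{Q}$.} Fix $a\in L^2_+(\mathbb{R})$ and a sequence $a_n\in\mathcal{D}_{\mathrm{KLV}}$ with $a_n\to a$. The set $\mathcal{Q}=\{a_n\}$ is precompact, so Condition~\ref{hyp:M0-local-compactness} gives a common lifespan $[-\tau,\tau]$ and the uniform tail \eqref{eq:M0-engine-tail}. Proposition~\ref{prop:M0-KLV-engine} then yields a limit in $C([-\tau,\tau];L^2_+)$. This limit is independent of the approximating sequence, by interleaving, and is identified by \eqref{eq:M0-engine-formula}. Define $\mathsf{S}(t)a$ on $[-\tau,\tau]$ as this limit. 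Mass conservation and the Hardy support in \eqref{eq:M0-mass-Hardy} pass to the strong limit. Agreement with $u^{\mathrm{cl}}_a$ for $a\in\mathcal{D}_{\mathrm{KLV}}$ follows by choosing the constant sequence.

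\textbf{Stability and flow law.} The uniform version of the condition over precompact families gives continuous dependence on data on $[-\tau,\tau]$. Compactness of the set $\{\mathsf{S}(t)a:|t|\le\tau\}$, as the continuous image of an interval, then gives a uniform local time step around every point of that orbit. The flow law $\mathsf{S}(t+s)=\mathsf{S}(t)\mathsf{S}(s)$ follows from the classical flow law for the approximants: the time-$s$ approximants $u^{\mathrm{cl}}_{a_n}(s)$ lie in $\mathcal{D}_{\mathrm{KLV}}$, since the weighted smooth class propagates classically, and they converge to $\mathsf{S}(s)a$. I then iterate the local step and define $I(a)$ as the union of all intervals reachable this way. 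Uniqueness holds in the closure class \eqref{eq:M0-flow-Lax-approximation}, since any two closures agree on overlaps by the independence statement.

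\textbf{Approximation, semicontinuity, and the KLV structure.} For (ii), on a compact $J\Subset I(a)$ I cover $J$ by finitely many local steps and diagonalize the approximating sequences, using continuous dependence at each junction. This gives a single sequence $a_n$ whose classical solutions live on all of $J$, and hence flow--Lax admissibility. Lower semicontinuity of the lifespan in (iii) comes from the same finite chain, which persists under small perturbation of the datum. For (iv), formula \eqref{eq:M0-engine-formula} holds on each local piece. Conservation of the cyclic measure and the intertwining identities then follow by applying Theorem~\ref{thm:P0min-import} on compact subintervals, with its smooth-approximation hypothesis supplied by (ii).

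\textbf{Blow-up alternative (main obstacle).} Item (v) is the delicate step. I argue by contradiction: suppose $T_+<\infty$ and $\mathfrak{c}_+(a)=0$. Then the family $\{\mathsf{S}(t)a:\tau<t<T_+\}$ is Fourier equicontinuous near the endpoint. To close the argument one also needs $L^2$ precompactness of the orbit near $T_+$, including physical-space tightness. I would obtain this from the equicontinuity and tightness estimates cited in Proposition~\ref{prop:M0-KLV-engine}: with the high-frequency tail controlled, these estimates show that $\mathsf{S}(t)a$ is Cauchy as $t\to T_+$, so it has a strong limit $a_*$. Applying the local step to a precompact neighbourhood of $a_*$ gives a uniform time $\tau(\mathcal{Q})$. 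This extends the flow past $T_+$, contradicting maximality. The hardest point is transferring the tail control to the classical approximants uniformly up to the endpoint, since those estimates are stated for approximants on a common lifespan; I would use the diagonal sequences from (ii) on $[0,T_+-\delta]$, with constants independent of $\delta$. The backward endpoint $-T_-(a)$ is handled by time reversal.
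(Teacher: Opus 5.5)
Your proposal follows the paper's proof essentially step for step: core approximation with interleaving, propagation of the weighted smooth core to iterate the local step (the paper makes uniformity on compact sets concrete through finite-rank projections $\mathcal{R}_m$ onto an orthonormal basis lying in the core), and, for (v), a contradiction argument that produces a strong limit $a_\ast$ and continues the flow past $T_+$. Note that the uniform local time should come from a compact set such as $\{\mathsf{S}(t)a\}_{t\text{ near }T_+}\cup\{a_\ast\}$, since no neighbourhood of $a_\ast$ is precompact in $L^2$.

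The ``hardest point'' you flag disappears in the paper's version. The low-frequency modulus $\|\mathbf{1}_{[0,N]}(D)[\mathsf{S}(t)a-\mathsf{S}(s)a]\|_{L^2}\le C_{N,\|a\|_{L^2}}|t-s|$ depends only on the mass, so it transfers to all of $[0,T_+)$ with $\delta$-independent constants. Together with the vanishing high-frequency tail, it gives the Cauchy property directly, with no physical-space tightness needed.
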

\begin{proof}
Fix $a\in L^2_+(\mathbb{R})$ and choose a core sequence converging to $a$. For two choices use their interleaving; the sequential form of Condition~\ref{hyp:M0-local-compactness} and Proposition~\ref{prop:M0-KLV-engine} give the same local limit.  Uniformity and continuous dependence on compact sets follow from one fixed regularization scheme. Choose an orthonormal basis $(e_k)_{k\ge1}$ of $L^2_+(\mathbb{R})$ with $e_k\in\mathcal{D}_{\mathrm{KLV}}$, and let $\mathcal{R}_mf=\sum_{k=1}^m\langle f,e_k\rangle e_k$.
Then $\mathcal{R}_m\to\mathrm{Id}$ strongly and uniformly on every compact subset $\mathcal{K}\subset L^2_+(\mathbb{R})$. Consequently
\begin{equation}\label{eq:M0-precompact-core-family}
\bigcup_{m\ge1}\mathcal{R}_m\mathcal{K}\quad\hbox{is an $L^2$-precompact subset of }\mathcal{D}_{\mathrm{KLV}}.
\end{equation}
Apply Condition~\ref{hyp:M0-local-compactness} and Proposition~\ref{prop:M0-KLV-engine} to this single family. It gives a local strong flow uniformly on $\mathcal{K}$, as
well as continuous dependence and independence of the core approximation. The weighted smooth core is preserved on its classical lifespan by the local form of the estimates in
\cite[Proposition~2.3]{Killip-2025-CAMS}. At a time inside the local interval, the corresponding classical images of \eqref{eq:M0-precompact-core-family} are again precompact. Reapplying the same argument supplies the next common time slice. Finite iteration on a compact time interval gives the flow law, lower semicontinuity of lifespan, the maximal interval \eqref{eq:M0-maximal-flow}, and \eqref{eq:M0-flow-Lax-approximation}. Mass conservation and Hardy support pass through the strong limit.

Suppose $T:=T_+(a)<\infty$ but $\mathfrak{c}_+(a)=0$. Passing the classical low-frequency estimate through \eqref{eq:M0-flow-Lax-approximation} gives, for every fixed $N$,
\begin{equation}\label{eq:M0-low-frequency-time-modulus}
\|\mathbf{1}_{[0,N]}(D)[\mathsf{S}(t)a-\mathsf{S}(s)a]\|_{L^2(\mathbb{R})}\le C_{N,\|a\|_{L^2(\mathbb{R})}}|t-s|, \qquad 0\le s,t<T.
\end{equation}
This is the low-frequency part of the time-equicontinuity proof in \cite[Lemma~5.4]{Killip-2025-CAMS}. Vanishing of $\mathfrak{c}_+(a)$ makes the high-frequency tail uniformly small for $t$ sufficiently close to $T$. On the earlier compact part of the orbit the same uniform tail follows because a continuous image of a compact time interval is compact in $L^2(\mathbb{R})$. Thus the whole set $\{\mathsf{S}(t)a:0\le t<T\}$ is Fourier equicontinuous. Combining its tail bound with \eqref{eq:M0-low-frequency-time-modulus} shows that $\mathsf{S}(t)a$ is uniformly continuous on $[0,T)$, hence has a strong limit $a_\ast\in L^2_+(\mathbb{R})$ as $t\to T$.

Let $\mathsf{S}(h)a_\ast$ be the two-sided local flow furnished above. For fixed small $h<0$, continuous dependence and the old flow law give
\[
\mathsf{S}(h)a_\ast=\lim_{t\uparrow T}\mathsf{S}(h)(\mathsf{S}(t)a)=\lim_{t\uparrow T}\mathsf{S}(t+h)a=\mathsf{S}(T+h)a.
\]
Therefore the nonnegative-time half of the local orbit $s\mapsto\mathsf{S}(s)a_\ast$ sews to the old orbit and extends it beyond $T$, contradicting maximality. This proves \eqref{eq:M0-blowup-criterion}. Since $\|\mathsf{S}(t)a\|_{L^2(\mathbb{R})}$ is constant, finite-time blow-up means loss of strong $L^2$ continuation through a high-frequency cascade.
\end{proof}

\begin{prop}[Subthreshold $L^2_+$ flow]\label{prop:M0-known-range}
For every $a\in L^2_+(\mathbb{R})$ with $\|a\|_{L^2(\mathbb{R})}^2<2\pi$, the focusing equation has a global, jointly continuous $L^2_+$ flow obtained as the strong $C_tL_x^2$ limit of KLV-core solutions. It satisfies \eqref{eq:M0-mass-Hardy}, \eqref{eq:M0-engine-formula}, and the local conclusions.
\end{prop}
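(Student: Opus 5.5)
The plan is to obtain the subthreshold flow as the special case of Theorem~\ref{thm:M0-maximal-closure} in which Condition~\ref{hyp:M0-local-compactness} holds globally in time. That is, for mass strictly below $2\pi$ we aim to verify the uniform-in-time Fourier tail bound directly from the Killip--Laurens--Vi\c{s}an theory. The first step is to fix a datum $a$ with $\|a\|_{L^2}^2<2\pi$ and an $L^2$-precompact family $\mathcal{Q}\subset\mathcal{D}_{\mathrm{KLV}}$ containing a core sequence $a_n\to a$. We may assume every element of $\mathcal{Q}$ has mass at most some $m<2\pi$, since mass is continuous and $\mathcal{Q}$ can be taken inside a small $L^2$ ball around a compact subset of the open subthreshold region. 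By \cite[Theorem~1.4]{Killip-2025-CAMS}, i.e.\ $M^\ast\ge2\pi$ (consistent with Corollary~\ref{cor:M0-exact-threshold}), equicontinuous subthreshold initial sets produce orbits that remain equicontinuous for all $t\in\mathbb{R}$. In particular, each classical lifespan $I_a^{\mathrm{cl}}$ is all of $\mathbb{R}$, by the smooth continuation criterion \cite[Lemma~2.4]{Killip-2025-CAMS}. The resulting bound is
\[
\lim_{K\to\infty}\sup_{b\in\mathcal{Q}}\sup_{t\in\mathbb{R}}\|\mathbf{1}_{[K,\infty)}\mathcal{F}_0u_b^{\mathrm{cl}}(t)\|_{L^2(\mathbb{R})}=0 .
\]

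With this bound, Proposition~\ref{prop:M0-KLV-engine} applies on every compact interval $J$. It gives convergence of $u^{\mathrm{cl}}_{a_n}$ in $C(J;L^2_+)$ to a limit that is independent of the approximating sequence and is given by the KLV boundary formula \eqref{eq:M0-engine-formula}. A diagonal argument over $J=[-k,k]$ then produces a global strong $C_tL^2_x$ limit. Joint continuity in $(t,a)$ follows from the uniform version of Proposition~\ref{prop:M0-KLV-engine} applied to the precompact family \eqref{eq:M0-precompact-core-family} built from a compact neighbourhood of $a$ in the subthreshold region. Mass conservation and the Hardy support in \eqref{eq:M0-mass-Hardy} pass to the strong limit directly.

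For the ``local conclusions'', we simply note that the hypothesis of Theorem~\ref{thm:M0-maximal-closure} holds on the subthreshold region. Parts (i)--(iv) therefore apply, and part (v) is excluded because the cascade defect $\mathfrak{c}_+(a)$ vanishes by the uniform tail bound, so $T_\pm(a)=\infty$. Item (iv), which covers cyclic measure conservation and the intertwining identities of Theorem~\ref{thm:P0min-import}, comes from the same local argument.

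The main obstacle is the first step. We must make sure that the KLV equicontinuity theorem delivers the tail bound uniformly over a precompact family and for all times, rather than only for a single datum. We would handle this by recalling that \cite[Definition~1.5]{Killip-2025-CAMS} is formulated for equicontinuous sets of initial data, and that compact subsets of $L^2$ are equicontinuous. The only care needed is to keep $\mathcal{Q}$ strictly inside the mass region below $2\pi$.
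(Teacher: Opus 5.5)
Your argument is correct in substance, but it is organized differently from the paper. The paper does not derive Proposition~\ref{prop:M0-known-range} from its own machinery. It simply cites the subthreshold global well-posedness theorem of Killip--Laurens--Vi\c{s}an \cite[Theorems~1.6 and~1.7 and Section~5]{Killip-2025-CAMS}, with \cite[Theorem~1.3]{Killip-2026-arXiv} as an independent construction.

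You instead take only the threshold statement $M^\ast\ge2\pi$ of \cite[Theorem~1.4]{Killip-2025-CAMS} and feed it into Proposition~\ref{prop:M0-KLV-engine} and Theorem~\ref{thm:M0-maximal-closure}. This is not more elementary. The proof of Proposition~\ref{prop:M0-KLV-engine} itself rests on \cite[Propositions~4.10, 5.1, 5.5--5.6 and Lemma~5.4]{Killip-2025-CAMS}, so you are essentially re-running KLV's own proof of their Theorem~1.6 through the paper's engine.

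What your route buys is something the bare citation leaves implicit. It identifies the subthreshold flow with the maximal closure $\mathsf{S}$ of Theorem~\ref{thm:M0-maximal-closure}, with $I(a)=\mathbb{R}$. Hence uniqueness in the closure class, flow--Lax admissibility, and item~(iv) are obtained inside the paper's framework, rather than by matching solution concepts with KLV. The citation, in turn, is shorter and gives joint continuity on the whole open set $\{\|a\|_{L^2}^2<2\pi\}$ at once.

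Two points need tightening.
\begin{enumerate}
\item $L^2_+(\mathbb{R})$ is infinite dimensional, so $a$ has no compact neighbourhood. For joint continuity, take $t_k\to t$ and $a_k\to a$, and apply the uniform version of Proposition~\ref{prop:M0-KLV-engine} to $\bigcup_m\mathcal{R}_m\mathcal{K}$ with the compact set $\mathcal{K}=\{a\}\cup\{a_k\}$. Since $\mathcal{R}_m$ is an orthogonal projection, it does not increase mass. So for large $k$ this family stays uniformly below $2\pi$, and KLV's equicontinuity theorem supplies its all-time tail bound.
\item Theorem~\ref{thm:M0-maximal-closure} is stated under Condition~\ref{hyp:M0-local-compactness} for every precompact family of arbitrary mass, whereas you verify it only below $2\pi$. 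You should add that its proof invokes the condition only on families generated from $\mathcal{K}$ by $\mathcal{R}_m$ and by the classical flow. All of these have mass at most $\sup_{\mathcal{K}}\|\cdot\|_{L^2}^2$, so the theorem localizes to the flow-invariant open set $\{\|a\|_{L^2}^2<2\pi\}$.
\end{enumerate}
For items (i)--(iii) you do not even need this localization. Your all-time tail bound lets Proposition~\ref{prop:M0-KLV-engine} act on every $[-k,k]$ directly, without the time-slicing iteration.
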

\begin{proof}
This is \cite[Theorems~1.6 and~1.7 and Section~5]{Killip-2025-CAMS}; see also the independent Hamiltonian construction \cite[Theorem~1.3]{Killip-2026-arXiv}.
\end{proof}
\begin{re}\label{rem:M0-large-mass-obstruction}
For the mixed regime, \eqref{eq:FR-measure} and \eqref{eq:cyclic-total-mass} give $\|u_0\|_{L^2(\mathbb{R})}^2=2\pi N+\frac{1}{2\pi}\|\beta_{u_0}\|_{L^2(\mathbb{R}_+)}^2>2\pi$. Thus the mixed regime lies above the verified strict subthreshold range. For every $\varepsilon>0$, \cite[Theorem~1.1]{Kim-2026-MAMS} constructs a datum
$a\in\mathcal{S}(\mathbb{R})\cap L^2_+(\mathbb{R})$ with $2\pi<\|a\|_{L^2(\mathbb{R})}^2<2\pi+\varepsilon$ whose classical chiral solution blows up in finite time. The general large-mass target is therefore a maximal-lifespan theorem containing the cascade branch \eqref{eq:M0-blowup-criterion}.

The rough-potential construction \cite[Theorem~1.1]{Hadama-2026-arXiv} supplies an independent unweighted $L^2$ solution concept and global flow for sufficiently small data, and \cite[Theorem~1.3]{Hadama-2026-arXiv} gives scattering in its stated Strichartz/density class. The two branches of \cite[Theorem~1.1]{Kim-2026-JEMS} cover the stated $H^1$ and $H^{1,1}$ regimes. For general mass, Theorem~\ref{thm:M0-maximal-closure} constructs the unweighted KLV closure under Condition~\ref{hyp:M0-local-compactness}.
\end{re}

\section{Proof of the finite-time branch of Theorem~\ref{thm:main}}\label{sec:blowup-resolution}
Let $\mathsf S(\cdot)u_0$ be the maximal flow from Theorem~\ref{thm:M0-maximal-closure}. We prove the positive-time statement under the following endpoint KLV assumptions; time reversal gives the negative-time statement.
\begin{condition}[Endpoint KLV--Ward identities] \label{hyp:blowup-KLV-Ward}
The following hold.
\begin{enumerate}[(i)]
  \item For every $v\in L^2_+(\mathbb{R})$, $L_v=D-v\Pi_+(\overline{v}\,\cdot)$ has a lower-semibounded self-adjoint realization on $H^{1/2}_+(\mathbb{R})$. Strong $L^2$ convergence of the potential implies norm-resolvent convergence. Moreover, zero-carrier affine changes obey
   $\mathcal{M}_{\lambda,y,0,0}^*L_v\mathcal{M}_{\lambda,y,0,0}=\lambda^{-1}L_{\mathcal{M}_{\lambda,y,0,0}^\ast v}$,   and, on a smooth common core,
   $[\mathsf{X},L_v]=i\left(I-\frac{1}{2\pi}|v\rangle\langle v|\right)$,   stable under smooth $L^2$ approximation in the Yosida-commutator sense.

  \item For every finite $t\in\mathbb{R}$, let
   $\mathsf{A}_t^{u_0}:=[\mathsf{X}+2tL_{u_0}]_{\mathrm{KLV}}$.   This is a closed maximal dissipative KLV realization, and its resolvent and boundary row are locally uniformly continuous in $t$. The associated semigroups and their adjoints converge strongly on compact semigroup-time intervals. With $W_t^{u_0}$ as in \eqref{eq:boundary-transform}, the horizontal Green identity
  \begin{equation}\label{eq:blowup-horizontal-Green}
  \int_{\mathbb{R}}|[W_t^{u_0}f](x+iy)|^2dx+\frac y\pi\int_{\mathbb{R}}\| (\mathsf{A}_t^{u_0}-x-iy)^{-1}f\|_2^2dx=\|f\|_2^2
  \end{equation}
  holds for $y>0$ and $f\in L^2_+$.

  \item Strictly inside the lifespan,
  \begin{equation}\label{eq:blowup-interior-boundary-flow}
  W_t^{u_0}u_0=\mathsf{S}(t)u_0,
  \end{equation}
  and the resolvent intertwinings
  \[
  W_t^{u_0}(\mathsf{A}_t^{u_0}-\zeta)^{-1}=(\mathsf{X}-\zeta)^{-1}W_t^{u_0},\qquad \zeta\in\mathbb{C}_+,
  \]
  hold, as does the common-domain inclusion
  \begin{equation}\label{eq:blowup-common-domain}
  \operatorname {Dom}(\mathsf X)\cap\operatorname {Dom}(L_{u_0})\subset\operatorname {Dom}(\mathsf A_t^{u_0}),\qquad\mathsf A_t^{u_0}f=\mathsf Xf+2tL_{u_0}f.
  \end{equation}
\end{enumerate}
\end{condition}

Throughout this section, set $T=T_+(u_0)<\infty$ and $u(\tau)=\mathsf{S}(\tau)u_0$ for $0\le\tau<T$.

\subsection{Weak endpoint and mass loss}\label{subsec:blowup-endpoint}
Define
\[
\mathfrak{c}_+(u_0)=\lim_{K\to\infty}\lim_{r\to T}\sup_{r<\tau<T}\|P_{>K}u(\tau)\|_{L^2(\mathbb{R})}^2.
\]
The inner limit exists because the terminal envelope decreases in $r$; the resulting quantity decreases in $K$.

\begin{lemma}[Low-frequency time modulus]\label{lem:blowup-low-frequency-modulus}
For $0\leq s,\tau<T$ and $K>0$,
\begin{equation}\label{eq:blowup-low-frequency-modulus}
\|P_{\leq K}(u(\tau)-u(s))\|_{L^2(\mathbb{R})}\leq K^2\|u_0\|_{L^2(\mathbb{R})}\left(1+\frac{\|u_0\|_{L^2(\mathbb{R})}^2}{2\pi}\right)|\tau-s|.
\end{equation}
\end{lemma}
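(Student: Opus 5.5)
The plan is to prove \eqref{eq:blowup-low-frequency-modulus} first for classical solutions and then pass it to the strong closure. Fix a compact subinterval $J\Subset[0,T)$ containing $s,\tau$. By Theorem~\ref{thm:M0-maximal-closure}(ii), there are core data $a_n\in\mathcal{D}_{\mathrm{KLV}}$ whose classical solutions $u_n=u^{\mathrm{cl}}_{a_n}$ converge to $u$ in $C(J;L^2)$, with $\|a_n\|_{L^2}\to\|u_0\|_{L^2}$. Since $P_{\le K}$ is an $L^2$ contraction, the inequality passes to the limit with the constant evaluated at $\|a_n\|_{L^2}$, and that constant converges. It therefore suffices to prove the bound for smooth $H^\infty_+$ solutions, with $\|u_0\|$ replaced by the conserved mass.

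For a smooth solution, write $\partial_t u=i\partial_x^2u+i(D+|D|)(|u|^2)u$. Because $u\in L^2_+$, the nonlinearity equals $2iD\Pi_+(|u|^2)\,u$, and we can use the Lax form $\partial_t u=\mathbf{B}_uu$ from \eqref{eq:M8-actual-generator}. Apply $P_{\le K}$ and estimate in $L^2$ by duality against $\phi$ with $\widehat\phi$ supported in $[0,K]$ and $\|\phi\|_{L^2}=1$. The linear term then gives $|\langle i\partial_x^2u,\phi\rangle|=|\langle u,\partial_x^2\phi\rangle|\le K^2\|u\|_{L^2}$.

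For the nonlinear term, a crude bound puts the derivative on the test function. Write $2u\,\partial_x\Pi_+(\bar u u)$ as $2\partial_x\bigl(u\Pi_+(|u|^2)\bigr)-2(\partial_xu)\Pi_+(|u|^2)$. The second piece is still bad, so instead use the structure: pair with $\phi$ and move $\partial_x$ onto $\Pi_+(\bar u\,\cdot)$ via the adjoint identity $\langle u\,\partial_x\Pi_+(\bar u u),\phi\rangle=-\langle \Pi_+(\bar u u),\partial_x\Pi_+(\bar u\phi)\rangle$. Then we need $\|\partial_x\Pi_+(\bar u\phi)\|$. Here the frequency support saves us: $\bar u$ has nonpositive frequencies and $\phi$ has frequencies in $[0,K]$, so $\Pi_+(\bar u\phi)$ has frequencies in $[0,K]$. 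Bernstein then gives $\|\partial_x\Pi_+(\bar u\phi)\|_{L^2}\le K\|\bar u\phi\|_{L^2}$.

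The remaining pieces are $\|\bar u\phi\|_{L^2}\le\|u\|_{L^2}\|\phi\|_{L^\infty}\le\|u\|_{L^2}\sqrt{K/(2\pi)}$ and $\|\Pi_+(|u|^2)\|_{L^2}$. The latter is not controlled by mass alone; this is the main obstacle. To avoid it, I would first try to apply the same frequency trick twice. Write the pairing as $\langle |u|^2,\overline{\Pi_+}\ldots\rangle$ so that only $L^1$ norms of $|u|^2$ appear against an $L^\infty$ low-frequency object, and bound that object by Bernstein with another factor $\sqrt K$. This yields $K^2\|u\|^3/(2\pi)$ up to the factor $2$. If the constant bookkeeping fails to match $\frac{1}{2\pi}$ exactly, the fallback is to use the commutator/Lax representation together with the conservation $\nu_{u(t)}=\nu_{u_0}$, bounding $\langle L_u\cdot,\cdot\rangle$ on the frequency window.

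Integrating in time from $s$ to $\tau$ then gives \eqref{eq:blowup-low-frequency-modulus}.
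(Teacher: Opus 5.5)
\textbf{Verdict: there is a gap, but only in the constant.} Your overall plan matches the paper's: prove the bound for classical solutions, control $\|P_{\le K}\partial_t u\|_{L^2}$ using only the mass and the Hardy frequency support, integrate in time, and pass to $u$ through the compact-time approximation of Theorem~\ref{thm:M0-maximal-closure}. Several steps are fine as written: the approximation step, the linear bound $K^2\|u\|_{L^2}$, and your way around the uncontrolled $\|\Pi_+(|u|^2)\|_{L^2}$ (pairing $|u|^2\in L^1$ against the low-frequency function $\psi:=\partial_x\Pi_+(\bar u\phi)$).

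The cubic estimate, however, loses a factor $2$. Your chain gives
\[
\|\psi\|_{L^\infty}\le\sqrt{K/(2\pi)}\,\|\psi\|_{L^2}\le\sqrt{K/(2\pi)}\,K\|\bar u\phi\|_{L^2}\le\frac{K^2}{2\pi}\|u\|_{L^2},
\]
so $|\langle 2u\,\partial_x\Pi_+(|u|^2),\phi\rangle|\le\frac{K^2}{\pi}\|u\|_{L^2}^3$. That yields the lemma with $1+\|u_0\|_{L^2}^2/\pi$, not the stated $1+\|u_0\|_{L^2}^2/(2\pi)$. The loss is structural: the Bernstein steps replace the frequency weight $\eta\in[0,K]$ by its maximum $K$, and $\int_0^K K\,d\eta=2\int_0^K\eta\,d\eta$. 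Your fallback (conservation of $\nu_{u(t)}$ and bounding $\langle L_u\cdot,\cdot\rangle$ on the window) is not an argument and gives no mechanism for recovering the factor. So, as written, the proposal does not prove the displayed inequality.

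\textbf{Fix: keep $\eta$ inside the integral, as the paper does.} With $A_u(\eta)=\int_0^\infty\widehat u(\alpha+\eta)\overline{\widehat u(\alpha)}\,d\alpha$ one has $\widehat{|u|^2}=A_u/(2\pi)$ and $|A_u|\le 2\pi\|u\|_{L^2}^2$. For $\xi\ge0$,
\[
\widehat{(D+|D|)(|u|^2)u}(\xi)=\frac{2}{(2\pi)^2}\int_0^\xi\eta\,A_u(\eta)\,\widehat u(\xi-\eta)\,d\eta .
\]
For $\xi\le K$ only $\eta\in[0,K]$ enters. Young's inequality with $\|\eta\mathbf{1}_{[0,K]}\|_{L^1}=K^2/2$ then gives exactly $\|P_{\le K}(D+|D|)(|u|^2)u\|_{L^2}\le\frac{K^2}{2\pi}\|u\|_{L^2}^3$.

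You can also stay in your duality framework: write $\langle |u|^2,\psi\rangle$ on the Fourier side and apply Cauchy--Schwarz in the inner variable before integrating in $\eta$:
\[
|\langle |u|^2,\psi\rangle|\le\frac{\|u\|_{L^2}^2}{(2\pi)^2}\int_0^K\eta\int_0^\infty|\widehat u(\gamma)|\,|\widehat\phi(\eta+\gamma)|\,d\gamma\,d\eta\le\frac{K^2}{4\pi}\|u\|_{L^2}^3 .
\]
After the factor $2$ this is the sharp $\frac{K^2}{2\pi}\|u\|_{L^2}^3$. For the later use in Theorem~\ref{thm:blowup-weak-endpoint} any constant $C_{K,u_0}$ would suffice, so your weaker bound is enough there. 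It is not enough for the lemma as stated.
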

\begin{proof}
For a classical chiral solution $v$, set $A_v(\eta)=\int_0^\infty\widehat{v}(\alpha+\eta) \overline{\widehat v(\alpha)}d\alpha$. Plancherel and Cauchy--Schwarz give $|A_v(\eta)|\leq2\pi\|v\|_{L^2(\mathbb{R})}^2$. If $\mathcal{N}(v)=(D+|D|)(|v|^2)v$, then, for $\xi\geq0$, $\widehat{\mathcal{N}(v)}(\xi)=\frac{2}{(2\pi)^2}\int_0^\xi\eta A_v(\eta)\widehat{v}(\xi-\eta)d\eta$. Young's inequality therefore yields
\[
\|P_{\leq K}\mathcal{N}(v)\|_{L^2(\mathbb{R})}\leq\frac{K^2}{2\pi}\|v\|_{L^2(\mathbb{R})}^3,\qquad\|P_{\leq K}D^2v\|_{L^2(\mathbb{R})}\leq K^2\|v\|_{L^2(\mathbb{R})}.
\]
Integrating the equation in time proves \eqref{eq:blowup-low-frequency-modulus} for smooth solutions. The compact-time approximation in Theorem~\ref{thm:M0-maximal-closure} passes the estimate to $u$.
\end{proof}

\begin{thm}[Weak endpoint and mass loss]\label{thm:blowup-weak-endpoint}
There is a unique $z_T\in L^2_+(\mathbb R)$ such that
\[
u(\tau)\rightharpoonup z_T,\qquad P_{\leq K}u(\tau)\longrightarrow P_{\leq K}z_T\quad\text{strongly in }L^2(\mathbb{R})
\]
for every fixed $K$. In fact,
\begin{equation}\label{eq:blowup-low-endpoint-rate}
\|P_{\leq K}(u(\tau)-z_T)\|_{L^2(\mathbb{R})}\leq C_{K,u_0}(T-\tau).
\end{equation}
Moreover,
\begin{equation}\label{eq:blowup-defect-loss}
\mathfrak{c}_+(u_0)=\|u_0\|_{L^2(\mathbb{R})}^2-\|z_T\|_{L^2(\mathbb{R})}^2>0.
\end{equation}
\end{thm}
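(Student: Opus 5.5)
The plan is to obtain the weak endpoint from the low-frequency Lipschitz modulus of Lemma~\ref{lem:blowup-low-frequency-modulus}, and the mass-loss identity from mass conservation combined with the blow-up criterion of Theorem~\ref{thm:M0-maximal-closure}(v).

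\emph{Step 1: low-frequency limit.} Fix $K$. By \eqref{eq:blowup-low-frequency-modulus}, the family $\tau\mapsto P_{\le K}u(\tau)$ is Lipschitz into $L^2$ on $[0,T)$. Since $T<\infty$, it is uniformly Cauchy as $\tau\to T$, so it has a strong limit $z^{(K)}$. Moreover, letting $s\to T$ in \eqref{eq:blowup-low-frequency-modulus} gives $\|P_{\le K}u(\tau)-z^{(K)}\|\le C_{K,u_0}(T-\tau)$. For $K<K'$ we have $P_{\le K}z^{(K')}=z^{(K)}$, and $\|z^{(K)}\|\le\|u_0\|$ uniformly in $K$. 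Hence the $z^{(K)}$ are the compatible truncations of a single $z_T\in L^2_+$ with $\|z_T\|\le\|u_0\|$: take $\mathcal F_0 z_T$ to be the monotone limit of the $\mathcal F_0z^{(K)}$ on $[0,K]$, which is square integrable by Fatou.

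\emph{Step 2: weak convergence and uniqueness.} The orbit has constant norm $\|u_0\|$. Test $u(\tau)$ against $\phi\in L^2$ and split $\phi=P_{\le K}\phi+P_{>K}\phi$. The first piece converges by Step 1, and the second is bounded by $\|u_0\|\,\|P_{>K}\phi\|$, which is small for large $K$. This gives $u(\tau)\rightharpoonup z_T$. Uniqueness follows because weak limits are unique, and any other candidate must share all the $P_{\le K}$ projections.

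\emph{Step 3: mass loss.} Write
\[
\|u_0\|^2=\|u(\tau)\|^2=\|P_{\le K}u(\tau)\|^2+\|P_{>K}u(\tau)\|^2 .
\]
As $\tau\to T$, the first term converges to $\|P_{\le K}z_T\|^2$, so $\|P_{>K}u(\tau)\|^2$ converges to $\|u_0\|^2-\|P_{\le K}z_T\|^2$. In particular the $\limsup$ defining the inner limit of $\mathfrak c_+$ is a genuine limit with this value. Letting $K\to\infty$ gives $\|u_0\|^2-\|z_T\|^2$ by monotone convergence, which is the identity in \eqref{eq:blowup-defect-loss}.

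\emph{Step 4: positivity.} This is the main point. We need that $\mathfrak c_+(u_0)$ defined here coincides with the cascade defect of \eqref{eq:M0-blowup-criterion}. The two definitions differ only by the convention $\mathbf 1_{[K,\infty)}$ versus $P_{>K}$, a null boundary point, and the iterated limits agree by the monotonicity noted before the lemma. Given this, Theorem~\ref{thm:M0-maximal-closure}(v) with $T_+(u_0)=T<\infty$ gives $\mathfrak c_+(u_0)>0$.

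If one wants to avoid relying on that criterion, argue by contradiction. If $\|z_T\|=\|u_0\|$, then weak convergence plus equality of norms gives strong convergence $u(\tau)\to z_T$. The extension argument in the proof of Theorem~\ref{thm:M0-maximal-closure}(v) then continues the flow past $T$ using the local flow from $z_T$ and continuous dependence, contradicting maximality. This strong-convergence case is the step requiring care, but it is exactly what the earlier sewing argument already supplies.
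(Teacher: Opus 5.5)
Your proposal is correct and matches the paper's proof step for step: Cauchy low-frequency limits from Lemma~\ref{lem:blowup-low-frequency-modulus}, compatible truncations assembled into $z_T$ (you glue Fourier transforms, the paper uses the orthogonality Cauchy identity $\|z_{K_m}-z_{K_n}\|^2=\|z_{K_m}\|^2-\|z_{K_n}\|^2$), weak convergence by frequency splitting, mass conservation plus orthogonality for the defect identity, and positivity from Theorem~\ref{thm:M0-maximal-closure}(v). One small point, which the paper shares: for general $\phi\in L^2(\mathbb{R})$ the piece $P_{>K}\phi$ also contains the negative frequencies of $\phi$ and need not tend to zero, but that part pairs to zero against $u(\tau)\in L^2_+(\mathbb{R})$, so the splitting argument still goes through.
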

\begin{proof}
For each $K$, Lemma~\ref{lem:blowup-low-frequency-modulus} makes $P_{\leq K}u(\tau)$ Cauchy as $\tau\to T$; denote its limit by $z_K$. If $K_1<K_2$, then $P_{\leq K_1}z_{K_2}=z_{K_1}$. For any
$K_m\to\infty$, orthogonality gives
\[
\|z_{K_m}-z_{K_n}\|_{L^2(\mathbb{R})}^2=\|z_{K_m}\|_{L^2(\mathbb{R})}^2-\|z_{K_n}\|_{L^2(\mathbb{R})}^2\qquad(m>n).
\]
Thus $z_{K_m}$ converges strongly to a vector $z_T$ independent of the chosen sequence, and $P_{\leq K}z_T=z_K$. Letting one time tend to $T$ in \eqref{eq:blowup-low-frequency-modulus} proves
\eqref{eq:blowup-low-endpoint-rate}.

For $h\in L^2(\mathbb{R})$, first truncate $h$ to $P_{\leq K}h$ and then send $\tau\to T$; the uniform mass bound and $P_{>K}h\to0$ prove weak convergence. Conservation of mass and orthogonality give
\[
\|P_{>K}u(\tau)\|_{L^2(\mathbb{R})}^2=\|u_0\|_{L^2(\mathbb{R})}^2-\|P_{\leq K}u(\tau)\|_{L^2(\mathbb{R})}^2.
\]
Taking the terminal limit and then $K\to\infty$ proves the equality in \eqref{eq:blowup-defect-loss}. Its strict positivity follows from item~(v) of Theorem~\ref{thm:M0-maximal-closure}.
\end{proof}

\subsection{The endpoint Wold defect and its quantization}\label{subsec:blowup-Wold}
For finite $t$, abbreviate $\mathsf{A}_t=\mathsf{A}_t^{u_0}$ and set
\begin{equation}\label{eq:blowup-Wold-projection}
V_t(s)=e^{-is\mathsf{A}_t}\quad(s\geq0),\qquad\mathsf{P}_t^{\mathrm{W}}=\operatorname*{s-lim}_{S\to\infty}V_t(S)^\ast V_t(S).
\end{equation}

\begin{lemma}[Green--Wold identity]
\label{lem:blowup-Green-Wold}
For $s\geq0$, $V_t(s)V_t(s)^\ast=I$, the limit in \eqref{eq:blowup-Wold-projection} is an orthogonal projection, and
\begin{equation}\label{eq:blowup-Wold-ledger}
(W_t^{u_0})^\ast W_t^{u_0}=I-\mathsf{P}_t^{\mathrm{W}}.
\end{equation}
\end{lemma}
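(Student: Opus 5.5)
We plan to prove the three assertions in order: co-isometry, existence of the limit, and the Green--Wold ledger.

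\emph{Co-isometry.} Since $\mathsf{A}_t$ is closed and maximal dissipative (Condition~\ref{hyp:blowup-KLV-Ward}(ii)), $-i\mathsf{A}_t$ generates a contraction semigroup $V_t(s)$. The horizontal Green identity \eqref{eq:blowup-horizontal-Green} shows that the dissipation of $\mathsf{A}_t$ is carried entirely by the rank-one boundary row and not by a bulk term. Polarizing the full KLV Green identity (as in \eqref{eq:full-green}, with $y_0$ removed), we expect
\[
\operatorname{Im}\langle \mathsf{A}_tf,f\rangle=-\pi|\ell f|^2 .
\]
For the adjoint, $\mathsf{A}_t^\ast$ is the corresponding accretive realization with the reversed boundary condition.

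\emph{Adjoint orbits.} We check that for $g\in\mathrm{Dom}(\mathsf{A}_t^\ast)$ the orbit $V_t(s)^\ast g$ stays in the subspace where the boundary row vanishes. This is the analogue of the backward-shift adjoint being the forward shift in the free model, where $\mathsf{X}$ is the generator of the Hardy-space translation in Fourier variables. Consequently
\[
\frac{d}{ds}\|V_t(s)^\ast g\|^2=0 ,
\]
so $V_t(s)^\ast$ is an isometry, which is $V_t(s)V_t(s)^\ast=I$.

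\emph{The projection.} Because $V_t(S)$ is a contraction, $V_t(S)^\ast V_t(S)$ is positive and nonincreasing in $S$, so its strong limit $\mathsf{P}_t^{\mathrm{W}}$ exists. Using the co-isometry,
\[
(V_t(S)^\ast V_t(S))^2=V_t(S)^\ast V_t(S),
\]
so each $V_t(S)^\ast V_t(S)$ is already an orthogonal projection. A decreasing family of projections converges strongly to a projection, so $\mathsf{P}_t^{\mathrm{W}}$ is an orthogonal projection.

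\emph{The ledger.} For $f$ in the domain, the energy identity is
\[
\|f\|^2-\|V_t(S)f\|^2=2\pi\int_0^S|\ell V_t(s)f|^2ds .
\]
Letting $S\to\infty$ gives
\[
\langle(I-\mathsf{P}_t^{\mathrm{W}})f,f\rangle=2\pi\int_0^\infty|\ell V_t(s)f|^2ds .
\]
We then identify the right-hand side with $\|W_t^{u_0}f\|^2$ as follows.
\begin{itemize}
\item Take the Laplace transform in $s$. The resolvent satisfies
\[
(\mathsf{A}_t-z)^{-1}=i\int_0^\infty e^{izs}V_t(s)\,ds,\qquad \operatorname{Im}z>0 ,
\]
so $[W_t^{u_0}f](z)$ is, up to the constant $(2\pi i)^{-1}$, the Fourier--Laplace transform of $s\mapsto\ell V_t(s)f$.
\item By Paley--Wiener--Plancherel on horizontal lines,
\[
\int|[W_t^{u_0}f](x+iy)|^2dx=2\pi\int_0^\infty e^{-2ys}|\ell V_t(s)f|^2ds .
\]
The normalization constants must be checked against \eqref{eq:dynamic-output}.
\item Let $y\downarrow0$. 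By monotone convergence, $\|W_t^{u_0}f\|^2$ (the nontangential boundary value norm) equals $2\pi\int_0^\infty|\ell V_t(s)f|^2ds$.
\item As a consistency check, \eqref{eq:blowup-horizontal-Green} gives the same conclusion, since its second term tends to $\|\mathsf{P}_t^{\mathrm{W}}f\|^2$ as $y\downarrow0$ by the same Laplace--Plancherel argument applied to $\|V_t(s)f\|^2$.
\end{itemize}
Polarizing and extending by density gives \eqref{eq:blowup-Wold-ledger}.

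\emph{Main obstacle.} The hardest step is justifying the boundary-row calculus for the rough KLV realization. Specifically, we must show that $\ell V_t(s)f$ is meaningful and locally $L^2$ in $s$ for $f$ in a core, and that the adjoint semigroup preserves the kernel of the row. The route we would try first is to avoid semigroup traces entirely and work with resolvents only, using \eqref{eq:blowup-horizontal-Green} together with the Laplace representation. Then $V_tV_t^\ast=I$ is read off from the fact that the adjoint resolvent has no boundary loss (Green identity for $\mathsf{A}_t^\ast$), and the smooth-approximation stability in Condition~\ref{hyp:blowup-KLV-Ward}(ii) transfers the identities from classical data.
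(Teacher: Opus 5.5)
Your proposal is correct. The co-isometry and projection steps match the paper's proof. The paper also obtains $V_t(s)V_t(s)^\ast=I$ from symmetry of the adjoint generator, and then concludes that $V_t(S)^\ast V_t(S)$ is a decreasing family of projections.

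The paper makes your ``adjoint has no boundary loss'' concrete as follows. For smooth bounded $u_{0,m}\to u_0$, the term $\mathsf{Q}_mf=u_{0,m}\Pi_+(\overline{u_{0,m}}f)$ is a bounded self-adjoint perturbation. Hence $\mathsf{A}_{t,m}^\ast=e^{-it\partial_x^2}(i\partial_\xi)e^{it\partial_x^2}-2t\mathsf{Q}_m$ on $e^{-it\partial_x^2}H^1_0(0,\infty)$ is symmetric. The strong convergence of the semigroups and their adjoints in Condition~\ref{hyp:blowup-KLV-Ward}(ii) then carries $V_{t,m}(s)V_{t,m}(s)^\ast=I$ over to the rough limit. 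Note that what is needed is symmetry of $\mathsf{A}_t^\ast$, not merely accretivity with a ``reversed boundary condition''; any genuine boundary gain would destroy the isometry of $V_t(s)^\ast$.

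For the ledger, your primary route is genuinely different. The paper's proof is exactly your ``consistency check'':
\begin{itemize}
\item it takes \eqref{eq:blowup-horizontal-Green} as given;
\item it rewrites the resolvent term, via the Laplace formula and vector-valued Plancherel, as $2y\int_0^\infty e^{-2ys}\|V_t(s)f\|^2ds$;
\item it lets $y\to0$, using Abel's theorem on that term and the $H^2$ boundary theorem on the first term.
\end{itemize}

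You instead integrate the pointwise dissipation identity $\operatorname{Im}\langle\mathsf{A}_tf,f\rangle=-\pi|\ell f|^2$ along the semigroup. You then identify $W_t^{u_0}$ as the output map of that semigroup; in the paper's Fourier convention this reads $\widehat{W_t^{u_0}f}(s)=I_+\bigl(V_t(s)f\bigr)$. This is the standard energy balance of a dissipative system.

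What your route buys: Laplace-transforming your energy identity reproduces \eqref{eq:blowup-horizontal-Green} for every $y>0$. So your argument proves the integrated Green identity that the paper assumes, and it also gives an explicit Fourier-side description of $W_t^{u_0}$.

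What it costs is the obstacle you name yourself. You need the pointwise Green identity and graph-continuity of the row along semigroup orbits of the rough realization. The paper records that identity in transported form as \eqref{eq:full-green}, but Condition~\ref{hyp:blowup-KLV-Ward}(ii) only supplies its integrated version. The paper's route touches the row only through resolvents, so it never has to make sense of $\ell V_t(s)f$ for rough data.
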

\begin{proof}
Choose smooth bounded $u_{0,m}\to u_0$ in $L^2_+(\mathbb{R})$ and put
\[
\mathsf{A}_{t,0}=e^{-it\partial_x^2}\mathsf{X}e^{it\partial_x^2},\qquad\mathsf{Q}_mf=u_{0,m}\Pi_+(\overline{u_{0,m}}f).
\]
Then $\mathsf{A}_{t,m}=\mathsf{A}_{t,0}-2t\mathsf{Q}_m,\qquad\mathsf{A}_{t,m}^\ast=\mathsf{A}_{t,0}^\ast-2t\mathsf{Q}_m$. On the Fourier half-line,
\[
\operatorname {Dom}(\mathsf{A}_{t,0}^\ast)=e^{-it\partial_x^2}H_0^1(0,\infty),\qquad\mathsf{A}_{t,0}^\ast=e^{-it\partial_x^2}(i\partial_\xi)e^{it\partial_x^2},
\]
so both $\mathsf{A}_{t,0}^\ast$ and $\mathsf{A}_{t,m}^\ast$ are symmetric. Consequently $V_{t,m}(s)^\ast$ is isometric and $V_{t,m}(s)V_{t,m}(s)^\ast=I$.
Norm-resolvent convergence and Trotter--Kato give strong convergence of both semigroups, uniformly for $s$ in compact intervals, so the identity passes to the rough limit. Hence $V_t(s)^\ast V_t(s)$ are decreasing range projections and their strong limit is the projection onto the unitary Wold component.

For $y>0$, the semigroup-resolvent formula and vector-valued Plancherel give
\[
\frac{y}{\pi}\int_{\mathbb{R}}\|(\mathsf{A}_t-x-iy)^{-1}f\|_{L^2(\mathbb{R}_+)}^2dx=2y\int_0^\infty e^{-2ys}\|V_t(s)f\|_{L^2(\mathbb{R}_+)}^2ds.
\]
The last expression tends to $\|\mathsf{P}_t^{\mathrm{W}}f\|_{L^2(\mathbb{R}_+)}^2$ by Abel's theorem. Letting $y\to0$ in \eqref{eq:blowup-horizontal-Green} and using the $H^2$ boundary theorem yields $\|W_t^{u_0}f\|_{L^2(\mathbb{R}_+)}^2=\|f\|_{L^2(\mathbb{R}_+)}^2-\|\mathsf{P}_t^{\mathrm{W}}f\|_{L^2(\mathbb{R}_+)}^2$. Polarization proves \eqref{eq:blowup-Wold-ledger}.
\end{proof}

\begin{lemma}[Identification of the endpoint row]\label{lem:blowup-endpoint-row}
One has
\begin{equation}\label{eq:blowup-endpoint-row}
W_T^{u_0}u_0=z_T,\qquad\|\mathsf{P}_T^{\mathrm{W}}u_0\|_{L^2(\mathbb{R})}^2=\mathfrak{c}_+(u_0).
\end{equation}
\end{lemma}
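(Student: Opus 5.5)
We aim to prove the two identities in \eqref{eq:blowup-endpoint-row} in two stages. First we identify $W_T^{u_0}u_0$ with the weak endpoint $z_T$ from Theorem~\ref{thm:blowup-weak-endpoint}. Then we read off the Wold defect by applying the Green--Wold identity \eqref{eq:blowup-Wold-ledger} at $t=T$.

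For the first identity we use the interior flow identity \eqref{eq:blowup-interior-boundary-flow}, which says that $W_\tau^{u_0}u_0=u(\tau)$ for $\tau<T$. Condition~\ref{hyp:blowup-KLV-Ward}(ii) gives that the resolvent and boundary row of $\mathsf{A}_t^{u_0}$ are locally uniformly continuous in $t\in\mathbb{R}$, in particular at $t=T$. From this we want strong continuity of $t\mapsto W_t^{u_0}f$ in a weak sense. We fix $z\in\mathbb{C}_+$ and write
\[
[W_t^{u_0}f](z)=\frac{1}{2\pi i}I_+\bigl((\mathsf{A}_t^{u_0}-z)^{-1}f\bigr).
\]
Continuity of the boundary row composed with the resolvent then gives $[W_\tau^{u_0}u_0](z)\to[W_T^{u_0}u_0](z)$ pointwise on $\mathbb{C}_+$ as $\tau\to T$. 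By \eqref{eq:full-boundary-contraction} the family $W_\tau^{u_0}u_0=u(\tau)$ is bounded in $L^2_+$. Hardy-space point evaluations $f\mapsto f(z)$ are given by pairing with the Cauchy--Szeg\H{o} kernels, and these kernels span a dense set. Pointwise convergence of Hardy extensions on a bounded family is therefore equivalent to weak convergence in $L^2_+$, so $u(\tau)\rightharpoonup W_T^{u_0}u_0$. Theorem~\ref{thm:blowup-weak-endpoint} gives $u(\tau)\rightharpoonup z_T$, and uniqueness of weak limits yields $W_T^{u_0}u_0=z_T$.

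For the second identity we apply Lemma~\ref{lem:blowup-Green-Wold} with $t=T$; this is legitimate because Condition~\ref{hyp:blowup-KLV-Ward}(ii) holds for every finite $t$, including $T$. Pairing \eqref{eq:blowup-Wold-ledger} with $u_0$ gives
\[
\|W_T^{u_0}u_0\|^2=\|u_0\|^2-\|\mathsf{P}_T^{\mathrm{W}}u_0\|^2 .
\]
Substituting $W_T^{u_0}u_0=z_T$ gives $\|\mathsf{P}_T^{\mathrm{W}}u_0\|^2=\|u_0\|^2-\|z_T\|^2$, and \eqref{eq:blowup-defect-loss} identifies the right-hand side with $\mathfrak{c}_+(u_0)$.

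The main obstacle is the first step, namely justifying that the Hardy extensions converge pointwise as $\tau\to T$. The continuity in Condition~\ref{hyp:blowup-KLV-Ward}(ii) is stated for the resolvent and the boundary row separately. The composite $\ell\circ R$ is continuous because continuity of the row is meant on the resolvent range, in the graph-continuous sense of \eqref{eq:dynamic-output}. If only continuity of the row itself is available, we would pass instead through the intertwining $W_t^{u_0}(\mathsf{A}_t^{u_0}-\zeta)^{-1}=(\mathsf{X}-\zeta)^{-1}W_t^{u_0}$. This identity lets us test against smooth Hardy vectors and use the resolvent continuity directly, possibly with the help of the Green identity \eqref{eq:blowup-horizontal-Green} for uniform control in $\tau$.
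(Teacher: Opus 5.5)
Your proposal is correct and follows the paper's proof essentially step for step. You use the interior identity $W_\tau^{u_0}u_0=u(\tau)$, continuity of the scalar row in $t$ to get pointwise convergence on $\mathbb{C}_+$, identification with $z_T$, and then \eqref{eq:blowup-Wold-ledger} at $t=T$ together with \eqref{eq:blowup-defect-loss}. For the identification step you use density of Szeg\H{o} kernels and uniqueness of weak limits, while the paper uses bounded point evaluations and Hardy uniqueness; these are the same fact. Your fallback through the resolvent intertwining is not needed, and it would not be available at $t=T$ anyway, since Condition~\ref{hyp:blowup-KLV-Ward}(iii) asserts it only strictly inside the lifespan.
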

\begin{proof}
At every interior time $\tau<T$, \eqref{eq:blowup-interior-boundary-flow} gives $W_{\tau}^{u_0}u_0=u(\tau)$. Point evaluation in $\mathbb{C}_+$ is bounded on $L^2_+(\mathbb{R})$, so Theorem~\ref{thm:blowup-weak-endpoint} and continuity of the scalar row give $[W_T^{u_0}u_0](z)=z_T(z)$ for every $z\in\mathbb{C}_+$. Hardy uniqueness proves the first identity. Applying \eqref{eq:blowup-Wold-ledger} to $u_0$ and using \eqref{eq:blowup-defect-loss} gives
\[
\|\mathsf {P}_T^{\mathrm{W}}u_0\|_{L^2(\mathbb{R})}^2=\|u_0\|_{L^2(\mathbb{R})}^2-\|W_T^{u_0}u_0\|_{L^2(\mathbb{R})}^2=\|u_0\|_{L^2(\mathbb{R})}^2-\|z_T\|_{L^2(\mathbb{R})}^2=\mathfrak{c}_+(u_0),
\]
which proves the second identity.
\end{proof}

\begin{thm}[Yosida--Wiener quantization]\label{thm:blowup-dark-quantization}
The unitary part of $\mathsf{A}_t$ on $\mathsf{P}_t^{\mathrm{W}}L^2_+(\mathbb{R})$ has finitely many simple eigenvalues and no continuous spectral subspace. Every normalized dark eigenvector $e$ satisfies
\begin{equation}\label{eq:blowup-dark-overlap}
|\langle e,u_0\rangle_{L^2(\mathbb{R})}|^2=2\pi.
\end{equation}
In particular, if $N_\ast=\dim(\mathsf{P}_T^{\mathrm{W}}L^2_+(\mathbb{R}))$, then
\begin{equation}\label{eq:blowup-defect-quantization}
\mathfrak{c}_+(u_0)=2\pi N_\ast,\qquad1\leq N_\ast\leq\left\lfloor\frac{\|u_0\|_{L^2(\mathbb{R})}^2}{2\pi}\right\rfloor.
\end{equation}
\end{thm}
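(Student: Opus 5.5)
The plan is to work on the Wold subspace $\mathcal{W}_t:=\mathsf{P}_t^{\mathrm{W}}L^2_+(\mathbb{R})$. There the semigroup $V_t(s)$ restricts to a unitary group, since $V_t(s)^\ast V_t(s)\to\mathsf{P}_t^{\mathrm{W}}$ and $V_t(s)V_t(s)^\ast=I$ by Lemma~\ref{lem:blowup-Green-Wold}. By Stone's theorem the part $\mathsf{A}_t^{\mathrm{u}}$ of $\mathsf{A}_t$ on $\mathcal{W}_t$ is therefore self-adjoint, and $\mathcal{W}_t$ reduces $\mathsf{A}_t$. Moreover \eqref{eq:blowup-Wold-ledger} gives $W_t^{u_0}f=0$ for $f\in\mathcal{W}_t$, so the boundary row vanishes on the dark channel. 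The engine of the argument is a virial identity along this unitary group.

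\textbf{Step 1: Heisenberg identity.} Since $\mathsf{A}_t=\mathsf{X}+2tL_{u_0}$ on the common domain \eqref{eq:blowup-common-domain}, the commutator in Condition~\ref{hyp:blowup-KLV-Ward}(i) gives formally
\[
[\mathsf{A}_t,L_{u_0}]=i\Bigl(I-\tfrac{1}{2\pi}|u_0\rangle\langle u_0|\Bigr).
\]
For $f\in\mathcal{W}_t$ I would differentiate $\langle L_{u_0}e^{-is\mathsf{A}^{\mathrm{u}}_t}f,e^{-is\mathsf{A}^{\mathrm{u}}_t}f\rangle$ in $s$ and integrate to obtain, for $s\in\mathbb{R}$,
\[
\langle L_{u_0}f_s,f_s\rangle=\langle L_{u_0}f,f\rangle+s\|f\|^2-\frac{1}{2\pi}\int_0^s|\langle f_\sigma,u_0\rangle|^2d\sigma,\qquad f_s=e^{-is\mathsf{A}^{\mathrm{u}}_t}f .
\]
This requires choosing the sign convention consistently; the argument below works for either sign after reversing $s$. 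To justify the identity I would replace $L_{u_0}$ by its Yosida regularization $L_\epsilon=L_{u_0}(I+\epsilon L_{u_0})^{-1}$ (after shifting $L_{u_0}$ to be positive) and use the stability of the commutator identity in the Yosida-commutator sense from Condition~\ref{hyp:blowup-KLV-Ward}(i). The identity is then valid in the quadratic form sense for $f$ in the form domain $H^{1/2}_+$, and I would pass $\epsilon\to0$ using lower semiboundedness together with monotone convergence. This justification is the main obstacle: the dark vectors need not lie in $\operatorname{Dom}(L_{u_0})$ a priori, and the regularized commutator produces error terms that must vanish as $\epsilon\to0$ uniformly on compact $s$-intervals.

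\textbf{Step 2: no continuous spectrum and no part orthogonal to the cyclic space.} Let $f\in\mathcal{W}_t$ lie in the continuous spectral subspace of $\mathsf{A}^{\mathrm{u}}_t$, taken in the form domain by density and then extended by approximation. The spectral measure of $\mathsf{A}^{\mathrm{u}}_t$ associated with the pair $(f,u_0)$ has no atoms. By Wiener's theorem, $\frac1{|s|}\int_0^s|\langle f_\sigma,u_0\rangle|^2d\sigma\to0$. Letting $s\to-\infty$ in the identity of Step 1 then gives $\langle L_{u_0}f_s,f_s\rangle\sim s\|f\|^2\to-\infty$. This contradicts lower semiboundedness of $L_{u_0}$ unless $f=0$. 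Hence the spectrum of $\mathsf{A}^{\mathrm{u}}_t$ is pure point.

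\textbf{Step 3: the overlap identity.} For a normalized eigenvector $e$ with $\mathsf{A}^{\mathrm{u}}_te=xe$, we have $e_s=e^{-isx}e$. The identity of Step 1 then reads
\[
0=s\Bigl(1-\tfrac{1}{2\pi}|\langle e,u_0\rangle|^2\Bigr)\quad\text{for all }s,
\]
which is \eqref{eq:blowup-dark-overlap}. The same identity also yields simplicity. Indeed, if $e,e'$ were orthonormal eigenvectors for the same eigenvalue, some nonzero combination of them would be orthogonal to $u_0$, and that combination would violate the overlap identity.

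\textbf{Step 4: finiteness and quantization.} Bessel's inequality applied to an orthonormal eigenbasis $\{e_k\}$ of $\mathcal{W}_t$ gives $2\pi\cdot\#\{k\}\le\|u_0\|^2$. So the eigenvalues are finitely many, and $N_\ast=\dim\mathcal{W}_T\le\lfloor\|u_0\|^2/2\pi\rfloor$. Since $\{e_k\}$ is an orthonormal basis of $\mathcal{W}_T$,
\[
\|\mathsf{P}_T^{\mathrm{W}}u_0\|^2=\sum_k|\langle e_k,u_0\rangle|^2=2\pi N_\ast .
\]
Combining this with Lemma~\ref{lem:blowup-endpoint-row} gives $\mathfrak{c}_+(u_0)=2\pi N_\ast$. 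Finally, $N_\ast\ge1$ because $\mathfrak{c}_+(u_0)>0$ by Theorem~\ref{thm:blowup-weak-endpoint}.
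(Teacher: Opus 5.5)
There is a genuine gap in Step~2, and the same issue affects Step~3. Your unregularized virial identity carries information only when $\langle L_{u_0}f,f\rangle<\infty$, i.e.\ for $f$ in the form domain $H^{1/2}_+(\mathbb{R})$. For any other dark vector both sides are $+\infty$.

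Nothing in Condition~\ref{hyp:blowup-KLV-Ward} says that the continuous spectral subspace of $\mathsf{A}_t^{\mathrm{u}}$ meets $H^{1/2}_+$ densely, or at all. Density of $H^{1/2}_+$ in $L^2_+$ says nothing about its intersection with a given closed subspace. Moreover, an approximant $f_n\in H^{1/2}_+$ of $f$ generally lies outside $\mathsf{P}_t^{\mathrm{W}}L^2_+(\mathbb{R})$, so neither the unitary group, nor your identity, nor Wiener's theorem applies to it. Showing that every form-domain vector of the continuous dark subspace vanishes therefore does not show that the subspace is trivial. Likewise, in Step~3 a dark eigenvector $e$ need not lie in $H^{1/2}_+$, and then $\langle L_{u_0}e_s,e_s\rangle=\langle L_{u_0}e,e\rangle$ just reads $\infty=\infty$.

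The missing idea is to keep the Yosida regularization at finite $\rho$ throughout. Put $\mathbb{H}=L_{u_0}+C\ge1$, $F_\rho=\rho(\rho+\mathbb{H})^{-1}$, $\mathbb{H}_\rho=\rho\mathbb{H}(\rho+\mathbb{H})^{-1}$, $B=I-\frac{1}{2\pi}|u_0\rangle\langle u_0|$ and $U(s)=e^{-is\mathsf{A}_t^{\mathrm{u}}}$. Then $Q_\rho(s)=\langle\mathbb{H}_\rho U(s)f,U(s)f\rangle$ is finite and nonnegative for \emph{every} $f$. Its derivative is $\pm\langle F_\rho U(s)f,BF_\rho U(s)f\rangle$, bounded by $\|B\|$.

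For an eigenvector, $Q_\rho$ is constant in $s$, so $\langle F_\rho e,BF_\rho e\rangle=0$ for each $\rho$. Letting $\rho\to\infty$ gives \eqref{eq:blowup-dark-overlap} with no domain hypothesis.

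For the continuous part, the paper couples the time window to $\rho$. Set $\gamma_\rho=\langle f,(I-F_\rho)f\rangle$ and $S_\rho=\rho\sqrt{\gamma_\rho}$. Then $S_\rho\to\infty$, because $\gamma_\rho\ge(\rho+1)^{-1}$, and $S_\rho/\rho=\sqrt{\gamma_\rho}\to0$. On a one-sided window of length $S_\rho$ (on the side dictated by the commutator sign), the bound $\|(I-F_\rho)U(s)f\|^2\le Q_\rho(s)/\rho\le\gamma_\rho+\|B\|S_\rho/\rho=o(1)$ lets you replace $F_\rho U(s)f$ by $U(s)f$. Wiener's theorem then removes the rank-one part of $B$ on average, so $Q_\rho$ changes by $(1+o(1))S_\rho$ across the window. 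Since $Q_\rho(0)/S_\rho=\sqrt{\gamma_\rho}\to0$, this makes $Q_\rho$ negative at the far end, a contradiction.

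Neither your $\rho=\infty$ argument nor a fixed-$\rho$, $|s|\to\infty$ argument captures this. For $|s|\gg\rho$ the vector $U(s)f$ may leak to high $\mathbb{H}$-energy where $F_\rho$ no longer sees it. Your Step~1 (for form-domain vectors) and Step~4 are otherwise fine, but Step~4 needs the repaired Steps~2 and~3 as input.
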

\begin{proof}
Choose $C=C(u_0)$ so that $\mathbb{H}=L_{u_0}+C\geq1$, and set
\[
B=I-\frac{1}{2\pi}|u_0\rangle\langle u_0|,\quad F_\rho=\rho(\rho+\mathbb{H})^{-1},\quad \mathbb{H}_\rho=\rho\mathbb{H}(\rho+\mathbb{H})^{-1}.
\]
The Ward identity, first on smooth cores and then through the bounded Duhamel identity in Condition~\ref{hyp:blowup-KLV-Ward}, gives
\begin{equation}\label{eq:blowup-Yosida-commutator}
[\mathsf{A}_t,\mathbb{H}_\rho]=iF_\rho BF_\rho
\end{equation}
in the semigroup $C^1$ sense. Let $\mathsf{A}_t^{\mathrm{u}}$ denote the self-adjoint generator on the Wold unitary part. Compression gives $[\mathsf{A}_t^{\mathrm{u}},\mathsf{P}_t^{\mathrm{W}}\mathbb{H}_\rho\mathsf{P}_t^{\mathrm{W}}]=i\mathsf{P}_t^{\mathrm{W}}F_\rho BF_\rho\mathsf{P}_t^{\mathrm{W}}$. If $\mathsf{A}_t^{\mathrm{u}}e=ae$ and $\|e\|_{L^2(\mathbb{R})}=1$, the bounded virial identity and $F_\rho\to I$ strongly yield
\[
0=\lim_{\rho\to\infty}\langle F_\rho e,BF_\rho e\rangle_{L^2(\mathbb{R})}=1-\frac{1}{2\pi}|\langle e,u_0\rangle_{L^2(\mathbb{R})}|^2,
\]
which proves \eqref{eq:blowup-dark-overlap}. An eigenspace of dimension at least two would contain a nonzero vector orthogonal to $u_0$, so every eigenvalue is simple. Bessel's inequality bounds their number by $\lfloor\|u_0\|_{L^2(\mathbb{R})}^2/(2\pi)\rfloor$.

It remains to exclude continuous spectrum. Suppose that a normalized $f$ lies in the continuous spectral subspace of $\mathsf{A}_t^{\mathrm{u}}$. Put
\[
G_\rho=I-F_\rho,\qquad\gamma_\rho=\langle f,G_\rho f\rangle_{L^2(\mathbb{R})},\qquad S_\rho=\rho\sqrt{\gamma_\rho}.
\]
The spectral theorem and $\mathbb{H}\geq1$ imply
\[
\gamma_\rho\to0,\qquad\gamma_\rho\geq(\rho+1)^{-1},\qquad S_\rho\to\infty,\qquad S_\rho/\rho\to0.
\]
For $U(s)=e^{-is\mathsf{A}_t^{\mathrm{u}}}$, define $Q_\rho(s)=\rho\langle U(s)f,G_\rho U(s)f\rangle_{L^2(\mathbb{R})}$.
The commutator identity gives
\begin{equation}\label{eq:blowup-Wiener-energy-derivative}
Q_\rho'(s)=\langle F_\rho U(s)f,BF_\rho U(s)f\rangle_{L^2(\mathbb{R})},\qquad |Q_\rho'(s)|\leq\|B\|_{\mathcal{B}(L^2_+(\mathbb{R}))}.
\end{equation}
Uniformly for $-S_\rho\leq s\leq0$,
\[
\|(I-F_\rho)U(s)f\|_{L^2(\mathbb{R})}^2\leq Q_\rho(s)/\rho\leq\gamma_\rho+\|B\|_{\mathcal{B}(L^2_+(\mathbb{R}))}S_\rho/\rho=o(1).
\]
Thus the time average of the identity part of $B$ tends to one, while
\[
\langle u_0,F_\rho U(s)f\rangle_{L^2(\mathbb{R})}=\langle\mathsf{P}_t^{\mathrm{W}}u_0,U(s)f\rangle_{L^2(\mathbb{R})}+o(1)
\]
uniformly on this interval. The one-sided Wiener theorem for the continuous spectral measure gives
\[
\frac{1}{S_\rho}\int_{-S_\rho}^0|\langle\mathsf{P}_t^{\mathrm{W}}u_0,U(s)f\rangle_{L^2(\mathbb{R})}|^2ds\underset{\rho\to\infty}{\longrightarrow}0.
\]
Integration of \eqref{eq:blowup-Wiener-energy-derivative} therefore gives
\[
\frac{Q_\rho(0)-Q_\rho(-S_\rho)}{S_\rho}\underset{\rho\to\infty}{\longrightarrow}1.
\]
This contradicts $Q_\rho(-S_\rho)\geq0$ and $Q_\rho(0)/S_\rho=\sqrt{\gamma_\rho}\to0$ as $\rho\to\infty$.
Hence the continuous dark subspace is trivial. Finally, Lemma~\ref{lem:blowup-endpoint-row} makes the endpoint dark space nonzero and $\|\mathsf{P}_T^{\mathrm{W}}u_0\|_{L^2(\mathbb{R})}^2=\sum_{j=1}^{N_\ast}|\langle e_j,u_0\rangle_{L^2(\mathbb{R})}|^2=2\pi N_\ast$, which proves \eqref{eq:blowup-defect-quantization}.
\end{proof}

\begin{lemma}[Interior isometry and endpoint channels]
\label{lem:blowup-dark-decomposition}
Let $(e_j)_{j=1}^{N_*}$ be the orthonormal eigenbasis of the endpoint unitary part,
\begin{equation}\label{eq:blowup-dark-eigen-data}
\mathsf{A}_T^{\mathrm{u}}e_j=\alpha_je_j,\qquad\chi_j=\langle u_0,e_j\rangle_{L^2(\mathbb{R})},\qquad |\chi_j|^2=2\pi.
\end{equation}
The $\alpha_j$ are pairwise distinct. Define
\[
\mathfrak{d}_j(\tau)=\chi_jW_{\tau}^{u_0}e_j,\qquad r_{\mathrm{end}}(\tau)=W_{\tau}^{u_0}(I-\mathsf{P}_T^{\mathrm{W}})u_0.
\]
Then, for every $\tau<T$,
\begin{equation}\label{eq:S9-dark-ledger}
u(\tau)=r_{\mathrm{end}}(\tau)+\sum_{j=1}^{N_*}\mathfrak{d}_j(\tau),
\end{equation}
and
\begin{equation}\label{eq:S9-dark-orthogonality}
r_{\mathrm{end}}(\tau)\perp \mathfrak{d}_j(\tau),\qquad\mathfrak{d}_j(\tau)\perp \mathfrak{d}_k(\tau)\ (j\ne k),\qquad\|\mathfrak{d}_j(\tau)\|_{L^2(\mathbb{R})}^2=2\pi.
\end{equation}
Moreover,
\begin{equation}\label{eq:blowup-dark-endpoint-limits}
r_{\mathrm{end}}(\tau)\longrightarrow z_T\quad\text{strongly in }L^2(\mathbb{R}),\qquad\mathfrak{d}_j(\tau)\rightharpoonup0.
\end{equation}
Consequently,
\begin{equation}\label{eq:S9-no-diffuse-interface}
u(\tau)-z_T=\sum_{j=1}^{N_\ast}\mathfrak{d}_j(\tau)+o_{L^2(\mathbb{R})}(1),
\end{equation}
and
\[
\|u(\tau)-r_{\mathrm{end}}(\tau)\|_{L^2(\mathbb{R})}^2=2\pi N_\ast,\qquad\|u(\tau)-z_T\|_{L^2(\mathbb R)}^2\underset{\tau\to T}{\longrightarrow}2\pi N_\ast.
\]
\end{lemma}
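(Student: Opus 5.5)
The plan is to work at a fixed interior time $\tau<T$ and use the Wold ledger \eqref{eq:blowup-Wold-ledger}, the endpoint identifications of Lemma~\ref{lem:blowup-endpoint-row} and Theorem~\ref{thm:blowup-dark-quantization}, and the interior boundary flow \eqref{eq:blowup-interior-boundary-flow}.

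\emph{Step 1: algebraic decomposition.} Theorem~\ref{thm:blowup-dark-quantization} says $\mathsf P_T^{\mathrm W}L^2_+$ has no continuous part and simple eigenvalues, so $(e_j)$ is an orthonormal basis of it. This gives $\mathsf P_T^{\mathrm W}u_0=\sum_j\chi_j e_j$ and $|\chi_j|^2=2\pi$ by \eqref{eq:blowup-dark-overlap}. Simplicity forces the $\alpha_j$ to be pairwise distinct. Applying $W_\tau^{u_0}$ to $u_0=(I-\mathsf P_T^{\mathrm W})u_0+\sum_j\chi_je_j$ and using \eqref{eq:blowup-interior-boundary-flow} yields \eqref{eq:S9-dark-ledger}.

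\emph{Step 2: orthogonality, i.e.\ interior isometry.} Here I would show that $W_\tau^{u_0}$ is isometric for $\tau<T$. By \eqref{eq:blowup-Wold-ledger} it suffices to prove $\mathsf P_\tau^{\mathrm W}=0$. Apply Theorem~\ref{thm:blowup-dark-quantization} at time $\tau$: any dark eigenvector $e$ at $\tau$ has $|\langle e,u_0\rangle|^2=2\pi$. The Wold identity applied to $u_0$ then gives $\|u_0\|^2-\|W_\tau u_0\|^2=\|\mathsf P_\tau^{\mathrm W}u_0\|^2\ge 2\pi$ if the dark space is nonzero. This contradicts $\|W_\tau u_0\|=\|u(\tau)\|=\|u_0\|$ (mass conservation \eqref{eq:M0-mass-Hardy}). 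With isometry in hand, orthogonality of $(I-\mathsf P_T^{\mathrm W})u_0$ and the $e_j$ transfers to their images, which proves \eqref{eq:S9-dark-orthogonality} and $\|u(\tau)-r_{\mathrm{end}}(\tau)\|^2=2\pi N_*$.

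\emph{Step 3: endpoint limits (main obstacle).} Weak convergence $W_\tau^{u_0}f\rightharpoonup W_T^{u_0}f$ for every $f$ follows from continuity of the resolvent and boundary row in $t$, through pointwise Hardy evaluation and the uniform bound \eqref{eq:full-boundary-contraction}, as in Lemma~\ref{lem:blowup-endpoint-row}. Since $e_j\in\mathsf P_T^{\mathrm W}L^2_+$, \eqref{eq:blowup-Wold-ledger} gives $W_T^{u_0}e_j=0$, hence $\mathfrak d_j(\tau)\rightharpoonup0$. For $f=(I-\mathsf P_T^{\mathrm W})u_0$ we get $W_\tau f\rightharpoonup W_T f=W_Tu_0=z_T$. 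The norms converge because of the isometries: $\|W_\tau f\|=\|f\|$, and $\|f\|^2=\|W_Tf\|^2=\|z_T\|^2$ since $W_T$ is isometric on $\operatorname{Ran}(I-\mathsf P_T^{\mathrm W})$. Weak convergence plus convergence of norms gives strong convergence $r_{\mathrm{end}}(\tau)\to z_T$.

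The delicate point is the weak continuity of $\tau\mapsto W_\tau^{u_0}$ up to the endpoint. I would derive it from Condition~\ref{hyp:blowup-KLV-Ward}(ii), using locally uniform continuity of the resolvent and row together with the $\le1$ bound, and Hardy density of reproducing kernels. Finally, \eqref{eq:S9-no-diffuse-interface} follows by substituting $r_{\mathrm{end}}=z_T+o(1)$ into \eqref{eq:S9-dark-ledger}, and the last norm limit follows from orthogonality.
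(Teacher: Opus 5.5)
Your proposal is correct and follows essentially the same route as the paper: interior isometry from $\mathsf P_\tau^{\mathrm W}=0$, obtained by combining mass conservation, the Wold ledger and the $2\pi$ dark overlap, and then the endpoint weak limit from row continuity upgraded to a strong limit for $r_{\mathrm{end}}$ by equality of norms. The only cosmetic difference is that you obtain $\|(I-\mathsf P_T^{\mathrm W})u_0\|=\|z_T\|$ from the Wold ledger at $T$ (via $W_T^{u_0}e_j=0$ and $W_T^{u_0}u_0=z_T$), whereas the paper uses the defect identity $\|\mathsf P_T^{\mathrm W}u_0\|^2=\mathfrak c_+(u_0)=\|u_0\|^2-\|z_T\|^2$; the two are equivalent.
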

\begin{proof}
For $\tau<T$, mass conservation, \eqref{eq:blowup-interior-boundary-flow}, and \eqref{eq:blowup-Wold-ledger} give $\|\mathsf{P}_{\tau}^{\mathrm{W}}u_0\|_{L^2(\mathbb{R})}=0$. If this Wold
space were nonzero, Theorem~\ref{thm:blowup-dark-quantization} would provide a dark atom having nonzero overlap with $u_0$, a contradiction. Hence $\mathsf{P}_{\tau}^{\mathrm{W}}=0$ and
$W_{\tau}^{u_0}$ is an isometry on all of $L^2_+(\mathbb{R})$. Apply it to the orthogonal decomposition $u_0=(I-\mathsf{P}_T^{\mathrm{W}})u_0+\sum_{j=1}^{N_\ast}\chi_je_j$ to obtain \eqref{eq:S9-dark-ledger}--\eqref{eq:S9-dark-orthogonality}.

Scalar-row continuity, the contraction bound, and density of Hardy reproducing kernels imply $W_{\tau}^{u_0}f\rightharpoonup W_T^{u_0}f$ for every fixed $f$. By \eqref{eq:blowup-Wold-ledger}, $W_T^{u_0}e_j=0$ and $(W_T^{u_0})^*W_T^{u_0}=I-\mathsf{P}_T^{\mathrm{W}}$. Thus $\mathfrak{d}_j(\tau)\rightharpoonup0$, while $r_{\mathrm{end}}(\tau)\rightharpoonup z_T$. Interior isometry,
\eqref{eq:blowup-defect-quantization}, and \eqref{eq:blowup-endpoint-row} give
\[
\|r_{\mathrm{end}}(\tau)\|_{L^2(\mathbb{R})}^2=\|(I-\mathsf{P}_T^{\mathrm{W}})u_0\|_{L^2(\mathbb{R})}^2=\|z_T\|_{L^2(\mathbb{R})}^2.
\]
Weak convergence plus equality of norms gives the strong limit in \eqref{eq:blowup-dark-endpoint-limits}. Substitution into \eqref{eq:S9-dark-ledger} proves \eqref{eq:S9-no-diffuse-interface};
\eqref{eq:S9-dark-orthogonality} gives the first norm identity, and the strong convergence of the regular channel gives the second.
\end{proof}

\subsection{Dark rows and simultaneous graph--Schur capture}\label{subsec:blowup-graph-Schur}
We convert the dark-channel decomposition into physical $R$-bubbles for the potential $u(\tau)$.
\begin{lemma}[Adaptive snapshot rows]\label{lem:blowup-snapshot-rows}
Let $(\alpha_j,e_j)$ be as in \eqref{eq:blowup-dark-eigen-data}. Put
\[
\delta_\tau=T-\tau,\qquad\Lambda_\tau=\delta_\tau^{-1/2},\qquad F_\Lambda=\Lambda(\Lambda+\mathbb H)^{-1},\qquad J_\Lambda=-i\Lambda(\mathsf X-i\Lambda)^{-1},
\]
and define
\[
f_{j,\tau}=J_{\Lambda_\tau}F_{\Lambda_\tau}e_j,\qquad g_{j,\tau}=W_{\tau}^{u_0}f_{j,\tau},\qquad h_{j,\tau}=W_{\tau}^{u_0}e_j.
\]
Then
\begin{align}
\|g_{j,\tau}-h_{j,\tau}\|_{L^2(\mathbb{R})}&\underset{\tau\to T}{\longrightarrow}0,\label{eq:blowup-snapshot-close}\\
\|(\mathsf X-\alpha_j)g_{j,\tau}\|_{L^2(\mathbb R)}&\lesssim\Lambda_\tau^{-1},\label{eq:blowup-snapshot-X-row}\\
\|(L_{u(\tau)}+C)g_{j,\tau}\|_{L^2(\mathbb R)}&\lesssim\Lambda_\tau,\label{eq:blowup-snapshot-L-row}\\
\Lambda_\tau^{-1}\|(L_{u(\tau)}+C)g_{j,\tau}\|_{L^2(\mathbb R)}&\underset{\tau\to T}{\longrightarrow}0.\label{eq:blowup-snapshot-refined-row}
\end{align}
\end{lemma}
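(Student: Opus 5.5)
The plan is to split each bound into a time-independent regularization error and a propagation error along $W_\tau^{u_0}$. For the propagation step we use the resolvent intertwining $W_\tau^{u_0}(\mathsf A_\tau-\zeta)^{-1}=(\mathsf X-\zeta)^{-1}W_\tau^{u_0}$ from Condition~\ref{hyp:blowup-KLV-Ward}(iii), the contraction bound \eqref{eq:full-boundary-contraction}, and the interior isometry from Lemma~\ref{lem:blowup-dark-decomposition}.

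For \eqref{eq:blowup-snapshot-close}, note that $F_\Lambda\to I$ strongly because $\mathbb H\ge1$. Similarly, $J_\Lambda=-i\Lambda(\mathsf X-i\Lambda)^{-1}\to I$ strongly with $\|J_\Lambda\|\le1$, since $\mathsf X$ is self-adjoint on the Hardy space up to its boundary structure; if only dissipativity is available, the same conclusion follows from the standard resolvent argument. Consequently $f_{j,\tau}\to e_j$ in $L^2$ as $\Lambda_\tau\to\infty$. Since $W_\tau^{u_0}$ is a contraction, $\|g_{j,\tau}-h_{j,\tau}\|\le\|f_{j,\tau}-e_j\|\to0$.

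For \eqref{eq:blowup-snapshot-X-row}, write $g_{j,\tau}=W_\tau J_\Lambda F_\Lambda e_j$ and move the $J_\Lambda$ factor across the intertwining. The expression $(\mathsf X-i\Lambda)^{-1}W_\tau=W_\tau(\mathsf A_\tau-i\Lambda)^{-1}$ gives
\[
(\mathsf X-\alpha_j)g_{j,\tau}=-i\Lambda W_\tau(\mathsf A_\tau-\alpha_j)(\mathsf A_\tau-i\Lambda)^{-1}F_\Lambda e_j.
\]
We then compare $\mathsf A_\tau$ with $\mathsf A_T$, using the affine relation $\mathsf A_\tau=\mathsf A_T-2\delta_\tau L_{u_0}$ on the common domain \eqref{eq:blowup-common-domain}. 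We also use $\mathsf A_T e_j=\alpha_je_j$ on the unitary part, together with the Yosida commutator \eqref{eq:blowup-Yosida-commutator} to commute $F_\Lambda$ past $\mathsf A_T$ at cost $O(\Lambda^{-1})\|B\|$.

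This yields three contributions. The commutator term is $O(\Lambda^{-1})$ after multiplication by $\Lambda\|(\mathsf A-i\Lambda)^{-1}\|\le1$. The eigenvalue term contributes zero. The term $2\delta_\tau L_{u_0}F_\Lambda e_j$ has norm $\lesssim\delta_\tau\Lambda=\Lambda^{-1}$, because $\|\mathbb HF_\Lambda\|\le\Lambda$. The remaining resolvent factor satisfies $\|\Lambda(\mathsf A_\tau-i\Lambda)^{-1}\|\le1$, so it only multiplies these bounds by a constant. This is the reason for the scaling choice $\Lambda_\tau=\delta_\tau^{-1/2}$.

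For \eqref{eq:blowup-snapshot-L-row} and \eqref{eq:blowup-snapshot-refined-row}, we transport $L_{u_0}$ to $L_{u(\tau)}$. On the smooth approximants, the Lax equation gives $W_\tau L_{u_0}=L_{u(\tau)}W_\tau$ on the cyclic subspace, which is the operator form of \eqref{eq:cyclic-intertwining}. Since the interior isometry makes $W_\tau$ an isometry on all of $L^2_+$, we expect to extend this to $(L_{u(\tau)}+C)W_\tau f=W_\tau\mathbb Hf$ for $f\in\operatorname{Dom}(\mathbb H)$. We would verify this through resolvents via Condition~\ref{hyp:blowup-KLV-Ward}(i) and the smooth approximation of Theorem~\ref{thm:M0-maximal-closure}.

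Granting this identity, we have $\|(L_{u(\tau)}+C)g_{j,\tau}\|=\|\mathbb HJ_\Lambda F_\Lambda e_j\|$. The commutator $[\mathsf X,L_{u_0}]=iB$ gives $[\mathbb H,J_\Lambda]=O(\Lambda^{-1})$ in norm, so this quantity is bounded by $\|\mathbb HF_\Lambda e_j\|+O(1)\lesssim\Lambda$. For the refined bound, $\Lambda^{-1}\|\mathbb HF_\Lambda e\|^2=\int\Lambda^{-1}s^2\Lambda^2(\Lambda+s)^{-2}\,d\mu_e\to0$ by dominated convergence, since each $e$ is fixed and $\Lambda^{-1}s\Lambda/(\Lambda+s)\le\min(s,\Lambda)/\Lambda$.

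The main obstacle is the \emph{noncyclic} Lax intertwining $W_\tau\mathbb H=(L_{u(\tau)}+C)W_\tau$ on all of $L^2_+$. The dark vectors $e_j$ need not lie in the cyclic subspace $\mathcal K_{u_0}$, and the rough passage requires the strong semigroup convergence of Condition~\ref{hyp:blowup-KLV-Ward}(ii). To establish it, we would first prove the identity for smooth data at the resolvent level, $W_\tau(\mathbb H-z)^{-1}=(L_{u(\tau)}+C-z)^{-1}W_\tau$, then pass to the limit using norm-resolvent continuity and weak convergence of the boundary rows.
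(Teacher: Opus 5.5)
Your arguments for \eqref{eq:blowup-snapshot-close}, \eqref{eq:blowup-snapshot-L-row} and \eqref{eq:blowup-snapshot-refined-row} are essentially the paper's. The paper also relies on the noncyclic Lax intertwining you flag, under the phrase ``the two intertwinings''. It writes your dominated-convergence step as $\Lambda^{-1}\mathbb H F_\Lambda=I-F_\Lambda$; note that your integrand should carry $\Lambda^{-2}$, not $\Lambda^{-1}$.

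The genuine gap is in \eqref{eq:blowup-snapshot-X-row}. In $g_{j,\tau}=W_\tau^{u_0}J_\Lambda F_\Lambda e_j$, the $\mathsf X$-resolvent $J_\Lambda$ acts \emph{before} $W_\tau^{u_0}$. The intertwining $W_\tau^{u_0}(\mathsf A_\tau-\zeta)^{-1}=(\mathsf X-\zeta)^{-1}W_\tau^{u_0}$ only converts an $\mathsf X$-resolvent that acts \emph{after} $W_\tau^{u_0}$. Your displayed formula is therefore the formula for $J_\Lambda W_\tau^{u_0}F_\Lambda e_j$. That is a different vector, because $W_\tau^{u_0}$ intertwines $\mathsf A_\tau$ with $\mathsf X$ rather than commuting with $\mathsf X$. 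It is also inconsistent with your $L$-row step, where $J_\Lambda$ stays on the initial-data side. Moreover, your route needs $F_\Lambda e_j\in\operatorname{Dom}(\mathsf X)\cap\operatorname{Dom}(L_{u_0})$ in order to invoke \eqref{eq:blowup-common-domain}, and nothing provides this. Placing the vector in $\operatorname{Dom}(\mathsf X)$ is precisely what $J_\Lambda$ is for.

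The repair, which is the paper's argument, leaves $J_\Lambda$ in place. Since $J_\Lambda$ maps into $\operatorname{Dom}(\mathsf X)$ and $[\mathbb H,J_\Lambda]=\Lambda(\mathsf X-i\Lambda)^{-1}B(\mathsf X-i\Lambda)^{-1}$ is bounded, you get $f_{j,\tau}\in\operatorname{Dom}(\mathsf X)\cap\operatorname{Dom}(L_{u_0})\subset\operatorname{Dom}(\mathsf A_\tau)$. The domain form of the pencil intertwining, $\mathsf X W_\tau^{u_0}f=W_\tau^{u_0}\mathsf A_\tau f$, then gives
\[
(\mathsf X-\alpha_j)g_{j,\tau}=W_\tau^{u_0}\bigl\{(\mathsf A_T-\alpha_j)f_{j,\tau}-2\delta_\tau L_{u_0}f_{j,\tau}\bigr\}.
\]
The last term is $\lesssim\delta_\tau\Lambda_\tau=\Lambda_\tau^{-1}$, as you say. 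For the first term you must commute $\mathsf A_T$ past \emph{both} regularizers:
\[
(\mathsf A_T-\alpha_j)J_\Lambda F_\Lambda e_j=J_\Lambda[\mathsf A_T,F_\Lambda]e_j+[\mathsf A_T,J_\Lambda]F_\Lambda e_j.
\]
The missing ingredient is
\[
[\mathsf A_T,J_\Lambda]=2T\Lambda(\mathsf X-i\Lambda)^{-1}B(\mathsf X-i\Lambda)^{-1},
\]
which follows from $[\mathsf A_T,\mathsf X]=2T[L_{u_0},\mathsf X]=-2iTB$ and has norm at most $2T\|B\|/\Lambda$. Together with $[\mathsf A_T,F_\Lambda]=-\frac{i}{\Lambda}F_\Lambda BF_\Lambda$, this gives $\|(\mathsf A_T-\alpha_j)f_{j,\tau}\|\lesssim\Lambda_\tau^{-1}$, and hence \eqref{eq:blowup-snapshot-X-row}.
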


\begin{proof}
The Yosida commutator \eqref{eq:blowup-Yosida-commutator} and the resolvent commutator identity give
\begin{equation}\label{eq:blowup-F-commutator}
[\mathsf{A}_T,F_\Lambda]=-\frac{i}{\Lambda}F_\Lambda BF_\Lambda.
\end{equation}
The Ward identity also yields $[\mathbb{H},J_\Lambda]=\Lambda(\mathsf{X}-i\Lambda)^{-1}B(\mathsf{X}-i\Lambda)^{-1}$, \begin{equation}\label{eq:blowup-J-commutators}
[\mathsf{A}_T,J_\Lambda]=2T\Lambda(\mathsf{X}-i\Lambda)^{-1}B(\mathsf{X}-i\Lambda)^{-1},
\end{equation}
and hence
\begin{equation}\label{eq:blowup-J-commutator-bounds}
\|[\mathbb{H},J_\Lambda]\|_{\mathcal{B}(L^2(\mathbb{R}))}\leq\frac{\|B\|_{\mathcal{B}(L^2(\mathbb{R}))}}{\Lambda},\qquad\|[\mathsf{A}_T,J_\Lambda]\|_{\mathcal{B}(L^2(\mathbb{R}))}\leq2T\frac{\|B\|_{\mathcal {B}(L^2(\mathbb{R}))}}{\Lambda}.
\end{equation}
These identities hold for smooth approximants and pass to the rough realization by the Yosida commutator stability in Condition~\ref{hyp:blowup-KLV-Ward}.

Since $F_\Lambda e_j\in\operatorname{Dom}(\mathbb{H})$, the bounded commutator $[\mathbb{H},J_\Lambda]$ shows that $J_\Lambda F_\Lambda e_j\in\operatorname{Dom}(\mathbb{H})$, while the resolvent definition of $J_\Lambda$ places it in $\operatorname{Dom}(\mathsf{X})$. Thus $f_{j,\tau}$ belongs to $\operatorname{Dom}(\mathsf {X})\cap\operatorname{Dom}(L_{u_0})$. The common-domain identity \eqref{eq:blowup-common-domain} gives
\begin{equation}\label{eq:blowup-pencil-time-difference}
\mathsf{A}_\tau f_{j,\tau}=\mathsf{A}_T f_{j,\tau}-2\delta_\tau L_{u_0}f_{j,\tau}.
\end{equation}
Since $J_\Lambda,F_\Lambda\to I$ strongly, $f_{j,\tau}\to e_j$. Equations \eqref{eq:blowup-F-commutator}--\eqref{eq:blowup-J-commutator-bounds} give $\|(\mathsf{A}_T-\alpha_j)f_{j,\tau}\|_{L^2(\mathbb{R})}\lesssim\Lambda_\tau^{-1}$, \begin{equation}\label{eq:blowup-snapshot-H-bound}
\|\mathbb{H}f_{j,\tau}\|_{L^2(\mathbb{R})}\leq\Lambda_\tau+\frac{\|B\|_{\mathcal{B}(L^2(\mathbb{R}))}}{\Lambda_\tau}.
\end{equation}
Interior isometry and the two intertwinings prove \eqref{eq:blowup-snapshot-close} and \eqref{eq:blowup-snapshot-L-row}. Applying the pencil intertwining to \eqref{eq:blowup-pencil-time-difference} gives
\[
(\mathsf{X}-\alpha_j)g_{j,\tau}=W_{\tau}^{u_0}\bigl\{(\mathsf{A}_T-\alpha_j)f_{j,\tau}-2\delta_\tau L_{u_0}f_{j,\tau}\bigr\},
\]
which, together with \eqref{eq:blowup-snapshot-H-bound} and $\delta_\tau\Lambda_\tau=\Lambda_\tau^{-1}$, proves \eqref{eq:blowup-snapshot-X-row}. Finally, with
$G_\Lambda=I-F_\Lambda$,
\[
\Lambda^{-1}\mathbb{H}J_\Lambda F_\Lambda e_j=J_\Lambda G_\Lambda e_j+\Lambda^{-1}[\mathbb H,J_\Lambda]F_\Lambda e_j\underset{\Lambda\to\infty}{\longrightarrow}0,
\]
which proves \eqref{eq:blowup-snapshot-refined-row}.
\end{proof}

\begin{lemma}[Low-frequency evacuation]\label{lem:S9-low-evacuation}
For every fixed $K>0$ and every nonempty $I\subset\{1,\ldots,N_\ast\}$,
\begin{equation}\label{eq:S9-low-evacuation}
\left\|P_{\leq K}\sum_{j\in I}\mathfrak{d}_j(\tau)\right\|_{L^2(\mathbb{R})}\underset{\tau\to T}{\longrightarrow}0.
\end{equation}
\end{lemma}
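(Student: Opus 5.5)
The idea is to combine the decomposition \eqref{eq:S9-no-diffuse-interface} of Lemma~\ref{lem:blowup-dark-decomposition} with the strong low-frequency convergence in Theorem~\ref{thm:blowup-weak-endpoint}. No information about individual dark channels beyond their weak vanishing will be used.

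First, Theorem~\ref{thm:blowup-weak-endpoint} gives $P_{\le K}(u(\tau)-z_T)\to0$ strongly for every fixed $K$. Lemma~\ref{lem:blowup-dark-decomposition} shows that $r_{\mathrm{end}}(\tau)\to z_T$ strongly, so $P_{\le K}(u(\tau)-r_{\mathrm{end}}(\tau))\to0$. By \eqref{eq:S9-dark-ledger} this quantity is exactly $P_{\le K}\sum_{j=1}^{N_\ast}\mathfrak d_j(\tau)$, which proves the claim for the full index set.

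For a proper subset $I$, the full sum is not enough, because cancellations between channels must be excluded. Each $\mathfrak d_j(\tau)$ converges weakly to zero by \eqref{eq:blowup-dark-endpoint-limits}. It is also uniformly bounded, with $\|\mathfrak d_j\|^2=2\pi$. To upgrade weak convergence to strong convergence after applying $P_{\le K}$, I would show that $\{P_{\le K}\mathfrak d_j(\tau)\}$ is precompact in $L^2$. Weak convergence to $0$ plus precompactness then yields strong convergence to $0$.

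Compactness in frequency is automatic, since the Fourier support lies in $[0,K]$. The remaining issue is physical-space tightness. A uniform low-frequency time modulus, in the spirit of Lemma~\ref{lem:blowup-low-frequency-modulus}, is not directly available for $\mathfrak d_j(\tau)=\chi_jW_\tau^{u_0}e_j$. Instead I would use the snapshot rows of Lemma~\ref{lem:blowup-snapshot-rows}. By \eqref{eq:blowup-snapshot-close}, $\mathfrak d_j(\tau)=\chi_j g_{j,\tau}+o(1)$. By \eqref{eq:blowup-snapshot-X-row}, $\|(\mathsf X-\alpha_j)g_{j,\tau}\|\lesssim\Lambda_\tau^{-1}\to0$.

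Since $\mathsf X$ acts as $i\partial_\xi$ on the Fourier side, this controls $\partial_\xi\widehat{g_{j,\tau}}$. Commuting $P_{\le K}$ with $\mathsf X$ produces an endpoint term at $\xi=K$, which I would avoid by replacing the sharp cutoff with a smooth one $\varphi(D)$, with $\varphi\in C_c^\infty$ equal to $1$ on $[0,K]$. Then $\|\mathsf X\varphi(D)g_{j,\tau}\|$ is uniformly bounded, giving uniform decay in $x$ and hence tightness. With frequency support in a fixed compact set, Kolmogorov--Riesz gives precompactness of $\varphi(D)g_{j,\tau}$.

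The trace term at $\xi=0$ from $\mathrm{Dom}(\mathsf X)$ also needs care. One can either note that $g_{j,\tau}=W_\tau f_{j,\tau}$ lies in the domain via the intertwining, or bound $x\varphi(D)g$ using \eqref{eq:M10prime-physical-generator}, where the constant term is harmless after localization in $x$. Since $\mathfrak d_j\rightharpoonup0$, we get $\varphi(D)\mathfrak d_j(\tau)\to0$ strongly, hence $P_{\le K}\mathfrak d_j(\tau)\to0$. Summing over $j\in I$ proves \eqref{eq:S9-low-evacuation}.

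I expect this tightness step to be the main obstacle: extracting a uniform physical-space bound from \eqref{eq:blowup-snapshot-X-row} requires handling the Hardy boundary trace. If that fails, the fallback is the mass identity. Orthogonality gives $\|u(\tau)-r_{\mathrm{end}}\|^2=2\pi N_\ast$, and one can compare $\|P_{\le K}\sum_{j\in I}\mathfrak d_j\|$ with the low-frequency component of the isometric image $W_\tau$ of the projection onto $\mathrm{span}\{e_j:j\in I\}$, using the fact that its compression converges weakly to zero.
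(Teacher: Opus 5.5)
Your argument is correct, but it takes a softer route than the paper.

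\textbf{The paper's route.} The paper uses the \emph{decay} in \eqref{eq:blowup-snapshot-X-row}, not just a bound.
\begin{itemize}
\item With $G_{j,\tau}(\xi)=e^{i\alpha_j\xi}\widehat g_{j,\tau}(\xi)$ one has $\|\partial_\xi G_{j,\tau}\|_{L^2(\mathbb{R}_+)}\lesssim\Lambda_\tau^{-1}$ and $\|G_{j,\tau}\|_{L^2}=O(1)$.
\item The half-line trace inequality $\|G\|_{L^\infty}^2\le 2\|G\|_{L^2}\|G'\|_{L^2}$ then gives $\|\widehat g_{j,\tau}\|_{L^\infty}^2\lesssim\Lambda_\tau^{-1}$.
\item Hence $\|P_{\le K}g_{j,\tau}\|_{L^2}^2\lesssim K\Lambda_\tau^{-1}$.
\item Finally \eqref{eq:blowup-snapshot-close} transfers this to each $\mathfrak{d}_j(\tau)$, and one sums over $I$.
\end{itemize}

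\textbf{Your route.} You use only an $O(1)$ bound on the centered $\mathsf X$ row. You compensate with the weak vanishing $\mathfrak{d}_j(\tau)\rightharpoonup0$ from \eqref{eq:blowup-dark-endpoint-limits} and a Kolmogorov--Riesz compactness argument for $\varphi(D)g_{j,\tau}$. This works:
\begin{itemize}
\item Write $x\varphi(D)g=\mathsf X\varphi(D)g+c$, where $c$ is a constant proportional to $\varphi(0)\widehat g(0+)$. This constant is bounded by the same trace inequality.
\item This gives $\int_{|x|>R}|\varphi(D)g|^2\lesssim R^{-1}$ uniformly in $\tau$.
\item Compact Fourier support supplies equicontinuity, so the family is precompact and weak vanishing upgrades to strong vanishing.
\end{itemize}

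\textbf{The trace step.} Your first option (membership in $\mathrm{Dom}(\mathsf X)$ via the intertwining) does not by itself handle the trace, since that domain does not force $\widehat g(0+)=0$. It is your second option, the constant term decaying like $x^{-1}$, that is actually needed.

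\textbf{Comparison.}
\begin{itemize}
\item The paper's argument is two lines and quantitative, with an explicit rate. It does not need the weak endpoint limits of the dark channels.
\item Yours gives no rate. It would, however, survive if the snapshot rows were only known to have bounded, rather than vanishing, centered $\mathsf X$-rows.
\end{itemize}

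Your separate treatment of the full index set via Theorem~\ref{thm:blowup-weak-endpoint} is correct but redundant, since the per-channel argument already covers every $I$.
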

\begin{proof}
Write $G_{j,\tau}(\xi)=e^{i\alpha_j\xi}\widehat g_{j,\tau}(\xi)$.
The half-line trace inequality, Plancherel, and \eqref{eq:blowup-snapshot-X-row} imply $\|\widehat{g}_{j,\tau}\|_{L^\infty(\mathbb{R})}^2\lesssim\Lambda_\tau^{-1}$. Consequently, $\|P_{\leq K}g_{j,\tau}\|_{L^2(\mathbb{R})}^2\lesssim \frac{K}{\Lambda_\tau}$. Now $\mathfrak d_j(\tau)=\chi_jh_{j,\tau}$, $g_{j,\tau}-h_{j,\tau}\to0$ in $L^2(\mathbb{R})$ as $\tau\to T$, and the number of channels is finite. Therefore \eqref{eq:S9-low-evacuation} follows.
\end{proof}

For $v\in L^2_+(\mathbb{R})$, write
\[
\mathsf{T}_{\bar{v}}f=\Pi_+(\bar{v}f),\qquad\mathfrak{l}_v(f,h)=\langle D^{1/2}f,D^{1/2}h\rangle-\langle\mathsf{T}_{\bar v}f,\mathsf{T}_{\bar{v}}h\rangle.
\]
\begin{lemma}[Sharp Hardy--Toeplitz inequality]\label{lem:blowup-sharp-Hardy-Toeplitz}
If $v\in L^2_+(\mathbb{R})$ and $f\in\dot H^{1/2}_+(\mathbb{R})$, then
\begin{equation}\label{eq:blowup-sharp-Toeplitz}
\|\mathsf{T}_{\bar{v}}f\|_{L^2(\mathbb{R})}^2\leq\frac{\|v\|_{L^2(\mathbb{R})}^2}{2\pi}\|D^{1/2}f\|_{L^2(\mathbb{R})}^2.
\end{equation}
If $v,f\ne0$, equality holds if and only if there are $\lambda>0$, $y\in\mathbb R$, and nonzero constants $A,B$ such that
\begin{equation}\label{eq:blowup-sharp-equality}
v=A\mathcal{M}_{\lambda,y,0,0}R,\qquad f=B\mathcal{M}_{\lambda,y,0,0}R.
\end{equation}
The carrier-relative version follows after a common nonnegative modulation.
\end{lemma}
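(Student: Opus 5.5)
The plan is to work entirely on the Fourier side, where $\mathsf T_{\bar v}$ becomes an integral operator with explicit kernel, and to obtain both the inequality and the equality case from a single Cauchy--Schwarz step. Write $F=\mathcal F_0 f$ and $V=\mathcal F_0 v$, both supported in $[0,\infty)$. For $\zeta\ge0$,
\[
\mathcal F_0(\mathsf T_{\bar v}f)(\zeta)=\frac{1}{\sqrt{2\pi}}\int_0^\infty \overline{V(\eta)}F(\eta+\zeta)\,d\eta .
\]
This is the same Hankel-type formula as in Lemma~\ref{lem:M9-carrier-Hankel}, with the roles of $v$ and $f$ placed so that $f$ plays the symbol.

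Apply Cauchy--Schwarz in $\eta$ to this formula:
\[
|\mathcal F_0(\mathsf T_{\bar v}f)(\zeta)|^2\le\frac{1}{2\pi}\|V\|_{L^2}^2\int_\zeta^\infty|F(\omega)|^2\,d\omega .
\]
Integrating in $\zeta\ge0$ and using Fubini gives $\int_0^\infty\int_\zeta^\infty|F|^2=\int_0^\infty\omega|F(\omega)|^2\,d\omega$. The right side is $\|D^{1/2}f\|_{L^2}^2$, since $\omega\ge0$ on the support and $\mathcal F_0$ is unitary. Together with $\|V\|_{L^2}^2=\|v\|_{L^2}^2$, this yields \eqref{eq:blowup-sharp-Toeplitz}. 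The only routine points are checking the normalization constants against $\mathcal F_0=(2\pi)^{-1/2}\widehat{\ \cdot\ }$, and verifying that for $f\in\dot H^{1/2}_+$ and $v\in L^2_+$ the product $\bar v f$ is well defined before projection. I would handle the latter by density of Schwartz $f$ with compactly supported Fourier transform, the bound itself giving the extension.

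For the equality case I would argue as follows. Equality in the integrated inequality forces equality in Cauchy--Schwarz for a.e.\ $\zeta\ge0$. Equality in the final Fubini step is exact, so it imposes no further condition. Hence for a.e.\ $\zeta$ there is $c(\zeta)$ with $F(\eta+\zeta)=c(\zeta)V(\eta)$ for a.e.\ $\eta>0$. This step needs care whenever the tail $F(\cdot+\zeta)$ vanishes; I would restrict to $\zeta$ below the upper edge of the support of $F$.

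Taking $\zeta=0$ shows $F=c(0)V$ with $c(0)\ne0$ (since $f\ne0$). It follows that $F(\eta+\zeta)=m(\zeta)F(\eta)$, i.e.\ $S_\zeta^\ast f=m(\zeta)f$. This is exactly the exact character relation \eqref{eq:M10prime-exact-character}, here holding for all $\zeta$ with a measurable multiplier. Measurability suffices for the character classification: $m(a)=\langle S_a^\ast f,f\rangle/\|f\|^2$ is continuous by strong continuity of the shift. The argument in the proof of Theorem~\ref{thm:M10prime-character} then gives $F(\zeta)=ce^{-(\alpha+i\beta)\zeta}\mathbf 1_{\mathbb R_+}(\zeta)$ with $\alpha>0$. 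By \eqref{eq:modulation-fourier} and $\mathcal F_0R\propto e^{-\zeta}\mathbf 1_{\mathbb R_+}$, this means $f=B\mathcal M_{\lambda,y,0,0}R$ with $\lambda=\alpha$, $y=\beta$, and the phase absorbed into $B$. Since $V$ is proportional to $F$, the same holds for $v$ with the same $(\lambda,y)$.

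The converse is the step I expect to be the main (though mild) obstacle. When $F=ce^{-w\zeta}$ and $V=c'e^{-w\zeta}$ on $\mathbb R_+$, the tail $F(\cdot+\zeta)=e^{-w\zeta}F$ is exactly parallel to $V$ for every $\zeta$, so Cauchy--Schwarz is an equality pointwise and hence equality holds in \eqref{eq:blowup-sharp-Toeplitz}. The delicate part is the a.e.\ bookkeeping in the forward direction, namely extracting a single measurable $m(\zeta)$ from pointwise proportionality, which I would settle by defining $m$ through the inner product as above. For the carrier-relative version, I would conjugate by a common $\mathcal M_{1,0,\xi,0}$ with $\xi\ge0$. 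This multiplies both $v$ and $f$ by the same phase $e^{i\xi x}$, which leaves $\bar v f$ unchanged and reduces the statement to the zero-carrier case.
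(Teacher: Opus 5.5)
Your argument follows the paper's proof. The inequality comes from the Fourier-side Hankel formula, Cauchy--Schwarz in $\eta$, and Fubini. The equality case comes from the a.e.\ proportionality $F(\cdot+\zeta)=c(\zeta)V$, turned into an exponential through the shift (character/Pexider) equation. Your reduction to \eqref{eq:M10prime-exact-character} spells out what the paper compresses into ``comparing two right shifts.''

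One step needs repair, because $f$ is only assumed to lie in $\dot H^{1/2}_+$.

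The proportionality holds only for a.e.\ $\zeta>0$, so ``taking $\zeta=0$'' is not justified as written. Your remedy is continuity of $m(a)=\langle S_a^\ast f,f\rangle/\|f\|^2$ via strong continuity of the shift on $L^2$. That presupposes $f\in L^2$, which for $f\in\dot H^{1/2}_+$ is exactly what $F\propto V$ is supposed to deliver. The homogeneous case is genuinely needed later, for instance for $f=D^{-1/2}h$ in Lemma~\ref{lem:blowup-colligation-gap}.

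The fix is short:
\begin{itemize}
\item Since $\int_a^\infty|F|^2\le a^{-1}\|D^{1/2}f\|_{L^2}^2$, translation of $F$ is continuous in $L^2(a,b)$ for every $0<a<b$.
\item Take good shifts $\zeta_n\to0^+$ and a window $(a,b)$ with $V\mathbf 1_{(a,b)}\neq0$. Then $c(\zeta_n)=\langle F(\cdot+\zeta_n),V\rangle_{L^2(a,b)}/\|V\|_{L^2(a,b)}^2$ converges to some $c_0$.
\item It follows that $F=c_0V$ a.e.\ on $(0,\infty)$, with $c_0\neq0$ because $f\neq0$.
\end{itemize}
Only at that point is $f\in L^2$. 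Your continuity upgrade of $S_\zeta^\ast f=m(\zeta)f$ from a.e.\ $\zeta$ to all $\zeta$, and the character classification, then go through as you describe.

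The paper's two-shift comparison sidesteps the issue differently. Using $F(\eta+\zeta_2)=c(\zeta_2)V(\eta)=c(\zeta_1)V(\eta+\zeta_2-\zeta_1)$, it obtains the shift relation directly for $V\in L^2$.
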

\begin{proof}
For $\xi\geq0$, $\widehat{\mathsf{T}_{\bar{v}}f}(\xi)=\frac{1}{2\pi}\int_0^\infty\widehat{f}(\xi+\eta)\overline{\widehat{v}(\eta)}d\eta$. Cauchy--Schwarz in $\eta$, followed by Plancherel and Fubini, gives
\[
\|\mathsf{T}_{\bar{v}}f\|_{L^2(\mathbb{R})}^2\leq\frac{1}{8\pi^3}\left(\int_0^\infty|\widehat{v}(\eta)|^2d\eta\right)\left(\int_0^\infty s|\widehat{f}(s)|^2ds\right),
\]
which is \eqref{eq:blowup-sharp-Toeplitz}. Equality is equivalent to $\widehat{f}(\xi+\eta)=c(\xi)\widehat{v}(\eta)$ almost everywhere. The measurable Pexider equation obtained by comparing two right shifts gives
\[
\widehat{v}(\eta)=A_0e^{-z\eta},\qquad\widehat{f}(\eta)=B_0e^{-z\eta},\qquad\operatorname{Re}z>0.
\]
Since $\widehat{R}(\eta)=-2\pi i\sqrt2 e^{-\eta}\mathbf{1}_{[0,\infty)}(\eta)$, this gives \eqref{eq:blowup-sharp-equality}. The converse is immediate.
\end{proof}

\begin{lemma}[Graph closure under a kinetic bound]\label{lem:blowup-bounded-graph-closure}
Suppose $v_n\rightharpoonup v$ and $f_n\rightharpoonup f$ in $L^2_+(\mathbb R)$, the sequence $(v_n)$ is bounded in $L^2_+(\mathbb R)$, and
\begin{equation}\label{eq:blowup-bounded-graph-assumptions}
\sup_n\|D^{1/2}f_n\|_{L^2(\mathbb{R})}<\infty,\qquad f_n\in\operatorname{Dom}(L_{v_n}),\qquad\eta_n\to0,\qquad\|(L_{v_n}+\eta_n)f_n\|_{L^2(\mathbb{R})}\to0.
\end{equation}
Then $f\in H^{1/2}_+(\mathbb R)$,
$\mathsf{T}_{\bar v}f\in L^2_+(\mathbb{R})$, and
\[
\mathsf{T}_{\overline{v_n}}f_n\rightharpoonup\mathsf{T}_{\bar{v}}f,\qquad\mathfrak{l}_v(f,\varphi)=0\quad(\varphi\in H^{1/2}_+(\mathbb{R})).
\]
\end{lemma}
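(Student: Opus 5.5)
The plan is to pass to the limit term by term in the weak (quadratic-form) eigenvalue equation. The kinetic bound supplies the compactness that handles the one nonlinear term, $\mathsf{T}_{\overline{v_n}}f_n$.

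\emph{Regularity and control of the Toeplitz term.} The sequence $f_n$ is bounded in $L^2$ because it converges weakly, and $D^{1/2}f_n$ is bounded by hypothesis. Weak lower semicontinuity therefore gives $f\in H^{1/2}_+(\mathbb{R})$ and $D^{1/2}f_n\rightharpoonup D^{1/2}f$. By Lemma~\ref{lem:blowup-sharp-Hardy-Toeplitz},
\[
\|\mathsf{T}_{\overline{v_n}}f_n\|_{L^2}^2\le \frac{\|v_n\|_{L^2}^2}{2\pi}\|D^{1/2}f_n\|_{L^2}^2,
\]
so $\mathsf{T}_{\overline{v_n}}f_n$ is bounded in $L^2_+$. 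Applying the same inequality to the pair $(v,f)$ gives $\mathsf{T}_{\bar v}f\in L^2_+(\mathbb{R})$.

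\emph{Identifying the weak limit of $\mathsf{T}_{\overline{v_n}}f_n$.} This is the main obstacle: both factors converge only weakly, and a product of weak limits need not pass to the limit. Since the sequence is bounded, it suffices to test against a dense class, say $\psi\in L^2_+$ with $\widehat\psi\in C_c^\infty(\mathbb{R}_+)$. The pairing is
\[
\langle \mathsf{T}_{\overline{v_n}}f_n,\psi\rangle=\int f_n\overline{v_n\psi}\,dx .
\]
In Fourier variables, $\widehat{v_n\psi}$ is a convolution of $\widehat{v_n}$ with a compactly supported smooth function.
\begin{itemize}
\item On a frequency window $[0,K]$: the uniform $H^{1/2}$ bound gives strong local compactness of $f_n$ on low frequencies, via Rellich in the Fourier variable after a physical cutoff. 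Alternatively, use the identity $\widehat{\mathsf{T}_{\bar v}f}(\xi)=\frac1{2\pi}\int_0^\infty \widehat f(\xi+\eta)\overline{\widehat v(\eta)}\,d\eta$ and note that, for fixed $\xi$, the map $\eta\mapsto\widehat{f_n}(\xi+\eta)$ is controlled in weighted $L^2$.
\item On the tail: the high-frequency part of $f_n$ has norm at most $K^{-1/2}$ times the kinetic bound, so its contribution is uniformly small.
\end{itemize}
In practice I would prove directly that $\widehat{\mathsf{T}_{\overline{v_n}}f_n}(\xi)\to\widehat{\mathsf{T}_{\bar v}f}(\xi)$ pointwise for $\xi>0$. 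Writing the integrand as $\widehat{f_n}(\xi+\cdot)\,\overline{\widehat{v_n}}$, this again requires strong convergence of one factor on $[\xi,\infty)$, obtained by splitting at a large $K$. Below $K$, I would first show that the Hardy structure and the $H^{1/2}$ bound make $\widehat{f_n}$ converge strongly in $L^2(0,K)$. If that fails, the fallback is to use the equation: $D f_n=-\eta_nf_n+v_n\mathsf{T}_{\overline{v_n}}f_n+o(1)$ yields a uniform bound for $f_n$ in a negative-order space with local gain, and Aubin--Lions type compactness in $\xi$ then gives the local strong convergence. Combining this with dominated convergence and the uniform $L^2$ bound gives the weak convergence.

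\emph{Passing to the limit in the form.} For $\varphi\in H^{1/2}_+$, start from
\[
\mathfrak{l}_{v_n}(f_n,\varphi)+\eta_n\langle f_n,\varphi\rangle=\langle (L_{v_n}+\eta_n)f_n,\varphi\rangle\longrightarrow 0 .
\]
First take $\varphi$ in a dense class with $\widehat\varphi$ compactly supported. Then $D^{1/2}$ passes to the limit by weak convergence, and $\eta_n\langle f_n,\varphi\rangle\to0$. For the second term, write $\langle\mathsf{T}_{\overline{v_n}}f_n,\mathsf{T}_{\overline{v_n}}\varphi\rangle$. Since $\varphi$ is fixed and smooth, $\mathsf{T}_{\overline{v_n}}\varphi\to\mathsf{T}_{\bar v}\varphi$ strongly by the same kernel estimate: $\widehat\varphi(\xi+\cdot)$ is a fixed compact $L^2$ kernel. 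Together with the weak convergence already established, this shows the product converges. Hence $\mathfrak{l}_v(f,\varphi)=0$ on the dense class. The bound of Lemma~\ref{lem:blowup-sharp-Hardy-Toeplitz} makes $\mathfrak{l}_v(f,\cdot)$ continuous in $H^{1/2}$, which extends the identity to all $\varphi\in H^{1/2}_+(\mathbb{R})$.
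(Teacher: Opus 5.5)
You have the surrounding steps exactly as the paper does: $f\in H^{1/2}_+$ by weak lower semicontinuity, the uniform $L^2$ bound from the sharp Hardy--Toeplitz inequality, strong convergence $\mathsf{T}_{\overline{v_n}}\varphi\to\mathsf{T}_{\bar v}\varphi$ from the Hilbert--Schmidt kernel $\widehat\varphi(\xi+\eta)$, weak-times-strong convergence in the form, and density. The gap is in the step you yourself flag as the main obstacle. The route you commit to is false. You claim that $\widehat{f_n}$ converges strongly in $L^2(0,K)$, and hence that $\widehat{\mathsf{T}_{\overline{v_n}}f_n}(\xi)$ converges pointwise. But a kinetic bound and Hardy support give no spatial tightness. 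The equation is translation invariant, so the fallback through the equation cannot supply tightness either.

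Concretely, take $v_n=f_n=R(\cdot-n)$ and $\eta_n=0$. These satisfy every hypothesis with $L_{v_n}f_n=0$ exactly, because $\Pi_+(|R|^2)=i/(x+i)$ gives $DR=R\,\Pi_+(|R|^2)$; the weak limits are $v=f=0$. Yet $\widehat{f_n}(\xi)=e^{-in\xi}\widehat R(\xi)$ has constant positive norm on $(0,K)$ while tending weakly to zero. Likewise $|\widehat{\mathsf{T}_{\overline{v_n}}f_n}(\xi)|=2\pi e^{-\xi}$ for every $n$ and every $\xi\ge0$, so it converges to $\widehat{\mathsf{T}_{\bar v}f}(\xi)=0$ at no point. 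The same example shows that controlling $\eta\mapsto\widehat{f_n}(\xi+\eta)$ in weighted $L^2$ yields only weak convergence, which cannot pass a product of two weakly convergent factors to the limit.

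The missing idea is that the compactness must come from the \emph{spatial} decay of the test function. For $\widehat\psi\in C_c^\infty(\mathbb{R}_+)$, $\psi$ is a Schwartz function, so multiplication by $\bar\psi$ is compact from $H^{1/2}(\mathbb{R})$ to $L^2(\mathbb{R})$ (Rellich on bounded intervals plus the decay of $\psi$). Since $f_n\rightharpoonup f$ in $H^{1/2}$, this gives $\bar\psi f_n\to\bar\psi f$ strongly. Your own pairing then yields
\[
\langle\mathsf{T}_{\overline{v_n}}f_n,\psi\rangle=\langle\bar\psi f_n,v_n\rangle\longrightarrow\langle\bar\psi f,v\rangle=\langle\mathsf{T}_{\bar v}f,\psi\rangle
\]
by strong-times-weak convergence. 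This is the paper's Hankel-symmetry step $\langle\mathsf{T}_{\overline{v_n}}f_n,\varphi\rangle=\langle\mathsf{T}_{\bar\varphi}f_n,v_n\rangle$, with $\mathsf{T}_{\bar\varphi}$ compact on bounded subsets of $H^{1/2}_+$.

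Your first bullet, ``Rellich after a physical cutoff,'' points this way. However, the cutoff must be the test function itself, and the strongly convergent object is $\bar\psi f_n$ in physical space, not $\mathbf{1}_{[0,K]}\widehat{f_n}$. With this replacement the frequency splitting is unnecessary, and the rest of your argument goes through unchanged.
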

\begin{proof}
Weak lower semicontinuity gives $f\in H^{1/2}_+(\mathbb{R})$. For a positive-frequency Schwartz function $\varphi$, the map $v\mapsto\mathsf{T}_{\bar{v}}\varphi$ is Hilbert--Schmidt because its
Fourier kernel is a constant multiple of $\widehat{\varphi}(\xi+\eta)$. Hence $\mathsf{T}_{\overline{v_n}}\varphi \to\mathsf{T}_{\bar{v}}\varphi$ strongly in $L^2(\mathbb{R})$. The map $v\mapsto D^{-1/2}(v\varphi)$ is also compact: truncate the input frequency to obtain a Hilbert--Schmidt operator and estimate the tail by $O_\varphi(A^{-1/2})$. Hankel symmetry gives $\langle\mathsf{T}_{\overline{v_n}}f_n,\varphi\rangle_{L^2(\mathbb{R})}=\langle\mathsf{T}_{\bar{\varphi}}f_n,v_n\rangle_{L^2(\mathbb{R})}$. Multiplication by the rapidly decaying $\varphi$, followed by the Hardy projection, is compact from a bounded subset of $H^{1/2}_+(\mathbb{R})$ to $L^2(\mathbb{R})$; hence $\mathsf{T}_{\bar{\varphi}}f_n\to\mathsf{T}_{\bar{\varphi}}f$ strongly in $L^2(\mathbb{R})$. Together with $v_n\rightharpoonup v$, this identifies the weak output as $\mathsf{T}_{\bar{v}}f$. The uniform $L^2$ bound follows from Lemma~\ref{lem:blowup-sharp-Hardy-Toeplitz}. Pairing the last relation in \eqref{eq:blowup-bounded-graph-assumptions} with $\varphi$ and passing to the limit proves the form equation. Density extends it to $H^{1/2}_+(\mathbb{R})$.
\end{proof}

\begin{lemma}[Stability at the sharp self-row]\label{lem:blowup-sharp-stability}
Let $f_n\in H^{1/2}_+(\mathbb{R})$, $\|f_n\|_{L^2(\mathbb{R})}=1$, and $E_n=\|D^{1/2}f_n\|_{L^2(\mathbb{R})}^2>0$. If
\begin{equation}\label{eq:blowup-self-row-saturation}
\frac{E_n-2\pi\|\mathsf{T}_{\overline{f_n}}f_n\|_{L^2(\mathbb{R})}^2}{E_n}\underset{n\to\infty}{\longrightarrow}0,
\end{equation}
then
\begin{equation}\label{eq:blowup-sharp-stability}
\inf_{\lambda>0,\,y,\,\theta}\left\|f_n-(2\pi)^{-1/2}\mathcal{M}_{\lambda,y,0,\theta}R\right\|_{L^2(\mathbb{R})}\underset{n\to\infty}{\longrightarrow}0.
\end{equation}
After kinetic normalization and suitable translations and phases, the convergence is strong in $H^{1/2}_+(\mathbb{R})$.
\end{lemma}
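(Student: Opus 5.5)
The plan is a concentration-compactness argument built on the sharp inequality of Lemma~\ref{lem:blowup-sharp-Hardy-Toeplitz}, whose equality cases are exactly the zero-carrier $R$ profiles. The inequality applied with $v=f=f_n$ gives $2\pi\|\mathsf T_{\overline{f_n}}f_n\|_{L^2}^2\le E_n$, so \eqref{eq:blowup-self-row-saturation} states that the normalized deficit vanishes. Both the deficit ratio and the target distance in \eqref{eq:blowup-sharp-stability} are invariant under the zero-carrier affine maps $\mathcal M_{\lambda,y,0,\theta}$. Indeed, the $L^2$ norm is preserved, while $E_n$ and $\|\mathsf T_{\bar f}f\|^2$ both scale like $\lambda^{-1}$; the latter follows from $\widehat{\mathsf T_{\bar f}f}(\xi)=\frac1{2\pi}\int\widehat f(\xi+\eta)\overline{\widehat f(\eta)}d\eta$. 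We therefore rescale so that $E_n=1$. Since the infimum is invariant, it suffices to show that every subsequence has a further subsequence along which some modulated $f_n$ converges strongly in $L^2$ to $(2\pi)^{-1/2}\mathcal M_{\lambda,y,0,\theta}R$.

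\emph{Step 1: frequency localization.} After normalization $f_n$ is bounded in $H^{1/2}_+$. This gives high-frequency tightness, since $\int_{\xi>A}|\widehat f_n|^2\le A^{-1}E_n$. The danger is low-frequency escape of mass. The Cauchy--Schwarz proof of \eqref{eq:blowup-sharp-Toeplitz} gives a quantitative remainder. Namely, $2\pi\|\mathsf T_{\bar f}f\|^2$ equals $\frac{1}{8\pi^3}\int_0^\infty\!\int_0^\infty|\widehat f(\xi+\eta)||\widehat f(\eta)|\cdots$ before Cauchy--Schwarz, and the gap equals an integral of $|\widehat f(\xi+\eta)\|\widehat f\|^{-1}-c(\xi)\widehat f(\eta)|^2$. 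I would write this defect as
\[
E-2\pi\|\mathsf T_{\bar f}f\|^2=\frac{1}{8\pi^3}\int_0^\infty\Bigl(\|F\|^2\|F(\cdot+\xi)\|^2-\bigl|\langle F(\cdot+\xi),F\rangle\bigr|^2\Bigr)d\xi,
\]
with $F=\widehat f$. This expresses the defect as an integrated rank-one defect of backward shifts, exactly the quantity $\delta(a)$ in Theorem~\ref{thm:M10prime-character} (with $m=2\pi$ after normalization).

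\emph{Step 2: compactness modulo dilation.} Normalizing $E_n=1$ alone does not fix the scale. I would therefore instead pick $\lambda_n$ so that half of the mass of $\widehat f_n$ lies in $[0,1]$. Then I would show that $E_n$ stays bounded above and below after this choice, using the shift-defect identity: if mass split into widely separated frequency scales, the cross terms $\langle F(\cdot+\xi),F\rangle$ would be small relative to $\|F\|\|F(\cdot+\xi)\|$ over a set of $\xi$ of definite $E$-weight, contradicting vanishing deficit. With $E_n\simeq1$ and the mass fixed, the Fourier transforms are tight at both ends. Physical-space translations are handled by the phase $e^{-iy\xi}$: we do not need spatial tightness, because the argument takes place entirely on the Fourier side, and weak limits of $|\widehat f_n|$-type quantities suffice after choosing $y_n$ so that a weak limit is nonzero. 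This can be arranged via the totality argument used in Theorem~\ref{thm:M8-two-tail}, with $\mathcal M_{\lambda,y,0,0}\psi$.

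\emph{Step 3: identification and the main obstacle.} Pass to a weak limit $f_n\rightharpoonup f\neq0$. The shift defects $\delta_{f_n}(a)\to0$ for almost every $a$, by the integral identity, in $L^1(d\xi)$ and hence locally uniformly by Lipschitz continuity in $a$. If the convergence were strong, Theorem~\ref{thm:M10prime-character} would give $f=\mathcal M_{\alpha,\beta,0,\vartheta}R/\sqrt{2\pi}$. The \textbf{main obstacle} is upgrading weak to strong convergence, i.e.\ excluding the dichotomy $\|f\|<1$. I would argue that the bilinear shift-defect functional splits asymptotically into a part from $f$ and a part from $f_n-f$, plus cross terms that vanish by weak convergence and frequency tightness. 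This would give a strict subadditivity of the optimal ratio under splitting, which contradicts saturation unless the remainder vanishes, in the spirit of Lemma~\ref{lem:M8-no-fraction}. Once strong $L^2$ convergence holds, the energy $E_n\to E(f)$ follows from the saturation identity, which yields the stated $H^{1/2}$ convergence.
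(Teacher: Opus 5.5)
Your overall architecture is the right one: quotient by the zero-carrier affine maps, extract a nonzero profile, exclude dichotomy by strict subadditivity, and conclude with the equality case. The subadditivity you have in mind is exactly what the paper uses. There is, however, a genuine gap. The step that produces a nonzero profile is justified circularly, and the weak-to-strong upgrade is only announced.

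\textbf{The circular step.} The totality argument of Theorem~\ref{thm:M8-two-tail} gives a positive lower bound for $\sup_{\lambda,y}|\langle f,\mathcal{M}_{\lambda,y,0,0}\psi\rangle|$ only as a minimum over a \emph{compact} set. So it presupposes the relative compactness you are trying to prove. Along a noncompact sequence, for instance $n$ far-apart copies of a bump with amplitude $n^{-1/2}$, totality gives no uniform bound.

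\textbf{The missing spatial input.} Your claim that spatial tightness is not needed, because everything happens on the Fourier side, also fails. Translations act on $\widehat{f_n}$ by the phase $e^{-iy\xi}$, so quantities built from $|\widehat{f_n}|$ cannot see the physical-space vanishing and splitting that must be excluded. Strong $L^2$ convergence modulo translation requires tightness in both variables.

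The missing idea is that saturation is a lower bound on a physical-space quantity. Because $|f|^2$ is real, $2\|\mathsf{T}_{\bar f}f\|_{L^2}^2=\||f|^2\|_{L^2}^2=\|f\|_{L^4}^4$. After dilating to $E_n=1$ (which already fixes the dilation), hypothesis \eqref{eq:blowup-self-row-saturation} reads $\pi\|f_n\|_{L^4}^4\to1$. The Lions--Lieb nonvanishing lemma for $H^{1/2}$-bounded sequences then supplies translations with $f_n(\cdot+y_n)\rightharpoonup f\neq0$ in $H^{1/2}_+$.

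\textbf{How Step~3 then closes.} With that in hand, your Step~3 becomes the paper's proof, and Steps~1--2 are unnecessary. Put $a=\|f\|_{L^2}^2$, $b=\|D^{1/2}f\|_{L^2}^2$, and $r_n=f_n(\cdot+y_n)-f$.
\begin{enumerate}
\item Pythagoras splits the $L^2$ and $\dot H^{1/2}$ norms.
\item Brezis--Lieb splits $\|f_n\|_{L^4}^4$. Its almost-everywhere input comes from local Rellich compactness, which is what your ``cross terms vanish by weak convergence and frequency tightness'' really requires.
\item Apply the sharp inequality $\pi\|g\|_{L^4}^4\le\|g\|_{L^2}^2\|D^{1/2}g\|_{L^2}^2$ (Lemma~\ref{lem:blowup-sharp-Hardy-Toeplitz} with $v=f$) to $f$ and to $r_n$. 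This gives $1\le ab+(1-a)(1-b)=1-a(1-b)-b(1-a)$.
\item Since $a,b>0$, this forces $a=b=1$, which is strong convergence in $H^{1/2}_+$.
\end{enumerate}
This also excludes low-frequency escape automatically: escaping mass would make $a<1$. So no median-mass rescaling is needed.

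The equality case of Lemma~\ref{lem:blowup-sharp-Hardy-Toeplitz} then identifies the limit directly, which makes Theorem~\ref{thm:M10prime-character} superfluous. If you do use that theorem, note that $a\mapsto\delta_{f_n}(a)$ is not uniformly Lipschitz for $H^{1/2}$ data; that would need $xf_n\in L^2$. Uniformity in $a$ should instead come from $\|S_a^\ast(f_n-f)\|\le\|f_n-f\|$. Also, with $\|f\|_{L^2}=1$ the prefactor in your defect identity is $1/(4\pi^2)$, not $1/(8\pi^3)$.
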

\begin{proof}
Dilate so that $E_n=1$. The identity $2\|\mathsf{T}_{\bar{f}}f\|_{L^2(\mathbb{R})}^2=\|f\|_{L^4(\mathbb{R})}^4$ turns the hypothesis into $\pi\|f_n\|_{L^4(\mathbb{R})}^4\to1$, the endpoint value of
\begin{equation}\label{eq:blowup-sharp-Hardy-GN}
\pi\|f\|_{L^4(\mathbb{R})}^4\leq\|f\|_{L^2(\mathbb{R})}^2\|D^{1/2}f\|_{L^2(\mathbb{R})}^2.
\end{equation}
Nonvanishing of the $L^4$ norm supplies translations for which a subsequence converges weakly in $H^{1/2}(\mathbb{R})$ to a nonzero $f$. Local Rellich compactness gives almost-everywhere convergence. If $a=\|f\|_{L^2(\mathbb{R})}^2$ and $b=\|D^{1/2}f\|_{L^2(\mathbb{R})}^2$, Hilbert-space Pythagoras, the Brezis--Lieb splitting \cite{BrezisLieb1983}, and
\eqref{eq:blowup-sharp-Hardy-GN} yield
\[
1\leq ab+(1-a)(1-b)=1-a(1-b)-b(1-a)\leq1.
\]
Since $a,b>0$, this forces $a=b=1$. Thus convergence is strong in $H^{1/2}_+(\mathbb{R})$, and the equality classification in Lemma~\ref{lem:blowup-sharp-Hardy-Toeplitz} identifies the limit. Undo the normalization to obtain \eqref{eq:blowup-sharp-stability}.
\end{proof}

Fix a sequence $\tau_n\to T$ and put
\[
\Lambda_n=(T-\tau_n)^{-1/2},\qquad u_n=u(\tau_n),\qquad g_{j,n}=g_{j,\tau_n},\qquad h_{j,n}=h_{j,\tau_n}.
\]
For channel $j$, set
\[
M_{j,n}=\mathcal{M}_{\Lambda_n^{-1},\alpha_j,0,0},\qquad U_{j,n}=M_{j,n}^\ast u_n,\qquad G_{j,n}=M_{j,n}^\ast g_{j,n},\qquad E_{j,n}=\|D^{1/2}G_{j,n}\|_{L^2(\mathbb{R})}^2.
\]
\begin{lemma}[Capture at bounded kinetic energy]\label{lem:blowup-bounded-kinetic-capture}
If $(E_{j,n})$ is bounded, then, after passing to a subsequence, there is a zero-carrier $R$--bubble $Q_{j,n}$ such that
\begin{equation}\label{eq:blowup-bounded-channel-capture}
\|\mathfrak{d}_j(\tau_n)-Q_{j,n}\|_{L^2(\mathbb{R})}\underset{n\to\infty}{\longrightarrow}0.
\end{equation}
\end{lemma}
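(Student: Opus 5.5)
We work in the rescaled frame $M_{j,n}$ and aim to show that $G_{j,n}$ converges strongly in $L^2$, along a subsequence, to a multiple of an affine copy of $R$. Undoing $M_{j,n}$ then gives the bubble.

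\textbf{Step 1 (weak limits in the frame).} By \eqref{eq:blowup-snapshot-close} and interior isometry, $\|G_{j,n}\|_{L^2(\mathbb{R})}=\|g_{j,n}\|_{L^2(\mathbb{R})}\to1$, and $(U_{j,n})$ is bounded by mass conservation. The zero-carrier covariance in Condition~\ref{hyp:blowup-KLV-Ward}(i), with $\lambda=\Lambda_n^{-1}$, gives
\[
(L_{U_{j,n}}+\Lambda_n^{-1}C)G_{j,n}=\Lambda_n^{-1}M_{j,n}^\ast(L_{u_n}+C)g_{j,n}.
\]
By \eqref{eq:blowup-snapshot-refined-row}, the right-hand side tends to zero in $L^2$. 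The position row \eqref{eq:blowup-snapshot-X-row}, conjugated by the dilation, reads $\|\mathsf{X}G_{j,n}\|_{L^2(\mathbb{R})}\lesssim1$. Together with the $L^2$ bound this gives physical-space tightness of $G_{j,n}$. The bounded kinetic energy $E_{j,n}$ gives Fourier-side tightness. By Kolmogorov--Riesz, a subsequence satisfies $G_{j,n}\to G$ strongly in $L^2$, with $\|G\|_{L^2(\mathbb{R})}=1$, and also weakly in $H^{1/2}_+(\mathbb{R})$. Pass further to $U_{j,n}\rightharpoonup U$.

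\textbf{Step 2 (limiting equation and rigidity).} Apply Lemma~\ref{lem:blowup-bounded-graph-closure} with $v_n=U_{j,n}$, $f_n=G_{j,n}$ and $\eta_n=\Lambda_n^{-1}C$. It gives $\mathfrak{l}_U(G,G)=0$, that is,
\[
\|D^{1/2}G\|_{L^2(\mathbb{R})}^2=\|\mathsf{T}_{\bar U}G\|_{L^2(\mathbb{R})}^2.
\]
By Lemma~\ref{lem:blowup-sharp-Hardy-Toeplitz}, the right-hand side is at most $(2\pi)^{-1}\|U\|_{L^2(\mathbb{R})}^2\|D^{1/2}G\|_{L^2(\mathbb{R})}^2$. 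If $D^{1/2}G=0$, then $G=0$ for a Hardy function, contradicting $\|G\|_{L^2(\mathbb{R})}=1$. So equality must hold against the available mass. Here I expect to need a localization: the relevant potential is the part of $U$ seen by $G$. The channels $\mathfrak{d}_k$ with $k\ne j$ escape weakly in this frame because the $\alpha_k$ are distinct, so in the rescaled frame they sit at translates by $\Lambda_n(\alpha_k-\alpha_j)\to\infty$. The endpoint remainder $r_{\mathrm{end}}$ converges strongly and dilates to zero weakly. This leaves $U=\chi_jG$ with $|\chi_j|^2=2\pi$ from \eqref{eq:blowup-dark-eigen-data}. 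The relation $\|D^{1/2}G\|^2=\|\mathsf{T}_{\bar U}G\|^2$ is then exactly the equality case of \eqref{eq:blowup-sharp-Toeplitz}, and \eqref{eq:blowup-sharp-equality} yields $G=B\,\mathcal{M}_{\lambda,y,0,0}R$ with $|B|^2=(2\pi)^{-1}$.

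\textbf{Step 3 (transfer back).} Set
\[
Q_{j,n}=\chi_jM_{j,n}\bigl(B\,\mathcal{M}_{\lambda,y,0,0}R\bigr),
\]
which is a zero-carrier bubble by the composition law \eqref{eq:M10prime-composition}. Then
\[
\|\mathfrak{d}_j(\tau_n)-Q_{j,n}\|_{L^2(\mathbb{R})}\le\sqrt{2\pi}\bigl(\|h_{j,n}-g_{j,n}\|_{L^2(\mathbb{R})}+\|G_{j,n}-G\|_{L^2(\mathbb{R})}\bigr)\to0,
\]
using unitarity of $M_{j,n}$ and \eqref{eq:blowup-snapshot-close}.

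\textbf{Main obstacle.} The hard part is the identification $U=\chi_jG$ in Step 2, i.e.\ showing that the weak limit of the rescaled potential carries only channel $j$. My first attempt would decompose $u_n$ by \eqref{eq:S9-dark-ledger} and show that $M_{j,n}^\ast\mathfrak{d}_k(\tau_n)\rightharpoonup0$ for $k\ne j$, using the $\mathsf{X}$-row localization at the distinct centres $\alpha_k$. If this fails, the fallback is to prove only the inequality $U\ne0$ and then invoke the stability Lemma~\ref{lem:blowup-sharp-stability} via the self-row saturation \eqref{eq:blowup-self-row-saturation}.
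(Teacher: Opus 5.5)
Your proposal is correct and follows the paper's proof essentially step for step. The shared steps are: strong compactness of $G_{j,n}$ from the centered $\mathsf{X}$ row and the kinetic bound; the weak identification $U_{j,n}\rightharpoonup\chi_jG$, since the other dark channels sit at centres $\Lambda_n(\alpha_k-\alpha_j)$ that escape to infinity and the endpoint background spreads out weakly; the graph-closure lemma; and the equality case of the sharp Hardy--Toeplitz inequality with $|\chi_j|^2=2\pi$. Because of this, the fallback through Lemma~\ref{lem:blowup-sharp-stability} is never needed.

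One point needs tightening. $\mathsf{X}G$ differs from $xG$ by a constant proportional to $\widehat{G}(0^+)$, so $\|\mathsf{X}G_{j,n}\|_{L^2(\mathbb{R})}=O(1)$ is not a weighted bound on $xG_{j,n}$. Physical tightness comes instead from the resulting uniform $H^1(\mathbb{R}_+)$ bound on $\widehat{G}_{j,n}$, whose zero extensions are uniformly bounded in $H^s(\mathbb{R})$ for $s<\frac12$. The paper uses that same bound through Rellich compactness on bounded Fourier intervals.
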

\begin{proof}
The centered $\mathsf X$ row gives $\|\mathsf{X}G_{j,n}\|_{L^2(\mathbb{R})}=O(1)$, and the kinetic bound gives $\int_K^\infty|\widehat{G}_{j,n}|^2\,d\xi\lesssim K^{-1}$. Rellich compactness on bounded Fourier intervals therefore yields
\[
G_{j,n}\underset{n\to\infty}{\longrightarrow}\gamma_j
\]
strongly in $L^2(\mathbb{R})$, with $\|\gamma_j\|_{L^2(\mathbb{R})}=1$. In the $j$ frame, every $k\ne j$ is centered at $\Lambda_n(\alpha_k-\alpha_j)$ and hence has weak limit zero; the fixed endpoint background also converges weakly to zero. Thus $U_{j,n}\rightharpoonup\phi_j=\chi_j\gamma_j$.
By \eqref{eq:blowup-snapshot-refined-row} and affine covariance,
\[
\|(L_{U_{j,n}}+C/\Lambda_n)G_{j,n}\|_{L^2(\mathbb{R})}\underset{n\to\infty}{\longrightarrow}0.
\]
Lemma~\ref{lem:blowup-bounded-graph-closure} gives $\mathfrak{l}_{\phi_j}(\gamma_j,\gamma_j)=0$, that is, $\|D^{1/2}\gamma_j\|_{L^2(\mathbb{R})}^2=\|\mathsf{T}_{\overline{\phi_j}}\gamma_j\|_{L^2(\mathbb{R})}^2$. Since $\phi_j=\chi_j\gamma_j$, $|\chi_j|^2=2\pi$, and $\|\gamma_j\|_{L^2(\mathbb{R})}=1$, this is equality in Lemma~\ref{lem:blowup-sharp-Hardy-Toeplitz}. Hence $\phi_j$ belongs
to the zero-carrier orbit of $R$. Finally, $\|g_{j,n}-h_{j,n}\|_{L^2(\mathbb{R})}\to0$ and unitarity of the frame give \eqref{eq:blowup-bounded-channel-capture}.
\end{proof}

\begin{lemma}[Capture at diverging kinetic energy]\label{lem:blowup-high-kinetic-source-purity}
Let $U_n\in L^2_+(\mathbb{R})$ and $F_n\in\operatorname{Dom}(L_{U_n})$ satisfy
\[
\|F_n\|_{L^2(\mathbb{R})}\underset{n\to\infty}{\longrightarrow}1,\qquad\|L_{U_n}F_n\|_{L^2(\mathbb{R})}\underset{n\to\infty}{\longrightarrow}0,\qquad E_n:=\|D^{1/2}F_n\|_{L^2(\mathbb{R})}^2\underset{n\to\infty}{\longrightarrow}\infty.
\]
If $|\chi|^2=2\pi$ and
\begin{equation}\label{eq:blowup-high-kinetic-source-purity}
\|\mathsf{T}_{\overline{U_n-\chi F_n}}F_n\|_{L^2(\mathbb{R})}=o(E_n^{1/2}),
\end{equation}
then there are normalized zero-carrier $R$-bubbles $r_n$ such that
\begin{equation}\label{eq:blowup-enhanced-capture}
\|F_n-r_n\|_{L^2(\mathbb{R})}\underset{n\to\infty}{\longrightarrow}0,\qquad\|D^{1/2}(F_n-r_n)\|_{L^2(\mathbb{R})}=o(E_n^{1/2}).
\end{equation}
\end{lemma}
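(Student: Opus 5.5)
The strategy is to rescale so that the kinetic energy is normalized, convert the eigen-row into a near-saturation of the sharp self-row inequality, and then apply Lemma~\ref{lem:blowup-sharp-stability}. Set $\lambda_n=E_n\to\infty$ (after a harmless normalization of $\|F_n\|$) and dilate by the zero-carrier map $\mathcal{M}_{\lambda_n,0,0,0}$: define $\widetilde F_n=\mathcal{M}_{\lambda_n,0,0,0}^\ast F_n$ and $\widetilde U_n=\mathcal{M}_{\lambda_n,0,0,0}^\ast U_n$. By the affine covariance in Condition~\ref{hyp:blowup-KLV-Ward}(i) (equivalently Proposition~\ref{prop:M9-affine-covariance} with zero carrier), $\|D^{1/2}\widetilde F_n\|^2=1$ and $\|L_{\widetilde U_n}\widetilde F_n\|=\lambda_n\|L_{U_n}F_n\|$. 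The bound $\|L_{U_n}F_n\|\to0$ gives only $o(\lambda_n)$, so I would not try to use the rescaled operator equation directly. Instead I would work at the level of the quadratic form, which scales like $E_n$.

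\textbf{Step 1 (form identity).} Pair the row with $F_n$: $\langle L_{U_n}F_n,F_n\rangle=E_n-\|\mathsf T_{\overline{U_n}}F_n\|^2$, and its absolute value is at most $\|L_{U_n}F_n\|\|F_n\|=o(1)=o(E_n)$. Hence $\|\mathsf T_{\overline{U_n}}F_n\|^2=E_n(1+o(1))$.

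\textbf{Step 2 (source purity).} Write $U_n=\chi F_n+(U_n-\chi F_n)$. Linearity of $v\mapsto\mathsf T_{\bar v}F_n$ together with \eqref{eq:blowup-high-kinetic-source-purity} gives $\|\mathsf T_{\overline{U_n}}F_n-\bar\chi\,\mathsf T_{\overline{F_n}}F_n\|=o(E_n^{1/2})$. Combining this with Step 1 yields $2\pi\|\mathsf T_{\overline{F_n}}F_n\|^2=E_n(1+o(1))$, using $|\chi|^2=2\pi$. This is exactly the saturation hypothesis \eqref{eq:blowup-self-row-saturation} for $F_n/\|F_n\|$, since the normalization $\|F_n\|\to1$ only changes things by $o(1)$ relative factors; the cubic homogeneity in the $L^4$ form should be checked when making this precise.

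\textbf{Step 3 (stability).} Lemma~\ref{lem:blowup-sharp-stability} now gives zero-carrier $R$-profiles $r_n$, normalized to unit mass, with $\|F_n-r_n\|_{L^2}\to0$. After kinetic normalization, the same lemma gives strong $H^{1/2}$ convergence in the rescaled frame. Undoing the dilation, the kinetic error becomes $\|D^{1/2}(F_n-r_n)\|=o(E_n^{1/2})$, because the dilation multiplies $D^{1/2}$-norms by exactly $E_n^{1/2}$. The scales in Lemma~\ref{lem:blowup-sharp-stability} are chosen by kinetic normalization, so they are compatible with this $\lambda_n$ up to an $O(1)$ factor, which is then absorbed into the $R$ parameters.

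\textbf{Main obstacle.} The delicate point is Step 3's bookkeeping. Lemma~\ref{lem:blowup-sharp-stability} is stated with an infimum and with $L^2$ convergence, and the strong $H^{1/2}$ statement there holds only along subsequences. To obtain the full sequence I would argue by contradiction: if \eqref{eq:blowup-enhanced-capture} failed along some subsequence, extracting a further subsequence and applying the lemma would give a contradiction. I would also need to verify that the minimizing parameters in the $L^2$ infimum and in the $H^{1/2}$ statement can be chosen to be the same, using that $R$-profile scale and translation are pinned by the unit kinetic energy. Step 1 is elementary.
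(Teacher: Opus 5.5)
Your Steps 1--3 are exactly the paper's proof: the form identity $\langle L_{U_n}F_n,F_n\rangle=E_n-\|\mathsf{T}_{\overline{U_n}}F_n\|_{L^2(\mathbb{R})}^2=o(1)$; then the purity hypothesis together with $|\chi|^2=2\pi$, which gives the self-row saturation \eqref{eq:blowup-self-row-saturation} for $F_n/\|F_n\|_{L^2(\mathbb{R})}$; then Lemma~\ref{lem:blowup-sharp-stability}, whose strong $H^{1/2}$ conclusion you transport back by scaling (and your subsequence-of-subsequences remark correctly upgrades this to the full sequence). There is one cosmetic slip: $\mathcal{M}_{\lambda,0,0,0}^\ast$ multiplies the kinetic energy by $\lambda$, so kinetic normalization needs $\lambda_n=E_n^{-1}$ rather than $E_n$, and then the rescaled row $\lambda_n\|L_{U_n}F_n\|_{L^2(\mathbb{R})}$ is $o(E_n^{-1})$, not $o(E_n)$; this is immaterial, because Steps 1--2 are scale-invariant and you never use the rescaled operator equation.
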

\begin{proof}
The graph equation gives $E_n-\|\mathsf{T}_{\overline{U_n}}F_n\|_{L^2(\mathbb{R})}^2=o(1)$. The hypothesis and $|\chi|^2=2\pi$ imply
\[
\frac{2\pi\|\mathsf{T}_{\overline{F_n}}F_n\|_{L^2(\mathbb{R})}^2}{E_n}\underset{n\to\infty}{\longrightarrow}1.
\]
After normalizing $F_n$ in $L^2(\mathbb{R})$, this is \eqref{eq:blowup-self-row-saturation}. Apply Lemma~\ref{lem:blowup-sharp-stability}; its strong $H^{1/2}$ conclusion after kinetic normalization gives \eqref{eq:blowup-enhanced-capture}.
\end{proof}

Place all channels in the common frame
\[
\mathbf{M}_n=\mathcal{M}_{\Lambda_n^{-1},0,0,0},\qquad U_n=\mathbf{M}_n^\ast u_n,\qquad\Gamma_{j,n}=\mathbf M_n^*g_{j,n},\qquad x_{j,n}=\Lambda_n\alpha_j,
\]
and write $E(F)=\|D^{1/2}F\|_{L^2(\mathbb{R})}^2$. The snapshot lemma gives
\[
\|\Gamma_{j,n}\|_{L^2(\mathbb{R})}\underset{n\to\infty}{\longrightarrow}1,\qquad\|(\mathsf{X}-x_{j,n})\Gamma_{j,n}\|_{L^2(\mathbb{R})}=O(1),\qquad\|L_{U_n}\Gamma_{j,n}\|_{L^2(\mathbb{R})}\underset{n\to\infty}{\longrightarrow}0.
\]
Moreover,
\begin{equation}\label{eq:blowup-common-potential-decomposition}
U_n=\omega_n+\sum_{j=1}^{N_\ast}\chi_j\Gamma_{j,n},
\end{equation}
where $\omega_n=\mathbf{M}_n^\ast r_{\mathrm{end}}(\tau_n)+\sum_j\chi_j\mathbf{M}_n^*(h_{j,n}-g_{j,n})$. \begin{lemma}[Uniform removal of the endpoint background]\label{lem:blowup-background-source-small}
Let $F_n$ be any member of a fixed finite family of near-identity linear combinations of the rows $\Gamma_{j,n}$, with uniformly bounded $L^2(\mathbb{R})$ norm and a centered $\mathsf{X}$ row. If
$E(F_n)\to\infty$, then $\|\mathsf{T}_{\overline{\omega_n}}F_n\|_{L^2(\mathbb{R})}=o(E(F_n)^{1/2})$. \end{lemma}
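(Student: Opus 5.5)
The plan is to split the background $\omega_n$ into its two pieces, $\omega_n=\omega_n^{\mathrm{end}}+\omega_n^{\mathrm{err}}$, with $\omega_n^{\mathrm{end}}=\mathbf M_n^\ast r_{\mathrm{end}}(\tau_n)$ and $\omega_n^{\mathrm{err}}=\sum_j\chi_j\mathbf M_n^\ast(h_{j,n}-g_{j,n})$, and to treat each by the sharp Hardy--Toeplitz inequality of Lemma~\ref{lem:blowup-sharp-Hardy-Toeplitz} combined with a frequency splitting. For the error piece the inequality alone suffices:
\[
\|\mathsf T_{\overline{\omega_n^{\mathrm{err}}}}F_n\|_{L^2}^2\le\frac{\|\omega_n^{\mathrm{err}}\|_{L^2}^2}{2\pi}E(F_n),
\]
and $\|\omega_n^{\mathrm{err}}\|_{L^2}\to0$ by \eqref{eq:blowup-snapshot-close} and unitarity of $\mathbf M_n$. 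This piece therefore contributes $o(E(F_n)^{1/2})$ with no further input.

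For the endpoint piece, I would use that $r_{\mathrm{end}}(\tau_n)\to z_T$ strongly by Lemma~\ref{lem:blowup-dark-decomposition}. The strong error $\mathbf M_n^\ast(r_{\mathrm{end}}(\tau_n)-z_T)$ is handled exactly as above, which reduces matters to $\mathsf T_{\overline{\mathbf M_n^\ast z_T}}F_n$. Next, fix $K$ and split $z_T=P_{\le K}z_T+P_{>K}z_T$. The high part again costs at most $(2\pi)^{-1/2}\|P_{>K}z_T\|_{L^2}E(F_n)^{1/2}$, which is small once $K$ is large. For the low part, set $\zeta_n=\mathbf M_n^\ast P_{\le K}z_T$. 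Its Fourier transform is supported in $[0,K\Lambda_n^{-1}]$ and has $L^2$ norm at most $\|z_T\|_{L^2}$; the profile is spread over physical scale $\Lambda_n$, so its amplitude is $O(\Lambda_n^{-1/2})$ in $L^\infty$.

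The key estimate uses the Fourier formula $\widehat{\mathsf T_{\bar v}f}(\xi)=(2\pi)^{-1}\int_0^\infty\widehat f(\xi+\eta)\overline{\widehat v(\eta)}\,d\eta$. Applying Cauchy--Schwarz only over $\eta\in[0,K/\Lambda_n]$ gives
\[
\|\mathsf T_{\bar\zeta_n}F_n\|_{L^2}^2\lesssim\|\zeta_n\|_{L^2}^2\,\frac{K}{\Lambda_n}\,\|F_n\|_{L^2}^2\longrightarrow0,
\]
since the inner $\xi$-integral of $|\widehat F_n(\xi+\eta)|^2$ is bounded by $\|F_n\|^2$ and the $\eta$-window has length $K/\Lambda_n$. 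This is $O(1)=o(E(F_n)^{1/2})$ because $E(F_n)\to\infty$. Letting $n\to\infty$ and then $K\to\infty$ gives the claim. The estimates depend on $F_n$ only through $\|F_n\|_{L^2}$ and $E(F_n)$, so they hold uniformly over the fixed finite family of combinations.

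The main obstacle I expect is the low-frequency step, specifically checking that the window bound really yields decay rather than an $O(1)$ quantity comparable to the kinetic scale. If the crude window estimate turned out insufficient, I would fall back on the centered $\mathsf X$ row of $F_n$. That row localizes $F_n$ in physical space within $O(1)$ of the points $x_{j,n}$, while $\zeta_n$ has size $O(\Lambda_n^{-1/2})$ pointwise. Then $\|\overline{\zeta_n}F_n\|_{L^2}\le\|\zeta_n\|_{L^\infty}\|F_n\|_{L^2}\to0$, and since $\Pi_+$ is a contraction, this bounds the Toeplitz output directly.
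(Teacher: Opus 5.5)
Your proof is correct and follows the paper's argument. You first remove the strongly vanishing pieces $r_{\mathrm{end}}(\tau_n)-z_T$ and $h_{j,n}-g_{j,n}$ with the sharp Hardy--Toeplitz inequality. You then split $z_T$ into a small $L^2$ remainder, controlled by the same inequality at cost $O(\text{small}\cdot E(F_n)^{1/2})$, and a bounded part whose rescaling by $\mathbf M_n^\ast$ has $L^\infty$ norm $O(\Lambda_n^{-1/2})$; this is exactly your fallback, except that the paper uses an arbitrary $z^{(b)}\in L^2_+\cap L^\infty$ where you use $P_{\le K}z_T$.

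Your Fourier-window estimate is the frequency-side version of that same $L^\infty$ bound and is valid as written, so the worry you flag does not arise. Even an $O(1)$ bound would already suffice since $E(F_n)\to\infty$, and the centered $\mathsf X$ row is not needed.
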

\begin{proof}
The terms $r_{\mathrm{end}}(\tau_n)-z_T$ and $h_{j,n}-g_{j,n}$ tend to zero in $L^2(\mathbb{R})$. The sharp inequality bounds their Toeplitz sources by $o(E(F_n)^{1/2})$, uniformly over the finite
family. It remains to treat $\mathbf{M}_n^\ast z_T$. Given $\rho>0$, choose $z^{(b)}\in L^2_+(\mathbb{R})\cap L^\infty(\mathbb{R})$ with $\|z_T-z^{(b)}\|_{L^2(\mathbb{R})}<\rho$.
Affine covariance gives $\|\mathbf{M}_n^*z^{(b)}\|_{L^\infty(\mathbb{R})}=\Lambda_n^{-1/2}\|z^{(b)}\|_{L^\infty(\mathbb{R})}$, so its source is $o(1)$ because $\|F_n\|_{L^2(\mathbb{R})}=O(1)$. The remaining source is at most $C\rho E(F_n)^{1/2}$ by Lemma~\ref{lem:blowup-sharp-Hardy-Toeplitz}. Dividing by
$E(F_n)^{1/2}$, letting $n\to\infty$, and then letting $\rho\to0$ proves the claim.
\end{proof}

If $N_\ast\geq2$, the $\alpha_j$ are distinct and
\[
d_{jk,n}=|x_{j,n}-x_{k,n}|=\Lambda_n|\alpha_j-\alpha_k|,\qquad d_n=\min_{j\ne k}d_{jk,n}\underset{n\to\infty}{\longrightarrow}\infty.
\]
When $N_\ast=1$, we use the convention $d_n=+\infty$; every assertion below involving an off-diagonal label is then void. A normalized row satisfying a uniformly centered
$\mathsf{X}$ bound has $\liminf_{n\to\infty}E(F_n)>0$. Indeed, after modulation its Fourier transform is bounded in $H^1(0,\infty)$; the resulting $L^\infty$ bound prevents unit mass from concentrating at frequency zero.
\begin{lemma}[Trace--Cauchy decomposition and cross estimate]\label{lem:blowup-cross-estimate}
Let $f\in\operatorname{Dom}(\mathsf{X})\cap H^{1/2}_+(\mathbb{R})$ and $\|f\|_{L^2(\mathbb{R})}+\|(\mathsf{X}-y)f\|_{L^2(\mathbb{R})}\leq C_0$. With $r=R/\sqrt{2\pi}$ and $r_y=\mathcal{M}_{1,y,0,0}r$, one can write
\begin{equation}\label{eq:blowup-trace-decomposition}
f=\alpha r_y+f^\circ,\qquad|\alpha|+\|(x-y)f^\circ\|_{L^2(\mathbb{R})}\lesssim_{C_0}1,\qquad E(f^\circ)\leq2E(f)+O_{C_0}(1).
\end{equation}
Consequently, if $f,g$ have centers $y_f,y_g$ and $d=|y_f-y_g|\geq10$, then
\begin{equation}\label{eq:blowup-cross-estimate}
\|\mathsf{T}_{\bar g}f\|_{L^2(\mathbb{R})}^2\leq\|gf\|_{L^2(\mathbb{R})}^2\lesssim\frac{\sqrt{(E(f)+1)(E(g)+1)}}{d}+\frac{1}{d^2}.
\end{equation}
\end{lemma}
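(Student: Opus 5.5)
The plan works on the Fourier side, in two steps. First build the decomposition \eqref{eq:blowup-trace-decomposition} by peeling off the boundary trace at $\xi=0$. Then obtain the cross estimate \eqref{eq:blowup-cross-estimate} from physical-space localization of $f^\circ$ and $g^\circ$ together with the explicit decay of $r_y$.

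\emph{Step 1 (trace decomposition).} Conjugate by translation so that $y=0$, and put $F=\mathcal F_0 f$ on $\mathbb{R}_+$. The bound $\|\mathsf Xf\|_{L^2}\le C_0$ means $F\in H^1(0,\infty)$ with norm $\lesssim C_0$. The one-dimensional trace inequality then gives $|F(0)|\lesssim C_0$. Since $\mathcal F_0 r(\xi)=-i\sqrt2\,e^{-\xi}\mathbf 1_{\mathbb{R}_+}$ has value $-i\sqrt2$ at $0$, set $\alpha=iF(0)/\sqrt2$ and $f^\circ=f-\alpha r$. The function $F^\circ=F-\alpha\mathcal F_0 r$ is in $H^1(0,\infty)$ with $F^\circ(0)=0$, so its zero extension lies in $H^1(\mathbb{R})$, exactly as in Proposition~\ref{prop:M4a-localization}. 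By Plancherel, $\|xf^\circ\|_{L^2}\lesssim C_0$, with the constant and the trace term both controlled by $C_0$. For the energy, $r\notin H^{1/2}$ fails only logarithmically; in fact $E(r)=\frac{1}{2\pi}\int_0^\infty\xi\cdot 2e^{-2\xi}d\xi<\infty$. Hence $E(f^\circ)\le 2E(f)+2|\alpha|^2E(r)=2E(f)+O_{C_0}(1)$.

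\emph{Step 2 (cross estimate).} The first inequality is just contractivity of $\Pi_+$. For the second, decompose $f=\alpha r_{y_f}+f^\circ$ and $g=\beta r_{y_g}+g^\circ$, and expand $gf$ into four products.
\begin{itemize}
\item The $r\cdot r$ term: $\|r_{y_g}r_{y_f}\|_{L^2}^2=\int\frac{dx}{\pi^2((x-y_f)^2+1)((x-y_g)^2+1)}\lesssim d^{-2}$.
\item The mixed terms $r_{y_g}f^\circ$ and symmetrically: split $\mathbb{R}$ at the midpoint between the centres. On the half near $y_f$, $|r_{y_g}|\lesssim d^{-1}$, so this part contributes $\lesssim d^{-2}\|f^\circ\|_{L^2}^2$. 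On the far half, use $|f^\circ|\le |x-y_f|^{-1}|(x-y_f)f^\circ|$ with $|x-y_f|\ge d/2$. Pair this with $\|r\|_{L^\infty}$ if we only want $L^2$, or, better, with the $L^\infty$ bound $\|(x-y_f)f^\circ\|_{L^\infty}$. That bound follows by Gagliardo--Nirenberg from the weighted $L^2$ bound and $E(f^\circ)$. The result is a contribution $\lesssim (E(f)+1)^{1/2}/d$.
\item The $f^\circ g^\circ$ term: again split at the midpoint. On the half near $y_f$, bound $|g^\circ|\le \frac{2}{d}|(x-y_g)g^\circ|$ and $\|f^\circ\|_{L^\infty}^2\lesssim\|f^\circ\|_{L^2}\|D^{1/2}\ldots\|$. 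More precisely, use the one-dimensional endpoint estimate $\|h\|_{L^\infty}^2\lesssim \|h\|_{L^2}\|\partial h\|_{L^2}$ applied to the weighted function $(x-y_g)g^\circ$. Its derivative is controlled in the $\dot H^{1/2}$ sense by interpolating $E(g)$ against the $H^1$ Fourier regularity from Step 1.
\end{itemize}
Summing the four pieces gives the product $\sqrt{(E(f)+1)(E(g)+1)}/d$ plus $d^{-2}$.

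The main obstacle is this last product estimate. $H^{1/2}$ does not embed in $L^\infty$, so a naive sup bound fails. I would instead estimate $\|f^\circ g^\circ\mathbf 1_{\{|x-y_f|<d/2\}}\|_{L^2}^2$ through $L^4\cdot L^4$. Here $\|f^\circ\|_{L^4}^4\lesssim \|f^\circ\|_{L^2}^2E(f^\circ)$ by \eqref{eq:blowup-sharp-Hardy-GN}, and $\|g^\circ\mathbf 1\|_{L^4}^4\lesssim d^{-2}\|(x-y_g)g^\circ\|_{L^4}^2\|g^\circ\|_{L^4}^2$. Applying Cauchy--Schwarz and Gagliardo--Nirenberg to the weighted function yields exactly the square-root geometric mean over $d$. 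The symmetric region is handled the same way. If the weighted $L^4$ bound proves delicate, I would fall back to the elementary estimate $\|(x-y)f^\circ\|_{L^\infty}\lesssim 1$. That estimate comes from $F^\circ\in H^1$ together with a Fourier-side $L^1$ bound on $\partial_\xi F^\circ$ truncated at frequency $E$. The truncation loses only $\sqrt{E+1}$, which the claimed estimate tolerates.
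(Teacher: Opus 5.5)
Your Step~1 matches the paper's decomposition: subtract $\alpha r_y$ with $\alpha=F(0)/\widehat r(0)$, use the vanishing trace for the moment bound, and use $|A-B|^2\le 2|A|^2+2|B|^2$ for the energy. Your Cauchy--Cauchy and mixed terms are also fine, provided you take the first option you list. On the far side, pair $\|f^\circ\mathbf 1_{\{|x-y_f|\ge d/2\}}\|_{L^2}\lesssim d^{-1}$ with $\|r\|_{L^\infty}$; this gives $O(d^{-2})$, exactly as in the paper. Drop the ``better'' variant through $\|(x-y_f)f^\circ\|_{L^\infty}$, because that quantity is not controlled (see below).

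The genuine gap is in the $f^\circ g^\circ$ term. Your $L^4\cdot L^4$ route needs $\|(x-y_g)g^\circ\|_{L^4}$ bounded in terms of $C_0$ and $E(g)$, and no such bound holds. After modulation, $(x-y_g)g^\circ$ has Fourier transform $i\partial_\xi G^\circ$, which is only known to be in $L^2$. Gagliardo--Nirenberg for this weighted function would require $|\xi|^{1/2}\partial_\xi G^\circ\in L^2$, and that does not follow from $\partial_\xi G^\circ\in L^2$ and $|\xi|^{1/2}G^\circ\in L^2$.

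Concretely, take $y_g=0$ and $g=\sum_k k^{-1}\phi\bigl(4^kk^2(x-2^k)\bigr)$ with $\widehat\phi\in C_c^\infty((1,2))$. The pieces have pairwise disjoint positive Fourier supports, so $g$ has zero trace and finite mass and first moment. Its energy is finite, $E\approx E(\phi)\sum_k k^{-2}$, since $\dot H^{1/2}$ is dilation invariant. Yet $\|xg\|_{L^4}^4\gtrsim\sum_k 4^kk^{-6}=\infty$ and $\|xg\|_{L^\infty}\gtrsim\sup_k 2^k/k=\infty$.

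So both your weighted $L^4$ step and your fallback $\|(x-y)f^\circ\|_{L^\infty}\lesssim1$ fail. Truncating $\partial_\xi F^\circ$ at frequency $K$ costs $\sqrt K$ and leaves $P_{>K}g^\circ$ unestimated, and balancing these does not produce $\sqrt{E(f)E(g)}/d$.

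The missing idea, which is the paper's, is to localize \emph{before} applying Gagliardo--Nirenberg, so the weight is never differentiated. Take $\chi^2+\eta^2=1$ with $\chi\equiv1$ on the $y_f$ side of $m=(y_f+y_g)/2$ and a transition length comparable to $d$. The moment bound gives $\|\chi g^\circ\|_{L^2}\lesssim d^{-1}$. The fractional IMS (or commutator) estimate $\|[|D|^{1/2},\chi]\|\lesssim d^{-1/2}$ gives $E(\chi g^\circ)\lesssim E(g^\circ)+d^{-1}$. Hence $\|\chi g^\circ\|_{L^4}^4\lesssim d^{-2}\bigl(E(g^\circ)+d^{-1}\bigr)$, and therefore $\|f^\circ\chi g^\circ\|_{L^2}^2\le\|f^\circ\|_{L^4}^2\|\chi g^\circ\|_{L^4}^2\lesssim d^{-1}\sqrt{E(f^\circ)\bigl(E(g^\circ)+d^{-1}\bigr)}$. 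The $\eta$ piece is symmetric, and $E(f^\circ)\le 2E(f)+O(1)$ finishes the estimate.
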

\begin{proof}
Set $F(\xi)=e^{iy\xi}\widehat{f}(\xi)$ and choose $\alpha=F(0)/\widehat{r}(0)$. The half-line trace inequality bounds $\alpha$, while the Fourier transform of $f^\circ=f-\alpha r_y$ has
zero trace. Fourier differentiation then produces no boundary Dirac mass and gives the moment bound in \eqref{eq:blowup-trace-decomposition}; the kinetic estimate follows from $|A-B|^2\leq2|A|^2+2|B|^2$.

Let $m=(y_f+y_g)/2$ and choose a smooth cutoff changing from zero to one on an interval of length comparable to $d$ around $m$. The moment bounds give $O(d^{-1})$ wrong-side $L^2$ mass for the zero-trace remainders. The fractional IMS estimate and the one-dimensional Gagliardo--Nirenberg inequality then yield
\[
\|f^\circ g^\circ\|_{L^2(\mathbb{R})}^2\lesssim d^{-1}\sqrt{(E(f^\circ)+d^{-1})(E(g^\circ)+d^{-1})}.
\]
The two mixed Cauchy terms and the Cauchy--Cauchy term are $O(d^{-2})$. Combining the four terms proves \eqref{eq:blowup-cross-estimate}.
\end{proof}

Define the critical colligation $\mathcal{C}_v=\sqrt{2\pi}\,\mathsf{T}_{\bar{v}}D^{-1/2}$. The sharp Hardy--Toeplitz inequality implies
\begin{equation}\label{eq:blowup-colligation-Lipschitz}
\|\mathcal{C}_v\|_{\mathcal{B}(L^2(\mathbb R))}\leq\|v\|_{L^2(\mathbb{R})},\qquad\|\mathcal{C}_v-\mathcal{C}_w\|_{\mathcal{B}(L^2(\mathbb{R}))}\leq\|v-w\|_{L^2(\mathbb{R})}.
\end{equation}
\begin{lemma}[Critical colligation gap]\label{lem:blowup-colligation-gap}
For the normalized bubble $r=R/\sqrt{2\pi}$, $\mathcal{C}_r$ is compact. Its top singular value is one and is simple, with right singular vector $e_r=D^{1/2}r/\|D^{1/2}r\|_{L^2(\mathbb{R})}$.
Hence there is $s_2<1$ such that $h\perp e_r\quad\Longrightarrow\quad\|\mathcal{C}_rh\|_{L^2(\mathbb{R})}\leq s_2\|h\|_{L^2(\mathbb{R})}$. The same gap holds after zero-carrier affine transformations.
\end{lemma}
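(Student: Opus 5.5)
Set $\mathcal{C}:=\mathcal{C}_r=\sqrt{2\pi}\,\mathsf{T}_{\bar r}D^{-1/2}$ and work entirely on the Fourier side. For $\xi\ge0$ we have $\widehat{\mathsf{T}_{\bar r}f}(\xi)=\frac{1}{2\pi}\int_0^\infty\widehat f(\xi+\eta)\overline{\widehat r(\eta)}\,d\eta$, with $\widehat r(\eta)=c\,e^{-\eta}\mathbf 1_{\eta\ge0}$ and $c=-2\pi i$ (since $\widehat R=-2\sqrt2\pi i\,e^{-\eta}$). Writing $h=D^{1/2}f$, so that $\widehat f(s)=s^{-1/2}\widehat h(s)$, we get
\[
\widehat{\mathcal Ch}(\xi)=\kappa\int_\xi^\infty e^{-(s-\xi)}s^{-1/2}\widehat h(s)\,ds .
\]
This is an explicit integral operator on $L^2(\mathbb R_+)$ with kernel $k(\xi,s)=\kappa\,e^{\xi-s}s^{-1/2}\mathbf 1_{s>\xi}$, where $|\kappa|^2=2\pi\cdot|c|^2/(2\pi)^3\cdot 2\pi=1$ up to the constant fixed by the normalisation $\|r\|_{L^2}=1$.

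\textbf{Compactness.} I would check that $\int\!\!\int|k|^2=\int_0^\infty s^{-1}\int_0^s e^{-2(s-\xi)}\,d\xi\,ds=\int_0^\infty\frac{1-e^{-2s}}{2s}\,ds$ diverges logarithmically at infinity, so $\mathcal C$ is not Hilbert--Schmidt. I would instead split $\mathcal C=\mathcal C\mathbf 1_{s\le A}+\mathcal C\mathbf 1_{s>A}$. The first piece is Hilbert--Schmidt. For the second, Schur's test with the weight $w(s)=s^{-1/2}$ bounds it by $O(A^{-1/2})$: indeed $\int_\xi^\infty e^{\xi-s}s^{-1/2}\,ds\le\xi^{-1/2}$, and the column integral $\int_0^s e^{\xi-s}\xi^{-1/2}\,d\xi$ is also $\lesssim s^{-1/2}$ for large $s$. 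Since a norm limit of compact operators is compact, $\mathcal C$ is compact.

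\textbf{Top singular value.} The sharp inequality \eqref{eq:blowup-sharp-Toeplitz} with $\|r\|^2=1$ gives $\|\mathcal Ch\|\le\|h\|$ for $h$ in the range of $D^{1/2}$, which is dense in $L^2_+$. The equality case of Lemma~\ref{lem:blowup-sharp-Hardy-Toeplitz} with $v=r$ forces $f=Br$; in particular $h=D^{1/2}r\in L^2$ attains equality. Hence $\|\mathcal C\|=1$, and the value $1$ is attained at $e_r$.

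\textbf{Simplicity.} This is the main obstacle. Compactness tells us that $\|\mathcal C\|=1$ is attained exactly on the finite-dimensional space $\ker(\mathcal C^*\mathcal C-I)$. Take $h$ in that space and set $f=D^{-1/2}h$. Then $f\in\dot H^{1/2}_+$ and $f$ attains equality in \eqref{eq:blowup-sharp-Toeplitz}. Equality in the Cauchy--Schwarz step means $\widehat f(\xi+\eta)=c(\xi)e^{-\eta}$ for a.e.\ $(\xi,\eta)$, which already forces $\widehat f(s)=B e^{-s}$. Here the potential $v=r$ is fixed, so one should not appeal to the two-function Pexider argument; instead I would verify this directly: taking $\xi$ in a full-measure set, $\widehat f(s)e^{s}$ is a.e.\ constant on $(\xi,\infty)$, and letting $\xi\downarrow0$ gives the claim. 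Because this determines $f$ only in the homogeneous space, I would note that the Cauchy--Schwarz bound $\int s|\widehat f|^2<\infty$ is exactly what the argument uses, so no $L^2$ hypothesis on $f$ is needed. Thus $h\in\mathbb C\,D^{1/2}r$, the eigenspace is one-dimensional, and it is spanned by $e_r$.

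\textbf{The gap and affine invariance.} Compactness and self-adjointness of $\mathcal C^*\mathcal C$ give discrete spectrum whose only accumulation point is $0$. By simplicity, its second largest element satisfies $s_2^2<1$. Therefore $\|\mathcal Ch\|^2=\langle\mathcal C^*\mathcal Ch,h\rangle\le s_2^2\|h\|^2$ for every $h\perp e_r$.

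For zero-carrier affine transformations, with $M=\mathcal M_{\lambda,y,0,\theta}$, one has $M^*D^{1/2}M=\lambda^{-1/2}D^{1/2}$ and $\mathsf T_{\overline{Mr}}M=\lambda^{-1/2}M\,\mathsf T_{\bar r}$; the second follows from the covariance computation in Proposition~\ref{prop:M9-affine-covariance}, since $\Pi_+$ commutes with dilations and translations. These give $\mathcal C_{Mr}=M\,\mathcal C_r\,M^*$, a unitary conjugation. The singular values are therefore unchanged, and the top right singular vector becomes $Me_r=D^{1/2}(Mr)/\|D^{1/2}(Mr)\|_{L^2(\mathbb R)}$.
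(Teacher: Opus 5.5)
Your proposal is correct and follows the paper's proof essentially step for step. The steps are the explicit Fourier-side Volterra kernel, a Hilbert--Schmidt truncation plus an $O(A^{-1/2})$ tail for compactness, and the sharp Hardy--Toeplitz inequality for $\|\mathcal{C}_r\|\le 1$. Simplicity comes from its equality case, which you sensibly re-derive with $v=r$ held fixed; the gap from compactness of $\mathcal{C}_r^\ast\mathcal{C}_r$; and the last assertion from affine covariance. There are two cosmetic slips. First, $\widehat{r}(\eta)=-2\sqrt{\pi}\,i\,e^{-\eta}$, so $|\kappa|^2=2$ for the unnormalized transform. Second, the covariance actually reads $\mathcal{C}_{Mr}=e^{-i\theta}M\mathcal{C}_rM^\ast$; this unimodular factor changes neither the singular values nor the right singular vector $Me_r$.
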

\begin{proof}
Up to the fixed Fourier normalization, $(\mathcal{C}_rh)(\xi)=C\int_\xi^\infty e^{-(s-\xi)}h(s)s^{-1/2}ds$. Truncation to $s\leq A$ is Hilbert--Schmidt and the tail has norm $O(A^{-1/2})$, so the operator is compact. Its norm is at most one by \eqref{eq:blowup-colligation-Lipschitz}; the equality classification in Lemma~\ref{lem:blowup-sharp-Hardy-Toeplitz} shows that equality occurs only in the stated direction. Compactness gives the strict second singular-value gap. The affine covariance of $D^{-1/2}$ and of the Toeplitz row proves the final assertion.
\end{proof}

For a normalized $R$--bubble $r_p$, denote by $P^{\mathrm{R}}_{p,\eta}$ and $P^{\mathrm{L}}_{p,\eta}$ the right and left singular projections of $\mathcal{C}_{r_p}$ for singular values at least $\eta>0$; they have finite rank independent of $p$. For $p=(\lambda,y,0,\theta)$, abbreviate $\mathcal{M}_p=\mathcal{M}_{\lambda,y,0,\theta}$.
\begin{lemma}[Locking and two-sided block decay]\label{lem:blowup-block-decay}
Suppose $\|(\mathsf{X}-x_n)v_n\|_{L^2(\mathbb{R})}=O(1)$ and $v_n-r_{p_n}\to0$ in $L^2(\mathbb{R})$, where $p_n=(\lambda_n,y_n,0,\theta_n)$. Then
\begin{equation}\label{eq:blowup-frame-locking}
\lambda_n=O(1),\qquad |y_n-x_n|=O(1+\lambda_n).
\end{equation}
More generally, let $(w_n)$ be $L^2(\mathbb{R})$-bounded with $\|(\mathsf{X}-x_n)w_n\|_{L^2(\mathbb{R})}=O(1)$, and let $q_n=(\delta_n,z_n,0,\vartheta_n)$ satisfy
\[
\frac{|x_n-z_n|}{1+\delta_n}\underset{n\to\infty}{\longrightarrow}\infty.
\]
Then, for each fixed $\eta>0$,
\begin{equation}\label{eq:blowup-two-sided-block-decay}
\|\mathcal{C}_{w_n}P^{\mathrm{R}}_{q_n,\eta}\|_{\mathcal{B}(L^2(\mathbb{R}))}\underset{n\to\infty}{\longrightarrow}0,\qquad\|P^{\mathrm{L}}_{q_n,\eta}\mathcal{C}_{w_n}\|_{\mathcal{B}(L^2(\mathbb{R}))}
\underset{n\to\infty}{\longrightarrow}0.
\end{equation}
The analogous limits hold for two $R$ frames whose relative affine parameters escape every compact subset.
\end{lemma}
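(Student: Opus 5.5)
The proof splits into a locking part and a decay part, and for both I will pass to the Fourier half-line via \eqref{eq:modulation-fourier}. The bubble's Fourier transform is an explicit exponential: up to a fixed constant, $\widehat{r_p}(\zeta)=c\sqrt{\lambda}e^{i\theta-iy\zeta}e^{-\lambda\zeta}\mathbf 1_{\zeta\ge0}$.

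\textbf{Locking.} The centered row $\|(\mathsf X-x_n)v_n\|=O(1)$ means that $e^{ix_n\zeta}\widehat v_n$ is bounded in $H^1(0,\infty)$. By the half-line trace/Sobolev inequality it is then uniformly bounded in $L^\infty$, and it is uniformly Lipschitz in the $L^2$-modulus sense.

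\begin{itemize}
\item \emph{Scale bound.} Suppose $\lambda_n\to\infty$ along a subsequence. Then $\widehat{r_{p_n}}$ has $L^\infty$ norm $\sim\sqrt{\lambda_n}$ concentrated on $[0,1/\lambda_n]$. Since $v_n-r_{p_n}\to0$ in $L^2$, the mass of $\widehat v_n$ on $[0,1/\lambda_n]$ stays $\gtrsim1$. But the $L^\infty$ bound gives mass at most $O(\lambda_n^{-1/2})$ there, a contradiction. Hence $\lambda_n=O(1)$.
\item \emph{Center bound.} Given $\lambda_n=O(1)$, compute $\|(\mathsf X-x_n)r_{p_n}\|^2$ explicitly. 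This is the $L^2$ norm of $\partial_\zeta$ of $e^{i(x_n-y_n)\zeta-\lambda_n\zeta}\sqrt{\lambda_n}$ on the half-line, which is $\sim(|x_n-y_n|^2+\lambda_n^2)/\lambda_n$.
\item \emph{Passing the bound from $v_n$ to the bubble.} $L^2$-closeness does not transfer $\mathsf X$-bounds directly. Instead, test $v_n$ against the phase-shifted window: the weak pairing $\langle e^{ix_n\zeta}\widehat v_n,\phi\rangle$ with a fixed $\phi$ is equicontinuous. Compare it with the bubble pairing $\int e^{i(x_n-y_n)\zeta-\lambda_n\zeta}\phi\,d\zeta$. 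If $|x_n-y_n|/(1+\lambda_n)\to\infty$, the bubble pairing tends to zero for every fixed $\phi$ by Riemann--Lebesgue. Meanwhile the rescaled $H^1$-bounded family $e^{ix_n\zeta}\widehat v_n$ has, by compactness on bounded intervals, a nonzero strong local limit carrying mass $\ge$ some fixed amount, because the bubble mass sits at frequencies $\lesssim1/\lambda_n\gtrsim1$. This contradiction gives \eqref{eq:blowup-frame-locking}.
\end{itemize}

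\textbf{Block decay.} The projections $P^{\mathrm R}_{q_n,\eta}$ have fixed finite rank. By affine covariance of $\mathcal C$ (Lemma~\ref{lem:blowup-colligation-gap}), $P^{\mathrm R}_{q_n,\eta}=\mathcal M'_{q_n}P^{\mathrm R}_{0,\eta}\mathcal M'^{\ast}_{q_n}$ with the $D^{1/2}$-adapted unitary. So it suffices to show $\|\mathcal C_{w_n}\mathcal M'_{q_n}\phi_k\|\to0$ for finitely many fixed singular vectors $\phi_k$ of $\mathcal C_r$. Writing $\psi_{k,n}=D^{-1/2}\mathcal M'_{q_n}\phi_k$, which is a normalized profile at scale $\delta_n$ centered at $z_n$, the quantity to control is
\[
\|\mathsf T_{\overline{w_n}}\psi_{k,n}\|\le\|w_n\psi_{k,n}\|_{L^2}.
\]
I will bound this by the cross estimate of Lemma~\ref{lem:blowup-cross-estimate}, rescaled to scale $1+\delta_n$.

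\begin{itemize}
\item \emph{Reduction to Schwartz profiles.} First approximate the $\phi_k$ by Schwartz functions. The error is uniform by \eqref{eq:blowup-colligation-Lipschitz}-type bounds, since $\|\mathcal C_{w}\|\le\|w\|$.
\item \emph{Separated case.} Apply the trace--Cauchy decomposition \eqref{eq:blowup-trace-decomposition} to $w_n$ around $x_n$. The Cauchy part $\alpha r_{x_n}$ is then pointwise $\lesssim(1+|x-x_n|)^{-1}$, while the remainder has its wrong-side mass controlled by the moment bound. The profile $\psi_{k,n}$ is localized within $O(1+\delta_n)$ of $z_n$, up to tails that decay in $|x-z_n|/\delta_n$. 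Splitting $\mathbb R$ at the midpoint then gives $\|w_n\psi_{k,n}\|\lesssim\bigl((1+\delta_n)/|x_n-z_n|\bigr)^{1/2}\to0$.
\item \emph{Expected main obstacle: the large-scale regime.} When $\delta_n\to\infty$, the $L^\infty$ smallness $\|\psi_{k,n}\|_\infty\sim\delta_n^{-1/2}$ against $L^2$-bounded $w_n$ is not enough by itself. Here I will use that the $w_n$ near $x_n$ are localized at unit scale, so the overlap is $O(\delta_n^{-1/2})$. The tails of $w_n$ are handled by the moment bound.
\end{itemize}

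The left-sided statement in \eqref{eq:blowup-two-sided-block-decay} follows by taking adjoints, since $\|P^{\mathrm L}\mathcal C_{w_n}\|=\|\mathcal C_{w_n}^\ast P^{\mathrm L}\|$. One then uses Hankel symmetry to rewrite $\langle\mathsf T_{\bar w}f,\chi\rangle=\langle\mathsf T_{\bar\chi}f,w\rangle$ with $\chi$ a left singular vector, which reduces to the same overlap estimate. The final claim, for two $R$ frames with escaping relative parameters, is the same computation using the explicit overlap formula of Lemma~\ref{lem:overlap}.
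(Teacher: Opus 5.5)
\textbf{The locking step has a genuine gap when $\lambda_n\to0$.} Your scale bound is correct, and it is a nice shortcut compared with the paper's limit $(\mathsf X+a)r=0$: the $L^\infty$ bound on $e^{ix_n\zeta}\widehat{v_n}$ is played against the bubble's Fourier mass $\gtrsim1$ on $[0,1/\lambda_n]$. But it only gives $\lambda_n=O(1)$.

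The case $\lambda_n\to0$ is compatible with the hypotheses (take $v_n=r_{p_n}$, $y_n=x_n$). It is also exactly the regime in which the lemma is used later, for captured rows of diverging kinetic energy. There your center argument yields no contradiction. The bubble's Fourier mass on any fixed $[0,K]$ is a fixed multiple of $1-e^{-2\lambda_nK}\to0$. So the Rellich limit of $e^{ix_n\zeta}\widehat{v_n}$ on bounded intervals is zero whether or not $|x_n-y_n|\to\infty$. Your phrase ``frequencies $\lesssim1/\lambda_n\gtrsim1$'' points the wrong way: the mass escapes to infinity in frequency. Fixed test functions in the original frame cannot see such a bubble; you must test at the bubble's own scale.

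The paper does this by pulling back. One has $\|(\lambda_n\mathsf X+y_n-x_n)\mathcal M_{p_n}^\ast v_n\|=O(1)$ with $\mathcal M_{p_n}^\ast v_n\to r$ strongly. If $|y_n-x_n|/(1+\lambda_n)\to\infty$, divide by $y_n-x_n$ and pair with the zero-trace core of $\mathrm{Dom}(\mathsf X^\ast)$. This gives $\mathcal M_{p_n}^\ast v_n\rightharpoonup0$, a contradiction, whatever $\lambda_n$ does.

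Equivalently, in your notation, set $F_n=e^{ix_n\zeta}\widehat{v_n}$ and $A_n=e^{ix_n\zeta}\widehat{r_{p_n}}=c\sqrt{\lambda_n}e^{i\theta_n}e^{\kappa_n\zeta}$ with $\kappa_n=i(x_n-y_n)-\lambda_n$. One integration by parts gives $|\langle F_n,A_n\rangle|\le|\kappa_n|^{-1}\bigl(|F_n(0)||A_n(0)|+\|F_n'\|\|A_n\|\bigr)\lesssim(1+\sqrt{\lambda_n})/|\kappa_n|$. This tends to zero if $\lambda_n+|x_n-y_n|\to\infty$, contradicting $\langle F_n,A_n\rangle\to\|A_n\|^2>0$. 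It gives both bounds in \eqref{eq:blowup-frame-locking} at once. Incidentally, $\|(\mathsf X-x)r_p\|^2$ is a multiple of $|x-y|^2+\lambda^2$, not of $(|x-y|^2+\lambda^2)/\lambda$.

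\textbf{Block decay: the main statement works, the two-frame claim does not.} Your spatial bound $\|\mathsf T_{\overline{w_n}}\psi_{k,n}\|\le\|w_n\psi_{k,n}\|$ does prove the main statement, but for a simpler reason than you give. $\mathcal C$ is covariant under the plain $\mathcal M_q$ up to a phase, so $\psi_{k,n}=D^{-1/2}\mathcal M_{q_n}\phi_k=\delta_n^{1/2}\mathcal M_{q_n}D^{-1/2}\phi_k$ has sup norm $\|D^{-1/2}\phi_k\|_{L^\infty}$, independent of $\delta_n$ (not $\delta_n^{-1/2}$). Your midpoint split then already gives $O\bigl(((1+\delta_n)/|x_n-z_n|)^{1/2}\bigr)$ for every $\delta_n$, and the ``large-scale obstacle'' never arises. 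On the left side you must also control $D^{-1/2}$ at low frequency, using $\widehat{w\chi}(0)=0$ for positive-frequency $w,\chi$.

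The real problem is the two-frame assertion, because a purely spatial bound cannot detect scale separation. Take concentric frames with $\lambda_n/\delta_n\to0$ and $y_n=z_n$. Then $\|r_{p_n}\psi_{k,n}\|\to\|r\|\,|(D^{-1/2}\phi_k)(0)|$, which is nonzero for $\phi_k=e_r$, since $D^{-1/2}e_r$ is a multiple of $r$. Meanwhile Lemma~\ref{lem:overlap} only controls the inner product of two bubbles, not $\mathcal C_{r_{p_n}}$ on the remaining singular vectors.

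The missing tool is the compactness the paper uses. Pull back by $q_n$, so that $\mathcal M_{q_n}^\ast w_n\rightharpoonup0$: in the main case this follows from the centered row and the separation, in the two-frame case from escape of the relative parameters. Then note that $v\mapsto\mathcal C_v\phi$ is Hilbert--Schmidt: its Fourier kernel is a multiple of $\widehat{D^{-1/2}\phi}(\xi+\eta)$, with squared norm proportional to $\|\phi\|^2$. Likewise $v\mapsto D^{-1/2}\Pi_+(v\psi)$ is compact for Schwartz $\psi$, as in Lemma~\ref{lem:blowup-bounded-graph-closure}. This converts the weak limit into norm decay on the finite-rank singular blocks.
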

\begin{proof}
Pulling the centered row back by $p_n$ gives $\|(\lambda_n\mathsf{X}+y_n-x_n)\mathcal{M}_{p_n}^\ast v_n\|_{L^2(\mathbb{R})}=O(1)$. If
\[
\frac{|y_n-x_n|}{1+\lambda_n}\underset{n\to\infty}{\longrightarrow}\infty,
\]
division by the dominant coefficient and testing against the zero-trace core of $\operatorname{Dom}(\mathsf{X}^\ast)$ force the pulled-back row to converge weakly to zero, contradicting its strong limit $r$. The remaining failure would have $\lambda_n\to\infty$ and $(y_n-x_n)/\lambda_n\to a\in\mathbb{R}$; it would imply $(\mathsf{X}+a)r=0$, impossible because $\widehat {r}(\xi)=Ce^{-\xi}\mathbf{1}_{\xi\ge0}$ and $a$ is real. This proves \eqref{eq:blowup-frame-locking}.

For the second assertion, pull $w_n$ back by $q_n$. The centered $\mathsf{X}$ bound and the stated separation make this pullback converge weakly to zero. For a fixed right singular vector $\phi$, the map $v\mapsto\mathcal{C}_v\phi$ is Hilbert--Schmidt. For a Schwartz left vector $\psi$, $\mathcal{C}_v^\ast\psi=\sqrt{2\pi}\,D^{-1/2}(v\psi)$, and the compactness used in Lemma~\ref{lem:blowup-bounded-graph-closure} applies. Finite rank and density prove \eqref{eq:blowup-two-sided-block-decay}; affine weak dispersion gives the two-bubble version.
\end{proof}

\begin{lemma}[Saturation of the free critical block]\label{lem:blowup-free-block-saturation}
Let $v_n$ be normalized and centered, and let $r_{k,n}$ be finitely many pairwise separated normalized $R$--bubbles, all separated from $v_n$. Put $A_{0,n}=e^{i\vartheta_{0,n}}\mathcal{C}_{v_n},\qquad A_{k,n}=e^{i\vartheta_{k,n}}\mathcal{C}_{r_{k,n}}$. If $\|h_n\|_{L^2(\mathbb{R})}=1$, $\langle h_n,e_{r_{k,n}}\rangle_{L^2(\mathbb{R})}\to0$ for every $k$, and
\begin{equation}\label{eq:blowup-block-saturation-assumption}
\left\|A_{0,n}h_n+\sum_kA_{k,n}h_n\right\|_{L^2(\mathbb{R})}^2\underset{n\to\infty}{\longrightarrow}1,
\end{equation}
then
\[
\sum_k\|A_{k,n}h_n\|_{L^2(\mathbb{R})}^2\underset{n\to\infty}{\longrightarrow}0,\qquad\|A_{0,n}h_n\|_{L^2(\mathbb{R})}^2\underset{n\to\infty}{\longrightarrow}1.
\]
\end{lemma}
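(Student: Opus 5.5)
The plan is to expand the square in \eqref{eq:blowup-block-saturation-assumption} and show that every term other than $\|A_{0,n}h_n\|^2$ is asymptotically negligible. The available bounds are $\|A_{0,n}\|\le\|v_n\|_{L^2}=1$ and $\|A_{k,n}\|\le1$ from \eqref{eq:blowup-colligation-Lipschitz}. In addition, since $h_n$ is asymptotically orthogonal to each top right singular vector $e_{r_{k,n}}$, Lemma~\ref{lem:blowup-colligation-gap} gives $\limsup\|A_{k,n}h_n\|\le s_2<1$, uniformly in the zero-carrier frame.

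\textbf{Step 1.} First show that the cross terms vanish. For $k\ne l$, I will show $\langle A_{k,n}h_n,A_{l,n}h_n\rangle\to0$. Fix $\eta>0$ and split $h_n=P^{\mathrm R}_{q_{k,n},\eta}h_n+(I-P^{\mathrm R}_{q_{k,n},\eta})h_n$. The second piece contributes at most $\eta$ after applying $A_{k,n}$. For the first piece, $A_{k,n}P^{\mathrm R}=P^{\mathrm L}A_{k,n}P^{\mathrm R}$ up to $O(\eta)$, so the pairing reduces to $\|P^{\mathrm L}_{q_{k,n},\eta}A_{l,n}\|$. That norm tends to zero by the two-bubble version of \eqref{eq:blowup-two-sided-block-decay}, because the bubbles are separated. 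The same argument, using the first and second parts of \eqref{eq:blowup-two-sided-block-decay} with $w_n=v_n$, handles $\langle A_{0,n}h_n,A_{k,n}h_n\rangle$. Here the finite rank of $P^{\mathrm L}_{q,\eta}$ is essential: it converts block decay into vanishing of the pairing. Letting $n\to\infty$ and then $\eta\to0$ kills all cross terms.

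\textbf{Step 2.} Next, obtain an orthogonal splitting of $h_n$ itself. The same block decay shows that the right singular pieces $P^{\mathrm R}_{q_{k,n},\eta}h_n$ are asymptotically orthogonal to one another and to the effective support of $A_{0,n}$, meaning $\|A_{0,n}P^{\mathrm R}_{q_{k,n},\eta}\|\to0$. Writing $a_k=\|P^{\mathrm R}_{q_{k,n},\eta}h_n\|^2$, I get $\|A_{k,n}h_n\|^2\le s_2^2a_k+O(\eta)$ and $\|A_{0,n}h_n\|^2\le\|h_n-\sum_kP^{\mathrm R}_{q_{k,n},\eta}h_n\|^2+o(1)=1-\sum_ka_k+o(1)$. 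Combining these with Step 1,
\[
1\leftarrow\Bigl\|A_{0,n}h_n+\sum_kA_{k,n}h_n\Bigr\|^2\le1-(1-s_2^2)\sum_ka_k+O(\eta)+o(1).
\]
This forces $\limsup\sum_ka_k\lesssim\eta$. Hence $\sum_k\|A_{k,n}h_n\|^2\to0$ after $\eta\to0$, and then $\|A_{0,n}h_n\|^2\to1$ follows from the expansion.

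\textbf{Main obstacle.} The hardest step is the claim $\|A_{0,n}P^{\mathrm R}_{q_{k,n},\eta}\|\to0$, i.e.\ that the free block $\mathcal C_{v_n}$ sees nothing of the bubble's right singular subspace. This is exactly the first part of \eqref{eq:blowup-two-sided-block-decay} applied with $w_n=v_n$. It requires that the separation hypothesis be read as $|x_n-z_{k,n}|/(1+\delta_{k,n})\to\infty$ in the sense of Lemma~\ref{lem:blowup-block-decay}. I would state this reading explicitly and take it as the precise meaning of ``separated''.
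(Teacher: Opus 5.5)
Your proposal is correct and follows the paper's proof. The paper also splits each bubble colligation at singular level $\eta$. It then uses Lemma~\ref{lem:blowup-block-decay} to make the finite right and left blocks asymptotically orthogonal to one another and two-sidedly orthogonal to the free block, removes the top directions by hypothesis, and uses the gap $s_2$ to obtain $\bigl\|A_{0,n}h_n+\sum_kA_{k,n}h_n\bigr\|^2\le1-(1-s_2^2)\sum_k\|P^{\mathrm R}_{k,n,\eta}h_n\|^2+O(\eta)+o(1)$, before sending $n\to\infty$ and then $\eta\to0$. Your explicit expansion of the square, with cross terms killed by left-block decay and the free term bounded by right-block decay, simply spells out the paper's ``$o(1)$ Gram correction''.
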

\begin{proof}
Fix $0<\eta<1-s_2$ and split each captured colligation into its finite singular block above $\eta$ and a tail of norm at most $\eta$. Lemma~\ref{lem:blowup-block-decay} makes all finite right and left blocks mutually orthogonal up to $o(1)$ and two-sided orthogonal to the free block. The top directions are removed by the hypotheses, while every remaining singular value is at most $s_2$. After an $o(1)$ Gram correction,
\[
\left\|A_{0,n}h_n+\sum_kA_{k,n}h_n\right\|_{L^2(\mathbb{R})}^2\leq1-(1-s_2^2)\sum_k\|P^{\mathrm{R}}_{k,n,\eta}h_n\|_{L^2(\mathbb{R})}^2+O(\eta)+o(1).
\]
Letting first $n\to\infty$ and then $\eta\to0$, all captured outputs vanish, and \eqref{eq:blowup-block-saturation-assumption} forces the free output to saturate.
\end{proof}

Call a previously captured row $Z_{k,n}$ with diverging kinetic energy enhanced if there is a normalized $R$--bubble $r_{k,n}$ satisfying
\begin{equation}\label{eq:blowup-enhanced-row}
\|Z_{k,n}-r_{k,n}\|_{L^2(\mathbb{R})}\to0,\qquad\|D^{1/2}(Z_{k,n}-r_{k,n})\|_{L^2(\mathbb{R})}=o(E(Z_{k,n})^{1/2}).
\end{equation}
Lemma~\ref{lem:blowup-sharp-stability} supplies this estimate for every high-kinetic captured row. Lemma~\ref{lem:blowup-block-decay} locks its center to the corresponding $x_{k,n}$, so bubbles with distinct dark labels are pairwise separated.

\begin{lemma}[Graph-preserving $D^{1/2}$--Schur correction]\label{lem:blowup-Schur-correction}
Let $Z_{k,n}$, $k\in P$, be finitely many enhanced captured rows, and let $V_{j,n}$ be a corrected row satisfying
\[
\|V_{j,n}-\Gamma_{j,n}\|_{L^2(\mathbb{R})}=o(1),\quad\|(\mathsf{X}-x_{j,n})V_{j,n}\|_{L^2(\mathbb{R})}=O(1),\quad\|L_{U_n}V_{j,n}\|_{L^2(\mathbb{R})}=o(1).
\]
If $E(Z_{k,n})\gtrsim d_{jk,n}^2E(V_{j,n})$ for every $k\in P$, there are unique coefficients $\beta_{jk,n}$ for which
\begin{equation}\label{eq:blowup-Schur-system}
F_{j,n}=V_{j,n}-\sum_{k\in P}\beta_{jk,n}Z_{k,n},\qquad\langle D^{1/2}F_{j,n},e_{r_{\ell,n}}\rangle_{L^2(\mathbb{R})}=0\quad(\ell\in P).
\end{equation}
They obey
\begin{equation}\label{eq:blowup-Schur-coefficients}
|\beta_{jk,n}|\lesssim\sqrt{E(V_{j,n})/E(Z_{k,n})}=O(d_{jk,n}^{-1}),
\end{equation}
and
\begin{equation}\label{eq:blowup-Schur-preservation}
\begin{aligned}
&\|F_{j,n}-V_{j,n}\|_{L^2(\mathbb{R})}\underset{n\to\infty}{\longrightarrow}0,\qquad\|(\mathsf{X}-x_{j,n})F_{j,n}\|_{L^2(\mathbb{R})}=O(1),\\
&\|L_{U_n}F_{j,n}\|_{L^2(\mathbb{R})}\underset{n\to\infty}{\longrightarrow}0,\qquad E(F_{j,n})\lesssim_P E(V_{j,n}).
\end{aligned}
\end{equation}
\end{lemma}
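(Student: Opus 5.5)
The argument reduces to a finite linear system for the coefficients $\beta_{jk,n}$. With $P=\{k_1,\dots,k_p\}$, the orthogonality conditions in \eqref{eq:blowup-Schur-system} read
\[
\sum_{k\in P}\beta_{jk,n}\,\langle D^{1/2}Z_{k,n},e_{r_{\ell,n}}\rangle=\langle D^{1/2}V_{j,n},e_{r_{\ell,n}}\rangle,\qquad \ell\in P .
\]
I will rescale by setting $\tilde\beta_{jk}=\beta_{jk,n}E(Z_{k,n})^{1/2}$. After this rescaling the Gram matrix becomes
\[
G_{\ell k}=E(Z_{k,n})^{-1/2}\langle D^{1/2}Z_{k,n},e_{r_{\ell,n}}\rangle .
\]

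The first step is to show that $G=\mathrm{Id}+o(1)$, which gives existence and uniqueness of the coefficients.

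For the diagonal entries I use the enhanced-row estimate \eqref{eq:blowup-enhanced-row}. It gives $D^{1/2}Z_{k,n}=D^{1/2}r_{k,n}+o(E(Z_{k,n})^{1/2})$. Since $e_{r_{k,n}}$ is the normalized vector $D^{1/2}r_{k,n}/\|D^{1/2}r_{k,n}\|$, we have $\langle D^{1/2}r_{k,n},e_{r_{k,n}}\rangle=\|D^{1/2}r_{k,n}\|=E(Z_{k,n})^{1/2}(1+o(1))$. Hence $G_{kk}\to1$.

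For the off-diagonal entries I need
\[
\langle D^{1/2}r_{k,n},e_{r_{\ell,n}}\rangle\,E(Z_{k,n})^{-1/2}\to0 .
\]
This is the Fourier-side overlap of two $R$-bubbles weighted by $\xi$. An explicit computation in the spirit of Lemma~\ref{lem:overlap}, with $\widehat R=c\,e^{-\xi}\mathbf 1_{\xi\geq0}$, shows that it is small whenever the relative affine parameters escape every compact set. That escape holds here because the bubbles are locked to the distinct centers $x_{k,n}$ by Lemma~\ref{lem:blowup-block-decay}. Inverting $G$ by a Neumann series then gives unique $\beta_{jk,n}$ with
\[
|\tilde\beta_{jk}|\lesssim\max_\ell|\langle D^{1/2}V_{j,n},e_{r_{\ell,n}}\rangle|\le E(V_{j,n})^{1/2},
\]
which is the first inequality in \eqref{eq:blowup-Schur-coefficients}. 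The hypothesis $E(Z_{k,n})\gtrsim d_{jk,n}^2E(V_{j,n})$ converts this into the bound $O(d_{jk,n}^{-1})$.

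The second step is to verify the preservation bounds \eqref{eq:blowup-Schur-preservation} term by term.

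The $L^2$ bound follows from $\|Z_{k,n}\|_{L^2}=O(1)$ and $\beta_{jk,n}\to0$, since $d_{jk,n}\to\infty$.

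The kinetic bound follows from $|\beta_{jk,n}|^2E(Z_{k,n})\lesssim E(V_{j,n})$.

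For $L_{U_n}F_{j,n}$, each $Z_{k,n}$ is a captured row, so I expect $\|L_{U_n}Z_{k,n}\|=o(1)$ or at least $O(1)$. Multiplying by $\beta_{jk,n}=o(1)$ makes this contribution vanish.

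The centered moment is the delicate term. Here
\[
(\mathsf X-x_{j,n})Z_{k,n}=(\mathsf X-x_{k,n})Z_{k,n}+(x_{k,n}-x_{j,n})Z_{k,n},
\]
and the second piece has size $d_{jk,n}$. Multiplying by $|\beta_{jk,n}|=O(d_{jk,n}^{-1})$ gives exactly $O(1)$. This is why the hypothesis carries the factor $d_{jk,n}^2$.

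The main obstacle is the off-diagonal Gram decay. The $D^{1/2}$-weighted overlap does not reduce directly to \eqref{eq:overlap}, because the bubbles have wildly different scales when the energies diverge. I would compute the overlap explicitly as
\[
\int_0^\infty \xi\, e^{-(\lambda_k+\lambda_\ell)\xi-i(y_k-y_\ell)\xi}\,d\xi ,
\]
normalized by $(\lambda_k\lambda_\ell)^{-1/2}$. This equals $\sqrt{\lambda_k\lambda_\ell}/(\lambda_k+\lambda_\ell+i\Delta y)^2$, up to constants. Its modulus tends to zero exactly when the separation quantity \eqref{eq:separation} diverges, and that divergence is guaranteed by the center locking \eqref{eq:blowup-frame-locking}.
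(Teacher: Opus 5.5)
Your proposal is correct and follows the paper's proof essentially step for step. The normalized $D^{1/2}$ Gram matrix is $I+o(1)$ by the enhanced-row estimate and the explicit bubble overlaps under center locking, Cauchy--Schwarz gives the coefficient bounds, and the $d_{jk,n}^{-1}$ size of $\beta_{jk,n}$ exactly compensates the displacement in the centered $\mathsf{X}$ row. One small correction: the properly normalized overlap is $\langle e_{r_k},e_{r_\ell}\rangle=4\lambda_k\lambda_\ell/(\lambda_k+\lambda_\ell+i\Delta y)^2$, since you dropped the $\sqrt{\lambda}$ factors in the Fourier amplitudes of the bubbles. This expression is scale invariant and is what makes your ``exactly when the separation diverges'' claim true, although in the locked regime $\lambda_k,\lambda_\ell=O(1)$, $|\Delta y|\to\infty$ either expression tends to zero.
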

\begin{proof}
For normalized bubbles, direct Fourier integration gives
\[
\left|\langle e_{r_{\lambda,x}},e_{r_{\delta,y}}\rangle_{L^2(\mathbb{R})}\right|=\left|\frac{4\lambda\delta}{(\lambda+\delta+i(x-y))^2}\right|.
\]
Pairwise separation and \eqref{eq:blowup-enhanced-row} make the normalized $D^{1/2}$ Gram matrix in \eqref{eq:blowup-Schur-system} equal to $I+o(1)$. It is uniformly invertible, and Cauchy--Schwarz gives \eqref{eq:blowup-Schur-coefficients} and the kinetic bound. The factor $O(d_{jk,n}^{-1})$ compensates the displacement $|x_{j,n}-x_{k,n}|=d_{jk,n}$ in the centered $\mathsf {X}$ row. Linearity of $L_{U_n}$ on its common domain proves the last convergence in \eqref{eq:blowup-Schur-preservation}.
\end{proof}

\begin{thm}[Simultaneous graph--Schur capture]\label{thm:S9-simultaneous-capture}
For every sequence $\tau_n\to T$, there are a subsequence, a permutation $j_1,\ldots,j_{N_\ast}$, and zero-carrier $R$--bubbles $Q_{\ell,n}$ such that
\[
\|\mathfrak{d}_{j_\ell}(\tau_n)-Q_{\ell,n}\|_{L^2(\mathbb{R})}\underset{n\to\infty}{\longrightarrow}0\qquad(1\leq\ell\leq N_\ast).
\]
More generally, for every nonempty active set $I$, the ordering can be chosen so that some $j\in I$ is captured after at most $N_\ast-|I|$ labels from $I^c$.
\end{thm}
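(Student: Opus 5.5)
The plan is an induction on the number of captured labels, run in the common frame $\mathbf M_n$ with the decomposition \eqref{eq:blowup-common-potential-decomposition}. First pass to a subsequence along which, for every $j$, either $E(\Gamma_{j,n})$ stays bounded or tends to infinity, and along which all ratios $E(\Gamma_{j,n})/E(\Gamma_{k,n})$ and $d_{jk,n}^2$-normalized ratios converge in $[0,\infty]$. At each stage we have a captured set $P$, consisting of enhanced rows in the sense of \eqref{eq:blowup-enhanced-row}, and an active set $I=P^c$. The general statement of the theorem, that some $j\in I$ is captured after at most $N_\ast-|I|$ labels from $I^c$, is the inductive claim; the full permutation follows by iterating it from $I=\{1,\dots,N_\ast\}$ down to a singleton.

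For the inductive step, choose $j\in I$ minimizing $E(\Gamma_{j,n})$, with ties broken by the subsequence ordering. If $E(\Gamma_{j,n})$ is bounded, Lemma~\ref{lem:blowup-bounded-kinetic-capture} applies in the $j$ frame and captures $\mathfrak d_j(\tau_n)$ directly. There the other channels are weakly null because $d_n\to\infty$, and the background is weakly null by Lemma~\ref{lem:S9-low-evacuation} together with the weak limit $\mathbf M_n^\ast z_T\rightharpoonup0$.

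If instead $E(\Gamma_{j,n})\to\infty$, first apply the Schur correction Lemma~\ref{lem:blowup-Schur-correction} with $V_{j,n}=\Gamma_{j,n}$ against the enhanced captured rows $Z_{k,n}$, $k\in P$. Its hypothesis $E(Z_{k,n})\gtrsim d_{jk,n}^2E(V_{j,n})$ has to be secured, and I would do this by ordering the captures by kinetic scale. The outcome is a row $F_{j,n}$ that is $D^{1/2}$-orthogonal to the bubble directions $e_{r_{k,n}}$, preserves the graph bounds, and satisfies $E(F_{j,n})\simeq E(\Gamma_{j,n})$. The goal is then the source-purity hypothesis \eqref{eq:blowup-high-kinetic-source-purity} with $\chi=\chi_j$, so that Lemma~\ref{lem:blowup-high-kinetic-source-purity} captures the row. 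To get it, expand
\[
U_n-\chi_jF_{j,n}=\omega_n+\sum_{k\ne j}\chi_k\Gamma_{k,n}+\chi_j(\Gamma_{j,n}-F_{j,n}).
\]
The background term $\omega_n$ is removed by Lemma~\ref{lem:blowup-background-source-small}. Active labels $k\in I\setminus\{j\}$ have $E(\Gamma_{k,n})\ge E(\Gamma_{j,n})$ and centers separated by $d_{jk,n}\to\infty$, so the cross estimate \eqref{eq:blowup-cross-estimate} makes their sources $o(E^{1/2})$ once $E(\Gamma_{k,n})/d_{jk,n}=o(E(\Gamma_{j,n}))$. Otherwise they are large and must be handled by the saturation argument below.

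For the captured labels $k\in P$, rewrite the source through the colligation. We have $\mathsf T_{\bar v}F=(2\pi)^{-1/2}\mathcal C_vD^{1/2}F$, so with $h_n=D^{1/2}F_{j,n}/E(F_{j,n})^{1/2}$ the graph equation $\|L_{U_n}F_{j,n}\|\to0$ gives saturation \eqref{eq:blowup-block-saturation-assumption} for $\sum_k\chi_k$-weighted colligations; the Lipschitz bound \eqref{eq:blowup-colligation-Lipschitz} replaces $\Gamma_{k,n}$ by the bubbles $r_{k,n}$. The Schur orthogonality supplies $\langle h_n,e_{r_{k,n}}\rangle\to0$. Lemma~\ref{lem:blowup-free-block-saturation} then kills every captured block and forces the free block, built from the pure row $F_{j,n}$, to saturate. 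This is exactly source purity, and Lemma~\ref{lem:blowup-high-kinetic-source-purity} yields an enhanced bubble for label $j$. Finally, $\|F_{j,n}-\Gamma_{j,n}\|\to0$, $\|g-h\|\to0$ from \eqref{eq:blowup-snapshot-close}, and unitarity of $\mathbf M_n$ transfer the capture to $\mathfrak d_j(\tau_n)=\chi_jh_{j,n}$.

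The main obstacle is the interplay among uncaptured active rows of comparable or larger energy. Lemma~\ref{lem:blowup-free-block-saturation} only removes captured bubble blocks, while uncaptured large rows $\Gamma_{k,n}$ are not yet known to be bubbles. I would handle this by choosing $j$ to be the active label of minimal energy and treating the remaining large active rows as part of the "free" vector $v_n$ in Lemma~\ref{lem:blowup-free-block-saturation}. Concretely, I would group $\chi_jF_{j,n}+\sum_{k\in I\setminus\{j\}}\chi_k\Gamma_{k,n}$ and use the two-sided block decay of Lemma~\ref{lem:blowup-block-decay} to show that the cross terms between separated centered rows vanish in operator norm. If this step fails for comparable energies, the fallback is the cross estimate combined with the displacement factor $d_{jk,n}^{-1}$. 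That fallback is why the statement is phrased as guaranteeing only some $j\in I$, at the cost of capturing labels from $I^c$ first.
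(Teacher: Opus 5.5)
Your proposal has a genuine gap, and it sits in the selection rule. You pick the active label of \emph{minimal} kinetic energy. The step only closes if one picks, among the remaining Schur-corrected rows, the one of \emph{maximal} energy $E_n=E(F_{j,n})$, which is what the paper does. The reason is an asymmetry in the available tools.

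\begin{itemize}
\item An uncaptured foreign row $F_{\ell,n}$ can be removed from the source of $F_{j,n}$ only by the cross estimate \eqref{eq:blowup-cross-estimate}. It gives $\|\mathsf{T}_{\overline{F_{\ell,n}}}F_{j,n}\|^2/E_n\lesssim d_{j\ell,n}^{-1}\sqrt{(E(F_{\ell,n})+1)/E_n}+o(1)$. This vanishes when $E(F_{\ell,n})=o(d_{j\ell,n}^2E_n)$, so comparable energies are harmless. It does not vanish when $E(F_{\ell,n})\gtrsim d_{j\ell,n}^2E_n$.
\item A row of that much larger energy can be handled only if it is already an enhanced bubble. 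Then its colligation has a finite top singular block with the gap $s_2<1$ of Lemma~\ref{lem:blowup-colligation-gap}. The Schur correction of Lemma~\ref{lem:blowup-Schur-correction} makes $h_n$ orthogonal to its top direction, and Lemma~\ref{lem:blowup-free-block-saturation} kills it.
\end{itemize}

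With the maximal choice, every row of larger energy is a captured one. The dichotomy \eqref{eq:blowup-ratio-dichotomy} then sorts them: the cross estimate handles the first alternative, and Schur projection plus saturation handles the second.

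Your minimal choice inverts this. All captured rows have energy $\lesssim E(\Gamma_{j,n})$. So the hypothesis $E(Z_{k,n})\gtrsim d_{jk,n}^2E(V_{j,n})$ that you plan to ``secure by ordering'' actually fails for every $k\in P$. That failure is harmless, since the cross estimate disposes of those rows. The real problem is that the rows which may carry energy $\gg d^2E(\Gamma_{j,n})$ are exactly the uncaptured ones, and neither tool applies to them.

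Your remedy is to lump $\chi_jF_{j,n}+\sum_{k\in I\setminus\{j\}}\chi_k\Gamma_{k,n}$ into the free vector of Lemma~\ref{lem:blowup-free-block-saturation}. This does not close the gap.
\begin{itemize}
\item That lemma draws its force from extremality. The free vector is normalized and centered, so $\|\mathcal{C}_{v_n}\|\le1$ by \eqref{eq:blowup-colligation-Lipschitz}, and saturation at level one pins down $h_n$.
\item The lump has norm about $\sqrt{2\pi|I|}$ and $|I|$ mutually separated centers. The graph identity only forces $\|\mathcal{C}_{U_n}h_n\|^2\approx2\pi$, which is roughly a fraction $|I|^{-1/2}$ of the lumped colligation's norm bound. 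That is not an extremal condition.
\item Even if it were, saturation of the lumped colligation says nothing about the individual sources $\mathsf{T}_{\overline{\Gamma_{k,n}}}F_{j,n}$. So the purity hypothesis \eqref{eq:blowup-high-kinetic-source-purity} for $\chi_jF_{j,n}$ alone is not obtained.
\item Lemma~\ref{lem:blowup-block-decay} does not help either. Its operator-norm decay is against the finite-rank singular projections of an $R$-frame, not between two unresolved rows.
\end{itemize}

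Replacing ``minimal'' by ``maximal'', after Schur-correcting all remaining rows, repairs the step and makes the lumping unnecessary.

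Relatedly, the hereditary clause is not a sign of a failing fallback. Once the maximal-energy procedure produces an ordering, one simply stops at the first selected label in $I$; all earlier labels lie in $I^c$.
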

\begin{proof}
If $N_\ast=1$, take $F_{1,n}=\Gamma_{1,n}$. A bounded kinetic subsequence is captured by Lemma~\ref{lem:blowup-bounded-kinetic-capture}. Otherwise $E(F_{1,n})\to\infty$; then
\eqref{eq:blowup-common-potential-decomposition} contains only the self row and the endpoint background, and Lemma~\ref{lem:blowup-background-source-small} gives \eqref{eq:blowup-high-kinetic-source-purity}. Apply Lemma~\ref{lem:blowup-high-kinetic-source-purity} with $(U_n,F_n,\chi)=(U_n,\Gamma_{1,n},\chi_1)$. The resulting enhanced $R$--row, together with
$g_{1,n}-h_{1,n}\to0$ in $L^2(\mathbb{R})$, gives the required physical bubble. Thus the assertion holds for $N_\ast=1$.

Assume $N_\ast\geq2$. Maintain a captured set $C$ and corrected rows. Initially $C=\varnothing$ and $F_{j,n}=\Gamma_{j,n}$. For a remaining label $j$ and each previously captured enhanced row $Z_{k,n}$, pass to a subsequence so that exactly one of
\begin{equation}\label{eq:blowup-ratio-dichotomy}
\frac{E(Z_{k,n})}{d_{jk,n}^2E(F_{j,n})}\underset{n\to\infty}{\longrightarrow}0,\qquad\text{or}\qquad\liminf_n\frac{E(Z_{k,n})}{d_{jk,n}^2E(F_{j,n})}>0
\end{equation}
holds. Apply Lemma~\ref{lem:blowup-Schur-correction} simultaneously to the second class and recompute the ratios. Every nonterminal recomputation adds an index, so this inner procedure stops after at most $|C|$ steps. Its output differs from $\Gamma_{j,n}$ by $o_{L^2(\mathbb{R})}(1)$ and retains all three row properties in \eqref{eq:blowup-Schur-preservation}.

Among the remaining corrected rows choose one, again denoted $F_{j,n}$, with maximal kinetic energy and put $E_n=E(F_{j,n})$. If $(E_n)$ is bounded, the proof of Lemma~\ref{lem:blowup-bounded-kinetic-capture} applies verbatim: the near-identity correction preserves the dark label, the centered row supplies compactness, and the Lax equation closes the graph. Thus the entire original $j$ channel is captured.

Assume $E_n\to\infty$. The triangular Schur transformation and its inverse are $I+O(d_n^{-1})$, and therefore
\begin{equation}\label{eq:blowup-triangular-potential}
U_n=\omega_n+\sum_{\ell\notin C}\chi_\ell F_{\ell,n}+\sum_{k\in C}\widetilde{\chi}_{k,n}Z_{k,n},\qquad \widetilde{\chi}_{k,n}=\chi_k+o(1),
\end{equation}
with self coefficient $\chi_j$. For another active row, maximality of $E_n$ and Lemma~\ref{lem:blowup-cross-estimate} give
\[
\frac{\|\mathsf{T}_{\overline{F_{\ell,n}}}F_{j,n}\|_{L^2(\mathbb{R})}^2}{E_n}\lesssim d_{j\ell,n}^{-1}\sqrt{\frac{E(F_{\ell,n})+1}{E_n}}+o(1)\underset{n\to\infty}{\longrightarrow}0.
\]
The same cross estimate and the first alternative in \eqref{eq:blowup-ratio-dichotomy} remove all unprojected captured rows; Lemma~\ref{lem:blowup-background-source-small} removes $\omega_n$.

For every projected row, replace $Z_{k,n}$ by its enhanced bubble $r_{k,n}$. By \eqref{eq:blowup-colligation-Lipschitz}, the cost is $o(E_n^{1/2})$, independently of the possibly larger kinetic energy of $Z_{k,n}$. Put $h_n=D^{1/2}F_{j,n}/E_n^{1/2}$. The graph identity gives
\[
E_n-\|\mathsf{T}_{\overline{U_n}}F_{j,n}\|_{L^2(\mathbb{R})}^2=\langle L_{U_n}F_{j,n},F_{j,n}\rangle_{L^2(\mathbb{R})}=o(1).
\]
Using \eqref{eq:blowup-triangular-potential}, the preceding small terms, and $|\chi_j|^2=2\pi$, we obtain phases such that
\[
\left\|e^{i\vartheta_{j,n}}\mathcal{C}_{F_{j,n}}h_n+\sum_{k\in P}e^{i\vartheta_{k,n}}\mathcal{C}_{r_{k,n}}h_n\right\|_{L^2(\mathbb{R})}^2=1+o(1).
\]
The Schur equations make $h_n$ orthogonal to the top right singular directions of the captured blocks. Since $\|F_{j,n}\|_{L^2(\mathbb{R})}\to1$, normalize the free row and apply
Lemma~\ref{lem:blowup-free-block-saturation}; the normalization changes the colligation by $o(1)$ by \eqref{eq:blowup-colligation-Lipschitz}. Every projected source vanishes, and the self-row saturates. Restoring the discarded terms yields $\|\mathsf{T}_{\overline{U_n-\chi_jF_{j,n}}}F_{j,n}\|_{L^2(\mathbb{R})}=o(E_n^{1/2})$. Lemma~\ref{lem:blowup-high-kinetic-source-purity}, applied with $(U_n,F_n,\chi)=(U_n,F_{j,n},\chi_j)$, produces a normalized $R$--bubble $r_{j,n}$ with the enhanced estimate. Since
\[
F_{j,n}-\Gamma_{j,n}=o_{L^2(\mathbb{R})}(1)\quad\text{and}\quad g_{j,n}-h_{j,n}=o_{L^2(\mathbb{R})}(1),
\]
returning to physical coordinates gives a bubble $Q_{j,n}$ satisfying
\[
\|\mathfrak{d}_j(\tau_n)-Q_{j,n}\|_{L^2(\mathbb{R})}\underset{n\to\infty}{\longrightarrow}0.
\]

Add $j$ to $C$ and iterate. Every outer step adds one label, so the process terminates after at most $N_\ast$ steps. If an active set $I$ is prescribed, stop at the first selected label in $I$; all preceding labels belong to $I^c$, which proves the hereditary assertion.
\end{proof}

\begin{cor}[Finite descent]\label{cor:S9-finite-descent}
Every sequence $\tau_n\to T$ has a subsequence and zero-carrier $R$--bubbles $Q_{1,n},\ldots,Q_{N_\ast,n}$ such that
\begin{equation}\label{eq:S9-sequential-resolution}
u(\tau_n)=z_T+\sum_{j=1}^{N_\ast}Q_{j,n}+o_{L^2(\mathbb{R})}(1).
\end{equation}
If $\rho_n^{(m)}=u(\tau_n)-z_T-\sum_{j=1}^mQ_{j,n}$, then
\[
\begin{aligned}
\|\rho_n^{(m)}\|_{L^2(\mathbb{R})}^2&\underset{n\to\infty}{\longrightarrow}2\pi(N_\ast-m),\\
\langle Q_{j,n},Q_{k,n}\rangle_{L^2(\mathbb{R})}&\underset{n\to\infty}{\longrightarrow}0\quad(j\ne k),\\
\|\rho_n^{(m-1)}\|_{L^2(\mathbb{R})}^2&=2\pi+\|\rho_n^{(m)}\|_{L^2(\mathbb{R})}^2+o(1).
\end{aligned}
\]
In particular, $\rho_n^{(N_\ast)}\to0$ strongly in $L^2(\mathbb{R})$.
\end{cor}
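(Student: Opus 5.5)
The plan is to combine the exact dark-channel ledger of Lemma~\ref{lem:blowup-dark-decomposition} with the sequential capture of Theorem~\ref{thm:S9-simultaneous-capture}. Given $\tau_n\to T$, Theorem~\ref{thm:S9-simultaneous-capture} yields a subsequence, a permutation, and zero-carrier $R$-bubbles with $\|\mathfrak{d}_{j_\ell}(\tau_n)-Q_{\ell,n}\|_{L^2}\to0$. Relabel so that $Q_{j,n}$ approximates $\mathfrak{d}_j(\tau_n)$; summing over the finitely many labels is harmless.

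Substituting the captures into \eqref{eq:S9-no-diffuse-interface}, which reads $u(\tau)-z_T=\sum_j\mathfrak{d}_j(\tau)+o_{L^2}(1)$, gives \eqref{eq:S9-sequential-resolution} by the triangle inequality. The same substitution expresses the partial remainder as
\[
\rho_n^{(m)}=\sum_{j>m}\mathfrak{d}_j(\tau_n)+o_{L^2}(1).
\]

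For the quantitative statements, the orthogonality relations and the norm identity $\|\mathfrak{d}_j\|^2=2\pi$ in \eqref{eq:S9-dark-orthogonality} give $\|\sum_{j>m}\mathfrak{d}_j(\tau_n)\|^2=2\pi(N_\ast-m)$, so $\|\rho_n^{(m)}\|^2\to2\pi(N_\ast-m)$. For $j\neq k$, write
\[
\langle Q_{j,n},Q_{k,n}\rangle=\langle\mathfrak{d}_j,\mathfrak{d}_k\rangle+o(1)=o(1),
\]
using Cauchy--Schwarz together with the uniform bound $\|Q_{j,n}\|=\sqrt{2\pi}$. The telescoping identity is then just the difference of the two norm limits at $m-1$ and $m$, since $2\pi(N_\ast-m+1)=2\pi+2\pi(N_\ast-m)$. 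Taking $m=N_\ast$ gives $\rho_n^{(N_\ast)}\to0$.

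I do not expect a genuine obstacle here; the substance lies in Theorem~\ref{thm:S9-simultaneous-capture}. The one point needing care is the relabelling: the permutation has to be chosen along the same subsequence for every label, which the simultaneous capture statement provides. We should also note that every $Q_{j,n}$ is a zero-carrier bubble of mass exactly $2\pi$, since $\mathcal{M}$ is unitary.
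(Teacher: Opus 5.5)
Your proposal is correct and follows the paper's proof: you relabel the channels via Theorem~\ref{thm:S9-simultaneous-capture}, substitute the captures into the dark ledger together with the strong endpoint convergence (that combination is exactly \eqref{eq:S9-no-diffuse-interface}), and read off the three Pythagorean statements from the orthogonality and norm identities in \eqref{eq:S9-dark-orthogonality}. Taking $m=N_\ast$ then leaves only $r_{\mathrm{end}}(\tau_n)-z_T=o_{L^2}(1)$, as in the paper.
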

\begin{proof}
Relabel the channels according to Theorem~\ref{thm:S9-simultaneous-capture}. Substitute $Q_{j,n}=\mathfrak{d}_j(\tau_n)+o_{L^2(\mathbb{R})}(1)$ into \eqref{eq:S9-dark-ledger} and use \eqref{eq:blowup-dark-endpoint-limits} to obtain \eqref{eq:S9-sequential-resolution}. The orthogonality in \eqref{eq:S9-dark-orthogonality} gives all three Pythagorean statements. At $m=N_\ast$ only
$r_{\mathrm{end}}(\tau_n)-z_T$ remains.
\end{proof}

\subsection{Borel selection for all times}\label{subsec:blowup-Borel}
To obtain Borel measurable bubbles for all times, let $\widehat{\mathcal{P}}_0=(0,\infty)\times\mathbb{R}\times\mathbb{R}/2\pi\mathbb{Z}$, identify $\mathbb{R}/2\pi\mathbb{Z}$ with the unit circle by $\zeta=e^{i\theta}$, and write $Q_{\lambda,y,\zeta}(x)=\lambda^{-1/2}\zeta R((x-y)/\lambda)$.
For unordered $J$--tuples in $L^2_+(\mathbb{R})$, use the quotient metric
\[
d_{J,2}([f_1,\ldots,f_J],[g_1,\ldots,g_J])=\min_{\pi\in\mathfrak{S}_J}\left(\sum_{j=1}^J\|f_j-g_{\pi(j)}\|_{L^2(\mathbb{R})}^2\right)^{1/2}.
\]
This is a complete separable metric because it is the finite isometric quotient of the Polish space $(L^2_+(\mathbb{R}))^J$. The map $(\lambda,y,\zeta)\mapsto Q_{\lambda,y,\zeta}$ is strongly continuous.

Put $J=N_\ast$ and let $\mathcal{C}_J^R$ be the set of unordered zero-carrier $J$--bubble configurations. With $\mathcal{D}(\tau)=[\mathfrak{d}_1(\tau),\ldots,\mathfrak{d}_J(\tau)]$, define $\Delta_J(\tau)=\operatorname{dist}_{d_{J,2}}(\mathcal{D}(\tau),\mathcal{C}_J^R)$.
The map $\tau\mapsto \mathfrak{d}_j(\tau)$ is strongly continuous on every compact subinterval of $[0,T)$: scalar-row continuity first gives weak continuity, while \eqref{eq:S9-dark-orthogonality} fixes its norm at $\sqrt{2\pi}$, so weak continuity and equality of norms give strong continuity. Thus $\Delta_J$ is continuous away from the endpoint.
\begin{lemma}[Sequential criterion and Borel near-minimizer]
\label{lem:blowup-Borel-selection}
One has
\begin{equation}\label{eq:blowup-full-time-configuration-zero}
\Delta_J(\tau)\underset{\tau\to T}{\longrightarrow}0.
\end{equation}
Moreover, there are Borel zero-carrier bubbles $Q_j(\tau)$ satisfying
\begin{equation}\label{eq:blowup-full-time-channel-matching}
E_J(\tau):=\left(\sum_{j=1}^J\|\mathfrak{d}_j(\tau)-Q_j(\tau)\|_{L^2(\mathbb{R})}^2\right)^{1/2}\underset{\tau\to T}{\longrightarrow}0,
\end{equation}
and, for $\tau$ sufficiently close to $T$,
\begin{equation}\label{eq:blowup-near-minimizer}
E_J(\tau)<\Delta_J(\tau)+(T-\tau).
\end{equation}
\end{lemma}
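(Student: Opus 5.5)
The plan has two parts. The first is the limit \eqref{eq:blowup-full-time-configuration-zero}, which I would prove by contradiction from Corollary~\ref{cor:S9-finite-descent}. The second is the Borel selection, done with a countable-dense-set construction of the same kind as in Proposition~\ref{prop:sequential-upgrade}.

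For the limit, suppose it fails. Then there are $\varepsilon>0$ and $\tau_n\to T$ with $\Delta_J(\tau_n)\ge\varepsilon$. Theorem~\ref{thm:S9-simultaneous-capture} gives a subsequence, a permutation $\pi$ and zero-carrier bubbles $Q_{\ell,n}$ with $\|\mathfrak{d}_{j_\ell}(\tau_n)-Q_{\ell,n}\|_{L^2(\mathbb{R})}\to0$. The unordered tuple $[Q_{1,n},\ldots,Q_{J,n}]$ lies in $\mathcal{C}_J^R$. Since $d_{J,2}$ minimizes over permutations, it is bounded by the matched sum, so
\[
\Delta_J(\tau_n)\le\Bigl(\sum_\ell\|\mathfrak{d}_{j_\ell}(\tau_n)-Q_{\ell,n}\|_{L^2(\mathbb{R})}^2\Bigr)^{1/2}\to0,
\]
which contradicts $\Delta_J(\tau_n)\ge\varepsilon$.

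For the selection, fix a countable dense set $\{p_m\}$ in $\widehat{\mathcal{P}}_0^J\times\mathfrak{S}_J$; a point is a $J$-tuple of bubble parameters together with an ordering. For each $m$, the map
\[
\tau\mapsto e_m(\tau)=\Bigl(\sum_j\|\mathfrak{d}_j(\tau)-Q_{p_m,\pi_m(j)}\|_{L^2(\mathbb{R})}^2\Bigr)^{1/2}
\]
is continuous on $[0,T)$, because $\mathfrak{d}_j$ is strongly continuous away from the endpoint (as noted before the lemma). Since $(\lambda,y,\zeta)\mapsto Q_{\lambda,y,\zeta}$ is strongly continuous, we get $\inf_m e_m(\tau)=\Delta_J(\tau)$. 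Indeed, any configuration in $\mathcal{C}_J^R$ together with its optimal permutation can be approximated through the dense parameters; permutations form a finite set, so minimizing over them commutes with the infimum over $m$. Therefore $\Delta_J$ is continuous on $[0,T)$, hence Borel. The sets
\[
A_m=\{\tau:e_m(\tau)<\Delta_J(\tau)+(T-\tau)\}
\]
are then Borel, and they cover $[0,T)$ because $T-\tau>0$. Put $m(\tau)=\min\{m:\tau\in A_m\}$; this index is Borel since $\{m(\tau)=m\}=A_m\setminus\bigcup_{k<m}A_k$. Define $Q_j(\tau)=Q_{p_{m(\tau)},\pi_{m(\tau)}(j)}$. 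Then $E_J(\tau)=e_{m(\tau)}(\tau)<\Delta_J(\tau)+(T-\tau)$, which gives \eqref{eq:blowup-near-minimizer} for every $\tau<T$. Combined with \eqref{eq:blowup-full-time-configuration-zero}, this yields \eqref{eq:blowup-full-time-channel-matching}. Because the parameter selections are Borel and the bubble map is continuous, each $Q_j$ is Borel as a map into $L^2_+(\mathbb{R})$.

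The main obstacle is making sure that the infimum over the countable set really equals $\Delta_J$. The parameter space $(0,\infty)$ is not closed, but this does not matter: the claim only needs density, not compactness, since for each target configuration an approximating dense point exists with error tending to zero by strong continuity of the bubble map. A second point to state explicitly is that the sequential capture of Theorem~\ref{thm:S9-simultaneous-capture} holds for \emph{every} sequence; this is exactly what is needed to upgrade the sequential statement to the full limit as $\tau\to T$.
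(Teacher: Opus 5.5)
Your proposal is correct and follows essentially the same route as the paper. You prove the limit \eqref{eq:blowup-full-time-configuration-zero} by contradiction from Theorem~\ref{thm:S9-simultaneous-capture}, and you build the Borel near-minimizer from a countable dense parameter grid with a first-index selection. The only cosmetic difference is that you fold the permutation into the countable index set, whereas the paper minimizes over $\mathfrak{S}_J$ and then picks the first minimizing permutation.

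One small slip: an infimum of continuous functions is in general only upper semicontinuous, though that already suffices for Borel measurability. Continuity of $\Delta_J$ on $[0,T)$ actually comes from $\Delta_J$ being a distance to a fixed set composed with the $d_{J,2}$-continuous map $\tau\mapsto\mathcal{D}(\tau)$.
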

\begin{proof}
If \eqref{eq:blowup-full-time-configuration-zero} failed, there would be $\eta_0>0$ and $\tau_n\to T$ with $\Delta_J(\tau_n)\geq\eta_0$. Theorem~\ref{thm:S9-simultaneous-capture} gives a subsequence, bubbles, and a permutation whose channelwise cost tends to zero, contradicting the definition of $\Delta_J$.

For measurable selection, choose the countable dense grid
\[
\mathcal{G}_0=\{(e^a,b,e^{2\pi i r}):a,b\in\mathbb{Q},\ r\in\mathbb{Q}\cap[0,1)\}\subset\widehat{\mathcal{P}}_0,
\]
and enumerate $\mathcal{G}_0^J$ as $\widehat{\mathbf{g}}^{\,m}$, $m\geq1$. Define
\[
C_{m,\pi}(\tau)=\left(\sum_{j=1}^J\|\mathfrak{d}_j(\tau)-Q_{\widehat{g}_{\pi(j)}^{m}}\|_{L^2(\mathbb{R})}^2\right)^{1/2},\qquad
F_m(\tau)=\min_{\pi\in\mathfrak{S}_J}C_{m,\pi}(\tau).
\]
Every $F_m$ is continuous and density gives $\Delta_J(\tau)=\inf_mF_m(\tau)$. Set $m(\tau)=\min\{m:F_m(\tau)<\Delta_J(\tau)+(T-\tau)\}$. Its fibers are Borel because
\[
\{m(\tau)=m\}=\{F_m<\Delta_J+T-\tau\}\cap\bigcap_{k<m}\{F_k\geq\Delta_J+T-\tau\}.
\]
Order the finite group $\mathfrak{S}_J$ and choose the first minimizing permutation. Its fibers are finite intersections of strict or weak inequalities between the Borel costs, so the choice, denoted $\pi_m(\tau)$, is Borel. Set $\pi=\pi_{m(\tau)}(\tau)$. For $1\le j\le J$, set $Q_j(\tau)=Q_{\widehat g_{\pi(j)}^{\,m(\tau)}}$.
This proves \eqref{eq:blowup-near-minimizer}, and \eqref{eq:blowup-full-time-configuration-zero} proves \eqref{eq:blowup-full-time-channel-matching}. Taking the principal Borel argument of each selected phase produces the Borel functions $\theta_j$ in Theorem~\ref{thm:main}; extend all parameters constantly to an earlier compact part of $[0,T)$ if necessary.
\end{proof}

For the finite-time branch of Theorem~\ref{thm:main}, the weak endpoint follows from Theorem~\ref{thm:blowup-weak-endpoint}, and \eqref{eq:blowup-quantization} follows from Theorem~\ref{thm:blowup-dark-quantization}. Choose the Borel bubbles in Lemma~\ref{lem:blowup-Borel-selection} and set
\[
e(\tau)=u(\tau)-z_T-\sum_{j=1}^JQ_j(\tau)=(r_{\mathrm{end}}(\tau)-z_T)+\sum_{j=1}^J(\mathfrak{d}_j(\tau)-Q_j(\tau)).
\]
Equations \eqref{eq:blowup-dark-endpoint-limits} and \eqref{eq:blowup-full-time-channel-matching} give
\[
\|e(\tau)\|_{L^2(\mathbb{R})}\leq\|r_{\mathrm{end}}(\tau)-z_T\|_{L^2(\mathbb{R})}+\sqrt J E_J(\tau){\longrightarrow}0,
\]
which proves the finite-time decomposition.

For $p=(\lambda,y,0,\theta)$, the explicit Fourier transform of $R$ gives
\begin{equation}\label{eq:blowup-R-low-tail}
\|P_{\leq K}\mathcal{M}_pR\|_{L^2(\mathbb{R})}^2=2\pi(1-e^{-2\lambda K}).
\end{equation}
By Lemma~\ref{lem:S9-low-evacuation} and the channelwise matching, $P_{\leq K}Q_j(\tau)\to0$ in $L^2(\mathbb{R})$ as $\tau\to T$ for every $K$, proving the ultraviolet limit. Formula \eqref{eq:blowup-R-low-tail} then forces $\lambda_j(\tau)\to0$. Finally, $\|Q_j(\tau)\|_{L^2(\mathbb{R})}^2=2\pi$, and for $j\ne k$ the
orthogonality \eqref{eq:S9-dark-orthogonality} gives
\[
\left|\langle Q_j,Q_k\rangle_{L^2(\mathbb{R})}\right|\leq\sqrt{2\pi}\left(\|Q_j-\mathfrak{d}_j\|_{L^2(\mathbb{R})}+\|Q_k-\mathfrak{d}_k\|_{L^2(\mathbb{R})}\right)\underset{\tau\to T}{\longrightarrow}0.
\]
This proves the Gram limit.

\appendix
\section{Verified analyzable initial data classes}\label{app:verified}
Recall from Definition~\ref{def:analyzable-intro} that $q\in\mathcal{A}_{\mathrm{an}}$ if the following conditions hold.
\begin{enumerate}
 \item[(A1)] The actual Dirichlet compression of the transported KLV realization is the affine family generated by one static maximal dissipative operator, with compatible physical boundary rows.
 \item[(A2)] The actual boundary functional has the bounded Riesz lift required by the enhanced scalar graph driven by $\beta_q$.
 \item[(A3)] A phase-fixing anchor exists in the actual/scalar graph and has the prescribed nonzero endpoint trace and coefficient-one sewing at every embedded point label.
 \item[(A4)] The point return satisfies either the subcritical operator estimate with exponent $\rho<\tfrac12$, or the stated oscillatory weak-return limit.
\end{enumerate}

For $(C_tG)(\lambda)=e^{-it\lambda^2}G(\lambda)$, the dynamic scalar domain is $C_t^{-1}H^1(\mathbb{R}_+)$, which is not contained in $H^1(\mathbb{R}_+)$ when $t\ne0$. Define the dynamic row by
\begin{equation}\label{app:eq:dynamic-row}
\ell_{D,t}G:=\frac{1}{i\sqrt{2\pi}}(C_tG)(0+)=\frac{1}{i\sqrt{2\pi}}G(0+),\qquad G\in C_t^{-1}H^1(\mathbb{R}_+).
\end{equation}
It is the unique graph-continuous extension agreeing with the static row on the common smooth core.

Denote the analyzable class with this dynamic row by $\mathcal{A}_{\mathrm{an}}^{\mathrm{corr}}$. It coincides with $\mathcal{A}_{\mathrm{an}}$ when its endpoint row is interpreted by \eqref{app:eq:dynamic-row}.

For $\mu\in\sigma_p(L_q)\cap[0,\infty)$, let $I_\mu$ be a bounded interval containing $\mu$ and no other point eigenvalue; at $\mu=0$, $I_\mu$ is understood as a right-neighborhood of $0$.
\begin{definition}[Spectral Dini--moment class]\label{app:def:XDM}
Let $\mathcal{X}_{\mathrm{DM}}$ be the set of $q\in L^1(\mathbb{R})\cap L^2_+(\mathbb{R})$ such that
\begin{align}
D_\mu(q)&:=\int_{I_\mu\cap[0,\infty)}\frac{|\beta_q(\lambda)|}{|\lambda-\mu|}d\lambda<\infty&&\text{for every }\mu\in\sigma_p(L_q)\cap[0,\infty),\label{app:eq:Dini}\\
x\phi_j&\in L^2(\mathbb{R})&&\text{for every normalized }\phi_j\in\ker(L_q-\mu_j).\label{app:eq:point-moment}
\end{align}
The first condition is independent of the sufficiently small choice of $I_\mu$. Both conditions are vacuous when $L_q$ has no point spectrum.
\end{definition}

The defining conditions depend nonlinearly on $L_q$, so $\mathcal{X}_{\mathrm{DM}}$ need not be a vector space.
\begin{theorem}[Spectral criterion]\label{app:thm:XDM}
With the dynamic row interpreted by \eqref{app:eq:dynamic-row},
\[
\mathcal{X}_{\mathrm{DM}}\subset\mathcal{A}_{\mathrm{an}}^{\mathrm{corr}}.
\]
For every $q\in\mathcal{X}_{\mathrm{DM}}$, the operator alternative in (A4) holds with the stronger exponent $\rho=0$.
\end{theorem}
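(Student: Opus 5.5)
The plan is to verify the four conditions (A1)--(A4) one at a time for $q\in\mathcal{X}_{\mathrm{DM}}$, and to use the integrability $q\in L^1$ to access the Frank--Read Jost columns throughout.

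\textbf{Conditions (A1) and (A2).} I would start with (A1) and (A2), which are structural statements about the Dirichlet compression.

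For (A1), I plan to show that the chirp $C_t$ conjugates the static graph of $T_D$ (Proposition~\ref{prop:static-actual-graph}) onto the domain of $A^D_{t,\eta}$. On the common smooth core, $A_{t,\eta}$ acts as $T+2t(M-\eta)-z_0$. Because $M_\lambda$ is a multiplication operator, the Schur formula \eqref{eq:strict-schur-inverse} reproduces $[T_D+2t(M_\lambda-\eta)-z_0]^{-1}$ once the domains match. I will identify the domains using the dynamic row \eqref{app:eq:dynamic-row}, which is exactly the graph-continuous extension, and this also gives the row compatibility \eqref{eq:dirichlet-row-compatibility}.

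For (A2), I would write $G_h=\Phi_q f$ with $f=\mathscr{U}_q^{-1}J(T_D-\zeta_\ast)^{-1}h$. Using the Frank--Read representation $G(\lambda)=(2\pi)^{-1/2}\langle f,m_\lambda\rangle$, I will compute $i G_h'$ through the commutator $[\mathsf X,L_q]$. The defect term should come out as $\beta_q\,\mathcal C_H^+(\overline{\beta_q}G_h)$ plus contributions from the point channels. Away from $S_+$ these contributions are smooth. The form bound \eqref{eq:M3-form-bound} then follows from \eqref{eq:CH-properties} together with an $L^\infty$ bound on $G_h$, which in turn follows from the graph estimate \eqref{eq:M2-global-Linfty}.

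\textbf{Condition (A3).} For the anchor, I will take $e_0$ to be the projection $P_{\mathrm{ac}}(L_q)$ of a fixed Schwartz function $\psi$ with $\widehat\psi(0)=1$.

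The first task is to show that the point correction $\sum_j\langle\psi,\phi_j\rangle\phi_j$ lies in $\mathrm{Dom}(\mathsf X)$ and has vanishing trace $I_+$. This is where the moment condition \eqref{app:eq:point-moment} enters. It gives $x\phi_j\in L^2$, and Proposition~\ref{prop:M11-FR-cyclic-phase} gives $\widehat\phi_j(0)=0$, so $I_+e_0=1$. I would then normalize $E_0=\Phi_q e_0$ by a phase, which fixes $J_+E_0$ and $\ell_DE_0$.

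The remaining requirement is the sewing $\Gamma_{\mu,-}E_0=\Gamma_{\mu,+}E_0$. For this I plan to use the Feshbach decomposition of Theorem~\ref{thm:paired-Feshbach-crossing}. The singular paired term is controlled by $c_\lambda$, and the identity $c_\lambda\|g_\mu\|^2=\sqrt{2\pi}\,\overline{\Phi F_\mu}$ relates it to the radiation coordinate. The Dini condition \eqref{app:eq:Dini} then yields $S(\lambda)/(\lambda-\mu)\in L^1$, and Lemma~\ref{lem:weighted-trace-zero} kills the one-sided limits. At $\mu=0$ only the right bank exists, so that case needs only the trace statement.

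\textbf{Condition (A4), the expected obstacle.} I expect the main difficulty to be showing that the operator alternative holds with $\rho=0$. This amounts to a uniform bound on
\[
K_t(\eta)^{-1}-2t(M_{\mathrm p}-\eta)
\]
for $\eta$ ranging over compact sets away from the spectrum.

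I would write $K_t(\eta)$ through the Schur complement of the point block. The diagonal part is $(T_{pp}+2t(M_{\mathrm p}-\eta)-z_0)^{-1}$, where $T_{pp}=P T P$ is bounded. It is bounded because $\mathsf X\phi_j\in L^2$ by \eqref{app:eq:point-moment}.

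The off-diagonal coupling $PTJ=b_D$ is bounded on the graph by \eqref{eq:point-graph-output-bound}. The continuous resolvent $R^D_{t,\eta}$ has norm at most $y_0^{-1}$. Inverting, I expect
\[
K_t^{-1}=2t(M_{\mathrm p}-\eta)+T_{pp}-z_0-b_D R^D_{t,\eta} b_D^\ast ,
\]
with every term after the first uniformly bounded. This gives \eqref{eq:schur-bound} with $\rho=0$.

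The delicate point is that $b_D^\ast$ must map $\mathcal E$ into $L^2(\mathbb R_+)$ boundedly, which requires the coupling vector to lie in $\mathcal H_c$. Checking this is the step I would attack first. I plan to compute the coupling explicitly as $\Phi_q(\mathsf X\phi_j)$ and to control it with \eqref{app:eq:point-moment} together with the Dini integrability near embedded labels.
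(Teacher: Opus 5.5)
Your (A4) step is essentially the paper's. Once $T$ has the block form $\begin{pmatrix}A_{\mathrm p}&B\\ B^\ast&A_{\beta_q}\end{pmatrix}$ on $\mathcal E\oplus H^1(\mathbb R_+)$ with $B$ bounded, the Schur complement $K_t(\eta)^{-1}=A_{\mathrm p}+2t(M_{\mathrm p}-\eta)-z_0-BR^D_{t,\eta}B^\ast$ gives $\rho=0$ at once. The point you flag as delicate is settled directly: $(PTJG)_j=\langle G,\Phi_qP_{\mathrm{ac}}(L_q)\mathsf X^\ast e_j\rangle$, with $\mathsf X^\ast e_j=xe_j\in L^2$ by \eqref{app:eq:point-moment} and $I_+e_j=0$. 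No Dini input is needed there, and the graph bound \eqref{eq:point-graph-output-bound} alone would not suffice. So the difficulty is not where you place it.

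The genuine gap is that you never establish the static identification $T_D=A_{\beta_q}$ with $\mathrm{Dom}(T_D)=H^1(\mathbb R_+)$, together with the endpoint identity $I_+(\mathscr U_q^{-1}JG)=\sqrt{2\pi}\,G(0+)$. Everything else rests on these. The paper proves them on the core $C_c^\infty([0,\infty))$. For $f=\Phi_q^\ast G$ it derives $xf=\frac{iG(0)}{\sqrt{2\pi}}+i\Phi_q^\ast G'-\frac{1}{2\pi\sqrt{2\pi}}\mathcal R_G$ by integrating the incoming Jost column by parts in $\lambda$. This is exactly where \eqref{app:eq:Dini} is used: it dominates the embedded poles by $|\beta_q(\lambda)|/|\lambda-\mu_j|$. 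Combined with $\beta_q\in L^\infty$ from Frank--Read, which makes $B_+(\beta_q)$ bounded and $A_{\beta_q}$ maximal dissipative on $H^1$, the inclusion $A_{\beta_q}\subset T_D$ becomes equality.

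Your commutator computation via $[\mathsf X,L_q]$ only determines the off-diagonal (Hilbert-transform) kernel of $T$ in spectral coordinates. It fixes neither the domain, nor the diagonal term, nor the trace relation. In fact $QTJG=A_{\beta_q}G$ exactly, with no extra point-channel terms. Working from an arbitrary $G_h\in\mathrm{Dom}(T_D)$ also fails, because the pairing $\langle f,m_\lambda\rangle$ is only available for $f\in L^1$, which is false whenever $I_+f\neq0$.

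Without this identification your other steps break down.

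In (A2), bounding $\|G_h\|_{L^\infty}$ via \eqref{eq:M2-global-Linfty} is circular. That estimate presupposes an enhanced triple with $Z\in L^2$, which is what you are trying to prove; the paper uses $\beta_q\in L^\infty$ instead.

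In (A3), no Schwartz $\psi\in L^2_+(\mathbb R)$ has $\widehat\psi(0)=1$. More seriously, any $e_0$ with $I_+e_0=1$ has a Fourier transform that jumps at $0$. So $e_0\notin L^1$, and Theorem~\ref{thm:paired-Feshbach-crossing} cannot be applied to it; Corollary~\ref{cor:phase-I-closure} subtracts the anchor precisely to land in $L^1$. Moreover, since $\ell_DE_0=(2\pi i)^{-1}I_+e_0$ by definition, a phase normalization cannot also force $J_+E_0=(2\pi)^{-1/2}$ unless you already know $I_+(\mathscr U_q^{-1}JG)=\sqrt{2\pi}\,J_+G$. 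Once $\mathrm{Dom}(T_D)=H^1$ is known, the anchor is trivial: any $E_0\in C_c^\infty([0,\infty))$ with $E_0(0)=(2\pi)^{-1/2}$ works, and the sewing is automatic.

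Likewise, in (A1), ``once the domains match'' is the entire content. The paper obtains the domain $\mathcal E\oplus C_t^{-1}H^1(\mathbb R_+)$ from three ingredients: the bounded block structure, the KLV agreement of the abstract realization with the algebraic pencil on $\mathscr U_q^{-1}(\mathcal E\oplus C_c^\infty([0,\infty)))$, and maximal dissipativity of both closures.
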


For $p\in\{1,2\}$ and $R\ge1$, define $Q_p(q;R):=\|q\|_{L^p(\{|x|>R\})}$.

\begin{definition}[Hardy tail space]\label{app:def:tail}
Set
\[
\mathcal{T}_+:=\left\{q\in L^1(\mathbb{R})\cap L^2_+(\mathbb{R}):\int_1^\infty\bigl(Q_1(q;R)^2+Q_2(q;R)^2\bigr)dR<\infty\right\},
\]
with norm
\begin{equation}\label{app:eq:tail-norm}
\|q\|_{\mathcal{T}_+}:=\|q\|_{L^1(\mathbb{R})}+\|q\|_{L^2(\mathbb{R})}+\|Q_1(q;\cdot)\|_{L^2(1,\infty)}+\|Q_2(q;\cdot)\|_{L^2(1,\infty)}.
\end{equation}
\end{definition}

\begin{prop}[Basic structure of $\mathcal{T}_+$]\label{app:prop:tail-Banach}
The space $(\mathcal{T}_+,\|\cdot\|_{\mathcal{T}_+})$ is a Banach space, is invariant as a set under finite translations, CM-DNLS scalings, phase rotations, and Hardy-preserving modulations, and contains $X_1:=\{q\in L^2_+(\mathbb{R}):\langle x\rangle q\in L^2\}$. Moreover, $X_1\subsetneq\mathcal{T}_+$, and $\mathcal{S}(\mathbb{R})\cap L^2_+(\mathbb{R})$ is dense in $L^2_+(\mathbb{R})$ and contained in $\mathcal{T}_+$.
\end{prop}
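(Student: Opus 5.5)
The proposition has four claims: $\mathcal{T}_+$ is a Banach space, it is invariant under the symmetries, $X_1\subsetneq\mathcal{T}_+$, and it contains the Schwartz Hardy functions, which are dense in $L^2_+$. I will prove them in that order.

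\emph{Completeness.} I would realize $\mathcal{T}_+$ as a closed subspace of a product of Banach spaces. The map
\[
q\mapsto\bigl(q,\;q,\;Q_1(q;\cdot),\;Q_2(q;\cdot)\bigr)
\]
takes values in $L^1\times L^2\times L^2(1,\infty)\times L^2(1,\infty)$. It is not linear in the last two slots, but each $Q_p(\cdot;R)$ is a seminorm. Hence $\|\cdot\|_{\mathcal{T}_+}$ satisfies the triangle inequality, by Minkowski in $R$, and is a norm, since its $L^1$ part already separates points.

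Take a Cauchy sequence $q_n$. It converges in $L^1\cap L^2$ to some $q$, and $q\in L^2_+$ because $L^2_+$ is closed. For fixed $m$ and every $R$ we have $Q_p(q_n-q_m;R)\to Q_p(q-q_m;R)$. Fatou's lemma in $R$ then gives
\[
\|Q_p(q-q_m;\cdot)\|_{L^2(1,\infty)}\le\liminf_n\|Q_p(q_n-q_m;\cdot)\|_{L^2(1,\infty)},
\]
and the right-hand side is small for large $m$. This shows both that $q\in\mathcal{T}_+$ and that $q_m\to q$ in the $\mathcal{T}_+$ norm.

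\emph{Invariance.} Phase rotations and Hardy-preserving modulations $e^{i\xi x}$ with $\xi\ge0$ leave $|q|$ unchanged and preserve $L^2_+$, so there is nothing to prove for them. For a translation by $y$, we have $\{|x-y|>R\}\subset\{|x|>R-|y|\}$. This gives $Q_p(q(\cdot-y);R)\le Q_p(q;R-|y|)$ for $R>|y|+1$, while on a bounded range of $R$ the quantity is trivially bounded by $\|q\|_{L^p}$.

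For the scaling $q_\lambda=\lambda^{1/2}q(\lambda\cdot)$, a change of variables gives
\[
Q_1(q_\lambda;R)=\lambda^{-1/2}Q_1(q;\lambda R),\qquad Q_2(q_\lambda;R)=Q_2(q;\lambda R).
\]
Substituting $R'=\lambda R$ in the $dR$ integral produces a factor $\lambda^{-1}$, and the interval $(1,\infty)$ becomes $(\lambda,\infty)$. If $\lambda<1$ the extra piece $(\lambda,1)$ is bounded by $\|q\|_{L^p}^2$ times its length. In every case the result is finite.

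\emph{Inclusion $X_1\subset\mathcal{T}_+$.} If $\langle x\rangle q\in L^2$, then Cauchy--Schwarz gives $q\in L^1$. For the tails:
\begin{itemize}
\item $Q_2(q;R)^2\le R^{-2}\|xq\|_2^2$, which is integrable on $(1,\infty)$.
\item More sharply, Fubini gives $\int_1^\infty Q_2(q;R)^2\,dR\le\int|x|\,|q|^2\,dx<\infty$.
\item $Q_1(q;R)\le\|xq\|_2\,\bigl(\int_{|x|>R}x^{-2}\,dx\bigr)^{1/2}\lesssim R^{-1/2}$, so $Q_1(q;R)^2\lesssim R^{-1}$.
\end{itemize}
The last bound is only borderline, and this is where I expect the main subtlety. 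The fix is to use Fubini differently: write $Q_1(q;R)=\int_R^\infty\phi$ with $\phi(r)=|q(r)|+|q(-r)|$, and apply Hardy's inequality to the tail operator,
\[
\int_1^\infty\Bigl(\int_R^\infty\phi\Bigr)^2 dR\le 4\int_1^\infty r^2\phi(r)^2\,dr\lesssim\|xq\|_2^2 .
\]
This is the dual Hardy inequality with weight exponent $0$, whose constant is $4$.

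\emph{Strictness and density.} For strictness I would take $q=\Pi_+$ applied to a function behaving like $|x|^{-a}$ with $a$ slightly below $3/2$, or more concretely a positive-frequency function such as $(x+i)^{-a}$ with $a\in(1,3/2)$. Its Fourier transform is supported in $[0,\infty)$, so it lies in $L^2_+$. Then $xq\notin L^2$, while $Q_1(q;R)\sim R^{1-a}$ is square integrable because $2(a-1)>1$ fails only for $a\le 3/2$. The value of $a$ has to be tuned so that both tails are square integrable while $xq\notin L^2$. Since $Q_1^2\sim R^{2-2a}$ requires $a>3/2$ for integrability, the choice $a=3/2$ is borderline. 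So I would instead take $q(x)=(x+i)^{-3/2}\log^{-1}(2+|x|)$, projected or built directly with a Hardy-preserving log factor. Since this construction is fiddly, I would alternatively take $q$ as a sum of Schwartz Hardy bumps placed at positions $2^k$ with suitably chosen weights $c_k$, chosen so that $\sum 4^k c_k^2=\infty$ while the tail sums are square integrable in $R$.

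Finally, $\mathcal{S}\cap L^2_+$ is contained in $X_1\subset\mathcal{T}_+$. It is dense in $L^2_+$ because functions $\mathcal{F}_0^{-1}\varphi$ with $\varphi\in C_c^\infty(0,\infty)$ are dense, by Plancherel.
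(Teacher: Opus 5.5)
Completeness, invariance, the inclusion $X_1\subset\mathcal{T}_+$ (dual Hardy inequality for $Q_1$, Fubini for $Q_2$) and density are all correct and essentially follow the paper. Your completeness argument is actually more direct: $\|q\|_{L^1}$ is already part of the norm, so the paper's detour through monotonicity of $Q_1$ is unnecessary.

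\emph{The gap is the strictness $X_1\subsetneq\mathcal{T}_+$.} The example you settle on, $(x+i)^{-3/2}\log^{-1}(2+|x|)$, lies in $X_1$. Indeed $|xq|^2\sim|x|^{-1}\log^{-2}|x|$ is integrable at infinity, and the logarithmic factor also destroys the positive-frequency support. No tuning of powers or logarithms can fix this. Suppose $\phi(r)=|q(r)|+|q(-r)|$ is nonincreasing on $[1,\infty)$. Then $Q_1(q;R)\ge\int_R^{2R}\phi\ge R\,\phi(2R)$, hence $\int_1^\infty Q_1(q;R)^2\,dR\ge\frac18\int_2^\infty s^2\phi(s)^2\,ds$. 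So the dual Hardy inequality reverses, and $\mathcal{T}_+$ and $X_1$ coincide on monotone profiles. A separating example must carry lacunary, non-monotone mass.

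Your fallback, Schwartz Hardy bumps placed at the points $2^k$, is the right mechanism. It is what the paper does, with centres $R_{n+1}\ge R_n^4$ and weights $a_n=1/(R_n\sqrt n)$. As written, however, it fixes neither the weights nor the argument that $xq\notin L^2$ when the bumps have overlapping Schwartz tails. Dyadic centres suffice:
\begin{itemize}
\item Take $0\ne g$ with $\widehat g\in C_c^\infty(0,\infty)$ and $c_k=2^{-k}k^{-1/2}$, and set $q=\sum_k c_k g(\cdot-2^k)$.
\item Bound $Q_p$ by $\|g\|_{L^p}$ for $R\le 2|y|$ and use rapid decay beyond. This gives $\|Q_p(g(\cdot-y);\cdot)\|_{L^2(1,\infty)}\lesssim_g 1+|y|^{1/2}$.
\item Minkowski together with $\sum_k c_k2^{k/2}<\infty$ then puts $q$ in $\mathcal{T}_+$; $\sum_k c_k<\infty$ gives $q\in L^1\cap L^2_+$.
\item For the lower bound, work on $I_k=[2^k-1,2^k+1]$. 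Every other packet is at distance at least $2^{k-1}-1$, so their total $L^2(I_k)$ contribution is $O_N(2^{-kN})=o(c_k)$.
\item Since $g$ is entire, $\|g\|_{L^2(-1,1)}>0$. Hence $\int x^2|q|^2\,dx\gtrsim\sum_k 4^kc_k^2=\sum_k k^{-1}=\infty$.
\end{itemize}
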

\begin{proof}
The triangle inequality for each tail function gives the norm inequality. If $(q_n)$ is Cauchy in $\mathcal{T}_+$, it converges in $L^2$ to some $q\in L^2_+(\mathbb{R})$. For every $R>1$, monotonicity gives $Q_1(q_n-q_m;R)^2\le\int_{R-1}^{R}Q_1(q_n-q_m;r)^2dr$, so $(q_n)$ is Cauchy in $L^1(\{|x|>R\})$. Combining this with local $L^1$-convergence from $L^2$-convergence identifies the tail limit with $q$; Fatou's lemma, applied also to differences, proves completeness. The $Q_2$ part follows directly from
\begin{equation}\label{app:eq:Q2-Fubini}
\int_1^\infty Q_2(q;R)^2dR=\int_\mathbb{R}(|x|-1)_+|q(x)|^2dx.
\end{equation}
Finite changes of origin and CM-DNLS symmetries preserve finiteness of the terms in \eqref{app:eq:tail-norm}.

For $f\ge0$, the dual Hardy inequality on each half-line gives $\int_1^\infty\left(\int_R^\infty f(x)\,dx\right)^2dR\le4\int_1^\infty (x-1)^2f(x)^2dx$. Apply this to $|q(x)|$ and $|q(-x)|$, and use \eqref{app:eq:Q2-Fubini}, to obtain
\[
\|Q_1(q;\cdot)\|_{L^2(1,\infty)}+\|Q_2(q;\cdot)\|_{L^2(1,\infty)}\lesssim\|\langle x\rangle q\|_2.
\]
Thus $X_1\subset\mathcal{T}_+$.

For strictness, choose a nonzero $g\in\mathcal{S}(\mathbb{R})\cap L^2_+(\mathbb{R})$ whose Fourier transform is supported in a compact subinterval of $(0,\infty)$. Choose $R_n\to\infty$ sufficiently rapidly, for example inductively with $R_{n+1}\ge R_n^4$, so that distinct translates below are asymptotically disjoint on $I_n=[R_n-1,R_n+1]$, and put $a_n=\frac{1}{R_n\sqrt n},\qquad q(x)=\sum_{n=1}^\infty a_ng(x-R_n)$. The series converges in $L^2$, retains positive Fourier support, and the centres may be chosen sufficiently rapidly that $\sum_n a_n\sqrt{R_n}<\infty$. Minkowski's inequality and the rapid decay of $g$ imply $\||x|^{1/2}q\|_{L^2(\mathbb{R})}+\|Q_1(q;\cdot)\|_{L^2(1,\infty)}<\infty$; indeed the corresponding norm of one translate is $O_g(1+\sqrt{R_n})$. Hence $q\in\mathcal{T}_+$. The centers may also be chosen so that on every $I_n$, the sum of all other packets has $L^2(I_n)$-norm at most one half of that of $a_ng(\cdot-R_n)$. Therefore $\int_\mathbb{R}x^2|q(x)|^2dx\gtrsim_g\sum_n R_n^2a_n^2=\sum_n\frac{1}{n}=\infty$. Thus $q\notin X_1$, proving strictness. Fourier cutoff and smoothing inside $(0,\infty)$ prove density.
\end{proof}

\begin{theorem}[Physical-space criterion]\label{app:thm:tail}
Every datum in the Hardy tail space is analyzable:
\[
\mathcal{T}_+\subset\mathcal{X}_{\mathrm{DM}}\subset\mathcal{A}_{\mathrm{an}}^{\mathrm{corr}}.
\]
Thus $q\in\mathcal{T}_+$ suffices for the analyzability hypothesis of the main theorem, with the dynamic row defined by \eqref{app:eq:dynamic-row}.
\end{theorem}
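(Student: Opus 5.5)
The first inclusion $\mathcal{X}_{\mathrm{DM}}\subset\mathcal{A}_{\mathrm{an}}^{\mathrm{corr}}$ is Theorem~\ref{app:thm:XDM}, so the work is to show $\mathcal{T}_+\subset\mathcal{X}_{\mathrm{DM}}$. Fix $q\in\mathcal{T}_+$. By definition $q\in L^1(\mathbb{R})\cap L^2_+(\mathbb{R})$, so two conditions remain to be checked: the point-moment condition \eqref{app:eq:point-moment} for every eigenfunction, and the Dini condition \eqref{app:eq:Dini} at every nonnegative eigenvalue.

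\textbf{Point moments.} Let $L_q\phi=\mu\phi$ with $\|\phi\|_2=1$. The eigenvalue equation gives
\[
\phi=R_0(\mu)\bigl(q\,\Pi_+(\overline q\phi)\bigr),
\]
where $R_0(\mu)$ is the free resolvent in the appropriate sense: it is a genuine resolvent for $\mu<0$ and a boundary-value resolvent at embedded $\mu\ge0$, as in the proof of Theorem~\ref{thm:paired-Feshbach-crossing}. The decay $\phi\in L^1$ follows from \cite[Lemma~3.2]{Frank-2026-arXiv}. The plan is to commute $x$ through the resolvent identity:
\[
x\phi=R_0(\mu)\bigl(xq\,\Pi_+(\overline q\phi)\bigr)+[x,R_0(\mu)]\bigl(q\,\Pi_+(\overline q\phi)\bigr).
\]
On the Fourier side, the commutator becomes differentiation of the symbol $(\xi-\mu)^{-1}$. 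The term $xq$ is not in $L^2$ in general, so instead of a direct $\langle x\rangle q\in L^2$ bound I would use a dyadic decomposition in $|x|$. On each shell $\{|x|\sim R\}$ the tail integrals $Q_1(q;R)$ and $Q_2(q;R)$ control the contribution, and the square-summability $\int_1^\infty(Q_1^2+Q_2^2)\,dR<\infty$ is exactly what turns $\sum_R R^2\|\phi\|_{L^2(|x|\sim R)}^2$ into a convergent quantity. Informally, $\|\phi\|_{L^2(|x|>R)}\lesssim Q_1(q;R)+Q_2(q;R)$ up to errors that can be absorbed, and then
\[
\int x^2|\phi|^2\,dx\approx\int_1^\infty R\,\|\phi\|_{L^2(|x|>R)}^2\,dR.
\]
To close this, the tail bound must gain a factor $R^{-1/2}$. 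That gain is the part of the argument I am least sure of.

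\textbf{Dini condition.} At an embedded $\mu\ge0$, use the paired Feshbach decomposition of Theorem~\ref{thm:paired-Feshbach-crossing}. Write
\[
\beta_q(\lambda)\propto\langle q,m_\lambda\rangle=\langle q,m_{\lambda,\mathrm{reg}}\rangle+c_\lambda\langle q,p_\lambda\rangle.
\]
The aim is to show $|\beta_q(\lambda)|\lesssim|\lambda-\mu|^{1/2}$ in an averaged sense near $\mu$. For the regular part, the $L^2_\lambda$ bound on $c_\lambda$ together with Cauchy--Schwarz against $|\lambda-\mu|^{-1}$ is not enough; a Hölder-$\tfrac12$ modulus in $\lambda$ of the pairing is needed. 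The tail condition should supply this: $x\,q$ is not in $L^1$, but the $L^2$-in-$R$ control of $Q_1$ yields Hölder-$\tfrac12$ continuity of $\widehat q$ and of $\lambda\mapsto\langle q,m_{\lambda,\mathrm{reg}}\rangle$ in $L^2$-averaged form. Concretely, $\widehat q$ lies in the Besov-type space $B^{1/2}_{2,\ldots}$ built from the tail norm. The vanishing of the limit at $\mu$ comes from the overlap/orthogonality identities: $\beta_q$ pairs $P_{\mathrm{ac}}$ against $q$, and $q$ has exact overlap with $\phi_\mu$, as in Lemma~\ref{lem:weighted-trace-zero}. Writing $\beta_q(\lambda)=(\lambda-\mu)^{1/2}\gamma(\lambda)$ with $\gamma\in L^2$ and applying Cauchy--Schwarz then gives
\[
\int_{I_\mu}\frac{|\beta_q(\lambda)|}{|\lambda-\mu|}\,d\lambda\le\|\gamma\|_{L^2(I_\mu)}\,\bigl\|\,|\lambda-\mu|^{-1/2}\bigr\|_{L^2(I_\mu)},
\]
but the last factor diverges logarithmically. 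So one needs slightly more, for instance a bound in the form of a Dini modulus $\omega(h)$ with $\int\omega(h)h^{-1}\,dh<\infty$, obtained directly from $\int Q_1(q;R)^2\,dR$ via
\[
|\widehat q(\xi+h)-\widehat q(\xi)|\lesssim\int\min(1,h|x|)\,|q(x)|\,dx.
\]

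\textbf{Main obstacle.} The hardest step is the Dini estimate at the threshold $\mu=0$ and at embedded $\mu>0$. It requires converting physical tail integrability into a Dini modulus of the Jost pairing through the Feshbach column, uniformly in the singular coefficient $c_\lambda$. I would first try bounding
\[
\int_0^\delta h^{-1}\int_{\mathbb{R}}\min(1,h|x|)\,|q(x)|\,dx\,dh
\]
by $\int_1^\infty Q_1(q;R)\,R^{-1}\,dR$, which Cauchy--Schwarz controls by $\|Q_1\|_{L^2}$, and then propagate this modulus through the uniformly invertible reduced operator $B_\lambda$.
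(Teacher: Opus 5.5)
\textbf{Verdict: genuine gaps.} Your reduction matches the paper's. $\mathcal{X}_{\mathrm{DM}}\subset\mathcal{A}_{\mathrm{an}}^{\mathrm{corr}}$ is Theorem~\ref{app:thm:XDM}, and membership in $L^1\cap L^2_+$ is built into $\mathcal{T}_+$. What remains is \eqref{app:eq:point-moment} and \eqref{app:eq:Dini}, and in both the step you leave open is the whole content.

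\textbf{Point moments.} You correctly compute that you need the gain $\|\phi\|_{L^2(|x|>R)}\lesssim R^{-1/2}(Q_1+Q_2)(q;R/2)$, but you never produce the $R^{-1/2}$. The paper gets it from a pointwise eigenfunction bound, read off from the proof of Frank--Read's decay lemma:
\[
|\phi(x)|\le\frac{C_{q,\phi}}{|x|}\max\bigl\{Q_1(q;|x|/2),Q_2(q;|x|/2)\bigr\}\qquad(|x|\ \text{large}).
\]
This gives $\int_{|x|>R_0}|x\phi|^2\,dx\lesssim\int_{R_0/2}^\infty(Q_1^2+Q_2^2)\,dR$ directly. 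As a by-product it gives $\phi\in L^1_\alpha\cap L^\infty_\alpha$ for $\alpha<\frac12$, which is needed later.

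The factor $|x|^{-1}$ comes from Frank--Read's physical-space analysis of $(D-\mu)\phi=q\,\Pi_+(\overline q\phi)$, not from commuting $x$ through $R_0(\mu)$. That commutator route is poorly suited here for two reasons. At an embedded $\mu\ge0$ the commutator symbol is proportional to $(\xi-\mu)^{-2}$, which is not locally integrable at $\xi=\mu$. And $xq\,\Pi_+(\overline q\phi)$ need not lie in $L^2$.

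\textbf{Dini condition.} The Feshbach decomposition does not deliver it. Theorem~\ref{thm:paired-Feshbach-crossing} controls the pole coefficient $c_\lambda$ only in $L^2(I_\mu)$. Its divisibility estimate \eqref{eq:paired-divisibility} is proved only for $f\in\mathrm{Dom}(\mathsf X)\cap P_{\mathrm{ac}}(L_{u_0})L^2_+(\mathbb{R})\cap L^1(\mathbb{R})$, hence for $f\perp\phi_\mu$. For $f=q$ the overlap identity gives $|\langle q,\phi_\mu\rangle|^2=2\pi\|\phi_\mu\|^2\neq0$. So $c_\lambda\langle q,p_\lambda\rangle$ does not vanish at $\mu$, and any Dini bound would rest on a cancellation with the regular term that you do not control.

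Lemma~\ref{lem:weighted-trace-zero} also cannot give $\beta_q(\mu)=0$. It assumes integrability of $\beta_q(\lambda)/(\lambda-\mu)$, which is exactly what you are trying to prove.

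The paper avoids the pole in two steps:
\begin{enumerate}
\item[(a)] It uses the rank-one reduced Fredholm family $\mathcal B_\lambda=\mathrm{Id}-\Pi_+\overline q\,Q_\mu R_0^\pm(\lambda)q\Pi_+$, which is invertible at $\lambda=\mu$. It then shows the modulated Jost function $M^\pm_\lambda$ is $C^{0,\alpha}(I;L^\infty_x)$ through $\mu$. This uses the fractional moments $q\in L^1_\alpha\cap L^2_\alpha$ from the tail norm and $\phi_\mu\in L^1_\alpha\cap L^\infty_\alpha$ from the pointwise bound above.
\item[(b)] It imports $\beta_q(\mu)=0$ from Frank--Read's Lemma~8.5.
\end{enumerate}
Together these give $|\beta_q(\lambda)|\lesssim|\lambda-\mu|^\alpha$, and \eqref{app:eq:Dini} follows immediately.

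Your log-moment Dini modulus for $\widehat q$ is correct, and a Dini modulus would indeed suffice in place of the H\"older one. But it covers only the free part. You would still need the same modulus for the resolvent differences and for the $\phi_\mu$-dependent rank-one terms, which requires weighted control of $\phi_\mu$ — the estimate missing from the point-moment step. You would also need a separate proof that $\beta_q(\mu)=0$.
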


For $p\in\{1,2\}$ and $\alpha\ge0$, write $L^p_\alpha:=\{f:\langle x\rangle^{\alpha}f\in L^p(\mathbb{R})\}$.

\begin{lemma}[Fractional moments supplied by the tail norm]\label{app:lem:fractional-moments}
If $q\in\mathcal{T}_+$, then $q\in L^1\cap L^2_+(\mathbb{R})$ and, for every $0<\alpha<\frac{1}{2}$, $q\in L^1_\alpha\cap L^2_\alpha$. More quantitatively,
\begin{align}
\int_{|x|>1}|x|^\alpha|q(x)|dx&\le Q_1(q;1)+\alpha\left(\int_1^\infty R^{2\alpha-2}dR\right)^{1/2}\|Q_1(q;\cdot)\|_{L^2_R},\label{app:eq:L1-moment}\\
\int_{|x|>1}|x|^{2\alpha}|q(x)|^2dx&\le Q_2(q;1)^2+2\alpha\int_1^\infty Q_2(q;R)^2dR.\label{app:eq:L2-moment}
\end{align}
\end{lemma}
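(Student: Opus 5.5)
The plan is to derive both moment bounds from a layer-cake (Fubini) representation of the weight $|x|^\alpha$ in terms of the tail functions $Q_p(q;R)$, on the region $|x|>1$.

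For the $L^1$ bound \eqref{app:eq:L1-moment}, write, for $|x|>1$,
\[
|x|^\alpha=1+\int_1^{|x|}\alpha R^{\alpha-1}\,dR .
\]
Multiply by $|q(x)|$, integrate over $\{|x|>1\}$, and interchange the order of integration by Tonelli. This gives
\[
\int_{|x|>1}|x|^\alpha|q|\,dx=Q_1(q;1)+\alpha\int_1^\infty R^{\alpha-1}Q_1(q;R)\,dR .
\]
Cauchy--Schwarz in $R$ then bounds the last integral by $\bigl(\int_1^\infty R^{2\alpha-2}dR\bigr)^{1/2}\|Q_1(q;\cdot)\|_{L^2(1,\infty)}$. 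This factor is finite exactly because $2\alpha-2<-1$, i.e.\ $\alpha<\tfrac12$, which explains the restriction on $\alpha$.

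For the $L^2$ bound \eqref{app:eq:L2-moment}, use the same identity with weight $|x|^{2\alpha}=1+\int_1^{|x|}2\alpha R^{2\alpha-1}dR$, applied to $|q|^2$:
\[
\int_{|x|>1}|x|^{2\alpha}|q|^2\,dx=Q_2(q;1)^2+2\alpha\int_1^\infty R^{2\alpha-1}Q_2(q;R)^2\,dR .
\]
Since $2\alpha-1<0$ and $R\ge1$, we have $R^{2\alpha-1}\le1$. The claimed bound follows, and its right side is finite by the definition of $\mathcal{T}_+$.

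It remains to add the contribution of $|x|\le1$. There $\langle x\rangle^\alpha\le 2^{\alpha/2}$, so this region is controlled by $\|q\|_{L^1}$ and $\|q\|_{L^2}$ respectively. Since $\langle x\rangle^\alpha\lesssim 1+|x|^\alpha$, this yields $q\in L^1_\alpha\cap L^2_\alpha$. The membership $q\in L^1\cap L^2_+$ holds by definition of $\mathcal{T}_+$.

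There is no real obstacle. The only point needing care is justifying Tonelli, which holds because all integrands are nonnegative, and noting that the $L^1$ estimate is where $\alpha<\tfrac12$ is genuinely used.
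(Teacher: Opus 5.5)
Your proposal is correct and follows the paper's proof essentially verbatim: the layer-cake (Tonelli) identity for the weights $|x|^\alpha$ and $|x|^{2\alpha}$, Cauchy--Schwarz in $R$ using $2\alpha-2<-1$ for the $L^1$ bound, and $R^{2\alpha-1}\le 1$ on $R\ge1$ for the $L^2$ bound. Your treatment of the region $|x|\le1$ makes explicit a routine step the paper leaves implicit.
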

\begin{proof}
The layer-cake identity gives
\[
\int_{|x|>1}|x|^\alpha|q(x)|dx=Q_1(q;1)+\alpha\int_1^\infty R^{\alpha-1}Q_1(q;R)dR.
\]
Cauchy--Schwarz proves \eqref{app:eq:L1-moment}, since $2\alpha-2<-1$.  Applying the same identity to $|q|^2$, and noting that $R^{2\alpha-1}\le1$ for $R\ge1$, proves \eqref{app:eq:L2-moment}.
\end{proof}

\begin{lemma}[Endpoint point-position estimate]\label{app:lem:point-position}
Let $q\in\mathcal{T}_+$, and let $\phi$ be any point eigenfunction of $L_q$. Then
\begin{equation}\label{app:eq:xphi}
x\phi\in L^2(\mathbb{R}).
\end{equation}
Moreover, for every $0<\alpha<\frac{1}{2}$, $\phi\in L^1_\alpha\cap L^\infty_\alpha$. \end{lemma}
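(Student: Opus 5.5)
We work from the eigenvalue equation $L_q\phi=\mu\phi$, that is, $(D-\mu)\phi=q\,\Pi_+(\overline{q}\phi)$, and pass to the Fourier side. Write $g=\Pi_+(\overline q\phi)$ and $F=qg=(D-\mu)\phi$, as in \eqref{eq:eigencharge-source}. Then $\widehat{\phi}(\xi)=\widehat F(\xi)/(\xi-\mu)$ on $\xi\ge0$. The claim $x\phi\in L^2$ is equivalent to $\widehat\phi|_{\mathbb{R}_+}\in H^1(\mathbb{R}_+)$ with vanishing trace at $0$, since the zero extension must be $H^1(\mathbb{R})$; this is the reasoning behind \eqref{eq:zero-row-L1}.

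The first step is to bootstrap decay of $\phi$. The input is $\phi\in L^2\cap L^\infty$, with $\phi\in L^1_+$ from \cite[Lemma~3.2]{Frank-2026-arXiv}, and $q\in L^1_\alpha\cap L^2_\alpha$ for all $\alpha<\frac12$ from Lemma~\ref{app:lem:fractional-moments}. The multiplier $(\xi-\mu)^{-1}$ corresponds in physical space to the kernel of $(D-\mu)^{-1}$, which is a bounded exponential or Heaviside kernel. Hence $\phi=(D-\mu\mp i0)^{-1}(qg)$. The boundary condition at $\mu$ (when $\mu\ge0$, the vanishing of $\widehat F(\mu)$, i.e. $\langle F,e^{i\mu x}\rangle=0$) converts this into a kernel supported on one side of each point. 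Weighted bounds $\langle x\rangle^\alpha\phi\in L^1\cap L^\infty$ then follow by distributing $\langle x\rangle^\alpha\lesssim\langle x-y\rangle^\alpha+\langle y\rangle^\alpha$. One factor is absorbed by $q\in L^1_\alpha$ together with $g\in L^\infty$. For the other, which involves $\int_{|y|>|x|}|q(y)g(y)|\,dy$, the tail quantity $Q_1(q;|x|)$ is controlled. This gives the fractional statement.

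For the endpoint $x\phi\in L^2$, differentiate in Fourier: $\partial_\xi\widehat\phi=\partial_\xi\widehat F/(\xi-\mu)-\widehat F/(\xi-\mu)^2$. In physical space it is cleaner to use the commutator identity $[x,L_q]$, which by the Ward identity in Condition~\ref{hyp:blowup-KLV-Ward}(i) equals $i(I-\tfrac1{2\pi}|q\rangle\langle q|)$. Applied to the eigenvector, this gives formally
\[
(L_q-\mu)(x\phi)=i\phi-\tfrac{i}{2\pi}\langle\phi,q\rangle q+(\text{boundary term from }\Pi_+).
\]
The right side lies in $L^2$ and is orthogonal to $\phi$ by the overlap identity $|\langle q,\phi\rangle|^2=2\pi\|\phi\|^2$. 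We would therefore solve $(L_q-\mu)\psi=\text{RHS}$ on the reduced space $\phi^\perp$ and identify $\psi-x\phi\in\ker(L_q-\mu)$. When $\mu<0$ is isolated, the reduced resolvent is bounded and this is routine. What needs justification is that $x\phi$ is a priori tempered and that the commutator makes sense, which follows from the fractional moments above via a cutoff $x\chi(x/R)$ and a limit $R\to\infty$.

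The main obstacle is the embedded case $\mu\ge0$, where $L_q-\mu$ has no bounded reduced inverse. Here we would return to the Fourier representation and prove that $\widehat F(\xi)/(\xi-\mu)$ is $H^1$ near $\mu$. This requires $\widehat F\in C^{1,\gamma}$ near $\mu$ with $\widehat F(\mu)=0$ and $\widehat F'$ locally $L^2$-Dini. Since $F=qg$, we have $\widehat F'=-i\widehat{xF}$. The square-integrated tail condition $\int_1^\infty(Q_1^2+Q_2^2)\,dR<\infty$ is exactly what makes $\widehat{|x|^{1/2}F}$ square integrable, i.e. $F\in L^2_{1/2}$ using $g\in L^\infty_{\alpha}$. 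A Hardy-type inequality $\|(\widehat F(\xi)-\widehat F(\mu))/(\xi-\mu)\|_{L^2}\lesssim\|xF\|$ would then not suffice directly. Instead we would use the weaker $L^2$ Hardy inequality on the Fourier side with half a derivative from $F\in L^2_{1/2}$ and half from $\phi\in L^\infty_\alpha$, iterating the bootstrap once more. This gives $x\phi\in L^2$, and as a byproduct the Dini bound \eqref{app:eq:Dini}.
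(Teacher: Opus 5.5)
Your one-sided Volterra representation is the right starting point. Since $\widehat F(\mu)=0$ (automatic for $\mu<0$ by Hardy support, and forced by $\phi\in L^2$ when $\mu\ge0$), you may integrate from $+\infty$ for $x>0$ and from $-\infty$ for $x<0$, so $|\phi(x)|\le\int_{|y|\ge|x|}|q(y)g(y)|\,dy$.

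\textbf{The gap.} You only feed in $g\in L^\infty$. That gives $|\phi(x)|\le\|g\|_{L^\infty}Q_1(q;|x|)$, which yields $L^\infty_\alpha$ but not the two other claims:
\begin{itemize}
\item $x\phi\in L^2$ would need $\int_1^\infty R^2Q_1(q;R)^2\,dR<\infty$;
\item $\phi\in L^1_\alpha$ would need $\int_1^\infty R^\alpha Q_1(q;R)\,dR<\infty$, i.e.\ essentially $q\in L^1_{1+\alpha}$.
\end{itemize}
The $\mathcal{T}_+$ condition gives neither. Splitting $\langle x\rangle^\alpha\lesssim\langle x-y\rangle^\alpha+\langle y\rangle^\alpha$ cannot help, because the Volterra kernel does not decay.

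\textbf{The missing idea.} $g=\Pi_+(\overline q\phi)$ is the Cauchy integral of the $L^1$ function $h=\overline q\phi$, so it decays like $|y|^{-1}$. Its leading coefficient is a multiple of $\langle\phi,q\rangle\neq0$, so this rate is sharp. Concretely, split $h$ at $|s|=|x|/2$.
\begin{itemize}
\item For $|y|\ge|x|$ the near piece is $O(\|h\|_{L^1}/|y|)$, so it contributes $\lesssim|x|^{-1}Q_1(q;|x|)$ to $\phi(x)$.
\item The far piece is bounded in $L^2$ by $\|h\|_{L^2(|s|>|x|/2)}$. The crude bound already gives $|\phi(s)|\lesssim|s|^{-1}$, since $RQ_2(q;R)^2\le\int_{|y|>R}|y||q|^2\to0$. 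After this one bootstrap, the far piece contributes $\lesssim|x|^{-1}Q_2(q;|x|/2)$.
\end{itemize}
This is exactly the bound the paper uses: $|\phi(x)|\le C|x|^{-1}\max\{Q_1(q;|x|/2),Q_2(q;|x|/2)\}$ for large $|x|$. With it both assertions are one line. Squaring gives $\int_{|x|>R_0}|x\phi|^2\lesssim\int(Q_1^2+Q_2^2)\,dR$, which is the defining condition of $\mathcal{T}_+$. Cauchy--Schwarz against $x^{2\alpha-2}$ gives $L^1_\alpha$, precisely for $\alpha<\frac12$.

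\textbf{Your endpoint argument does not replace this.}
\begin{itemize}
\item You solve $(L_q-\mu)\psi=i\phi-\frac{i}{2\pi}\langle\phi,q\rangle q$ on $\phi^\perp$ and then conclude $\psi-x\phi\in\ker(L_q-\mu)$. This presupposes that $x\phi$ lies in the space where that kernel is one-dimensional, which is essentially the conclusion, unless you prove uniqueness in a weighted class.
\item The rank-one term is itself the $[x,\Pi_+]$ boundary term, so there is no additional ``boundary term''.
\item Condition~\ref{hyp:blowup-KLV-Ward} is a hypothesis of the blow-up section, not an available result.
\item For embedded $\mu\ge0$ you concede that the Fourier-side Hardy inequality does not close, and the ``half a derivative from each factor'' iteration is never specified. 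Indeed, $\widehat F/(\xi-\mu)\in H^1$ near $\mu$ requires regularity of $\widehat F'$ at $\mu$ beyond $xF\in L^2$.
\end{itemize}
The physical-space estimate above treats negative and embedded eigenvalues uniformly, since $\mu$ enters only through $\widehat F(\mu)=0$. Finally, the Dini bound is not a byproduct: it concerns $\beta_q$, and the paper proves it separately, using this lemma as input.
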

\begin{proof}
Since $q\in L^1\cap L^2_+$, Frank--Read's Theorem~3.1 and Lemma~3.2 apply. Inspection of their proof gives, for $|x|$ sufficiently large,
\begin{equation}\label{app:eq:FR-tail}
|\phi(x)|\le \frac{C_{q,\phi}}{|x|}\max\left\{Q_1(q;|x|/2),Q_2(q;|x|/2)\right\}.
\end{equation}
Consequently,
\[
\int_{|x|>R_0}|x\phi(x)|^2dx\lesssim_{q,\phi}\int_{R_0/2}^\infty\bigl(Q_1(q;R)^2+Q_2(q;R)^2\bigr)dR<\infty.
\]
Every eigenfunction belongs to $H^1_+$, hence is locally bounded; this supplies the compact part and proves \eqref{app:eq:xphi}.

The right side of \eqref{app:eq:FR-tail} is bounded by $C|x|^{-1}$, so $\phi\in L^\infty_\alpha$ for $\alpha<1$. For the weighted $L^1$ estimate, Cauchy--Schwarz gives
\[
\int_{|x|>R_0}|x|^\alpha|\phi(x)|dx\lesssim\left(\int_{R_0/2}^\infty[Q_1(q;R)^2+Q_2(q;R)^2]dR\right)^{1/2}\left(\int_{R_0}^\infty x^{2\alpha-2}dx\right)^{1/2},
\]
which is finite precisely for $\alpha<\frac{1}{2}$.  Local integrability follows from local boundedness.
\end{proof}

\begin{lemma}[H\"older divisibility at embedded point spectrum]\label{app:lem:beta-holder}
Let $q\in\mathcal{T}_+$. For every $0<\alpha<\frac{1}{2}$,
\begin{equation}\label{app:eq:beta-holder}
\beta_q\in C^{0,\alpha}_{\mathrm{loc}}([0,\infty)).
\end{equation}
If $\mu\in\sigma_{\mathrm{p}}(L_q)\cap[0,\infty)$, then
\begin{equation}
|\beta_q(\lambda)|\lesssim_{q,\mu,\alpha}|\lambda-\mu|^\alpha,\qquad\frac{\beta_q(\lambda)}{\lambda-\mu}\in L^1_{\mathrm{loc}},\label{app:eq:beta-divisibility}
\end{equation}
one-sided at $\mu=0$.
\end{lemma}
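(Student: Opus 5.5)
The plan is to express $\beta_q$ through the Frank--Read Jost columns and turn the physical-space moment bounds of Lemma~\ref{app:lem:fractional-moments} into Hölder regularity in the spectral variable. By \eqref{eq:beta} and the Jost-column formula for $\Phi_q$ used in the proof of Theorem~\ref{thm:paired-Feshbach-crossing}, for a.e.\ $\lambda>0$ we have $\beta_q(\lambda)=c\,\langle P_{\mathrm{ac}}(L_q)q,m_\lambda\rangle_{L^1,L^\infty}$ for a fixed constant $c$. The eigenfunctions $\phi_j$ lie in $L^1$ by Lemma~\ref{app:lem:point-position}, and $m_\lambda$ is bounded, so the point projections can be paired separately. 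The function $\beta_q$ is therefore a finite combination of $\lambda\mapsto\langle q,m_\lambda\rangle$ and of the $\langle\phi_j,m_\lambda\rangle$ weighted by fixed coefficients.

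\emph{Step 1 (Hölder regularity away from embedded labels).} On a compact $\Lambda$ containing no point eigenvalue, \eqref{eq:Feshbach-column-decomposition} with no pole term gives $m_\lambda=e^{i\lambda x}+R_0^{\mathrm{in}}(\lambda)qa_\lambda$, where $a_\lambda=\mathcal{A}_\lambda^{-1}b_\lambda$. We want $\lambda\mapsto m_\lambda$ to be $C^{0,\alpha}$ into $L^\infty_{-\alpha}$, the space of functions with $\langle x\rangle^{-\alpha}m_\lambda\in L^\infty$.
\begin{itemize}
\item For $e^{i\lambda x}$ this follows from $|e^{i\lambda x}-e^{i\lambda' x}|\le 2|\lambda-\lambda'|^\alpha|x|^\alpha$.
\item For the resolvent term, the boundary kernel of $R_0^{\mathrm{in}}(\lambda)$ on Hardy space is $e^{i\lambda(x-y)}$ times a Heaviside factor, so it is $\alpha$-Hölder in $\lambda$ with weight $|x-y|^\alpha$. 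Combined with $q\in L^1_\alpha$, this gives Hölder continuity of $\lambda\mapsto b_\lambda=\Pi_+(\bar q e^{i\lambda x})$ in $L^2$.
\item It also gives Hölder continuity of $\lambda\mapsto\mathcal{A}_\lambda$ in operator norm.
\end{itemize}
Since $\mathcal{A}_\lambda$ is invertible on $\Lambda$, $a_\lambda$ is Hölder in $L^2$. Pairing with $q\in L^1_\alpha$ and $\phi_j\in L^1_\alpha$ (Lemmas~\ref{app:lem:fractional-moments} and \ref{app:lem:point-position}) gives \eqref{app:eq:beta-holder} on $\Lambda$. At $\lambda=0$ the same argument works one-sidedly.

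\emph{Step 2 (behaviour at an embedded label $\mu\ge0$).} Here we use the Feshbach splitting of Theorem~\ref{thm:paired-Feshbach-crossing}, namely $m_\lambda=m_{\lambda,\mathrm{reg}}+c_\lambda p_\lambda$.
\begin{itemize}
\item The regular part is built from $z_\lambda=B_\lambda^{-1}Q_lb_\lambda$ with $B_\lambda$ uniformly invertible on $I_\mu$, so Step 1 shows it is $C^{0,\alpha}$ across $\mu$.
\item For the singular part, the proof of Theorem~\ref{thm:paired-Feshbach-crossing} gives $c_\lambda\|g_\mu\|^2=\sqrt{2\pi}\,\overline{(\Phi_qF_\mu)(\lambda)}$ with $F_\mu=(D-\mu)\phi_\mu$.
\item We apply the same Jost representation to $F_\mu$. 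Since $F_\mu=q g_\mu$ and $q\in L^1_\alpha$, $g_\mu\in L^\infty$, we get $F_\mu\in L^1_\alpha$.
\end{itemize}
The key cancellation is that, because $\phi_\mu$ is an eigenfunction, $\langle F_\mu,m_\lambda\rangle=(\lambda-\mu)\langle\phi_\mu,m_\lambda\rangle$ formally, using $(L_q-\lambda)m_\lambda=0$. Hence the singular coefficient is divisible by $(\lambda-\mu)$ in a Hölder sense. Meanwhile $\langle P_{\mathrm{ac}}q,p_\lambda\rangle$ is handled as in \eqref{eq:paired-divisibility}. Together these should give $\beta_q(\mu)=0$ and $|\beta_q(\lambda)|=|\beta_q(\lambda)-\beta_q(\mu)|\lesssim|\lambda-\mu|^\alpha$. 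The latter is integrable against $|\lambda-\mu|^{-1}$, which proves \eqref{app:eq:beta-divisibility}.

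\emph{Main obstacle.} The hardest part is Step 2. One must show that $\beta_q$ extends continuously to $\mu$ with value zero, not merely that it is Hölder on each bank. The planned route is:
\begin{enumerate}
\item Prove that the two one-sided limits of $\beta_q$ at $\mu$ agree, using the matching of the regular pairing across $\mu$ from the proof of \eqref{eq:paired-crossing}.
\item Show that $\beta_q\in L^2$ combined with the structure $\beta_q=\text{(Hölder)}+\overline{c_\lambda}\,\text{(Hölder)}$ forces the common limit to vanish. If it did not, $c_\lambda$ would behave like $(\lambda-\mu)^{-1}$, contradicting \eqref{eq:Feshbach-c-L2}.
\end{enumerate}
If this fails, the fallback is to invoke the Lemma~\ref{lem:weighted-trace-zero} mechanism on the paired term $S_q$ to obtain the zero trace, and then upgrade it to the $\alpha$-rate using the Hölder modulus from Step 1.
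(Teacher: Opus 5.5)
\textbf{Verdict: there is a genuine gap.} Your Step~1 is essentially the paper's unreduced Hölder Fredholm argument away from the exceptional set. One small correction: Hölder continuity of $b_\lambda$ in $L^2$, and the Hilbert--Schmidt Hölder bound for $\mathcal{A}_\lambda$, use $q\in L^2_\alpha$, not $q\in L^1_\alpha$; both are supplied by Lemma~\ref{app:lem:fractional-moments}. The gap is in Step~2. There neither the Hölder continuity of $\beta_q$ through $\mu$ nor the zero $\beta_q(\mu)=0$ is actually established.

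\textbf{The key cancellation is false.} The equation $(L_q-\lambda)m_\lambda=0$ gives $(D-\mu)m_\lambda=(\lambda-\mu)m_\lambda+q\Pi_+(\overline{q}m_\lambda)$. Hence
$\langle m_\lambda,F_\mu\rangle=(\lambda-\mu)\langle m_\lambda,\phi_\mu\rangle+\langle a_\lambda,g_\mu\rangle$.
The first term vanishes for $\lambda\ne\mu$, since pairing $m_\lambda$ against $(L_q-\mu)\phi_\mu=0$ gives $(\lambda-\mu)\langle m_\lambda,\phi_\mu\rangle=0$.
\begin{itemize}
\item The true identity is therefore the tautology $\langle m_\lambda,F_\mu\rangle=\langle a_\lambda,g_\mu\rangle=\|g_\mu\|^2c_\lambda$. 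Your version, combined with that orthogonality, would force $c_\lambda\equiv0$.
\item The divisibility you have in mind holds for the free wave, $\widehat{F_\mu}(\lambda)=(\lambda-\mu)\widehat{\phi_\mu}(\lambda)$, not for $m_\lambda$.
\item Expressing $c_\lambda$ through $\Phi_qF_\mu$ is circular: it is again a pairing with $m_\lambda$, whose regularity at $\mu$ is what you are trying to prove.
\item Theorem~\ref{thm:paired-Feshbach-crossing} only delivers $c_\lambda\in L^2(I_\mu)$ and $r_{2,f}\in L^2(I_\mu)$. So the singular term is $(\lambda-\mu)$ times an $L^1$ function, which gives neither a pointwise Hölder modulus across $\mu$ nor the rate $|\lambda-\mu|^\alpha$.
\item In addition, \eqref{eq:paired-divisibility} is stated for $f\in\mathrm{Dom}(\mathsf{X})$, which $P_{\mathrm{ac}}(L_q)q$ need not satisfy for $q\in\mathcal{T}_+$.
\end{itemize}

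\textbf{The vanishing argument does not work either.} Nothing in $\beta_q=(\text{Hölder})+\overline{c_\lambda}(\text{Hölder})$ ties a nonzero common limit at $\mu$ to $c_\lambda\sim(\lambda-\mu)^{-1}$. A Hölder function in $L^2$ can perfectly well be nonzero at $\mu$. The fallback via Lemma~\ref{lem:weighted-trace-zero} only shows that the singular paired term $S_q$ has zero limit. The regular pairing $\langle P_{\mathrm{ac}}(L_q)q,m_{\lambda,\mathrm{reg}}\rangle$ merely has equal one-sided limits (which is all \eqref{eq:paired-crossing} uses) and has no reason to vanish at $\mu$.

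\textbf{What the paper does instead.} It avoids the pole decomposition entirely by removing the eigenline inside the free resolvent:
$\mathcal{B}_\lambda=\mathrm{Id}-\Pi_+\overline{q}Q_\mu R_0^\pm(\lambda)q\Pi_+$.
This family is $C^{0,\alpha}$ in operator norm and invertible at $\lambda=\mu$ itself, for four reasons:
\begin{itemize}
\item $\ker\mathcal{A}_\mu^\pm=\operatorname{span}\{g_\mu\}$;
\item $\mathrm{Ran}\,\mathcal{A}_\mu^\pm=g_\mu^\perp$;
\item $\mathcal{B}_\mu-\mathcal{A}_\mu^\pm$ has range in $\operatorname{span}\{g_\mu\}$;
\item $\mathcal{B}_\mu g_\mu=g_\mu$.
\end{itemize}
Solving $\mathcal{B}_\lambda w_\lambda=\Pi_+(\overline{q}Q_\mu e_\lambda)$ reconstructs $Q_\mu m_{\mathrm{e}}^\pm(\lambda)$. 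The orthogonality $P_\mu m_{\mathrm{e}}^\pm(\lambda)=0$ then shows that $e^{-i\lambda x}m_{\mathrm{e}}^\pm(\lambda)$ is $C^{0,\alpha}$ in $L^\infty_x$ through $\mu$, with no singular component at all. Pairing with $q$ gives Hölder continuity of $\beta_q$.

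The zero $\beta_q(\mu)=0$ is a separate spectral input, imported from Frank--Read's Lemma~8.5, not a consequence of integrability. Only after combining it with the Hölder modulus does one get $|\beta_q(\lambda)|\lesssim|\lambda-\mu|^\alpha$.
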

\begin{proof}
We quantify the reduced Fredholm argument in the proof of Frank--Read's Theorem~8.1.

Fix one of the two limiting-resolvent banks and a compact interval $I$. The free first-order resolvent kernel and $|e^{i h y}-1|\lesssim_\alpha |h|^\alpha|y|^\alpha$ give $\left\|\Pi_+\overline q[R_0^\pm(\lambda)-R_0^\pm(\nu)]q\Pi_+\right\|_{\mathcal{S}_2}\lesssim_{q,I,\alpha}|\lambda-\nu|^\alpha$. Indeed, after squaring the kernel, use $|x-y|^{2\alpha}\lesssim |x|^{2\alpha}+|y|^{2\alpha}$ and Lemma~\ref{app:lem:fractional-moments}. Away from the finite exceptional set, the resolvent identity and Fredholm inversion now yield an operator-norm $C^{0,\alpha}$ inverse family.

At $\mu\in\sigma_{\mathrm{p}}(L_q)\cap[0,\infty)$, let $P_\mu$ be the rank-one projection onto its normalized eigenfunction $\phi_\mu$, and put $Q_\mu=\mathrm{Id}-P_\mu$.  Because $\phi_\mu\in L^1\cap L^\infty$, the same formula extends canonically to bounded functions:
\[
P_\mu h=\frac{\langle h,\phi_\mu\rangle_{L^\infty,L^1}}{\|\phi_\mu\|_{L^2(\mathbb{R})}^2}\phi_\mu.
\]
Lemma~\ref{app:lem:point-position} and the same kernel difference estimate give
\[
\left\|q[R_0^\mp(\lambda)-R_0^\mp(\nu)]\phi_\mu\right\|_{L^2(\mathbb{R})}\lesssim |\lambda-\nu|^\alpha\bigl(\|q\|_{L^2_\alpha(\mathbb{R})}\|\phi_\mu\|_{L^1(\mathbb{R})}+\|q\|_{L^2(\mathbb{R})}\|\phi_\mu\|_{L^1_\alpha(\mathbb{R})}\bigr),
\]
where $\|f\|_{L^p_\alpha(\mathbb{R})}:=\|\langle x\rangle^{\alpha}f\|_{L^p(\mathbb{R})}$. For $f\in L^2_+$, the projection term is
\begin{equation}\label{app:eq:rank-one}
\|\phi_\mu\|_{L^2(\mathbb{R})}^2\Pi_+\overline{q}P_\mu R_0^\pm(\lambda)q\Pi_+ f={\left\langle f,\Pi_+\overline qR_0^\mp(\lambda)\phi_\mu\right\rangle}\Pi_+(\overline q\phi_\mu).
\end{equation}
Consequently the reduced family $\mathcal{B}_\lambda:=\mathrm{Id}-\Pi_+\overline qQ_\mu R_0^\pm(\lambda)q\Pi_+$ is $C^{0,\alpha}$ in operator norm.

Put
\[
g_\mu=\Pi_+(\overline q\phi_\mu),\qquad\mathcal{A}_\mu^\pm=\mathrm{Id}-\Pi_+\overline qR_0^\pm(\mu)q\Pi_+.
\]
The exceptional-space correspondence and simplicity of $\mu$ give
\[
\ker\mathcal{A}_\mu^+=\ker\mathcal{A}_\mu^-=\operatorname{span}\{g_\mu\},\qquad (\mathcal A_\mu^\pm)^\ast=\mathcal{A}_\mu^\mp,
\]
so $\mathrm{Ran}\mathcal{A}_\mu^\pm=g_\mu^\perp$. Formula \eqref{app:eq:rank-one} shows that $\mathcal{B}_\mu-\mathcal{A}_\mu^\pm$ has range in $\operatorname{span}\{g_\mu\} $. If $\mathcal{B}_\mu f=0$, the two orthogonal range components vanish separately, so $f=cg_\mu$. The eigenfunction equation yields $R_0^\pm(\mu)q\Pi_+ g_\mu=\phi_\mu,\qquad Q_\mu R_0^\pm(\mu)q\Pi_+ g_\mu=0$, and hence $\mathcal B_\mu g_\mu=g_\mu$; therefore $c=0$. Since $\mathcal{B}_\mu$ is identity minus compact and has index zero, it is invertible. After shrinking $I$, $\mathcal{B}_\lambda^{-1}-\mathcal{B}_\nu^{-1}=\mathcal{B}_\lambda^{-1}(\mathcal{B}_\nu-\mathcal{B}_\lambda)\mathcal{B}_\nu^{-1}$ is $C^{0,\alpha}$ in operator norm.

To obtain $L^\infty_x$ estimates for the Jost functions, let $e_\lambda(x)=e^{i\lambda x}$ and set
\[
b_\lambda=\Pi_+(\overline qQ_\mu e_\lambda),\qquad w_\lambda=\mathcal{B}_\lambda^{-1}b_\lambda,\qquad F_\lambda=q\Pi_+ w_\lambda.
\]
Here
\[
Q_\mu e_\lambda=e_\lambda-\frac{\langle e_\lambda,\phi_\mu\rangle_{L^\infty,L^1}}{\|\phi_\mu\|_{L^2(\mathbb{R})}^2}\phi_\mu.
\]
The elementary modulation estimate and the fractional moments already proved give
\[
\|b_\lambda-b_\nu\|_{L^2(\mathbb{R})}\lesssim |\lambda-\nu|^\alpha\left(\|q\|_{L^2_\alpha(\mathbb{R})}+\frac{\|q\phi_\mu\|_{L^2(\mathbb{R})}\|\phi_\mu\|_{L^1_\alpha(\mathbb{R})}}{\|\phi_\mu\|_{L^2(\mathbb{R})}^2}\right).
\]
Thus $b_\lambda,w_\lambda$ are $C^{0,\alpha}$ in $L^2$, while $F_\lambda$ is $C^{0,\alpha}$ in $L^1$ and uniformly bounded in $L^1_\alpha$. Define the modulated free-resolvent row $U_\lambda=e^{-i\lambda x}R_0^\pm(\lambda)F_\lambda$. The Volterra kernel gives the explicit estimate
\[
\|U_\lambda-U_\nu\|_{L^\infty(\mathbb{R})}\lesssim \|F_\lambda-F_\nu\|_{L^1(\mathbb{R})}+|\lambda-\nu|^\alpha\|F_\nu\|_{L^1_\alpha(\mathbb{R})},
\]
so $U_\lambda$ is $C^{0,\alpha}$ in $L^\infty$.

Finally set
\[
d_\lambda=\langle e_\lambda,\phi_\mu\rangle,\qquad v_\lambda=e^{-i\lambda x}\phi_\mu,\qquad a_\lambda=\langle U_\lambda,v_\lambda\rangle_{L^\infty,L^1}.
\]
The conditions
$\phi_\mu\in L^1_\alpha\cap L^\infty_\alpha$ imply that $d_\lambda$, $v_\lambda$, and $a_\lambda$ are all $C^{0,\alpha}$ in their respective scalar, $L^1\cap L^\infty$, and scalar topologies. Steps~2a--2c of Frank--Read's proof give
\begin{equation}\label{app:eq:reduced-reconstruction}
e^{-i\lambda x}Q_\mu m_{\mathrm{e}}^\pm(\lambda)=1-\|\phi_\mu\|_{L^2(\mathbb{R})}^{-2}d_\lambda v_\lambda+U_\lambda-\|\phi_\mu\|_{L^2(\mathbb{R})}^{-2}a_\lambda v_\lambda.
\end{equation}
The reduced orthogonality identity gives $P_\mu m_{\mathrm e}^\pm(\lambda)=0$ away from $\mu$; hence \eqref{app:eq:reduced-reconstruction} proves $M_\lambda^\pm(x):=e^{-i\lambda x}m_{\mathrm{e}}^\pm(x,\lambda)\in C^{0,\alpha}(I;L^\infty_x)$. Away from the finite exceptional set the same proof is the unreduced case $P_\mu=0$. Finally, the incoming scattering pairing is $i\overline{\beta_q(\lambda)}=\int_\mathbb{R}\overline{q(x)}e^{i\lambda x}M^-_\lambda(x)dx$. Consequently,
\begin{align*}
|\beta_q(\lambda)-\beta_q(\nu)|&\lesssim \|q\|_{L^1(\mathbb{R})}\|M^-_\lambda-M^-_\nu\|_{L^\infty(\mathbb{R})}+\|q(e^{i\lambda x}-e^{i\nu x})\|_{L^1(\mathbb{R})}\sup_{\theta\in I}\|M^-_\theta\|_{L^\infty(\mathbb{R})}\\
&\lesssim |\lambda-\nu|^\alpha,
\end{align*}
which proves \eqref{app:eq:beta-holder}. Frank--Read's Lemma~8.5 gives $\beta_q(\mu)=0$. Hence $|\beta_q(\lambda)|/|\lambda-\mu|\lesssim |\lambda-\mu|^{\alpha-1}$, which is locally integrable for every $\alpha>0$, proving \eqref{app:eq:beta-divisibility}.
\end{proof}

\begin{proof}[Proof of Theorem~\ref{app:thm:tail}]
Lemma~\ref{app:lem:fractional-moments} gives $q\in L^1\cap L^2_+(\mathbb{R})$. Lemma~\ref{app:lem:beta-holder} proves \eqref{app:eq:Dini} at every nonnegative point eigenvalue, and Lemma~\ref{app:lem:point-position} proves \eqref{app:eq:point-moment} for every point eigenfunction. Thus $q\in\mathcal{X}_{\rm DM}$, and Theorem~\ref{app:thm:XDM} gives analyzability.
\end{proof}

To prove Theorem~\ref{app:thm:XDM}, we verify (A1)--(A4).

Let $\Phi_q:P_{\mathrm{ac}}(L_q)L^2_+(\mathbb{R})\to L^2(\mathbb{R}_+)$ be the distorted Fourier transform of \cite[Theorem~5.3]{Frank-2026-arXiv}.  With an $L^2$-orthonormal point basis $e_1,\ldots,e_N$, it extends to a unitary map
\[
\mathscr{U}_q:L^2_+(\mathbb{R})\longrightarrow\mathcal{E}\oplus\mathcal{H}_c,\qquad\mathcal{E}=\mathbb{C}^N,\quad \mathcal{H}_c=L^2(\mathbb{R}_+),
\]
under which
\[
\mathscr{U}_qL_q\mathscr{U}_q^{-1}=M_{\rm p}\oplus M_\lambda,\qquad M_{\rm p}=\operatorname{diag}(\mu_1,\ldots,\mu_N).
\]
Let $P,Q$ be the projections onto $\mathcal{E},\mathcal{H}_c$, respectively, and let $J:\mathcal{H}_c\hookrightarrow\mathcal{E}\oplus\mathcal{H}_c$ be the inclusion.

The Hardy position operator is
\[
\mathrm{Dom}(\mathsf{X})=\{f\in L^2_+(\mathbb{R}):\widehat{f}|_{\mathbb{R}_+}\in H^1(\mathbb{R}_+)\},\qquad \widehat{\mathsf{X}f}=i\partial_\xi\widehat{f},\qquad I_+f=\widehat{f}(0+).
\]
Put $T=\mathscr{U}_q\mathsf{X}\mathscr{U}_q^{-1}$, and let $T_D$ be the actual time-zero Dirichlet compression from the main text.

Extend $\beta_q$ by zero to $( -\infty,0)$, and set
\[
A_{\beta_q}=i\partial_\lambda+B_+(\beta_q),\qquad B_+(\beta)=M_\beta\mathcal{C}_H^+M_{\overline{\beta}},\qquad \mathrm{Dom}(A_{\beta_q})=H^1(\mathbb{R}_+),
\]
where $\mathcal{C}_H^+=(4\pi)^{-1}(\mathrm{Id}+i H)$ on the half-line. Frank--Read's Theorems~7.1 and~8.1 and Corollary~8.4 imply, for $q\in L^1\cap L^2_+$,
\begin{equation}\label{app:eq:beta-bounded}
\beta_q\in C_0([0,\infty))\cap L^2(\mathbb{R}_+)\cap L^\infty(\mathbb{R}_+).
\end{equation}
Thus $B_+(\beta_q)$ is a bounded self-adjoint operator.  Moreover, $A_{\beta_q}$ is maximal dissipative; one graph core is $C_c^\infty([0,\infty))$.

\begin{prop}[Static graph and nonzero endpoint]\label{app:prop:static-graph}
If $q\in\mathcal{X}_{\mathrm{DM}}$, then
\begin{equation}\label{app:eq:static-graph}
T_D=A_{\beta_q},\qquad \mathrm{Dom}(T_D)=H^1(\mathbb{R}_+),
\end{equation}
and, for every $G\in H^1(\mathbb{R}_+)$,
\begin{equation}\label{app:eq:static-row}
I_+(\mathscr{U}_q^{-1}JG)=\sqrt{2\pi}G(0+),\qquad\ell_DG=\frac{1}{i\sqrt{2\pi}}G(0+).
\end{equation}
The scalar graph has coefficient-one sewing at every positive embedded eigenvalue.
\end{prop}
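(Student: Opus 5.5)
The plan is to prove both inclusions $\mathrm{Dom}(T_D)\subset H^1(\mathbb{R}_+)$ and $H^1(\mathbb{R}_+)\subset\mathrm{Dom}(T_D)$ with matching actions. Once $T_D\subset A_{\beta_q}$ is established, the reverse inclusion comes for free: both operators are maximal dissipative (Proposition~\ref{prop:static-actual-graph} for $T_D$, and the remark after \eqref{app:eq:beta-bounded} for $A_{\beta_q}$). The model to keep in mind is the Frank--Read distorted transform, which intertwines $\mathsf{X}$ with $i\partial_\lambda$ up to a rank-one-type commutator. Concretely, the identity $[\mathsf{X},L_q]=i(I-\frac{1}{2\pi}|q\rangle\langle q|)$, transported by $\mathscr{U}_q$, shows that on the continuous channel $T$ acts as $i\partial_\lambda$ plus a term built from $\beta_q$. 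The Hilbert-transform kernel $\mathcal{C}_H^+$ appears as the compression of $\frac{1}{2\pi}|q\rangle\langle q|$ divided by $(\lambda-\mu)$, i.e. as the kernel solving the commutator equation $\lambda K-K\mu=\frac{i}{2\pi}\beta(\lambda)\overline{\beta(\mu)}$ with the incoming boundary prescription.

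\textbf{Step 1: an explicit core.} Take $G\in C_c^\infty(\mathbb{R}_+\setminus S_+)$ and set $f=\Phi_q^\ast G$. Write $f$ as a superposition of Jost columns, $f(x)=(2\pi)^{-1/2}\int G(\lambda)m^-_\lambda(x)\,d\lambda$. Since $q\in L^1$, the Jost functions $M^-_\lambda$ are bounded and continuous in $\lambda$. Differentiating the Jost equation in $\lambda$ and using the commutator identity then computes $\widehat{f}$, shows $f\in\mathrm{Dom}(\mathsf{X})$, and yields the two relations
\[
QTJG=i G'+B_+(\beta_q)G,\qquad PTJG=\bigl(\langle TJG,e_j\rangle\bigr)_j .
\]
Along the way one gets the boundary evaluation $I_+f=\sqrt{2\pi}\,G(0+)$; the Fourier transform of $m_\lambda$ near $\xi=0$ contributes the trace $G(0+)$. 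This gives \eqref{app:eq:static-row} on the core, so $T_D=A_{\beta_q}$ there.

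\textbf{Step 2: extension to $H^1$.} Extend from the core to all of $H^1(\mathbb{R}_+)$ in two pieces. Near $\lambda=0$, continuity of $\beta_q$ from \eqref{app:eq:beta-bounded} handles the endpoint. Near a positive embedded eigenvalue $\mu$, apply Theorem~\ref{thm:paired-Feshbach-crossing}: the Dini condition \eqref{app:eq:Dini} and the moment condition \eqref{app:eq:point-moment} make the singular pole coefficient $c_\lambda$ and the paired term integrable against $(\lambda-\mu)^{-1}$. This is exactly the input Lemma~\ref{lem:weighted-trace-zero} needs to force equal one-sided traces. It also shows that $PTJG$ stays bounded by the graph norm, because $\langle x\phi_j,f\rangle$ is controlled via $x\phi_j\in L^2$. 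Hence $C_c^\infty(\mathbb{R}_+\setminus S_+)$ is dense in $H^1$ for the relevant graph norm with no jump across $S_+$, the closure of $T_D$ on this core is $A_{\beta_q}$ on $H^1$, and coefficient-one sewing holds automatically.

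\textbf{Step 3: equality and the row.} Maximality of both operators gives \eqref{app:eq:static-graph}. Formula \eqref{app:eq:static-row} then follows from \eqref{eq:static-actual-row} together with the trace identity of Step 1, extended by graph continuity.

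\textbf{Main obstacle.} I expect the hardest part to be Step 1's identification of the off-diagonal kernel as precisely $M_\beta\mathcal{C}_H^+M_{\overline\beta}$. This requires the principal-value/incoming-limit computation of $\langle \mathsf{X}\Phi^\ast G,\Phi^\ast F\rangle$ using the Frank--Read Laurent and Jost asymptotics, including the correct constant $1/(4\pi)$ and the sign of $iH$. I would first verify it for $q$ Schwartz, where the classical scattering computations are available, and then pass to $q\in L^1\cap L^2_+$ by norm-resolvent continuity and the uniform $L^\infty$ bounds on $M^-_\lambda$.
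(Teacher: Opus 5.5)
Your argument has a genuine gap, and it sits exactly where the proposition has content.

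\textbf{The core is too small.} Every element of $C_c^\infty(\mathbb{R}_+\setminus S_+)$ vanishes near each embedded label $\mu\in S_+$. Since your Step~2 still has to ``handle the endpoint'', you are apparently also taking them to vanish near $\lambda=0$. Because $B_+(\beta_q)$ is bounded, the graph norm of $A_{\beta_q}$ is the $H^1$ norm, and point evaluations are $H^1$-continuous. So, as soon as there is an embedded eigenvalue (or whenever $0$ is excluded), the closure of this core is a proper subspace of $\{G\in H^1(\mathbb{R}_+):G(\mu)=0\ (\mu\in S_+)\}$, cut down further by $G(0)=0$. It is not $H^1(\mathbb{R}_+)$, and the density claim in Step~2 is false.

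Agreement of $T_D$ and $A_{\beta_q}$ on this core therefore only shows that a proper, non-maximal restriction of $A_{\beta_q}$ is contained in $T_D$. Nothing shows it is a core for $T_D$ either. Maximal dissipativity therefore yields neither $T_D\subset A_{\beta_q}$ nor equality. The trace identity you claim ``along the way'' in Step~1 also reduces to $0=0$ there. What is never proved is precisely the statement itself: that $\mathrm{Dom}(T_D)$ contains functions with $G(0)\neq0$ satisfying $I_+f=\sqrt{2\pi}G(0+)$, and functions that are nonzero and continuous through embedded eigenvalues.

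\textbf{Step~2 cannot supply this.} Theorem~\ref{thm:paired-Feshbach-crossing} requires $f\in L^1$, i.e.\ $I_+f=0$, and requires $G$ to be known already to lie in the enhanced graph. Treating the component with $I_+f\neq0$ needs a phase-fixing anchor. That is the main-text route of Corollary~\ref{cor:phase-I-closure}, which assumes Conditions~\ref{hyp:M3-actual-form} and~\ref{hyp:M4ab-anchor}. In the appendix both are verified in Proposition~\ref{app:prop:last-gates} using the very proposition you are proving, so the argument is circular. Moreover, the integrability in that theorem comes from $f\in L^1\cap\mathrm{Dom}(\mathsf X)$ and $c_\lambda\in L^2$, not from \eqref{app:eq:Dini}.

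\textbf{The missing idea} is to compute directly on the genuine core $C_c^\infty([0,\infty))$ and let \eqref{app:eq:Dini} carry you through the poles.
\begin{enumerate}
\item Set $\mathcal{R}_G=\lim_{\varepsilon\downarrow0}\int_0^\infty G\overline{\beta_q}\,(L_q-\lambda+i\varepsilon)^{-1}q\,d\lambda$. Its continuous coordinate is $-\tfrac{i\beta_q}{\sqrt{2\pi}}$ times the incoming Cauchy transform of $G\overline{\beta_q}$; this is where $\mathcal{C}_H^+$ and its constant come from. Its point coordinates at $\mu_j\ge0$ converge by \eqref{app:eq:Dini}.
\item Identify $\mathcal{R}_G$ with the weak-$\ast$ Jost integral $\int_0^\infty G\overline{\beta_q}\,m_0^-(\lambda)\,d\lambda$. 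In the Frank--Read Laurent chart the pole $-i\psi_j/(\lambda-\mu_j)$ is dominated by $|\beta_q(\lambda)|/|\lambda-\mu_j|$, which is integrable precisely by the Dini condition. A possible point-supported constant discrepancy is removed by a decay argument.
\item Integrate by parts with the Jost derivative identity. Continuity of the modulated Jost column through each embedded label cancels the interior boundary terms, and the incoming normalization produces the endpoint term. This gives $xf=\tfrac{iG(0)}{\sqrt{2\pi}}+i\Phi_q^\ast G'-\tfrac{1}{2\pi\sqrt{2\pi}}\mathcal{R}_G$.
\item The implication ``$xf=h+c$ with $h\in L^2_+$ implies $I_+f=-2\pi ic$'' now yields both $I_+f=\sqrt{2\pi}G(0)$ and $QTJG=A_{\beta_q}G$.
\end{enumerate}
This gives $A_{\beta_q}\subset T_D$ on a true core of $H^1(\mathbb{R}_+)$, and maximal dissipativity gives equality. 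Sewing is then automatic, since $H^1$ functions have a single trace at interior points.

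Finally, your fallback of verifying the kernel for Schwartz $q$ and passing to the limit is not viable as stated. Embedded eigenvalues, and with them $\mathscr{U}_q$ and $T_D$, are not stable under $L^2$ approximation of $q$.
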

\begin{proof}
It suffices to identify the actual and scalar operators on the core $\mathcal{C}=C_c^\infty([0,\infty))$.  Let $G\in\mathcal{C}$, set $a=G\overline{\beta_q}$, and define $\mathcal{R}_{G,\varepsilon}:=\int_0^\infty a(\lambda)(L_q-\lambda+i\varepsilon)^{-1}qd\lambda$. In continuous spectral coordinates,
\[
[\Phi_q\mathcal{R}_{G,\varepsilon}](\nu)=-\frac{i\beta_q(\nu)}{\sqrt{2\pi}}\int_0^\infty\frac{G(\lambda)\overline{\beta_q(\lambda)}}{\nu-\lambda+i\varepsilon}d\lambda.
\]
The half-line Cauchy transforms converge strongly on $L^2$, and \eqref{app:eq:beta-bounded} preserves this convergence. A point coordinate at $\mu_j\ge0$ is a fixed multiple of
\[
\int_0^\infty\frac{G(\lambda)\overline{\beta_q(\lambda)}}{\mu_j-\lambda+i\varepsilon}d\lambda,
\]
which converges by \eqref{app:eq:Dini}; at a negative eigenvalue the denominator is separated from the integration ray. Since the point spectrum is finite, distorted Plancherel yields $\mathcal{R}_{G,\varepsilon}\longrightarrow\mathcal{R}_G \quad\text{strongly in }L^2_+(\mathbb{R})$.
Near a nonnegative point eigenvalue, rescale an eigenfunction to the Frank--Read normalization $\langle q,\psi_j\rangle=2\pi i$. Frank--Read's Theorem~8.1 then gives the incoming Laurent expansion $m_0^-(\lambda)=-\frac{i\psi_j}{\lambda-\mu_j}+h_j^-(\lambda)$, where the regular part is weak-$\ast$ continuous and locally bounded in $L^\infty_x$. Condition \eqref{app:eq:Dini} makes the pole in $a(\lambda)m_0^-(\lambda)$ an ordinary weak-$\ast$ $L^1_\lambda$ integral. Define the resulting positive-frequency distribution $\mathcal{J}_G:=\mathrm{w}^\ast\int_0^\infty G(\lambda)\overline{\beta_q(\lambda)}m_0^-(\lambda)d\lambda$. For every $\varphi\in\mathcal{S}(\mathbb{R})$ with $\operatorname{supp}\widehat{\varphi}\Subset(0,\infty)$, the resolvent identity $(L_q-\lambda+i\varepsilon)^{-1}q
=m_0(\lambda-i\varepsilon)$, followed by dominated convergence in the Laurent chart, gives $\langle\mathcal{R}_G,\varphi\rangle=\langle\mathcal{J}_G,\varphi\rangle$. The domination near each nonnegative eigenvalue is precisely $C\frac{|\beta_q(\lambda)|}{|\lambda-\mu_j|}+C|\beta_q(\lambda)|$, and away from the finite exceptional set it is $C|\beta_q|$. Thus $\operatorname{supp}\widehat{(\mathcal{J}_G-\mathcal{R}_G)}\subset\{0\}$. The structure theorem for point-supported Fourier distributions makes this difference a polynomial. The same majorants give $\mathcal{J}_G\in L^\infty$, whereas $\mathcal{R}_G\in L^2$, so $\mathcal{J}_G-\mathcal{R}_G=c$ for a constant $c$.

To show $c=0$, convolve this identity with $\chi\in C_c^\infty(\mathbb{R})$, $\int\chi=1$. The spectral term tends to zero at $+\infty$, since $\widehat{\mathcal{R}_G}\widehat{\chi}\in L^1$. The Jost term does so by the incoming resolvent boundary condition, dominated near each pole by $C\frac{|\beta_q(\lambda)|}{|\lambda-\mu_j|}+C|\beta_q(\lambda)|$, which is integrable by \eqref{app:eq:Dini}. Hence $c=0$ and $\mathcal{R}_G=\mathcal{J}_G =\mathrm{w}^\ast\int_0^\infty G(\lambda)\overline{\beta_q(\lambda)}m_0^-(\lambda)d\lambda$.
Frank--Read's Jost derivative identity now permits integration by parts on the whole half-line. Coefficient-one continuity of the modulated Jost column through an embedded label and \eqref{app:eq:Dini} cancel the interior boundary terms; the incoming normalization gives the term at zero.  Thus, in $\mathcal{S}'(\mathbb{R}_x)$,
\begin{equation}\label{app:eq:Jost-IBP}
\int_0^\infty G(\lambda)x m_{\mathrm{e}}^-(\lambda)d\lambda=i G(0)\mathbf{1}+i\int_0^\infty G'(\lambda)m_{\mathrm{e}}^-(\lambda)d\lambda-\frac{1}{2\pi}\mathcal{R}_G.
\end{equation}

Set $f=\mathscr{U}_q^{-1}JG=\Phi_q^\ast G$. Applying the inverse distorted Fourier formula to \eqref{app:eq:Jost-IBP} yields $xf=\frac{iG(0)}{\sqrt{2\pi}}+i\Phi_q^\ast G'-\frac{1}{2\pi\sqrt{2\pi}}\mathcal{R}_G$. The last two terms belong to $L^2_+(\mathbb{R})$. Fourier transforming the elementary distributional implication
\[
xf=h+c,\quad h\in L^2_+(\mathbb{R})\quad\Longrightarrow\quad f\in\mathrm{Dom}(\mathsf{X}),\quad I_+f=-2\pi ic,
\]
gives
\[
I_+f=\sqrt{2\pi}G(0),\qquad QTJG=i G'+\beta_q\mathcal{C}_H^+(\overline{\beta_q}G)=A_{\beta_q}G.
\]
Thus $A_{\beta_q}\subset T_D$ on a graph core. Both operators are maximal dissipative, so they are equal. Equation \eqref{app:eq:static-row} follows from the definition
$\ell_D=(2\pi i)^{-1}I_+\mathscr{U}_q^{-1}J$. Every $H^1$-function has a single trace at each interior point, giving coefficient-one sewing.
\end{proof}

\begin{lemma}[Point lines lie in both Hardy position domains]\label{app:lem:point-domains}
If $q\in\mathcal{X}_{\mathrm{DM}}$, then every vector $e_j$ in the orthonormal point basis satisfies
\begin{equation}\label{app:eq:point-domains}
e_j\in\mathrm{Dom}(\mathsf{X})\cap\mathrm{Dom}(\mathsf{X}^\ast),\qquad I_+e_j=0,\qquad \mathsf{X}e_j=\mathsf{X}^\ast e_j=xe_j.
\end{equation}
\end{lemma}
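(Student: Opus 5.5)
The plan is to reduce everything to the Fourier side, using the moment condition \eqref{app:eq:point-moment} from the definition of $\mathcal{X}_{\mathrm{DM}}$. Fix an eigenfunction $e_j$ with $L_qe_j=\mu_je_j$, $\|e_j\|_{L^2(\mathbb{R})}=1$. Then $xe_j\in L^2(\mathbb{R})$, and by Plancherel the Fourier transform $\widehat{e_j}$ lies in $H^1(\mathbb{R})$, with $i\partial_\xi\widehat{e_j}=\widehat{xe_j}$ in distributions on all of $\mathbb{R}$. Because $e_j\in L^2_+(\mathbb{R})$, $\widehat{e_j}$ vanishes on $(-\infty,0)$. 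An $H^1(\mathbb{R})$ function is continuous, so $\widehat{e_j}(0)=0$, i.e.\ $I_+e_j=\widehat{e_j}(0+)=0$. The restriction to $\mathbb{R}_+$ lies in $H^1(\mathbb{R}_+)$, which gives $e_j\in\mathrm{Dom}(\mathsf{X})$ with $\widehat{\mathsf{X}e_j}=i\partial_\xi\widehat{e_j}=\widehat{xe_j}$ on $\mathbb{R}_+$. The derivative of the zero extension carries no Dirac mass at $0$, so $xe_j$ is itself positive-frequency and equals $\mathsf{X}e_j$.

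Next I handle the adjoint. On the Fourier half-line, $\mathsf{X}$ is $i\partial_\xi$ on $H^1(\mathbb{R}_+)$ with no boundary condition. Integration by parts shows that its adjoint is $i\partial_\xi$ on $H^1_0(\mathbb{R}_+)$, the zero-trace subspace; this is the same description used in the proof of Lemma~\ref{lem:blowup-Green-Wold}. Explicitly, for $f\in\mathrm{Dom}(\mathsf{X})$ and $F\in H^1(\mathbb{R}_+)$,
\[
\langle i\partial_\xi \widehat f,F\rangle-\langle \widehat f,i\partial_\xi F\rangle=-i\widehat f(0)\overline{F(0)}.
\]
Since $\widehat{e_j}(0)=0$ by the first step, $\widehat{e_j}\in H^1_0(\mathbb{R}_+)$. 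Hence $e_j\in\mathrm{Dom}(\mathsf{X}^\ast)$ with $\mathsf{X}^\ast e_j=\mathsf{X}e_j=xe_j$.

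The one step needing care is checking that \eqref{app:eq:point-moment} really applies to the orthonormal basis vectors $e_j$. It does: each eigenspace is one-dimensional by Lemma~\ref{lem:static-cyclic-formula}, so $e_j$ is a unimodular multiple of the normalized $\phi_j$ in \eqref{app:eq:point-moment}. Once that is settled, the rest is standard Sobolev trace theory on the half-line.
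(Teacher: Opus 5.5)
Your proposal is correct and follows essentially the same route as the paper: the moment condition \eqref{app:eq:point-moment} gives $\widehat{e}_j\in H^1(\mathbb{R})$, the Hardy support forces its continuous representative to vanish at $0$, and the Fourier descriptions of $\mathsf{X}$ (as $i\partial_\xi$ on $H^1(\mathbb{R}_+)$) and $\mathsf{X}^\ast$ (as $i\partial_\xi$ on $H^1_0(\mathbb{R}_+)$) then yield \eqref{app:eq:point-domains}. The only extra ingredient in the paper is a citation of Frank--Read's Lemma~3.2 for $e_j\in L^1$, which you rightly do without, since $\langle x\rangle e_j\in L^2$ already makes $\widehat{e}_j$ continuous.
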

\begin{proof}
Frank--Read's Lemma~3.2 gives $e_j\in L^1$ because $q\in L^1\cap L^2_+$. Condition \eqref{app:eq:point-moment} gives $\widehat{e}_j\in H^1(\mathbb{R})$. Its continuous representative is zero on $(-\infty,0]$, because $e_j\in L^2_+(\mathbb{R})$; in particular its trace at zero vanishes. The Fourier definitions of $\mathsf{X}$ and its adjoint
give \eqref{app:eq:point-domains}.
\end{proof}

\begin{prop}[Bounded point--continuous block]\label{app:prop:block}
There are a self-adjoint matrix $A_{\rm p}$ on $\mathcal{E}$ and a bounded operator $B:\mathcal{H}_c\to\mathcal{E}$ such that
\begin{equation}\label{app:eq:block}
T=\begin{pmatrix}A_{\rm p}&B\\B^\ast&A_{\beta_q}\end{pmatrix},\qquad \mathrm{Dom}(T)=\mathcal{E}\oplus H^1(\mathbb{R}_+).
\end{equation}
If
$b_j=\Phi_qP_{\mathrm{ac}}(L_q)\mathsf{X}e_j$, then
\begin{equation}\label{app:eq:B-bound}
(BG)_j=\langle G,b_j\rangle,\qquad\|B\|\le\left(\sum_{j=1}^N\|b_j\|_{L^2(\mathbb{R})}^2\right)^{1/2}.
\end{equation}
\end{prop}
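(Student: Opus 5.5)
The plan is to compute the matrix of $T=\mathscr{U}_q\mathsf{X}\mathscr{U}_q^{-1}$ in the decomposition $\mathcal{E}\oplus\mathcal{H}_c$ block by block, using the two results just established. Proposition~\ref{app:prop:static-graph} identifies the compressed continuous block $QTJ=T_D=A_{\beta_q}$ with domain $H^1(\mathbb{R}_+)$. Lemma~\ref{app:lem:point-domains} places every point vector $e_j$ in $\mathrm{Dom}(\mathsf{X})\cap\mathrm{Dom}(\mathsf{X}^\ast)$, with $\mathsf{X}e_j=\mathsf{X}^\ast e_j=xe_j$.

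\textbf{Point column.} Since $\mathcal{E}$ is finite dimensional and each $e_j\in\mathrm{Dom}(\mathsf{X})$, the whole of $\mathcal{E}$ lies in $\mathrm{Dom}(T)$. Hence $\mathrm{Dom}(T)=\mathcal{E}\oplus\mathrm{Dom}(T_D)=\mathcal{E}\oplus H^1(\mathbb{R}_+)$. Indeed, $(a,G)\in\mathrm{Dom}(T)$ if and only if $JG\in\mathrm{Dom}(T)$, which is exactly the definition of $\mathrm{Dom}(T_D)$ in \eqref{eq:static-actual-domain}.

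Define $(A_{\mathrm p})_{kj}=\langle \mathsf{X}e_j,e_k\rangle=\langle xe_j,e_k\rangle$. This matrix is Hermitian because multiplication by $x$ is symmetric on these vectors, which lie in $L^2$ with $xe_j\in L^2$. The lower-left entry is $QT$ applied to the $j$-th basis vector, namely $\Phi_qP_{\mathrm{ac}}(L_q)\mathsf{X}e_j=b_j\in L^2(\mathbb{R}_+)$. This defines a finite-rank map $C:\mathcal{E}\to\mathcal{H}_c$ with $Ca=\sum_j a_jb_j$.

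\textbf{Upper-right block.} For $G\in H^1(\mathbb{R}_+)$ set $f=\mathscr{U}_q^{-1}JG\in\mathrm{Dom}(\mathsf{X})$. Then
\[
(PTJG)_j=\langle \mathsf{X}f,e_j\rangle=\langle f,\mathsf{X}^\ast e_j\rangle=\langle f,\mathsf{X}e_j\rangle=\langle JG,\mathscr{U}_q\mathsf{X}e_j\rangle=\langle G,b_j\rangle .
\]
The first step uses $e_j\in\mathrm{Dom}(\mathsf{X}^\ast)$, and the last uses that $JG$ has no point component, so only the continuous coordinate $b_j$ of $\mathscr{U}_q\mathsf{X}e_j$ survives. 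Thus $B=C^\ast$, with $(BG)_j=\langle G,b_j\rangle$. Cauchy--Schwarz in each coordinate and summation over $j$ give the bound \eqref{app:eq:B-bound}. In particular $B$ extends from $H^1(\mathbb{R}_+)$ to a bounded operator on $\mathcal{H}_c$.

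\textbf{Assembling the block form.} Write $T(a,G)=T(a,0)+TJG$ and decompose each term with $P$ and $Q$. This yields exactly the matrix \eqref{app:eq:block}.

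\textbf{Main obstacle.} The delicate point is the adjoint step: it requires $e_j\in\mathrm{Dom}(\mathsf{X}^\ast)$, not merely $e_j\in\mathrm{Dom}(\mathsf{X})$. This holds because $I_+e_j=0$, which is exactly what Lemma~\ref{app:lem:point-domains} provides through the moment condition \eqref{app:eq:point-moment}. Without the vanishing trace, the boundary term $\widehat{f}(0+)\overline{\widehat{e_j}(0+)}$ would break the symmetry $B=C^\ast$. A second point to check is the consistency of the sign and Fourier normalization of $\Phi_q$ with the normalization $Q\mathscr{U}_q=\Phi_qP_{\mathrm{ac}}$ fixed in Section~\ref{sec:spectral framework}.
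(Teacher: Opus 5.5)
Your proposal is correct and follows essentially the same route as the paper. You compute the upper-right block by moving $\mathsf{X}$ onto $e_j$; the paper phrases this as the Hardy Green identity with $I_+e_j=0$, while you phrase it as $e_j\in\mathrm{Dom}(\mathsf{X}^\ast)$ with $\mathsf{X}^\ast e_j=xe_j$. You then obtain the lower-left block $B^\ast$ and the self-adjointness of $A_{\mathrm p}$ from the same vanishing trace, and the domain identity by subtracting the finite point component.
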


\begin{proof}
For $G\in H^1$, Proposition~\ref{app:prop:static-graph} and Lemma~\ref{app:lem:point-domains} give
\[
(PTJG)_j=\langle\mathsf{X}\mathscr{U}_q^{-1}JG,e_j\rangle=\langle G,\Phi_qP_{\mathrm{ac}}\mathsf{X}^\ast e_j\rangle,
\]
 which proves \eqref{app:eq:B-bound}. The Hardy Green identity and $I_+e_j=0$ identify the opposite block with $B^\ast$ and make $A_{\rm p}$ self-adjoint. The domain identity follows by subtracting the finite point component from an arbitrary vector in $\mathrm{Dom}(T)$.
\end{proof}

\begin{prop}[Dynamic common graph]\label{app:prop:dynamic-graph}
For every $t\in\mathbb{R}$ and $\eta\in\mathbb{R}$, the transported KLV realization coincides with the closed block realization of
\begin{equation}\label{app:eq:dynamic-pencil}
T+2t(M-\eta)-z_0,
\end{equation}
whose continuous domain is $C_t^{-1}H^1(\mathbb{R}_+)$. Its Dirichlet row is $\ell_{D,t}$ from \eqref{app:eq:dynamic-row}. Hence (A1), including strong Borel dependence on $\eta$, holds.
\end{prop}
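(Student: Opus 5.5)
The plan is to identify the transported KLV realization $A_{t,\eta}$ with the block operator obtained from \eqref{app:eq:block} by adding the bounded-below diagonal multiplication $2t(M-\eta)$. I will first build the block realization and show that it is closed and maximal dissipative. I will then match it with $A_{t,\eta}$ on a common core, and finally read off the Dirichlet compression and its row. I will work with the chirp $C_t$. By Proposition~\ref{app:prop:block}, the continuous diagonal entry is $A_{\beta_q}+2t(M_\lambda-\eta)-z_0$, and conjugating by $C_t$ gives
\[
C_t\bigl(i\partial_\lambda+2tM_\lambda\bigr)C_t^{-1}=i\partial_\lambda,
\]
since $i\partial_\lambda(e^{-it\lambda^2}G)=e^{-it\lambda^2}(iG'+2t\lambda G)$. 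Hence $i\partial_\lambda+2tM_\lambda$, with domain $C_t^{-1}H^1(\mathbb{R}_+)$, is unitarily equivalent to the free maximal dissipative operator $i\partial_\lambda$ on $H^1(\mathbb{R}_+)$. The remaining terms $B_+(\beta_q)$, $-2t\eta$, $-z_0$, the finite matrix $A_{\rm p}+2t(M_{\rm p}-\eta)$ and the bounded off-diagonal couplings $B,B^\ast$ are all bounded. A bounded perturbation of the form self-adjoint plus $-z_0$ (with $\operatorname{Im}z_0>0$) keeps the operator closed and maximal dissipative, with the same domain $\mathcal{E}\oplus C_t^{-1}H^1(\mathbb{R}_+)$.

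For the identification I will use the common core $\mathcal{E}\oplus C_t^{-1}C_c^\infty([0,\infty))$. On the point lines, Lemma~\ref{app:lem:point-domains} shows that $e_j$ lies in $\mathrm{Dom}(\mathsf X)\cap\mathrm{Dom}(L_q)$. On the continuous side, $C_t^{-1}G$ with $G$ smooth lies in $H^1(\mathbb{R}_+)\cap\mathrm{Dom}(M_\lambda)$ on compact support, so Proposition~\ref{app:prop:static-graph} gives $\mathscr U_q^{-1}JC_t^{-1}G\in\mathrm{Dom}(\mathsf X)$. This vector also lies in $\mathrm{Dom}(L_q)$, because $L_q$ acts as $M_\lambda$. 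On the core, the KLV realization therefore acts as $\mathsf X+2t(L_q-\eta)-z_0$, which is exactly the block expression. Both realizations are maximal dissipative and one extends the other on this core. Taking graph closures and invoking maximality then gives equality of the closed operators and of their domains.

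The step I expect to be the main obstacle is justifying that the KLV realization is the closure from this particular core. This is needed because the KLV construction is defined through resolvents and approximations, not through an explicit domain. I would handle it through the resolvent. The block resolvent $(\text{block}-\zeta)^{-1}$ is a bounded inverse whose range lies in the KLV domain, since the KLV operator is closed and agrees with the block operator on the core. The block resolvent is also a right inverse of $A_{t,\eta}$. By the injectivity of $A_{t,\eta}$ (from the bound $\|R_{t,\eta}\|\le y_0^{-1}$), it therefore coincides with $R_{t,\eta}$.

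Once the domain is identified, the Dirichlet compression $QA_{t,\eta}J$ has continuous domain $C_t^{-1}H^1(\mathbb{R}_+)$. Its row is the graph-continuous extension of $\ell_D$ from the core. Since $|e^{-it\lambda^2}|=1$ and the chirp equals $1$ at $\lambda=0$, this row equals $(i\sqrt{2\pi})^{-1}G(0+)$, which is \eqref{app:eq:dynamic-row}. Hence the compression is the affine KLV family generated by $T_D=A_{\beta_q}$, as required by Condition~\ref{hyp:schur-graph}, with compatible rows.

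Strong Borel dependence on $\eta$ then follows from the resolvent identity. The operators differ in $\eta$ only by the bounded term $-2t\eta$, so the resolvents are norm-continuous in $\eta$, and the charges are finite-dimensional compressions of these resolvents. This gives (A1).
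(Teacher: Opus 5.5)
Your proposal is correct and follows essentially the same route as the paper: chirp-conjugate the block from Proposition~\ref{app:prop:block} to obtain a closed maximal dissipative operator on $\mathcal{E}\oplus C_t^{-1}H^1(\mathbb{R}_+)$ with graph core $\mathcal{E}\oplus C_c^\infty([0,\infty))$. You then place this core in $\mathrm{Dom}(\mathsf{X})\cap\mathrm{Dom}(L_q)$, use closedness of the KLV realization plus maximality to get equality, identify the rows on the core via graph continuity, and obtain Borel dependence from norm continuity of the resolvent in $\eta$. Two points should be made explicit. The input behind your ``therefore'' is that the abstract KLV realization acts as the algebraic pencil on all of $\mathrm{Dom}(\mathsf{X})\cap\mathrm{Dom}(L_q)$; the paper cites KLV Proposition~4.10 for this, and KLV's Ward estimate (4.42) for graph continuity of the physical row. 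Your resolvent argument in the ``main obstacle'' paragraph then simply reproves the standard fact that a maximal dissipative operator has no proper dissipative extension.
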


\begin{proof}
Let $\mathscr{G}:=\mathscr{U}_q^{-1}\bigl(\mathcal{E}\oplus C_c^\infty([0,\infty))\bigr)$. Lemma~\ref{app:lem:point-domains} places every point basis vector in $\mathrm{Dom}(\mathsf{X})\cap\mathrm{Dom}(L_q)$. Proposition~\ref{app:prop:static-graph} places $\Phi_q^\ast G$ in $\mathrm{Dom}(\mathsf{X})$ for smooth $G$, while compact spectral support gives $\Phi_q^\ast G\in\mathrm{Dom}(L_q)$. Thus $\mathscr{G}\subset\mathrm{Dom}(\mathsf{X})\cap\mathrm{Dom}(L_q)$. KLV Proposition~4.10 states that its abstract maximal dissipative realization agrees with the algebraic pencil on this common domain.

Equation \eqref{app:eq:block} and conjugation by $C_t$ show that the explicit block closure of \eqref{app:eq:dynamic-pencil} has domain $\mathcal{E}\oplus C_t^{-1}H^1$, is maximal dissipative, and has $\mathcal{E}\oplus C_c^\infty([0,\infty))$ as a graph core. The KLV realization is closed and contains this core restriction; hence it contains the explicit block closure. Maximal dissipativity of both operators forces equality.

For the boundary row, KLV Proposition~4.11, equation~(4.42), gives the Ward estimate
\[
|I_+\mathcal{A}(t,z;q)f|\le C_{t,z,q}\|f\|_{L^2(\mathbb{R})},
\]
 which makes the actual physical row graph-continuous on the KLV resolvent range. On the explicit side, $C_t$ is a graph isomorphism from $C_t^{-1}H^1$ to $H^1$, and $C_t(0)=1$; hence $\ell_{D,t}$ is graph-continuous. The two rows agree on $C_c^\infty([0,\infty))$, so core density makes them equal on the full dynamic graph.  This proves \eqref{app:eq:dynamic-row}.

Resolvent continuity in the real shift $\eta$, followed by finite-dimensional compression and inversion, gives the asserted Borel properties.
\end{proof}

\begin{prop}[Gates (A2)--(A4)]\label{app:prop:last-gates}
Every $q\in\mathcal{X}_{\mathrm{DM}}$ satisfies the actual-form and anchor gates. Moreover, with $K_t(\eta)$ denoting the point compression of the full resolvent,
\begin{equation}\label{app:eq:Schur-bound}
\sup_{t\ge0,\,\eta\in\mathbb{R}}\|K_t(\eta)^{-1}-2t(M_{\mathrm{p}}-\eta)\|\le \|A_{\rm p}-z_0\|+\frac{\|B\|^2}{\operatorname{Im}z_0}.
\end{equation}
Thus (A4) holds with $\rho=0$.
\end{prop}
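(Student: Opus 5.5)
The proof has three parts: (A2), (A3), and the Schur bound \eqref{app:eq:Schur-bound}. Each rests on Proposition~\ref{app:prop:static-graph}, which identifies $T_D=A_{\beta_q}$ on $\mathrm{Dom}(T_D)=H^1(\mathbb{R}_+)$. For (A2), take $\zeta_\ast=z_0$ and $\mathcal{Y}=L^2(\mathbb{R}_+)$. Then $G_h=(A_{\beta_q}-\zeta_\ast)^{-1}h\in H^1(\mathbb{R}_+)$. By \eqref{app:eq:beta-bounded}, $\beta_q\in L^\infty$, so $\overline{\beta_q}G_h\in L^2$ with $\|\overline{\beta_q}G_h\|_2\le\|\beta_q\|_\infty y_0^{-1}\|h\|_2$. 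The functional $\mathcal{L}_h(v)=\langle \overline{\beta_q}G_h,\mathcal{C}_H^+v\rangle$ is therefore bounded by $(2\pi)^{-1}\|\beta_q\|_\infty y_0^{-1}\|h\|_2\|v\|_2$, using \eqref{eq:CH-properties}. Its Riesz representative is $Z_h=\mathcal{C}_H^+(\overline{\beta_q}G_h)$, and the identity $iG_h'+\beta_qZ_h=\zeta_\ast G_h+h$ is just the resolvent equation for $A_{\beta_q}$. It even holds globally on $\mathbb{R}_+$, not only on the punctured set.

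For (A3), fix $\chi\in C_c^\infty([0,\infty))$ with $\chi(0)=1$ and set $E_0=(2\pi)^{-1/2}\chi\in H^1(\mathbb{R}_+)=\mathrm{Dom}(T_D)$. Then $J_+E_0=(2\pi)^{-1/2}$. By \eqref{app:eq:static-row}, $\ell_DE_0=(i\sqrt{2\pi})^{-1}(2\pi)^{-1/2}=(2\pi i)^{-1}$ and $I_+e_0=\sqrt{2\pi}E_0(0+)=1$. The interior sewing $\Gamma_{\mu,-}E_0=\Gamma_{\mu,+}E_0$ holds because $H^1$ functions are continuous. Membership of $E_0$ in the enhanced maximal graph follows from the factor identities with $K=iE_0'+\beta_q\mathcal{C}_H^+(\overline{\beta_q}E_0)\in L^2$.

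For (A4), use the block form \eqref{app:eq:block} and set $\zeta=z_0+2t\eta$. The transported operator is
\[
A_{t,\eta}=\begin{pmatrix}A_{\rm p}+2t(M_{\rm p}-\eta)-z_0&B\\ B^\ast&A_{\beta_q}+2t(M_\lambda-\eta)-z_0\end{pmatrix}.
\]
Its continuous diagonal block $A^D_{t,\eta}$ is invertible with $\|(A^D_{t,\eta})^{-1}\|\le y_0^{-1}$, by Proposition~\ref{app:prop:dynamic-graph} and dissipativity. The Schur complement formula for the inverse of a $2\times2$ block operator gives
\[
K_t(\eta)^{-1}=A_{\rm p}+2t(M_{\rm p}-\eta)-z_0-B(A^D_{t,\eta})^{-1}B^\ast.
\]
Hence $\|K_t(\eta)^{-1}-2t(M_{\rm p}-\eta)\|\le\|A_{\rm p}-z_0\|+\|B\|^2/\operatorname{Im}z_0$. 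This bound is uniform in $t$ and $\eta$, which gives \eqref{eq:schur-bound} with $\rho=0$.

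The main obstacle is justifying the block Schur formula at the level of closed unbounded operators. The difficulty is that the continuous entry has the moving domain $C_t^{-1}H^1$. The off-diagonal blocks are bounded (by \eqref{app:eq:B-bound}) and $\mathcal{E}$ is finite-dimensional, so I would argue directly. Fix $a\in\mathcal{E}$ and solve $A_{t,\eta}(a',G)=(a,0)$ by setting $G=-(A^D_{t,\eta})^{-1}B^\ast a'$. Substituting into the first row gives $K_t(\eta)^{-1}a'=a$. Invertibility of the finite matrix on the right-hand side is guaranteed because Proposition~\ref{prop:strict-dirichlet} already shows $K_t(\eta)$ is invertible.
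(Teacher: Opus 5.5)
Your proposal is correct and follows essentially the same route as the paper. It uses the identification $T_D=A_{\beta_q}$ together with $\beta_q\in L^\infty$ for (A2), a smooth $H^1$ anchor with $E_0(0)=(2\pi)^{-1/2}$ for (A3), and the Schur complement of the continuous diagonal block of the dynamic block matrix, combined with the dissipative bound $\|R^D_{t,\eta}\|\le(\operatorname{Im}z_0)^{-1}$, for (A4). Your explicit solution of $A_{t,\eta}(a',G)=(a,0)$, which verifies the formula for $K_t(\eta)^{-1}$, is a harmless extra detail that the paper states without derivation.
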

\begin{proof}
For $G_h=(T_D-\zeta_\ast)^{-1}h$, Proposition \ref{app:prop:static-graph} and \eqref{app:eq:beta-bounded} give
\[
Z_h=\mathcal{C}_H^+(\overline{\beta_q}G_h)\in L^2,\qquad \|Z_h\|_{L^2(\mathbb{R})}\le\|\mathcal{C}_H^+\|\|\beta_q\|_{L^\infty(\mathbb{R})}\|G_h\|_{L^2(\mathbb{R})}\lesssim_{\zeta_\ast}\|h\|_{L^2(\mathbb{R})}.
\]
Hence, for smooth $v$,
\[
\left|\int_0^\infty\overline{\beta_q}G_h\overline{\mathcal{C}_H^+v}d\lambda\right|\le \|\beta_q\|_{L^\infty(\mathbb{R})}\|G_h\|_{L^2(\mathbb{R})}\|\mathcal{C}_H^+\|\|v\|_{L^2(\mathbb{R})},
\]
and the Riesz representative is $Z_h$. The identity $T_D=A_{\beta_q}$ supplies the punctured distributional equation and its coefficient-one traces. This proves (A2).

Choose $E_0\in C_c^\infty([0,\infty))$ with $E_0(0)=(2\pi)^{-1/2}$. Equations \eqref{app:eq:static-graph}--\eqref{app:eq:static-row} give
\[
I_+(\mathscr{U}_q^{-1}JE_0)=1,\qquad J_+E_0=(2\pi)^{-1/2},\qquad \ell_DE_0=(2\pi i)^{-1},
\]
and $E_0\in H^1$ has identical traces on both banks of every embedded label. This proves (A3).

Finally, take the Schur complement of the continuous diagonal block in the closed dynamic matrix from Proposition~\ref{app:prop:dynamic-graph}. If $R^D_{t,\eta}$ is the Dirichlet resolvent, then
\begin{equation}\label{app:eq:Schur-identity}
K_t(\eta)^{-1}=A_{\mathrm{p}}+2t(M_{\mathrm{p}}-\eta)-z_0-BR^D_{t,\eta}B^\ast.
\end{equation}
Maximal dissipative resolvent control gives $\|R^D_{t,\eta}\|\le(\operatorname{Im}z_0)^{-1}$. Subtracting the linear point motion in \eqref{app:eq:Schur-identity} proves
\eqref{app:eq:Schur-bound}. The point-return proposition in the main text then gives the weak oscillatory alternative as well.
\end{proof}

\begin{proof}[Proof of Theorem~\ref{app:thm:XDM}]
Propositions~\ref{app:prop:static-graph} and \ref{app:prop:dynamic-graph} give (A1). Proposition \ref{app:prop:last-gates} gives (A2)--(A4), with $\rho=0$. Therefore $q\in\mathcal{A}_{\mathrm{an}}^{\mathrm{corr}}$.
\end{proof}

\begin{prop}[Strict inclusions]\label{app:prop:hierarchy}
One has
\[
\bigcup_{s>1}X_s\subsetneq X_1\subsetneq\mathcal{T}_+\subsetneq\mathcal{X}_{\mathrm{DM}}\subset\mathcal{A}_{\mathrm{an}}^{\mathrm{corr}}.
\]
\end{prop}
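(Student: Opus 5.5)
The chain has four links: $\bigcup_{s>1}X_s\subsetneq X_1$, then $X_1\subsetneq\mathcal{T}_+$, then $\mathcal{T}_+\subsetneq\mathcal{X}_{\mathrm{DM}}$, and finally $\mathcal{X}_{\mathrm{DM}}\subset\mathcal{A}_{\mathrm{an}}^{\mathrm{corr}}$. I would treat them one at a time. Here I read $X_s$ as $\{q\in L^2_+(\mathbb{R}):\langle x\rangle^s q\in L^2\}$, by analogy with $X_1$. The last inclusion is exactly Theorem~\ref{app:thm:XDM}. The non-strict inclusion $X_1\subset\mathcal{T}_+$, together with its strictness, is Proposition~\ref{app:prop:tail-Banach}. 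The inclusion $\mathcal{T}_+\subset\mathcal{X}_{\mathrm{DM}}$ is Theorem~\ref{app:thm:tail}.

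For $\bigcup_{s>1}X_s\subset X_1$ I would use $\langle x\rangle\le\langle x\rangle^s$. For strictness I would adapt the packet construction from Proposition~\ref{app:prop:tail-Banach}. Fix $g\in\mathcal{S}\cap L^2_+$ with $\widehat g\in C_c^\infty((0,\infty))$ and set $q=\sum_n a_n g(\cdot-R_n)$ with rapidly growing, almost-disjoint centres. I would choose $a_n=R_n^{-1}n^{-1}$. Then $\|\langle x\rangle q\|_2^2\approx\sum_n n^{-2}<\infty$, while for any $s>1$ we get $\|\langle x\rangle^s q\|_2^2\gtrsim\sum_n R_n^{2s-2}n^{-2}=\infty$, because $R_n^{2(s-1)}$ beats $n^2$ once, say, $R_n=e^{n^2}$. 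Positive Fourier support is preserved, so $q\in X_1\setminus\bigcup_{s>1}X_s$.

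The genuinely new content is the strictness $\mathcal{T}_+\subsetneq\mathcal{X}_{\mathrm{DM}}$, and this is where I expect the main obstacle. The simplest route is data with empty point spectrum, since then both conditions in Definition~\ref{app:def:XDM} are vacuous and $\mathcal{X}_{\mathrm{DM}}$ contains every $q\in L^1\cap L^2_+$ for which $L_q$ has no eigenvalues. First I would show that subthreshold data $\|q\|_2^2<2\pi$ have $\sigma_{\mathrm p}(L_q)=\emptyset$. This follows from Lemma~\ref{lem:static-cyclic-formula}: each eigenvalue contributes mass $2\pi$ to $\nu_q(\mathbb{R})=\|q\|_2^2$. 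It then remains to exhibit a small $q\in L^1\cap L^2_+$ that is not in $\mathcal{T}_+$. I would reuse the packet series with weights tuned so that $\sum_n|a_n|<\infty$ and $\sum_n|a_n|^2$ is small, making $q\in L^1\cap L^2_+$ with small mass, while the tail quantity $\int_1^\infty Q_1(q;R)^2\,dR$ diverges. For instance, $a_n=n^{-2}$ with centres $R_n=n^{4}$ gives $Q_1(q;R)\gtrsim\sum_{R_n>R}n^{-2}\approx R^{-1/4}$, which is not square integrable on $(1,\infty)$.

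The delicate points are therefore twofold. The almost-disjointness of the packets must be verified, so that the lower bound on $Q_1$ holds uniformly in $R$. The overall amplitude must be scaled below the $2\pi$ threshold without destroying the divergence, and that divergence is scale-invariant under multiplying $q$ by a constant. Positive frequency support is again inherited from $\widehat g$.
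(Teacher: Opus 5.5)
Your proposal is correct and has the same skeleton as the paper's proof. The non-strict links and the strictness of $X_1\subsetneq\mathcal{T}_+$ are imported from Theorem~\ref{app:thm:XDM}, Theorem~\ref{app:thm:tail} and Proposition~\ref{app:prop:tail-Banach}. The first strictness comes from a lacunary packet series. The strictness $\mathcal{T}_+\subsetneq\mathcal{X}_{\mathrm{DM}}$ uses the mass trace formula: below mass $2\pi$ there is no point spectrum, so both conditions in Definition~\ref{app:def:XDM} are vacuous.

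The real difference is the witness for that last strictness. The paper takes the closed-form datum $q_c(x)=c(1-ix)^{-3/2}$ with $0<|c|<\sqrt{\pi}$. The Gamma integral gives its Fourier transform $\frac{2\pi c}{\Gamma(3/2)}\xi^{1/2}e^{-\xi}\mathbf{1}_{(0,\infty)}(\xi)$, so Hardy support and mass $2|c|^2<2\pi$ are immediate. Since $|q_c(x)|=|c|(1+x^2)^{-3/4}$, one gets $Q_1(q_c;R)\asymp R^{-1/2}$ directly, with no interaction terms. Your series $\sum_n n^{-2}g(\cdot-n^4)$ also works. The price is the almost-disjointness check you flagged. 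On $[R_n-L,R_n+L]$, Schwartz decay and the spacing $R_{n+1}-R_n\sim 4n^3$ bound the other packets by $O(n^{-2-3k}+n^{-4k})$ in $L^1$, which is negligible against $n^{-2}$. So your route closes, but the explicit example is more economical.

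A smaller point on the first strictness concerns your weights $a_n=1/(nR_n)$. The triangle inequality gives only $\sum_n a_n\|\langle x\rangle g(\cdot-R_n)\|_2\approx\sum_n n^{-1}=\infty$. So $\|\langle x\rangle q\|_2^2\approx\sum_n n^{-2}$ must come from the quadratic expansion. The cross terms for $m>n$ are bounded by $a_na_m\langle R_m\rangle^2\langle R_m-R_n\rangle^{-k}$, which is summable for your lacunary centres. The paper instead takes $d_n=1/(n^2S_n)$, so Minkowski suffices directly. Its packet-dominance lower bound $S_n^{2\varepsilon}/n^4\to\infty$ still gives $q_0\notin X_{1+\varepsilon}$ for every $\varepsilon>0$.
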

\begin{proof}
For the first strict inclusion, choose $g\in\mathcal{S}(\mathbb{R})\cap L^2_+(\mathbb{R})$ as in the proof of Proposition~\ref{app:prop:tail-Banach}, choose centres $S_n$ increasing super-exponentially and with the same local packet-dominance property, and put $d_n=\frac{1}{n^2S_n},\qquad q_0(x)=\sum_{n=1}^\infty d_ng(x-S_n)$. Minkowski's inequality gives $\|\langle x\rangle q_0\|_{L^2(\mathbb{R})}\lesssim_g\sum_n d_n(1+S_n)<\infty$, so $q_0\in X_1$. On the dominance interval around $S_n$, for every $\varepsilon>0$, $\int_\mathbb{R}\langle x\rangle^{2(1+\varepsilon)}|q_0(x)|^2dx\gtrsim_g S_n^{2(1+\varepsilon)}d_n^2=\frac{S_n^{2\varepsilon}}{n^4}$. The right side is unbounded in $n$, hence $q_0\notin X_{1+\varepsilon}$. This proves $\bigcup_{s>1}X_s\subsetneq X_1$.

For the strictness $\mathcal{T}_+\subsetneq\mathcal{X}_{\mathrm{DM}}$, take $q_c(x)=c(1-i x)^{-3/2},\qquad 0<|c|<\sqrt\pi$. The Gamma-integral representation gives
\[
\widehat{q}_c(\xi)=\frac{2\pi c}{\Gamma(3/2)}\xi^{1/2}e^{-\xi}\mathbf{1}_{(0,\infty)}(\xi),\qquad\|q_c\|_{L^2(\mathbb{R})}^2=2|c|^2<2\pi.
\]
Hence $q_c\in L^1\cap L_+^2(\mathbb{R})$. Frank--Read's mass trace formula
\begin{equation}\label{app:eq:mass-trace}
\|q\|_{L^2(\mathbb{R})}^2=2\pi N(L_q)+\frac{1}{2\pi}\|\beta_q\|_{L^2(\mathbb{R})}^2
\end{equation}
forces $N(L_{q_c})=0$, so both spectral conditions in Definition \ref{app:def:XDM} are vacuous and $q_c\in\mathcal{X}_{\mathrm{DM}}$. Since $Q_1(q_c;R)\asymp_c R^{-1/2}$, we have $Q_1(q_c;\cdot)\notin L^2(1,\infty)$ and $q_c\notin\mathcal{T}_+$.
\end{proof}

\begin{cor}[All subthreshold integrable Hardy data are analyzable]\label{app:cor:subthreshold}
If
\begin{equation}\label{app:eq:subthreshold}
q\in L^1(\mathbb{R})\cap L^2_+(\mathbb{R}), \qquad \|q\|_{L^2(\mathbb{R})}^2<2\pi,
\end{equation}
then $q\in\mathcal{X}_{\mathrm{DM}}\subset\mathcal{A}_{\mathrm{an}}^{\mathrm{corr}}$. More generally, the same conclusion holds for every $q\in L^1\cap L^2_+(\mathbb{R})$ whose Lax operator has no point spectrum, regardless of its mass.
\end{cor}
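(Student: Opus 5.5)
The plan is to reduce Corollary~\ref{app:cor:subthreshold} to Theorem~\ref{app:thm:XDM} by showing that both spectral conditions in Definition~\ref{app:def:XDM} hold vacuously. For the second statement this is immediate. If $q\in L^1\cap L^2_+(\mathbb{R})$ and $\sigma_{\mathrm{p}}(L_q)=\varnothing$, then \eqref{app:eq:Dini} is indexed by $\sigma_{\mathrm{p}}(L_q)\cap[0,\infty)=\varnothing$. Likewise, \eqref{app:eq:point-moment} is indexed by the empty family of point eigenfunctions. Hence $q\in\mathcal{X}_{\mathrm{DM}}$, and Theorem~\ref{app:thm:XDM} gives $q\in\mathcal{A}_{\mathrm{an}}^{\mathrm{corr}}$. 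Note that mass plays no role here, since Definition~\ref{app:def:XDM} imposes no threshold.

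For the subthreshold statement, we show that $\|q\|_{L^2}^2<2\pi$ forces $\sigma_{\mathrm{p}}(L_q)=\varnothing$, and then apply the previous paragraph. We use the mass trace formula \eqref{app:eq:mass-trace}, which is equivalently \eqref{eq:FR-measure} with $B=\mathbb{R}$ combined with \eqref{eq:cyclic-total-mass}. It reads $\|q\|_{L^2}^2=2\pi N(L_q)+\frac{1}{2\pi}\|\beta_q\|_{L^2(\mathbb{R}_+)}^2$. The $\beta_q$-term is nonnegative, so $2\pi N(L_q)\le\|q\|_{L^2}^2<2\pi$. Since $N(L_q)$ is a nonnegative integer, it follows that $N(L_q)=0$.

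A more elementary route to the same conclusion is Lemma~\ref{lem:static-cyclic-formula}. Each eigenvalue $\mu$ carries cyclic mass $\nu_q(\{\mu\})=2\pi$, so any eigenvalue would force $\nu_q(\mathbb{R})\ge2\pi$. This contradicts $\nu_q(\mathbb{R})=\|q\|_{L^2}^2<2\pi$.

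There is no real obstacle here. The only point to check is that Theorem~\ref{app:thm:XDM} covers the case $N=0$. In that case the spectral representation has $\mathcal{E}=\{0\}$ and there are no point lines. The block $B$ of Proposition~\ref{app:prop:block} is zero, and the Schur bound \eqref{app:eq:Schur-bound} is trivially satisfied. Propositions~\ref{app:prop:static-graph}, \ref{app:prop:dynamic-graph} and \ref{app:prop:last-gates} then apply verbatim with all point-label statements void, using only \eqref{app:eq:beta-bounded}, which holds for every $q\in L^1\cap L^2_+$.
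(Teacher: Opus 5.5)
Your proposal is correct and matches the paper's proof: the mass trace formula \eqref{app:eq:mass-trace} forces $N(L_q)=0$ under the subthreshold condition, so both requirements of Definition~\ref{app:def:XDM} are vacuous, and Theorem~\ref{app:thm:XDM} gives analyzability, with the no-point-spectrum case immediate from the same definition. Your two additions are consistent with this argument but not needed for it: the atom-counting variant via Lemma~\ref{lem:static-cyclic-formula} is the same identity in measure form, and the check that Theorem~\ref{app:thm:XDM} covers $N=0$ is the paper's implicit assumption.
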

\begin{proof}
Equation \eqref{app:eq:mass-trace} gives $N(L_q)=0$ under \eqref{app:eq:subthreshold}. Both requirements in Definition \ref{app:def:XDM} are then vacuous. The second assertion is immediate from the same definition.
\end{proof}

\section*{Acknowledgements}
The authors are deeply grateful to Rupert L. Frank, Soonsik Kwon, and Taegyu Kim for their insightful comments throughout this work.

\section*{Funding information}
This work was supported by the National Natural Science Foundation of China under Grant No. 12371255, the Fundamental Research Funds for the Central Universities of CUMT under Grant No. 2024ZDPYJQ1003, and the Postgraduate Research \& Practice Program of Education \& Teaching Reform of CUMT under Grant No. 2025YJSJG031.

\end{document}